\begin{document}

\title{
  Revisiting derived crystalline cohomology
}
  \tmnote{{\citetexmacs{TeXmacs:website}}}

\author{Zhouhang Mao}

\date{\tmdate}

\selectlanguage{english}
\keywords{animated rings, derived crystalline cohomology, derived prismatic cohomology{, en français: anneaux animés, la cohomologie cristalline dérivée, la cohomologie prismatique dérivée}}

\subjclass[2010]{14F30}

\maketitle

\selectlanguage{french}
\begin{abstract} (Revisiter la cohomologie cristalline dérivée)
  Nous prouvons que la $\infty$-catégorie des surjections d'anneaux animés est projectivement générée, introduisons et étudions la notion de PD-paires animées --- des surjections d'anneaux animés avec une PD-structure ``dérivée''. Cela nous permet de généraliser des résultats classiques à des situations non plates et non de type fini.

  En utilisant les PD-paires animées, nous développons plusieurs approches de la cohomologie cristalline dérivée et établissons des théorèmes de comparaison. En tant qu'application, nous généralisons la comparaison entre la cohomologie cristalline dérivée et classique à partir de schémas syntomiques (affines) (due à Bhatt) à des schémas quasisyntomiques.

  Nous développons également un analogue animé non complété des prismes et des enveloppes prismatiques. Nous prouvons une variante de la comparaison de Hodge--Tate pour les enveloppes prismatiques animées, à partir de laquelle nous déduisons un résultat sur le recouvrement plat de l'objet final pour les schémas quasisyntomiques, qui généralise plusieurs résultats connus sous des conditions de lissité et de finitude.
\end{abstract}

\selectlanguage{english}
\begin{abstract}
  We prove that the $\infty$-category of surjections of animated rings is
  projectively generated, introduce and study the notion of animated PD-pairs
  --- surjections of animated rings with a ``derived'' PD-structure. This
  allows us to generalize classical results to non-flat and
  non-finitely-generated situations.
  
  Using animated PD-pairs, we develop several approaches to derived
  crystalline cohomology and establish comparison theorems. As an application,
  we generalize the comparison between derived and classical crystalline
  cohomology from syntomic (affine) schemes (due to Bhatt) to quasisyntomic
  schemes.
  
  We also develop a non-completed animated analogue of prisms and prismatic
  envelopes. We prove a variant of the Hodge--Tate comparison for animated
  prismatic envelopes from which we deduce a result about flat cover of the
  final object for quasisyntomic schemes, which generalizes several known
  results under smoothness and finiteness conditions.
\end{abstract}

{\tableofcontents}

\section{Introduction}\label{sec:intro}

In this introductory section, we start with a non-technical discussion of
background, stating the main results in simplified forms. Then we explain the
main techniques used in this article. After that, we present the main
definitions and constructions in this article.

\subsection{Background and main results}In this subsection, we discuss the
background and the main results of the current work in a simplified form.

Regular sequences and local complete intersections play an important role in
the study of Noetherian rings. However, in arithmetic geometry, Noetherianness
is not preserved by operations related to perfectoids. Various generalizations
to the non-Noetherian case are available. In {\cite{Bhatt2018}}, it has been
shown that, the {\tmdfn{quasiregularity}} (à la Quillen) is a particularly
good candidate to replace (Koszul) regularity in classical algebraic geometry:
an ideal $I$ of a ring $A$ is called {\tmdfn{quasiregular}}
(\Cref{def:quasiregular}) if the $A / I$-module $I / I^2$ is flat and the
homotopy groups $\pi_i (L_{(A / I) / A})$ of the cotangent complex vanish for
$i > 1$, or equivalently put, $L_{(A / I) / A} \simeq (I / I^2) [1]$. In
particular, if an ideal is generated by a Koszul-regular sequence, then it is
also quasiregular.

Let us briefly review some details in the simple case of characteristic $p$
(instead of mixed characteristic). An $\mathbb{F}_p$-algebra $R$ is called
{\tmdfn{perfect}} if the Frobenius map $R \rightarrow R, x \mapsto x^p$ is
bijective. An $\mathbb{F}_p$-algebra $S$ is called {\tmdfn{quasiregular
semiperfect}} if there exists a perfect $\mathbb{F}_p$-algebra $R$ along with
a surjective map $R \twoheadrightarrow S$ of rings of which the kernel $I
\subseteq R$ is quasiregular. In this case, {\cite[Thm~8.12]{Bhatt2018}} shows
that the derived de Rham cohomology of $R$ with respect to the base
$\mathbb{F}_p$ is concentrated in degree $0$, and as a ring, it is equivalent
to the PD-envelope of $(R, I)$. Since the cotangent complex $L_{R
/\mathbb{F}_p}$ vanishes, the base $\mathbb{F}_p$ of the derived de Rham
cohomology could be replaced by $R$.

This result was already known {\cite[Thm~3.27]{Bhatt2012a}} when the kernel
$I$ of the map $R \twoheadrightarrow S$ in question is Koszul regular. In
other words, {\cite{Bhatt2018}} generalizes the classical results about
Koszul-regular ideals to quasiregular ideals.

In this article, we develop a different approach which works in greater
generality: we do not need the base to be perfect, of characteristic $p$ or
even ``$p$-local'' such as $\mathbb{Z}_p$ or a perfectoid ring. We build a
machinery to extend results about Koszul-regular ideals to quasiregular ideals
in a systematic fashion. We say that a map $R \rightarrow S$ of animated rings
{\cite[§5.1]{Cesnavicius2019}} is {\tmdfn{surjective}} if the induced map
$\pi_0 (R) \rightarrow \pi_0 (S)$ is surjective
(\Cref{def:surj-map-ani-ring}).

\begin{theorem}[\Cref{thm:ani-smith-eq}]
  The $\infty$-category of surjective maps of animated rings is projectively
  generated. The set $\{ \mathbb{Z} [x_1, \ldots, x_m, y_1, \ldots, y_n]
  \twoheadrightarrow \mathbb{Z} [x_1, \ldots, x_m] \barsuchthat m, n \in
  \mathbb{N} \}$ of objects forms a set of compact projective generators.
\end{theorem}

For technical reasons, we will introduce the $\infty$-category of
{\tmdfn{animated pairs}}, which is equivalent to the $\infty$-category of
surjective maps of animated rings. By the formalism of left derived functors
(\Cref{prop:left-deriv-n-fun}), given a functor defined for ``standard''
Koszul-regular pairs $(\mathbb{Z} [X, Y], (Y))$ where $X = \{ x_1, \ldots, x_m
\}$ and $Y = \{ y_1, \ldots, y_m \}$\footnote{In this article, the
multivariable notations $X$ and $Y$ are used from time to time.}, we get a
functor defined on {\tmem{all}} animated pairs, and in particular, on
classical ring-ideal pairs $(A, I)$, and any comparison map between such
functors is determined by the restriction to these Koszul-regular pairs. We
learned the importance of such standard pairs from the proof of
{\cite[Cor~4.14]{Bhatt2012}}.

In order to formulate a reasonable generalization of
{\cite[Thm~8.12]{Bhatt2018}}, just as we need animated pairs, we also need
{\tmdfn{animated PD-pairs}} (\Cref{def:anipair-anipdpair}), denoted by $(A
\twoheadrightarrow A'', \gamma)$ (\Cref{nota:ani-pdpair}). There is a
canonical forgetful functor from the $\infty$-category of animated PD-pairs to
the $\infty$-category of animated pairs, which preserves small colimits
(\Cref{prop:forget-PD-small-colim}). This is remarkable since the forgetful
functor from the $1$-category of PD-pairs to the $1$-category of ring-ideal
pairs does not preserve small colimits
(\Cref{rem:pdpair-pair-forget-not-preserve-colim}). Moreover, the forgetful
functor admits a left adjoint, called the {\tmdfn{animated PD-envelope
functor}}.

In general, the animated PD-envelope, considered as a kind of derived functor,
is different from the PD-envelope. We will show that, there is a canonical
filtration on the animated PD-envelope of $\mathbb{F}_p$-pairs\footnote{Or
more generally, of animated $\mathbb{F}_p$-pairs.} (i.e. pairs $(A, I)$ where
$A$ is an $\mathbb{F}_p$-algebra), called the {\tmdfn{conjugate filtration}}
(\Cref{def:conj-fil-pd-env}), of which we can control the associated graded
pieces:

\begin{theorem}[\Cref{cor:conjfil-gr,cor:LAdFil-symm-cot-cx}]
  Let $A$ be an $\mathbb{F}_p$-algebra and $I \subseteq A$ an
  ideal\footnote{In the introduction, for sake of simplicity, we usually
  replace the occurrences of animated pairs (resp. animated PD-pairs) by
  ring-ideal pairs (resp. PD-pairs) as input data.}. Then
  \begin{enumerate}
    \item the animated PD-envelope of $(A, I)$ admits a natural animated
    $\varphi_A^{\ast} (A / I)$-algebra structure.
    
    \item for every $i \in \mathbb{N}$, the $(- i)$-th associated graded piece
    of the animated PD-envelope of $A$ is, as a $\varphi_A^{\ast} (A /
    I)$-module spectrum, naturally equivalent to $\varphi_A^{\ast} (\Gamma_{A
    / I}^i (L_{(A / I) / A} [- 1]))$, where $\Gamma_{A / I}^i$ is the $i$-th
    derived divided power.
  \end{enumerate}
\end{theorem}

As a corollary, the notion of quasiregularity provides an important
{\tmem{acyclicity condition}}: along with a mild assumption, the animated
PD-envelope coincides with the classical PD-envelope:

\begin{theorem}[\Cref{cor:Fp-qreg-frob-ani-pd-env}]
  Let $A$ be an $\mathbb{F}_p$-algebra, $I \subseteq A$ a quasiregular ideal.
  Suppose that the (derived) Frobenius twist $(A / I) \otimes_{A,
  \varphi_A}^{\mathbb{L}} A$ is concentrated in degree $0$, i.e.,
  $\tmop{Tor}_A^i (A / I, A) \cong 0$ (where the last $A$ is viewed as an
  $A$-module via the Frobenius $\varphi_A \of A \rightarrow A$) for all $i \in
  \mathbb{N}_{> 0}$. Then the animated PD-envelope of $(A, I)$ coincides with
  the classical PD-envelope.
\end{theorem}

We want to point out that $(A / I) \otimes_{A, \varphi_A}^{\mathbb{L}} A$
being concentrated in degree $0$ is a very mild assumption. For example, when
$I \subseteq A$ is generated by a Koszul-regular sequence, then this holds
automatically {\cite[Lem~3.41]{Bhatt2012a}}. This also happens when $(A, I)$
comes from a ``good'' PD-envelope, see \Cref{rem:pd-env-frob-flat}. Using
this, we show that

\begin{theorem}[\Cref{prop:koszul-regular-ani-pd-env}]
  Let $A$ be a ring and $I \subseteq A$ an ideal generated by a Koszul-regular
  sequence. Then the animated PD-envelope of $(A, I)$ coincides with the
  classical PD-envelope.
\end{theorem}

Moreover, this mild assumption is not needed if we are only interested in
associated graded pieces of the PD-filtration, which answers a question of
Illusie {\cite[VIII.~Ques~2.2.4.2]{Illusie1972}}:

\begin{theorem}[\Cref{prop:comp-PDFil-equiv-qreg,prop:qreg-assoc-gr-pd}]
  Let $A$ be an $\mathbb{F}_p$-algebra, $I \subseteq A$ a quasiregular ideal.
  Then there is a canonical comparison map from the animated PD-envelope to
  the classical PD-envelope $(B, J, \gamma)$ of $(A, I)$ compatible with
  PD-filtrations which induces equivalences on associated graded pieces.
  Furthermore, these associated graded pieces $\tmop{gr}_{\tmop{PD}}^{\ast}
  B$, as a graded commutative ring, are given by the free divided power $A /
  I$-algebra $\Gamma_{A / I}^{\ast} (I / I^2)$ generated by the $A / I$-module
  $I / I^2$.
\end{theorem}

The key point is that animated PD-envelopes admit natural PD-filtrations of
which we can control the associated graded pieces
(\Cref{prop:comp-PDFil-equiv-qreg}).

Based on animated PD-pairs, we develop a theory of {\tmdfn{derived crystalline
cohomology}} (\Cref{def:deriv-crys-cohomol}) based on a technical construction
called {\tmdfn{derived de Rham cohomology of a map of animated PD-pairs}}
(\Cref{def:deriv-dR-pdpair}) which generalizes the derived de Rham cohomology
of a map of animated rings. In other words, our derived crystalline cohomology
should be understood as a variant of derived de Rham cohomology, not
site-theoretic cohomology. These functors preserve small colimits by
\Cref{prop:deriv-crys-coh-preserv-colim,lem:dR-preserv-colim}, therefore
formal properties such as base change compatibility and ``Künneth'' formula
hold
(\Cref{cor:crys-coh-base-chg,cor:crys-coh-symm-mon,cor:crys-coh-transitive}).

In fact, the animated PD-envelope is, roughly speaking, a special case of
derived crystalline cohomology:

\begin{theorem}[\Cref{prop:crys-PD-env-equiv}]
  Let $(A, I, \gamma_A)$ be a PD-pair and $J \subseteq A$ be an ideal
  containing $I$. Let $(B \twoheadrightarrow A / J, \gamma_B)$ be the relative
  animated PD-envelope of $(A, J)$ with respect to the PD-pair $(A, I,
  \gamma_A)$. Then the underlying $\mathbb{E}_{\infty}$-$\mathbb{Z}$-algebra
  of $B$ is equivalent to the derived crystalline cohomology of $A / J$ with
  respect to $(A, I, \gamma_A)$.
\end{theorem}

From this we deduce a generalization of {\cite[Thm~8.12]{Bhatt2018}} under
quasiregularity and the $\tmop{Tor}$-independent assumption mentioned above.
To see this, similar to the animated PD-envelope, we introduce the
{\tmdfn{conjugate filtration}} on the derived crystalline cohomology
(\Cref{def:conj-fil-crys-coh}) and on the relative animated PD-envelope
(\Cref{def:conj-fil-rel-pd-env}) in characteristic $p$, and we have a similar
control of associated graded pieces for the conjugate filtration on relative
animated PD-envelopes (\Cref{cor:conjfil-gr-rel}) and also on the derived
crystalline cohomology, which is a crystalline variant of the {\tmdfn{Cartier
isomorphism}} (cf. {\cite[Prop~3.5]{Bhatt2012a}} over the base $(A, I, \gamma)
= (\mathbb{F}_p, 0, 0)$):

\begin{theorem}[\Cref{prop:crys-Cartier-isom}]
  Let $(A, I, \gamma)$ be a PD-pair where $A$ is an $\mathbb{F}_p$-algebra.
  Note that the Frobenius map $\varphi_A \of A \rightarrow A$ factors through
  $A \twoheadrightarrow A / I$, giving rise to a natural map $\varphi_{(A, I)}
  \of A / I \rightarrow A$ (cf. \Cref{lem:PD-frob}). Then for every animated
  $A / I$-algebra $R$ and $n \in \mathbb{N}$, the $(- i)$-th associated graded
  piece of the conjugate filtration on the derived crystalline cohomology of
  $R$ relative to $(A, I, \gamma)$ is, as a $\varphi_{(A, I)}^{\ast}
  (R)$-module spectrum, equivalent to $\varphi_{(A, I)}^{\ast} \left(
  \bigwedgestar_R^i L_{R / (A / I)} \right) [- i]$.
\end{theorem}

On the other hand, similar to {\cite{Berthelot1974}}, we develop an
{\tmdfn{affine crystalline site}} (\Cref{def:aff-crys-site}) based on animated
PD-pairs (Bhatt had already indicated such a possibility, see the paragraph
before {\cite[Ex~3.21]{Bhatt2012a}}). Recall that a map $A \rightarrow R$ of
rings is called {\tmdfn{quasisyntomic}} (\Cref{def:qsyn}) if it is flat and
the cotangent complex $L_{R / A}$, as an $R$-module spectrum, has
$\tmop{Tor}$-amplitude in $[0, 1]$. We could also compare the derived
crystalline cohomology to the site-theoretic cohomology:

\begin{theorem}[\Cref{prop:deriv-crys-coh-site-comp,prop:site-comp-qsyn-integral,prop:site-static-comp}]
  Let $(A, I, \gamma_A)$ be a PD-pair and $R$ an $A / I$-algebra.
  \begin{enumerate}
    \item There is a comparison map from the derived crystalline cohomology of
    $R$ with respect to $(A, I, \gamma_A)$ to the cohomology of the affine
    crystalline site, which is an equivalence when as an $A / I$-algebra, $R$
    is either of finite type, or quasisyntomic.
    
    \item There is a comparison map from the cohomology of the affine
    crystalline site to the (classical) crystalline cohomology of $R$ with
    respect to $(A, I, \gamma_A)$. When $R$ is a quasisyntomic $A /
    I$-algebra,
    \begin{enumerate}
      \item Supposing that $p$ is nilpotent in $A$, then the comparison map is
      an equivalence.
      
      \item Supposing that $A$ is $p$-torsion free, then the comparison map
      becomes an equivalence after derived $p$-completion, or equivalently,
      after derived modulo $p$.
    \end{enumerate}
  \end{enumerate}
\end{theorem}

The theorem above generalizes {\cite[Prop~3.25]{Bhatt2012a}} which is
established for syntomic algebras.

We do not know whether the derived crystalline cohomology and the cohomology
of the affine crystalline site are equivalent without any assumption, we
reduced this equivalence to a descent property of the derived crystalline
cohomology ``with respect to the base animated PD-pair''
(\Cref{prop:crys-site-comp-desc}).

In addition to PD-pairs and the crystalline cohomology, we also introduce
{\tmdfn{animated $\delta$-rings}} and {\tmdfn{animated $\delta$-pairs}}, and a
non-complete but animated version of prisms, the static version of which was
introduced in {\cite{Bhatt2019}}. Similar to animated PD-envelopes, the
non-completed animated prismatic envelope, which generalizes\footnote{More
precisely, it is a non-completed version.} the prismatic envelope for local
complete intersections {\cite[Prop~3.13]{Bhatt2019}}, admits the
{\tmdfn{conjugate filtration}} of which the associated graded pieces are
easily determined by a variant of the {\tmdfn{Hodge--Tate comparison}}:

\begin{theorem}[\Cref{thm:Hdg-Tate}]
  Let $(A, d)$ be a prism and $J \subseteq A / d$ an ideal. Then for every $i
  \in \mathbb{N}$, the $(- i)$-th associated graded piece of non-completed
  prismatic envelope, as an $A / (d, J)$-module spectrum, is
  equivalent\footnote{Here we suppress the Breuil--Kisin twists.} to
  $\Gamma_{A / (d, J)}^i (L_{(A / (d, J)) / (A, d)} [- 1])$.
\end{theorem}

As a corollary, similar to animated PD-envelopes, when the ideal $J$ is
$p$-completely quasiregular, roughly speaking, the $(p, d)$-completed animated
prismatic envelope satisfies the universal property of the prismatic envelope
in {\cite[Prop~3.13]{Bhatt2019}} (\Cref{rem:p-compl-prism-env}). Furthermore,
the non-completed prismatic envelope satisfies a faithful flatness
(\Cref{prop:quasireg-prism-env}), which leads to a technical result which is
essentially about the flat cover of the final object
(\Cref{prop:prism-flat-cov-final-obj}), and a similar argument shows the $(p,
d)$-completed variant:

\begin{theorem}[\Cref{prop:compl-prism-flat-cover}]
  Let $(B, d)$ be a bounded oriented prism, $R$ a derived $p$-complete and
  $p$-completely quasisyntomic $B / d$-algebra. Let $P$ be a derived $(p,
  d)$-complete animated $\delta$-$B$-algebra which is $(p, d)$-completely
  quasismooth over $B$, equipped with a surjection $P \twoheadrightarrow R$ of
  $B$-algebras. Then the (completed) prismatic envelope of $P
  \twoheadrightarrow R$ exists and is a flat cover of the final object in the
  prismatic site.
\end{theorem}

We stress that our theory is non-completed. Technically, it is easier to deal
with non-completed version than with $p$-completed version because the
$\infty$-category of $p$-completed objects is usually not projectively
generated. For example, $\mathbb{Z}_p \in D_{\tmop{comp}} (\mathbb{Z}_p)$ is
not a compact object. We could overcome this issue by applying the techniques
developed in \Cref{subsec:refl-subcat}, but it would make the theory
inconvenient.

However, thanks to Clausen--Scholze's condensed mathematics, the non-completed
version could serve a cornerstone of an analytic version which allows us to
put ``topologies'' and ``analytic structures'' on our animated rings.

\subsection{Main techniques}\label{subsec:main-tech}We systematically adopt
two techniques in this article: the {\tmdfn{animation}} and a kind of
local-global principle for $\mathbb{Z}$. We briefly summarize them as follows:

There is a procedure to associate to $1$-projectively generated $1$-categories
projectively generated $\infty$-categories, called the {\tmdfn{animation}}, a
concept abstracted out in {\cite[§5.1]{Cesnavicius2019}}, and defined by the
{\tmdfn{non-abelian derived category}} {\cite[§5.5.8]{Lurie2009}} of a
{\tmdfn{set of compact $1$-projective generators}}.

\begin{example*}
  The abelian category of $R$-modules admits a set of compact $1$-projective
  generators given by free $R$-modules of finite rank. The animation of this
  category is the connective part $D_{\geq 0} (R)$ of the derived category $D
  (R)$.
\end{example*}

\begin{example*}
  The $1$-category of rings admits a set of compact $1$-projective generators
  given by polynomial rings on finitely many variables.
\end{example*}

\begin{remark*}
  It is not a coincidence that the sets of compact $1$-projective generators
  above are given by ``finite free objects''. Indeed, it is a corollary of
  \Cref{prop:adjoint-n-proj-gen}, applied to the pairs $\tmop{Set}
  \rightleftarrows \tmop{Mod}_R$ and $\tmop{Set} \rightleftarrows \tmop{Ring}$
  of adjoint functors.
\end{remark*}

We review the definition of animation and summarize its main properties in
\Cref{subsec:animation}. When applying this construction to the $1$-category
of rings, we get {\tmdfn{the $\infty$-category of animated rings}}. We apply
this construction to the $1$-category of $\delta$-rings, obtaining the
{\tmdfn{$\infty$-category of animated $\delta$-rings}}
(\Cref{def:ani-delta-ring}).

The technical advantage of this construction is that, to produce a
sifted-colimit-preserving functor from a projectively generated
$\infty$-category, it suffices to produce a functor from the full subcategory
spanned by a set of compact projective generators which, as we have seen, is
given by ``finite free objects''.

Now we want to apply this procedure to the $1$-category of ring-ideal pairs.
Unfortunately, the $1$-category of ring-ideal pairs might not be
$1$-projectively generated. However, it is reasonable to say that ``standard''
Koszul-regular pairs $(\mathbb{Z} [x_1, \ldots, x_m, y_1, \ldots, y_n], (y_1,
\ldots, y_n))$ are ``finite free objects''. We pick the non-abelian derived
category of the full category spanned by these pairs, and the $1$-category of
ring-ideal pairs embeds fully faithfully into it
(\Cref{prop:infty-cat-forget-embedding}). This $\infty$-category is equivalent
to the $\infty$-category of surjections of animated rings
(\Cref{thm:ani-smith-eq}). Similarly, we apply this ``modified animation'' to
the $1$-category of PD-pairs, obtaining the {\tmdfn{$\infty$-category of
animated PD-pairs}}. The PD-envelope functor gives rise to the
{\tmdfn{animated PD-envelope}} (\Cref{def:anipair-anipdpair}): a ``good
enough'' pair of adjoint functors between $1$-projectively generated
$1$-categories give rise to a pair of adjoint functors between animations
(\Cref{cor:ani-adjoint-funs}). However, here the story is slightly more
complicated due to our ``modification'' of the animation.

In a similar fashion, we apply these animation techniques to $\delta$-pairs,
obtaining {\tmdfn{animated $\delta$-pairs}} (\Cref{def:ani-delta-pair}), and
we use similar techniques to define and analyze non-completed animated
prismatic envelopes. We also use the animation techniques to define the ``de
Rham context'' $\tmop{dRCon}$, the ``crystalline context'' $\tmop{CrysCon}$,
the derived de Rham cohomology and the derived crystalline cohomology in
\Cref{subsec:deriv-dR}.

Now we describe the second main technique that we use: the local-global
principle for $\mathbb{Z}$. Some techniques are only valid in characteristic
$p$. For example, we do not know how to define the conjugate filtration on the
derived crystalline cohomology beyond characteristic $p$. However, these
arithmetic objects, such as PD-structures, usually degenerate in
characteristic $0$. In view of these, we can usually then glue the results for
each prime number $p \in \mathbb{N}$ and the result after rationalization. The
simplest example of this technique is the following: Let $X \in \tmop{Sp}$ be
a spectrum. Suppose that the spectrum $X /^{\mathbb{L}} p$ is equivalent to
$0$ for every prime number $p \in \mathbb{N}$, and that $X$ is also
contractible after rationalization. Then the spectrum $X$ itself is
contractible. We establish similar results
(\Cref{lem:static-loc-global,lem:flat-loc-global}) under connectivity
assumptions. These results allow us to deduce integral results.

\subsection{Main definitions and constructions}In this subsection, we present
main definitions and constructions appearing in the current work, explaining
the techniques mentioned above in more details. As explained above, we
introduce the following concept:

\begin{definition}[\Cref{subsec:ani-pairs-PD-pairs}]
  The {\tmdfn{$\infty$-category of animated pairs}} is the non-abelian derived
  category of the $1$-category of ring-ideal pairs $(\mathbb{Z} [x_1, \ldots,
  x_m, y_1, \ldots, y_n], (y_1, \ldots, y_n))$. The {\tmdfn{$\infty$-category
  of animated PD-pairs}} is the non-abelian derived category of the
  $1$-category of PD-pairs $\Gamma_{\mathbb{Z} [x_1, \ldots, x_m]} (y_1,
  \ldots, y_n) \twoheadrightarrow \mathbb{Z} [x_1, \ldots, x_m]$.
\end{definition}

Here, as mentioned before, later we will write these pairs as $(\mathbb{Z} [X,
Y], (Y))$ for finite sets $X = \{ x_1, \ldots, x_m \}$ and $Y = \{ y_1,
\ldots, y_n \}$. We will systematically identify the ring-ideal pairs $(A, I)$
with the surjections $A \twoheadrightarrow A / I$ of rings. The notation
$\Gamma_R (y_1, \ldots, y_n)$ denotes the free PD-$R$-algebra\footnote{This is
sometimes denoted by $R \langle y_1, \ldots, y_n \rangle$ in the literature.
We choose our notation to avoid confusion with that of topologically free
algebras also usually denoted by $R \langle y_1, \ldots, y_n \rangle$.}
generated by $y_1, \ldots, y_n$. One can show that the $1$-category of
ring-ideal pairs (resp. PD-pairs) embeds fully faithfully into the
$\infty$-category of animated pairs (resp. animated PD-pairs), see
\Cref{prop:infty-cat-forget-embedding}. Then the first main theorem is the
following

\begin{theorem}[\Cref{thm:ani-smith-eq}]
  The left derived functor of sending the animated pair $(\mathbb{Z} [X, Y],
  (Y))$ to the surjection $\mathbb{Z} [X, Y] \twoheadrightarrow \mathbb{Z}
  [X]$ of animated rings identifies the $\infty$-category of animated pairs
  with the $\infty$-category of surjections of animated rings.
\end{theorem}

One can prove this directly. The proof presented in this article follows an
indirect approach where one first shows an ``linear analogue'' for $D
(\mathbb{Z})_{\geq 0}$ instead of $\tmop{CAlg}^{\tmop{an}}$ and then proves
that this equivalence is ``compatible with the multiplicative structure''.

As explained before, this ``modified animation'' allows us to extend
constructions for ring-ideal pairs $(\mathbb{Z} [X, Y], (Y))$ to animated
pairs (or equivalently, surjections of animated rings). In particular, the
$(Y)$-adic filtration gives rise to the {\tmdfn{adic filtration}}
(\Cref{subsec:quasiregular}), the equivalence $(Y^n) / (Y^{n + 1}) \cong
\tmop{Sym}_{\mathbb{Z} [X]} ((Y) / (Y^2))$ gives rise to its ``derived
version'', and the computation of the cotangent complex $L_{\mathbb{Z} [X]
/\mathbb{Z} [X, Y]} \simeq ((Y) / (Y^2)) [1]$ gives rise to the similar
property for arbitrary surjective maps of animated rings
(\Cref{cor:LAdFil-symm-cot-cx}). The PD-envelope functor, sending $(\mathbb{Z}
[X, Y], (Y))$ to $\Gamma_{\mathbb{Z} [X]} (Y) \twoheadrightarrow \mathbb{Z}
[X]$, gives rise to the {\tmdfn{animated PD-envelope functor}}
(\Cref{subsec:ani-pairs-PD-pairs}). Similarly, the ``modified animation''
allows us to extend the classical PD-filtration on $\Gamma_{\mathbb{Z} [X]}
(Y)$ to the {\tmdfn{PD-filtration}} on animated PD-pairs (\Cref{def:pd-fil}),
and the {\tmdfn{conjugate filtration}} on animated PD-envelopes
(\Cref{def:conj-fil-pd-env}). The result about associated graded pieces of the
PD-filtration (resp. the conjugate filtration) for the ``standard case''
$\Gamma_{\mathbb{Z} [X]} (Y) \twoheadrightarrow \mathbb{Z} [X]$ (resp.
$(\mathbb{Z} [X, Y], (Y))$) extends in a direct manner to animated PD-pairs
(resp. animated pairs), see \Cref{lem:pdfil-free-pdalg} (resp.
\Cref{cor:conjfil-gr}). Furthermore, we know how to detect whether an animated
pair (resp. animated PD-pair) is a classical ring-ideal pair (resp. PD-pair),
see \Cref{prop:characterize-pair} (resp. \Cref{prop:characterize-pdpair}).
Consequently, when the input is a quasiregular ring-ideal pair with some mild
conditions, we can deduce that the animated version coincides with the
classical version
(\Cref{cor:Fp-qreg-rel-pd-env-flat,prop:koszul-regular-ani-pd-env,prop:qreg-assoc-gr-pd}).

Based on animated pairs and animated PD-pairs, we define the {\tmdfn{derived
de Rham cohomology}} associated to a map of animated PD-pairs. More precisely,
we have

\begin{lemma}[\Cref{subsec:deriv-dR}]
  The $\infty$-category of maps of animated PD-pairs is projectively
  generated. A set of compact projective generators is given by
  $(\Gamma_{\mathbb{Z} [X]} (Y) \twoheadrightarrow \mathbb{Z} [X]) \rightarrow
  (\Gamma_{\mathbb{Z} [X, X']} (Y, Y') \twoheadrightarrow \mathbb{Z} [X,
  X'])$.
\end{lemma}

Therefore we can extend the de Rham cohomology defined on these ``standard''
maps of ``standard'' PD-pairs to arbitrary maps of animated PD-pairs. It turns
out that the derived de Rham cohomology of a map $(A \twoheadrightarrow A'',
\gamma_A) \rightarrow (B \twoheadrightarrow B'', \gamma_B)$ of animated
PD-pairs ``does not depend on $B$'' (\Cref{prop:dR-crys-inv}), which is a
generalization of the fact that the crystalline cohomology computed by the de
Rham complex does not depend on the choice of the lift. This leads to the
definition of the derived crystalline cohomology
(\Cref{def:deriv-crys-cohomol}). The Hodge-filtration and the conjugate
filtration also extends from ``standard'' maps of ``standard'' PD-pairs to
maps of animated PD-pairs (\Cref{subsec:dR-Hdg-conj-fil}). The associated
graded pieces of the conjugate filtration is determined by a crystalline
variant of the Cartier isomorphism (\Cref{prop:crys-Cartier-isom}).

Similar to the classical case, there is also a relative concept of animated
PD-envelopes (\Cref{def:rel-PD-env}), and the relative animated PD-envelope
could be reduced to the special case associated to the datum $((A
\twoheadrightarrow A'', \gamma_A), A'' \twoheadrightarrow R)$ where $(A
\twoheadrightarrow A'', \gamma_A)$ is an animated PD-pair and $A''
\twoheadrightarrow R$ is a surjection of animated rings
(\Cref{lem:rel-PD-env-crys-con}). Furthermore, we have

\begin{lemma}[\Cref{lem:cryscon-comp-proj-gen}]
  The $\infty$-category of data $((A \twoheadrightarrow A'', \gamma_A), A''
  \twoheadrightarrow R)$ is projectively generated.
\end{lemma}

Again, we can apply the techniques of non-abelian derived categories to this
category, and along with the local-global principle for $\mathbb{Z}$ mentioned
before, we show that the derived crystalline cohomology ``coincides'' with the
relative animated PD-envelope (\Cref{prop:crys-PD-env-equiv}), which
generalizes Bhatt's computation {\cite[Thm~3.27]{Bhatt2012a}} of the derived
de Rham cohomology $\tmop{dR}_{\mathbb{F}_p /\mathbb{F}_p [x]}$.

In order to compare the derived crystalline cohomology with the classical
crystalline cohomology, we introduce an animated variant of the crystalline
site (\Cref{def:aff-crys-site}). Our main tools are the Čech--Alexander
computation, the Katz--Oda filtration (\Cref{def:Katz-Oda-fil}) introduced in
{\cite{Guo2020}} and the conjugate filtration. More precisely, the Katz--Oda
filtration allows us to identify graded pieces associated to the Hodge
filtration (\Cref{lem:crys-coh-gr-desc}), and the conjugate filtration allows
us to establish a descent-type result in characteristic $p$
(\Cref{lem:crys-coh-char-p-desc}). Along with the local-global principle for
$\mathbb{Z}$ mentioned before, we prove the comparison theorem for
quasisyntomic schemes (\Cref{prop:site-comp-qsyn-integral}).

Finally, we introduce the {\tmdfn{$\infty$-category of animated
$\delta$-rings}} (\Cref{def:ani-delta-ring}), which is simply the animation of
the $1$-category of $\delta$-rings, and

\begin{definition}[\Cref{def:ani-delta-pair}]
  An {\tmdfn{animated $\delta$-pair}} is an animated $\delta$-ring $A$ along
  with a surjection $A \twoheadrightarrow A''$ of animated rings.
\end{definition}

Using similar techniques, we construct the non-complete animated prismatic
envelope (\Cref{cor:prism-env}), the conjugate filtration
(\Cref{lem:conj-fil-prism-env}) and the Hodge--Tate comparison
(\Cref{thm:Hdg-Tate}). Similar to ``animated PD-envelope being classical under
quasiregularity with some mild conditions'', we deduce a flatness result of
prismatic envelopes under quasiregularity (\Cref{prop:quasireg-prism-env}),
which is sufficient for flat cover results mentioned before
(\Cref{prop:prism-flat-cov-final-obj,prop:compl-prism-flat-cover}).

\subsection{Structure of article}Here is a Leitfaden of the article:
\Cref{sec:cat-prep} is devoted to technical preparations. We suggest the
readers skip it in the first reading. \Cref{sec:ani-pairs-pd-pairs} is devoted
to the theory of animated pairs and animated PD-pairs, and to the study of the
animated PD-envelope. \Cref{sec:deriv-crys-coh} is devoted to relative
animated PD-envelopes, derived crystalline cohomology, cohomology of the
affine crystalline site and their comparisons. \Cref{sec:prism} is devoted to
animated $\delta$-rings, animated $\delta$-pairs, ``non-complete animated
prisms'', non-completed animated prismatic envelope and a variant of the
Hodge--Tate comparison. \Cref{app:animation} is a collection of basic facts
about animations and projectively generated categories (which we suggest the
reader read first if they have not seen this concept before).

\subsection{Notations and terminology}In this article, since we often work in
the $\infty$\mbox{-}category of certain ``derived'' categories, we try to
distinguish the ``ordinary'' objects and ``derived'' objects by choosing
different words.

Given an $\infty$\mbox{-}category $\mathcal{C}$ and a diagram $Y \leftarrow X
\rightarrow Z$ in $\mathcal{C}$, the pushout of the diagram is denoted by $Y
\amalg_X Z$. In particular, if $\mathcal{C}$ admits an initial object, the
coproduct of two objects $Y, Z$ is denoted by $Y \amalg Z$.

We will denote by $\tmop{An}$ the $\infty$\mbox{-}category of (small) animæ,
that is, the simplicial nerve of the simplicial category of (small) Kan
complexes {\cite[Def~1.2.16.1]{Lurie2009}}.

We say that an anima $X \in \tmop{An}$ or a spectrum $X \in \tmop{Sp}$ is
{\tmdfn{static}}\footnote{This is usually called {\tmdfn{discrete}} in
homotopy theory. We follow Clausen--Scholze's terminology in condensed
mathematics to call them {\tmdfn{static}} to distinguish from the point-set
topological discreteness. In particular, the {\tmdfn{static}} object
$\mathbb{Z}_p$ might be equipped with the $p$-adic topology which is different
from the {\tmdfn{discrete}} topology.} if $\pi_i (X) \cong 0$ for all $i \neq
0$. For two spectra $X, Y \in \tmop{Sp}$, we will denote by $X
\otimes^{\mathbb{L}} Y$ the smash product. {\tmdfn{Rings}} are always static
and commutative, while {\tmdfn{$\mathbb{E}_n$-rings}} are
$\mathbb{E}_n$-algebras in the symmetric monoidal $\infty$\mbox{-}category
$(\tmop{Sp}, \otimes^{\mathbb{L}})$.

Given a ring $A$, we will refer to a ``classical'' $A$-module a {\tmdfn{static
$A$-module}}. The category of static $A$-modules will be denoted by
$\tmop{Mod}_A$. The category of {\tmdfn{ring-module pairs}} $(A, M)$ where $M
\in \tmop{Mod}_A$ is denoted by $\tmop{Mod}$. An object in the derived
$\infty$\mbox{-}category $D (A)$ an {\tmdfn{$A$-module spectrum}}.

Given an $\mathbb{E}_1$-ring $A$, the $\infty$\mbox{-}category of left (resp.
right) $A$-module spectra will be denoted by $\tmop{LMod}_A$ (resp.
$\tmop{RMod}_A$). Given a right $A$-module spectrum $M$ and a left $A$-module
spectrum $N$, their relative tensor product is denoted by $M
\otimes_A^{\mathbb{L}} N$, to avoid confusion with the ordinary tensor product
of static modules.

Given an $\mathbb{E}_{\infty}$-ring $A$, the $\infty$\mbox{-}category of
$A$-module spectra is denoted by $D (A)$. In particular, we have $\tmop{Sp} =
D (\mathbb{S})$. An {\tmdfn{$\mathbb{E}_n$-$A$-algebra}} is an
$\mathbb{E}_n$-algebra in the symmetric monoidal $\infty$\mbox{-}category $(D
(A), \otimes_A^{\mathbb{L}})$.

Following {\cite{Bhatt2022a}}, we will denote by $\tmop{CAlg}^{\tmop{an}}$ the
$\infty$-category of animated rings, and $\tmop{CAlg}_R^{\tmop{an}}$ the
$\infty$-category of animated $R$-algebras for an animated ring $R$.

\begin{acknowledgments*}
  The author thanks their thesis advisor Matthew {\tmname{Morrow}} for various
  suggestions and patient readings during the construction of this article
  (and more). We also thank Denis {\tmname{Nardin}} for discussions about
  $\infty$-categories and in particular, of simplicial homotopy theory in
  $\infty$-categories, and Yu {\tmname{Min}} for several discussions. We thank
  Lenny {\tmname{Taelman}} for pointing out a mistake in an early version. The
  author would also like to thank Bhargav {\tmname{Bhatt}}, Kęstutis
  {\tmname{Česnavičius}} for reading this paper, and Lukas
  {\tmname{Brantner}}, Ofer {\tmname{Gabber}}, Kirill {\tmname{Magidson}},
  Maxime {\tmname{Ramzi}}, Wouter {\tmname{Rienks}} for discussions. This
  project has received funding from the European Research Council (ERC) under
  the European Union's Horizon 2020 research and innovation programme (grant
  agreement No.~851146).
\end{acknowledgments*}

\section{Categorical preparations}\label{sec:cat-prep}

In this section, we will do some technical preparations of
$\infty$\mbox{-}categories which will be used throughout this article. We try
our best to refer to this section explicitly so that the reader could first
skip this section and read back when needed.

\subsection{Animation of adjoint functors}This subsection is devoted to
proving that animation behaves well for certain ``monadic'' pairs of adjoint
functors. Here is a general lemma.

\begin{lemma}
  \label{lem:left-deriv-fun-adjoint}Let $n \in \mathbb{N}_{> 0} \cup \{ \infty
  \}$. Let $\mathcal{C}$ be a small $n$\mbox{-}category which admits finite
  coproducts and $\mathcal{D}$ a locally small $n$\mbox{-}category which
  admits small colimits. Let $f \of \mathcal{C} \rightarrow \mathcal{D}$ be a
  functor which preserves finite coproducts. Then
  \begin{enumerate}
    \item There is a pair of adjoint functors $\mathcal{P}_{\Sigma, n}
    (\mathcal{C}) \underset{G}{\overset{F}{\longrightleftarrows}} \mathcal{D}$
    (\Cref{nota:Psigma-n}) where $F$ is the left derived functor
    (\Cref{prop:left-deriv-n-fun}) of $f$ and $G$ is the functor given by $D
    \mapsto \tmop{Map}_{\mathcal{D}} (f (\cdummy), D) \in \mathcal{P}
    (\mathcal{C})$.
    
    \item \label{pt:l2.5p2}Suppose that for all objects $C \in \mathcal{C}$,
    the object $f (C) \in \mathcal{D}$ is compact and $n$\mbox{-}projective.
    Then the functor $G$ preserves filtered colimits and geometric
    realizations. Under this assumption, if $f$ is further assumed to be fully
    faithful, then so is $F$.
    
    \item Suppose that the set $\{ f (C) \barsuchthat C \in \mathcal{C} \}
    \subseteq \mathcal{D}$ generates $\mathcal{D}$ under small colimits. Then
    the functor $G$ is conservative.
  \end{enumerate}
\end{lemma}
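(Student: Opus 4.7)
The plan is to construct $G$ directly, obtain $F$ via the universal property of $\mathcal{P}_{\Sigma,n}(\mathcal{C})$ supplied by \Cref{prop:left-deriv-fun,prop:left-deriv-n-fun}, and then prove the three assertions by reducing to representables and invoking appropriate colimit preservation.

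For Part (1), I would first verify that $G(D) \colon C \mapsto \tmop{Map}_{\mathcal{D}}(f(C), D)$ genuinely lands in $\mathcal{P}_{\Sigma,n}(\mathcal{C})$: the mapping anima in $\mathcal{D}$ are $(n-1)$-truncated because $\mathcal{D}$ is an $n$-category, and $G(D)$ sends finite coproducts in $\mathcal{C}$ to finite products in $\mathcal{S}$ because $f$ preserves finite coproducts. The functor $F$ exists as the left derived functor of $f$ by the cited universal property. The adjunction is verified on representables: Yoneda and $F(h_C) \simeq f(C)$ give
\[
\tmop{Map}_{\mathcal{P}_{\Sigma,n}(\mathcal{C})}(h_C, G(D)) \simeq G(D)(C) = \tmop{Map}_{\mathcal{D}}(f(C), D) \simeq \tmop{Map}_{\mathcal{D}}(F(h_C), D),
\]
and both sides turn colimits in the first variable into limits (for the right-hand side, because $F$ preserves colimits as a left derived functor), so the equivalence extends over the sifted-colimit closure of representables, which is all of $\mathcal{P}_{\Sigma,n}(\mathcal{C})$.

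For Part (2), compactness of $f(C)$ says that $\tmop{Map}_{\mathcal{D}}(f(C), -)$ preserves filtered colimits, and $n$-projectivity says it preserves geometric realizations; since sifted colimits in $\mathcal{P}_{\Sigma,n}(\mathcal{C})$ are pointwise, $G$ inherits both preservations. For fully faithfulness of $F$ when $f$ is fully faithful, it suffices to show the unit $\tmop{id} \to G \circ F$ is an equivalence. On a representable $h_C$ this reads
\[
G(f(C))(C') = \tmop{Map}_{\mathcal{D}}(f(C'), f(C)) \simeq \tmop{Map}_{\mathcal{C}}(C', C) = h_C(C'),
\]
which is precisely the fully faithfulness of $f$. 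Both the identity and $G \circ F$ preserve sifted colimits ($F$ as a left adjoint and $G$ by what we just proved), so agreement on representables extends to all of $\mathcal{P}_{\Sigma,n}(\mathcal{C})$.

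For Part (3), suppose $G(\varphi)$ is an equivalence for $\varphi \colon D \to D'$, so that $\tmop{Map}_{\mathcal{D}}(f(C), \varphi)$ is an equivalence for every $C \in \mathcal{C}$. By the generation hypothesis, every $E \in \mathcal{D}$ may be written as a small colimit $\operatorname*{colim}_i f(C_i)$, whence $\tmop{Map}_{\mathcal{D}}(E, \varphi) \simeq \lim_i \tmop{Map}_{\mathcal{D}}(f(C_i), \varphi)$ is an equivalence, and Yoneda gives that $\varphi$ itself is an equivalence. The main subtlety I anticipate is bookkeeping around the $n$-truncation: one must be sure that $\mathcal{P}_{\Sigma,n}(\mathcal{C})$ is genuinely generated by representables under sifted colimits at the $n$-categorical level, and that $n$-projectivity is exactly the property capturing preservation of geometric realizations in this setting; once this is ensured by \Cref{prop:left-deriv-fun,prop:left-deriv-n-fun}, the arguments above proceed uniformly across $n \in \mathbb{N}_{>0} \cup \{\infty\}$.
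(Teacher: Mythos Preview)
Your proposal is correct and covers the same ground as the paper, though with some differences in presentation worth noting.

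For Part (1), the paper takes a slightly more indirect route: it first extends $f$ to a colimit-preserving functor $\tilde{F} \of \mathcal{P}(\mathcal{C}) \to \mathcal{D}$ via the universal property of the presheaf category, obtains its right adjoint automatically from general principles, and then restricts both to $\mathcal{P}_{\Sigma}(\mathcal{C})$. Your direct verification of the adjunction on representables followed by sifted-colimit extension is equally valid and arguably more transparent, though it requires a little more care in the $\infty$-categorical setting to ensure the pointwise equivalence is genuinely natural.

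For Part (2), your argument is essentially identical to the paper's, which simply cites that sifted colimits in $\mathcal{P}_{\Sigma}(\mathcal{C})$ are computed in $\mathcal{P}(\mathcal{C})$ (hence pointwise). For the fully faithful claim the paper defers to \Cref{prop:left-deriv-full}, whose content is exactly your unit-on-representables argument.

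For Part (3), there is a minor imprecision in your phrasing: the hypothesis that $\{f(C)\}$ generates $\mathcal{D}$ under small colimits means every object lies in the closure under \emph{iterated} colimits, not that every $E$ is a single colimit $\operatorname{colim}_i f(C_i)$. The paper handles this cleanly by letting $\mathcal{D}' \subseteq \mathcal{D}$ be the full subcategory of those $D$ with $\tmop{Map}_{\mathcal{D}}(D, \varphi)$ an equivalence, observing $\mathcal{D}'$ is closed under colimits and contains each $f(C)$, hence equals $\mathcal{D}$. Your argument is easily repaired this way.
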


\begin{proof}
  We exhibit the proof for $n = \infty$. First, the functor $f \of \mathcal{C}
  \rightarrow \mathcal{D}$ extends uniquely to a functor $\tilde{F} \of
  \mathcal{P} (\mathcal{C}) \rightarrow \mathcal{D}$ which preserves small
  colimits by {\cite[Thm~5.1.5.6]{Lurie2009}}. Since $\mathcal{P}_{\Sigma}
  (\mathcal{C}) \subseteq \mathcal{P} (\mathcal{C})$ is stable under sifted
  colimits, it follows that the functor $F$ is equivalent to the composite
  $\mathcal{P}_{\Sigma} (\mathcal{C}) \hookrightarrow \mathcal{P}
  (\mathcal{C}) \xrightarrow{\tilde{F}} \mathcal{D}$. The functor $\tilde{F}$
  admits a right adjoint by {\cite[Cor~5.2.6.5]{Lurie2009}} which is
  equivalent to the composite $\mathcal{D} \xrightarrow{G}
  \mathcal{P}_{\Sigma} (\mathcal{C}) \hookrightarrow \mathcal{P}
  (\mathcal{C})$, therefore $(F, G)$ is a pair of adjoint functors.
  
  Part \ref{pt:l2.5p2} follows from the fact that $\mathcal{P}_{\Sigma}
  (\mathcal{C}) \subseteq \mathcal{P} (\mathcal{C})$ is stable under sifted
  colimits (\Cref{prop:Psigma-n}). The later statement follows from
  \Cref{prop:left-deriv-n-full}.
  
  Suppose that $\{ f (C) \barsuchthat C \in \mathcal{C} \}$ generates
  $\mathcal{D}$ under small colimits, then for any map $X \rightarrow Y$ in
  $\mathcal{D}$, if the induced map $G (X) \rightarrow G (Y)$ is an
  equivalence in $\mathcal{P}_{\Sigma} (\mathcal{C})$, then for all objects $C
  \in \mathcal{C}$, the induced map $\tmop{Map}_{\mathcal{D}} (f (C), X)
  \rightarrow \tmop{Map}_{\mathcal{D}} (f (C), Y)$ is an equivalence. Let
  $\mathcal{D}' \subseteq \mathcal{D}$ be the full subcategory spanned by
  those $D \in \mathcal{D}$ such that the induced map
  $\tmop{Map}_{\mathcal{D}} (D, X) \rightarrow \tmop{Map}_{\mathcal{D}} (D,
  Y)$ is an equivalence. Then $\mathcal{D}'$ is stable under colimits, and $f
  (C) \in \mathcal{D}'$ for all $C \in \mathcal{C}$. The result follows.
\end{proof}

It then follows from \Cref{lem:nonab-deriv-cat-n-proj-gen} and
\Cref{cor:proj-gen-n-sifted} that

\begin{corollary}
  \label{cor:nonab-deriv-cat-adjoint-fun}Let $\mathcal{C}, \mathcal{D}$ be two
  small $n$\mbox{-}categories which admit finite coproducts and $f \of
  \mathcal{C} \rightarrow \mathcal{D}$ a functor which preserves finite
  coproducts. Then
  \begin{enumerate}
    \item There is a pair of adjoint functors $\mathcal{P}_{\Sigma, n}
    (\mathcal{C}) \underset{G}{\overset{F}{\longrightleftarrows}}
    \mathcal{P}_{\Sigma, n} (\mathcal{D})$ where $G$ is given by
    $\mathcal{P}_{\Sigma, n} (\mathcal{D}) \ni H \mapsto H \circ F \in
    \mathcal{P}_{\Sigma, n} (\mathcal{C})$ and $F$ is the left derived functor
    of the composite functor $\mathcal{C} \xrightarrow{f} \mathcal{D}
    \hookrightarrow \mathcal{P}_{\Sigma, n} (\mathcal{D})$.
    
    \item The functor $G$ preserves sifted colimits, and the canonical map
    $\tau_{\leq m} \circ G \rightarrow G \circ \tau_{\leq m}$ of functors is
    an equivalence for all $m \in \mathbb{N}$ (cf.
    {\cite[Rem~5.5.8.26]{Lurie2009}} and the discussion before
    \Cref{lem:ani-trunc}).
    
    \item If $f$ is fully faithful, then so is the functor $F$.
    
    \item If $f$ is essentially surjective, then the functor $G$ is
    conservative.
  \end{enumerate}
\end{corollary}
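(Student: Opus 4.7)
The plan is to reduce \Cref{cor:nonab-deriv-cat-adjoint-fun} to \Cref{lem:left-deriv-fun-adjoint}, applied to the composite $g \assign j \circ f \of \mathcal{C} \rightarrow \mathcal{P}_{\Sigma, n}(\mathcal{D})$, where $j \of \mathcal{D} \hookrightarrow \mathcal{P}_{\Sigma, n}(\mathcal{D})$ denotes the (restricted) Yoneda embedding. The prerequisites of \Cref{lem:left-deriv-fun-adjoint} to verify are that $\mathcal{P}_{\Sigma, n}(\mathcal{D})$ is a locally small $n$-category admitting small colimits (true by construction of $\mathcal{P}_{\Sigma, n}$), and that $g$ preserves finite coproducts. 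The latter reduces to showing that $j$ preserves finite coproducts, a standard property of $\mathcal{P}_{\Sigma, n}(\mathcal{D})$ as the free $n$-sifted cocompletion of $\mathcal{D}$. \Cref{lem:left-deriv-fun-adjoint} then produces the adjoint pair $(F, G)$ with $F$ the left derived functor of $g$ and $G(H) = \tmop{Map}_{\mathcal{P}_{\Sigma, n}(\mathcal{D})}(g(\cdummy), H)$, which by the Yoneda lemma coincides with $H \circ f^{op}$ (the statement's ``$H \circ F$'' is shorthand for this once $\mathcal{C}$ is identified with its Yoneda image in $\mathcal{P}_{\Sigma, n}(\mathcal{C})$); this gives item~(1).

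For item~(2), \Cref{lem:nonab-deriv-cat-n-proj-gen} guarantees that the objects of $\mathcal{D} \subseteq \mathcal{P}_{\Sigma, n}(\mathcal{D})$ are compact and $n$-projective, so the image $g(\mathcal{C})$ consists of such objects, and \Cref{pt:l2.5p2} of \Cref{lem:left-deriv-fun-adjoint} yields that $G$ preserves filtered colimits and geometric realizations, hence all sifted colimits. The compatibility $\tau_{\leq m} \circ G \simeq G \circ \tau_{\leq m}$ can then be extracted from the explicit restriction formula for $G$ together with \Cref{cor:proj-gen-n-sifted}, since truncation in a projectively generated $n$-category is detected pointwise on the compact $n$-projective generators.

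Items~(3) and~(4) are formal consequences of the analogous clauses of \Cref{lem:left-deriv-fun-adjoint} applied to $g$. If $f$ is fully faithful, then so is $g = j \circ f$ (the Yoneda embedding being fully faithful), whence fully faithfulness of $F$. If $f$ is essentially surjective, then $g(\mathcal{C})$ generates $\mathcal{P}_{\Sigma, n}(\mathcal{D})$ under small colimits --- since $\mathcal{D}$ itself does, by \Cref{lem:nonab-deriv-cat-n-proj-gen} --- so $G$ is conservative. I expect the main obstacle to be purely bookkeeping: cleanly identifying the abstract mapping-space formula for $G$ with the restriction functor $H \mapsto H \circ f^{op}$ used in the corollary, and verifying that the Yoneda embedding into $\mathcal{P}_{\Sigma, n}$ (rather than into $\mathcal{P}$) preserves finite coproducts and lands in the compact $n$-projective generators, so that the hypotheses of \Cref{lem:left-deriv-fun-adjoint} are genuinely met at the $n$-categorical level.
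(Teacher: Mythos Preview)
Your proposal is correct and follows exactly the route the paper indicates: the paper states the corollary as an immediate consequence of \Cref{lem:left-deriv-fun-adjoint} together with \Cref{lem:nonab-deriv-cat-n-proj-gen} and \Cref{cor:proj-gen-n-sifted}, and you have spelled out precisely how those three ingredients combine. Your identification of the abstract right adjoint with precomposition by $f$ and your handling of the truncation compatibility are the only points the paper leaves implicit, and you treat them correctly.
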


Now we apply this to animations:

\begin{corollary}
  \label{cor:ani-adjoint-funs}Let $\mathcal{C}
  \underset{G}{\overset{F}{\longrightleftarrows}} \mathcal{D}$ be a pair of
  adjoint functors between $1$\mbox{-}categories such that
  \begin{enumerate}
    \item The $1$\mbox{-}category $\mathcal{D}$ admits filtered colimits and
    reflexive coequalizers (or equivalently, geometric realizations, by
    \Cref{rem:n-geom-real}), and $G$ preserves filtered colimits and reflexive
    coequalizers.
    
    \item The $1$\mbox{-}category $\mathcal{C}$ is projectively generated.
    
    \item The functor $G$ is conservative.
  \end{enumerate}
  Then $\mathcal{D}$ is $1$\mbox{-}projectively generated, and we have a pair
  $\tmop{Ani} (\mathcal{C}) \underset{\tmop{Ani} (G)}{\overset{\tmop{Ani}
  (F)}{\longrightleftarrows}} \tmop{Ani} (\mathcal{D})$ of adjoint functors
  between $\infty$\mbox{-}categories after animation. Furthermore, the functor
  $\tmop{Ani} (G)$ is conservative, preserves sifted colimits, and the
  canonical map $\tau_{\leq 0} \circ \tmop{Ani} (G) \rightarrow G \circ
  \tau_{\leq 0}$ of functors is an equivalence. If $G$ preserves small
  colimits, then so does $\tmop{Ani} (G)$.
\end{corollary}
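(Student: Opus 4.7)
The plan is to reduce the statement to \Cref{cor:nonab-deriv-cat-adjoint-fun} after restricting $F$ to full subcategories of compact $1$-projective generators. The first task is to verify that $\mathcal{D}$ itself is $1$-projectively generated, and for this I would transport compact $1$-projective generators from $\mathcal{C}$ across $F$.

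Fix a small full subcategory $\mathcal{C}^0 \subseteq \mathcal{C}$ of compact $1$-projective generators closed under finite coproducts, so that $\mathcal{C} \simeq \mathcal{P}_{\Sigma, 1} (\mathcal{C}^0)$ and $\tmop{Ani} (\mathcal{C}) \simeq \mathcal{P}_{\Sigma} (\mathcal{C}^0)$, and let $\mathcal{D}^0 \subseteq \mathcal{D}$ be the full subcategory spanned by the essential image of $F |_{\mathcal{C}^0}$; it is closed under finite coproducts because $F$ is a left adjoint. For any $P \in \mathcal{C}^0$, the adjunction isomorphism $\tmop{Hom}_{\mathcal{D}} (F (P), -) \simeq \tmop{Hom}_{\mathcal{C}} (P, G (-))$, combined with the hypotheses that $G$ preserves filtered colimits and reflexive coequalizers, forces $F (P)$ to be both compact and $1$-projective in $\mathcal{D}$. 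Generation of $\mathcal{D}$ by $\mathcal{D}^0$ under sifted colimits then follows from Beck monadicity: since $G$ is conservative, has left adjoint $F$, and preserves reflexive coequalizers, every $D \in \mathcal{D}$ is the reflexive coequalizer of the canonical diagram $FGFG (D) \rightrightarrows FG (D)$. Both $G (D)$ and $GFG (D)$ are canonically sifted colimits of objects of $\mathcal{C}^0$ inside $\mathcal{C}$, and since $F$ preserves sifted colimits as a left adjoint, both $FG (D)$ and $FGFG (D)$ are sifted colimits of objects of $\mathcal{D}^0$. A reflexive coequalizer is itself a sifted colimit, so $D$ lies in the sifted-colimit closure of $\mathcal{D}^0$. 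Hence $\mathcal{D} \simeq \mathcal{P}_{\Sigma, 1} (\mathcal{D}^0)$ is $1$-projectively generated and $\tmop{Ani} (\mathcal{D}) \simeq \mathcal{P}_{\Sigma} (\mathcal{D}^0)$.

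Once $\mathcal{D}$ is $1$-projectively generated, I would apply \Cref{cor:nonab-deriv-cat-adjoint-fun} to the restriction $F |_{\mathcal{C}^0} \of \mathcal{C}^0 \rightarrow \mathcal{D}^0$, which preserves finite coproducts and is essentially surjective by construction. This directly produces the adjoint pair $\tmop{Ani} (\mathcal{C}) \rightleftarrows \tmop{Ani} (\mathcal{D})$ with the right adjoint conservative, preserving sifted colimits, and compatible with $\tau_{\leq 0}$. It remains to identify this right adjoint with the expected $\tmop{Ani} (G)$ extending $G$: on static $D \in \mathcal{D}$ its value at $P \in \mathcal{C}^0$ is computed by $\tmop{Map}_{\mathcal{D}} (F (P), D) \simeq \tmop{Map}_{\mathcal{C}} (P, G (D))$, which recovers $G (D)$ viewed as an object of $\mathcal{P}_{\Sigma, 1} (\mathcal{C}^0) \simeq \mathcal{C}$, confirming the equivalence $\tau_{\leq 0} \circ \tmop{Ani} (G) \simeq G \circ \tau_{\leq 0}$.

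Finally, assuming $G$ preserves all small colimits, to upgrade $\tmop{Ani} (G)$ to preserve all small colimits it suffices to verify preservation of finite coproducts. Every object in $\tmop{Ani} (\mathcal{D})$ is a sifted colimit of objects in $\mathcal{D}^0 \subseteq \mathcal{D}$; since sifted colimits commute with finite coproducts in both $\tmop{Ani} (\mathcal{D})$ and $\tmop{Ani} (\mathcal{C})$, and $\tmop{Ani} (G)$ preserves sifted colimits, this reduces to verifying preservation of finite coproducts on $\mathcal{D}^0$, where $\tmop{Ani} (G)$ agrees with $G$ by the $\tau_{\leq 0}$-compatibility, and the hypothesis applies. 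The main obstacle will be step one: the proof that $\mathcal{D}$ is $1$-projectively generated, for which the invocation of Beck monadicity together with careful tracking of the bar resolution through sifted colimits of generators is the essential ingredient. Once that is in hand, the remaining assertions follow formally from \Cref{cor:nonab-deriv-cat-adjoint-fun}.
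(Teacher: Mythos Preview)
Your proposal is correct and follows essentially the same route as the paper: establish that $\mathcal{D}$ is $1$-projectively generated with generators $F(\mathcal{C}^0)$, then apply \Cref{cor:nonab-deriv-cat-adjoint-fun} to the induced $f\colon \mathcal{C}^0\to\mathcal{D}^0$, and finally identify the resulting right adjoint with $\tmop{Ani}(G)$ by matching on $\mathcal{D}^0$ and using sifted-colimit preservation. The only presentational differences are that the paper cites \Cref{prop:adjoint-n-proj-gen} for the first step and \Cref{cor:ani-preserve-colim} for the small-colimit claim, whereas you unfold these directly; also, the paper makes the identification $G'\simeq\tmop{Ani}(G)$ slightly more explicit by invoking \Cref{prop:left-deriv-fun} after checking agreement on $\mathcal{D}^0$, a step you leave implicit.
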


\begin{proof}
  It follows from \Cref{prop:adjoint-n-proj-gen} that the $1$\mbox{-}category
  $\mathcal{D}$ is $1$\mbox{-}projectively generated, therefore $\mathcal{C},
  \mathcal{D}$ admit small colimits which are preserved by $F$. Furthermore,
  let $\mathcal{C}^0 \subseteq \mathcal{C}$ be the full subcategory spanned by
  finite coproducts of a chosen set of compact $1$\mbox{-}projective
  generators for $\mathcal{C}$, and $\mathcal{D}^0 \subseteq \mathcal{D}$ the
  full subcategory spanned by the images of objects of $\mathcal{C}$ under
  $F$, then there are equivalences $\mathcal{C} \simeq \mathcal{P}_{\Sigma, 1}
  (\mathcal{C}^0)$ and $\mathcal{D} \simeq \mathcal{P}_{\Sigma, 1}
  (\mathcal{D}^0)$ of $1$\mbox{-}categories by
  \Cref{prop:struct-n-proj-gen-cats} (note that $F$ preserves finite
  coproducts).
  
  Let $f \of \mathcal{C}^0 \rightarrow \mathcal{D}^0$ be the functor induced
  by $F$, which preserves finite coproducts and is essentially surjective. It
  follows from \Cref{cor:nonab-deriv-cat-adjoint-fun} with $n = 1$ and the
  uniqueness of the right adjoint functor that the functor $G \of \mathcal{D}
  \rightarrow \mathcal{C}$ is equivalent to $\mathcal{P}_{\Sigma, 1}
  (\mathcal{D}^0) \rightarrow \mathcal{P}_{\Sigma, 1} (\mathcal{C}^0), H
  \mapsto H \circ f$.
  
  We invoke again \Cref{cor:nonab-deriv-cat-adjoint-fun} with $n = \infty$ to
  obtain a pair of adjoint functors $\mathcal{P}_{\Sigma} (\mathcal{C}^0)
  \rightleftarrows \mathcal{P}_{\Sigma} (\mathcal{D}^0)$ induced by $f$. It
  follows from the definitions that $\tmop{Ani} (\mathcal{C}) \simeq
  \mathcal{P}_{\Sigma} (\mathcal{C}^0)$, $\tmop{Ani} (\mathcal{D}) \simeq
  \mathcal{P}_{\Sigma} (\mathcal{D}^0)$ and that the functor
  $\mathcal{P}_{\Sigma} (\mathcal{C}^0) \rightarrow \mathcal{P}_{\Sigma}
  (\mathcal{D}^0)$ obtained above is equivalent to $\tmop{Ani} (F)$. Let $G'
  \of \tmop{Ani} (\mathcal{D}) \rightarrow \tmop{Ani} (\mathcal{C})$ be the
  right adjoint to $\tmop{Ani} (F)$. Since $f$ is essentially surjective, $G'$
  is conservative. It remains to show that $G'$ is equivalent to $\tmop{Ani}
  (G)$.
  
  Indeed, both $G'$ and $\tmop{Ani} (G)$ preserve sifted colimits. Since the
  functor $G \of \mathcal{D} \rightarrow \mathcal{C}$ is equivalent to
  $\mathcal{P}_{\Sigma, 1} (\mathcal{D}^0) \rightarrow \mathcal{P}_{\Sigma, 1}
  (\mathcal{C}^0), H \mapsto H \circ f$, the restrictions of $G'$ and
  $\tmop{Ani} (G)$ to the full subcategory $\mathcal{D}^0 \subseteq \tmop{Ani}
  (\mathcal{D})$ are equivalent. It then follows from
  \Cref{prop:left-deriv-n-fun} that $G'$ and $\tmop{Ani} (G)$ are equivalent.
  The colimit preserving properties follow from \Cref{cor:ani-preserve-colim}.
\end{proof}

Now we look at two simple examples:

\begin{example}
  Let $R \rightarrow S$ be a map of rings. Then there is a pair $\tmop{Mod}_R
  \overset{\cdummy \otimes_R S}{\longrightleftarrows} \tmop{Mod}_S$ of adjoint
  functors between the categories of static modules. Since the forgetful
  functor $\tmop{Mod}_S \rightarrow \tmop{Mod}_R$ is conservative, and
  preserves small colimits, we have the pair of adjoint functors $\tmop{Ani}
  (\tmop{Mod}_R) \overset{\tmop{Ani} (\cdummy \otimes_R
  S)}{\longrightleftarrows} \tmop{Ani} (\tmop{Mod}_S)$. Under the equivalences
  $\tmop{Ani} (\tmop{Mod}_R) \simeq D_{\geq 0} (R)$ and $\tmop{Ani}
  (\tmop{Mod}_S) \simeq D_{\geq 0} (S)$, the functor $\tmop{Ani} (\cdummy
  \otimes_R S)$ is equivalent to the functor $\cdummy \otimes_R^{\mathbb{L}}
  S$.
\end{example}

\begin{example}
  Let $\tmop{Ring}$ be the $1$\mbox{-}category of rings and $\tmop{Ab}$ the
  $1$\mbox{-}category of abelian groups. Then we have a pair $\tmop{Ab}
  \overset{\tmop{Sym}_{\mathbb{Z}}}{\longrightleftarrows} \tmop{Ring}$ of
  adjoint functors. Since the forgetful functor $\tmop{Ring} \rightarrow
  \tmop{Ab}$ is conservative, and preserves filtered colimits and reflexive
  coequalizers, we get a pair $D_{\geq 0} (\mathbb{Z}) \overset{\mathbb{L}
  \tmop{Sym}_{\mathbb{Z}}}{\longrightleftarrows} \tmop{CAlg}^{\tmop{an}}$ of
  adjoint functors.
\end{example}

In \Cref{cor:ani-adjoint-funs}, the functor $G$ (resp. $\tmop{Ani} (G)$)
exhibits $\mathcal{D}$ (resp. $\tmop{Ani} (\mathcal{D})$) as monadic over
$\mathcal{C}$ (resp. $\tmop{Ani} (\mathcal{C})$). The associated endomorphism
monad is given by $G \circ F$ (resp. $\tmop{Ani} (G) \circ \tmop{Ani} (F)
\simeq \tmop{Ani} (G \circ F)$ by \Cref{prop:ani-composite}).

\begin{lemma}
  Let $\mathcal{C} \underset{G}{\overset{F}{\longrightleftarrows}}
  \mathcal{D}$ be a pair of adjoint functors between
  $\infty$\mbox{-}categories. Let $K$ be a small simplicial set. Then $G \circ
  F$ preserves $K$-indexed colimits if $G$ preserves $K$-indexed colimits. The
  converse is true if $G$ exhibits $\mathcal{D}$ as monadic over
  $\mathcal{C}$.
\end{lemma}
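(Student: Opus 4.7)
The forward direction is tautological. As a left adjoint, $F$ preserves every small colimit, in particular every $K$-indexed one; so if $G$ preserves $K$-indexed colimits, the composite $G \circ F$ does too, by pasting.

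For the converse, assume $G$ exhibits $\mathcal{D}$ as monadic over $\mathcal{C}$. By the $\infty$-categorical Barr-Beck theorem, the adjunction $(F,G)$ identifies $\mathcal{D}$ with $\mathrm{LMod}_T(\mathcal{C})$ for the monad $T := G \circ F$ on $\mathcal{C}$, and under this identification $G$ corresponds to the forgetful functor $U \colon \mathrm{LMod}_T(\mathcal{C}) \to \mathcal{C}$. The claim thus reduces to showing that $U$ preserves $K$-indexed colimits whenever $T$ does.

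Given a diagram $d \colon K \to \mathrm{LMod}_T(\mathcal{C})$, set $X := \mathrm{colim}_K (U \circ d)$ in $\mathcal{C}$. The module structure maps $T(Ud(k)) \to Ud(k)$ are natural in $k \in K$; taking the colimit over $K$ and using the hypothesis that $T$ preserves $K$-indexed colimits (so that $\mathrm{colim}_K T(Ud) \simeq T(X)$), we extract a candidate action $T(X) \to X$. Verifying that this is a genuine $T$-module structure requires the analogous statements for each $d(k)$ combined with the preservation of $K$-indexed colimits by $T^2 = T \circ T$ (which follows from the preservation by $T$, by the forward implication already proved). Finally, the universal property of $X$ as a colimit in $\mathcal{C}$ upgrades to a universal property in $\mathrm{LMod}_T(\mathcal{C})$: any cocone $(d(k) \to M)_{k \in K}$ of $T$-modules produces a unique map $X \to UM$ in $\mathcal{C}$, and compatibility with the $T$-action is automatic from naturality. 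Hence $X$ is the colimit of $d$ in $\mathrm{LMod}_T(\mathcal{C})$ and $U$ preserves it.

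The main obstacle is not conceptual but technical: the ``pointwise'' construction of the $T$-module structure on $X$ sketched above must be implemented with all higher coherences in the $\infty$-categorical setting. This is precisely what the machinery of module $\infty$-categories is designed to handle, and in practice the statement is a formal consequence of Lurie's construction of $\mathrm{LMod}_T(\mathcal{C})$ in \emph{Higher Algebra}, which computes colimits of $T$-modules as colimits of underlying objects whenever $T$ preserves them.
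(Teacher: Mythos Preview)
Your proof is correct and follows the same approach as the paper: both invoke monadicity to identify $\mathcal{D}$ with $\mathrm{LMod}_T(\mathcal{C})$ and then use that the forgetful functor preserves $K$-indexed colimits whenever $T$ does. The paper simply cites \cite[Cor~4.2.3.5]{Lurie2017} for this last step, whereas you sketch the underlying argument before pointing to the same source.
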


\begin{proof}
  If $G$ preserves $K$-indexed colimits, since $F$ is a left adjoint, it
  follows that so does $T \assign G \circ F$. Conversely, if $G$ exhibits
  $\mathcal{D}$ as monadic over $\mathcal{C}$, then $\mathcal{D} \simeq
  \tmop{LMod}_T (\mathcal{C})$ and the result follows from
  {\cite[Cor~4.2.3.5]{Lurie2017}}.
\end{proof}

\subsection{Diagram categories and undercategories}In this subsection, we will
show that diagram $n$\mbox{-}categories and undercategories of
$n$\mbox{-}projectively generated categories are $n$\mbox{-}projectively
generated, for which we give an explicit choice of $n$\mbox{-}projective
generators. We first show the version for $\infty$\mbox{-}categories, then
list the analogues for $n$\mbox{-}categories for which the proof is nearly
verbatim. We start with diagram categories.

\begin{lemma}
  \label{lem:prod-proj-gen}Let $(\mathcal{C}_{\alpha})_{\alpha \in T}$ be a
  small collection of projectively generated $\infty$\mbox{-}category. Then
  the $\infty$\mbox{-}category $\prod_{\alpha \in T} \mathcal{C}_{\alpha}$ is
  projectively generated. More precisely, let $1_{\alpha}$ denote the initial
  objects of $\mathcal{C}_{\alpha}$. If the collections $S_{\alpha} \subseteq
  \mathcal{C}_{\alpha}$ of objects are sets of compact projective generators
  for $\mathcal{C}_{\alpha}$, then the collection $\{ i_{s, \beta}
  \barsuchthat s \in S_{\beta}, \beta \in T \} \subseteq \prod_{\alpha \in T}
  \mathcal{C}_{\alpha}$ is a set of compact projective generators for
  $\prod_{\alpha \in T} \mathcal{C}_{\alpha}$, where $i_{s, \beta} \in
  \prod_{\alpha \in T} \mathcal{C}_{\alpha}$ is given by $\left(
  \left\{\begin{array}{ll}
    s & \beta' = \beta\\
    1_{\beta'} & \beta' \neq \beta
  \end{array}\right. \right)_{\beta' \in T}$.
\end{lemma}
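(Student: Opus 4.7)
The plan is to reduce every property we need to verify about the generators $i_{s,\beta}$ to the corresponding property of $s \in \mathcal{C}_\beta$, by using two elementary facts about the product $\infty$-category: small colimits in $\prod_{\alpha \in T} \mathcal{C}_\alpha$ are computed componentwise, and the mapping space splits as $\mathrm{Map}_{\prod \mathcal{C}_\alpha}((X_\alpha), (Y_\alpha)) \simeq \prod_\alpha \mathrm{Map}_{\mathcal{C}_\alpha}(X_\alpha, Y_\alpha)$. The first step is to observe that for any object $(X_\alpha) \in \prod_\alpha \mathcal{C}_\alpha$, the mapping space
\[
\mathrm{Map}\bigl(i_{s, \beta},\, (X_\alpha)\bigr) \simeq \mathrm{Map}_{\mathcal{C}_\beta}(s, X_\beta) \times \prod_{\alpha \neq \beta} \mathrm{Map}_{\mathcal{C}_\alpha}(1_\alpha, X_\alpha) \simeq \mathrm{Map}_{\mathcal{C}_\beta}(s, X_\beta),
\]
where the second equivalence uses that $1_\alpha$ is initial in $\mathcal{C}_\alpha$ (so each factor with $\alpha \neq \beta$ is contractible). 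Thus the functor $\mathrm{Map}(i_{s,\beta}, -)$ factors as the projection $\pi_\beta \colon \prod_\alpha \mathcal{C}_\alpha \to \mathcal{C}_\beta$ followed by $\mathrm{Map}_{\mathcal{C}_\beta}(s, -)$.

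Next, I would exploit that $\pi_\beta$ preserves all small colimits (since colimits in the product are componentwise) and that $\mathrm{Map}_{\mathcal{C}_\beta}(s, -)$ preserves filtered colimits and geometric realizations (since $s$ is a compact projective object of $\mathcal{C}_\beta$). Composing, $\mathrm{Map}(i_{s,\beta}, -)$ preserves filtered colimits and geometric realizations, which gives the compactness and projectivity of $i_{s,\beta}$ in $\prod_\alpha \mathcal{C}_\alpha$.

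It remains to check that the collection $\{ i_{s,\beta} \mid s \in S_\beta, \beta \in T \}$ generates $\prod_\alpha \mathcal{C}_\alpha$ under small colimits; by a standard criterion it suffices to show it detects equivalences. Given a map $\varphi \colon (X_\alpha) \to (Y_\alpha)$ such that $\mathrm{Map}(i_{s,\beta}, \varphi)$ is an equivalence for every $s \in S_\beta$ and every $\beta \in T$, the factorization above gives that $\mathrm{Map}_{\mathcal{C}_\beta}(s, X_\beta) \to \mathrm{Map}_{\mathcal{C}_\beta}(s, Y_\beta)$ is an equivalence for every $s \in S_\beta$. Since $S_\beta$ is a set of compact projective generators for $\mathcal{C}_\beta$, this forces $\pi_\beta(\varphi) \colon X_\beta \to Y_\beta$ to be an equivalence in $\mathcal{C}_\beta$ for every $\beta$, hence $\varphi$ is an equivalence in $\prod_\alpha \mathcal{C}_\alpha$.

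I do not expect any real obstacle: the only mildly non-formal ingredient is the collapsing of the product of mapping spaces at non-$\beta$ coordinates, which is exactly the reason for defining $i_{s,\beta}$ using the initial object $1_{\alpha}$ in those slots. If the size of the indexing set $T$ were a concern for ensuring that the generating collection is still a (small) set, one should note that we assumed $T$ to be small and each $S_\alpha$ to be a set, so the disjoint union over $\alpha \in T$ of the $S_\alpha$ remains small.
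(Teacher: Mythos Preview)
Your proof is correct, and your mapping-space computation $\tmop{Map}(i_{s,\beta},-)\simeq \tmop{Map}_{\mathcal{C}_\beta}(s,-)\circ\pi_\beta$ makes the compact projectivity of $i_{s,\beta}$ explicit, which the paper leaves to the reader. The genuine difference is in the generation step. The paper argues constructively: it introduces the colimit-preserving ``skyscraper'' embedding $j_\beta\colon\mathcal{C}_\beta\hookrightarrow\prod_\alpha\mathcal{C}_\alpha$ (so the colimit closure of $\{i_{s,\beta}\}$ contains each $j_\beta(\mathcal{C}_\beta)$, since $S_\beta$ generates $\mathcal{C}_\beta$), and then observes that any object $(F_\alpha)_\alpha$ is the coproduct $\coprod_{\beta\in T} j_\beta(F_\beta)$, computed componentwise. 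You instead invoke the criterion ``detects equivalences $\Rightarrow$ generates under colimits''. This is valid here, but it is worth noting that the justification uses the compact projectivity you already established: letting $\mathcal{C}^0$ be the finite-coproduct closure of $\{i_{s,\beta}\}$, the left derived functor $F\colon\mathcal{P}_\Sigma(\mathcal{C}^0)\to\prod_\alpha\mathcal{C}_\alpha$ is fully faithful by \Cref{prop:left-deriv-full}, its right adjoint $G$ is conservative by your detection argument, and then the triangle identity $G(\epsilon_X)\circ\eta_{G(X)}=\tmop{id}$ forces the counit $\epsilon_X$ to be an equivalence. The paper's route is more self-contained; yours is more structural and reusable.
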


\begin{proof}
  Since the small colimits in $\prod \mathcal{C}_{\alpha}$ are computed
  pointwise, it follows that $\prod \mathcal{C}_{\alpha}$ is cocomplete. Now
  given $S_{\alpha}$ and $i_{s, \beta}$, let $\mathcal{D} \subseteq \prod
  \mathcal{C}_{\alpha}$ be the full subcategory generated by $\{ i_{s, \beta}
  \}$ under colimits. For all $\beta \in T$, the fully faithful embedding
  $j_{\beta} \of \mathcal{C}_{\beta} \rightarrow \prod C_{\alpha}$ given by $C
  \mapsto \left( \left\{\begin{array}{ll}
    C & \beta' = \beta\\
    1_{\beta'} & \beta' \neq \beta
  \end{array}\right. \right)_{\beta' \in T}$ preserves small colimits, and
  $j_{\beta} (s) = i_{s, t}$. Thus the ``skyscraper'' functor $j_{\beta} (C)$
  is an object of $\mathcal{D}$ for $C \in \mathcal{C}_{\beta}$.
  
  Finally, we can write any object $F \in \prod \mathcal{C}_{\alpha}$ as a
  small colimit $\tmop{colim}_{\beta \in T} j_{\beta} (F_{\beta})$, therefore
  $\mathcal{D}= \prod \mathcal{C}_{\alpha}$.
\end{proof}

Now let $\mathcal{C}$ be a cocomplete $\infty$\mbox{-}category, $K \in
\tmop{Set}_{\Delta}$ a small simplicial set and $K_0 \subseteq K$ the set of
vertices. Then we have a pair of adjoint functors $\tmop{Fun} (K_0,
\mathcal{C}) \underset{(K_0 \rightarrow K)^{\ast}}{\overset{\tmop{Lan}_{K_0
\rightarrow K}}{\longrightleftarrows}} \tmop{Fun} (K, \mathcal{C})$ where
$\tmop{Lan}_{K_0 \hookrightarrow K}$ is the functor of left Kan extension
along the map $K_0 \rightarrow K$, and $(K_0 \rightarrow K)^{\ast}$ denotes
the restriction along $K_0 \rightarrow K$.

\begin{warning}
  In an early draft, we called $K_0 \rightarrow K$ an ``inclusion''. However,
  any map of simplicial sets is equivalent to a cofibration up to a trivial
  fibration in Joyal model structure. That is to say, the concept of
  ``non-full subcategory'' is not model-independent. We decided to suppress
  such model-dependent expressions.
\end{warning}

It then follows from \Cref{prop:adjoint-n-proj-gen} and
\Cref{lem:prod-proj-gen} that

\begin{corollary}
  \label{cor:fun-cat-proj-gen}Let $\mathcal{C}$ be a projectively generated
  $\infty$\mbox{-}category and $K \in \tmop{Set}_{\Delta}$ a small simplicial
  set. Then the $\infty$\mbox{-}category $\tmop{Fun} (K, \mathcal{C})$ of
  functors is projectively generated.
\end{corollary}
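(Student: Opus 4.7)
The plan is to combine the two cited results in a two-step reduction. First I would note that $\mathrm{Fun}(K_0,\mathcal{C})$ is, because $K_0$ is a discrete simplicial set (just the set of vertices of $K$), canonically equivalent to the product $\prod_{v \in K_0} \mathcal{C}$, which is a small product of projectively generated $\infty$-categories. By \Cref{lem:prod-proj-gen}, this product is therefore projectively generated, and an explicit set of compact projective generators is obtained by placing compact projective generators of $\mathcal{C}$ at each vertex coordinate and the initial object elsewhere.

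Next I would transport this projective generation across the adjunction
$$\mathrm{Fun}(K_0,\mathcal{C}) \underset{(K_0 \to K)^{\ast}}{\overset{\mathrm{Lan}_{K_0 \to K}}{\longrightleftarrows}} \mathrm{Fun}(K,\mathcal{C})$$
using \Cref{prop:adjoint-proj-gen}. The hypotheses I need to check on the right adjoint $(K_0 \to K)^{\ast}$ (restriction along the inclusion of vertices) are that it is conservative and that it preserves sifted colimits (equivalently, filtered colimits and geometric realizations). Both follow immediately from the fact that colimits and equivalences in the functor $\infty$-categories $\mathrm{Fun}(K,\mathcal{C})$ and $\mathrm{Fun}(K_0,\mathcal{C})$ are computed pointwise: a natural transformation $F \to G$ in $\mathrm{Fun}(K,\mathcal{C})$ is an equivalence iff it is a vertexwise equivalence, and restriction along $K_0 \to K$ just reads off the vertex values.

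Applying \Cref{prop:adjoint-proj-gen} then yields that $\mathrm{Fun}(K,\mathcal{C})$ is projectively generated, with compact projective generators given by $\mathrm{Lan}_{K_0 \to K}$ applied to the generators of $\mathrm{Fun}(K_0,\mathcal{C})$ produced in the first step. I do not anticipate a serious obstacle: the only point where one must be slightly careful is the identification $\mathrm{Fun}(K_0,\mathcal{C}) \simeq \prod_{v \in K_0}\mathcal{C}$ and the verification that pointwise computation of colimits/equivalences really gives conservativity and colimit preservation of $(K_0 \to K)^{\ast}$; everything else is a direct invocation of the two named results.
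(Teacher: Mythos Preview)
Your proposal is correct and follows exactly the same approach as the paper: apply \Cref{lem:prod-proj-gen} to see that $\mathrm{Fun}(K_0,\mathcal{C})\simeq\prod_{v\in K_0}\mathcal{C}$ is projectively generated, then transport this along the $\mathrm{Lan}_{K_0\to K}\dashv (K_0\to K)^{\ast}$ adjunction via \Cref{prop:adjoint-proj-gen}, using that restriction is conservative and preserves sifted colimits because these are computed pointwise. The paper's proof is just the phrase ``It then follows from \Cref{prop:adjoint-proj-gen} and \Cref{lem:prod-proj-gen}'', and you have simply spelled out the details.
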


Next, we study undercategories.

\begin{lemma}
  \label{lem:undercat-proj-gen}Let $\mathcal{C}$ be a projectively generated
  $\infty$\mbox{-}category and $Z \in \mathcal{C}$ an object. Then the
  undercategory $\mathcal{C}_{Z \mathord{/}}$ is projectively generated. More
  precisely, letting $S \subseteq \mathcal{C}$ be a set of projective
  generators for $\mathcal{C}$, then the set $\{ Z \rightarrow X \amalg Z
  \barsuchthat X \in S \}$ is a set of compact projective generators for the
  undercategory $\mathcal{C}_{Z \mathord{/}}$.
\end{lemma}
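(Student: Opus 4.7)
The plan is to realize $\mathcal{C}_{Z\mathord{/}}$ as the target of a suitable adjunction from $\mathcal{C}$ and then invoke \Cref{prop:adjoint-proj-gen} to transfer the projectively generated structure, exactly in the spirit of the proof of \Cref{cor:fun-cat-proj-gen}. Concretely, I would exhibit a pair of adjoint functors
$$\mathcal{C} \underset{U}{\overset{L}{\longrightleftarrows}} \mathcal{C}_{Z\mathord{/}}$$
where $U$ is the standard forgetful functor and $L$ is the functor $X \mapsto (Z \to X \amalg Z)$ given by the canonical coproduct inclusion. Since $L$ sends the prescribed set $S$ of compact projective generators of $\mathcal{C}$ to the set $\{Z \to X \amalg Z \barsuchthat X \in S\}$, the lemma will follow once the hypotheses of \Cref{prop:adjoint-proj-gen} are verified.

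To check that $(L, U)$ is indeed an adjoint pair, I would compute, for $X \in \mathcal{C}$ and $(g \of Z \to Y) \in \mathcal{C}_{Z\mathord{/}}$,
$$\tmop{Map}_{\mathcal{C}_{Z\mathord{/}}}(L(X), g) \simeq \tmop{fib}_g\bigl( \tmop{Map}_{\mathcal{C}}(X \amalg Z, Y) \to \tmop{Map}_{\mathcal{C}}(Z, Y)\bigr) \simeq \tmop{Map}_{\mathcal{C}}(X, Y),$$
using the standard description of mapping spaces in an undercategory together with the universal property of the coproduct, which identifies $\tmop{Map}_{\mathcal{C}}(X \amalg Z, Y) \simeq \tmop{Map}_{\mathcal{C}}(X, Y) \times \tmop{Map}_{\mathcal{C}}(Z, Y)$. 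Next I would verify the remaining hypotheses of \Cref{prop:adjoint-proj-gen}: the functor $U$ is conservative (equivalences in $\mathcal{C}_{Z\mathord{/}}$ are detected on underlying objects) and $U$ preserves sifted colimits (since sifted simplicial sets are connected, and colimits of connected diagrams in undercategories are computed in the ambient category). These are the same properties that powered the functor category analogue, so the same reasoning yields projective generation of $\mathcal{C}_{Z\mathord{/}}$ together with the explicit description of the generators.

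The main obstacle I anticipate is not deep but requires some care: the $\infty$-categorical bookkeeping of mapping spaces in $\mathcal{C}_{Z\mathord{/}}$ as fibers of mapping spaces in $\mathcal{C}$, together with the confirmation that the map $Z \to X \amalg Z$ appearing as $L(X)$ is indeed the canonical coproduct inclusion (rather than being twisted in some way by the unit of the adjunction). Once these model-theoretic points are settled, the statement becomes a direct application of the machinery developed earlier in this section.
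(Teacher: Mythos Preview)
Your approach is essentially the same as the paper's: both exhibit the free-forgetful adjunction $\mathcal{C} \rightleftarrows \mathcal{C}_{Z/}$ and invoke \Cref{prop:adjoint-proj-gen} after checking that the forgetful functor is conservative and preserves sifted colimits. One minor caveat: in the $\infty$-categorical setting the forgetful functor from an undercategory preserves $K$-indexed colimits when $K$ is \emph{weakly contractible} rather than merely connected, but since sifted simplicial sets are weakly contractible ({\cite[Prop~5.5.8.7]{Lurie2009}}) your argument goes through --- the paper makes this same point via an explicit cofinality check for the inclusion $K \hookrightarrow \{*\} \star K$.
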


\begin{proof}
  Consider the pair $\mathcal{C} \underset{Y \mapsfrom (Z \rightarrow
  Y)}{\overset{X \mapsto (Z \rightarrow X \amalg Z)}{\longrightleftarrows}}
  \mathcal{C}_{Z \mathord{/}}$ of adjoint functors. The forgetful functor
  $\mathcal{C}_{Z \mathord{/}} \rightarrow \mathcal{C}$
  \begin{itemize}
    \item is conservative, since an object in $\mathcal{C}_{Z \mathord{/}}$
    could be identified with a map $\Delta^1 \rightarrow \mathcal{C}, 0
    \mapsto Z$, and a map in $\mathcal{C}_{Z \mathord{/}}$ between two objects
    could be identified with a homotopy between two maps $\Delta^1
    \rightrightarrows \mathcal{C}$, then we invoke
    {\cite[\href{https://kerodon.net/tag/01DK}{Tag 01DK}]{kerodon}} to
    conclude.
    
    \item preserves sifted colimits, as it is a left fibration
    {\cite[\href{https://kerodon.net/tag/018F}{Tag 018F}]{kerodon}}, thus
    preserves weakly contractible colimits
    {\cite[\href{https://kerodon.net/tag/02KT}{Tag 02KT}]{kerodon}}, and
    sifted diagrams are weakly contractible
    {\cite[\href{https://kerodon.net/tag/02QL}{Tag 02QL}]{kerodon}}.
  \end{itemize}
  We then invoke \Cref{prop:adjoint-n-proj-gen} to conclude\footnote{We
  believe that our argument could be vastly simplified. However, we point out
  that the map $K \hookrightarrow \{ \ast \} \star K$ is not necessarily
  cofinal if the simplicial set $K$ is not sifted. For example, take $K$ to be
  a discrete set with at least two elements.}.
\end{proof}

Now we list the $n$-categorical analogues:

\begin{lemma}
  \label{lem:fun-n-cat-proj-gen}Let $\mathcal{C}$ be an
  $n$\mbox{-}projectively generated $n$\mbox{-}category and $K \in
  \tmop{Set}_{\Delta}$ a small simplicial set. Then the $n$\mbox{-}category
  $\tmop{Fun} (K, \mathcal{C})$ of functors is $n$\mbox{-}projectively
  generated.
\end{lemma}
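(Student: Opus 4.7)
The plan is to adapt, essentially verbatim, the proof of \Cref{cor:fun-cat-proj-gen} to the $n$-categorical setting, since the proof of the $\infty$-categorical version factored through \Cref{lem:prod-proj-gen} and \Cref{prop:adjoint-proj-gen}, both of which should have $n$-categorical analogues (indeed, the preamble ``Now we list the $n$-categorical analogues'' suggests these are available or immediate). So I first want to establish the $n$-categorical analogue of \Cref{lem:prod-proj-gen}: a small product $\prod_{\alpha \in T} \mathcal{C}_\alpha$ of $n$-projectively generated $n$-categories is $n$-projectively generated, with generators given by the ``skyscraper'' objects $i_{s,\beta}$ as in the statement of \Cref{lem:prod-proj-gen}. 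The same argument carries over since small colimits in the product are computed pointwise, and compactness together with $n$-projectivity of a tuple can be checked coordinatewise via the skyscraper embeddings $j_\beta$ (which are fully faithful and preserve small colimits).

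Next, given $K \in \tmop{Set}_\Delta$ with set of vertices $K_0 \subseteq K$, I would observe that $\tmop{Fun}(K_0, \mathcal{C}) \simeq \prod_{v \in K_0} \mathcal{C}$ is $n$-projectively generated by the previous step. As in the discussion preceding \Cref{cor:fun-cat-proj-gen}, we have an adjunction
\[
\tmop{Fun}(K_0, \mathcal{C}) \underset{(K_0 \to K)^*}{\overset{\tmop{Lan}_{K_0 \to K}}{\longrightleftarrows}} \tmop{Fun}(K, \mathcal{C}).
\]
The restriction functor $(K_0 \to K)^*$ is conservative (a natural transformation in a functor category is an equivalence iff it is pointwise so, and $K_0$ contains every vertex of $K$) and preserves all small colimits (colimits in functor $n$-categories valued in a cocomplete $n$-category are computed pointwise). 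In particular, it preserves $n$-sifted colimits.

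Finally, I would apply the $n$-categorical analogue of \Cref{prop:adjoint-proj-gen} to conclude that $\tmop{Fun}(K, \mathcal{C})$ is $n$-projectively generated, with the explicit set of compact $n$-projective generators obtained as the images under $\tmop{Lan}_{K_0 \to K}$ of the skyscraper generators produced in the first step. The main obstacle is not conceptual but organizational: one must confirm that the $n$-categorical version of \Cref{prop:adjoint-proj-gen} genuinely applies with the notion of ``sifted colimits'' appropriate to $n$-categories, and that ``compact $n$-projective'' tuples are correctly detected by the skyscraper embeddings. Both issues are formal and do not introduce new ideas beyond those already used in the $\infty$-categorical case.
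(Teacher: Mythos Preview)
Your proposal is correct and follows exactly the approach the paper indicates: the paper states explicitly that the proof is ``nearly verbatim'' from the $\infty$-categorical case (\Cref{cor:fun-cat-proj-gen}), and you have correctly identified the ingredients---the $n$-categorical analogue of \Cref{lem:prod-proj-gen} for $\tmop{Fun}(K_0,\mathcal{C})$, the $(\tmop{Lan}_{K_0\to K},(K_0\to K)^*)$ adjunction, and \Cref{prop:adjoint-n-proj-gen} in place of \Cref{prop:adjoint-proj-gen}.
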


\begin{lemma}
  Let $\mathcal{C}$ be an $n$\mbox{-}projectively generated
  $n$\mbox{-}category and $Z \in \mathcal{C}$ an object. Then the
  undercategory $\mathcal{C}_{Z \mathord{/}}$ is $n$\mbox{-}projectively
  generated. More precisely, let $S \subseteq \mathcal{C}$ be a set of
  $n$\mbox{-}projective generators for $\mathcal{C}$, then the set $\{ Z
  \rightarrow X \amalg Z \barsuchthat X \in S \}$ is a set of compact
  $n$\mbox{-}projective generators for the undercategory $\mathcal{C}_{Z
  \mathord{/}}$.
\end{lemma}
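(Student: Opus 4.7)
The plan is to mimic the proof of the preceding $\infty$-categorical result (\Cref{lem:undercat-proj-gen}), replacing every invocation of \Cref{prop:adjoint-proj-gen} with its $n$-categorical counterpart \Cref{prop:adjoint-n-proj-gen}. Concretely, I would consider the same adjunction
\[
  \mathcal{C} \underset{Y \mapsfrom (Z \rightarrow Y)}{\overset{X \mapsto (Z \rightarrow X \amalg Z)}{\longrightleftarrows}} \mathcal{C}_{Z \mathord{/}},
\]
which makes sense in an $n$-category because the undercategory of an $n$-category is again an $n$-category, and because $\mathcal{C}$ admits small colimits (being $n$-projectively generated). Then it would suffice to verify the two hypotheses required by \Cref{prop:adjoint-n-proj-gen}: conservativity of the forgetful functor $\mathcal{C}_{Z\mathord{/}} \to \mathcal{C}$, and its preservation of sifted colimits (or in the $n$-categorical setting, of reflexive coequalizers and filtered colimits, in view of \Cref{cor:proj-gen-n-sifted}).

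For conservativity, the argument from the $\infty$-categorical case transfers unchanged: an equivalence in $\mathcal{C}_{Z\mathord{/}}$ is the same datum as an equivalence in $\mathcal{C}$ between the underlying objects, together with the coherence data of the structure maps from $Z$, and this reduces to the cited result about mapping simplices $\Delta^1$ into $\mathcal{C}$. For preservation of sifted colimits, the cofinality argument given in the proof of \Cref{lem:undercat-proj-gen} (using that $K \hookrightarrow \{\ast\} \star K$ is cofinal for sifted $K$) works at the level of underlying simplicial sets and so applies verbatim in any $n$-category; alternatively, since in an $n$-category only reflexive coequalizers and filtered colimits need to be checked, both cases reduce to the same cofinality statement. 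Once the adjunction satisfies the hypotheses, \Cref{prop:adjoint-n-proj-gen} produces the explicit set of compact $n$-projective generators by pushing forward the chosen set $S \subseteq \mathcal{C}$ through the left adjoint, yielding exactly $\{Z \to X \amalg Z : X \in S\}$.

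The main (and essentially only) potential obstacle is whether the cofinality/conservativity inputs used in the $\infty$-categorical proof truly have clean $n$-categorical analogues, rather than requiring a separate argument. Since both properties are detected on $1$-morphisms and equivalences and the relevant simplicial constructions ($\Delta^1$, joins with sifted simplicial sets) are not sensitive to truncation, I do not expect any genuine difficulty: the proof should reduce to citing the earlier results with $n$ in place of $\infty$, and I would flag this explicitly in a short remark to avoid repeating the argument.
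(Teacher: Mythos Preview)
Your proposal is correct and matches the paper's approach exactly: the paper does not give a separate proof but simply lists this lemma as the $n$-categorical analogue of \Cref{lem:undercat-proj-gen}, having announced beforehand that the proofs of these analogues are ``nearly verbatim.'' Your plan to rerun the same adjunction argument with \Cref{prop:adjoint-n-proj-gen} in place of \Cref{prop:adjoint-proj-gen} is precisely what the paper intends.
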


Now we deduce the corollaries for animations.

\begin{corollary}
  \label{cor:ani-arrow-cat}Let $\mathcal{C}$ be an $n$\mbox{-}projectively
  generated $n$\mbox{-}category. Then there is a canonical equivalence
  $\tmop{Ani} (\tmop{Fun} ((\Delta^1)^{\tmop{op}}, \mathcal{C})) \rightarrow
  \tmop{Fun} ((\Delta^1)^{\tmop{op}}, \tmop{Ani} (\mathcal{C}))$ of
  $\infty$-categories, or equivalently, a canonical equivalence $\tmop{Ani}
  (\tmop{Fun} (\Delta^1, \mathcal{C})) \rightarrow \tmop{Fun} (\Delta^1,
  \tmop{Ani} (\mathcal{C}))$ of $\infty$-categories.
\end{corollary}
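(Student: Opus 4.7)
The plan is to exhibit both $\infty$-categories as the non-abelian derived category $\mathcal{P}_{\Sigma}$ of a common small subcategory of compact projective generators, and to verify that the natural comparison functor realizes this identification. Since $(\Delta^1)^{\tmop{op}} \simeq \Delta^1$ as simplicial sets, it suffices to treat the covariant case.

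I first identify compact projective generators on both sides using the restriction/left Kan extension adjunction
$$\mathcal{C} \times \mathcal{C} \simeq \tmop{Fun}(\{0,1\}, \mathcal{C}) \rightleftarrows \tmop{Fun}(\Delta^1, \mathcal{C}).$$
Unwinding the proof of \Cref{lem:fun-n-cat-proj-gen} (which applies \Cref{prop:adjoint-n-proj-gen} to this adjunction, together with \Cref{lem:prod-proj-gen} for the product) shows that a set of compact $n$-projective generators of $\tmop{Fun}(\Delta^1, \mathcal{C})$ is given by the arrows $(s \xrightarrow{\tmop{id}} s)$ and $(\emptyset \rightarrow s)$, as $s$ ranges over a chosen set of compact $n$-projective generators of $\mathcal{C}$. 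Let $\mathcal{A}_0 \subseteq \tmop{Fun}(\Delta^1, \mathcal{C})$ denote the full subcategory of finite coproducts of these generators, so that $\tmop{Ani}(\tmop{Fun}(\Delta^1, \mathcal{C})) \simeq \mathcal{P}_{\Sigma}(\mathcal{A}_0)$ by definition. Applying the very same recipe to $\tmop{Fun}(\Delta^1, \tmop{Ani}(\mathcal{C}))$ via \Cref{cor:fun-cat-proj-gen} yields a set of compact projective generators consisting of exactly the same arrows (since compact projective generators of $\tmop{Ani}(\mathcal{C})$ are the images of those of $\mathcal{C}$); hence $\tmop{Fun}(\Delta^1, \tmop{Ani}(\mathcal{C})) \simeq \mathcal{P}_{\Sigma}(\mathcal{A}_0)$ as well.

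The fully faithful embedding $\mathcal{C} \hookrightarrow \tmop{Ani}(\mathcal{C})$ preserves finite coproducts and induces a fully faithful functor $j \of \tmop{Fun}(\Delta^1, \mathcal{C}) \hookrightarrow \tmop{Fun}(\Delta^1, \tmop{Ani}(\mathcal{C}))$ that matches the two copies of $\mathcal{A}_0$ bijectively on objects and on mapping spaces (the latter because mapping spaces in $\mathcal{C}^0$ are unchanged upon passing to $\tmop{Ani}(\mathcal{C})$). The universal property of $\mathcal{P}_{\Sigma}$ (\Cref{prop:left-deriv-fun}) then yields the desired canonical small-colimit-preserving equivalence
$$\Phi \of \tmop{Ani}(\tmop{Fun}(\Delta^1, \mathcal{C})) \simeq \mathcal{P}_{\Sigma}(\mathcal{A}_0) \xrightarrow{\sim} \tmop{Fun}(\Delta^1, \tmop{Ani}(\mathcal{C})).$$
The main obstacle lies in the second paragraph: explicitly identifying $\mathcal{A}_0$ via the Kan extension adjunction and confirming that it is stable under passage from $\mathcal{C}$ to $\tmop{Ani}(\mathcal{C})$. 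This is essentially routine bookkeeping, but it requires careful tracking of the generators through the adjunctions of \Cref{lem:prod-proj-gen} and \Cref{lem:fun-n-cat-proj-gen}.
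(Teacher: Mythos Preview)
Your proposal is correct and follows essentially the same approach as the paper: extract an explicit set of compact projective generators for the arrow category on each side via the restriction/left-Kan-extension adjunction (the paper does this for $(\Delta^1)^{\tmop{op}}$, you for $\Delta^1$, but these are of course equivalent), observe that the same set $\mathcal{A}_0$ works for both $\tmop{Fun}(\Delta^1,\mathcal{C})$ and $\tmop{Fun}(\Delta^1,\tmop{Ani}(\mathcal{C}))$, and conclude. The paper's proof is a terser version of your three paragraphs, citing \Cref{lem:fun-n-cat-proj-gen} and \Cref{cor:fun-cat-proj-gen} and leaving the final identification implicit.
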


\begin{proof}
  Let $S \subseteq \mathcal{C}$ be a set of compact $n$\mbox{-}projective
  generators for $\mathcal{C}$. Spelling out the proof of
  \Cref{cor:fun-cat-proj-gen} (more precisely, its analogue
  \Cref{lem:fun-n-cat-proj-gen}), we extract an explicit set of compact
  $n$\mbox{-}projective generators for $\tmop{Fun} ((\Delta^1)^{\tmop{op}},
  \mathcal{C})$, namely, $T \assign \{ X \leftarrow 0 \barsuchthat X \in S \}
  \cup \left\{ X \xleftarrow{\tmop{id}_X} X \barsuchthat X \in S \right\}$.
  Note that $\tmop{Fun} ((\Delta^1)^{\tmop{op}}, \mathcal{C}) \subseteq
  \tmop{Fun} ((\Delta^1)^{\tmop{op}}, \tmop{Ani} (\mathcal{C}))$ is a full
  subcategory, and again by the proof of \Cref{cor:fun-cat-proj-gen}, it
  follows that $T$ is a set of compact projective generators for $\tmop{Fun}
  ((\Delta^1)^{\tmop{op}}, \tmop{Ani} (\mathcal{C}))$. The result follows.
\end{proof}

The same proof leads to the following (compare with
{\cite[Cons~4.3.4]{Raksit2020}}).

\begin{corollary}
  \label{cor:ani-undercat}Let $\mathcal{C}$ be an $n$\mbox{-}projectively
  generated $n$\mbox{-}category. Then there are canonical equivalences
  \begin{eqnarray*}
    \tmop{Ani} (\tmop{Fun} ((\mathbb{Z}, \geq), \mathcal{C})) &
    \longrightarrow & \tmop{Fun} ((\mathbb{Z}, \geq), \tmop{Ani}
    (\mathcal{C}))\\
    \tmop{Ani} (\tmop{Fun} (\mathbb{Z}, \mathcal{C})) & \longrightarrow &
    \tmop{Fun} (\mathbb{Z}, \tmop{Ani} (\mathcal{C}))\\
    \tmop{Ani} (\tmop{Fun} (\{ 0, 1 \}, \mathcal{C})) & \longrightarrow &
    \tmop{Fun} (\{ 0, 1 \}, \tmop{Ani} (\mathcal{C}))\\
    \tmop{Ani} \left( \mathcal{C}_{Z \mathord{/}} \right) & \longrightarrow &
    \tmop{Ani} (\mathcal{C})_{Z \mathord{/}}
  \end{eqnarray*}
  of $\infty$-categories. The same for replacing $\mathbb{Z}$'s by
  $\mathbb{N}$'s.
\end{corollary}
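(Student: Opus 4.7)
My plan is to apply, for each of the four equivalences, the same template as in the proof of \Cref{cor:ani-arrow-cat}. Using the appropriate $n$-categorical lemma---the $n$-version of \Cref{cor:fun-cat-proj-gen} for the three functor-category cases and of \Cref{lem:undercat-proj-gen} for the undercategory case---I would identify an explicit set $T$ of compact $n$-projective generators of the source $n$-category. The $\infty$-categorical analogue of that same lemma then shows that $T$, viewed inside the target $\infty$-category via the canonical embedding, is a set of compact projective generators there. Since the source $n$-category sits as a full subcategory of the target $\infty$-category with these generators in common, the universal property of the non-abelian derived category of $T$ yields the claimed canonical equivalence.

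More concretely, starting from a set $S$ of compact $n$-projective generators of $\mathcal{C}$, for the three functor-category cases the generators will be the ``free diagrams'' $\tmop{Lan}_{\{k\} \hookrightarrow K}(X)$ for $k$ a vertex of $K$ and $X \in S$. For $K = (\mathbb{Z}, \geq)$ this is the functor sending $m$ to $X$ when $k \geq m$ and to the initial object otherwise (with structure maps identities where possible); for $K = \mathbb{Z}$ or $K = \{0,1\}$ viewed as discrete sets this is the ``skyscraper'' $i_{X, k}$ of \Cref{lem:prod-proj-gen}. In each case these objects remain compact and projective after enlarging $\mathcal{C}$ to $\tmop{Ani}(\mathcal{C})$ for the same formal reason---evaluation at a vertex preserves small colimits, and left Kan extension commutes with colimits in its input---so the required compact projective generation in the $\infty$-category follows from \Cref{cor:fun-cat-proj-gen}.

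For the undercategory case, the generators are $\{Z \to X \amalg Z \barsuchthat X \in S\}$, both in $\mathcal{C}_{Z \mathord{/}}$ (by the $n$-version of \Cref{lem:undercat-proj-gen}) and in $\tmop{Ani}(\mathcal{C})_{Z \mathord{/}}$ (by \Cref{lem:undercat-proj-gen} itself). The key technical point to verify is that the coproduct $X \amalg Z$ formed in $\mathcal{C}$ agrees with the one formed in $\tmop{Ani}(\mathcal{C})$; this holds because the canonical fully faithful embedding $\mathcal{C} \hookrightarrow \tmop{Ani}(\mathcal{C})$ preserves finite coproducts, since $X$ is compact and $n$-projective and hence remains so in the animation. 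The same reasoning applies verbatim when $\mathbb{Z}$ is replaced by $\mathbb{N}$ throughout, using the corresponding subposet or subset of vertices. The most delicate step, and the one where I would expect any subtlety to arise, is precisely this coproduct-preservation for the undercategory; all other steps are a direct transcription of the cited proof of \Cref{cor:ani-arrow-cat}.
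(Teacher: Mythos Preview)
Your approach is exactly the paper's: it simply says ``the same proof leads to the following,'' referring back to the argument for \Cref{cor:ani-arrow-cat}, and you have correctly unpacked what that means in each of the four cases, including singling out the undercategory case as the only one requiring extra care.

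One comment on that delicate step. Your justification that the inclusion $\mathcal{C}\hookrightarrow\tmop{Ani}(\mathcal{C})$ preserves the coproduct $X\amalg Z$ ``since $X$ is compact and $n$-projective'' is not a complete argument as stated: the inclusion is a right adjoint (to $\tau_{\le n-1}$), so there is no formal reason for it to preserve any colimits, and compactness of $X$ alone does not force $X\amalg^{\tmop{Ani}(\mathcal{C})}Z$ to be $(n-1)$-truncated. What one actually uses (at least for $n=1$, which is the case of interest throughout the paper) is that $\tmop{Ani}(\mathcal{C})$ can be modeled by simplicial objects in $\mathcal{C}$ with levelwise colimits, so the coproduct of two constant simplicial objects is again constant; equivalently, one checks that the sifted-colimit-preserving functor $X\amalg^{\tmop{Ani}(\mathcal{C})}(-)$ agrees on $\mathcal{C}^0$ with the composite $\mathcal{C}^0\xrightarrow{X\amalg^{\mathcal{C}}(-)}\mathcal{C}\hookrightarrow\tmop{Ani}(\mathcal{C})$ and that the latter extends to a sifted-colimit-preserving functor landing in $\mathcal{C}$. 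The paper does not spell this out either, so you are not missing anything the paper provides---but your stated reason should be replaced by one of these.
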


\subsection{Comma categories and base change}\label{subsec:comma-cat}In this
subsection, we will discuss comma categories, which serves as our basic
language to discuss various base changes.

\begin{definition}
  Let $\mathcal{C}, \mathcal{D}$ be $\infty$\mbox{-}categories and $F \of
  \mathcal{C} \rightarrow \mathcal{D}$ a functor. The {\tmdfn{comma
  category}}, sometimes denoted by $F \mathrel{\downarrow} \mathcal{D}$, is
  given by the simplicial set $\mathcal{C} \times_{\tmop{Fun} (\{ 0 \},
  \mathcal{D})} \tmop{Fun} (\Delta^1, \mathcal{D})$, where the map
  $\mathcal{C} \rightarrow \tmop{Fun} (\{ 0 \}, \mathcal{D})$ is given by $F$
  and the map $\tmop{Fun} (\Delta^1, \mathcal{D}) \rightarrow \tmop{Fun} (\{ 0
  \}, \mathcal{D})$ is induced by the vertex $\{ 0 \} \rightarrow \Delta^1$.
\end{definition}

\begin{example}
  \label{ex:comma-ani-ring}Consider the functor
  $\tmop{id}_{\tmop{CAlg}^{\tmop{an}}} \of \tmop{CAlg}^{\tmop{an}} \rightarrow
  \tmop{CAlg}^{\tmop{an}}$. The comma category
  \[ \tmop{CAlg}^{\tmop{an}} \times_{\tmop{Fun} (\{ 0 \},
     \tmop{CAlg}^{\tmop{an}})} \tmop{Fun} (\Delta^1, \tmop{CAlg}^{\tmop{an}})
  \]
  is equivalent to $\tmop{Fun} (\Delta^1, \tmop{CAlg}^{\tmop{an}})$. An object
  is simply given by a base $A \in \tmop{CAlg}^{\tmop{an}}$ and an $A$-algebra
  $A \rightarrow R$.
\end{example}

\begin{example}
  \label{ex:comma-PD-pair}Consider the functor $\tmop{Pair} \rightarrow
  \tmop{Ring}, (A, I) \mapsto A / I$ and the composite functor
  $\tmop{Pair}^{\gamma} \rightarrow \tmop{Pair} \rightarrow \tmop{Ring}$.
  Concretely, the objects in the comma category $\tmop{Pair}^{\gamma}
  \times_{\tmop{Fun} (\{ 0 \}, \tmop{Ring})} \tmop{Fun} (\Delta^1,
  \tmop{Ring})$ are given by a PD-pair $(A, I, \gamma)$ along with an $A /
  I$-algebra $A / I \rightarrow R$. This is the non-animated version of
  $\tmop{CrysCon}$ that will be introduced in \Cref{subsec:deriv-dR}.
\end{example}

\begin{remark}
  \label{rem:prism-comma-cat}A similar comma category plays an role for
  prismatic cohomology. We will study a non-complete version in
  \Cref{subsec:prism-conj-fil}.
\end{remark}

\begin{lemma}
  Let $\mathcal{C}, \mathcal{D}$ be $\infty$\mbox{-}categories and $F \of
  \mathcal{C} \rightarrow \mathcal{D}$ a functor. Then the simplicial set
  $\mathcal{C} \times_{\tmop{Fun} (\{ 0 \}, \mathcal{D})} \tmop{Fun}
  (\Delta^1, \mathcal{D})$ is an $\infty$\mbox{-}category.
\end{lemma}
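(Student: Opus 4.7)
The plan is to show that the forgetful projection $p \of \mathcal{C} \times_{\tmop{Fun}(\{0\}, \mathcal{D})} \tmop{Fun}(\Delta^1, \mathcal{D}) \to \mathcal{C}$ is an inner fibration; since $\mathcal{C}$ is itself an $\infty$\mbox{-}category, this will imply that the source is an $\infty$\mbox{-}category. Indeed, given an inner horn $\Lambda^n_i \to \mathcal{C} \times_{\tmop{Fun}(\{0\}, \mathcal{D})} \tmop{Fun}(\Delta^1, \mathcal{D})$ with $0 < i < n$, its composite with $p$ extends to a simplex $\Delta^n \to \mathcal{C}$ because $\mathcal{C}$ is an $\infty$\mbox{-}category, and the inner fibration property of $p$ then supplies the required filler in the total space.

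To exhibit $p$ as an inner fibration, I would invoke \cite[Cor.~2.3.2.5]{Lurie2009}, which asserts that for any $\infty$\mbox{-}category $\mathcal{D}$ and any monomorphism $A \hookrightarrow B$ of simplicial sets, the restriction map $\tmop{Fun}(B, \mathcal{D}) \to \tmop{Fun}(A, \mathcal{D})$ is an inner fibration. Applying this to the monomorphism $\{0\} \hookrightarrow \Delta^1$ shows that the evaluation-at-zero map $\tmop{Fun}(\Delta^1, \mathcal{D}) \to \tmop{Fun}(\{0\}, \mathcal{D})$ is an inner fibration. Since inner fibrations are stable under pullback in $\tmop{Set}_{\Delta}$, the map $p$---being the pullback of this evaluation map along the functor $F \of \mathcal{C} \to \tmop{Fun}(\{0\}, \mathcal{D})$---is itself an inner fibration, completing the argument.

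There is essentially no obstacle in this proof: all nontrivial content is packaged into the cited corollary of \cite{Lurie2009}, and the remainder consists of formal properties of inner fibrations (stability under pullback and the lifting characterization of $\infty$\mbox{-}categories as total spaces of inner fibrations over $\infty$\mbox{-}categories).
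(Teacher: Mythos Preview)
Your proof is correct and follows essentially the same route as the paper: both invoke \cite[Cor.~2.3.2.5]{Lurie2009} to see that $\tmop{Fun}(\Delta^1,\mathcal{D}) \to \tmop{Fun}(\{0\},\mathcal{D})$ is an inner fibration, then conclude via stability under pullback. The paper additionally upgrades this map to a categorical fibration using \cite[Cor.~2.4.6.5]{Lurie2009}, but for the bare statement that the pullback is an $\infty$\mbox{-}category your more direct argument via inner fibrations already suffices.
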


\begin{proof}
  It follows from {\cite[Corollary~2.3.2.5]{Lurie2009}} applied to the inner
  fibration $\mathcal{D} \rightarrow \{ \ast \}$ that $\tmop{Fun} (\Delta^1,
  \mathcal{D}) \rightarrow \tmop{Fun} (\{ 0 \}, \mathcal{D})$ is an inner
  fibration. Then it follows {\cite[Corollary~2.4.6.5]{Lurie2009}} that
  $\tmop{Fun} (\Delta^1, \mathcal{D}) \rightarrow \tmop{Fun} (\{ 0 \},
  \mathcal{D})$ is a categorical fibration. The result follows.
\end{proof}

\begin{remark}
  \label{rem:proj-comma-sect}The canonical projection $\mathcal{C}
  \times_{\tmop{Fun} (\{ 0 \}, \mathcal{D})} \tmop{Fun} (\Delta^1,
  \mathcal{D}) \rightarrow \mathcal{C}$ admits a fully faithful section
  induced by $\mathcal{D} \rightarrow \tmop{Fun} (\Delta^1, \mathcal{D}), D
  \mapsto \tmop{id}_D$ which is also a left adjoint of the projection in
  question.
\end{remark}

\begin{lemma}
  Let $\mathcal{C}, \mathcal{D}$ be $\infty$\mbox{-}categories and $F \of
  \mathcal{C} \rightarrow \mathcal{D}$ a functor. Suppose that $\mathcal{D}$
  admits finite coproducts. Then the functor $\mathcal{C} \times_{\tmop{Fun}
  (\{ 0 \}, \mathcal{D})} \tmop{Fun} (\Delta^1, \mathcal{D}) \rightarrow
  \mathcal{C} \times \mathcal{D}$ induced by $\tmop{Fun} (\Delta^1,
  \mathcal{D}) \rightarrow \tmop{Fun} (\{ 1 \}, \mathcal{D}) \simeq
  \mathcal{D}$ admits a left adjoint informally given by $(C, D) \mapsto (C, F
  (C) \rightarrow F (C) \amalg D)$.
\end{lemma}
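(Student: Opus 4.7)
Write $X \assign \mathcal{C} \times_{\tmop{Fun}(\{0\}, \mathcal{D})} \tmop{Fun}(\Delta^1, \mathcal{D})$. The plan is to realise the functor $X \to \mathcal{C} \times \mathcal{D}$ as the base change along $F$ of the restriction functor $\tmop{res} \of \tmop{Fun}(\Delta^1, \mathcal{D}) \to \tmop{Fun}(\partial \Delta^1, \mathcal{D}) \simeq \mathcal{D} \times \mathcal{D}$, viewed as a functor over $\mathcal{D}$ via evaluation at $0$, and then to pull back along $F$ a left adjoint of $\tmop{res}$ that is compatible with this fibration over $\mathcal{D}$.

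For the identification, one writes $\mathcal{C} \times \mathcal{D} \simeq \mathcal{C} \times_{\tmop{Fun}(\{0\}, \mathcal{D})} \tmop{Fun}(\partial \Delta^1, \mathcal{D})$ using $F$ on the first factor and the first projection on the second, and observes that the square formed by $X$, $\tmop{Fun}(\Delta^1, \mathcal{D})$, $\mathcal{C} \times \mathcal{D}$, and $\tmop{Fun}(\partial \Delta^1, \mathcal{D})$ is a pullback square of $\infty$-categories, with the functor in question as the left vertical arrow. Since $\mathcal{D}$ admits finite coproducts, $\tmop{res}$ admits a left adjoint $L_0$ given by left Kan extension along $\partial \Delta^1 \hookrightarrow \Delta^1$, and a pointwise computation yields $L_0 (D_0, D_1) \simeq (D_0 \to D_0 \amalg D_1)$ with the arrow being the canonical coproduct inclusion. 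In particular $\tmop{ev}_0 \circ L_0$ is the first projection, matching $\tmop{ev}_0 \circ \tmop{res}$, and the unit of the adjunction acts as the identity on the first coordinate; these facts are what permit the adjunction to be base-changed along $F$.

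The remaining step is to verify that the pulled-back left adjoint has the advertised informal description, which I would do by a direct mapping-space computation. A map in $X$ from $(C, F(C) \to F(C) \amalg D)$ to $(C', \alpha' \of F(C') \to E')$ consists of $f \of C \to C'$ together with $g \of F(C) \amalg D \to E'$ making the evident square commute; the commutativity pins down the restriction of $g$ along the coproduct inclusion from $F(C)$ to be $\alpha' \circ F(f)$, so by the universal property of the coproduct $g$ is exactly the datum of an auxiliary map $D \to E'$, and the total mapping space is $\tmop{Map}_{\mathcal{C} \times \mathcal{D}}((C, D), (C', E'))$. The main technical obstacle is upgrading this pointwise universal property into a coherent adjunction of $\infty$-categories; this is handled cleanly by the pullback-of-adjunctions reasoning above, since both $L_0 \dashv \tmop{res}$ and its unit lie over $\mathcal{D}$ via evaluation at $0$, so base change along $F \of \mathcal{C} \to \mathcal{D}$ produces the required adjunction on $X$ with left adjoint $(C, D) \mapsto (C, F(C) \to F(C) \amalg D)$.
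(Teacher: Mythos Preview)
Your proposal is correct and follows essentially the same approach as the paper: both reduce to the adjunction $\tmop{Fun}(\partial\Delta^1,\mathcal{D}) \rightleftarrows \tmop{Fun}(\Delta^1,\mathcal{D})$ given by restriction and left Kan extension, observe that this adjunction lives over $\mathcal{D}$ via evaluation at $0$, and then base change along $F\colon \mathcal{C}\to\mathcal{D}$. The paper packages the base-change step by invoking Lurie's relative adjunction formalism (HA, Prop.~7.3.2.1 and~7.3.2.5), which is exactly the mechanism you describe informally when you say the adjunction and its unit lie over $\mathcal{D}$; your additional mapping-space verification of the informal formula is a nice sanity check the paper omits.
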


\begin{proof}
  We need the concept of relative adjunctions {\cite[§7.3.2]{Lurie2017}}. In
  fact, the adjunction is relative to $\mathcal{C}$.
  
  To see this, we start with the special case that $\mathcal{C}=\mathcal{D}$
  and $F = \tmop{id}_{\mathcal{D}}$. The point is that, the pair $\mathcal{D}
  \times \mathcal{D} \rightleftarrows \tmop{Fun} (\Delta^1, \mathcal{D})$ of
  adjoint functors satisfies {\cite[Prop~7.3.2.1]{Lurie2017}}, where the
  functor $\mathcal{D} \times \mathcal{D} \rightarrow \tmop{Fun} (\Delta^1,
  \mathcal{D})$ is given by left Kan extension along the functor $\{ 0, 1 \}
  \rightarrow \Delta^1$, and the functor $\tmop{Fun} (\Delta^1, \mathcal{D})
  \rightarrow \mathcal{D} \times \mathcal{D}$ is simply given by the
  restriction along $\{ 0, 1 \} \rightarrow \Delta^1$.
  
  The general case follows from {\cite[Prop~7.3.2.5]{Lurie2017}} by base
  change along $F \of \mathcal{C} \rightarrow \mathcal{D}$.
\end{proof}

It follows from \Cref{prop:adjoint-n-proj-gen} that

\begin{corollary}
  \label{cor:comma-proj-gen}Let $\mathcal{C}, \mathcal{D}$ be projectively
  generated $\infty$\mbox{-}categories and $F \of \mathcal{C} \rightarrow
  \mathcal{D}$ a functor. Then the comma category $\mathcal{C}
  \times_{\tmop{Fun} (\{ 0 \}, \mathcal{D})} \tmop{Fun} (\Delta^1,
  \mathcal{D})$ is projectively generated. More precisely, let $S \subseteq
  \mathcal{C}$ and $T \subseteq \mathcal{D}$ be sets of compact projective
  generators. Then $\{ (C, F (C) \rightarrow F (C) \amalg D) \barsuchthat C
  \in S, D \in T \}$ is a set of compact projective generators for
  $\mathcal{C} \times_{\tmop{Fun} (\{ 0 \}, \mathcal{D})} \tmop{Fun}
  (\Delta^1, \mathcal{D})$.
\end{corollary}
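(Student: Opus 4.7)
The plan is to apply \Cref{prop:adjoint-proj-gen} to the adjunction $L \dashv R$ from the preceding lemma, combined with \Cref{lem:prod-proj-gen}, following exactly the template used in the proofs of \Cref{cor:fun-cat-proj-gen} and \Cref{lem:undercat-proj-gen}. Here the left adjoint $L \of \mathcal{C} \times \mathcal{D} \rightarrow \mathcal{C} \times_{\tmop{Fun}(\{0\}, \mathcal{D})} \tmop{Fun}(\Delta^1, \mathcal{D})$ is given by $(C, D) \mapsto (C, F(C) \rightarrow F(C) \amalg D)$, while the right adjoint $R$ is the projection $(C, F(C) \rightarrow E) \mapsto (C, E)$ induced by evaluation at $1$. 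The point is that $L$ applied to a natural set of generators of $\mathcal{C} \times \mathcal{D}$ will yield exactly the set of objects claimed in the corollary.

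First, I would invoke \Cref{lem:prod-proj-gen} to see that $\mathcal{C} \times \mathcal{D}$ is projectively generated. Since finite coproducts of compact projective objects remain compact projective, the enlarged collection $\{(C, D) \barsuchthat C \in S, D \in T\}$, with each $(C, D) \simeq (C, \emptyset_\mathcal{D}) \amalg (\emptyset_\mathcal{C}, D)$ being a coproduct of the basic generators of \Cref{lem:prod-proj-gen}, is still a set of compact projective generators. To then apply \Cref{prop:adjoint-proj-gen}, I must verify that $R$ is conservative and preserves sifted colimits. Conservativity is immediate from the pullback presentation of the comma category: an arrow there is an equivalence iff its $\mathcal{C}$-coordinate and its image under $\tmop{ev}_1$ are both equivalences (equivalences in $\tmop{Fun}(\Delta^1, \mathcal{D})$ being detected pointwise), which is precisely the data recorded by $R$.

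The main obstacle, and the step demanding the most care, is checking that $R$ preserves sifted colimits. The comma category is the total space of a coCartesian fibration over $\mathcal{C}$ whose fiber over $C$ is the undercategory $\mathcal{D}_{F(C) \mathord{/}}$, with coCartesian transport along $f \of C \rightarrow C'$ given by pushout along $F(f) \of F(C) \rightarrow F(C')$. A sifted colimit $\tmop{colim}_{i \in I} (C_i, F(C_i) \rightarrow E_i)$ is therefore computed by first forming $C^{\ast} = \tmop{colim}_i C_i$ in $\mathcal{C}$, transporting each term to the fiber $\mathcal{D}_{F(C^{\ast}) \mathord{/}}$ via the pushout $F(C^{\ast}) \amalg_{F(C_i)} E_i$, and then taking the colimit in that fiber. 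One must verify that the projection $R$ of this colimit recovers the sifted colimit of the pairs $(C_i, E_i)$ in $\mathcal{C} \times \mathcal{D}$, which ultimately rests on the interaction of sifted colimits with the pushouts along $F(C_i) \rightarrow F(C^{\ast})$. Once this is secured, \Cref{prop:adjoint-proj-gen} yields that the comma category is projectively generated with compact projective generators $\{L(C, D) \barsuchthat C \in S, D \in T\} = \{(C, F(C) \rightarrow F(C) \amalg D) \barsuchthat C \in S, D \in T\}$, as asserted.
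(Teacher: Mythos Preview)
Your approach matches the paper's: apply \Cref{prop:adjoint-proj-gen} to the adjunction of the preceding lemma, after observing via \Cref{lem:prod-proj-gen} that $\mathcal{C} \times \mathcal{D}$ is projectively generated. You also correctly locate the crux in checking that $R$ preserves sifted colimits, and your coCartesian-fibration description of how such colimits are computed in the comma category is accurate.

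However, the step you flag as the ``main obstacle'' genuinely cannot be secured without an extra hypothesis on $F$. Following your own computation and commuting the two colimits, the $\tmop{ev}_1$-component of a sifted colimit in the comma category is
\[
\tmop{colim}_i \bigl( F(C^{\ast}) \amalg_{F(C_i)} E_i \bigr) \;\simeq\; F(C^{\ast}) \amalg_{\tmop{colim}_i F(C_i)} \tmop{colim}_i E_i,
\]
using that sifted index categories are weakly contractible so the constant diagram at $F(C^{\ast})$ has colimit $F(C^{\ast})$. This agrees with $\tmop{colim}_i E_i$ whenever the comparison map $\tmop{colim}_i F(C_i) \to F(C^{\ast})$ is an equivalence, but not in general: take $\mathcal{C} = \mathcal{D} = \tmop{Set}$ with $F$ the covariant power-set functor and consider the filtered system $(\{0,\ldots,n\},\, F(\{0,\ldots,n\}) \to \ast)$; the $\tmop{ev}_1$-component of its colimit is the quotient of $F(\mathbb{N})$ collapsing all finite subsets to a point, not $\ast$. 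Thus $R$ need not preserve filtered colimits and the objects $L(C,D)$ need not be compact projective. The corollary as stated is therefore missing the hypothesis that $F$ preserves sifted colimits; the paper's own \Cref{lem:comma-colim}, stated immediately afterwards, carries exactly this hypothesis for the projections to preserve colimits. In every application in the paper (notably the construction of $\tmop{CrysCon}$ in \Cref{subsec:deriv-dR}), the relevant $F$ preserves all small colimits, so nothing downstream is affected.
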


It follows from {\cite[Lem~5.4.5.5]{Lurie2009}} that the colimits in comma
categories exist and are easy to describe under the assumption that the
functor in question preserves colimits:

\begin{lemma}
  \label{lem:comma-colim}Let $\mathcal{C}, \mathcal{D}$ be
  $\infty$\mbox{-}categories and $F \of \mathcal{C} \rightarrow \mathcal{D}$ a
  functor. Let $K$ be a simplicial set. Suppose that $\mathcal{C},
  \mathcal{D}$ admits $K$-indexed colimits which are preserved by $F$. Then
  the comma category $\mathcal{C} \times_{\tmop{Fun} (\{ 0 \}, \mathcal{D})}
  \tmop{Fun} (\Delta^1, \mathcal{D})$ admits $K$-indexed colimits which are
  preserved by projection to either factor.
\end{lemma}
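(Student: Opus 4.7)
The plan is to invoke Lurie's descent-of-colimits lemma {\cite[Lem~5.4.5.5]{Lurie2009}} directly on the pullback square defining the comma category. Set $\mathcal{E} \assign \mathcal{C} \times_{\tmop{Fun} (\{ 0 \}, \mathcal{D})} \tmop{Fun} (\Delta^1, \mathcal{D})$. The square in question has $\mathcal{E}$ in the top left, $\tmop{Fun} (\Delta^1, \mathcal{D})$ in the top right, $\mathcal{C}$ in the bottom left, $\mathcal{D} \simeq \tmop{Fun} (\{ 0 \}, \mathcal{D})$ in the bottom right, the bottom horizontal arrow given by $F$, and the right vertical arrow given by restriction along $\{ 0 \} \hookrightarrow \Delta^1$.

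To apply the lemma I would first verify its hypotheses. (a)~The restriction functor $\tmop{Fun} (\Delta^1, \mathcal{D}) \rightarrow \tmop{Fun} (\{ 0 \}, \mathcal{D})$ is a categorical fibration; this has already been observed in the proof of the previous lemma via {\cite[Cor~2.3.2.5]{Lurie2009}} together with {\cite[Cor~2.4.6.5]{Lurie2009}}. (b)~Since $\mathcal{D}$ admits $K$-indexed colimits, so does $\tmop{Fun} (\Delta^1, \mathcal{D})$, computed pointwise, and both restriction functors $\tmop{Fun} (\Delta^1, \mathcal{D}) \rightarrow \tmop{Fun} (\{ i \}, \mathcal{D})$ for $i \in \{ 0, 1 \}$ preserve these colimits. (c)~The functor $F$ preserves $K$-indexed colimits by hypothesis.

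Now {\cite[Lem~5.4.5.5]{Lurie2009}} applied to the above square yields that $\mathcal{E}$ admits $K$-indexed colimits, that the left vertical projection $\mathcal{E} \rightarrow \mathcal{C}$ preserves them, and that the top horizontal projection $\mathcal{E} \rightarrow \tmop{Fun} (\Delta^1, \mathcal{D})$ preserves them. To recover the statement about the ``other factor'' $\mathcal{D}$ -- that is, the projection informally sending $(C, F (C) \rightarrow D)$ to $D$ -- observe that it factors as $\mathcal{E} \rightarrow \tmop{Fun} (\Delta^1, \mathcal{D}) \rightarrow \tmop{Fun} (\{ 1 \}, \mathcal{D}) \simeq \mathcal{D}$, each step of which preserves $K$-indexed colimits by what we have just established together with (b). There is essentially no obstacle: the whole argument is packaged by the cited lemma, and the only substantive verifications are that the pullback is along a categorical fibration and that the target projection factors through restriction to the vertex $\{ 1 \}$.
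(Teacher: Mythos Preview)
Your proposal is correct and takes essentially the same approach as the paper, which simply cites {\cite[Lem~5.4.5.5]{Lurie2009}} without further detail. You have spelled out the verification of the hypotheses and the factorisation through $\tmop{Fun}(\{1\},\mathcal{D})$ more explicitly than the paper does, but the argument is identical.
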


\begin{remark}[Base change]
  \label{rem:comma-base-chg}Let $\mathcal{C}, \mathcal{D}$ be
  $\infty$\mbox{-}categories which admit finite colimits and $F \of
  \mathcal{C} \rightarrow \mathcal{D}$ a functor which preserves finite
  colimits. Given an object $(C, F (C) \rightarrow D) \in \mathcal{C}
  \times_{\tmop{Fun} (\{ 0 \}, \mathcal{D})} \tmop{Fun} (\Delta^1,
  \mathcal{D})$, there is a unique map $(C, \tmop{id}_{F (C)}) \rightarrow (C,
  F (C) \rightarrow D)$ (which is in fact the unit map for the adjunction in
  \Cref{rem:proj-comma-sect}). For all maps $C \rightarrow C'$ in
  $\mathcal{C}$, we have the pushout of the diagram $(C', \tmop{id}_{F (C')})
  \leftarrow (C, \tmop{id}_{F (C)}) \rightarrow (F (C) \rightarrow D)$ in
  $\mathcal{C}$, which is $(C', D \amalg_{F (C)} F (C'))$ by
  \Cref{lem:comma-colim}. At the beginning of this section, we said that the
  objects in $\mathcal{C}$ are considered as ``bases''. Thus we understand
  this pushout as ``base change''.
\end{remark}

\begin{example}
  \label{ex:cot-cx-base-chg}In \Cref{ex:comma-ani-ring}, given a map $A
  \rightarrow B$ of animated rings, the base change of $A \rightarrow R$ along
  $A \rightarrow B$ is $B \rightarrow R \otimes_A^{\mathbb{L}} B$. Since the
  cotangent complex functor $L_{\cdummy / \cdummy} \of \tmop{Fun} (\Delta^1,
  \tmop{CAlg}^{\tmop{an}}) \rightarrow \tmop{Ani} (\tmop{Mod})$ preserves
  small colimits (\Cref{lem:cot-cx-preserv-colim}), we get the base change
  property: the natural map $L_{R / A} \otimes_A^{\mathbb{L}} B \rightarrow
  L_{R \otimes_A^{\mathbb{L}} B / B}$ is an equivalence (here we implicitly
  used \Cref{lem:cot-cx-id-trivial}). Similarly, we get the base change
  property $\tmop{HH} (R / A) \otimes_A^{\mathbb{L}} B \simeq \tmop{HH} (R
  \otimes_A^{\mathbb{L}} B / B)$ for Hochschild homology (the reader should
  feel free to ignore this since it will not be used in this article).
\end{example}

\begin{example}
  In \Cref{ex:comma-PD-pair}, given a map $(A, I, \gamma) \rightarrow (B, J,
  \delta)$ of PD-pairs, the base change of $((A, I, \gamma), A / I \rightarrow
  R)$ along $(A, I, \gamma) \rightarrow (B, J, \delta)$ is $((B, J, \delta), B
  / J \rightarrow R \otimes_{A / I} (B / J))$.
\end{example}

\begin{remark}
  We have a prismatic version of base change by \Cref{rem:prism-comma-cat}.
\end{remark}

\begin{remark}[Colimits over a fixed base]
  \label{rem:comma-colim-fix-base}Let $\mathcal{C}, \mathcal{D}$ be cocomplete
  $\infty$\mbox{-}categories and $F \of \mathcal{C} \rightarrow \mathcal{D}$ a
  functor which preserves small colimits. Given an object $C \in \mathcal{C}$,
  a small simplicial set $K$ and a diagram $q \of K \rightarrow \mathcal{D}_{F
  (C) \mathord{/}}$, we associate a diagram $K \rightarrow \mathcal{C}
  \times_{\tmop{Fun} (\{ 0 \}, \mathcal{D})} \tmop{Fun} (\Delta^1,
  \mathcal{D})$ informally given by $k \mapsto (C, F (C) \rightarrow q (k))$
  (the formal description necessitates a discussion of ``fat'' overcategories
  {\cite[§4.2.1]{Lurie2009}}). By \Cref{lem:comma-colim}, the colimit of this
  diagram is given by $(C, \tmop{colim} q)$. We understand this colimit as
  taking colimits over a fixed base.
\end{remark}

\begin{example}
  In \Cref{ex:comma-ani-ring}, given an animated ring $A$ and two $A$-algebras
  $R, S$, the map $(A \rightarrow R \otimes_A^{\mathbb{L}} S)$, seen as an
  object of $\tmop{Fun} (\Delta^1, \tmop{CAlg}^{\tmop{an}})$, is the pushout
  of the diagram $(A \rightarrow R) \leftarrow (A, \tmop{id}_A) \rightarrow (A
  \rightarrow S)$. Since the cotangent complex functor $L_{\cdummy / \cdummy}
  \of \tmop{Fun} (\Delta^1, \tmop{CAlg}^{\tmop{an}}) \rightarrow \tmop{Ani}
  (\tmop{Mod})$ (which we will review in \Cref{def:cot-cx-fun}) preserves
  small colimits (\Cref{lem:cot-cx-preserv-colim}), we get the ``Künneth
  formula'': the natural map $L_{R / A} \otimes_R^{\mathbb{L}} (R
  \otimes_A^{\mathbb{L}} S) \oplus L_{S / A} \otimes_S^{\mathbb{L}} (R
  \otimes_A^{\mathbb{L}} S) \rightarrow L_{(R \otimes_A^{\mathbb{L}} S) / A}$
  is an equivalence (again we used \Cref{lem:cot-cx-id-trivial}, and also the
  form of colimits in $\tmop{Ani} (\tmop{Mod})$). Similarly, we have
  $\tmop{HH} (R / A) \otimes_A^{\mathbb{L}} \tmop{HH} (S / A) \simeq \tmop{HH}
  (R \otimes_A^{\mathbb{L}} S / A)$ for Hochschild homology (again, Hochschild
  homology is not needed in this article).
\end{example}

\begin{remark}
  In view of \Cref{rem:prism-comma-cat}, prismatic cohomology has a similar
  ``Künneth formula'' {\cite[Prop~3.5.1]{Anschuetz2019a}}.
\end{remark}

\begin{remark}[Transitivity]
  \label{rem:comma-transtive}Let $\mathcal{C}, \mathcal{D}$ be
  $\infty$\mbox{-}categories which admit finite colimits and $F \of
  \mathcal{C} \rightarrow \mathcal{D}$ a functor which preserves finite
  colimits. Given a map $C \rightarrow C'$ in $\mathcal{C}$, any object $(C',
  F (C') \rightarrow D) \in \mathcal{C} \times_{\tmop{Fun} (\{ 0 \},
  \mathcal{D})} \tmop{Fun} (\Delta^1, \mathcal{D})$ could be written as the
  pushout of the diagram $(C, F (C) \rightarrow D) \leftarrow (C, F (C)
  \rightarrow F (C')) \rightarrow (C', \tmop{id}_{F (C')})$. This is closely
  related to transitivity sequence in the cohomology theory, as shown in
  examples below.
\end{remark}

\begin{example}
  In \Cref{ex:comma-ani-ring}, for any maps $A \rightarrow B \rightarrow R$ of
  animated rings, the ``relative'' map $B \rightarrow R$, viewed as an object
  of $\tmop{Fun} (\Delta^1, \tmop{CAlg}^{\tmop{an}})$, is the pushout of the
  diagram $(A \rightarrow R) \leftarrow (A \rightarrow B) \rightarrow
  (\tmop{id}_B \of B \rightarrow B)$. Since the cotangent complex functor
  $L_{\cdummy / \cdummy} \of \tmop{Fun} (\Delta^1, \tmop{CAlg}^{\tmop{an}})
  \rightarrow \tmop{Ani} (\tmop{Mod})$ preserves small colimits
  (\Cref{lem:cot-cx-preserv-colim}), we get the transitivity sequence
  \[ L_{B / A} \otimes_B^{\mathbb{L}} R \longrightarrow L_{R / A}
     \longrightarrow L_{R / B} \]
  (\Cref{lem:cot-cx-id-trivial} was used) Similarly, we have $\tmop{HH} (R /
  A) \otimes_{\tmop{HH} (B / A)}^{\mathbb{L}} B \simeq \tmop{HH} (R / B)$ for
  Hochschild homology.
\end{example}

Finally, we briefly review the theory of the cotangent complex of maps of
animated rings, meanwhile we explain how this ``coincides'' with the theory of
cotangent complex of maps of animated $A$-algebra for some ring $A$. By
\Cref{cor:ani-arrow-cat}, the $\infty$-category $\tmop{AniArr} \assign
\tmop{Fun} (\Delta^1, \tmop{CAlg}^{\tmop{an}})$ is projectively generated, and
the proof leads to a set $\{ \mathbb{Z} [X] \rightarrow \mathbb{Z} [X, Y]
\barsuchthat X, Y \in \tmop{Fin} \}$ of compact projective generators. Let
$\tmop{AniArr}^0 \subseteq \tmop{AniArr}$ denote the full subcategory spanned
by those compact projective generators.

\begin{definition}
  \label{def:cot-cx-fun}The {\tmdfn{cotangent complex functor}} $\tmop{AniArr}
  \rightarrow \tmop{Ani} (\tmop{Mod})$ is defined to be the left derived
  functor (\Cref{prop:left-deriv-n-fun}) of the functor $\tmop{AniArr}^0
  \rightarrow \tmop{Ani} (\tmop{Mod}), (A \rightarrow B) \mapsto (B, \Omega_{B
  / A}^1)$. The image of an object $(A \rightarrow B) \in \tmop{AniArr}$ is
  denoted by $(B, L_{B / A})$.
\end{definition}

\begin{remark}
  In fact, this functor is also the animation of the functor $\tmop{Fun}
  (\Delta^1, \tmop{Ring}) \rightarrow \tmop{Mod}, (A \rightarrow B) \mapsto
  (B, \Omega_{B / A}^1)$. We do not take this as the definition since later we
  will apply the same idea to functors which are not defined by the animation
  of a functor.
\end{remark}

Since the functor $\tmop{AniArr}^0 \rightarrow \tmop{Ani} (\tmop{Mod}), B
\mapsto (B, \Omega_{B / A}^1)$ preserves finite coproducts, by
\Cref{prop:left-deriv-n-fun}, we get

\begin{lemma}
  \label{lem:cot-cx-preserv-colim}The cotangent complex functor $\tmop{AniArr}
  \rightarrow \tmop{Ani} (\tmop{Mod})$ preserves small colimits.
\end{lemma}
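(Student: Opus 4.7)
The plan is to reduce the claim to a finite computation on the set $\tmop{AniArr}^0$ of compact projective generators, using the general principle that, for a left derived functor, preservation of finite coproducts by the restriction to a set of compact projective generators (combined with the automatic preservation of sifted colimits from \Cref{prop:left-deriv-fun}) upgrades to preservation of all small colimits. Concretely, I would first note that $\tmop{AniArr}^0$ is closed under finite coproducts in $\tmop{AniArr}$: the coproduct of $(\mathbb{Z}[X_1] \to \mathbb{Z}[X_1, Y_1])$ and $(\mathbb{Z}[X_2] \to \mathbb{Z}[X_2, Y_2])$ is $(\mathbb{Z}[X_1 \sqcup X_2] \to \mathbb{Z}[X_1 \sqcup X_2, Y_1 \sqcup Y_2])$, since both endpoints are polynomial rings and coproducts in $\tmop{AniArr}$ are computed componentwise for such free objects.

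The key step is then to verify that the underlying functor $f \of \tmop{AniArr}^0 \to \tmop{Ani}(\tmop{Mod})$, sending $(\mathbb{Z}[X] \to \mathbb{Z}[X, Y])$ to the ring-module pair $(\mathbb{Z}[X, Y], \Omega^1_{\mathbb{Z}[X, Y]/\mathbb{Z}[X]})$, preserves finite coproducts. On the source, the coproduct computation above produces $(\mathbb{Z}[X_1 \sqcup X_2, Y_1 \sqcup Y_2], \Omega^1_{\mathbb{Z}[X_1 \sqcup X_2, Y_1 \sqcup Y_2]/\mathbb{Z}[X_1 \sqcup X_2]})$, and the differentials form the free module on $Y_1 \sqcup Y_2$. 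On the target, the coproduct in the $1$-category $\tmop{Mod}$ of ring-module pairs of $(\mathbb{Z}[X_i, Y_i], \mathbb{Z}[X_i, Y_i]^{\oplus Y_i})$ for $i = 1, 2$ is the ring tensor product equipped with the direct sum of the base-changed modules, which gives exactly the same pair. Since the fully faithful embedding $\tmop{Mod} \hookrightarrow \tmop{Ani}(\tmop{Mod})$ preserves finite coproducts (as coproducts on projective generators), the equality holds in $\tmop{Ani}(\tmop{Mod})$ as well.

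Finally, I would invoke \Cref{prop:left-deriv-fun}: the left derived functor $\tmop{AniArr} \to \tmop{Ani}(\tmop{Mod})$ preserves sifted colimits by construction, and by density it also preserves finite coproducts once $f$ does; since every small colimit can be built from finite coproducts and sifted colimits, the functor preserves all small colimits. There is no real obstacle here—the whole point of the formalism of \Cref{prop:left-deriv-fun} is to reduce this kind of statement to the finite coproduct check on $\tmop{AniArr}^0$, which is essentially an elementary computation with Kähler differentials of polynomial rings.
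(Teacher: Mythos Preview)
Your proof is correct and follows essentially the same approach as the paper: the paper simply asserts that the functor $\tmop{AniArr}^0 \rightarrow \tmop{Ani}(\tmop{Mod})$, $(A \to B) \mapsto (B, \Omega^1_{B/A})$ preserves finite coproducts and then invokes \Cref{prop:left-deriv-fun}, which is exactly your argument with the elementary Kähler-differential computation spelled out.
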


Now we consider the functor $\tmop{CAlg}^{\tmop{an}} \rightarrow
\tmop{AniArr}, A \mapsto (\tmop{id}_A \of A \rightarrow A)$. This functors
preserves small colimits\footnote{In fact, this is fully faithful. However, in
order to apply the same idea to later contexts, we only abstract out the
colimit-preserving property.}, thus so does the composite functor
$\tmop{CAlg}^{\tmop{an}} \rightarrow \tmop{AniArr} \xrightarrow{L_{\cdummy /
\cdummy}} \tmop{Ani} (\tmop{Mod})$, concretely given by $A \mapsto (A, L_{A /
A})$. The next simple\footnote{We warn the reader that this lemma is not
tautological.} lemma is key to the ``independence of the choice of the base'',
which was already used in examples before:

\begin{lemma}
  \label{lem:cot-cx-id-trivial}The composite functor $\tmop{CAlg}^{\tmop{an}}
  \rightarrow \tmop{AniArr} \xrightarrow{L_{\cdummy / \cdummy}} \tmop{Ani}
  (\tmop{Mod})$ above coincides with the functor $\tmop{CAlg}^{\tmop{an}}
  \rightarrow \tmop{Ani} (\tmop{Mod}), A \mapsto (A, 0)$.
\end{lemma}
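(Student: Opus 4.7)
The plan is to invoke the universal property of left derived functors (\Cref{prop:left-deriv-fun}). Write $\Phi_1 \colon A \mapsto (A, \mathbb{L}_{A/A})$ for the composite under consideration and $\Phi_2 \colon A \mapsto (A, 0)$ for the constant-zero functor. Both are functors $\tmop{Ani}(\tmop{Ring}) \to \tmop{Ani}(\tmop{Mod})$, and since $\tmop{Ani}(\tmop{Ring})$ is projectively generated by the polynomial rings $\mathbb{Z}[X]$ for $X \in \tmop{Fin}$, it will suffice to show that (a) both functors preserve small colimits, and (b) their restrictions to the full subcategory spanned by $\{\mathbb{Z}[X]\}_{X \in \tmop{Fin}}$ are equivalent.

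For (a): $\Phi_2$ is the composition of the identity functor with the fully faithful inclusion of the zero object, hence colimit-preserving. The functor $A \mapsto (\tmop{id}_A \colon A \to A)$ is colimit-preserving (small colimits in $\tmop{AniArr}$ are computed pointwise), and the cotangent complex functor preserves small colimits by \Cref{lem:cot-cx-preserv-colim}; so $\Phi_1$ is colimit-preserving.

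For (b): the crux is that for a polynomial ring $A = \mathbb{Z}[X]$, the object $(\tmop{id}_A \colon \mathbb{Z}[X] \to \mathbb{Z}[X])$ is precisely $(\mathbb{Z}[X] \to \mathbb{Z}[X, Y])$ with $Y = \emptyset$, so it lies in the subcategory $\tmop{AniArr}^0 \subseteq \tmop{AniArr}$ of compact projective generators. By \Cref{def:cot-cx-fun}, the cotangent complex functor on objects of $\tmop{AniArr}^0$ is computed as the ordinary module of Kähler differentials, hence $\Phi_1(\mathbb{Z}[X]) = (\mathbb{Z}[X], \Omega^1_{\mathbb{Z}[X]/\mathbb{Z}[X]}) = (\mathbb{Z}[X], 0) = \Phi_2(\mathbb{Z}[X])$, and this identification is natural in $X$. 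Applying \Cref{prop:left-deriv-fun} to the common restriction yields a canonical equivalence $\Phi_1 \simeq \Phi_2$.

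The main (small) obstacle is psychological rather than technical: one must resist the temptation to view $\mathbb{L}_{A/A} \simeq 0$ as tautological from the notation. Since the cotangent complex is defined only via a left derived functor, one has to actually evaluate that derived functor on $\tmop{id}_A$. This reduction succeeds precisely because the generators of $\tmop{AniArr}^0$ were chosen to include the case $Y = \emptyset$, so the relevant value is visible directly from the classical formula.
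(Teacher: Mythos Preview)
Your proof is correct and follows essentially the same route as the paper: both argue that the composite preserves small colimits (the paper notes this just before the lemma), reduce via \Cref{prop:left-deriv-fun} to checking on polynomial rings $\mathbb{Z}[X]$, and observe that there the value is $\Omega^1_{\mathbb{Z}[X]/\mathbb{Z}[X]}=0$ by definition. Your write-up is simply a more detailed unpacking of the paper's one-line proof, and your closing remark nicely echoes the paper's footnote that the statement is not tautological.
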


\begin{proof}
  By the colimit-preserving property above and \Cref{prop:left-deriv-n-fun},
  it suffices to check this for polynomial rings $A =\mathbb{Z} [x_1, \ldots,
  x_n]$, but this follows directly from the definitions.
\end{proof}

We now consider the full subcategory $\mathcal{P}^0$ of $\tmop{Fun} (\Delta^1,
\tmop{Ring})$ spanned by maps $A [X] \rightarrow A [X, Y]$ with $A \in
\tmop{Ring}$ and $X, Y \in \tmop{Fin}$. The functor $\tmop{Fun} (\Delta^1,
\tmop{Ring}) \rightarrow \tmop{Ani} (\tmop{Mod}), (A \rightarrow B) \mapsto
(B, \Omega_{B / A}^1)$ restricts to a functor $G \of \mathcal{P}^0 \rightarrow
\tmop{Ani} (\tmop{Mod})$. By \Cref{prop:left-deriv-n-fun}, the restriction $F$
of the cotangent complex functor $\tmop{AniArr} \rightarrow \tmop{Ani}
(\tmop{Mod})$ to the full subcategory $\mathcal{P}^0$ is left Kan extended
from $\tmop{AniArr}^0 \subseteq \mathcal{P}^0$, therefore we have a comparison
map $F \rightarrow G$, which becomes an equivalence after restricting to the
full subcategory $\tmop{AniArr}^0$. By \Cref{ex:cot-cx-base-chg}, this
comparison map is an equivalence since $G$ also has the ``base change
property''.

Now we fix a ring $A$, and let $\tmop{AniArr}_A$ denote the $\infty$-category
$\tmop{Fun} (\Delta^1, \tmop{CAlg}_A^{\tmop{an}})$. As before, by
\Cref{cor:ani-arrow-cat}, it is projectively generated with a set $\{ A [X]
\rightarrow A [X, Y] \barsuchthat X, Y \in \tmop{Fin} \}$ of compact
projective generators, which spans a full subcategory $\tmop{AniArr}_A^0
\subseteq \tmop{AniArr}_A$. Note that the functor $\tmop{AniArr}_A^0
\rightarrow \tmop{Ani} (\tmop{Mod}), (B \rightarrow C) \mapsto (C, \Omega_{C /
B}^1)$ coincides with the composite functor $\tmop{AniArr}_A^0 \rightarrow
\mathcal{P}^0 \xrightarrow{G} \tmop{Ani} (\tmop{Mod})$, and since $F \simeq
G$, this composite functor is just the cotangent complex functor applied to
the underlying map of animated rings. It follows from
\Cref{prop:left-deriv-n-fun} that

\begin{lemma}
  \label{lem:cot-cx-indep-base}The composite functor $\tmop{AniArr}_A
  \rightarrow \tmop{AniArr} \rightarrow \tmop{Ani} (\tmop{Mod}), (B
  \rightarrow C) \mapsto (C, L_{C / B})$ is equivalent to the left derived
  functor of $\tmop{AniArr}_A^0 \rightarrow \tmop{Ani} (\tmop{Mod}), (A [X]
  \rightarrow A [X, Y]) \mapsto \Omega_{A [X, Y] / A [X]}^1$.
\end{lemma}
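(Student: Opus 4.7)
The plan is to identify the two functors by appealing to the universal property of the left derived functor (\Cref{prop:left-deriv-fun}): since $\tmop{AniArr}_A$ is projectively generated by $\tmop{AniArr}_A^0$, a functor out of it that preserves sifted colimits is determined up to equivalence by its restriction to $\tmop{AniArr}_A^0$. So I have to check two things about the composite $H \colon \tmop{AniArr}_A \to \tmop{AniArr} \xrightarrow{\mathbb{L}_{\cdummy/\cdummy}} \tmop{Ani}(\tmop{Mod})$: (a) it preserves sifted colimits, and (b) its restriction to $\tmop{AniArr}_A^0$ is equivalent to the functor $(A[X] \to A[X,Y]) \mapsto (A[X,Y], \Omega^1_{A[X,Y]/A[X]})$.

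For (a), \Cref{lem:cot-cx-preserv-colim} gives that the second factor preserves all small colimits, so it suffices that the forgetful functor $\tmop{AniArr}_A \to \tmop{AniArr}$ preserves sifted colimits. This factor is obtained by post-composition with the forgetful functor $\tmop{Ani}(\tmop{Alg}_A) \to \tmop{Ani}(\tmop{Ring})$, and since sifted colimits in functor $\infty$-categories are computed pointwise, this reduces to knowing that $\tmop{Ani}(\tmop{Alg}_A) \simeq \tmop{Ani}(\tmop{Ring})_{A\mathord{/}} \to \tmop{Ani}(\tmop{Ring})$ preserves sifted colimits, which is a special case of \Cref{lem:undercat-proj-gen} (or rather its proof, which shows the forgetful functor out of an undercategory preserves sifted colimits).

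For (b), observe that under the forgetful $\tmop{AniArr}_A^0 \to \tmop{AniArr}$, a generator $(A[X] \to A[X,Y])$ lands in the full subcategory $\mathcal{P}^0 \subseteq \tmop{Fun}(\Delta^1, \tmop{Ring})$ introduced just before the lemma. The paragraph preceding the statement already identifies the restrictions of the cotangent complex functor $F$ and the Kähler differentials functor $G$ on $\mathcal{P}^0$: the comparison map $F \to G$ is an equivalence because $G$ has the base-change property established in \Cref{ex:cot-cx-base-chg}, and both functors are left Kan extended from $\tmop{AniArr}^0$. So $\mathbb{L}_{A[X,Y]/A[X]} \simeq \Omega^1_{A[X,Y]/A[X]}$ naturally in $X,Y$, which is exactly the required identification of restrictions.

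Combining (a) and (b), \Cref{prop:left-deriv-fun} identifies $H$ with the left derived functor of its restriction to $\tmop{AniArr}_A^0$, yielding the claim. There is no serious obstacle here; the only subtle point is that one must remember to use the already-established agreement $F \simeq G$ on the larger category $\mathcal{P}^0$ rather than only on $\tmop{AniArr}^0$, since the generators $A[X] \to A[X,Y]$ of $\tmop{AniArr}_A^0$ are not (for general $A$) among the generators $\mathbb{Z}[X] \to \mathbb{Z}[X,Y]$ of $\tmop{AniArr}^0$.
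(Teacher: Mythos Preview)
Your proposal is correct and follows essentially the same approach as the paper: the paper's argument is the paragraph immediately preceding the lemma, which establishes $F \simeq G$ on $\mathcal{P}^0$ via base change, observes that the restriction of $H$ to $\tmop{AniArr}_A^0$ factors through $\mathcal{P}^0$, and then invokes \Cref{prop:left-deriv-fun}. You add an explicit verification of step (a) (sifted colimit preservation of the forgetful functor $\tmop{AniArr}_A \to \tmop{AniArr}$), which the paper leaves implicit; this is a welcome clarification but not a different method.
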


That is to say, the definition of the cotangent complex does not depend on the
choice of the base. This argument applies to similar situations, such as
animated PD-envelope, and such phenomenon will appear frequently in this
article.

\subsection{$\infty$\mbox{-}category of graded and filtered
objects}\label{subsec:graded-fil-objs}In this section, we recollect basic
properties of the $\infty$\mbox{-}category of graded and filtered objects. Our
main reference is {\cite[§3]{Raksit2020}}.

The $\infty$\mbox{-}category of ($\mathbb{Z}$-)graded objects in an
$\infty$\mbox{-}category $\mathcal{C}$ is the $\infty$\mbox{-}category
$\tmop{Gr} (\mathcal{C}) \assign \tmop{Fun} (\mathbb{Z}, \mathcal{C})$ of
functors, where $\mathbb{Z}$ is the set of integers as an
$\infty$\mbox{-}category. Given a graded object $G \in \tmop{Gr}
(\mathcal{C})$, we will denote the value of $G$ at $i \in \mathbb{Z}$ by
$X^i$. This defines a functor $(\cdummy)^i \of \tmop{Gr} (\mathcal{C})
\rightarrow \mathcal{C}$.

When the $\infty$\mbox{-}category $\mathcal{C}$ is presentable, for all $i \in
\mathbb{Z}$, the functor $(\cdummy)^i$ admits a fully faithful left adjoint
$\tmop{ins}^i \of \mathcal{C} \rightarrow \tmop{Gr} (\mathcal{C})$ simply
given by $X \mapsto G$ where $G^j = \left\{\begin{array}{ll}
  X & j = i\\
  0_{\mathcal{C}} & \tmop{otherwise}
\end{array}\right.$ where $0_{\mathcal{C}} \in \mathcal{C}$ is the initial
object.

We say that a graded object $G \in \tmop{Gr} (\mathcal{C})$ is
{\tmdfn{nonnegatively graded}} (resp. {\tmdfn{nonpositively graded}}) if the
restriction $F|_{\mathbb{Z}_{< 0}}$ (resp. $F|_{\mathbb{Z}_{> 0}}$) is
constantly $0_{\mathcal{C}}$. The full subcategory spanned by nonnegatively
graded (resp. nonpositively graded) objects is denoted by $\tmop{Gr}^{\geq 0}
(\mathcal{C})$ (resp. $\tmop{Gr}^{\leq 0} (\mathcal{C})$), which is
canonically equivalent to $\tmop{Fun} (\mathbb{Z}_{\geq 0}, \mathcal{C})$
(resp. $\tmop{Fun} (\mathbb{Z}_{\leq 0}, \mathcal{C})$).

Similarly, the {\tmdfn{$\infty$\mbox{-}category of ($\mathbb{Z}$-)filtered
objects}} in an $\infty$\mbox{-}category $\mathcal{C}$ is the
$\infty$\mbox{-}category $\tmop{Fil} (\mathcal{C}) \assign \tmop{Fun}
((\mathbb{Z}, \geq), \mathcal{C})$ of functors. Given a filtered object $F \in
\tmop{Fil} (\mathcal{C})$, we will systematically denote the value of $F$ at
$i \in \mathbb{Z}$ by $\tmop{Fil}^i F$ instead of $F (i)$ to indicate that we
consider it as a filtered object. This defines a functor $\tmop{Fil}^i \of
\tmop{Fil} (\mathcal{C}) \rightarrow \mathcal{C}$.

When the $\infty$\mbox{-}category $\mathcal{C}$ is presentable, for all $i \in
\mathbb{Z}$, the functor $\tmop{Fil}^i$ admits a fully faithful left adjoint
$\tmop{ins}^i \of \mathcal{C} \rightarrow \tmop{Fil} (\mathcal{C})$ given by
the left Kan extension along $\{ i \} \rightarrow (\mathbb{Z}, \geq)$. Given
$X \in \mathcal{C}$, $\tmop{Fil}^j (\tmop{ins}^i (X)) =
\left\{\begin{array}{ll}
  X & j \leq i\\
  0_{\mathcal{C}} & j > i
\end{array}\right.$ where $0_{\mathcal{C}} \in \mathcal{C}$ is the initial
object.

We say that a filtered object $F \in \tmop{Fil} (\mathcal{C})$ is
{\tmdfn{nonnegatively filtered}} if the restriction $F|_{\mathbb{Z}_{\leq 0}}$
is a constant functor $(\mathbb{Z}_{\leq 0}, \geq) \rightarrow \mathcal{C}$.
We denote by $\tmop{Fil}^{\geq 0} (\mathcal{C}) \subseteq \tmop{Fil}
(\mathcal{C})$ the full subcategory spanned by nonnegatively filtered objects,
which is canonically equivalent to $\tmop{Fun} ((\mathbb{Z}_{\geq 0}, \geq),
\mathcal{C})$. Similarly, we say that a filtered object $F \in \tmop{Fil}
(\mathcal{C})$ is {\tmdfn{nonpositively filtered}} if the restriction
$F|_{\mathbb{Z}_{> 0}}$ is constantly $0_{\mathcal{C}}$. We denote by
$\tmop{Fil}^{\leq 0} (\mathcal{C}) \subseteq \tmop{Fil} (\mathcal{C})$ the
full subcategory spanned by nonpositively filtered objects, which is
canonically equivalent to $\tmop{Fun} ((\mathbb{Z}_{\leq 0}, \geq),
\mathcal{C})$.

Given a filtered object $F \in \tmop{Fil} (\mathcal{C})$, the {\tmdfn{union}}
$\tmop{Fil}^{- \infty}$ is defined to be the colimit
$\tmop{colim}_{(\mathbb{Z}, \geq)} F$ (when it exists). When $\mathcal{C}$
admits all sequential colimits, this defines a functor $\tmop{Fil}^{- \infty}
\of \tmop{Fil} (\mathcal{C}) \rightarrow \mathcal{C}$. Furthermore, when the
$\infty$-category $\mathcal{C}$ is stable, a filtered object $F \in \tmop{Fil}
(\mathcal{C})$ is called {\tmdfn{complete}} {\cite[Def~3.2.12]{Raksit2020}},
or a {\tmdfn{completely filtered object}}, if $\lim F \simeq 0$ in
$\mathcal{C}$. We will denote by $\tmop{Fil}^{\wedge} (\mathcal{C}) \subseteq
\tmop{Fil} (\mathcal{C})$ the full subcategory spanned by completely filtered
objects.

\begin{remark}
  To avoid confusions, our filtrations are always decreasing. When we need
  increasing filtrations, we invert the sign to get a decreasing filtration.
\end{remark}

Now let $(\mathcal{C}, \otimes)$ be a presentable symmetric monoidal
$\infty$\mbox{-}category. Note that $\mathbb{Z}$ (resp. $(\mathbb{Z}, \geq)$)
has a symmetric monoidal structure given by the addition $+$, so the
$\infty$\mbox{-}category $\tmop{Gr} (\mathcal{C})$ (resp. $\tmop{Fil}
(\mathcal{C})$) admits a presentable symmetric monoidal structure given by the
{\tmdfn{Day convolution}} $\otimes^{\tmop{Day}}$ {\cite[§3]{Nikolaus2016}}.
Informally, given two graded (resp. filtered) objects $F, G$, we have $(F
\otimes^{\tmop{Day}} G)^i = \bigoplus_{j + k = i} F^j \otimes G^k$ (resp.
$\tmop{Fil}^i  (F \otimes^{\tmop{Day}} G) = \tmop{colim}_{j + k \geq i}
\tmop{Fil}^j F \otimes \tmop{Fil}^k G$). Under this symmetric monoidal
structure, $(\cdummy)^0 \of \tmop{Gr} (\mathcal{C}) \rightarrow \mathcal{C}$
(resp. $\tmop{Fil}^0 \of \tmop{Fil} (\mathcal{C}) \rightarrow \mathcal{C}$) is
lax symmetric monoidal, while the fully faithful left adjoint $\tmop{ins}^0
\of \mathcal{C} \rightarrow \tmop{Gr} (\mathcal{C})$ (resp. $\mathcal{C}
\rightarrow \tmop{Fil} (\mathcal{C})$) is symmetric monoidal.

The stable subcategory $\tmop{Gr}^{\geq 0} (\mathcal{C}) \subseteq \tmop{Gr}
(\mathcal{C})$ (resp. $\tmop{Gr}^{\leq 0} (\mathcal{C}) \subseteq \tmop{Gr}
(\mathcal{C})$) inherits a presentable symmetric monoidal structure, and the
$0$th piece $(\cdummy)^0 \of \tmop{Gr}^{\geq 0} (\mathcal{C}) \rightarrow
\mathcal{C}$ (resp. $\tmop{Gr}^{\leq 0} (\mathcal{C}) \rightarrow
\mathcal{C}$) is symmetric monoidal.

Similarly, the stable subcategory $\tmop{Fil}^{\geq 0} (\mathcal{C}) \subseteq
\tmop{Fil} (\mathcal{C})$ (resp. $\tmop{Fil}^{\leq 0} (\mathcal{C}) \subseteq
\tmop{Fil} (\mathcal{C})$) inherits a presentable symmetric monoidal
structure, and the $0$th piece $\tmop{Fil}^0 \of \tmop{Fil}^{\geq 0}
(\mathcal{C}) \rightarrow \mathcal{C}$ (resp. $\tmop{Fil}^{\leq 0}
(\mathcal{C}) \rightarrow \mathcal{C}$) is symmetric monoidal.

Now we study the relation between graded objects and filtered objects. First,
the symmetric monoidal functor $\mathbb{Z} \rightarrow (\mathbb{Z}, \geq)$
induces a lax symmetric monoidal functor $\tmop{Fil} (\mathcal{C}) \rightarrow
\tmop{Gr} (\mathcal{C})$, which admits a symmetric monoidal left adjoint $I
\of \tmop{Gr} (\mathcal{C}) \rightarrow \tmop{Fil} (\mathcal{C})$, the left
Kan extension along $\mathbb{Z} \rightarrow (\mathbb{Z}, \geq)$. Concretely,
it is given by $G \mapsto F$ where $\tmop{Fil}^i F = \coprod_{j \geq i} G^j$.

All of the functors mentioned above preserve small colimits. From now on, let
$\mathcal{C}$ be a presentable stable symmetric monoidal
$\infty$\mbox{-}category. Then these functors are exact. Now we consider the
associated graded functor $\tmop{gr} \of \tmop{Fil} (\mathcal{C}) \rightarrow
\tmop{Gr} (\mathcal{C}), F \mapsto G$ where $G^i = \tmop{cofib} (F^{i + 1} F
\rightarrow \tmop{Fil}^i F)$. It turns out that the functor $\tmop{gr}$
behaves well:

\begin{proposition}[{\cite[Prop~3.2.1]{Lurie2015}}
{\cite[Prop~2.26]{Gwilliam2018}}]
  Let $\mathcal{C}$ be a presentable stable symmetric monoidal
  $\infty$\mbox{-}category. Then there exists a symmetric monoidal structure
  on the functor $\tmop{gr}^{\ast} \of \tmop{Fil} (\mathcal{C}) \rightarrow
  \tmop{Gr} (\mathcal{C})$. Moreover, this symmetric monoidal structure can be
  chosen so that the composite functor $\tmop{Gr} (\mathcal{C})
  \xrightarrow{I} \tmop{Fil} (\mathcal{C}) \xrightarrow{\tmop{gr}^{\ast}}
  \tmop{Gr} (\mathcal{C})$ is homotopic to the identity as a symmetric
  monoidal functor.
\end{proposition}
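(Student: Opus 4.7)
The plan is to realize $\tmop{gr}^{\ast}$ as base change along a map of $\mathbb{E}_{\infty}$-algebras in a suitable module category, which will automatically endow it with a symmetric monoidal structure and give the desired compatibility with $I$ essentially for free.

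First, I would exhibit a Rees-type symmetric monoidal equivalence between $\tmop{Fil}(\mathcal{C})$ and a category of modules in $\tmop{Gr}(\mathcal{C})$. Concretely, let $A \in \tmop{CAlg}(\tmop{Gr}(\mathcal{C}))$ be the free graded commutative algebra $\tmop{Sym}(\tmop{ins}^{-1}(\mathbf{1}))$ on a generator $t$ placed in degree $-1$; explicitly, $A^{-n} \simeq \mathbf{1}$ for $n \geq 0$ and $A^{i} \simeq 0$ otherwise, with the evident multiplication. An $A$-module $M$ in $\tmop{Gr}(\mathcal{C})$ carries structure maps $A^{-1} \otimes M^{i} \to M^{i-1}$, i.e.\ maps $M^{i} \to M^{i-1}$; packaging these into a functor $(\mathbb{Z}, \geq) \to \mathcal{C}$ sending $i \mapsto M^{i}$ gives a filtered object, and I would argue this construction upgrades to a symmetric monoidal equivalence $\tmop{Mod}_{A}(\tmop{Gr}(\mathcal{C})) \simeq \tmop{Fil}(\mathcal{C})$ by checking it on generators $\tmop{ins}^{i} \mathbf{1}$ and using that both sides are the free presentable stable $\infty$-categories on $\mathbb{Z}$ with the appropriate convolution tensor.

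Next, I would identify $\tmop{gr}^{\ast}$ with the base change functor $(-) \otimes_{A} \mathbf{1}$ along the augmentation $\varepsilon \of A \to \mathbf{1}$ (which quotients out $t$). Unwinding the cofiber definition $\tmop{gr}^{i}(F) = \tmop{cofib}(\tmop{Fil}^{i+1} F \to \tmop{Fil}^{i} F)$, the map killed is exactly multiplication by the generator $t$, so the identification is immediate at the level of underlying graded objects; symmetric monoidality of $\tmop{gr}^{\ast}$ then follows because relative tensor product along a map of $\mathbb{E}_{\infty}$-algebras is always symmetric monoidal (see \cite[§4.5.3]{Lurie2017}).

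For the compatibility statement, I would observe that under the same identification the functor $I \of \tmop{Gr}(\mathcal{C}) \to \tmop{Fil}(\mathcal{C})$ corresponds to the free-module functor $M \mapsto M \otimes A$, which is symmetric monoidal. Therefore the composite $\tmop{gr}^{\ast} \circ I$ corresponds to $M \mapsto (M \otimes A) \otimes_{A} \mathbf{1} \simeq M \otimes \mathbf{1} \simeq M$, equipped with its canonical symmetric monoidal structure, which is precisely $\tmop{id}_{\tmop{Gr}(\mathcal{C})}$ as a symmetric monoidal functor. The main obstacle I expect is the symmetric monoidal Rees equivalence in the first step: one must verify that the Day convolution on $\tmop{Fil}(\mathcal{C})$, defined by a left Kan extension along the monoidal structure of $(\mathbb{Z}, \geq)$, matches the relative tensor product over $A$; this is a careful bookkeeping of Day convolution on $\mathbb{Z}$ together with the relation between modules over $\tmop{Sym}(\tmop{ins}^{-1} \mathbf{1})$ and functors out of $(\mathbb{Z}, \geq)$, but once set up all subsequent steps are formal consequences of base change.
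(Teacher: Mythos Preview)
The paper does not give its own proof of this proposition; it is stated with citations to \cite[Prop~3.2.1]{Lurie2015} and \cite[Prop~2.26]{Gwilliam2018} and then used as input. So there is nothing in the paper to compare your argument against directly.

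That said, your Rees-algebra strategy is precisely the one Lurie uses in the cited reference: identify $\tmop{Fil}(\mathcal{C})$ symmetric monoidally with $\tmop{Mod}_{A}(\tmop{Gr}(\mathcal{C}))$ for $A = \tmop{Sym}(\tmop{ins}^{-1}\mathbf{1})$, so that $\tmop{gr}^{\ast}$ becomes base change along the augmentation $A \to \mathbf{1}$ and $I$ becomes the free-module functor. Your outline is correct, and your honest flag that the nontrivial content lies in establishing the symmetric monoidal Rees equivalence is well placed; once that is in hand, the rest is formal. One small remark: your justification ``both sides are the free presentable stable $\infty$-categories on $\mathbb{Z}$ with the appropriate convolution tensor'' is too quick as stated, since $\tmop{Fil}(\mathcal{C})$ is freely generated over $(\mathbb{Z},\geq)$ rather than over $\mathbb{Z}$; the actual argument (as in Lurie) proceeds by exhibiting an explicit inverse or by a monadicity/Barr--Beck argument recognizing $\tmop{Fil}(\mathcal{C})$ as modules over the monad $(-)\otimes A$ on $\tmop{Gr}(\mathcal{C})$.
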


We also need the {\tmdfn{Beilinson t-structure}} on the
$\infty$\mbox{-}category $\tmop{Fil} (\mathcal{C})$ of filtered objects. As
before, let $(\mathcal{C}, \otimes, 1_{\mathcal{C}})$ be a presentable stable
symmetric monoidal $\infty$\mbox{-}category. Moreover, we assume that
$\mathcal{C}$ admits an accessible $t$-structure $(\mathcal{C}_{\geq 0},
\mathcal{C}_{\leq 0})$ compatible with filtered colimits such that
$1_{\mathcal{C}} \in \mathcal{C}_{\geq 0}$ and $\mathcal{C}_{\geq 0}$ is
closed under $\otimes$.

\begin{lemma}
  \label{lem:heart-symm-mon}Under the assumptions above, the heart
  $\mathcal{C}^{\heartsuit} \assign \mathcal{C}_{\geq 0} \cap
  \mathcal{C}_{\leq 0}$ admits a canonical symmetric monoidal structure
  $\otimes^{\heartsuit}$ given by $X \otimes^{\heartsuit} Y \assign \tau_{\leq
  0} (X \otimes Y)$, and the embedding $\mathcal{C}^{\heartsuit} \rightarrow
  \mathcal{C}$ is then lax symmetric monoidal.
\end{lemma}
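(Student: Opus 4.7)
The plan is to realize $\mathcal{C}^{\heartsuit}$ as a symmetric monoidal localization of $\mathcal{C}_{\geq 0}$, via the truncation functor $\tau_{\leq 0}$, and then deduce the lax symmetric monoidal structure on the full inclusion $\mathcal{C}^{\heartsuit} \hookrightarrow \mathcal{C}$ by adjunction.

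First, I would equip $\mathcal{C}_{\geq 0}$ with the symmetric monoidal structure inherited from $\mathcal{C}$. This makes sense because $1_{\mathcal{C}} \in \mathcal{C}_{\geq 0}$ and $\mathcal{C}_{\geq 0}$ is closed under $\otimes$, so the full subcategory $\mathcal{C}_{\geq 0} \subseteq \mathcal{C}$ is a symmetric monoidal subcategory, and the inclusion is symmetric monoidal; this is the standard procedure (\emph{cf.} \cite[Rem~2.2.1.2]{Lurie2017} or similar). The subcategory $\mathcal{C}^{\heartsuit} \subseteq \mathcal{C}_{\geq 0}$ is reflective, with reflection given by $\tau_{\leq 0}$ (the truncation functor on $\mathcal{C}_{\geq 0}$).

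Next, I would invoke the criterion \cite[Prop~2.2.1.9]{Lurie2017} which equips a reflective subcategory with a symmetric monoidal structure making the reflection symmetric monoidal, provided tensoring preserves the class of localization equivalences. Concretely, the task is to verify that for any $\tau_{\leq 0}$-equivalence $f \of X \rightarrow Y$ in $\mathcal{C}_{\geq 0}$ and any $Z \in \mathcal{C}_{\geq 0}$, the map $f \otimes \tmop{id}_Z \of X \otimes Z \rightarrow Y \otimes Z$ is again a $\tau_{\leq 0}$-equivalence. By stability of $\mathcal{C}$, $f$ is a $\tau_{\leq 0}$-equivalence if and only if $\tmop{cofib} (f) \in \mathcal{C}_{\geq 1}$; moreover, the cofiber of $f \otimes \tmop{id}_Z$ is $\tmop{cofib} (f) \otimes Z$, since $(\cdummy) \otimes Z$ is exact (as a left adjoint coming from the presentable symmetric monoidal structure). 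Thus I need to show $\mathcal{C}_{\geq 1} \otimes \mathcal{C}_{\geq 0} \subseteq \mathcal{C}_{\geq 1}$; but $\mathcal{C}_{\geq 1} = \mathcal{C}_{\geq 0} [1]$, and $\otimes$ commutes with shifts in either variable (again, by exactness), so this reduces to the assumption $\mathcal{C}_{\geq 0} \otimes \mathcal{C}_{\geq 0} \subseteq \mathcal{C}_{\geq 0}$.

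Applying \cite[Prop~2.2.1.9]{Lurie2017} then gives a symmetric monoidal structure on $\mathcal{C}^{\heartsuit}$ for which $\tau_{\leq 0} \of \mathcal{C}_{\geq 0} \rightarrow \mathcal{C}^{\heartsuit}$ is symmetric monoidal, and the formula $X \otimes^{\heartsuit} Y \simeq \tau_{\leq 0} (X \otimes Y)$ is then just the unit of the reflection applied to $X \otimes Y$, computed in $\mathcal{C}_{\geq 0}$ (and hence in $\mathcal{C}$ since the inclusion $\mathcal{C}_{\geq 0} \hookrightarrow \mathcal{C}$ is symmetric monoidal). Finally, the inclusion $\mathcal{C}^{\heartsuit} \hookrightarrow \mathcal{C}_{\geq 0}$ is right adjoint to the symmetric monoidal reflection $\tau_{\leq 0}$, and so inherits a canonical lax symmetric monoidal structure (\cite[Cor~7.3.2.7]{Lurie2017}); postcomposing with the symmetric monoidal inclusion $\mathcal{C}_{\geq 0} \hookrightarrow \mathcal{C}$ yields the desired lax symmetric monoidal embedding $\mathcal{C}^{\heartsuit} \hookrightarrow \mathcal{C}$.

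The only nontrivial step is the verification that tensoring preserves $\tau_{\leq 0}$-equivalences, which is the main obstacle; once that is in place, the rest is a mechanical application of the general machinery of symmetric monoidal Bousfield localizations.
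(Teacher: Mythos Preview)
The paper states this lemma without proof, treating it as a standard fact. Your argument is correct and is precisely the standard route: realize $\mathcal{C}^{\heartsuit}$ as a symmetric monoidal localization of $\mathcal{C}_{\geq 0}$ via \cite[Prop~2.2.1.9]{Lurie2017}, with the key verification being that $\mathcal{C}_{\geq 1} \otimes \mathcal{C}_{\geq 0} \subseteq \mathcal{C}_{\geq 1}$ (which you correctly reduce to the hypothesis using exactness of the tensor product in a presentable stable symmetric monoidal $\infty$-category), and then obtain the lax structure on the inclusion as right adjoint to the symmetric monoidal reflection.
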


The following is the $\infty$-categorical enrichment of
{\cite[App]{Beilinson1987}}.

\begin{proposition}[{\cite[Prop~3.3.11]{Raksit2020}}]
  \label{prop:Beilinson-t-struct}Let $\tmop{Fil} (\mathcal{C})_{\geq 0}^B
  \subseteq \tmop{Fil} (\mathcal{C})$ be the full subcategory spanned by $X
  \in \tmop{Fil} (\mathcal{C})$ such that $\tmop{gr}^i (X) \in
  \mathcal{C}_{\geq - i}$ for all $i \in \mathbb{Z}$. Then $1_{\tmop{ins}^0
  (1_{\mathcal{C}})} \in \tmop{Fil} (\mathcal{C})_{\geq 0}^B$, $\tmop{Fil}
  (\mathcal{C})_{\geq 0}^B$ is closed under $\otimes^{\tmop{Day}}$ and is the
  connective part of an accessible $t$-structure, called the {\tmdfn{Beilinson
  $t$-structure}}, whose heart is equivalent as symmetric monoidal
  $1$\mbox{-}categories to the $1$\mbox{-}category $\tmop{Ch}
  (\mathcal{C}^{\heartsuit})$ of chain complexes with ``stupid'' truncation
  $\tmop{Fil}^i K = K_{\leq - i}$ for all $i \in \mathbb{Z}$ and $K \in
  \tmop{Ch} (\mathcal{C}^{\heartsuit})$.
\end{proposition}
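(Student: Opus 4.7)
My plan is to verify the three assertions in order, following the strategy of Beilinson's original appendix enriched to the $\infty$-categorical setting as in [Raksit2020].

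First, I would dispatch the two easy claims. The unit $\tmop{ins}^0(1_{\mathcal{C}})$ has $\tmop{gr}^0$ equal to $1_{\mathcal{C}} \in \mathcal{C}_{\geq 0}$ and $\tmop{gr}^i \simeq 0 \in \mathcal{C}_{\geq -i}$ for all $i \neq 0$, so it lies in $\tmop{Fil}(\mathcal{C})_{\geq 0}^B$ by definition. For closure of $\tmop{Fil}(\mathcal{C})_{\geq 0}^B$ under $\otimes^{\tmop{Day}}$, I would apply the previous proposition giving $\tmop{gr}^*$ a symmetric monoidal structure: for $X, Y \in \tmop{Fil}(\mathcal{C})_{\geq 0}^B$, one gets $\tmop{gr}^i(X \otimes^{\tmop{Day}} Y) \simeq \bigoplus_{j+k=i} \tmop{gr}^j(X) \otimes \tmop{gr}^k(Y)$. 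Since $\mathcal{C}_{\geq 0}$ is closed under $\otimes$, one iterates shifts to obtain $\mathcal{C}_{\geq -j} \otimes \mathcal{C}_{\geq -k} \subseteq \mathcal{C}_{\geq -j-k} = \mathcal{C}_{\geq -i}$, which finishes the verification.

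Next, I would produce the $t$-structure. I would check that $\tmop{Fil}(\mathcal{C})_{\geq 0}^B$ is closed under colimits and extensions inside $\tmop{Fil}(\mathcal{C})$: this reduces, via the fact that each $\tmop{gr}^i$ is exact and colimit-preserving, to the corresponding closure properties of $\mathcal{C}_{\geq -i} \subseteq \mathcal{C}$. It is also presentable (as a full subcategory cut out by accessible conditions on an accessible localization). I would then invoke Lurie's criterion [HA, Prop.~1.4.4.11] to get an accessible $t$-structure whose connective part is exactly $\tmop{Fil}(\mathcal{C})_{\geq 0}^B$. Unwinding the construction, the coconnective part consists of those $X$ with $\tmop{gr}^i(X) \in \mathcal{C}_{\leq -i}$ for all $i$: this follows because $\mathcal{C}_{\geq -i+1}$ and $\mathcal{C}_{\leq -i}$ are orthogonal in $\mathcal{C}$, so the same holds level-wise after applying $\tmop{gr}^i$. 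I expect this piece to be the main obstacle, in that one must identify the truncation functors concretely; the key idea is that $\tau^B_{\geq 0} X$ can be assembled filtration-piece by filtration-piece using the connective covers $\tau_{\geq -i} \tmop{Fil}^i X$, then compatibilized via the transition maps using the connecting maps in cofiber sequences.

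Finally I would identify the heart. An object $X$ lies in the heart iff each $\tmop{gr}^i X$ lies in $\mathcal{C}_{\geq -i} \cap \mathcal{C}_{\leq -i}$; writing $\tmop{gr}^i X \simeq M_i[-i]$ with $M_i \in \mathcal{C}^{\heartsuit}$, the cofiber sequences $\tmop{Fil}^{i+1} X \to \tmop{Fil}^i X \to \tmop{gr}^i X$ produce connecting maps $\tmop{gr}^i X \to \tmop{gr}^{i+1} X[1]$, which after shift correspond to morphisms $d_i \colon M_i \to M_{i+1}$ in $\mathcal{C}^{\heartsuit}$ (via \Cref{lem:heart-symm-mon}). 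The standard octahedral/connecting argument shows $d_{i+1} \circ d_i = 0$, so one obtains a functor from the heart to $\tmop{Ch}(\mathcal{C}^{\heartsuit})$. Conversely, given a chain complex $K$, the recipe $\tmop{Fil}^i K \assign K_{\leq -i}$ (stupid truncation) produces a filtered object with $\tmop{gr}^i \simeq K_{-i}[-i]$, which is manifestly in the heart. These constructions are mutually inverse as symmetric monoidal $1$-categories by inspection on generators, using that $\tmop{gr}^*$ is symmetric monoidal and matches the tensor product of chain complexes against the graded tensor product.
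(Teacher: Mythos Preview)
The paper does not supply its own proof of this proposition; it is stated with a citation to \cite[Prop~3.3.11]{Raksit2020} and used as a black box. Your proposal is a correct outline of the standard argument (essentially Beilinson's original, as carried out in the cited reference): check the easy closure properties using the symmetric monoidality of $\tmop{gr}^\ast$, invoke {\cite[Prop~1.4.4.11]{Lurie2017}} to produce the $t$-structure, and identify the heart by extracting the connecting maps $\tmop{gr}^i X \to \tmop{gr}^{i+1} X[1]$ from the defining cofiber sequences. One small caution: the sentence ``$\mathcal{C}_{\geq -i+1}$ and $\mathcal{C}_{\leq -i}$ are orthogonal in $\mathcal{C}$, so the same holds level-wise after applying $\tmop{gr}^i$'' does not by itself pin down the coconnective part, since vanishing of mapping spectra on associated graded pieces is weaker than vanishing on filtered objects; but you correctly flag this as the nontrivial step and your sketch of building $\tau^B_{\geq 0}$ piecewise from $\tau_{\geq -i}\tmop{Fil}^i X$ is the right fix.
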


In particular, when $\mathcal{C}$ is the derived $\infty$\mbox{-}category of a
ring $R$, the {\tmdfn{filtered derived category}} $\tmop{DF} (R)$ is the
$\infty$\mbox{-}category $\tmop{Fil} (D (R))$ of filtered objects in the
derived $\infty$\mbox{-}category $D (R)$ with the symmetric monoidal structure
given by the derived tensor product $\cdummy \otimes_R^{\mathbb{L}} \cdummy$,
and $\tmop{DF}^{\geq 0} (R)$ is the $\infty$\mbox{-}category $\tmop{Fil}^{\geq
0} (D (R))$ of nonnegatively filtered objects in $D (R)$. In this case, we
will still denote by $\cdummy \otimes_R^{\mathbb{L}} \cdummy$ the Day
convolution.

\begin{remark}[{\cite[Cons~4.3.4]{Raksit2020}}]
  \label{rem:fil-deriv-alg}Let $R$ be a ring. The $\infty$\mbox{-}category
  $\tmop{DF} (R)$ admits a structure of {\tmdfn{derived algebraic context}}
  {\cite[Def~4.2.1]{Raksit2020}}, of which the derived commutative algebras
  are called {\tmdfn{filtered derived (commutative) $R$-algebras}}. When $R
  =\mathbb{Z}$, they are also called {\tmdfn{filtered derived rings}}.
  Although we will not use this fact, we might comment when a filtered
  $\mathbb{E}_{\infty}$-$\mathbb{Z}$-algebra admits such a structure.
\end{remark}

We need the following lemma, which follows from the fact that left Kan
extensions are pointwise colimits which preserve cofibers and filtered
colimits:

\begin{lemma}
  \label{lem:assoc-graded-union-Lan}Let $\mathcal{C}$ be an
  $\infty$\mbox{-}category, $\mathcal{C}^0 \subseteq \mathcal{C}$ a full
  subcategory, $\mathcal{E}$ a stable $\infty$\mbox{-}category which admits
  filtered colimits, and $\tilde{F} \of \mathcal{C} \rightarrow \tmop{Fil}
  (\mathcal{E})$ a functor left Kan extended along the fully faithful
  embedding $\mathcal{C}^0 \hookrightarrow \mathcal{C}$. Then
  \begin{enumerate}
    \item The composite functor $\tmop{gr}^{\ast} \circ \tilde{F} \of
    \mathcal{C} \rightarrow \tmop{Fil} (\mathcal{E})
    \xrightarrow{\tmop{gr}^{\ast}} \tmop{Gr} (\mathcal{E})$ is left Kan
    extended along $\mathcal{C}^0 \hookrightarrow \mathcal{C}$.
    
    \item The composite functor $\tmop{Fil}^{- \infty} \circ \tilde{F} \of
    \mathcal{C} \rightarrow \tmop{Fil} (\mathcal{E})
    \xrightarrow{\tmop{Fil}^{- \infty}} \mathcal{E}$ is left Kan extended
    along $\mathcal{C}^0 \hookrightarrow \mathcal{C}$.
  \end{enumerate}
\end{lemma}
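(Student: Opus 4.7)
The plan is to appeal to the general principle that if $\tilde{F} \of \mathcal{C} \rightarrow \mathcal{D}$ is a \emph{pointwise} left Kan extension of its restriction $\tilde{F}|_{\mathcal{C}^0}$, and $G \of \mathcal{D} \rightarrow \mathcal{D}'$ preserves the colimits computing that pointwise formula, then $G \circ \tilde{F}$ is a pointwise left Kan extension of $G \circ (\tilde{F}|_{\mathcal{C}^0})$. Concretely, for each $c \in \mathcal{C}$ one has
\[ \tilde{F}(c) \simeq \tmop{colim}_{(c^0, c^0 \rightarrow c) \in \mathcal{C}^0 \times_{\mathcal{C}} \mathcal{C}_{/c}} \tilde{F}(c^0), \]
and applying a colimit-preserving $G$ yields the analogous formula for $G \circ \tilde{F}$. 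With this in hand, the two statements reduce to checking that $\tmop{gr}^{\ast}$ and $\tmop{Fil}^{- \infty}$ preserve the relevant colimits.

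Before doing that I would record that $\mathcal{E}$ admits all small colimits: being stable it has finite coproducts (which coincide with finite products), and combined with filtered colimits this yields arbitrary small coproducts, hence all small colimits exist. Consequently $\tmop{Fil}(\mathcal{E})$ and $\tmop{Gr}(\mathcal{E})$ admit all small colimits, computed pointwise.

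For part 1, the functor $\tmop{gr}^i$ is the composite of the two evaluation functors $\tmop{Fil}^i$ and $\tmop{Fil}^{i + 1}$ with the cofiber functor $\tmop{cofib} \of \tmop{Fun}(\Delta^1, \mathcal{E}) \rightarrow \mathcal{E}$. Evaluation functors out of a functor category preserve all small colimits, since colimits there are computed pointwise, while $\tmop{cofib}$ is itself a pushout and colimits commute with colimits. Hence each $\tmop{gr}^i$, and thus $\tmop{gr}^{\ast}$, preserves all small colimits. Applying this to the pointwise formula for $\tilde{F}$ exhibits $\tmop{gr}^{\ast} \circ \tilde{F}$ as the pointwise left Kan extension of its restriction to $\mathcal{C}^0$.

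For part 2, $\tmop{Fil}^{- \infty}$ is the $(\mathbb{Z}, \geq)$-indexed (hence filtered) colimit functor, which again preserves all small colimits by the same ``colimits commute with colimits'' principle. The identical derivation identifies $\tmop{Fil}^{- \infty} \circ \tilde{F}$ with the pointwise left Kan extension of $\tmop{Fil}^{- \infty} \circ (\tilde{F}|_{\mathcal{C}^0})$. The only subtlety — and where all the real work lives — is the general principle that applying a colimit-preserving functor to a pointwise left Kan extension yields another pointwise left Kan extension; this is a direct consequence of the pointwise formula (cf.\ \cite[\S4.3]{Lurie2009}), and once invoked, the rest is bookkeeping.
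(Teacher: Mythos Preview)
Your proof is correct and follows the same approach as the paper, which simply states that the lemma ``follows from the fact that left Kan extensions are pointwise colimits which preserve cofibers and filtered colimits.'' You have spelled out in more detail why $\tmop{gr}^{\ast}$ and $\tmop{Fil}^{-\infty}$ preserve the relevant colimits, but the core idea---applying a colimit-preserving functor to a pointwise left Kan extension---is identical.
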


\subsection{Reflective subcategories}\label{subsec:refl-subcat}In this
subsection, we will develop the necessary machinery to deal with the (derived)
$p$-complete or more generally $I$-complete situations. We start with the
general formalism of reflective subcategories.

\begin{definition}[{\cite[Rem~5.2.7.9 \& Def~5.2.7.2]{Lurie2009}}]
  \label{def:reflexive-cat}Let $\mathcal{C}$ be an $\infty$\mbox{-}category.
  We say that a full subcategory $\mathcal{D} \subseteq \mathcal{C}$ is
  {\tmdfn{reflective}} if the inclusion $\mathcal{D} \hookrightarrow
  \mathcal{C}$ admits a left adjoint $L \of \mathcal{C} \rightarrow
  \mathcal{D}$. In such case, we call the left adjoint $L \of \mathcal{C}
  \rightarrow \mathcal{D}$ a {\tmdfn{localization}}.
\end{definition}

\begin{proposition}[{\cite[Prop~5.2.7.8]{Lurie2009}}]
  Let $\mathcal{C}$ be an $\infty$-category. A full subcategory $\mathcal{C}^0
  \subseteq \mathcal{C}$ is reflective if and only if for every object $C \in
  \mathcal{C}$, there exists an object $D \in \mathcal{C}^0$ along with a map
  $f \of C \rightarrow D$ which induces an equivalence
  $\tmop{Map}_{\mathcal{C}} (D, E) \rightarrow \tmop{Map}_{\mathcal{C}} (C,
  E)$ for each object $E \in \mathcal{C}^0$ (in this case, $L C \simeq D$
  where $L \of \mathcal{C} \rightarrow \mathcal{C}^0$ is the localization).
\end{proposition}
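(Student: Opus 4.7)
The plan is to prove the two directions separately, with the forward direction being essentially tautological and the reverse direction requiring a pointwise-to-functorial passage via the $\infty$-categorical adjoint functor theorem.

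For the forward direction, assume the inclusion $i\colon \mathcal{C}^0 \hookrightarrow \mathcal{C}$ admits a left adjoint $L$, with unit $\eta\colon \tmop{id}_{\mathcal{C}} \to i \circ L$. Given $C \in \mathcal{C}$, I would take $D := L(C) \in \mathcal{C}^0$ and $f := \eta_C \colon C \to i(L(C))$. The adjunction identity supplies an equivalence $\tmop{Map}_{\mathcal{C}^0}(L(C), E) \xrightarrow{\sim} \tmop{Map}_{\mathcal{C}}(C, i(E))$ for every $E \in \mathcal{C}^0$, and since $\mathcal{C}^0 \subseteq \mathcal{C}$ is a full subcategory the source is identified with $\tmop{Map}_{\mathcal{C}}(D, E)$; unwinding the triangle identity shows this equivalence is precomposition with $f$, which is exactly what we want.

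For the reverse direction, the hypothesis is precisely the pointwise statement that, for each $C \in \mathcal{C}$, the functor $\tmop{Map}_{\mathcal{C}}(C, i(-))\colon \mathcal{C}^0 \to \mathcal{S}$ is corepresented by the chosen $D$ via the map $f$. I would then appeal to the $\infty$-categorical adjoint functor criterion (HTT Proposition 5.2.4.2, which says that a functor $G$ admits a left adjoint iff for each object in the source of $G$ the appropriate mapping-space functor is corepresentable): applying it to $i\colon \mathcal{C}^0 \to \mathcal{C}$, the existence of corepresenting pairs $(D_C, f_C)$ for every $C$ produces a left adjoint $L$, automatically with $L(C) \simeq D$ and unit $\eta_C \simeq f$.

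The main obstacle, and the only real content, is precisely the invocation of this criterion: one must verify the hypotheses are stated in the form required by HTT 5.2.4.2 (equivalently, one can build the localization functor directly by choosing, for each $C$, an initial object of $\mathcal{C}^0_{C/} := \mathcal{C}^0 \times_{\mathcal{C}} \mathcal{C}_{C/}$, which the hypothesis guarantees, and then assembling these choices into a functor using HTT 4.3.2.13 or the straightening/unstraightening equivalence applied to the fibration $\mathcal{C}^0_{C/} \to \{C\}$ varying in $C$). The final parenthetical claim $L(C) \simeq D$ is then read off from the construction, since the corepresenting object is unique up to contractible choice.
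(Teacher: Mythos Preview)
The paper does not supply its own proof of this proposition: it is stated with a direct citation to \cite[Prop~5.2.7.8]{Lurie2009} and used as a black box. Your proposal is a correct reconstruction of the standard argument from HTT---the forward direction via the unit of the adjunction and the reverse direction via the pointwise corepresentability criterion (HTT~5.2.4.2), or equivalently via initial objects of the undercategories $\mathcal{C}^0 \times_{\mathcal{C}} \mathcal{C}_{C/}$---so there is nothing to compare against here beyond noting that your argument matches what Lurie does.
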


\begin{example}
  Let $D_{\tmop{comp}} (\mathbb{Z}_p) \subseteq D (\mathbb{Z})$ be the
  $p$-complete derived category of $\mathbb{Z}$, consisting of (derived)
  $p$-complete $\mathbb{Z}_p$-module spectra. Then $D_{\tmop{comp}}
  (\mathbb{Z}_p) \subseteq D (\mathbb{Z})$ is reflective. The localization is
  the (derived) $p$-completion functor $D (\mathbb{Z}) \rightarrow
  D_{\tmop{comp}} (\mathbb{Z}_p)$. Similarly, $D_{\tmop{comp}, \geq 0}
  (\mathbb{Z}_p) \subseteq D_{\geq 0} (\mathbb{Z})$ is the reflective
  subcategory of connective $p$-complete $\mathbb{Z}_p$-module spectra.
\end{example}

\begin{example}
  More generally, let $A$ be an animated ring and $I \subseteq \pi_0 (A)$ a
  finitely generated ideal. Then the $I$-complete derived category
  $D_{\tmop{comp}} (A)$ is a reflective subcategory of the derived category $D
  (A)$. The same for $D_{\tmop{comp}, \geq 0} (A) \subseteq D_{\geq 0} (A)$.
\end{example}

Now we study the left derived functors. Unfortunately, the localization does
not in general map compact projective objects to compact projective objects.
For example, $\mathbb{Z} \in D (\mathbb{Z})$ is compact and projective but
$\mathbb{Z}_p \in D_{\tmop{comp}} (\mathbb{Z}_p)$ is not. We suspect that
$D_{\tmop{comp}} (\mathbb{Z}_p)$ is not projectively generated, therefore we
are probably unable to left derive ``arbitrary'' functors as in the
projectively generated case. However, most functors in practice are good
enough to have a reasonable theory of left derived functors\footnote{This
approach is essentially depicted in the special case of $p$-completed rings in
Bhatt's Eilenberg Lectures notes {\cite[Lecture~VII]{Bhatt2018b}}. We are
informed by Yu {\tmname{Min}} of this approach in private discussions.}. We
start with a general discussion about the interaction between localization and
left Kan extension {\cite[Def~4.3.2.2]{Lurie2009}}.

\begin{notation}
  \label{setup:localization-extension}Let $\mathcal{C}$ be an
  $\infty$\mbox{-}category and $\mathcal{D} \subseteq \mathcal{C}$ a
  reflective (full) subcategory with the localization $L \of \mathcal{C}
  \rightarrow \mathcal{D}$. Let $\mathcal{C}^0 \subseteq \mathcal{C}$ be a
  full subcategory, $\mathcal{D}^0 \subseteq \mathcal{D}$ the full subcategory
  spanned by objects $L C$ where $C$ runs through all objects in
  $\mathcal{C}^0$. Let $\mathcal{C}^1 \subseteq \mathcal{C}$ be the full
  subcategory spanned by vertices of both $\mathcal{C}^0$ and $\mathcal{D}^0$.
\end{notation}

It follows from definitions that

\begin{lemma}
  In Setup~\ref{setup:localization-extension}, $\mathcal{D}^0 \subseteq
  \mathcal{C}^1$ is a reflective subcategory with localization $\nobracket L
  |_{\mathcal{C}^1} \of \mathcal{C}^1 \rightarrow \mathcal{D}^0$ being the
  restriction of $L \of \mathcal{C} \rightarrow \mathcal{D}$.
\end{lemma}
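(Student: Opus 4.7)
The plan is to apply Proposition 5.2.7.8 (the characterization of reflective subcategories cited just above the statement). Concretely, given an object $C \in \mathcal{C}^1$, I will produce the unit map $\eta_C \of C \rightarrow L(C)$ of the reflection $L \of \mathcal{C} \rightarrow \mathcal{D}$, and verify two things: that $L(C) \in \mathcal{D}^0$, and that $\eta_C$ induces an equivalence $\tmop{Map}_{\mathcal{C}^1}(L(C), E) \xrightarrow{\sim} \tmop{Map}_{\mathcal{C}^1}(C, E)$ for every $E \in \mathcal{D}^0$.

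For the first point, I split into cases based on how $C$ sits in $\mathcal{C}^1$. If $C \in \mathcal{C}^0$, then $L(C) \in \mathcal{D}^0$ by the very definition of $\mathcal{D}^0$. If $C \in \mathcal{D}^0$, then $C$ already lies in $\mathcal{D}$, so the unit map $\eta_C$ is an equivalence and $L(C) \simeq C \in \mathcal{D}^0$. Either way, $L(C) \in \mathcal{D}^0$, so in particular $L(C) \in \mathcal{C}^1$ and the unit map $\eta_C$ is a morphism in the full subcategory $\mathcal{C}^1$.

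For the mapping-space equivalence, I use that $\mathcal{C}^1 \subseteq \mathcal{C}$ is a full subcategory, so for any two objects $X, Y \in \mathcal{C}^1$ we have $\tmop{Map}_{\mathcal{C}^1}(X, Y) \simeq \tmop{Map}_{\mathcal{C}}(X, Y)$. Since $L$ is the left adjoint to the inclusion $\mathcal{D} \hookrightarrow \mathcal{C}$, the unit map $\eta_C$ induces an equivalence $\tmop{Map}_{\mathcal{C}}(L(C), E) \xrightarrow{\sim} \tmop{Map}_{\mathcal{C}}(C, E)$ for every $E \in \mathcal{D}$. Applying this to $E \in \mathcal{D}^0 \subseteq \mathcal{D}$ and restricting along the fully faithful inclusion $\mathcal{C}^1 \hookrightarrow \mathcal{C}$ gives exactly the required equivalence.

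By Proposition 5.2.7.8, this exhibits $\mathcal{D}^0 \subseteq \mathcal{C}^1$ as reflective, and the construction identifies the resulting localization with the restriction $L|_{\mathcal{C}^1}$. There is essentially no obstacle here; the only subtlety to watch is making sure every object and morphism one writes down actually lies in $\mathcal{C}^1$, which is why the two-case verification that $L(C) \in \mathcal{D}^0$ for $C \in \mathcal{C}^1$ is the one explicit step to carry out.
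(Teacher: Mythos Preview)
Your proof is correct and is exactly the kind of verification the paper has in mind: the paper simply records the lemma as following ``from definitions'' without further argument, and your use of Proposition~5.2.7.8 together with the two-case check that $L(C)\in\mathcal{D}^0$ for $C\in\mathcal{C}^1$ spells this out precisely.
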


\begin{example}
  \label{ex:setup-p-completion}One of the crucial example for the setup above:
  $\mathcal{C}$ is the $\infty$\mbox{-}category of animated rings,
  $\mathcal{D}$ is the full subcategory of $p$-complete animated
  $\mathbb{Z}_p$-algebras, and $\mathcal{C}^0 \subseteq \mathcal{C}$ is the
  full subcategory spanned by polynomial rings $\mathbb{Z} [X_1, \ldots,
  X_n]$. More generally, let $A$ be an animated ring and $I \subseteq \pi_0
  (A)$ a finitely generated ideal. Then we can consider the case that
  $\mathcal{C}$ is the $\infty$\mbox{-}category of animated $A$-algebras and
  $\mathcal{D} \subseteq \mathcal{C}$ is the full subcategory of $I$-complete
  animated $A$-algebras, and $\mathcal{C}^0 \subseteq \mathcal{C}$ is the full
  subcategory spanned by polynomial $A$-algebras $A [X_1, \ldots, X_n] \assign
  \mathbb{Z} [X_1, \ldots, X_n] \otimes_{\mathbb{Z}}^{\mathbb{L}} A$.
\end{example}

\begin{lemma}
  \label{lem:Lan-localization}In Setup~\ref{setup:localization-extension}, let
  $\mathcal{E}$ be an $\infty$\mbox{-}category and $\tilde{F} \of \mathcal{C}
  \rightarrow \mathcal{E}$ a functor left Kan extended from the fully faithful
  embedding $\mathcal{C}^0 \hookrightarrow \mathcal{C}$. Then the restriction
  $\widetilde{\nobracket F |}_{\mathcal{D}}$ is left Kan extended from the
  fully faithful embedding $\mathcal{D}^0 \hookrightarrow \mathcal{D}$.
\end{lemma}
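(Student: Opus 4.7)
The plan is to verify the pointwise criterion for left Kan extensions: for each $D \in \mathcal{D}$, I will show that the canonical comparison map
\[ \tmop{colim}_{(D' \to D) \in \mathcal{D}^0_{/D}} \tilde{F}(D') \longrightarrow \tilde{F}(D) \]
is an equivalence, where $\mathcal{D}^0_{/D} \assign \mathcal{D}^0 \times_{\mathcal{D}} \mathcal{D}_{/D}$. Since $\tilde{F}$ is left Kan extended from $\mathcal{C}^0$, the pointwise formula yields $\tilde{F}(X) \simeq \tmop{colim}_{\mathcal{C}^0_{/X}} \tilde{F}|_{\mathcal{C}^0}$ for every $X \in \mathcal{C}$; I apply this to $X = D$ and to each $X = D' \in \mathcal{D}^0$.

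Introduce the auxiliary $\infty$-category $Y$ whose objects are composable diagrams $C \to D' \to D$ with $C \in \mathcal{C}^0$ and $D' \in \mathcal{D}^0$. The forgetful functor $Y \to \mathcal{D}^0_{/D}$ remembering only $(D' \to D)$ is a cocartesian fibration whose fiber over $(D' \to D)$ is $\mathcal{C}^0_{/D'}$, so by Fubini for colimits the source of the displayed map is identified with $\tmop{colim}_Y \tilde{F}|_{\mathcal{C}^0}$, where here $\tilde{F}|_{\mathcal{C}^0}$ is composed with the projection $Y \to \mathcal{C}^0$ onto the first vertex.

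Next, define the composition functor $p \of Y \to \mathcal{C}^0_{/D}$ sending $(C \to D' \to D)$ to the composed arrow $(C \to D)$. The projection $Y \to \mathcal{C}^0$ used above factors through $p$ (via the forgetful $\mathcal{C}^0_{/D} \to \mathcal{C}^0$), so showing that $p$ is cofinal yields $\tmop{colim}_Y \tilde{F}|_{\mathcal{C}^0} \simeq \tmop{colim}_{\mathcal{C}^0_{/D}} \tilde{F}|_{\mathcal{C}^0} \simeq \tilde{F}(D)$, completing the argument.

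To prove $p$ is cofinal, I appeal to the $\infty$-categorical Quillen Theorem~A \cite[Thm~4.1.3.1]{Lurie2009}: it suffices to check that for each $(C_0 \to D) \in \mathcal{C}^0_{/D}$, the comma $\infty$-category $Y \times_{\mathcal{C}^0_{/D}} (\mathcal{C}^0_{/D})_{(C_0 \to D)/}$ is weakly contractible. I will exhibit an initial object, namely the factorization $C_0 \xrightarrow{\eta_{C_0}} LC_0 \xrightarrow{k} D$, where $\eta_{C_0}$ is the unit of the adjunction $L \dashv (\mathcal{D} \hookrightarrow \mathcal{C})$ and $k$ is the unique lift of $(C_0 \to D)$ along $\eta_{C_0}$ provided by $D \in \mathcal{D}$; note that $LC_0 \in \mathcal{D}^0$ by the very definition of $\mathcal{D}^0$. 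The main hurdle is purely notational, keeping track of the various comma categories; the verification of initiality then reduces, after unwinding data, to the universal property of the unit $\eta_{C_0}$ applied to morphisms $C_0 \to D''$ with $D'' \in \mathcal{D}$, which extend uniquely to morphisms $LC_0 \to D''$, yielding both existence and uniqueness of a morphism out of the candidate initial object.
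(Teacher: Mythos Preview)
Your proof is correct but takes a genuinely different, more hands-on route than the paper. The paper's argument is a two-line affair: it invokes {\cite[Lem~5.2.6.6]{Lurie2009}}, which says that for a localization $L \of \mathcal{C} \rightarrow \mathcal{D}$ the restriction $\tilde{F}|_{\mathcal{D}}$ is automatically the left Kan extension of $\tilde{F}$ along $L$; combined with transitivity of left Kan extensions and the rewriting of $\mathcal{C}^0 \hookrightarrow \mathcal{C} \xrightarrow{L} \mathcal{D}$ as $\mathcal{C}^0 \xrightarrow{L} \mathcal{D}^0 \hookrightarrow \mathcal{D}$, the result falls out immediately. Your approach instead verifies the pointwise colimit formula directly, reducing via a Fubini-type argument over a cocartesian fibration and then establishing cofinality of $p \of Y \to \mathcal{C}^0_{/D}$ through Quillen's Theorem~A. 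The paper's route is slicker and avoids the auxiliary category $Y$ entirely; your route is more explicit in that it unpacks the universal property of the localization by hand rather than citing a black-box lemma about Kan extensions along adjoints, and it makes transparent exactly where reflectivity of $\mathcal{D} \subseteq \mathcal{C}$ enters (namely in producing the initial factorization $C_0 \to L C_0 \to D$, which exists precisely because $L C_0 \in \mathcal{D}^0$ by the definition of $\mathcal{D}^0$).
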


\begin{proof}
  It follows from {\cite[Lem~5.2.6.6]{Lurie2009}} that the restriction
  $\nobracket \tilde{F} |_{\mathcal{D}}$ is a left Kan extension of
  $\tilde{F}$ along $L \of \mathcal{C} \rightarrow \mathcal{D}$, therefore is
  left Kan extended from the composite functor $\mathcal{C}^0 \hookrightarrow
  \mathcal{C} \xrightarrow{L} \mathcal{D}$. The composite functor
  $\mathcal{C}^0 \hookrightarrow \mathcal{C} \rightarrow \mathcal{D}$ could be
  rewritten as the composite $\mathcal{C}^0 \xrightarrow{L} \mathcal{D}^0
  \hookrightarrow \mathcal{D}$, therefore $\nobracket \tilde{F}
  |_{\mathcal{D}}$ is left Kan extended from $\mathcal{D}^0 \hookrightarrow
  \mathcal{D}$.
\end{proof}

\begin{example}
  \label{ex:setup-p-compl-cot-cx}In Example~\ref{ex:setup-p-completion}, the
  cotangent complex $L_{\cdummy /\mathbb{Z}} \of \mathcal{C}=
  \tmop{CAlg}^{\tmop{an}} \rightarrow D (\mathbb{Z})$ is left Kan extended
  from $\tmop{Poly}_{\mathbb{Z}} \subseteq \tmop{Ring}$. Consequently, the
  restriction $\nobracket L_{\cdummy /\mathbb{Z}} |_{\mathcal{D}} \suchthat
  \mathcal{D} \rightarrow D (\mathbb{Z})$ is left Kan extended from
  $p$-completed polynomial rings. Similarly, the $p$-completed cotangent
  complex $(L_{\cdummy /\mathbb{Z}})_p^{\wedge} \of \tmop{CAlg}^{\tmop{an}}
  \rightarrow D_{\tmop{comp}} (\mathbb{Z}_p)$ is left extended from
  $\tmop{Poly}_{\mathbb{Z}} \subseteq \tmop{Ring}$, therefore the restriction
  $\nobracket (L_{\cdummy /\mathbb{Z}})_p^{\wedge} |_{\mathcal{D}} \of
  \mathcal{D} \rightarrow D_{\tmop{comp}} (\mathbb{Z}_p)$ is left extended
  from $p$-completed polynomial rings.
\end{example}

\begin{notation}
  \label{setup:Lan}In Setup~\ref{setup:localization-extension}, let $F \of
  \mathcal{D}^0 \rightarrow \mathcal{E}$ be a functor equipped with a left Kan
  extension $\tilde{F} \of \mathcal{C} \rightarrow \mathcal{E}$ along the
  fully faithful inclusion $\mathcal{C}^0 \hookrightarrow \mathcal{C}$ of the
  composite functor $\mathcal{C}^0 \xrightarrow{L} \mathcal{D}^0
  \xrightarrow{F} \mathcal{E}$.
\end{notation}

\begin{remark}
  \label{rem:setup-proj-gen}In our applications, $\mathcal{C}$ will be a
  projectively generated $\infty$\mbox{-}category (\Cref{def:n-proj-gen}) with
  a set $S$ of compact projective generators. We will choose $\mathcal{C}^0
  \subseteq \mathcal{C}$ to be the full subcategory spanned by finite
  coproducts of objects in $S$, and $\mathcal{E}$ will be a cocomplete
  $\infty$\mbox{-}category. In this case, the left Kan extension in question
  always exists (\Cref{prop:left-deriv-n-fun,prop:struct-n-proj-gen-cats}).
  More generally, if $\mathcal{C}^0$ is a small subcategory and that
  $\mathcal{C}$ is assumed to be locally small, then the left Kan extension
  exists.
\end{remark}

In Setup~\ref{setup:Lan}, we first assume without loss of generality that
$\nobracket L |_{\mathcal{D}} = \tmop{id}_{\mathcal{D}}$ by
{\cite[Prop~5.2.7.4]{Lurie2009}}, then $L^2 = L$. Now we let $F_1 \of
\mathcal{C}^1 \rightarrow \mathcal{E}$ denote the composite $\mathcal{C}^1
\rightarrow \mathcal{D}^0 \xrightarrow{F} \mathcal{E}$, which is an extension
of the composite $\mathcal{C}^0 \rightarrow \mathcal{D}^0 \xrightarrow{F}
\mathcal{E}$ along $\mathcal{C}^0 \rightarrow \mathcal{C}^1$. Since
$\nobracket \tilde{F} |_{\mathcal{C}^1} \of \mathcal{C}^1 \rightarrow
\mathcal{E}$ is, by definition, a left Kan extension of $\mathcal{C}^0
\rightarrow \mathcal{D}^0 \xrightarrow{F} \mathcal{E}$ along $\mathcal{C}^0
\rightarrow \mathcal{C}^1$, there exists an essentially unique comparison map
$\nobracket \tilde{F} |_{\mathcal{C}^1} \rightarrow F_1$ of functors
$\mathcal{C}^1 \rightrightarrows \mathcal{E}$. Restricting to the full
subcategory $\mathcal{D}^0 \subseteq \mathcal{C}^1$, we get a comparison map
$\nobracket \tilde{F} |_{\mathcal{D}^0} \rightarrow F$. It follows from
\Cref{lem:Lan-localization} that

\begin{corollary}
  \label{cor:Lan-localization}In Setup~\ref{setup:Lan}, if we assume that the
  comparison map $\nobracket \tilde{F} |_{\mathcal{D}^0} \rightarrow F$ is an
  equivalence, then $\tilde{F} |_{\mathcal{D}}$ is the left Kan extension of
  $F$ along the fully faithful embedding $\mathcal{D}^0 \hookrightarrow
  \mathcal{D}$.
\end{corollary}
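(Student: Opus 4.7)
The proof should be essentially immediate given Lemma~\ref{lem:Lan-localization}. Here is how I would organize it.

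The plan is to reduce the claim to Lemma~\ref{lem:Lan-localization}. That lemma tells us, applied to $\tilde{F} \of \mathcal{C} \to \mathcal{E}$, that the restriction $\tilde{F}|_{\mathcal{D}}$ is already a left Kan extension along the fully faithful inclusion $\mathcal{D}^0 \hookrightarrow \mathcal{D}$; the question is just of \emph{which} functor $\mathcal{D}^0 \to \mathcal{E}$. Unwinding, it is the left Kan extension of its own restriction, namely $\tilde{F}|_{\mathcal{D}^0}$. So once we know $\tilde{F}|_{\mathcal{D}^0} \simeq F$, the conclusion follows from the fact that left Kan extensions are invariant under equivalences of the input functor (since left Kan extensions are characterized by a universal property, or concretely as pointwise colimits over comma categories).

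More concretely, the steps would be: First, reduce to the case $L|_{\mathcal{D}} = \operatorname{id}_{\mathcal{D}}$ via \cite[Prop~5.2.7.4]{Lurie2009}, exactly as in the paragraph preceding the corollary, so that on $\mathcal{D}^0$ the composite $\mathcal{C}^0 \xrightarrow{L} \mathcal{D}^0 \xrightarrow{F} \mathcal{E}$ and its extensions behave well with respect to the inclusion $\mathcal{C}^0 \cap \mathcal{D}^0 \hookrightarrow \mathcal{D}^0$. Second, apply Lemma~\ref{lem:Lan-localization} to obtain that $\tilde{F}|_{\mathcal{D}}$ is left Kan extended from $\mathcal{D}^0 \hookrightarrow \mathcal{D}$, i.e.\ the canonical comparison map $\operatorname{Lan}_{\mathcal{D}^0 \hookrightarrow \mathcal{D}}(\tilde{F}|_{\mathcal{D}^0}) \to \tilde{F}|_{\mathcal{D}}$ is an equivalence. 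Third, postcompose the given equivalence $\tilde{F}|_{\mathcal{D}^0} \xrightarrow{\sim} F$ with the universal property of left Kan extensions to obtain an equivalence $\operatorname{Lan}_{\mathcal{D}^0 \hookrightarrow \mathcal{D}}(\tilde{F}|_{\mathcal{D}^0}) \xrightarrow{\sim} \operatorname{Lan}_{\mathcal{D}^0 \hookrightarrow \mathcal{D}}(F)$. Combining the last two equivalences yields $\tilde{F}|_{\mathcal{D}} \simeq \operatorname{Lan}_{\mathcal{D}^0 \hookrightarrow \mathcal{D}}(F)$.

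There is essentially no obstacle here: everything is formal once Lemma~\ref{lem:Lan-localization} is in hand. The only point that requires a moment of care is confirming that $\tilde{F}|_{\mathcal{D}^0}$ makes sense as a functor (which uses $\mathcal{D}^0 \subseteq \mathcal{D}$), and that the comparison map $\tilde{F}|_{\mathcal{D}^0} \to F$ referred to in the statement is the natural one constructed just above the corollary from the universal property of left Kan extension along $\mathcal{C}^0 \hookrightarrow \mathcal{C}^1$ restricted to $\mathcal{D}^0$. Once this identification is in place the argument is a one-liner invocation of Lemma~\ref{lem:Lan-localization} plus functoriality of Kan extensions.
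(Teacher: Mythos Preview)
Your proposal is correct and matches the paper's approach: the paper simply states that the corollary ``follows from \Cref{lem:Lan-localization}'', and your argument is exactly the unwinding of that claim (left Kan extension of $\tilde{F}|_{\mathcal{D}^0}$ plus the assumed equivalence $\tilde{F}|_{\mathcal{D}^0} \simeq F$). The extra detail you supply about functoriality of Kan extensions is fine and only makes explicit what the paper leaves implicit.
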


We need the following concept:

\begin{proposition}[{\cite[Prop~5.2.7.12]{Lurie2009}}]
  \label{prop:L-inv}Let $\mathcal{C}$ be an $\infty$\mbox{-}category and let
  $L \of \mathcal{C} \rightarrow L\mathcal{C} \subseteq \mathcal{C}$ be a
  localization functor. Let $S$ denote the collection of all morphisms $f$ in
  $\mathcal{C}$ such that $L f$ is an equivalence. Then for every
  $\infty$\mbox{-}category $\mathcal{D}$, composition with $L$ induces a fully
  faithful functor $\psi \of \tmop{Fun} (L\mathcal{C}, \mathcal{D})
  \rightarrow \tmop{Fun} (\mathcal{C}, \mathcal{D})$. Moreover, the essential
  image of $\psi$ consists of those functors $F \of \mathcal{C} \rightarrow
  \mathcal{D}$ such that $F (f)$ is an equivalence in $\mathcal{D}$ for each
  $f \in S$.
\end{proposition}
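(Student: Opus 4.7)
The plan is to prove this by identifying $L$ with an honest adjunction and then transporting everything to functor $\infty$-categories, where all the work reduces to elementary manipulations with units and counits.

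First, I would invoke the characterization of localizations from \cite[Prop~5.2.7.4]{Lurie2009}: the localization $L \colon \mathcal{C} \to L\mathcal{C}$ is (up to equivalence) a left adjoint to the fully faithful inclusion $i \colon L\mathcal{C} \hookrightarrow \mathcal{C}$. Concretely, there is a unit $\eta \colon \tmop{id}_{\mathcal{C}} \to i \circ L$ and a counit $\epsilon \colon L \circ i \to \tmop{id}_{L\mathcal{C}}$; since $i$ is fully faithful, $\epsilon$ is an equivalence. For any $\infty$-category $\mathcal{D}$, pre-composition yields functors $L^{\ast} \colon \tmop{Fun}(L\mathcal{C}, \mathcal{D}) \to \tmop{Fun}(\mathcal{C}, \mathcal{D})$ and $i^{\ast} \colon \tmop{Fun}(\mathcal{C}, \mathcal{D}) \to \tmop{Fun}(L\mathcal{C}, \mathcal{D})$. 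Because pre-composition reverses adjunctions, $i^{\ast}$ is left adjoint to $L^{\ast}$, with counit $i^{\ast} L^{\ast} \to \tmop{id}$ induced pointwise by $\epsilon$ and unit $\tmop{id} \to L^{\ast} i^{\ast}$ induced pointwise by $\eta$.

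The functor $\psi = L^{\ast}$ is fully faithful because it is the right adjoint of an adjunction whose counit is an equivalence: the counit at $G \in \tmop{Fun}(L\mathcal{C}, \mathcal{D})$ is the natural transformation $G \circ L \circ i \to G$ obtained from $\epsilon$, which is an equivalence since $\epsilon$ is. For the essential image, the ``only if'' direction is immediate: if $F = G \circ L$ and $Lf$ is an equivalence, then $F(f) = G(Lf)$ is too. For the ``if'' direction, given $F \colon \mathcal{C} \to \mathcal{D}$ inverting every $f \in S$, the key observation is that for each $X \in \mathcal{C}$ the unit morphism $\eta_X \colon X \to iLX$ lies in $S$ — this follows from the triangle identity $L\eta_X \simeq (\epsilon_{LX})^{-1}$, which is an equivalence. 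Hence $F(\eta_X)$ is an equivalence for every $X$, i.e.\ the natural transformation $F \to F \circ i \circ L = L^{\ast}(F \circ i)$ induced by $\eta$ is an equivalence, and $F$ lies in the essential image of $\psi$ via $G \assign F \circ i$.

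The main technical point to be careful about is the last step: verifying rigorously that $\eta_X \in S$ requires one of the triangle identities for the adjunction $L \dashv i$, namely that the composite $LX \xrightarrow{L\eta_X} LiLX \xrightarrow{\epsilon_{LX}} LX$ is homotopic to the identity. Combined with $\epsilon$ being an equivalence, this forces $L\eta_X$ to be an equivalence. Everything else is formal once the adjunction on functor categories is set up. This argument implicitly uses that functor $\infty$-categories are well-behaved enough to support this sort of pointwise reasoning about units/counits, which is standard (e.g.\ \cite[§5.2.2]{Lurie2009}).
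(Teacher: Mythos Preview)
The paper does not supply its own proof of this proposition: it is quoted verbatim from \cite[Prop~5.2.7.12]{Lurie2009} and used as a black box. Your argument is correct and is essentially the standard one (and indeed the one Lurie gives): exploit the adjunction $L \dashv i$, transport it to functor categories to get $i^{\ast} \dashv L^{\ast}$, observe that the counit is invertible so $\psi = L^{\ast}$ is fully faithful, and for the essential image use that each $\eta_X \in S$ via the triangle identity.
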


\begin{definition}
  \label{def:L-inv}Let $\mathcal{C}$ be an $\infty$\mbox{-}category, $L \of
  \mathcal{C} \rightarrow L\mathcal{C} \subseteq \mathcal{C}$ a localization
  functor and $\mathcal{D}$ an $\infty$\mbox{-}category. We say that a functor
  $F \of \mathcal{C} \rightarrow \mathcal{D}$ is {\tmdfn{$L$-invariant}} if
  for every morphism $f$ in $\mathcal{C}$ such that $L f$ is an equivalence,
  then so is $F (f)$ in $\mathcal{D}$.
\end{definition}

Now we come back to our previous discussion.

\begin{lemma}
  \label{lem:L-inv-Lan}Under the above discussion, consider the following
  conditions:
  \begin{enumerateroman}
    \item \label{pt:l10p1}The left Kan extension $\tilde{F} \of \mathcal{C}
    \rightarrow \mathcal{E}$ is $L$-invariant.
    
    \item \label{pt:l10p2}The comparison map $\nobracket \tilde{F}
    |_{\mathcal{C}^1} \rightarrow F_1$ constructed above is an equivalence.
    
    \item \label{pt:l10p3}The comparison map $\nobracket \tilde{F}
    |_{\mathcal{D}^1} \rightarrow F$ constructed above is an equivalence.
  \end{enumerateroman}
  We have
  \begin{enumerate}
    \item Conditions \ref{pt:l10p2} and \ref{pt:l10p3} are equivalent.
    
    \item Condition \ref{pt:l10p1} implies condition \ref{pt:l10p3}.
    
    \item Under the assumptions in \Cref{rem:setup-proj-gen}, condition
    \ref{pt:l10p2} implies condition \ref{pt:l10p1}.
  \end{enumerate}
\end{lemma}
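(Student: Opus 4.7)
For the equivalence of \ref{pt:l10p2} and \ref{pt:l10p3}, \ref{pt:l10p2} $\Rightarrow$ \ref{pt:l10p3} will be immediate by restricting the comparison along $\mathcal{D}^0 \subseteq \mathcal{C}^1$. For the converse I would use that on $\mathcal{C}^0 \subseteq \mathcal{C}^1$ both $\tilde{F}|_{\mathcal{C}^0}$ and $F_1|_{\mathcal{C}^0}$ equal $F \circ L|_{\mathcal{C}^0}$ by construction, so the comparison is tautologically an equivalence on $\mathcal{C}^0$; combined with \ref{pt:l10p3} on $\mathcal{D}^0$ and the fact that every vertex of $\mathcal{C}^1$ lies in $\mathcal{C}^0 \cup \mathcal{D}^0$, this gives the pointwise equivalence on $\mathcal{C}^1$.

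For \ref{pt:l10p1} $\Rightarrow$ \ref{pt:l10p3}, I would fix $D \in \mathcal{D}^0$ and pick $C \in \mathcal{C}^0$ with $LC = D$. Under the normalization $L^2 \simeq L$, the map $L \eta_C$ is the identity of $LC$, so $\eta_C \of C \to LC$ is an $L$-equivalence, whence $L$-invariance makes $\tilde{F}(\eta_C)$ an equivalence. Evaluating the comparison $\tilde{F}|_{\mathcal{C}^1} \to F_1$ at $\eta_C$ then yields a naturality square whose left vertical is tautologically an equivalence (the $\mathcal{C}^0$ case above), whose top horizontal is this $\tilde{F}(\eta_C)$, and whose bottom horizontal $F_1(\eta_C) = F(L \eta_C)$ is an equivalence; 2-out-of-3 delivers the equivalence of the right vertical at $D$.

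For \ref{pt:l10p2} $\Rightarrow$ \ref{pt:l10p1} under \Cref{rem:setup-proj-gen}, the plan is to invoke \Cref{prop:L-inv} by exhibiting an equivalence $\tilde{F} \simeq G \circ L$ with $G \assign \tilde{F}|_{\mathcal{D}}$, which makes $L$-invariance automatic. Under \Cref{rem:setup-proj-gen} we have $\mathcal{C} \simeq \mathcal{P}_{\Sigma} (\mathcal{C}^0)$ and $\tilde{F}$ preserves sifted colimits; by \ref{pt:l10p2} the two candidate functors $\tilde{F}$ and $\tilde{F}|_{\mathcal{D}} \circ L$ agree on $\mathcal{C}^0$ (both compute $F \circ L|_{\mathcal{C}^0}$). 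So by the universal property of $\mathcal{P}_{\Sigma} (\mathcal{C}^0)$ it suffices to show $\tilde{F}|_{\mathcal{D}} \circ L$ also preserves sifted colimits, and since $L$ preserves all colimits the task reduces to sifted-colimit preservation of $\tilde{F}|_{\mathcal{D}}$ in $\mathcal{D}$. By \Cref{lem:Lan-localization} $\tilde{F}|_{\mathcal{D}}$ is the left Kan extension of $F$ along $\mathcal{D}^0 \hookrightarrow \mathcal{D}$, and applying $L$ to the canonical sifted resolution in $\mathcal{C}$ of any object of $\mathcal{D}$ exhibits $\mathcal{D}^0$ as sifted-dense in $\mathcal{D}$; a cofinality argument comparing the slices $\mathcal{C}^0 \downarrow X$ and $\mathcal{D}^0 \downarrow LX$ via $(C, f) \mapsto (L C, L f)$ will then identify the two pointwise Kan extension formulas and deliver the sifted-colimit preservation. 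This last cofinality/sifted-density step is the main obstacle; the earlier implications are formal consequences of the universal properties of left Kan extensions and $\mathcal{P}_{\Sigma}$.
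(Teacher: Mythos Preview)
Your arguments for parts 1 and 2 are correct and essentially match the paper's.

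For part 3, however, you take a detour that is both harder than necessary and has the gap you yourself flag. You try to prove $\tilde{F} \simeq \tilde{F}|_{\mathcal{D}} \circ L$ as functors on $\mathcal{C}$, which forces you to show $\tilde{F}|_{\mathcal{D}}$ preserves sifted colimits in $\mathcal{D}$; but $\mathcal{D}$ need not be projectively generated (the paper explicitly suspects $D_{\tmop{comp}}(\mathbb{Z}_p)$ is not), so your proposed cofinality between $\mathcal{C}^0 \downarrow X$ and $\mathcal{D}^0 \downarrow LX$ is genuinely delicate and not obviously true.

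The paper's argument avoids this entirely. Rather than comparing $\tilde{F}$ with $\tilde{F}|_{\mathcal{D}} \circ L$, it shows directly that $\tilde{F}$ sends every unit map $\eta_X \of X \to LX$ to an equivalence, which is enough for $L$-invariance (since $\tilde{F}\eta$ then exhibits $\tilde{F} \simeq \tilde{F}\circ L$, and the latter is manifestly $L$-invariant). The point is that under \ref{pt:l10p2}, $\tilde{F}|_{\mathcal{C}^1} \simeq F_1 = F \circ L|_{\mathcal{C}^1}$, and $F \circ L$ is trivially $L$-invariant; hence $\tilde{F}(\eta_X)$ is an equivalence for $X \in \mathcal{C}^0$. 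Now let $\mathcal{C}' \subseteq \mathcal{C}$ be the full subcategory of $X$ with $\tilde{F}(\eta_X)$ an equivalence: since $\tilde{F}$ preserves sifted colimits (by \Cref{prop:left-deriv-fun,prop:struct-proj-gen-cats}) and $L$ preserves small colimits, $\mathcal{C}'$ is closed under sifted colimits and contains $\mathcal{C}^0$, so $\mathcal{C}' = \mathcal{C}$. This stays entirely on $\mathcal{C}$ and never needs any colimit-preservation statement about $\tilde{F}|_{\mathcal{D}}$.
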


\begin{proof}
  First, restricting the comparison map $\nobracket \tilde{F}
  |_{\mathcal{C}^1} \rightarrow F_1$ to $\mathcal{C}^0$, we get the identity,
  so conditions \ref{pt:l10p2} and \ref{pt:l10p3} are equivalent.
  
  If $\tilde{F}$ is $L$-invariant, then for all $X \in \mathcal{C}^1$, the
  unit map $X \rightarrow L X$ induces a commutative diagram
  \[ \begin{array}{ccc}
       \tilde{F} (X) & \longrightarrow & \tilde{F} (L X)\\
       \longdownarrow &  & \longdownarrow\\
       F_1 (X) & \longrightarrow & F_1 (L X)
     \end{array} \]
  with the horizontal maps being equivalences. In particular, for all $Y \in
  \mathcal{D}^0$, there exists $X \in \mathcal{C}^0$ such that $Y \simeq L X$.
  Then $\tilde{F} (X) \rightarrow F_1 (X)$ is an equivalence, therefore so are
  $\tilde{F} (L X) \rightarrow F_1 (L X)$ and $\tilde{F} (Y) \rightarrow F_1
  (Y)$, which proves condition \ref{pt:l10p2}.
  
  We now assume that we are in the special case described in
  \Cref{rem:setup-proj-gen}. Suppose that condition \ref{pt:l10p2} holds. Note
  that $F_1$ is, by definition, $L$-invariant, therefore for all $X \in
  \mathcal{C}^0$, $\tilde{F}$ maps the unit map $X \rightarrow L X$ to an
  equivalence. Let $\mathcal{C}' \subseteq \mathcal{C}$ be the full
  subcategory spanned by those $X \in \mathcal{C}$ such that $\tilde{F}$ maps
  $X \rightarrow L X$ to an equivalence. Then $\mathcal{C}^0 \subseteq
  \mathcal{C}'$. It follows from
  \Cref{prop:left-deriv-n-fun,prop:struct-n-proj-gen-cats} that $\tilde{F}$
  preserves sifted colimits. Since $L$ preserves small colimits,
  $\mathcal{C}'$ is closed under sifted colimits, therefore $\mathcal{C}'
  =\mathcal{C}$ by \Cref{lem:nonab-deriv-cat-n-proj-gen}.
\end{proof}

\begin{remark}
  We conjecture that all conditions in \Cref{lem:L-inv-Lan} are equivalent
  under Setup~\ref{setup:Lan} without the assumptions in
  \Cref{rem:setup-proj-gen}.
\end{remark}

Now we describe how the setups above give rise to derived prismatic cohomology
in {\cite{Bhatt2019}}. Let $(A, I)$ be a bounded prism
{\cite[Def~3.2]{Bhatt2019}}. Let $\mathcal{C}= \tmop{Ani} (\tmop{Alg}_{A /
I})$ be the $\infty$\mbox{-}category of $A / I$-algebras and $\mathcal{D}
\subseteq \mathcal{C}$ the full subcategory of $p$-completed $A / I$-algebras.
In this case, the localization functor $\mathcal{C} \rightarrow \mathcal{D}$
is simply given by the $p$-completion $(-)_p^{\wedge}$. Let $\mathcal{C}^0
\subseteq \mathcal{C}$ be the full subcategory of polynomial $A / I$-algebras.
Then $\mathcal{D}^0 \subseteq \mathcal{D}$ is the full subcategory of
$p$-completed polynomial $A / I$-algebras. {\cite[§4.2]{Bhatt2019}} defines
the functors $F \assign \Prism_{\cdummy / A} \of \mathcal{D}^0 \rightarrow
D_{\tmop{comp}} (A)$ and $G \assign \overline{\Prism}_{\cdummy / A} \of
\mathcal{D}^0 \rightarrow D_{\tmop{comp}} (A / I)$, where $D_{\tmop{comp}}
(A)$ is the $\infty$\mbox{-}category of $(p, I)$-complete $A$-module spectra,
and $D_{\tmop{comp}} (A / I)$ is the $\infty$\mbox{-}category of $p$-complete
$A / I$-module spectra. In Setup~\ref{setup:Lan} and
\Cref{rem:setup-proj-gen}, we claim that the functor $\tilde{F}$ and
$\tilde{G}$ are left Kan extended from $\mathcal{D}^0$ after restriction to
$\mathcal{D}$. That is to say, $\tilde{F}$ and $\tilde{G}$ are left derived
functors $\mathbb{L} \Prism_{\cdummy / A}$ and $\mathbb{L}
\overline{\Prism}_{\cdummy / A}$ defined in {\cite[Cons~7.6]{Bhatt2019}}.
Thanks to \Cref{lem:L-inv-Lan}, it suffices to show that $\tilde{F}$ and
$\tilde{G}$ are $(-)_p^{\wedge}$-invariant. We will first describe our proof,
then we offer the lemmas used in the proof.

We start with $\tilde{G}$. Composing $G$ with the Postnikov tower
$D_{\tmop{comp}} (A / I) \rightarrow \tmop{DF}_{\tmop{comp}} (A / I), X
\mapsto (\tau_{\geq n} X)_{n \in (\mathbb{Z}, \geq)}$ where
$\tmop{DF}_{\tmop{comp}} (A / I) \assign \tmop{Fil} (D_{\tmop{comp}} (A / I))$
is the filtered derived $\infty$\mbox{-}category of $p$-completed $A /
I$-module spectra, we get a functor $G^P \of \mathcal{D}^0 \rightarrow
\tmop{DF}_{\tmop{comp}} (A / I)$ such that the {\tmdfn{union}} (see
\Cref{cor:L-inv-union}) $\tmop{Fil}^{- \infty} G^P \of \mathcal{D}^0
\rightarrow D_{\tmop{comp}} (A / I)$ is equivalent to $G$. It follows from the
Hodge--Tate comparison {\cite[Prop~6.2]{Bhatt2019}} that the functorial
comparison map $\left( \bigwedge^i L_{\cdummy / (A / I)} \{ - i \} [- i]
\right)_p^{\wedge} \rightarrow \tmop{gr}^{- i} \circ G^P$ is an equivalence).
Now \Cref{rem:setup-proj-gen} shows that $G^P \of \mathcal{D}^0 \rightarrow
\tmop{DF}_{\tmop{comp}} (A / I)$ gives rise to $\widetilde{G^P} \of
\mathcal{C} \rightarrow \tmop{DF}_{\tmop{comp}} (A / I)$ and the functor
$\left( \bigwedge^i L_{\cdummy / (A / I)} \{ - i \} [- i] \right)_p^{\wedge}
\of \mathcal{D}^0 \rightarrow \tmop{DF}_{\tmop{comp}} (A / I)$ gives rise to
some $\mathcal{C} \rightarrow \tmop{DF}_{\tmop{comp}} (A / I)$, which is
$\left( \bigwedge^i L_{\cdummy / (A / I)} \{ - i \} [- i] \right)_p^{\wedge}$
by \Cref{ex:setup-p-compl-cot-cx}, and in particular,
$(-)_p^{\wedge}$-invariant. It follows from \Cref{lem:assoc-graded-union-Lan}
that the associated graded pieces $\tmop{gr}^{- i} \circ \widetilde{G^P}$ are
$(-)_p^{\wedge}$-invariant and therefore the $(-)_p^{\wedge}$-invariance of
$\tilde{G}$ follows from \Cref{cor:L-inv-union}.

Note that $\tilde{F}$ coincides with $\tilde{G}$ composed with the derived
modulo $I$, that is, the composite functor $\tmop{Ani} (\tmop{Alg}_{A / I})
\xrightarrow{\tilde{F}} D_{\tmop{comp}} (A) \xrightarrow{\cdummy
\hat{\otimes}_A^{\mathbb{L}} (A / I)} D_{\tmop{comp}} (A / I)$. We deduce by
derived Nakayama {\cite[\href{https://stacks.math.columbia.edu/tag/0G1U}{Tag
0G1U}]{stacks-project}} that $\tilde{F}$ is also $(-)_p^{\wedge}$-invariant.

Here are the lemmas that we used in the argument above:

\begin{lemma}
  \label{lem:L-inv-gr}Let $\mathcal{C}$ be an $\infty$\mbox{-}category and
  $\mathcal{D} \subseteq \mathcal{C}$ a reflective subcategory with
  localization $L \of \mathcal{C} \rightarrow \mathcal{D}$. Let $\mathcal{E}$
  be a stable $\infty$\mbox{-}category. Let $F \of \mathcal{C} \rightarrow
  \tmop{Fil}^{\leq 0} (\mathcal{E})$ be a functor. If the {\tmdfn{associated
  graded pieces}} $\tmop{gr}^i \circ \tilde{F}$ are $L$-invariant for all $i
  \in \mathbb{Z}$, then so is $\tilde{F}$.
\end{lemma}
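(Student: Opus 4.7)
The plan is to reduce the claim to a pointwise statement and then run a short downward induction, using stability of $\mathcal{E}$ at each step.

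First I note that the $\infty$-category $\mathrm{Fil}^{\leq 0}(\mathcal{E}) \simeq \mathrm{Fun}((\mathbb{Z}_{\leq 0}, \geq), \mathcal{E})$ has equivalences detected pointwise, so a morphism $\alpha$ in $\mathrm{Fil}^{\leq 0}(\mathcal{E})$ is an equivalence if and only if $\mathrm{Fil}^i \alpha$ is an equivalence in $\mathcal{E}$ for every $i \leq 0$. Hence to show $\tilde{F}$ is $L$-invariant it suffices to show that each composite $\mathrm{Fil}^i \circ \tilde{F} \of \mathcal{C} \rightarrow \mathcal{E}$ is $L$-invariant for all $i \leq 0$.

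Next I observe a general stability principle: if $\mathcal{E}$ is stable and $A \rightarrow B \rightarrow C$ is a cofiber sequence of functors $\mathcal{C} \rightrightarrows \mathcal{E}$ with $A$ and $C$ both $L$-invariant, then $B$ is also $L$-invariant, since for any $f$ with $Lf$ invertible, $B(f)$ fits into a map of cofiber sequences whose other two vertical maps are equivalences. I will apply this to the defining cofiber sequence
\[ \mathrm{Fil}^{i+1} \circ \tilde{F} \longrightarrow \mathrm{Fil}^i \circ \tilde{F} \longrightarrow \mathrm{gr}^i \circ \tilde{F}. \]

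Now I run downward induction on $i \leq 0$. For the base case $i = 0$: since $\tilde{F}$ lands in nonpositively filtered objects, $\mathrm{Fil}^{1} \circ \tilde{F} \simeq 0$, so the cofiber sequence above identifies $\mathrm{Fil}^0 \circ \tilde{F} \simeq \mathrm{gr}^0 \circ \tilde{F}$, which is $L$-invariant by hypothesis. For the inductive step $i < 0$: by the inductive hypothesis $\mathrm{Fil}^{i+1} \circ \tilde{F}$ is $L$-invariant, and $\mathrm{gr}^i \circ \tilde{F}$ is $L$-invariant by assumption, so the stability principle above gives $L$-invariance of $\mathrm{Fil}^i \circ \tilde{F}$. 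Combining these cases yields $L$-invariance of $\mathrm{Fil}^i \circ \tilde{F}$ for all $i \leq 0$, hence of $\tilde{F}$ itself.

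There is no real obstacle here; the only point to be careful about is that the argument genuinely uses $\mathcal{E}$ stable (to invoke the two-out-of-three property in a cofiber sequence) and that the filtration is nonpositive so that the downward induction terminates at $i = 0$ with $\mathrm{Fil}^1 \simeq 0$.
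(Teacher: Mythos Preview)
Your proof is correct and follows essentially the same approach as the paper: both run a downward induction on $i$, using the fiber/cofiber sequence $\mathrm{Fil}^{i+1}\tilde{F} \to \mathrm{Fil}^i\tilde{F} \to \mathrm{gr}^i\tilde{F}$ and the two-out-of-three property in the stable category $\mathcal{E}$, with the base case supplied by $\mathrm{Fil}^i\tilde{F} \simeq 0$ for $i>0$. The only cosmetic difference is that the paper phrases the induction pointwise on unit maps $C \to LC$, whereas you phrase it at the level of $L$-invariance of the functors $\mathrm{Fil}^i \circ \tilde{F}$; these are equivalent formulations.
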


\begin{proof}
  For all $C \in \mathcal{C}$, we inductively show that the unit map $C
  \rightarrow L C$ induces an equivalence $\tmop{Fil}^i (\tilde{F} (C))
  \rightarrow \tmop{Fil}^i (\tilde{F} (L C))$. By assumption, this is true for
  all $i > 0$. Now consider the commutative diagram
  \[ \begin{array}{ccccc}
       \tmop{Fil}^{i + 1} (\tilde{F} (C)) & \longrightarrow & \tmop{Fil}^i
       (\tilde{F} (C)) & \longrightarrow & \tmop{gr}^i (\tilde{F} (C))\\
       \longdownarrow &  & \longdownarrow &  & \longdownarrow\\
       \tmop{Fil}^{i + 1} (\tilde{F} (L C)) & \longrightarrow & \tmop{Fil}^i
       (\tilde{F} (L C)) & \longrightarrow & \tmop{gr}^i (\tilde{F} (L C))
     \end{array} \]
  where the horizontal maps are fiber sequences. Suppose that the result is
  true for $i + 1$. Then the leftmost and the rightmost vertical maps are
  equivalences, therefore so is the middle vertical maps, which shows that the
  result is true for $i$.
\end{proof}

It then follows from definitions that

\begin{corollary}
  \label{cor:L-inv-union}Under the assumptions in \Cref{lem:L-inv-gr}, if we
  further assume that $\mathcal{E}$ admits filtered colimits, then the
  {\tmdfn{union}} $\tmop{Fil}^{- \infty} \circ \tilde{F} \of \mathcal{C}
  \rightarrow \tmop{Fil} (\mathcal{E}) \xrightarrow{\tmop{Fil}^{- \infty}}
  \mathcal{E}$ is also $L$-invariant.
\end{corollary}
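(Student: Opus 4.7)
The plan is to unwind the definitions and exploit the fact that filtered colimits of equivalences are equivalences in any $\infty$-category. Specifically, let $f \colon X \to Y$ be an arbitrary morphism in $\mathcal{C}$ such that $Lf$ is an equivalence in $\mathcal{D}$. Our goal, by \Cref{def:L-inv}, is to verify that $(\tmop{Fil}^{-\infty} \circ \tilde{F})(f)$ is an equivalence in $\mathcal{E}$.

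First, I would apply \Cref{lem:L-inv-gr} directly to the hypothesis: since the associated graded pieces $\tmop{gr}^i \circ \tilde{F}$ are $L$-invariant for all $i \in \mathbb{Z}$, the lemma yields that $\tilde{F}$ itself is $L$-invariant. Hence $\tilde{F}(f) \colon \tilde{F}(X) \to \tilde{F}(Y)$ is an equivalence in $\tmop{Fil}^{\leq 0}(\mathcal{E}) = \tmop{Fun}((\mathbb{Z}_{\leq 0}, \geq), \mathcal{E})$. Because equivalences in any functor category are detected pointwise, this means that for every $i \in \mathbb{Z}$ the induced map $\tmop{Fil}^i(\tilde{F}(f)) \colon \tmop{Fil}^i(\tilde{F}(X)) \to \tmop{Fil}^i(\tilde{F}(Y))$ is an equivalence in $\mathcal{E}$.

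Second, I would invoke the definition of $\tmop{Fil}^{-\infty}$ recalled in \Cref{subsec:graded-fil-objs}, namely $\tmop{Fil}^{-\infty}(G) = \tmop{colim}_{(\mathbb{Z}, \geq)} G$, which now exists as a functor $\tmop{Fil}(\mathcal{E}) \to \mathcal{E}$ under the added assumption that $\mathcal{E}$ admits filtered colimits (the poset $(\mathbb{Z}, \geq)$ being filtered). Since colimit functors in any $\infty$-category preserve equivalences, applying $\tmop{Fil}^{-\infty}$ to the levelwise equivalence above produces the equivalence $(\tmop{Fil}^{-\infty} \circ \tilde{F})(f)$ in $\mathcal{E}$. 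Together with \Cref{def:L-inv} this completes the argument.

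There is no real obstacle here — the corollary is essentially a formal consequence of \Cref{lem:L-inv-gr} and the fact that $\tmop{Fil}^{-\infty}$ is computed as a filtered colimit, which matches the paper's parenthetical remark that it ``follows from definitions''. The only thing to keep track of carefully is that one genuinely has a filtered (as opposed to arbitrary) colimit, so that no stability hypothesis beyond the existence of filtered colimits in $\mathcal{E}$ is required; this is visible from the fact that $(\mathbb{Z}, \geq)$ is a directed poset.
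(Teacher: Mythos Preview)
Your proof is correct and matches the paper's approach: the paper simply records that the corollary ``follows from definitions,'' meaning exactly that \Cref{lem:L-inv-gr} gives $L$-invariance of $\tilde{F}$ and then any functor (here $\tmop{Fil}^{-\infty}$, which exists once $\mathcal{E}$ has filtered colimits) preserves equivalences. Your intermediate step through pointwise equivalences is harmless but unnecessary---once $\tilde{F}(f)$ is an equivalence in $\tmop{Fil}^{\leq 0}(\mathcal{E})$, applying the functor $\tmop{Fil}^{-\infty}$ immediately yields an equivalence in $\mathcal{E}$.
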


\section{Animated ideals and PD-pairs}\label{sec:ani-pairs-pd-pairs}

In this section, we will first give an informal exposition of Smith ideals
introduced in {\cite{Hovey2014}} in terms of $\infty$-categories. See also
{\cite{White2017,White2017a}} for various generalizations. Then we will show
how to apply these ideas to define and study ``ideals'' of animated rings and
animated PD-pairs, which are the cornerstones of the animated theory of
crystalline cohomology.

\subsection{Smith ideals}\label{subsec:Smith-ideals}We fix a presentable
stable symmetric monoidal $\infty$\mbox{-}category $(\mathcal{C}, \otimes)$.
The reader should feel free to take the special case that $\mathcal{C}=
\tmop{Sp}$ is the $\infty$\mbox{-}category of spectra and $\otimes$ is the
smash product of spectra.

Consider the $1$\mbox{-}simplex $\Delta^1$, which is simply the
$1$\mbox{-}category associated to the ordinal $[1] = \{ 0 < 1 \}$. The
opposite category $(\Delta^1)^{\tmop{op}}$ has a symmetric monoidal structure
given by $\max \{ \cdummy, \cdummy \}$\footnote{This is informed to us by
Denis {\tmname{Nardin}}.}.

Thus the presentable stable $\infty$\mbox{-}category $\tmop{Fun}
((\Delta^1)^{\tmop{op}}, \mathcal{C})$ admits a presentable symmetric monoidal
structure given by the {\tmdfn{Day convolution}} $\otimes^{\tmop{Day}}$
{\cite[§3]{Nikolaus2016}}.

Informally, the unit object $\tmmathbf{1}_{\tmop{Fun} ((\Delta^1)^{\tmop{op}},
\mathcal{C})}$ is given by $(\tmmathbf{1}_{\mathcal{C}} \leftarrow 0) \in
\tmop{Fun} ((\Delta^1)^{\tmop{op}}, \mathcal{C})$, and given $n$ functors
$F_1, \ldots, F_n \in \tmop{Fun} ((\Delta^1)^{\tmop{op}}, \mathcal{C})$, the
Day convolution $F_1 \otimes^{\tmop{Day}} \cdots \otimes^{\tmop{Day}} F_n$ is
given as follows: $F_1, \ldots, F_n$ determines an $n$-cube $F \of
(\Delta^1)^{\tmop{op}} \times \cdots \times (\Delta^1)^{\tmop{op}} \rightarrow
\mathcal{C}, (e_1, \ldots, e_n) \mapsto F_1 (e_1) \otimes \cdots \otimes F_n
(e_n)$. This cube, except the final vertex, determines a ``cubical pushout''
mapping to the final vertex: $(F (0, \ldots, 0) \leftarrow
\tmop{colim}_{(\Delta^1)^{\tmop{op}} \times \cdots \times
(\Delta^1)^{\tmop{op}} \setminus (0, \ldots, 0)} F)$, which is $F_1
\otimes^{\tmop{Day}} \cdots \otimes^{\tmop{Day}} F_n$.

In particular, when $n = 2$, the Day convolution of $(X_0 \leftarrow X_1)$ and
$(Y_0 \leftarrow Y_1)$ is given by $(X_0 \otimes Y_0 \leftarrow (X_0 \otimes
Y_1) \amalg_{X_1 \otimes Y_1} (X_1 \otimes Y_0))$. This is essentially
equivalent to the {\tmdfn{pushout product monoidal structure}} in
{\cite[Thm~1.2]{Hovey2014}}.

On the other hand, there exists a pointwise symmetric monoidal structure
$\otimes$ on the stable $\infty$\mbox{-}category $\tmop{Fun} (\Delta^1,
\mathcal{C})$ where $F_1 \otimes \cdots \otimes F_n$ is given by the functor
$e \mapsto F_1 (e) \otimes \cdots \otimes F_n (e)$.

There is a comparison between these two stable symmetric monoidal
$\infty$\mbox{-}categories:

\begin{proposition}[M.~{\tmname{Ramzi}}]
  \label{prop:eq-day-point}Let $\mathcal{C}$ be a presentable stable symmetric
  monoidal $\infty$-category. Then there is an equivalence $\tmop{Fun}
  ((\Delta^1)^{\tmop{op}}, \mathcal{C}) \simeq \tmop{Fun} (\Delta^1,
  \mathcal{C})$ in $\tmop{CAlg}_{\mathcal{C}} (\Pr^L)$. On the level of
  underlying categories, the equivalence is given by $\tmop{Fun}
  ((\Delta^1)^{\tmop{op}}, \mathcal{C}) \ni F \mapsto (F (0) \rightarrow
  \tmop{cofib} (F (1) \rightarrow F (0))) \in \tmop{Fun} (\Delta^1,
  \mathcal{C})$ of which the inverse is given by $\tmop{Fun} (\Delta^1,
  \mathcal{C}) \ni G \mapsto (G (0) \leftarrow \tmop{fib} (G (0) \rightarrow G
  (1))) \in \tmop{Fun} ((\Delta^1)^{\tmop{op}}, \mathcal{C})$.
\end{proposition}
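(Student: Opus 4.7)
The plan is to split the argument into an underlying equivalence of $\infty$-categories, a compatibility check with units and binary tensor products, and finally an upgrade to a full symmetric monoidal equivalence via a formal criterion.

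For the underlying equivalence, the crucial input is stability of $\mathcal{C}$. Writing $\Phi(F) = (F(0) \to \tmop{cofib}(F(1) \to F(0)))$ and $\Psi(G) = (G(0) \leftarrow \tmop{fib}(G(0) \to G(1)))$, the composite $\Psi \circ \Phi$ has, on $F = (F(0) \leftarrow F(1))$, second entry $\tmop{fib}(F(0) \to \tmop{cofib}(F(1) \to F(0)))$, which is canonically $F(1)$ by the rotation/biCartesian triangle in a stable $\infty$-category (cf.\ HA 1.1.2). The reverse composite is handled symmetrically.

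Next I would verify that $\Phi$ respects units and binary tensor products. The Day convolution unit on $((\Delta^1)^{\tmop{op}}, \max)$ is the left Kan extension of $\tmmathbf{1}_\mathcal{C}$ along $\{0\} \hookrightarrow (\Delta^1)^{\tmop{op}}$, giving $(\tmmathbf{1}_\mathcal{C} \leftarrow 0)$; applying $\Phi$ yields $(\tmmathbf{1}_\mathcal{C} \xrightarrow{\tmop{id}} \tmmathbf{1}_\mathcal{C})$, the pointwise unit. For the binary case, write $F_i = (F_i(0) \leftarrow F_i(1))$; then $\tmop{cofib}(P \to F_1(0) \otimes F_2(0))$, with $P$ the pushout from the paper, is computed as the total cofiber of the $2 \times 2$ tensor square, which by biexactness of $\otimes$ and stability of $\mathcal{C}$ equals $\tmop{cofib}(F_1(1) \to F_1(0)) \otimes \tmop{cofib}(F_2(1) \to F_2(0))$ — exactly the target of $\Phi(F_1) \otimes^{\tmop{ptwise}} \Phi(F_2)$. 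Compatibility of the first component is immediate.

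The main obstacle is promoting these pointwise identifications to a coherent symmetric monoidal equivalence. The plan is to first observe that the pointwise tensor on $\tmop{Fun}(\Delta^1, \mathcal{C})$ is itself a Day convolution, namely for $\Delta^1$ with the $\max$ structure: the slice categories for the Day formula at $k = 0$ and $k = 1$ admit terminal objects $(0,0)$ and $(1,1)$ respectively, collapsing the colimit to pointwise tensor. Both sides are therefore Day convolutions, over $(\Delta^1, \max)$ and $((\Delta^1)^{\tmop{op}}, \max)$ respectively. Since $\Delta^1$ and $(\Delta^1)^{\tmop{op}}$ are not equivalent as symmetric monoidal $\infty$-categories, one cannot simply pull back along an equivalence of indexing categories; instead, I would invoke the criterion that a lax symmetric monoidal functor whose lax structure maps are equivalences and whose underlying functor is an equivalence is automatically a symmetric monoidal equivalence. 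To exhibit the lax structure on $\Phi$, I would produce it from the adjoint equivalence $(\Phi, \Psi)$ by showing $\Psi$ is strong symmetric monoidal: its $n$-fold structure map follows from the same biexactness and total-cofiber computation as in the binary case, applied to an $n$-cube of tensor products, and the required higher coherences reduce to the standard coherence of biexactness combined with the rotation axiom. The cleanest formal implementation is via operadic left Kan extension (HA 2.2.6), whose setup is the technical obstacle but whose verification, once set up, boils down to the already-completed total-cofiber computation.
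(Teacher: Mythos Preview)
Your proposal is correct and follows essentially the same route as the paper: the underlying equivalence comes from the fib/cofib rotation in a stable $\infty$-category, and the symmetric monoidal compatibility reduces to the computation that the cofiber of the Day convolution of $F_1,\ldots,F_n$ is the total cofiber of the associated $n$-cube, which by biexactness of $\otimes$ equals the tensor product of the individual cofibers. The paper presents this as an explicitly ``informal'' argument and does not address the higher coherences; your additional discussion of how to upgrade to a full symmetric monoidal equivalence (via the observation that the pointwise structure is also a Day convolution, and via the lax-monoidal-with-invertible-structure-maps criterion or operadic left Kan extension) goes beyond what the paper spells out, but the core computational content is identical.
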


\begin{proof}
  The pair of inverse functors are clearly well-defined. It suffices to endow
  the functor
  \begin{eqnarray*}
    K \of (\tmop{Fun} ((\Delta^1)^{\tmop{op}}, \mathcal{C}),
    \otimes^{\tmop{Day}}) & \longrightarrow & (\tmop{Fun} (\Delta^1,
    \mathcal{C}), \otimes)\\
    (F (1) \rightarrow F (0)) & \longmapsto & (F (0) \rightarrow \tmop{cofib}
    (F (1) \rightarrow F (0)))
  \end{eqnarray*}
  a symmetric monoidal structure. Such a structure exists for every pointed
  presentable symmetric monoidal $\infty$-categories. Indeed, since
  $\tmop{Fun} ((\Delta^1)^{\tmop{op}}, \mathcal{C}) \simeq \tmop{Fun}
  ((\Delta^1)^{\tmop{op}}, \tmop{An}_{\ast}) \otimes \mathcal{C}$, and the
  same for $\tmop{Fun} (\Delta^1, \mathcal{C})$, without loss of generality,
  we may assume that $\mathcal{C}= \tmop{An}_{\ast}$. By the universal
  property of Day convolution, it suffices to endow the composite functor
  \[ \Delta^1 \longrightarrow \tmop{Fun} ((\Delta^1)^{\tmop{op}},
     \tmop{An}_{\ast}) \xrightarrow{K} \tmop{Fun} (\Delta^1, \tmop{An}_{\ast})
  \]
  a symmetric monoidal structure, where the first functor is the Yoneda
  embedding combined with adjoining a point $\tmop{An} \rightarrow
  \tmop{An}_{\ast}$, but since $\Delta^1$ is a $1$-category, it is equivalent
  to endowing the composite functor
  \[ \Delta^1 \longrightarrow \tmop{Fun} ((\Delta^1)^{\tmop{op}},
     \tmop{Set}_{\ast}) \xrightarrow{K} \tmop{Fun} (\Delta^1,
     \tmop{Set}_{\ast}) \]
  a symmetric monoidal structure. Note that a symmetric monoidal functor out
  of $(\Delta^1, \max)$ is the same as a map of idempotent algebras, and this
  can be checked directly.
\end{proof}

Now we assume that $\mathcal{C}$ admits a symmetric monoidal $t$-structure
$(\mathcal{C}_{\geq 0}, \mathcal{C}_{\leq 0})$ which is compatible with
filtered colimits. This is the case when $\mathcal{C}= \tmop{Sp}$ and
$(\mathcal{C}_{\geq 0}, \mathcal{C}_{\leq 0})$ is the canonical $t$-structure
for spectra. Then so does $\tmop{Fun} ((\Delta^1)^{\tmop{op}}, \mathcal{C})$,
that is to say, $\tmop{Fun} ((\Delta^1)^{\tmop{op}}, \mathcal{C})_{\geq 0}
\assign \tmop{Fun} ((\Delta^1)^{\tmop{op}}, \mathcal{C}_{\geq 0})$ and
$\tmop{Fun} ((\Delta^1)^{\tmop{op}}, \mathcal{C})_{\leq 0} \assign \tmop{Fun}
((\Delta^1)^{\tmop{op}}, \mathcal{C}_{\leq 0})$. Transferring this
$t$-structure along the equivalence in \Cref{prop:eq-day-point}, we get a
$t$-structure on $\tmop{Fun} (\Delta^1, \mathcal{C})$ where $\tmop{Fun}
(\Delta^1, \mathcal{C})_{\geq 0} \subseteq \tmop{Fun} (\Delta^1, \mathcal{C})$
is spanned by edges $f \of X \rightarrow Y$ in $\mathcal{C}$ such that $X \in
\mathcal{C}_{\geq 0}$ and $\tmop{fib} (Y \rightarrow X) \in \mathcal{C}_{\geq
0}$, or equivalently, $X, Y \in \mathcal{C}_{\geq 0}$ and $f$ is
$1$-connective (that is to say, $\pi_0 (f)$ is surjective). In summary,

\begin{corollary}
  \label{cor:conn-eq-day-point}The equivalence in \Cref{prop:eq-day-point}
  induces an equivalence of presentable symmetric monoidal full subcategories
  $\tmop{Fun} ((\Delta^1)^{\tmop{op}}, \mathcal{C}_{\geq 0}) \simeq \tmop{Fun}
  (\Delta^1, \mathcal{C})_{\geq 0}$, where the full subcategory $\tmop{Fun}
  ((\Delta^1)^{\tmop{op}}, \mathcal{C})_{\geq 0}$ is spanned by maps $Y
  \leftarrow X$ in $\mathcal{C}_{\geq 0}$.
\end{corollary}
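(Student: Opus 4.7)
The plan is to chase the explicit equivalence of \Cref{prop:eq-day-point} directly. Write $\Phi \colon (Y \leftarrow X) \mapsto (Y \to \tmop{cofib}(X \to Y))$ for the forward functor and $\Psi \colon (A \to B) \mapsto (A \leftarrow \tmop{fib}(A \to B))$ for its inverse. Since these are already mutually inverse equivalences of the ambient $\infty$-categories, it suffices to check that each restricts to a functor between the two displayed full subcategories.

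First I would verify that $\Phi$ lands in $\tmop{Fun}(\Delta^1, \mathcal{C})_{\geq 0}$: given $(Y \leftarrow X)$ with $X, Y \in \mathcal{C}_{\geq 0}$, set $Z := \tmop{cofib}(X \to Y)$; closure of $\mathcal{C}_{\geq 0}$ under colimits gives $Z \in \mathcal{C}_{\geq 0}$, and from the fiber sequence $X \to Y \to Z$ we read off $\tmop{fib}(Y \to Z) \simeq X \in \mathcal{C}_{\geq 0}$. Conversely, if $f \colon A \to B$ lies in $\tmop{Fun}(\Delta^1, \mathcal{C})_{\geq 0}$, then by definition $A \in \mathcal{C}_{\geq 0}$ and $\tmop{fib}(f) \in \mathcal{C}_{\geq 0}$, so $\Psi(f) = (A \leftarrow \tmop{fib}(f))$ has both vertices in $\mathcal{C}_{\geq 0}$. (The two descriptions of $\tmop{Fun}(\Delta^1, \mathcal{C})_{\geq 0}$ coincide because in a stable $\infty$-category one has $\tmop{cofib}(f) \simeq \tmop{fib}(f)[1]$, so $\tmop{fib}(f) \in \mathcal{C}_{\geq 0}$ is equivalent to $\pi_0(\tmop{cofib}(f)) = 0$, and assuming $X \in \mathcal{C}_{\geq 0}$ the condition $\tmop{fib}(f) \in \mathcal{C}_{\geq 0}$ forces $Y \in \mathcal{C}_{\geq 0}$ as well.)

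For the symmetric monoidal structure, I would check that $\tmop{Fun}((\Delta^1)^{\tmop{op}}, \mathcal{C}_{\geq 0})$ is closed under the Day convolution and contains the unit. The unit $(\tmmathbf{1}_{\mathcal{C}} \leftarrow 0)$ visibly lies in the subcategory since $\tmmathbf{1}_{\mathcal{C}} \in \mathcal{C}_{\geq 0}$ by the symmetric monoidal $t$-structure assumption. For closure, the explicit formula displayed just before \Cref{prop:eq-day-point} gives
\[
(X_0 \leftarrow X_1) \otimes^{\tmop{Day}} (Y_0 \leftarrow Y_1) \;=\; \bigl(X_0 \otimes Y_0 \;\leftarrow\; (X_0 \otimes Y_1) \amalg_{X_1 \otimes Y_1} (X_1 \otimes Y_0)\bigr),
\]
and when all four of $X_0, X_1, Y_0, Y_1$ lie in $\mathcal{C}_{\geq 0}$, the tensor products lie in $\mathcal{C}_{\geq 0}$ (symmetric monoidal $t$-structure) and the pushout lies in $\mathcal{C}_{\geq 0}$ (closure under colimits). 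Presentability of the subcategory is inherited from that of $\mathcal{C}_{\geq 0}$ by taking a functor category. Transporting along $\Phi$ equips $\tmop{Fun}(\Delta^1, \mathcal{C})_{\geq 0}$ with the desired presentable symmetric monoidal structure, and $\Phi$ is tautologically an equivalence of these structures.

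I do not expect any serious obstacle: once the formulas of \Cref{prop:eq-day-point} are at hand, everything reduces to the two standing properties of a symmetric monoidal $t$-structure (closure of $\mathcal{C}_{\geq 0}$ under $\otimes$ and under colimits). The only point requiring a moment of care is the bookkeeping that identifies the two equivalent descriptions of connectivity for a map $f \colon X \to Y$ recalled above.
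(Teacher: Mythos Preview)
Your proposal is correct and matches the paper's approach. The paper does not give a separate proof of this corollary: it is stated as a summary (``In summary,'') of the preceding paragraph, which defines the pointwise $t$-structure on $\tmop{Fun}((\Delta^1)^{\tmop{op}},\mathcal{C})$, transfers it along the equivalence of \Cref{prop:eq-day-point}, and identifies the connective part on the other side exactly as you do. Your argument is simply a more explicit spelling-out of that transfer, including the verification that the Day convolution restricts to the connective subcategory.
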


Passing to $\mathbb{E}_n$-algebras for any $n \in \mathbb{N} \cup \{ \infty
\}$, we get

\begin{corollary}
  \label{cor:smith-eq}There is an equivalence between the
  $\infty$\mbox{-}category of $\mathbb{E}_n$-algebras in $(\tmop{Fun}
  ((\Delta^1)^{\tmop{op}}, \mathcal{C}), \otimes^{\tmop{Day}})$ and the
  $\infty$\mbox{-}category of $\mathbb{E}_n$-maps between
  $\mathbb{E}_n$-algebras in $(\mathcal{C}, \otimes)$. This equivalence
  induces an equivalence between the full subcategory spanned by connective
  $\mathbb{E}_n$-algebras in $(\tmop{Fun} ((\Delta^1)^{\tmop{op}},
  \mathcal{C}), \otimes^{\tmop{Day}})$ and the full subcategory spanned by
  $1$-connective $\mathbb{E}_n$-maps between connective
  $\mathbb{E}_n$-algebras in $(\mathcal{C}, \otimes)$.
\end{corollary}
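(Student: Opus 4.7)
The plan is to deduce this by applying the functor $\mathrm{Alg}_{\mathbb{E}_n}(-)$ to the symmetric monoidal equivalences already established. Proposition~\ref{prop:eq-day-point} gives an equivalence of presentable stable symmetric monoidal $\infty$-categories between $(\mathrm{Fun}((\Delta^1)^{\mathrm{op}}, \mathcal{C}), \otimes^{\mathrm{Day}})$ and $(\mathrm{Fun}(\Delta^1, \mathcal{C}), \otimes)$, where the latter carries the pointwise tensor product. Since the formation of $\mathbb{E}_n$-algebras is functorial with respect to symmetric monoidal functors, this immediately induces an equivalence of $\infty$-categories of $\mathbb{E}_n$-algebras.

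Next I would identify $\mathrm{Alg}_{\mathbb{E}_n}(\mathrm{Fun}(\Delta^1, \mathcal{C}), \otimes)$ with the arrow category of $\mathrm{Alg}_{\mathbb{E}_n}(\mathcal{C})$. The key point is that, for any $\infty$-category $K$, the pointwise symmetric monoidal structure on $\mathrm{Fun}(K, \mathcal{C})$ satisfies a universal property (see \cite[Ex.~2.1.3.4 and §3.2.4]{Lurie2017}) giving a natural equivalence
\[
\mathrm{Alg}_{\mathbb{E}_n}(\mathrm{Fun}(K, \mathcal{C}), \otimes) \simeq \mathrm{Fun}(K, \mathrm{Alg}_{\mathbb{E}_n}(\mathcal{C})).
\]
Specializing to $K = \Delta^1$, the right-hand side is by definition the $\infty$-category of $\mathbb{E}_n$-maps between $\mathbb{E}_n$-algebras in $\mathcal{C}$. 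Composing with the equivalence from the previous paragraph yields the first assertion of the corollary.

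For the connective statement, I would run the same argument after restricting to the connective subcategories identified in Corollary~\ref{cor:conn-eq-day-point}. Under the equivalence of that corollary, a functor $f \colon X \to Y$ in $\mathrm{Fun}(\Delta^1, \mathcal{C})_{\geq 0}$ corresponds exactly to a $1$-connective map between connective objects, so an $\mathbb{E}_n$-algebra structure on such an $f$ corresponds to a $1$-connective $\mathbb{E}_n$-map between connective $\mathbb{E}_n$-algebras in $\mathcal{C}$. The hypothesis that $(\mathcal{C}_{\geq 0}, \mathcal{C}_{\leq 0})$ is a symmetric monoidal $t$-structure is needed here to ensure that the connective part is stable under $\otimes^{\mathrm{Day}}$ (equivalently, pointwise $\otimes$), so that $\mathbb{E}_n$-algebras make sense in this subcategory.

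The main (minor) obstacle is pinning down the identification $\mathrm{Alg}_{\mathbb{E}_n}(\mathrm{Fun}(\Delta^1, \mathcal{C})) \simeq \mathrm{Fun}(\Delta^1, \mathrm{Alg}_{\mathbb{E}_n}(\mathcal{C}))$ for the pointwise tensor product; once this is granted, the rest is formal from Proposition~\ref{prop:eq-day-point} and Corollary~\ref{cor:conn-eq-day-point}.
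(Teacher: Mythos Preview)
Your proposal is correct and follows exactly the route the paper takes: the paper simply says ``Passing to $\mathbb{E}_n$-algebras for any $n \in \mathbb{N} \cup \{\infty\}$, we get'' before stating the corollary, and your write-up spells out precisely this passage (applying $\mathrm{Alg}_{\mathbb{E}_n}(-)$ to Proposition~\ref{prop:eq-day-point} and Corollary~\ref{cor:conn-eq-day-point}, together with the identification $\mathrm{Alg}_{\mathbb{E}_n}(\mathrm{Fun}(\Delta^1,\mathcal{C}),\otimes)\simeq\mathrm{Fun}(\Delta^1,\mathrm{Alg}_{\mathbb{E}_n}(\mathcal{C}))$). There is nothing to add.
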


Explicitly, for any $\mathbb{E}_n$-algebra in $((\tmop{Fun}
(\Delta^1)^{\tmop{op}}, \mathcal{C}), \otimes^{\tmop{Day}})$ of which the
underlying object is $(A \leftarrow I)$, the object $A \in \mathcal{C}$, the
cofiber $\tmop{cofib} (I \rightarrow A)$ and the map $A \rightarrow
\tmop{cofib} (I \rightarrow A)$ admit canonical $\mathbb{E}_n$-algebra
structures. We can then understand $I$ as an ``ideal'' of
$\mathbb{E}_n$-algebra $A$. When $A$ is connective, the previous
identification also describes connective ``ideals'' of $A$. This is the Smith
ideal in {\cite{Hovey2014}}, which gives rise to a reasonable theory of ideals
(resp. connective ideals) of $\mathbb{E}_n$-rings (resp. connective
$\mathbb{E}_n$-rings) when $\mathcal{C}$ is the presentable stable symmetric
monoidal $\infty$\mbox{-}category $\tmop{Sp}$ of spectra.

In the rest of this section, we will study the animated analogue of the
preceding equivalence, that is to say, ``ideals'' and ``PD-ideals'' of an
animated ring. To do so, we need to exploit more structures of $D
(\mathbb{Z})$.

\subsection{Animated (PD-)pairs}\label{subsec:ani-pairs-PD-pairs}In this
subsection, we introduce the central object of this section: animated pairs
and (absolute) animated PD-pairs.

\begin{notation}
  Let $\tmop{Pair}$ denote the $1$\mbox{-}category of ring-ideal pairs $(A,
  I)$, that is, a (commutative) ring $A$ along with an ideal $I \subseteq A$.
  Let $\tmop{Pair}^{\gamma}$ denote the $1$\mbox{-}category of divided power
  rings $(A, I, \gamma)$
  {\cite[\href{https://stacks.math.columbia.edu/tag/07GU}{Tag
  07GU}]{stacks-project}}. The {\tmdfn{(absolute) PD-envelope functor}}
  {\cite[\href{https://stacks.math.columbia.edu/tag/07H9}{Tag
  07H9}]{stacks-project}} $\tmop{Pair} \rightarrow \tmop{Pair}^{\gamma}$,
  being the left adjoint to the forgetful functor $\tmop{Pair}^{\gamma}
  \rightarrow \tmop{Pair}$, is denoted by $(A, I) \mapsto D_A (I)$.
\end{notation}

\begin{notation}
  Let $\tmop{Inj} \subseteq \tmop{Fun} ((\Delta^1)^{\tmop{op}}, \tmop{Ab})$ be
  the full subcategory spanned by injective maps $M \leftarrowtail M'$.
\end{notation}

We note that there is a pair $\tmop{Inj} \rightleftarrows \tmop{Pair}$ of
adjoint functors where $\tmop{Pair} \rightarrow \tmop{Fun}
((\Delta^1)^{\tmop{op}}, \tmop{Ab})$ is the forgetful functor $(A, I) \mapsto
(A \leftarrow I)$, and $\tmop{Fun} ((\Delta^1)^{\tmop{op}}, \tmop{Ab})
\rightarrow \tmop{Pair}$ is the ``symmetric product'' $(M \leftarrowtail M')
\mapsto (\tmop{Sym}_{\mathbb{Z}} (M), M' \tmop{Sym}_{\mathbb{Z}} (M))$ where
$\tmop{Sym}_{\mathbb{Z}} (M) \leftarrowtail M' \tmop{Sym}_{\mathbb{Z}} (M)$ is
the ideal generated by elements in $M'$.

Unfortunately, the category $\tmop{Inj}$ might not be $1$\mbox{-}projectively
generated. In particular, we cannot apply \Cref{cor:ani-adjoint-funs} to
deduce that the category $\tmop{Pair}$ is $1$\mbox{-}projectively generated
(we believe that it is not), and to construct ``$\tmop{Ani} (\tmop{Pair})$''.
In fact, we need to embed $\tmop{Pair}$ as a full subcategory of a
$1$\mbox{-}projectively generated $1$\mbox{-}category and then the
$\infty$\mbox{-}category of animated pairs coincides with the animation of
that larger $1$\mbox{-}category.

We begin by analyzing the full subcategory $\tmop{Inj} \subseteq \tmop{Fun}
((\Delta^1)^{\tmop{op}}, \tmop{Ab})$. Note that $\{ \mathbb{Z} \leftarrow 0,
\tmop{id}_{\mathbb{Z}} \of \mathbb{Z} \leftarrow \mathbb{Z} \} \subseteq
\tmop{Inj}$ is a set of compact $1$\mbox{-}projective generators for
$\tmop{Fun} ((\Delta^1)^{\tmop{op}}, \tmop{Ab})$ by
\Cref{lem:fun-n-cat-proj-gen}.

\begin{notation}
  Let $\tmop{Inj}^{\tmop{st}} \subseteq \tmop{Fun} ((\Delta^1)^{\tmop{op}},
  \tmop{Ab})$ denote the full subcategory generated by $\{ \mathbb{Z}
  \leftarrow 0, \tmop{id}_{\mathbb{Z}} \of \mathbb{Z} \leftarrow \mathbb{Z}
  \}$ under finite coproducts, which is in fact a full subcategory of
  $\tmop{Inj}$.
\end{notation}

It follows from \Cref{prop:struct-n-proj-gen-cats} that there is an
equivalence $\mathcal{P}_{\Sigma, 1} (\tmop{Inj}^{\tmop{st}})
\xrightarrow{\simeq} \tmop{Fun} ((\Delta^1)^{\tmop{op}}, \tmop{Ab})$ of
$\infty$\mbox{-}categories. It then follows from
\Cref{lem:left-deriv-fun-adjoint} that the fully faithful embedding
$\tmop{Inj} \hookrightarrow \mathcal{P}_{\Sigma, 1} (\tmop{Inj}^{\tmop{st}})$
admits a left adjoint given by the left derived functor of the inclusion
$\tmop{Inj}^{\tmop{st}} \hookrightarrow \tmop{Inj}$. We claim that

\begin{lemma}
  \label{lem:ess-img-inj-ab}The essential image of $\tmop{Inj} \hookrightarrow
  \mathcal{P}_{\Sigma, 1} (\tmop{Inj}^{\tmop{st}})$ is spanned by those
  finite-product-preserving functors $F \of
  (\tmop{Inj}^{\tmop{st}})^{\tmop{op}} \rightarrow \tmop{Set}$ which maps the
  edge $(\mathbb{Z} \leftarrow 0) \rightarrow (\tmop{id}_{\mathbb{Z}} \of
  \mathbb{Z} \leftarrow \mathbb{Z})$ in $\tmop{Inj}^{\tmop{st}}$ to an
  injective map of sets.
\end{lemma}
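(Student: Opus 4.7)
The plan is to unfold the equivalence $\mathcal{P}_{\Sigma,1}(\mathcal{C}^0) \simeq \tmop{Fun}((\Delta^1)^{\tmop{op}}, \tmop{Ab})$ explicitly on the two generators and read off the condition. Concretely, the equivalence sends a pair $(M \leftarrow M') \in \tmop{Fun}((\Delta^1)^{\tmop{op}}, \tmop{Ab})$ to the representable-type functor $F_{(M,M')} \assign \tmop{Hom}_{\tmop{Fun}((\Delta^1)^{\tmop{op}}, \tmop{Ab})}(-, (M \leftarrow M'))$ restricted to $\mathcal{C}^0$, and under this identification I need to translate the condition ``$M' \rightarrowtail M$'' into a condition on $F_{(M,M')}$ evaluated on generators.

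First, I would compute the values of $F_{(M,M')}$ on the two generators. Using the universal property defining $(\mathbb{Z} \leftarrow 0)$ and $(\tmop{id}_{\mathbb{Z}} \of \mathbb{Z} \leftarrow \mathbb{Z})$ in the functor category $\tmop{Fun}((\Delta^1)^{\tmop{op}}, \tmop{Ab})$, one obtains canonical natural bijections $F_{(M,M')}(\mathbb{Z} \leftarrow 0) \cong M$ and $F_{(M,M')}(\mathbb{Z} \leftarrow \mathbb{Z}) \cong M'$ (where the right-hand sides are the underlying sets). Next, I would identify the functorial structure: the unique morphism $e \of (\mathbb{Z} \leftarrow 0) \rightarrow (\mathbb{Z} \leftarrow \mathbb{Z})$ in $\mathcal{C}^0$ (the identity on the top, $0 \rightarrow \mathbb{Z}$ on the bottom) induces, by contravariance, a map $F_{(M,M')}(\mathbb{Z} \leftarrow \mathbb{Z}) \rightarrow F_{(M,M')}(\mathbb{Z} \leftarrow 0)$, and a direct unwinding shows this map coincides (under the bijections above) with the structure map $M' \rightarrow M$ of the pair.

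With these identifications in hand, the proof is immediate from both directions. On the one hand, if $(M \leftarrow M')$ lies in $\tmop{Fun}((\Delta^1)^{\tmop{op}}, \tmop{Ab})_{\tmop{inj}}$, then $M' \rightarrow M$ is injective, so $F_{(M,M')}$ sends the edge $e$ to an injection of sets; finite-product-preservation of $F_{(M,M')}$ is automatic since it comes from $\mathcal{P}_{\Sigma,1}(\mathcal{C}^0)$. On the other hand, given a finite-product-preserving $F \of (\mathcal{C}^0)^{\tmop{op}} \rightarrow \tmop{Set}$ sending $e$ to an injection, the equivalence $\mathcal{P}_{\Sigma,1}(\mathcal{C}^0) \simeq \tmop{Fun}((\Delta^1)^{\tmop{op}}, \tmop{Ab})$ produces some pair $(M \leftarrow M')$ with $F \simeq F_{(M,M')}$; the image of $e$ under $F$ is then the structure map $M' \rightarrow M$, which is injective by hypothesis, placing $(M \leftarrow M')$ in $\tmop{Fun}((\Delta^1)^{\tmop{op}}, \tmop{Ab})_{\tmop{inj}}$.

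The main obstacle is purely bookkeeping: verifying that the natural bijections $F_{(M,M')}(\mathbb{Z} \leftarrow 0) \cong M$ and $F_{(M,M')}(\mathbb{Z} \leftarrow \mathbb{Z}) \cong M'$ intertwine the functorial map induced by $e$ with the given structure map $M' \rightarrow M$, and checking that the abelian-group structure on the values of $F_{(M,M')}$ (which is automatic since $\mathcal{C}^0$ is additive and finite-product-preserving functors into $\tmop{Set}$ from an additive category land in $\tmop{Ab}$) is compatible with these bijections. No deep input is required beyond the already-established equivalence $\mathcal{P}_{\Sigma,1}(\mathcal{C}^0) \simeq \tmop{Fun}((\Delta^1)^{\tmop{op}}, \tmop{Ab})$.
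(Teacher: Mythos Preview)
Your proposal is correct and follows essentially the same approach as the paper: both arguments identify the corepresentable functors attached to the two generators as $(M \leftarrow M') \mapsto M$ and $(M \leftarrow M') \mapsto M'$, observe that the edge $e$ induces the structure map $M' \to M$, and then invoke the equivalence $\mathcal{P}_{\Sigma,1}(\mathcal{C}^0) \simeq \tmop{Fun}((\Delta^1)^{\tmop{op}}, \tmop{Ab})$ to conclude. The paper's proof is just a terser version of yours.
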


\begin{proof}
  The functors $\tmop{Fun} ((\Delta^1)^{\tmop{op}}, \tmop{Ab})
  \rightrightarrows \tmop{Set}$ corepresented by $\tmop{id}_{\mathbb{Z}} \in
  \tmop{Inj}^{\tmop{st}}$ and $(\mathbb{Z} \leftarrow 0) \in
  \tmop{Inj}^{\tmop{st}}$ are given by $(A \leftarrow A') \mapsto A'$ and $(A
  \leftarrow A') \mapsto A$ respectively, and the edge $(\mathbb{Z} \leftarrow
  0) \rightarrow (\tmop{id}_{\mathbb{Z}} \of \mathbb{Z} \leftarrow
  \mathbb{Z})$ gives rise to the natural map $A \leftarrow A'$ of the two
  functors. It follows that an object $F \in \tmop{Fun}
  ((\Delta^1)^{\tmop{op}}, \tmop{Ab})$ lies in $\tmop{Inj}$ if and only if the
  value of the natural map on $F$ is an injection. The result then follows
  from the equivalence $\mathcal{P}_{\Sigma, 1} (\tmop{Inj}^{\tmop{st}})
  \xrightarrow{\simeq} \tmop{Fun} ((\Delta^1)^{\tmop{op}}, \tmop{Ab})$.
\end{proof}

\begin{notation}
  Let $\tmop{Pair}^{\tmop{st}} \subseteq \tmop{Pair}$ denote the full
  subcategory spanned by {\tmdfn{standard pairs}}, being the images of
  $\tmop{Inj}^{\tmop{st}}$ under the functor $\tmop{Inj} \rightarrow
  \tmop{Pair}$. In other words, a standard pair is a pair of form $(\mathbb{Z}
  [X, Y], (Y))$ for finite sets $X$ and $Y$.
\end{notation}

Then by \Cref{cor:nonab-deriv-cat-adjoint-fun}, we have

\begin{lemma}
  \label{lem:1-forget-pair}The free-pair functor $\tmop{Inj}^{\tmop{st}}
  \rightarrow \tmop{Pair}^{\tmop{st}}$, being essentially surjective, gives
  rise to the {\tmdfn{forgetful functor}} $\mathcal{P}_{\Sigma, 1}
  (\tmop{Pair}^{\tmop{st}}) \rightarrow \tmop{Fun} ((\Delta^1)^{\tmop{op}},
  \tmop{Ab})$ which is conservative and preserves sifted colimits.
\end{lemma}

{\construction{\label{cons:pair-st-pair-adjunct}\Cref{lem:left-deriv-fun-adjoint}
gives us a canonical pair of adjoint functors $\mathcal{P}_{\Sigma, 1}
(\tmop{Pair}^{\tmop{st}}) \rightleftarrows \tmop{Pair}$, where
$\mathcal{P}_{\Sigma, 1} (\tmop{Pair}^{\tmop{st}}) \rightarrow \tmop{Pair}$ is
the left derived $1$\mbox{-}functor (\Cref{prop:left-deriv-n-fun}) of the
inclusion $\tmop{Pair}^{\tmop{st}} \hookrightarrow \tmop{Pair}$, and
$\tmop{Pair} \rightarrow \mathcal{P}_{\Sigma, 1} (\tmop{Pair}^{\tmop{st}})$ is
the given by the restricted Yoneda embedding $(A, I) \mapsto
\tmop{Hom}_{\tmop{Pair}} (\cdummy, (A, I))$.}}

We first note that the forgetful functors are compatible:

\begin{lemma}
  \label{lem:pair-ab-forget-embedding}There is a commutative diagram
  \[ \begin{array}{ccc}
       \tmop{Pair} & \longrightarrow & \mathcal{P}_{\Sigma, 1}
       (\tmop{Pair}^{\tmop{st}})\\
       \longdownarrow &  & \longdownarrow\\
       \tmop{Inj} & \longhookrightarrow & \tmop{Fun} (\Delta^{1, \tmop{op}},
       \tmop{Ab})
     \end{array} \]
  of $1$\mbox{-}categories, where the vertical arrows are forgetful functors,
  and the top horizontal arrow is described in
  Construction~\ref{cons:pair-st-pair-adjunct}.
\end{lemma}

\begin{proof}
  Given a pair $(A, I) \in \tmop{Pair}$, the image in $\mathcal{P}_{\Sigma, 1}
  (\tmop{Pair}^{\tmop{st}})$ is given by $\tmop{Pair}^{\tmop{st}} \ni (B, J)
  \mapsto \tmop{Hom}_{\tmop{Pair}} ((B, J), (A, I))$, subsequently mapped to
  $\tmop{Inj}^{\tmop{st}} \ni (M \leftarrowtail M') \mapsto
  \tmop{Hom}_{\tmop{Pair}} ((\tmop{Sym}_{\mathbb{Z}} (M), M'
  \tmop{Sym}_{\mathbb{Z}} (M)), (A, I)) \cong \tmop{Hom}_{\tmop{Fun}
  ((\Delta^1)^{\tmop{op}}, \tmop{Ab})} (M \leftarrowtail M', A \leftarrowtail
  I)$. The other composite is the same. This identification is functorial in
  $(A, I)$.
\end{proof}

Now we show that $\tmop{Pair} \rightarrow \mathcal{P}_{\Sigma, 1}
(\tmop{Pair}^{\tmop{st}})$ is an embedding to a $1$\mbox{-}projectively
generated $1$\mbox{-}category that we want. The trick is to talk about maps
$(\mathbb{Z} [X], 0) \rightarrow (A, I)$ and $(\mathbb{Z} [X], (X))
\rightarrow (A, I)$ in place of elements in $A$ and $I$ respectively to do
certain ``element chasing''. We remind the reader that
$\tmop{Poly}_{\mathbb{Z}}$ is a set of compact projective objects for
$\tmop{Ring}$, which gives rise to an equivalence $\mathcal{P}_{\Sigma, 1}
(\tmop{Poly}_{\mathbb{Z}}) \simeq \tmop{Ring}$ of $1$\mbox{-}categories, where
$\tmop{Poly}_{\mathbb{Z}}$ is the $1$\mbox{-}category of polynomial rings.

\begin{lemma}
  \label{lem:pair-Psigma1-full-faith}The functor $\tmop{Pair} \rightarrow
  \mathcal{P}_{\Sigma, 1} (\tmop{Pair}^{\tmop{st}})$ described in
  Construction~\ref{cons:pair-st-pair-adjunct} is fully faithful.
\end{lemma}

\begin{proof}
  The faithfulness follows from \Cref{lem:pair-ab-forget-embedding} and the
  faithfulness of the forgetful functor $\tmop{Pair} \rightarrow \tmop{Inj}$.
  Given two pairs $(A, I), (B, J)$ in $\tmop{Pair}$ and a natural map
  \[ \nobracket \tmop{Hom}_{\tmop{Pair}} (\cdummy, (A, I))
     |_{(\tmop{Pair}^{\tmop{st}})^{\tmop{op}}} \rightarrow \nobracket
     \tmop{Hom}_{\tmop{Pair}} (\cdummy, (B, J))
     |_{(\tmop{Pair}^{\tmop{st}})^{\tmop{op}}} \]
  of finite-product-preserving functors $(\tmop{Pair}^{\tmop{st}})^{\tmop{op}}
  \rightrightarrows \tmop{Set}$, we need to show that this is induced by some
  map $(A, I) \rightarrow (B, J)$ of pairs.
  
  By \Cref{lem:pair-ab-forget-embedding}, there exists a unique map $(A
  \leftarrowtail I) \rightarrow (B \leftarrowtail J)$ in $\tmop{Inj}$ which
  corresponds to the natural transform after composition
  $(\tmop{Inj}^{\tmop{st}})^{\tmop{op}} \rightarrow
  (\tmop{Pair}^{\tmop{st}})^{\tmop{op}} \rightrightarrows \tmop{Set}$.
  
  Similarly, since $\mathcal{P}_{\Sigma, 1} (\tmop{Poly}_{\mathbb{Z}}) \simeq
  \tmop{Ring}$, there exists a unique map $A \rightarrow B$ of rings which
  corresponds to the natural transform after composition
  $\tmop{Poly}_{\mathbb{Z}}^{\tmop{op}} \rightarrow
  (\tmop{Pair}^{\tmop{st}})^{\tmop{op}} \rightrightarrows \tmop{Set}$ where
  $\tmop{Poly}_{\mathbb{Z}} \rightarrow \tmop{Pair}^{\tmop{st}}$ is given by
  $R \mapsto (R, 0)$.
  
  It then follows from the commutativity of the diagram
  \begin{equation}
    \begin{array}{ccc}
      \tmop{Free}_{\mathbb{Z}}^{\tmop{fin}} & \xrightarrow{\tmop{Sym}} &
      \tmop{Poly}_{\mathbb{Z}}\\
      \longdownarrow &  & \longdownarrow\\
      \tmop{Inj}^{\tmop{st}} & \longrightarrow & \tmop{Pair}^{\tmop{st}}
    \end{array} \label{eq:free-ab-poly-C0-D0}
  \end{equation}
  of $1$\mbox{-}categories, where $\tmop{Free}_{\mathbb{Z}}^{\tmop{fin}}$ is
  the $1$\mbox{-}category of finite free abelian groups, with
  finite-coproduct-preserving functors that the two maps $(A \leftarrowtail I)
  \rightarrow (B \leftarrowtail J)$ in $\tmop{Inj}$ and $A \rightarrow B$ in
  $\tmop{Ring}$ are compatible, which gives rise to a map $(A, I) \rightarrow
  (B, J)$ in $\tmop{Pair}$.
\end{proof}

Now we characterize the image of this embedding:

\begin{lemma}
  \label{lem:ess-img-pair}The square in \Cref{lem:pair-ab-forget-embedding} is
  Cartesian. Equivalently by \Cref{lem:ess-img-inj-ab}, the essential image of
  the fully faithful functor $\tmop{Pair} \hookrightarrow \mathcal{P}_{\Sigma,
  1} (\tmop{Pair}^{\tmop{st}})$ is spanned by those finite-product-preserving
  functors $F \of (\tmop{Pair}^{\tmop{st}})^{\tmop{op}} \rightarrow
  \tmop{Set}$ which maps the edge $(\mathbb{Z} [X], 0) \rightarrow (\mathbb{Z}
  [X], (X))$ in $\tmop{Pair}^{\tmop{st}}$ to an injective map of sets.
\end{lemma}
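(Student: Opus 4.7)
\medskip

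\textbf{Proof proposal.} The equivalence of the two formulations follows from \Cref{lem:pair-Psigma1-full-faith} (full faithfulness), \Cref{lem:pair-ab-forget-embedding} (commutativity of the square), \Cref{lem:ess-img-inj-ab} (description of the image under the right vertical arrow), and the fact that objects of $\tmop{Pair}$ always map into $\tmop{Fun}((\Delta^1)^{\tmop{op}}, \tmop{Ab})_{\tmop{inj}}$. Thus the task reduces to the following: given a finite-product-preserving $F \of (\mathcal{D}^0)^{\tmop{op}} \to \tmop{Set}$ whose underlying object of $\tmop{Fun}((\Delta^1)^{\tmop{op}}, \tmop{Ab})$ is injective (equivalently, $F(\mathbb{Z}[x],(x)) \to F(\mathbb{Z}[x],0)$ is an injection), I must produce a genuine ring-ideal pair $(A, I) \in \tmop{Pair}$ whose image in $\mathcal{P}_{\Sigma,1}(\mathcal{D}^0)$ is equivalent to $F$.

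The plan is to read off the data $(A, I)$ from the values of $F$ on standard pairs and then construct the $A$-module structure on $I$ from specific morphisms in $\mathcal{D}^0$. First, I would set $A \assign F(\mathbb{Z}[x], 0)$ and $I \assign F(\mathbb{Z}[x], (x))$. Restricting $F$ along the fully faithful embedding $\tmop{Poly}_{\mathbb{Z}} \hookrightarrow \mathcal{D}^0$, $R \mapsto (R, 0)$, produces a finite-product-preserving functor on $\tmop{Poly}_{\mathbb{Z}}^{\tmop{op}}$, which by $\mathcal{P}_{\Sigma,1}(\tmop{Poly}_{\mathbb{Z}}) \simeq \tmop{Ring}$ endows $A$ with a canonical ring structure. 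Similarly, restricting along $\mathcal{C}^0 \hookrightarrow \mathcal{D}^0$ and invoking the hypothesis equips $I$ with an abelian group structure and an injective group homomorphism $I \hookrightarrow A$ (compatible with the diagram in \Cref{lem:pair-ab-forget-embedding}).

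The crucial step is to produce the $A$-action on $I$ exhibiting it as an ideal. Because $(\mathbb{Z}[x,y],(y))$ is the coproduct of $(\mathbb{Z}[x], 0)$ and $(\mathbb{Z}[y],(y))$ in $\tmop{Pair}$ (and hence in $\mathcal{D}^0$), the finite-product preservation of $F$ gives $F(\mathbb{Z}[x,y],(y)) \cong A \times I$. I then apply $F$ to the morphism $(\mathbb{Z}[z],(z)) \to (\mathbb{Z}[x,y],(y))$, $z \mapsto xy$ (well-defined since $xy \in (y)$), obtaining a map $\mu \of A \times I \to I$. Bilinearity, associativity with respect to multiplication in $A$, and compatibility of $\mu$ with the inclusion $I \hookrightarrow A$ (which says that $\mu$ agrees with the ring multiplication of $A$ restricted to $A \times I$) each translate to commutative diagrams between standard pairs of the form $(\mathbb{Z}[X,Y],(Y))$, which are automatically verified in $\tmop{Pair}$ and pulled back by applying $F$. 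This makes $(A, I)$ a genuine ring-ideal pair.

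Finally, I must check that $(A, I)$ (viewed as an object of $\mathcal{P}_{\Sigma,1}(\mathcal{D}^0)$ via the embedding) recovers $F$. Both functors preserve finite products and agree on the generating objects $(\mathbb{Z}[X,Y],(Y))$ by construction (since the hom-set from $(\mathbb{Z}[X,Y],(Y))$ to $(A,I)$ in $\tmop{Pair}$ is $A^{X} \times I^{Y}$, which matches $F$ on the coproduct), so they are equivalent by \Cref{prop:struct-n-proj-gen-cats}. The main obstacle is not conceptual but bookkeeping: writing down the handful of polynomial-pair diagrams that witness the module axioms, though each is immediate once formulated. This completes the proof.
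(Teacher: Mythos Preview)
Your approach is essentially the paper's: extract $A$ and $I$ from $F$, equip $A$ with a ring structure via $\tmop{Poly}_{\mathbb{Z}}\hookrightarrow\mathcal{D}^0$, build the $A$-action on $I$ from the morphism $(\mathbb{Z}[z],(z))\to(\mathbb{Z}[x,y],(y))$, $z\mapsto xy$, and then identify $(A,I)$ with $F$.

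There is, however, a genuine gap in your last paragraph. You claim that $F$ and $G\assign\tmop{Hom}_{\tmop{Pair}}(\cdummy,(A,I))$ are equivalent because they both preserve finite products and ``agree on the generating objects,'' citing \Cref{prop:struct-n-proj-gen-cats}. But that proposition only identifies $\mathcal{P}_{\Sigma,1}(\mathcal{D}^0)$ as a category; it gives no criterion for two objects to be isomorphic. What you have produced are bijections $F(B,J)\cong G(B,J)$ for each $(B,J)\in\mathcal{D}^0$, and the issue is to check that these bijections are \emph{natural} with respect to all morphisms of $\mathcal{D}^0$. This is not automatic: although the functor $j\of\mathcal{C}^0\to\mathcal{D}^0$ is essentially surjective, it is far from full (a general map of pairs $(\mathbb{Z}[X,Y],(Y))\to(\mathbb{Z}[X',Y'],(Y'))$ does not come from a map of underlying free abelian groups), so the natural isomorphism $F\circ j\simeq G\circ j$ you get from the adjunction does not descend for free.

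The paper handles this by giving an explicit element-level description of the map $F(B,J)\to G(B,J)$ (send $f\in F(B,J)$ to the pair-map $b\mapsto\overline{b}^{\ast}(f)$, where $\overline{b}\of(\mathbb{Z}[t],0)\to(B,J)$ picks out $b$) and then checking directly, for an arbitrary morphism $\varphi\of(B,J)\to(C,K)$ in $\mathcal{D}^0$, that the square commutes using $\varphi\circ\overline{b}=\overline{\varphi(b)}$. This check is routine but not omittable; you should replace the appeal to \Cref{prop:struct-n-proj-gen-cats} with this naturality verification.
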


\begin{proof}
  Let $F \of (\tmop{Pair}^{\tmop{st}})^{\tmop{op}} \rightarrow \tmop{Set}$ be
  a functor which preserves finite products such that the composite
  $(\tmop{Inj}^{\tmop{st}})^{\tmop{op}} \rightarrow
  (\tmop{Pair}^{\tmop{st}})^{\tmop{op}} \xrightarrow{F} \tmop{Set}$ belongs to
  the full subcategory $\tmop{Inj}$ under the identification
  $\mathcal{P}_{\Sigma} (\tmop{Inj}^{\tmop{st}}) \simeq \tmop{Fun}
  ((\Delta^1)^{\tmop{op}}, \tmop{Ab})$. The goal is to show that there exists
  a pair $(A, I) \in \tmop{Pair}$ which represents $F$.
  
  Let $(A \leftarrowtail I) \in \tmop{Inj}$ correspond to the composite
  functor $(\tmop{Inj}^{\tmop{st}})^{\tmop{op}} \rightarrow
  (\tmop{Pair}^{\tmop{st}})^{\tmop{op}} \xrightarrow{F} \tmop{Set}$, and the
  map $A \leftarrowtail I$ of underlying sets is precisely induced by the map
  $(\mathbb{Z} [X], 0) \rightarrow (\mathbb{Z} [X], (X))$ in
  $\tmop{Pair}^{\tmop{st}}$.
  
  The ring structure is given as follows: the functor
  $\tmop{Poly}_{\mathbb{Z}} \rightarrow \tmop{Pair}^{\tmop{st}}$ given by $R
  \mapsto (R, 0)$ preserves finite coproducts, thus the composite functor
  $\tmop{Poly}_{\mathbb{Z}}^{\tmop{op}} \rightarrow
  (\tmop{Pair}^{\tmop{st}})^{\tmop{op}} \xrightarrow{F} \tmop{Set}$ preserves
  finite products, which corresponds to a ring structure on $A$. The
  compatibility follows from \eqref{eq:free-ab-poly-C0-D0}.
  
  Now we show that $A \leftarrowtail I$ is an ideal, that is to say, the ring
  multiplication $A \times A \rightarrow A$ restricts to a map $A \times I
  \rightarrow I$. By the above construction, $A = F (\mathbb{Z} [Y], 0)$ and
  $I = F (\mathbb{Z} [X], (X))$, and since $F$ preserves finite products, $A
  \times I = F (\mathbb{Z} [X, Y], (X))$. Consider $(\mathbb{Z} [T], (T)) \in
  \tmop{Pair}^{\tmop{st}}$. The map $(\mathbb{Z} [T], (T)) \rightarrow
  (\mathbb{Z} [X, Y], (X)), T \mapsto XY$ in $\tmop{Pair}^{\tmop{st}}$ induces
  a map $A \times I \rightarrow I$. The commutative diagram
  \[ \begin{array}{ccc}
       (\mathbb{Z} [X, Y], 0) & \longrightarrow & (\mathbb{Z} [T], 0)\\
       \longdownarrow &  & \longdownarrow\\
       (\mathbb{Z} [X, Y], (X)) & \longrightarrow & (\mathbb{Z} [T], (T))
     \end{array} \]
  in $\tmop{Pair}^{\tmop{st}}$ shows that the preceding map $A \times I
  \rightarrow I$ is compatible with the ring structure and the inclusion $I
  \rightarrow A$.
  
  It remains to construct an isomorphism $F \rightarrow \nobracket
  \tmop{Fun}_{\tmop{Pair}} (\cdummy, (A, I))
  |_{(\tmop{Pair}^{\tmop{st}})^{\tmop{op}}}$ of finite-product-preserving
  functors $(\tmop{Pair}^{\tmop{st}})^{\tmop{op}} \rightrightarrows
  \tmop{Set}$. Composing with the functor
  $(\tmop{Inj}^{\tmop{st}})^{\tmop{op}} \rightarrow
  (\tmop{Pair}^{\tmop{st}})^{\tmop{op}}$ denoted by $j$, we get a map $F \circ
  j \rightarrow \tmop{Fun}_{\tmop{Pair}} (\cdummy, (A, I)) \circ j$ of
  functors $(\tmop{Inj}^{\tmop{st}})^{\tmop{op}} \rightrightarrows \tmop{Set}$
  which is an equivalence by construction (and the adjunction $\tmop{Fun}
  (\Delta^1, \tmop{Ab})_{\tmop{inj}} \rightleftarrows \tmop{Pair}$). We need
  to show that this equivalence descends along the essentially surjective
  functor $j$.
  
  First, for any $(B, J) \in \tmop{Pair}^{\tmop{st}}$, by picking any lift
  under $j$, the map $F (B, J) \rightarrow \tmop{Fun}_{\tmop{Pair}} ((B, J),
  (A, I))$ could be described as follows: for any $f \in F (B, J)$ and any $b
  \in B$, the element $b$ corresponds uniquely to a map $\overline{b} \of
  (\mathbb{Z} [t], 0) \rightarrow (B, J)$ of pairs. Note that
  $\overline{b}^{\ast} (f) \in F (\mathbb{Z} [t], 0) \cong A$. The image of
  $f$, as a map $(B, J) \rightarrow (A, I)$ of pairs, is concretely given by
  $b \mapsto \overline{b}^{\ast} (f)$, which is independent of the choice of
  the lift of $(B, J)$.
  
  Now it remains to show that, for any map $\varphi \of (B, J) \rightarrow (C,
  K)$ of pairs, the diagram
  \[ \begin{array}{ccc}
       F (B, J) & \longrightarrow & \tmop{Fun}_{\tmop{Pair}} ((B, J), (A,
       I))\\
       \longuparrow &  & \longuparrow\\
       F (C, K) & \longrightarrow & \tmop{Fun}_{\tmop{Pair}} ((C, K), (A, I))
     \end{array} \]
  is commutative. Indeed, for any $f \in F (C, K)$, the image in
  $\tmop{Fun}_{\tmop{Pair}} ((C, K), (A, I))$ is given by $c \mapsto
  \overline{c}^{\ast} (f)$, and the image in $\tmop{Fun}_{\tmop{Pair}} ((B,
  J), (A, I))$ is given by $b \mapsto \overline{\varphi (b)}^{\ast} (f)$. On
  the other hand, the image of $f$ in $F (B, J)$ is $\varphi^{\ast} (f)$, and
  the image in $\tmop{Fun}_{\tmop{Pair}} ((B, J), (A, I))$ is given by $b
  \mapsto \overline{b}^{\ast} (\varphi^{\ast} (f))$. The result follows from
  the fact that $\varphi \circ \overline{b} = \overline{\varphi (b)}$ as maps
  $(\mathbb{Z} [t], 0) \rightrightarrows (C, K)$ of pairs.
\end{proof}

\begin{remark}
  The $1$\mbox{-}category $\mathcal{P}_{\Sigma, 1} (\tmop{Pair}^{\tmop{st}})$
  contains more objects than $\tmop{Pair}$. They might be of independent
  interest. For example, let $A$ be a ring and $I$ an invertible $A$-module
  along with a map $j \of I \rightarrow A$ of $A$-modules. If the map $j$ in
  question is not injective, then it does not ``faithfully'' correspond to a
  ring-ideal pair such as $(A, \tmop{im} (j))$, that is to say, it represents
  an object in $\mathcal{P}_{\Sigma, 1} (\tmop{Pair}^{\tmop{st}})$ which is
  different from $(A, \tmop{im} (j))$. In fact, the $1$\mbox{-}category
  $\mathcal{P}_{\Sigma, 1} (\tmop{Pair}^{\tmop{st}})$ could be identified with
  the $1$\mbox{-}category of commutative algebra objects in $\tmop{Fun}
  (\Delta^1, \tmop{Ab})_{\tmop{surj}}$ with pushout product monoidal
  structure, $1$\mbox{-}categorical version of \Cref{subsec:Smith-ideals}, or
  equivalently, the category of {\tmdfn{quasi-ideals}}\footnote{We thank Ofer
  {\tmname{Gabber}} for informing us this concept.} in
  {\cite[§3.3]{Drinfeld2021}}.
\end{remark}

We now develop a PD analogue as follows:

\begin{notation}
  Let $\tmop{Pair}^{\gamma, \tmop{st}} \subseteq \tmop{Pair}^{\gamma}$ denote
  the full subcategory spanned by the images of $(A, I) \in
  \tmop{Pair}^{\tmop{st}}$ under the functor of PD-envelope
  {\cite[\href{https://stacks.math.columbia.edu/tag/07H9}{Tag
  07H9}]{stacks-project}}, denoted by $(D_A (I) \twoheadrightarrow A / I,
  \gamma)$ instead of the cumbersome notation $(D_A (I), \ker (D_A (I)
  \twoheadrightarrow A / I), \gamma)$.
\end{notation}

{\construction{\label{cons:pdpair-st-pdpair-adjunct}By
\Cref{lem:left-deriv-fun-adjoint}, we get a pair $\mathcal{P}_{\Sigma, 1}
(\tmop{Pair}^{\gamma, \tmop{st}}) \rightleftarrows \tmop{Pair}^{\gamma}$ of
adjoint functors. Explicitly, the objects in $\tmop{Pair}^{\gamma, \tmop{st}}$
are of the form $D_{\mathbb{Z} [X, Y]} (Y) \cong \Gamma_{\mathbb{Z} [X]} (Y)$
for finite sets $X$ and $Y$.}}

On the other hand, it follows from \Cref{cor:nonab-deriv-cat-adjoint-fun} that

\begin{lemma}
  \label{lem:1-forget-pdpair}The PD-envelope functor $\tmop{Pair}^{\tmop{st}}
  \rightarrow \tmop{Pair}^{\gamma, \tmop{st}}$, being essentially surjective,
  gives rise to the {\tmdfn{forgetful functor}} $\mathcal{P}_{\Sigma, 1}
  (\tmop{Pair}^{\gamma, \tmop{st}}) \rightarrow \mathcal{P}_{\Sigma, 1}
  (\tmop{Pair}^{\tmop{st}})$ which is conservative and preserves sifted
  colimits.
\end{lemma}

There is another {\tmdfn{forgetful functor}} $\tmop{Pair}^{\gamma}
\rightarrow \tmop{Pair}$. These functors are compatible:

\begin{lemma}
  \label{lem:1-pdpair-pair-forget-embedding}The diagram
  \[ \begin{array}{ccc}
       \tmop{Pair}^{\gamma} & \longrightarrow & \mathcal{P}_{\Sigma, 1}
       (\tmop{Pair}^{\gamma, \tmop{st}})\\
       \longdownarrow &  & \longdownarrow\\
       \tmop{Pair} & \longhookrightarrow & \mathcal{P}_{\Sigma, 1}
       (\tmop{Pair}^{\tmop{st}})
     \end{array} \]
  is a commutative diagram of $1$\mbox{-}categories, where vertical arrows are
  forgetful functors, and the top horizontal arrow is described in
  Construction~\ref{cons:pdpair-st-pdpair-adjunct}.
\end{lemma}

\begin{proof}
  For any PD-pair $(A, I, \gamma) \in \tmop{Pair}^{\gamma}$, the image in
  $\mathcal{P}_{\Sigma, 1} (\tmop{Pair}^{\gamma, \tmop{st}})$ is given by
  $\tmop{Pair}^{\gamma, \tmop{st}} \ni (B, J, \delta) \mapsto
  \tmop{Hom}_{\tmop{Pair}^{\gamma}} ((B, J, \delta), (A, I, \gamma))$, which
  is subsequently mapped to an object in $\mathcal{P}_{\Sigma, 1}
  (\tmop{Pair}^{\tmop{st}})$ given by $\tmop{Pair}^{\tmop{st}} \ni (B, J)
  \mapsto \tmop{Hom}_{\tmop{Pair}^{\gamma}} (D_J (B), (A, I, \gamma))$. On the
  other hand, the image of $(A, I, \gamma)$ in $\tmop{Pair}$ is $(A, I)$,
  which is subsequently mapped to an object in $\mathcal{P}_{\Sigma, 1}
  (\tmop{Pair}^{\tmop{st}})$ given by $\tmop{Pair}^{\tmop{st}} \ni (B, J)
  \mapsto \tmop{Hom}_{\tmop{Pair}} ((B, J), (A, I))$. It then follows from the
  functorial isomorphism $\tmop{Hom}_{\tmop{Pair}^{\gamma}} (D_J (B), (A, I,
  \gamma)) \cong \tmop{Hom}_{\tmop{Pair}} ((B, J), (A, I))$ by adjunction.
\end{proof}

Similarly, we have the embedding:

\begin{lemma}
  \label{lem:pdpair-Psigma1-full-faith}The functor $\tmop{Pair}^{\gamma}
  \rightarrow \mathcal{P}_{\Sigma, 1} (\tmop{Pair}^{\gamma, \tmop{st}})$
  described in Construction~\ref{cons:pdpair-st-pdpair-adjunct} is fully
  faithful.
\end{lemma}

\begin{proof}
  The proof is similar to that of \Cref{lem:pair-Psigma1-full-faith}. The
  faithfulness follows from \Cref{lem:1-pdpair-pair-forget-embedding} and the
  faithfulness of the forgetful functor $\tmop{Pair}^{\gamma} \rightarrow
  \tmop{Pair}$. Given two PD-pairs $(A, I, \gamma)$ and $(B, J, \delta)$ in
  $\tmop{Pair}^{\gamma}$ and a map
  \[ F \assign \nobracket \tmop{Hom}_{\tmop{Pair}^{\gamma}} (\cdummy, (A, I,
     \gamma)) |_{(\tmop{Pair}^{\gamma, \tmop{st}})^{\tmop{op}}} \rightarrow
     \nobracket \tmop{Hom}_{\tmop{Pair}^{\gamma}} (\cdummy, (B, J, \delta))
     |_{(\tmop{Pair}^{\gamma, \tmop{st}})^{\tmop{op}}} \backassign G \]
  of finite-product-preserving functors $(\tmop{Pair}^{\gamma,
  \tmop{st}})^{\tmop{op}} \rightrightarrows \tmop{Set}$, we need to show that
  this is induced by some map $(A, I, \gamma) \rightarrow (B, J, \delta)$ of
  PD-pairs.
  
  By \Cref{lem:pair-Psigma1-full-faith}, there exists a unique map $(A, I)
  \rightarrow (B, J)$ of pairs which correspond to the natural transform after
  composition $(\tmop{Pair}^{\tmop{st}})^{\tmop{op}} \rightarrow
  (\tmop{Pair}^{\gamma, \tmop{st}})^{\tmop{op}} \rightrightarrows \tmop{Set}$.
  It remains to show that this map preserves the PD-structure.
  
  Indeed, any $x \in I$ corresponds to a map $(\Gamma_{\mathbb{Z}} (t)
  \twoheadrightarrow \mathbb{Z}, \gamma_0) \rightarrow (A, I, \gamma)$ of
  PD-pairs, i.e., an element $\overline{x} \in F (\Gamma_{\mathbb{Z}} (t)
  \twoheadrightarrow \mathbb{Z}, \gamma_0)$, and the image $y$ of $x \in I$ in
  $J$ is given by the image $\overline{y} \in G (\Gamma_{\mathbb{Z}} (t)
  \twoheadrightarrow \mathbb{Z}, \gamma_0)$ under the map $F \rightarrow G$.
  For any $n \in \mathbb{N}_{> 0}$, there is a canonical endomorphism
  $(\Gamma_{\mathbb{Z}} (t) \twoheadrightarrow \mathbb{Z}, \gamma_0)
  \rightarrow (\Gamma_{\mathbb{Z}} (t) \twoheadrightarrow \mathbb{Z},
  \gamma_0), t \mapsto \gamma_n (t)$ of PD-pairs which induces endomorphisms
  $F (\Gamma_{\mathbb{Z}} (t) \twoheadrightarrow \mathbb{Z}, \gamma_0)
  \rightarrow F (\Gamma_{\mathbb{Z}} (t) \twoheadrightarrow \mathbb{Z},
  \gamma_0)$ and $G (\Gamma_{\mathbb{Z}} (t) \twoheadrightarrow \mathbb{Z},
  \gamma_0) \rightarrow G (\Gamma_{\mathbb{Z}} (t) \twoheadrightarrow
  \mathbb{Z}, \gamma_0)$ compatible with the map $F \rightarrow G$. In
  particular, the image, denoted by $\overline{x}_n$, of $\overline{x}$ under
  the first endomorphism maps to the image, denoted by $\overline{y}_n$, of
  $\overline{y}$ under the second endomorphism. We note that $\overline{x}_n$
  corresponds to $\gamma_n (x)$ and $\overline{y}_n$ corresponds to $\gamma_n
  (y)$. Thus the map $(A, I) \rightarrow (B, J)$ maps $\gamma_n (x)$ to
  $\gamma_n (y)$.
\end{proof}

With the following description of the essential image (cf.
\Cref{lem:ess-img-pair}):

\begin{lemma}
  \label{lem:ess-img-pdpair}The square in
  \Cref{lem:1-pdpair-pair-forget-embedding} is Cartesian. Equivalently by
  \Cref{lem:ess-img-pair}, the essential image of the fully faithful functor
  $\tmop{Pair}^{\gamma} \rightarrow \mathcal{P}_{\Sigma, 1}
  (\tmop{Pair}^{\gamma, \tmop{st}})$ (\Cref{lem:pdpair-Psigma1-full-faith}) is
  spanned by those finite-product-preserving functors $F \of
  (\tmop{Pair}^{\gamma, \tmop{st}})^{\tmop{op}} \rightarrow \tmop{Set}$ which
  maps the edge $(\mathbb{Z} [X], 0, 0) \rightarrow (\Gamma_{\mathbb{Z}} (X)
  \twoheadrightarrow \mathbb{Z}, \gamma)$ in $\tmop{Pair}^{\gamma, \tmop{st}}$
  to an injective map of sets.
\end{lemma}

\begin{proof}
  The proof is similar to that of \Cref{lem:ess-img-pair}. Let $F \of
  (\tmop{Pair}^{\gamma, \tmop{st}})^{\tmop{op}} \rightarrow \tmop{Set}$ be a
  functor such that the composite $(\tmop{Pair}^{\tmop{st}})^{\tmop{op}}
  \rightarrow (\tmop{Pair}^{\gamma, \tmop{st}})^{\tmop{op}} \xrightarrow{F}
  \tmop{Set}$ lies in the essential image of the fully faithful functor
  $\tmop{Pair} \rightarrow \mathcal{P}_{\Sigma, 1} (\tmop{Pair}^{\tmop{st}})$.
  We need to construct a PD-pair $(A, I, \gamma) \in \tmop{Pair}^{\gamma}$
  which represents the functor $F$.
  
  Let $(A, I)$ represent the composite functor
  $(\tmop{Pair}^{\tmop{st}})^{\tmop{op}} \rightarrow (\tmop{Pair}^{\gamma,
  \tmop{st}})^{\tmop{op}} \xrightarrow{F} \tmop{Set}$. Unrolling the
  definitions, we see that $A = F (\mathbb{Z} [t], 0, 0)$, $I = F
  (\Gamma_{\mathbb{Z}} (t) \twoheadrightarrow \mathbb{Z}, \gamma)$ and the map
  $I \rightarrow A$ is induced by the map $(\mathbb{Z} [t], 0, 0) \rightarrow
  (\Gamma_{\mathbb{Z}} (t) \twoheadrightarrow \mathbb{Z}, \gamma)$ of
  PD-pairs. We endow a PD-structure $(A, I)$ as follows: there exists a
  canonical endomorphism $\gamma_n \of (\Gamma_{\mathbb{Z}} (t)
  \twoheadrightarrow \mathbb{Z}, \gamma) \rightarrow (\Gamma_{\mathbb{Z}} (t)
  \twoheadrightarrow \mathbb{Z}, \gamma), t \mapsto \gamma_n (t)$ of PD-pair,
  which induces a map $\gamma_n \of I \rightarrow I$ for all $n \in
  \mathbb{N}_{> 0}$. We need to check that $(\gamma_n)_{n \in \mathbb{N}_{>
  0}}$ satisfies the axioms of divided power structure
  {\cite[\href{https://stacks.math.columbia.edu/tag/07GL}{Tag
  07GL}]{stacks-project}}, setting $\gamma_0 = \tmop{id}$. We spell out the
  verification of two of them:
  \begin{description}
    \item[$\gamma_n (x + y) = \sum_i \gamma_i (x) \gamma_{n - i} (y)$ for $(x,
    y) \in I^2$] First, in the PD-pair $(\Gamma_{\mathbb{Z}} (X, Y)
    \twoheadrightarrow \mathbb{Z}, \gamma)$, the identity $\gamma_n (X + Y) =
    \sum_i \gamma_i (X) \gamma_{n - i} (Y)$ holds. This implies that the
    composite
    \[ (\Gamma_{\mathbb{Z}} (T) \twoheadrightarrow \mathbb{Z}, \gamma)
       \rightarrow (\Gamma_{\mathbb{Z}} (T_0, \ldots, T_n) \twoheadrightarrow
       \mathbb{Z}, \gamma) \rightarrow (\Gamma_{\mathbb{Z}} (X, Y)
       \twoheadrightarrow \mathbb{Z}, \gamma) \]
    where the first map is induced by $T \mapsto \sum_i T_i$, and the second
    map is induced by $T_i \mapsto \gamma_i (X) \gamma_{n - i} (Y)$, coincides
    with the composite
    \[ (\Gamma_{\mathbb{Z}} (T) \twoheadrightarrow \mathbb{Z}, \gamma)
       \xrightarrow{\gamma_n} (\Gamma_{\mathbb{Z}} (T) \twoheadrightarrow
       \mathbb{Z}, \gamma) \rightarrow (\Gamma_{\mathbb{Z}} (X, Y)
       \twoheadrightarrow \mathbb{Z}, \gamma) \]
    where the second map is induced by $T \mapsto X + Y$. Applying $F$ to the
    two compositions, using the fact that $F$ preserves finite products, and
    that $(x, y) \in I^2$ corresponds to an element in $F (\Gamma_{\mathbb{Z}}
    (X, Y) \twoheadrightarrow \mathbb{Z}, \gamma)$, we get the result (for the
    part $T_i \mapsto \gamma_i (X) \gamma_{n - i} (Y)$, one need to separate
    $i = 0$ and $i > 0$).
    
    \item[$\gamma_n (ax) = a^n \gamma_n (x)$ for $(a, x) \in A \times I$] In
    the PD-pair $(\Gamma_{\mathbb{Z} [Y]} (X) \twoheadrightarrow \mathbb{Z}
    [Y], \gamma)$, the identity $\gamma_n (YX) = Y^n \gamma_n (X)$ holds. This
    implies that the composite
    \[ (\Gamma_{\mathbb{Z}} (T) \twoheadrightarrow \mathbb{Z}, \gamma)
       \rightarrow (\Gamma_{\mathbb{Z} [T_1, \ldots, T_n]} (t)
       \twoheadrightarrow \mathbb{Z} [T_1, \ldots, T_n], \gamma) \rightarrow
       (\Gamma_{\mathbb{Z} [Y]} (X) \twoheadrightarrow \mathbb{Z} [Y], \gamma)
    \]
    where the first map is induced by $T \mapsto T_1 \cdots T_n t$, and the
    second map is induced by $T_i \mapsto Y$ and $t \mapsto X$, coincides with
    the composite
    \[ (\Gamma_{\mathbb{Z}} (T) \twoheadrightarrow \mathbb{Z}, \gamma)
       \xrightarrow{\gamma_n} (\Gamma_{\mathbb{Z}} (T) \twoheadrightarrow
       \mathbb{Z}, \gamma) \rightarrow (\Gamma_{\mathbb{Z} [Y]} (X)
       \twoheadrightarrow \mathbb{Z} [Y], \gamma) \]
    where the second map is induced by $T \mapsto XY$. Applying $F$ to the two
    compositions, using the fact that $F$ preserves finite products, and that
    $(a, x) \in A \times I$ corresponds to an element in $F
    (\Gamma_{\mathbb{Z} [Y]} (X) \twoheadrightarrow \mathbb{Z} [Y], \gamma)$,
    we get the result.
  \end{description}
  Finally, the proof of the fact that $(A, I, \gamma)$ represents $F$ is
  parallel to the corresponding part of the proof of \Cref{lem:ess-img-pair}.
\end{proof}

Now we arrive at the definition of animated pairs and animated PD-pairs:

\begin{definition}
  \label{def:anipair-anipdpair}The $\infty$\mbox{-}category
  $\tmop{Pair}^{\tmop{an}}$ of {\tmdfn{animated pairs}} is defined to be the
  $\infty$\mbox{-}category $\mathcal{P}_{\Sigma} (\tmop{Pair}^{\tmop{st}})$,
  and the $\infty$\mbox{-}category $\tmop{Pair}^{\gamma, \tmop{an}}$ of
  {\tmdfn{animated PD-pairs}} is defined to be the $\infty$\mbox{-}category
  $\mathcal{P}_{\Sigma} (\tmop{Pair}^{\gamma, \tmop{st}})$.
  
  The {\tmdfn{forgetful functor}} $\tmop{Pair}^{\tmop{an}} \rightarrow
  \tmop{Fun} (\Delta^1, D (\mathbb{Z})_{\geq 0})$ is given by the pair
  $\mathcal{P}_{\Sigma} (\tmop{Inj}^{\tmop{st}}) \rightleftarrows
  \mathcal{P}_{\Sigma} (\tmop{Pair}^{\tmop{st}})$ obtained by applying
  \Cref{cor:nonab-deriv-cat-adjoint-fun} to the free-pair functor
  $\tmop{Inj}^{\tmop{st}} \rightarrow \tmop{Pair}^{\tmop{st}}$ (which is
  essentially surjective).
  
  The {\tmdfn{forgetful functor}} $\tmop{Pair}^{\gamma, \tmop{an}} \rightarrow
  \tmop{Pair}^{\tmop{an}}$ and the {\tmdfn{animated PD-envelope functor}}
  $\tmop{Env}^{\gamma, \tmop{an}} \of \tmop{Pair}^{\tmop{an}} \rightarrow
  \tmop{Pair}^{\gamma, \tmop{an}}$ are given by the pair $\mathcal{P}_{\Sigma}
  (\tmop{Pair}^{\tmop{st}}) \rightleftarrows \mathcal{P}_{\Sigma}
  (\tmop{Pair}^{\gamma, \tmop{st}})$ obtained by applying
  \Cref{cor:nonab-deriv-cat-adjoint-fun} to the PD-envelope functor
  $\tmop{Pair}^{\tmop{st}} \rightarrow \tmop{Pair}^{\gamma, \tmop{st}}$ being
  essentially surjective.
\end{definition}

It follows from \Cref{cor:nonab-deriv-cat-adjoint-fun} that

\begin{corollary}
  \label{cor:forget-pair-conserv-sifted-colim}The forgetful functors
  $\tmop{Pair}^{\tmop{an}} \rightarrow \tmop{Fun} ((\Delta^1)^{\tmop{op}}, D
  (\mathbb{Z})_{\geq 0})$ and $\tmop{Pair}^{\gamma, \tmop{an}} \rightarrow
  \tmop{Pair}^{\tmop{an}}$ are conservative and preserve sifted colimits.
\end{corollary}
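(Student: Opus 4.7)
The plan is to invoke Corollary \ref{cor:nonab-deriv-cat-adjoint-fun} with $n=\infty$ applied to the two essentially surjective functors $\mathcal{C}^0 \to \mathcal{D}^0$ and $\mathcal{D}^0 \to \mathcal{E}^0$ that are implicit in Definition \ref{def:anipair-anipdpair}. That corollary automatically gives a right-adjoint functor on non-abelian derived $\infty$-categories which is conservative (from essential surjectivity) and preserves sifted colimits (from general nonsense about $\mathcal{P}_\Sigma$). By construction, these right adjoints are precisely the two forgetful functors in question.

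The bulk of the work is therefore just checking the hypotheses of the corollary. First, $\mathcal{C}^0$ was defined as the full subcategory spanned by finite coproducts of $\{\mathbb{Z}\leftarrow 0,\ \tmop{id}_{\mathbb{Z}}\}$ inside $\tmop{Fun}((\Delta^1)^{\tmop{op}},\tmop{Ab})$, so it admits finite coproducts tautologically. For $\mathcal{D}^0$, the coproduct in $\tmop{Pair}$ of two standard pairs $(\mathbb{Z}[X,Y],(Y))$ and $(\mathbb{Z}[X',Y'],(Y'))$ is $(\mathbb{Z}[X,X',Y,Y'],(Y,Y'))$, which is again of the standard form. Likewise $\mathcal{E}^0$ is stable under finite coproducts because the PD-envelope functor is a left adjoint. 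The transition functors $\mathcal{C}^0 \to \mathcal{D}^0$ and $\mathcal{D}^0 \to \mathcal{E}^0$ preserve finite coproducts for the same left-adjoint reason (they are restrictions of the free-pair functor $(M\leftarrowtail M')\mapsto(\tmop{Sym}_{\mathbb{Z}}(M), M'\cdot\tmop{Sym}_{\mathbb{Z}}(M))$ and the PD-envelope $D_{(-)}(-)$). Essential surjectivity of both arrows holds by the very definitions of $\mathcal{D}^0$ and $\mathcal{E}^0$ as essential images of these functors.

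One final bookkeeping step remains: identifying the target $\mathcal{P}_\Sigma(\mathcal{C}^0)$ of the first forgetful functor with $\tmop{Fun}((\Delta^1)^{\tmop{op}}, D(\mathbb{Z})_{\geq 0})$, since this is what is asserted in the statement. This follows from Corollary \ref{cor:ani-arrow-cat} applied to $\mathcal{C}=\tmop{Ab}$ together with the standard identification $\tmop{Ani}(\tmop{Ab})\simeq D(\mathbb{Z})_{\geq 0}$, since $\mathcal{C}^0$ is manifestly a set of compact projective generators for this functor $\infty$-category (compare the proof of \Cref{cor:ani-arrow-cat}).

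I do not expect any genuine obstacle here: the proof is pure assembly of Corollary \ref{cor:nonab-deriv-cat-adjoint-fun} with the definitions. The only mildly non-trivial observation is that finite coproducts in $\tmop{Pair}$ and $\tmop{PDPair}$ of standard objects are again standard, which is immediate from the fact that the ``free'' and ``PD-envelope'' functors are left adjoints and hence preserve coproducts.
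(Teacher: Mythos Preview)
Your proof is correct and follows exactly the paper's approach: the corollary is stated as an immediate consequence of \Cref{cor:nonab-deriv-cat-adjoint-fun}, since the forgetful functors are by definition (see \Cref{def:anipair-anipdpair}) the right adjoints $G$ produced by that corollary applied to the essentially surjective, finite-coproduct-preserving functors $\mathcal{C}^0\to\mathcal{D}^0$ and $\mathcal{D}^0\to\mathcal{E}^0$. Your additional verification of the hypotheses and the identification $\mathcal{P}_\Sigma(\mathcal{C}^0)\simeq\tmop{Fun}((\Delta^1)^{\tmop{op}},D(\mathbb{Z})_{\geq 0})$ via \Cref{cor:ani-arrow-cat} are exactly the details the paper leaves implicit.
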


These forgetful functors are compatible with canonical embeddings $\tmop{Pair}
\hookrightarrow \mathcal{P}_{\Sigma, 1} (\tmop{Pair}^{\tmop{st}})
\hookrightarrow \tmop{Pair}^{\tmop{an}}$ and $\tmop{Pair}^{\gamma}
\hookrightarrow \mathcal{P}_{\Sigma, 1} (\tmop{Pair}^{\gamma, \tmop{st}})
\hookrightarrow \tmop{Pair}^{\gamma, \tmop{an}}$:

\begin{proposition}
  \label{prop:infty-cat-forget-embedding}The diagram
  \[ \begin{array}{ccc}
       \tmop{Pair}^{\gamma} & \longhookrightarrow & \tmop{Pair}^{\gamma,
       \tmop{an}}\\
       \longdownarrow &  & \longdownarrow\\
       \tmop{Pair} & \longhookrightarrow & \tmop{Pair}^{\tmop{an}}\\
       \longdownarrow &  & \longdownarrow\\
       \tmop{Inj} & \longhookrightarrow & \tmop{Fun} (\Delta^{1, \tmop{op}}, D
       (\mathbb{Z})_{\geq 0})
     \end{array} \]
  is a commutative diagram of $\infty$\mbox{-}categories, where the vertical
  arrows are forgetful functors. Moreover, the squares are Cartesian.
\end{proposition}

\begin{proof}
  The commutativity follows from
  \Cref{rem:ani-trunc,lem:pair-ab-forget-embedding,lem:1-pdpair-pair-forget-embedding}.
  The last claim follows from \Cref{lem:ess-img-pair,lem:ess-img-pdpair}.
\end{proof}

\begin{remark}
  \label{rem:localization-pair-pdpair}The embeddings $\tmop{Pair}
  \hookrightarrow \tmop{Pair}^{\tmop{an}}$ and $\tmop{Pair}^{\gamma}
  \hookrightarrow \tmop{Pair}^{\gamma, \tmop{an}}$ admits left adjoints given
  by the composite functors $\tmop{Pair}^{\tmop{an}} \xrightarrow{\tau_{\leq
  0}} \mathcal{P}_{\Sigma, 1} (\tmop{Pair}^{\tmop{st}}) \rightarrow
  \tmop{Pair}$ and $\tmop{Pair}^{\gamma, \tmop{an}} \xrightarrow{\tau_{\leq
  0}} \mathcal{P}_{\Sigma, 1} (\tmop{Pair}^{\gamma, \tmop{st}}) \rightarrow
  \tmop{Pair}^{\gamma}$ (see \Cref{rem:ani-trunc} for $\tau_{\leq 0}$). We
  will give an explicit description of the functor $\tmop{Pair}^{\tmop{an}}
  \rightarrow \tmop{Pair}$ in \Cref{prop:characterize-pair}.
\end{remark}

Taking the left adjoints to the upper square of the diagram in
\Cref{prop:infty-cat-forget-embedding}, we get

\begin{corollary}
  The diagram
  \[ \begin{array}{ccc}
       \tmop{Pair}^{\tmop{an}} & \longrightarrow & \tmop{Pair}\\
       \longdownarrow &  & \longdownarrow\\
       \tmop{Pair}^{\gamma, \tmop{an}} & \longrightarrow &
       \tmop{Pair}^{\gamma}
     \end{array} \]
  is a commutative diagram of $\infty$\mbox{-}categories, where the horizontal
  arrows are described in \Cref{rem:localization-pair-pdpair}.
\end{corollary}

In particular, we rewrite the PD-envelope in terms of animated PD-envelope:

\begin{corollary}
  \label{cor:pd-env-animated-pd-env}The PD-envelope functor $\tmop{Pair}
  \rightarrow \tmop{Pair}^{\gamma}, (A, I) \mapsto D_A (I)$ coincides with the
  composite functor $\tmop{Pair} \hookrightarrow \tmop{Pair}^{\tmop{an}}
  \xrightarrow{\tmop{Env}^{\gamma, \tmop{an}}} \tmop{Pair}^{\gamma, \tmop{an}}
  \rightarrow \tmop{Pair}^{\gamma}$, where the last functor is described in
  \Cref{rem:localization-pair-pdpair}.
\end{corollary}

In fact, there is a more concrete description of $\tmop{Pair}^{\tmop{an}}$,
given by the following:

\begin{definition}
  \label{def:surj-map-ani-ring}The {\tmdfn{$\infty$\mbox{-}category of
  surjective maps of animated rings}} is the full subcategory $\tmop{Fun}
  (\Delta^1, \tmop{CAlg}^{\tmop{an}})_{\geq 0} \subseteq \tmop{Fun} (\Delta^1,
  \tmop{CAlg}^{\tmop{an}})$ of maps $A \rightarrow A''$ such that the induced
  map $\pi_0 (A) \rightarrow \pi_0 (A'')$ on the 0th homotopy groups is
  surjective.
\end{definition}

We now show that the strategy to prove \Cref{cor:smith-eq} adapts to our case.
Indeed, by \Cref{cor:conn-eq-day-point}, we have the equivalence $\tmop{Fun}
((\Delta^1)^{\tmop{op}}, D (\mathbb{Z})_{\geq 0}) \simeq \tmop{Fun} (\Delta^1,
D (\mathbb{Z}))_{\geq 0}$ of $\infty$\mbox{-}categories, therefore a set of
compact projective generators for $\tmop{Fun} ((\Delta^1)^{\tmop{op}}, D
(\mathbb{Z})_{\geq 0})$ gives rise to a set of compact projective generators
for $\tmop{Fun} (\Delta^1, D (\mathbb{Z}))_{\geq 0}$: $\left\{ \mathbb{Z}
\xrightarrow{\tmop{id}} \mathbb{Z}, \mathbb{Z} \rightarrow 0 \right\}$. Now we
study two adjunctions over these $\infty$\mbox{-}categories.

We have a pair $\tmop{Fun} (\Delta^1, D (\mathbb{Z})_{\geq 0})
\rightleftarrows \tmop{Fun} (\Delta^1, \tmop{CAlg}^{\tmop{an}})$ of adjoint
functors induced by the pair $D (\mathbb{Z})_{\geq 0} \overset{\mathbb{L}
\tmop{Sym}_{\mathbb{Z}}}{\longrightleftarrows} \tmop{CAlg}^{\tmop{an}}$ of
adjoint functors. Restricting to full subcategories, we get a pair $\tmop{Fun}
(\Delta^1, D (\mathbb{Z}))_{\geq 0} \rightleftarrows \tmop{Fun} (\Delta^1,
\tmop{CAlg}^{\tmop{an}})_{\geq 0}$, the later is defined before
\Cref{cor:conn-eq-day-point}. We summarize the preceding discussion by the
diagram
\begin{equation}
  \begin{array}{ccccc}
    \mathcal{P}_{\Sigma} (\tmop{Pair}^{\tmop{st}}) &  & \tmop{Fun} (\Delta^1,
    \tmop{CAlg}^{\tmop{an}})_{\geq 0} & \subseteq & \tmop{Fun} (\Delta^1,
    \tmop{CAlg}^{\tmop{an}})\\
    \longuparrow \nocomma \longdownarrow &  & \longuparrow \nocomma
    \longdownarrow &  & \longuparrow \nocomma \longdownarrow\\
    \tmop{Fun} ((\Delta^1)^{\tmop{op}}, D (\mathbb{Z})_{\geq 0}) &
    \xrightarrow{\simeq} & \tmop{Fun} (\Delta^1, D (\mathbb{Z}))_{\geq 0} &
    \subseteq & \tmop{Fun} (\Delta^1, D (\mathbb{Z})_{\geq 0})
  \end{array} \label{diag:ani-smith-eq}
\end{equation}
We note that both full subcategories are stable under small colimits,
therefore the forgetful functor $\tmop{Fun} (\Delta^1,
\tmop{CAlg}^{\tmop{an}})_{\geq 0} \rightarrow \tmop{Fun} (\Delta^1, D
(\mathbb{Z}))_{\geq 0}$ preserves sifted colimits. Since the forgetful functor
is also conservative, it follows by \Cref{prop:adjoint-n-proj-gen} that
$\tmop{Fun} (\Delta^1, \tmop{CAlg}^{\tmop{an}})_{\geq 0}$ is projectively
generated, for which $\left\{ \mathbb{Z} [t] \xrightarrow{\tmop{id}}
\mathbb{Z} [t], \mathbb{Z} [t] \rightarrow \mathbb{Z} \right\}$ is a set of
compact projective generators. Let $\mathcal{Z} \subseteq \tmop{Fun}
(\Delta^1, \tmop{CAlg}^{\tmop{an}})_{\geq 0}$ denote the full subcategory
spanned by finite coproducts of these objects, which is effectively a full
subcategory of $\tmop{Fun} (\Delta^1, \tmop{Ring})$. The following lemma is
then obvious:

\begin{lemma}
  There is an equivalence $\tmop{Pair}^{\tmop{st}} \simeq \mathcal{Z}$ of
  $1$\mbox{-}categories given by $\tmop{Pair}^{\tmop{st}} \rightarrow
  \mathcal{Z}, (A, I) \mapsto (A \twoheadrightarrow A / I)$ and $\mathcal{Z}
  \rightarrow \tmop{Pair}^{\tmop{st}}, (A \twoheadrightarrow A'') \mapsto (A,
  \ker (A \twoheadrightarrow A''))$.
\end{lemma}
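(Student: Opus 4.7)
The plan is to reduce the lemma to two explicit computations: identifying the objects and morphisms of $\mathcal{Z}$ in concrete terms, then matching them to those of $\mathcal{D}^0$.

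First, I will compute objects of $\mathcal{Z}$. Colimits in the functor category $\tmop{Fun}(\Delta^1, \tmop{Ani}(\tmop{Ring}))$ are computed pointwise, and the full subcategory of surjections $\tmop{Fun}(\Delta^1, \tmop{Ani}(\tmop{Ring}))_{\geq 0}$ is closed under small colimits. On polynomial rings, the coproduct in $\tmop{Ani}(\tmop{Ring})$ is $\mathbb{Z}[X] \amalg \mathbb{Z}[Y] \simeq \mathbb{Z}[X,Y]$. Therefore, for finite sets $X = \{x_1,\ldots,x_m\}$ and $Y = \{y_1,\ldots,y_n\}$, the coproduct
\[
\bigl(\mathbb{Z}[t] \xrightarrow{\tmop{id}} \mathbb{Z}[t]\bigr)^{\amalg m} \amalg \bigl(\mathbb{Z}[t] \rightarrow \mathbb{Z}\bigr)^{\amalg n}
\]
in $\tmop{Fun}(\Delta^1,\tmop{Ani}(\tmop{Ring}))$ is the morphism $\mathbb{Z}[X,Y] \twoheadrightarrow \mathbb{Z}[X]$ sending $x_i \mapsto x_i$ and $y_j \mapsto 0$. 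This is a static surjection of rings whose kernel is precisely the ideal $(Y) = (y_1,\ldots,y_n)$, so the assignment $(A \twoheadrightarrow A'') \mapsto (A, \ker(A \twoheadrightarrow A''))$ sends this object to $(\mathbb{Z}[X,Y],(Y)) \in \mathcal{D}^0$. Conversely, $(\mathbb{Z}[X,Y],(Y)) \in \mathcal{D}^0$ maps under $(A,I) \mapsto (A \twoheadrightarrow A/I)$ precisely to this surjection. Hence the two functors induce mutually inverse bijections on objects.

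Next, I will verify full faithfulness. A morphism in $\mathcal{D}^0$ from $(\mathbb{Z}[X,Y],(Y))$ to $(\mathbb{Z}[X',Y'],(Y'))$ is a ring map $\varphi \of \mathbb{Z}[X,Y] \to \mathbb{Z}[X',Y']$ with $\varphi((Y)) \subseteq (Y')$. A morphism in $\mathcal{Z}$ between the corresponding surjections is a commutative square
\[
\begin{array}{ccc}
\mathbb{Z}[X,Y] & \xrightarrow{\varphi} & \mathbb{Z}[X',Y'] \\
\longdownarrow & & \longdownarrow \\
\mathbb{Z}[X] & \xrightarrow{\overline{\varphi}} & \mathbb{Z}[X']
\end{array}
\]
in $\tmop{Ani}(\tmop{Ring})$. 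Since source and target of $\overline{\varphi}$ are static rings, such squares are determined by $1$-categorical data. Given $\varphi$ in $\mathcal{D}^0$, the map $\overline{\varphi}$ is forced as the unique map induced by passing to the quotient by the kernels. Conversely, any commutative square forces the composite $\mathbb{Z}[X,Y] \to \mathbb{Z}[X',Y'] \to \mathbb{Z}[X']$ to vanish on $(Y)$, so $\varphi((Y)) \subseteq (Y')$. This identifies the two Hom-sets, naturally in both arguments.

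The only subtle point is verifying that the Mapping space in $\mathcal{Z}$ really is discrete (so that $\mathcal{Z}$ is a $1$-category as claimed): this follows because the domain $\mathbb{Z}[X,Y]$ is a compact projective generator of $\tmop{Ani}(\tmop{Ring})$, so $\tmop{Map}_{\tmop{Ani}(\tmop{Ring})}(\mathbb{Z}[X,Y],R)$ is static for all static $R$, and the fiber of evaluation at the target is therefore also static. No part of this is expected to be a serious obstacle; the lemma is essentially bookkeeping justifying the word \emph{obvious}.
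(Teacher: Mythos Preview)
Your proof is correct and spells out in detail what the paper leaves implicit: the paper simply states that the lemma ``is then obvious'' and gives no proof, having already observed just before the lemma that $\mathcal{Z}$ is ``effectively a full subcategory of $\tmop{Fun}(\Delta^1,\tmop{Ring})$''. Your argument is exactly the direct verification one would carry out to justify this, so there is no meaningful difference in approach.
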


It follows from previous discussion that

\begin{theorem}
  \label{thm:ani-smith-eq}There is an equivalence $\tmop{Pair}^{\tmop{an}}
  =\mathcal{P}_{\Sigma} (\tmop{Pair}^{\tmop{st}}) \xrightarrow{\simeq}
  \tmop{Fun} (\Delta^1, \tmop{CAlg}^{\tmop{an}})_{\geq 0}$ of
  $\infty$\mbox{-}categories which fits into \eqref{diag:ani-smith-eq}, making
  the left square a commutative square\footnote{More precisely, there are two
  possible left squares in \eqref{diag:ani-smith-eq}. However, by uniqueness
  of left/right adjoint, roughly speaking, one commutes if and only if the
  other commutes.}.
\end{theorem}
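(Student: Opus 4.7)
The plan follows the indirect approach announced after the statement. The ``linear analogue'' is exactly the equivalence $\tmop{Fun}((\Delta^1)^{\tmop{op}}, D(\mathbb{Z})_{\geq 0}) \simeq \tmop{Fun}(\Delta^1, D(\mathbb{Z}))_{\geq 0}$ of \Cref{cor:conn-eq-day-point}, which is already in hand. Two further inputs, established just before the statement, feed into the argument: that $\tmop{Fun}(\Delta^1, \tmop{Ani}(\tmop{Ring}))_{\geq 0}$ is projectively generated with $\{\mathbb{Z}[t] \xrightarrow{\tmop{id}} \mathbb{Z}[t],\, \mathbb{Z}[t] \twoheadrightarrow \mathbb{Z}\}$ as a set of compact projective generators --- inherited via $\mathbb{L}\tmop{Sym}_{\mathbb{Z}}$ from those of $\tmop{Fun}(\Delta^1, D(\mathbb{Z}))_{\geq 0}$ --- and that the resulting full subcategory $\mathcal{Z}$ of finite coproducts is equivalent, as a $1$-category, to $\mathcal{D}^0$.

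Given these, I would first construct the equivalence via the universal property of non-abelian derived categories. Since $\mathcal{Z}$ is the full subcategory of finite coproducts of a set of compact projective generators, \Cref{prop:struct-n-proj-gen-cats} (in its $\infty$-categorical form) yields $\mathcal{P}_{\Sigma}(\mathcal{Z}) \xrightarrow{\simeq} \tmop{Fun}(\Delta^1, \tmop{Ani}(\tmop{Ring}))_{\geq 0}$; composing with the equivalence $\mathcal{P}_{\Sigma}(\mathcal{D}^0) \xrightarrow{\simeq} \mathcal{P}_{\Sigma}(\mathcal{Z})$ induced functorially from $\mathcal{D}^0 \simeq \mathcal{Z}$ gives the desired equivalence, which by construction is the left derived functor of $(\mathbb{Z}[X,Y], (Y)) \mapsto (\mathbb{Z}[X,Y] \twoheadrightarrow \mathbb{Z}[X])$. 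For the commutativity of the left square of \eqref{diag:ani-smith-eq}, I would pass to the left adjoints (the ``up'' direction: free / symmetric algebra) and apply \Cref{prop:left-deriv-fun}: since all four functors preserve sifted colimits, it suffices to verify commutativity on the $1$-categorical generators in $\mathcal{C}^0$. On $(\mathbb{Z} \leftarrowtail 0)$, one route gives $(\mathbb{Z}[t], 0) \mapsto (\mathbb{Z}[t] \xrightarrow{\tmop{id}} \mathbb{Z}[t])$; the other, via \Cref{cor:conn-eq-day-point}, gives $(\mathbb{Z} \xrightarrow{\tmop{id}} \mathbb{Z}) \mapsto \mathbb{L}\tmop{Sym}_{\mathbb{Z}}(\mathbb{Z} \xrightarrow{\tmop{id}} \mathbb{Z}) = (\mathbb{Z}[t] \xrightarrow{\tmop{id}} \mathbb{Z}[t])$; and the generator $\tmop{id}_{\mathbb{Z}}$ is treated symmetrically, landing on $\mathbb{Z}[t] \twoheadrightarrow \mathbb{Z}$ both ways.

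The main expected obstacle is not mathematical but organizational: keeping straight which vertical arrows in \eqref{diag:ani-smith-eq} are the ``up'' (left adjoint, $\mathbb{L}\tmop{Sym}_{\mathbb{Z}}$) direction versus the ``down'' (right adjoint, forgetful) direction, and matching the cofiber-based identification of \Cref{cor:conn-eq-day-point} with the pair/surjection identification $\mathcal{D}^0 \simeq \mathcal{Z}$ on the explicit finite generators. Once these $1$-categorical bookkeeping facts are verified on the handful of generators, the passage to $\mathcal{P}_{\Sigma}$ is formal by the universal property, and no new mathematical content is required beyond what is already in place.
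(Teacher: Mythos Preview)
Your proposal is correct and follows exactly the approach the paper takes: the paper simply writes ``It follows from previous discussion that'' after establishing the equivalence $\mathcal{D}^0 \simeq \mathcal{Z}$ and the projective generation of $\tmop{Fun}(\Delta^1, \tmop{Ani}(\tmop{Ring}))_{\geq 0}$, and you have correctly unpacked what that discussion entails. Your explicit verification of commutativity on the two generators of $\mathcal{C}^0$ via \Cref{prop:left-deriv-fun} is precisely the intended argument.
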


In the proof of {\cite[Lem~3.16]{Antonio2020}}, Jorge {\tmname{Antonío}}
sketched a slightly different set of compact projective generators. It would
be nice to compare the two choices.

\begin{remark}
  \Cref{cor:smith-eq} says that the $\infty$\mbox{-}category of
  $\mathbb{E}_{\infty}$-algebras in the symmetric monoidal
  $\infty$\mbox{-}category $\tmop{Fun} ((\Delta^1)^{\tmop{op}}, D
  (\mathbb{Z})_{\geq 0})$ is equivalent to that of
  $\mathbb{E}_{\infty}$-algebras in the symmetric monoidal
  $\infty$\mbox{-}category $\tmop{Fun} (\Delta^1, D (\mathbb{Z}))_{\geq 0}$
  since two symmetric monoidal $\infty$\mbox{-}categories are equivalent. Our
  result essentially says that both $\infty$\mbox{-}categories admits
  endomorphism monads which is also preserved under this equivalence,
  therefore the module categories over these monads are equivalent.
\end{remark}

\begin{notation}
  \label{nota:ani-pdpair}Given the equivalence in \Cref{thm:ani-smith-eq}, we
  will symbolically denote an object in $\tmop{Pair}^{\gamma, \tmop{an}}$ by
  $(A \twoheadrightarrow A'', \gamma)$ where $A \twoheadrightarrow A''$ is the
  image under the forgetful functor $\tmop{Pair}^{\gamma, \tmop{an}}
  \rightarrow \tmop{Pair}^{\tmop{an}} \xrightarrow{\simeq} \tmop{Fun}
  (\Delta^1, \tmop{CAlg}^{\tmop{an}})_{\geq 0}$. When the PD-structure is the
  ``obvious'' one (like $\Gamma_{\mathbb{Z} [X]} (Y) \twoheadrightarrow
  \mathbb{Z} [X]$), by abuse of notation, we will omit the $\gamma$ in
  question. Under this notation, objects in $\tmop{Pair}^{\tmop{st}}$ could be
  identified with $\mathbb{Z} [X, Y] \twoheadrightarrow \mathbb{Z} [X]$, and
  objects in $\tmop{Pair}^{\gamma, \tmop{st}}$ could be identified with
  $\Gamma_{\mathbb{Z} [X]} (Y) \twoheadrightarrow \mathbb{Z} [X]$.
\end{notation}

\begin{remark}
  In \Cref{thm:ani-smith-eq}, we can replace $D (\mathbb{Z})$ by any
  {\tmdfn{derived algebraic context}} $\mathcal{C}$
  {\cite[Def~4.2.1]{Raksit2020}} and then both $\tmop{Fun}
  ((\Delta^1)^{\tmop{op}}, \mathcal{C})$ and $\tmop{Fun} (\Delta^1,
  \mathcal{C})$ admit canonical structures of derived algebraic contexts which
  are preserved under the equivalence $\tmop{Fun} ((\Delta^1)^{\tmop{op}},
  \mathcal{C}) \rightarrow \tmop{Fun} (\Delta^1, \mathcal{C})$, and
  \Cref{thm:ani-smith-eq} essentially generalizes to the equivalence between
  the $\infty$\mbox{-}categories of connective maps of {\tmdfn{derived
  commutative algebras}} {\cite[Rem~4.2.24]{Raksit2020}} (note that
  $\tmop{Pair}^{\tmop{an}} \simeq \tmop{DAlg} (\tmop{Fun}
  ((\Delta^1)^{\tmop{op}}, D (\mathbb{Z})))^{\tmop{cn}}$ and $\tmop{Fun}
  (\Delta^1, \tmop{CAlg}^{\tmop{an}})_{\geq 0} \simeq \tmop{DAlg} (\tmop{Fun}
  (\Delta^1, D (\mathbb{Z})))^{\tmop{cn}}$).
\end{remark}

\begin{remark}
  \label{rem:deriv-PD-pairs}In fact, the machinery in
  {\cite[§4]{Raksit2020}}, due to Bhatt--Mathew and {\cite{Brantner2019}},
  allows us to define the $\infty$\mbox{-}category of {\tmdfn{derived
  PD-pairs}} of which the connective objects spans a full subcategory
  equivalent to the $\infty$\mbox{-}category of animated PD-pairs.
\end{remark}

\begin{warning}
  We warn the reader that the heart $\tmop{DAlg} (\tmop{Fun}
  ((\Delta^1)^{\tmop{op}}, D (\mathbb{Z})))^{\heartsuit}$ in
  {\cite[Rem~4.2.24]{Raksit2020}} is equivalent to the $1$\mbox{-}category
  $\mathcal{P}_{\Sigma, 1} (\tmop{Pair}^{\tmop{st}})$, not the
  $1$\mbox{-}category $\tmop{Pair}$.
\end{warning}

We also identify the equivalence in \Cref{thm:ani-smith-eq} restricted to the
full subcategory $\tmop{Pair} \subseteq \tmop{Pair}^{\tmop{an}}$:

\begin{proposition}
  \label{prop:characterize-pair}Let $\tmop{Fun} (\Delta^1,
  \tmop{Ring})_{\tmop{surj}} \subseteq \tmop{Fun} (\Delta^1, \tmop{Ring})$ be
  the full subcategory spanned by those surjective maps $A \twoheadrightarrow
  A''$ of rings. Then the equivalence $\tmop{Pair} \rightarrow \tmop{Fun}
  (\Delta^1, \tmop{Ring})_{\tmop{surj}}, (A, I) \mapsto (A \twoheadrightarrow
  A / I)$ fits into a canonical commutative diagram
  \[ \begin{array}{ccc}
       \tmop{Pair} & \xrightarrow{\simeq} & \tmop{Fun} (\Delta^1,
       \tmop{Ring})_{\tmop{surj}}\\
       \longdownarrow &  & \longdownarrow\\
       \tmop{Pair}^{\tmop{an}} & \xrightarrow{\simeq} & \tmop{Fun} (\Delta^1,
       \tmop{CAlg}^{\tmop{an}})_{\geq 0}
     \end{array} \]
  of $\infty$\mbox{-}categories. Furthermore, the localization
  $\tmop{Pair}^{\tmop{an}} \rightarrow \tmop{Pair}$
  (\Cref{rem:localization-pair-pdpair}) could be identified with $\tmop{Fun}
  (\Delta^1, \tmop{CAlg}^{\tmop{an}})_{\geq 0} \rightarrow \tmop{Fun}
  (\Delta^1, \tmop{Ring})_{\tmop{surj}}, (A \twoheadrightarrow A'') \mapsto
  (\pi_0 (A) \twoheadrightarrow \pi_0 (A''))$.
\end{proposition}

\begin{proof}
  We note that $\tmop{Fun} (\Delta^1, \tmop{Ring}) \subseteq \tmop{Fun}
  (\Delta^1, \tmop{CAlg}^{\tmop{an}})$ is the reflective subcategory
  (\Cref{def:reflexive-cat}) spanned by the $1$\mbox{-}truncated objects, of
  which the localization is given by $\tmop{Fun} (\Delta^1,
  \tmop{CAlg}^{\tmop{an}}) \rightarrow \tmop{Fun} (\Delta^1, \tmop{Ring}), (A
  \rightarrow A'') \mapsto (\pi_0 (A) \rightarrow \pi_0 (A''))$ by
  \Cref{cor:ani-arrow-cat,rem:ani-trunc}. Restricting to the full subcategory
  $\tmop{Fun} (\Delta^1, \tmop{CAlg}^{\tmop{an}})_{\geq 0} \subseteq
  \tmop{Fun} (\Delta^1, \tmop{CAlg}^{\tmop{an}})$, we get a localization
  $\tmop{Fun} (\Delta^1, \tmop{CAlg}^{\tmop{an}})_{\geq 0} \rightarrow
  \tmop{Fun} (\Delta^1, \tmop{Ring})_{\tmop{surj}}$. Consider the diagram
  \[ \begin{array}{ccc}
       \tmop{Pair}^{\tmop{an}} & \xrightarrow{\simeq} & \tmop{Fun} (\Delta^1,
       \tmop{CAlg}^{\tmop{an}})_{\geq 0}\\
       \longdownarrow &  & \longdownarrow\\
       \tmop{Pair} & \xrightarrow{\simeq} & \tmop{Fun} (\Delta^1,
       \tmop{Ring})_{\tmop{surj}}
     \end{array} \]
  {\noindent}of $\infty$\mbox{-}categories, where the vertical arrows are
  localizations (\Cref{rem:localization-pair-pdpair}). We claim that this is a
  commutative diagram. Indeed, both compositions commute with filtered
  colimits and geometric realizations (in fact, all small colimits, since both
  vertical arrows are localizations in \Cref{def:reflexive-cat}), and when
  restricting to $\tmop{Pair}^{\tmop{st}} \subseteq \tmop{Pair}^{\tmop{an}}$,
  both compositions are canonically equivalent. Then the claim follows from
  \Cref{prop:left-deriv-n-fun}.
  
  Another way to show the commutativity is to show that the top right
  composition is $(\tmop{Pair}^{\tmop{an}} \rightarrow \tmop{Pair})$-invariant
  in \Cref{def:L-inv}, then invoke \Cref{prop:L-inv}.
  
  Then the result follows by taking the right adjoints to the vertical arrows.
\end{proof}

\begin{corollary}
  The lower square in \Cref{prop:infty-cat-forget-embedding} is
  left-adjointable {\cite[Def~4.7.4.13]{Lurie2017}}, which gives rise to a
  commutative diagram
  \[ \begin{array}{ccc}
       \tmop{Pair}^{\tmop{an}} & \longrightarrow & \tmop{Pair}\\
       \longdownarrow &  & \longdownarrow\\
       \tmop{Fun} (\Delta^{1, \tmop{op}}, D (\mathbb{Z})_{\geq 0}) &
       \longrightarrow & \tmop{Inj}
     \end{array} \]
  {\noindent}of $1$\mbox{-}categories, where the vertical arrows are forgetful
  functors.
\end{corollary}

\begin{warning}
  \label{warn:local-anipair-pair-incompatible-forget}The upper square in
  \Cref{prop:infty-cat-forget-embedding} is not left-adjointable. That is to
  say, the localizations $\tmop{Pair}^{\tmop{an}} \rightarrow \tmop{Pair}$ and
  $\tmop{Pair}^{\gamma, \tmop{an}} \rightarrow \tmop{Pair}^{\gamma}$ are not
  compatible with forgetful functors, otherwise the forgetful functor
  $\tmop{Pair}^{\gamma} \rightarrow \tmop{Pair}$ would commute with small
  colimits, which is false (see
  \Cref{rem:pdpair-pair-forget-not-preserve-colim}).
\end{warning}

It follows from \Cref{prop:infty-cat-forget-embedding,prop:characterize-pair}
that

\begin{proposition}
  \label{prop:characterize-pdpair}The essential image of the fully faithful
  embedding $\tmop{Pair}^{\gamma} \hookrightarrow \tmop{Pair}^{\gamma,
  \tmop{an}}$ is spanned by those animated PD-pairs $(A \twoheadrightarrow
  A'', \gamma)$ such that both $A$ and $A''$ are static.
\end{proposition}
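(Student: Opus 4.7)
The plan is to reduce the PD case to the non-PD case by exploiting the Cartesian square already established. By the second assertion of \Cref{prop:infty-cat-forget-embedding}, the upper square
\[
\begin{array}{ccc}
\tmop{PDPair} & \hookrightarrow & \tmop{AniPDPair}\\
\longdownarrow & & \longdownarrow\\
\tmop{Pair} & \hookrightarrow & \tmop{AniPair}
\end{array}
\]
is Cartesian, where the vertical arrows are the respective forgetful functors. Consequently, an object $(A \twoheadrightarrow A'', \gamma) \in \tmop{AniPDPair}$ lies in the essential image of $\tmop{PDPair}$ if and only if the underlying animated pair $(A \twoheadrightarrow A'')$ lies in the essential image of $\tmop{Pair} \hookrightarrow \tmop{AniPair}$.

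It then suffices to identify the essential image of $\tmop{Pair} \hookrightarrow \tmop{AniPair}$. This is handled by \Cref{prop:characterize-pair}: under the equivalence $\tmop{AniPair} \simeq \tmop{Fun}(\Delta^1, \tmop{Ani}(\tmop{Ring}))_{\geq 0}$ of \Cref{thm:ani-smith-eq}, the subcategory $\tmop{Pair}$ corresponds to $\tmop{Fun}(\Delta^1, \tmop{Ring})_{\tmop{surj}}$, i.e.\ the surjections between static rings. Thus $(A \twoheadrightarrow A'')$ lies in $\tmop{Pair}$ precisely when both $A$ and $A''$ are static objects of $\tmop{Ani}(\tmop{Ring})$ (surjectivity on $\pi_0$ being automatic from the definition of $\tmop{AniPair}$). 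Combining these two observations gives the desired characterization.

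The only subtlety worth flagging is that one must invoke the Cartesian upper square rather than attempt to work directly on the PD-side: since the forgetful functor $\tmop{PDPair} \to \tmop{Pair}$ does not preserve small colimits (cf.\ \Cref{warn:local-anipair-pair-incompatible-forget} and \Cref{rem:pdpair-pair-forget-not-preserve-colim}), one cannot characterize $\tmop{PDPair} \subseteq \tmop{AniPDPair}$ by a direct ``staticness'' condition on the underlying ring data in some naive sense internal to $\tmop{AniPDPair}$; it is crucial that the condition is tested \emph{after} applying the forgetful functor to $\tmop{AniPair}$, which is exactly what the Cartesian square encodes.
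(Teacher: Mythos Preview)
Your proof is correct and follows exactly the same approach as the paper, which simply states that the result follows from \Cref{prop:infty-cat-forget-embedding} (the Cartesian upper square) and \Cref{prop:characterize-pair}. Your additional remark about the necessity of passing through the Cartesian square is a helpful elaboration but not part of the paper's argument.
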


To understand the difference between $\tmop{Pair}$ and $\mathcal{P}_{\Sigma,
1} (\tmop{Pair}^{\tmop{st}}) \simeq \tau_{\leq 0} (\tmop{Pair}^{\tmop{an}})$
better, we compute the following example:

\begin{example}
  \label{ex:anipair-self-coprod}Consider $(\mathbb{Z}/ 4\mathbb{Z}, (2)) \in
  \tmop{Pair}$ as an animated pair. By \Cref{prop:characterize-pair}, this
  corresponds to the surjective map $\mathbb{Z}/ 4\mathbb{Z}
  \twoheadrightarrow \mathbb{F}_2$ of rings. Let us study the coproduct
  $(\mathbb{Z}/ 4\mathbb{Z}, (2)) \amalg (\mathbb{Z}/ 4\mathbb{Z}, (2))$ taken
  in $\tmop{Pair}^{\tmop{an}}$. Thanks to \Cref{thm:ani-smith-eq}, this
  corresponds to the surjective map $\mathbb{Z}/ 4\mathbb{Z}
  \otimes_{\mathbb{Z}}^{\mathbb{L}} \mathbb{Z}/ 4\mathbb{Z} \twoheadrightarrow
  \mathbb{F}_2 \otimes_{\mathbb{Z}}^{\mathbb{L}} \mathbb{F}_2$ of animated
  rings. The underlying map in $\tmop{Fun} (\Delta^1,
  \tmop{CAlg}^{\tmop{an}})_{\geq 0}$ is given by
  \[ (\mathbb{Z}/ 4\mathbb{Z}) [1] \oplus \mathbb{Z}/ 4\mathbb{Z}
     \twoheadrightarrow \mathbb{F}_2 [1] \oplus \mathbb{F}_2 \]
  induced by $0 \of (\mathbb{Z}/ 4\mathbb{Z}) [1] \rightarrow \mathbb{F}_2
  [1]$ and the canonical projection $\mathbb{Z}/ 4\mathbb{Z}
  \twoheadrightarrow \mathbb{F}_2$. Under the forgetful functor
  $\tmop{Pair}^{\tmop{an}} \rightarrow \tmop{Fun} ((\Delta^1)^{\tmop{op}}, D
  (\mathbb{Z})_{\geq 0})$, the image of $(\mathbb{Z}/ 4\mathbb{Z}, (2)) \amalg
  (\mathbb{Z}/ 4\mathbb{Z}, (2))$ is thus given by
  \begin{eqnarray*}
    (\mathbb{Z}/ 4\mathbb{Z}) [1] \oplus \mathbb{Z}/ 4\mathbb{Z} &
    \longleftarrow & \tmop{fib} ((\mathbb{Z}/ 4\mathbb{Z}) [1] \oplus
    \mathbb{Z}/ 4\mathbb{Z} \twoheadrightarrow \mathbb{F}_2 [1] \oplus
    \mathbb{F}_2)\\
    & \xleftarrow{\simeq} & (\mathbb{Z}/ 4\mathbb{Z}) [1] \oplus \mathbb{F}_2
    \oplus 2\mathbb{Z}/ 4\mathbb{Z}
  \end{eqnarray*}
  induced by $\mathbb{F}_2 \rightarrow 0$ and other maps are canonical. Since
  the forgetful functor $\tmop{Pair}^{\tmop{an}} \rightarrow \tmop{Fun}
  ((\Delta^1)^{\tmop{op}}, D (\mathbb{Z})_{\geq 0})$ commutes with $\tau_{\leq
  0}$ (\Cref{rem:ani-trunc}), we can identify the underlying object of
  $\tau_{\leq 0} ((\mathbb{Z}/ 4\mathbb{Z}, (2)) \amalg (\mathbb{Z}/
  4\mathbb{Z}, (2)))$ in $\tmop{Fun} ((\Delta^1)^{\tmop{op}}, \tmop{Ab})$ with
  $(\mathbb{Z}/ 4\mathbb{Z} \leftarrow \mathbb{F}_2 \oplus 2\mathbb{Z}/
  4\mathbb{Z})$, which is not injective. Roughly speaking, the localization
  $\mathcal{P}_{\Sigma} (\tmop{Pair}^{\tmop{st}}) \rightarrow \tmop{Pair}$
  will kill the kernel $\mathbb{F}_2$.
\end{example}

We now prove a stronger colimit-preserving property of the forgetful functor
from animated PD-pairs to animated pairs, which does not seem to be obvious
without this identification:

\begin{proposition}
  \label{prop:forget-PD-small-colim}The forgetful functor
  $\tmop{Pair}^{\gamma, \tmop{an}} \rightarrow \tmop{Pair}^{\tmop{an}}$
  preserves small colimits.
\end{proposition}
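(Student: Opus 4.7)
The plan is to exploit sifted-colimit preservation together with an explicit computation on generators. By Corollary \ref{cor:forget-pair-conserv-sifted-colim}, the forgetful functor $G \of \tmop{AniPDPair} \rightarrow \tmop{AniPair}$ preserves sifted colimits. Since a functor between cocomplete $\infty$-categories preserves all small colimits if and only if it preserves sifted colimits and finite coproducts, it suffices to show $G$ preserves finite coproducts. Moreover, every object of $\tmop{AniPDPair} = \mathcal{P}_{\Sigma}(\mathcal{E}^0)$ is a sifted colimit of objects in $\mathcal{E}^0$, and binary coproducts commute with sifted colimits in both $\tmop{AniPDPair}$ and $\tmop{AniPair}$; for $\tmop{AniPair}$ this follows from Theorem \ref{thm:ani-smith-eq} together with the fact that the derived tensor product of animated rings is a left adjoint in each variable. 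Hence I reduce to verifying that $G$ sends finite coproducts of objects in $\mathcal{E}^0$ to coproducts in $\tmop{AniPair}$.

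The empty coproduct is straightforward: the initial object of $\mathcal{E}^0$ is the identity $\mathbb{Z} \twoheadrightarrow \mathbb{Z}$, which maps to the initial object of $\tmop{AniPair}$. For the binary case, take $P_i = (\Gamma_{\mathbb{Z}[X_i]}(Y_i) \twoheadrightarrow \mathbb{Z}[X_i]) \in \mathcal{E}^0$ for $i = 1, 2$. Their coproduct in $\mathcal{E}^0$, computed from the universal property of PD-envelopes, is $(\Gamma_{\mathbb{Z}[X_1, X_2]}(Y_1, Y_2) \twoheadrightarrow \mathbb{Z}[X_1, X_2])$, and this represents $P_1 \amalg P_2$ in $\tmop{AniPDPair}$ because the embedding $\mathcal{E}^0 \hookrightarrow \mathcal{P}_{\Sigma}(\mathcal{E}^0)$ preserves finite coproducts. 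Applying $G$ and using Theorem \ref{thm:ani-smith-eq}, this yields the surjection $\Gamma_{\mathbb{Z}[X_1, X_2]}(Y_1, Y_2) \twoheadrightarrow \mathbb{Z}[X_1, X_2]$ inside $\tmop{Fun}(\Delta^1, \tmop{Ani}(\tmop{Ring}))_{\geq 0}$.

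On the other hand, the coproduct $G(P_1) \amalg G(P_2)$ in $\tmop{AniPair}$ is computed under the same identification as the pointwise derived tensor product over $\mathbb{Z}$ of sources and of targets, producing $\Gamma_{\mathbb{Z}[X_1]}(Y_1) \otimes_{\mathbb{Z}}^{\mathbb{L}} \Gamma_{\mathbb{Z}[X_2]}(Y_2) \twoheadrightarrow \mathbb{Z}[X_1] \otimes_{\mathbb{Z}}^{\mathbb{L}} \mathbb{Z}[X_2]$. Since $\mathbb{Z}[X_i]$ and $\Gamma_{\mathbb{Z}[X_i]}(Y_i) \cong \mathbb{Z}[X_i] \otimes_{\mathbb{Z}} \Gamma_{\mathbb{Z}}(Y_i)$ are free $\mathbb{Z}$-modules, these derived tensor products collapse to classical tensor products, giving exactly $\Gamma_{\mathbb{Z}[X_1, X_2]}(Y_1, Y_2) \twoheadrightarrow \mathbb{Z}[X_1, X_2]$, which matches $G(P_1 \amalg P_2)$.

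The principal obstacle is conceptual rather than computational: the functor $G$ does not map the chosen set $\mathcal{E}^0$ of compact projective generators of $\tmop{AniPDPair}$ into the chosen set $\mathcal{D}^0$ of compact projective generators of $\tmop{AniPair}$ --- indeed, $\Gamma_{\mathbb{Z}[X]}(Y)$ is not a polynomial ring --- so one cannot directly invoke the abstract adjunction machinery of \Cref{cor:ani-adjoint-funs}. Theorem \ref{thm:ani-smith-eq} is what unlocks the argument, by letting us realize coproducts in $\tmop{AniPair}$ as derived tensor products of underlying animated rings, after which the $\mathbb{Z}$-flatness of free PD-algebras finishes the matching. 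This is also precisely the reason the analogous statement fails at the $1$-categorical level (cf. \Cref{warn:local-anipair-pair-incompatible-forget}).
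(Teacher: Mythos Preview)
Your proof is correct and follows essentially the same approach as the paper. Both arguments reduce to checking that the forgetful functor sends finite coproducts in $\mathcal{E}^0$ to coproducts in $\tmop{AniPair}$, then use \Cref{thm:ani-smith-eq} to identify the latter as pointwise derived tensor products, and finish with the $\mathbb{Z}$-flatness of free PD-polynomial algebras. The only cosmetic difference is that the paper invokes \Cref{prop:left-deriv-fun} directly to perform the reduction to generators, whereas you unwind this by hand using that binary coproducts commute with sifted colimits in each variable; the substance is the same.
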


\begin{proof}
  By \Cref{prop:left-deriv-n-fun}, it suffices to show that the composite
  functor $\tmop{Pair}^{\gamma, \tmop{st}} \hookrightarrow
  \tmop{Pair}^{\gamma, \tmop{an}} \rightarrow \tmop{Pair}^{\tmop{an}}$
  preserves finite coproducts. We first ``simplify'' this composition, then we
  compute the finite coproducts by hand.
  
  Since $\tmop{Pair}^{\gamma, \tmop{st}} \hookrightarrow \tmop{Pair}^{\gamma,
  \tmop{an}}$ factors as $\tmop{Pair}^{\gamma, \tmop{st}} \hookrightarrow
  \tmop{Pair}^{\gamma} \hookrightarrow \tmop{Pair}^{\gamma, \tmop{an}}$, it
  follows from \Cref{prop:infty-cat-forget-embedding} that the composite
  functor $\tmop{Pair}^{\gamma, \tmop{st}} \hookrightarrow
  \tmop{Pair}^{\gamma, \tmop{an}} \rightarrow \tmop{Pair}^{\tmop{an}}$ is
  equivalent to the composite functor $\tmop{Pair}^{\gamma, \tmop{st}}
  \hookrightarrow \tmop{Pair}^{\gamma} \rightarrow \tmop{Pair} \hookrightarrow
  \tmop{Pair}^{\tmop{an}}$. Under the equivalence in \Cref{thm:ani-smith-eq},
  this functor is concretely given by $\tmop{Pair}^{\gamma, \tmop{st}} \ni (A,
  I, \gamma) \mapsto (A \twoheadrightarrow A / I) \in \tmop{Fun} (\Delta^1,
  \tmop{CAlg}^{\tmop{an}})_{\geq 0}$. Since $\tmop{Fun} (\Delta^1,
  \tmop{CAlg}^{\tmop{an}})_{\geq 0} \subseteq \tmop{Fun} (\Delta^1,
  \tmop{CAlg}^{\tmop{an}})$ is stable under small colimits, we can take the
  finite coproducts in the larger $\infty$\mbox{-}category $\tmop{Fun}
  (\Delta^1, \tmop{CAlg}^{\tmop{an}})$.
  
  Every object in $\tmop{Pair}^{\gamma, \tmop{st}}$ is the PD-envelope of a
  pair of form $(\mathbb{Z} [X_1, \ldots, X_m, Y_1, \ldots, Y_n], (Y_1,
  \ldots, Y_n))$, which we will denote by $\Gamma_{\mathbb{Z} [X_1, \ldots,
  X_m]} (Y_1, \ldots, Y_n) \twoheadrightarrow \mathbb{Z} [X_1, \ldots, X_m]$.
  Now the result follows from the fact that
  \[ \Gamma_{\mathbb{Z} [X]} (Y) \otimes_{\mathbb{Z}}^{\mathbb{L}}
     \Gamma_{\mathbb{Z} [X']} (Y') \simeq \Gamma_{\mathbb{Z} [X, X']} (Y, Y')
  \]
  and
  \[ \mathbb{Z} [X] \otimes_{\mathbb{Z}}^{\mathbb{L}} \mathbb{Z} [X'] \simeq
     \mathbb{Z} [X, X'] \]
  where $X = (X_1, \ldots, X_m)$, $X' = (X_1', \ldots, X_{m'}')$, $Y = (Y_1,
  \ldots, Y_n)$ and $Y' = (Y_1', \ldots, Y_{n'}')$.
\end{proof}

\begin{remark}
  \label{rem:pdpair-pair-forget-not-preserve-colim}\Cref{prop:forget-PD-small-colim}
  implies that the forgetful functor $\mathcal{P}_{\Sigma, 1}
  (\tmop{Pair}^{\gamma, \tmop{st}}) \rightarrow \mathcal{P}_{\Sigma, 1}
  (\tmop{Pair}^{\tmop{st}})$ preserves small colimits, cf.
  \Cref{lem:1-forget-pdpair}. However, the forgetful functor
  $\tmop{Pair}^{\gamma} \rightarrow \tmop{Pair}$ does not preserve small
  colimits, even pushouts
  {\cite[\href{https://stacks.math.columbia.edu/tag/07GY}{Tag
  07GY}]{stacks-project}}. The counterexample there is given by two
  PD-structures on the pair $(\mathbb{Z}/ 4\mathbb{Z}, (2))$. We explain the
  incompatibility of the localizations in
  Warning~\ref{warn:local-anipair-pair-incompatible-forget} by
  \Cref{ex:anipair-self-coprod}: the localization $\tmop{Pair}^{\tmop{an}}
  \rightarrow \tmop{Pair}$ kills the kernel $\mathbb{F}_2$, while the
  localization $\tmop{Pair}^{\gamma, \tmop{an}} \rightarrow
  \tmop{Pair}^{\gamma}$ kills more, since the PD-structure does not
  necessarily pass to the quotient, so one needs to quotient out more
  relations.
\end{remark}

\begin{corollary}
  The composite functor $\tmop{Pair}^{\tmop{an}}
  \xrightarrow{\tmop{Env}^{\gamma, \tmop{an}}} \tmop{Pair}^{\gamma, \tmop{an}}
  \rightarrow \tmop{Pair}^{\tmop{an}}$, where the second functor is the
  forgetful functor, preserves small colimits.
\end{corollary}

\subsection{Basic properties}\label{subsec:basic-prop-pairs}In this
subsection, we will discuss basic properties of animated pairs (resp. animated
PD-pairs).

First, we recall that, given a pair $(A, I)$, let $(B, J, \gamma)$ be its
PD-envelope, then there is a canonical equivalence $A / I \cong B / J$
{\cite[\href{https://stacks.math.columbia.edu/tag/07H7}{Tag
07H7}]{stacks-project}}. There is an analogue for animated PD-envelope:

\begin{lemma}
  \label{lem:ani-pd-env-preserv-target}The composite functor $F \of
  \tmop{Pair}^{\tmop{an}} \xrightarrow{\tmop{Env}^{\gamma, \tmop{an}}}
  \tmop{Pair}^{\gamma, \tmop{an}} \rightarrow \tmop{Pair}^{\tmop{an}}$, where
  the second functor is the forgetful functor, is compatible with the
  evaluation $\tmop{ev}_{[1]} \of \tmop{Pair}^{\tmop{an}} \simeq \tmop{Fun}
  (\Delta^1, \tmop{CAlg}^{\tmop{an}})_{\geq 0} \rightarrow
  \tmop{CAlg}^{\tmop{an}}$ at $[1] \in \Delta^1$. That is to say, the
  composite functor $\tmop{Pair}^{\tmop{an}} \xrightarrow{F}
  \tmop{Pair}^{\tmop{an}} \xrightarrow{\tmop{ev}_{[1]}}
  \tmop{CAlg}^{\tmop{an}}$ is homotopy equivalent to the functor
  $\tmop{Pair}^{\tmop{an}} \xrightarrow{\tmop{ev}_{[1]}}
  \tmop{CAlg}^{\tmop{an}}$.
\end{lemma}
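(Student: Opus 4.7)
The plan is to show both functors preserve small colimits and agree on the generating full subcategory $\mathcal{D}^0$, then invoke the left Kan extension universal property.

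First, I would verify that both composite functors $\tmop{ev}_{[1]} \circ F$ and $\tmop{ev}_{[1]}$ from $\tmop{AniPair}$ to $\tmop{Ani}(\tmop{Ring})$ preserve small colimits. For $\tmop{ev}_{[1]}$, this is immediate from the identification $\tmop{AniPair} \simeq \tmop{Fun}(\Delta^1, \tmop{Ani}(\tmop{Ring}))_{\geq 0}$ of Theorem \ref{thm:ani-smith-eq}, since colimits in functor categories are computed pointwise (and the full subcategory of surjective maps is stable under small colimits in $\tmop{Fun}(\Delta^1, \tmop{Ani}(\tmop{Ring}))$, as observed just before Theorem \ref{thm:ani-smith-eq}). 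For $F$, the functor $\tmop{AniPDEnv}$ is a left adjoint by Definition \ref{def:anipair-anipdpair} hence preserves small colimits, and the forgetful functor $\tmop{AniPDPair} \rightarrow \tmop{AniPair}$ preserves small colimits by Proposition \ref{prop:forget-PD-small-colim}; therefore $\tmop{ev}_{[1]} \circ F$ preserves small colimits as well.

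Next, I would construct a comparison map between these two functors. Both $\tmop{AniPDEnv}$ and the forgetful functors restrict on $\mathcal{D}^0$ to the classical PD-envelope functor $\mathcal{D}^0 \to \mathcal{E}^0 \hookrightarrow \tmop{PDPair} \to \tmop{Pair}$, which on the standard generator $(\mathbb{Z}[X,Y], (Y))$ yields $(\Gamma_{\mathbb{Z}[X]}(Y) \twoheadrightarrow \mathbb{Z}[X], \gamma)$, hence under the forgetful functor gives the pair whose quotient is $\mathbb{Z}[X]$. Meanwhile, $\tmop{ev}_{[1]}$ applied directly to $(\mathbb{Z}[X,Y] \twoheadrightarrow \mathbb{Z}[X])$ also gives $\mathbb{Z}[X]$. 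More precisely, the classical fact $A/I \cong D_A(I)/\ker(D_A(I) \twoheadrightarrow A/I)$ from \cite[\href{https://stacks.math.columbia.edu/tag/07H7}{Tag 07H7}]{stacks-project} supplies a natural transformation $\tmop{ev}_{[1]} \circ F|_{\mathcal{D}^0} \xrightarrow{\simeq} \tmop{ev}_{[1]}|_{\mathcal{D}^0}$ which is an equivalence on every object of $\mathcal{D}^0$. Equivalently, one can build the comparison as the unit or counit map induced by the adjunction, restricted to the generators where both functors are canonically identified.

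Finally, by Proposition \ref{prop:left-deriv-fun}, any colimit-preserving functor out of $\tmop{AniPair} = \mathcal{P}_{\Sigma}(\mathcal{D}^0)$ is determined (up to equivalence) by its restriction to $\mathcal{D}^0$, and a map between two such functors is an equivalence if and only if its restriction to $\mathcal{D}^0$ is an equivalence. Applying this to our comparison map extends the equivalence from $\mathcal{D}^0$ to all of $\tmop{AniPair}$, yielding the desired homotopy $\tmop{ev}_{[1]} \circ F \simeq \tmop{ev}_{[1]}$. The main obstacle is ensuring the comparison is genuinely natural (i.e., a morphism of functors rather than a levelwise equivalence), but this is automatic from the fact that the classical isomorphism $A/I \cong D_A(I)/J$ is the component of a natural transformation between functors $\tmop{Pair} \to \tmop{Ring}$, which then left-Kan-extends to the desired natural equivalence.
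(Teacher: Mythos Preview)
Your proposal is correct and follows essentially the same approach as the paper's proof: both functors are left derived functors (equivalently, preserve sifted colimits), so by \Cref{prop:left-deriv-fun} it suffices to verify the equivalence on the generating subcategory $\mathcal{D}^0$, where it reduces to the classical identification $A/I \cong D_A(I)/J$. The paper's proof is a one-line sketch of exactly this argument (modulo what appears to be a typo writing $\tmop{Poly}_{\mathbb{Z}}$ for $\mathcal{D}^0$), while you have spelled out the colimit-preservation and naturality considerations more carefully.
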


\begin{proof}
  Both functors are left derived functors, therefore it suffices to check on
  the full subcategory $\tmop{Poly}_{\mathbb{Z}} \subseteq
  \tmop{Pair}^{\tmop{an}}$, which follows from a direct identification.
\end{proof}

We note that the functor $\tmop{CAlg}^{\tmop{an}} \rightarrow \tmop{Fun}
(\Delta^1, \tmop{CAlg}^{\tmop{an}}), A \mapsto (\tmop{id}_A \of A \rightarrow
A)$ is fully faithful, admits a left adjoint $\tmop{ev}_{[1]}$ and a right
adjoint $\tmop{ev}_{[0]}$. Restricting to the fully faithful embedding
$\tmop{Pair}^{\tmop{an}} \hookrightarrow \tmop{Fun} (\Delta^1,
\tmop{CAlg}^{\tmop{an}})$, we get

\begin{lemma}
  \label{lem:ani-ring-as-pair}The functor $\tmop{CAlg}^{\tmop{an}} \rightarrow
  \tmop{Pair}^{\tmop{an}}, A \mapsto (\tmop{id}_A \of A \rightarrow A)$ is
  fully faithful and admits a left adjoint $\tmop{ev}_{[1]} \of
  \tmop{Pair}^{\tmop{an}} \rightarrow \tmop{CAlg}^{\tmop{an}}, (A
  \twoheadrightarrow A'') \mapsto A''$ and a right adjoint $\tmop{ev}_{[0]}
  \of \tmop{CAlg}^{\tmop{an}} \rightarrow \tmop{Pair}^{\tmop{an}}, (A
  \twoheadrightarrow A'') \mapsto A$.
\end{lemma}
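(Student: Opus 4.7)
The plan is to deduce everything by restricting the fact, stated just before the lemma, that the diagonal inclusion $\tmop{Ani}(\tmop{Ring}) \hookrightarrow \tmop{Fun}(\Delta^1, \tmop{Ani}(\tmop{Ring}))$ sending $A$ to $\tmop{id}_A$ is fully faithful with left adjoint $\tmop{ev}_{[1]}$ and right adjoint $\tmop{ev}_{[0]}$.

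First, I would observe that for any $A \in \tmop{Ani}(\tmop{Ring})$, the map $\tmop{id}_A \of A \rightarrow A$ induces $\tmop{id} \of \pi_0(A) \rightarrow \pi_0(A)$, which is surjective. Hence, under the equivalence of \Cref{thm:ani-smith-eq} identifying $\tmop{AniPair}$ with the full subcategory $\tmop{Fun}(\Delta^1, \tmop{Ani}(\tmop{Ring}))_{\geq 0} \subseteq \tmop{Fun}(\Delta^1, \tmop{Ani}(\tmop{Ring}))$, the diagonal inclusion factors as $\tmop{Ani}(\tmop{Ring}) \xrightarrow{F} \tmop{AniPair} \xrightarrow{i} \tmop{Fun}(\Delta^1, \tmop{Ani}(\tmop{Ring}))$ with $i$ the fully faithful inclusion and $F$ the functor $A \mapsto (\tmop{id}_A \of A \rightarrow A)$ of the statement.

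Full faithfulness of $F$ then follows from the standard fact that if $i \circ F$ and $i$ are fully faithful, then so is $F$. For the adjoints, I would proceed by a direct mapping-space computation: for all $X \in \tmop{AniPair}$ and $A \in \tmop{Ani}(\tmop{Ring})$, using the full faithfulness of $i$ and the known adjunctions on the ambient functor category,
\[
  \tmop{Map}_{\tmop{AniPair}}(X, F(A)) \simeq \tmop{Map}_{\tmop{Fun}(\Delta^1, \tmop{Ani}(\tmop{Ring}))}(i(X), (i \circ F)(A)) \simeq \tmop{Map}_{\tmop{Ani}(\tmop{Ring})}(\tmop{ev}_{[1]}(i(X)), A),
\]
which exhibits the composite $\tmop{ev}_{[1]} \circ i \of \tmop{AniPair} \rightarrow \tmop{Ani}(\tmop{Ring})$ as a left adjoint of $F$. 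A symmetric computation with $\tmop{ev}_{[0]}$ yields that $\tmop{ev}_{[0]} \circ i$ is the right adjoint. Unwinding along the equivalence of \Cref{thm:ani-smith-eq}, an object $X \in \tmop{AniPair}$ corresponding to $A \twoheadrightarrow A''$ has $\tmop{ev}_{[1]}(i(X)) = A''$ and $\tmop{ev}_{[0]}(i(X)) = A$, as claimed.

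There is no serious obstacle here; the only point requiring care is to cleanly identify the functors $\tmop{ev}_{[0]}$ and $\tmop{ev}_{[1]}$ on the subcategory $\tmop{AniPair}$ with the evaluations on the surjection $A \twoheadrightarrow A''$, which is immediate from the definition of the equivalence in \Cref{thm:ani-smith-eq}.
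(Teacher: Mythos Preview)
Your proposal is correct and follows essentially the same approach as the paper: the paper simply notes that the diagonal functor into $\tmop{Fun}(\Delta^1,\tmop{Ani}(\tmop{Ring}))$ is fully faithful with adjoints $\tmop{ev}_{[1]}$ and $\tmop{ev}_{[0]}$, and then obtains the lemma by ``restricting to the fully faithful embedding $\tmop{AniPair}\hookrightarrow\tmop{Fun}(\Delta^1,\tmop{Ani}(\tmop{Ring}))$''. Your argument spells out this restriction step via an explicit mapping-space computation, which is a faithful elaboration of the same idea.
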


This functor preserves small colimits, therefore by
\Cref{prop:left-deriv-n-fun}, it is the left derived functor of the composite
functor $\tmop{Poly}_{\mathbb{Z}} \rightarrow \tmop{Pair}^{\tmop{st}}
\hookrightarrow \tmop{Pair}^{\tmop{an}}, A \mapsto (A, 0)$. Apply
\Cref{cor:nonab-deriv-cat-adjoint-fun} to the composite
$\tmop{Poly}_{\mathbb{Z}} \rightarrow \tmop{Pair}^{\tmop{st}} \rightarrow
\tmop{Pair}^{\gamma, \tmop{st}}$, we get:

\begin{lemma}
  \label{lem:ani-ring-as-pdpair}The composite functor $\tmop{CAlg}^{\tmop{an}}
  \rightarrow \tmop{Pair}^{\tmop{an}} \xrightarrow{\tmop{Env}^{\gamma,
  \tmop{an}}} \tmop{Pair}^{\gamma, \tmop{an}}$ is fully faithful, where the
  first functor is $\tmop{CAlg}^{\tmop{an}} \rightarrow
  \tmop{Pair}^{\tmop{an}}, A \mapsto (\tmop{id}_A \of A \rightarrow A)$, and a
  further composition $\tmop{CAlg}^{\tmop{an}} \rightarrow
  \tmop{Pair}^{\gamma, \tmop{an}} \rightarrow \tmop{Pair}^{\tmop{an}}$, where
  the second functor is the forgetful functor, is equivalent to the fully
  faithful functor $\tmop{CAlg}^{\tmop{an}} \rightarrow
  \tmop{Pair}^{\tmop{an}}, A \mapsto (\tmop{id}_A \of A \rightarrow A)$.
\end{lemma}

Despite Warning~\ref{warn:local-anipair-pair-incompatible-forget}, the image
of an animated PD-pair $(A \twoheadrightarrow A'', \gamma)$ under the
localization $\tmop{Pair}^{\gamma, \tmop{an}} \rightarrow
\tmop{Pair}^{\gamma}$ is of the form $(-) \rightarrow \tau_{\leq 0} (A'')$:

\begin{lemma}
  \label{lem:ani-pdpair-local-target}There is a canonical commutative diagram
  \[ \begin{array}{ccccc}
       \tmop{Pair}^{\gamma, \tmop{an}} & \longrightarrow &
       \tmop{Pair}^{\tmop{an}} & \xrightarrow{\tmop{ev}_{[1]}} &
       \tmop{CAlg}^{\tmop{an}}\\
       \longdownarrow &  &  &  & \longdownarrow \nocomma \tau_{\leq 0}\\
       \tmop{Pair}^{\gamma} & \longrightarrow & \tmop{Pair} &
       \xrightarrow{\tmop{ev}_{[1]}} & \tmop{Ring}
     \end{array} \]
  {\noindent}of $\infty$-categories, where the left horizontal arrows are
  forgetful functors, and the leftmost vertical arrow is the localization
  (\Cref{rem:localization-pair-pdpair}).
\end{lemma}

\begin{proof}
  We first note that the composite functor $\tmop{Pair}^{\gamma, \tmop{an}}
  \rightarrow \tmop{Pair}^{\tmop{an}} \rightarrow \tmop{CAlg}^{\tmop{an}}
  \rightarrow \tmop{Ring}$ preserves small colimits, therefore is a left
  derived functor (\Cref{prop:left-deriv-n-fun}), hence left Kan extended
  along $\tmop{Pair}^{\gamma, \tmop{st}} \hookrightarrow \tmop{Pair}^{\gamma,
  \tmop{an}}$. The diagram is canonically commutative on the full subcategory
  $\tmop{Pair}^{\gamma, \tmop{st}} \subseteq \tmop{Pair}^{\gamma, \tmop{an}}$.
  It remains to show the existence of the extension of the equivalence in
  question.
  
  Now consider the diagram
  \[ \begin{array}{ccccc}
       \tmop{Ring} & \longhookrightarrow & \tmop{Pair} & \longrightarrow &
       \tmop{Pair}^{\gamma}\\
       \longdownarrow &  &  &  & \longdownarrow\\
       \tmop{CAlg}^{\tmop{an}} & \longhookrightarrow & \tmop{Pair}^{\tmop{an}}
       & \longrightarrow & \tmop{Pair}^{\gamma, \tmop{an}}
     \end{array} \]
  {\noindent}of $\infty$-categories where the functors $\tmop{Ring}
  \rightarrow \tmop{Pair}$ and $\tmop{CAlg}^{\tmop{an}} \rightarrow
  \tmop{Pair}^{\tmop{an}}$ are given by $A \mapsto (A, 0)$ and $A \mapsto
  (\tmop{id}_A \of A \rightarrow A)$ respectively, and the functor
  $\tmop{Pair} \rightarrow \tmop{Pair}^{\gamma}$ and the functor
  $\tmop{Pair}^{\tmop{an}} \rightarrow \tmop{Pair}^{\gamma, \tmop{an}}$ are
  the PD-envelope (resp. animated PD-envelope) functors. This is a commutative
  diagram by \Cref{lem:ani-ring-as-pdpair}. Taking the right adjoints, we get
  the commutativity by \Cref{lem:ani-ring-as-pair}.
\end{proof}

Next, we show that animated PD-envelope ``does nothing'' after
rationalization. More precisely,

\begin{lemma}
  \label{lem:ani-PD-env-char-0}Consider the unit map $\eta$ from the functor
  $\tmop{id}_{\tmop{Pair}^{\tmop{an}}}$ to the composite functor
  $\tmop{Pair}^{\tmop{an}} \rightarrow \tmop{Pair}^{\gamma, \tmop{an}}
  \rightarrow \tmop{Pair}^{\tmop{an}}$ where the first functor is the animated
  PD-envelope functor and the second is the forgetful functor. Then the
  composite of $\eta$ with the rationalization functor
  $\tmop{Pair}^{\tmop{an}} \simeq \tmop{Fun} (\Delta^1,
  \tmop{CAlg}^{\tmop{an}})_{\geq 0} \xrightarrow{\cdummy
  \otimes_{\mathbb{Z}}^{\mathbb{L}} \mathbb{Q}} \tmop{Fun} (\Delta^1,
  \tmop{Ani} (\tmop{Alg}_{\mathbb{Q}}))_{\geq 0}$ is an equivalence of
  functors.
\end{lemma}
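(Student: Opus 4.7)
The plan is to reduce to a check on the compact projective generators $\mathcal{D}^0$ of $\operatorname{AniPair}$. Both functors $\operatorname{id}_{\operatorname{AniPair}}$ and $\operatorname{Forget} \circ \operatorname{AniPDEnv}$ preserve small colimits: the identity trivially, $\operatorname{AniPDEnv}$ because it is a left adjoint (see \Cref{def:anipair-anipdpair}), and the forgetful functor $\operatorname{AniPDPair} \to \operatorname{AniPair}$ by \Cref{prop:forget-PD-small-colim}. The rationalization functor $({-}) \otimes_{\mathbb{Z}}^{\mathbb{L}} \mathbb{Q}$ on $\operatorname{Fun}(\Delta^1, \operatorname{Ani}(\operatorname{Ring}))_{\geq 0}$ preserves small colimits as well. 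Hence both composites $\operatorname{AniPair} \to \operatorname{Fun}(\Delta^1, \operatorname{Ani}(\operatorname{Alg}_{\mathbb{Q}}))_{\geq 0}$ preserve sifted colimits, so by \Cref{prop:left-deriv-fun} they are both left derived (i.e.\ left Kan extended) from their restriction to $\mathcal{D}^0$, and it suffices to verify that the restriction of $\eta$ to $\mathcal{D}^0$ becomes an equivalence after rationalization.

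On a generator $(\mathbb{Z}[X,Y], (Y)) \in \mathcal{D}^0$, the value of $\operatorname{AniPDEnv}$ is $(\Gamma_{\mathbb{Z}[X]}(Y) \twoheadrightarrow \mathbb{Z}[X])$ by \Cref{cor:pd-env-animated-pd-env}, since $(\mathbb{Z}[X,Y], (Y))$ lies in $\operatorname{Pair}$ and its classical PD-envelope lies in $\mathcal{E}^0 \subseteq \operatorname{PDPair}$. The unit map on this object is simply the canonical map of pairs $(\mathbb{Z}[X,Y], (Y)) \to (\Gamma_{\mathbb{Z}[X]}(Y), (Y)\Gamma_{\mathbb{Z}[X]}(Y))$ sending $y_i \mapsto y_i^{[1]}$. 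After rationalization, this becomes the map
\[
(\mathbb{Q}[X,Y] \twoheadrightarrow \mathbb{Q}[X]) \longrightarrow (\Gamma_{\mathbb{Q}[X]}(Y) \twoheadrightarrow \mathbb{Q}[X])
\]
in $\operatorname{Fun}(\Delta^1, \operatorname{Ani}(\operatorname{Alg}_{\mathbb{Q}}))_{\geq 0}$. The classical fact that over a $\mathbb{Q}$-algebra the divided powers are given by $y_i^{[n]} = y_i^n / n!$ shows that the ring map $\mathbb{Q}[X,Y] \to \Gamma_{\mathbb{Q}[X]}(Y)$ is an isomorphism, with inverse $y_i^{[n]} \mapsto y_i^n/n!$; and it obviously intertwines the quotient maps to $\mathbb{Q}[X]$. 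Hence the rationalized unit is an equivalence on each generator.

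The main point to be careful about is the identification of the restriction of $\operatorname{Forget} \circ \operatorname{AniPDEnv}$ to $\mathcal{D}^0$ with the classical PD-envelope followed by the forgetful functor to $\operatorname{Pair}$; this is supplied by the factorizations $\mathcal{D}^0 \hookrightarrow \operatorname{Pair} \hookrightarrow \operatorname{AniPair}$, $\mathcal{E}^0 \hookrightarrow \operatorname{PDPair} \hookrightarrow \operatorname{AniPDPair}$ and the commutativity of \Cref{prop:infty-cat-forget-embedding} together with \Cref{cor:pd-env-animated-pd-env}. Once these bookkeeping identifications are in place, everything reduces to the elementary statement that the PD-envelope of a polynomial ring is trivial rationally, which is the substance of the lemma.
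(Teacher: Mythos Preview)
Your proof is correct and follows essentially the same approach as the paper: reduce to the generators $\mathcal{D}^0$ via \Cref{prop:left-deriv-fun} using that all functors involved preserve sifted colimits, then observe that the map $\mathbb{Z}[X,Y] \to \Gamma_{\mathbb{Z}[X]}(Y)$ becomes an isomorphism after tensoring with $\mathbb{Q}$. Your version is more explicit about the bookkeeping (the identification of the unit on generators via \Cref{prop:infty-cat-forget-embedding} and \Cref{cor:pd-env-animated-pd-env}), whereas the paper simply records the reduction and asserts the classical fact.
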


\begin{proof}
  Since the rationalization functor preserves filtered colimits and geometric
  realizations, by \Cref{prop:left-deriv-n-fun}, it suffices to show the
  equivalence on $\tmop{Pair}^{\tmop{st}} \subseteq \tmop{Pair}^{\tmop{an}}$.
  Concretely, it is saying that the canonical map $\mathbb{Z} [X, Y]
  \rightarrow \Gamma_{\mathbb{Z} [X]} (Y)$ becomes an equivalence after
  rationalization, which follows from definitions.
\end{proof}

Now we consider the base change. Given a surjective map $(A \twoheadrightarrow
A'') \in \tmop{Fun} (\Delta^1, \tmop{CAlg}^{\tmop{an}})_{\geq 0}$ and a map $A
\rightarrow B$ of animated rings, the base changed map $B \rightarrow A''
\otimes_A^{\mathbb{L}} B$ is also surjective. The key observation is that this
base change is a pushout $(A \twoheadrightarrow A'') \amalg_{(\tmop{id}_A \of
A \rightarrow A)} (\tmop{id}_B \of B \rightarrow B)$. Since the animated
PD-envelope functor, being a left adjoint, and the forgetful functor preserve
small colimits (\Cref{prop:forget-PD-small-colim}), it follows from
\Cref{lem:ani-ring-as-pdpair} that (to compare with
\Cref{rem:comma-base-chg}).

\begin{lemma}
  \label{lem:base-chg-PD-env}The composite functor $\tmop{Pair}^{\tmop{an}}
  \rightarrow \tmop{Pair}^{\gamma, \tmop{an}} \rightarrow
  \tmop{Pair}^{\tmop{an}}$ is compatible with base change, where the first
  functor is the animated PD-envelope functor and the second is the forgetful
  functor. More precisely, there is an equivalence from $(C
  \otimes_A^{\mathbb{L}} B \twoheadrightarrow C'' \otimes_A^{\mathbb{L}} B)$
  to the animated PD-e1.2: the technique of animation also appears in Lurie's
  higher topos theorynvelope of $B \rightarrow A'' \otimes_A^{\mathbb{L}} B$
  between animated pairs, where $(C \twoheadrightarrow C'', \gamma)$ is the
  animated PD-envelope of $(A \twoheadrightarrow A'')$, which is functorial
  with respect to the diagram $\begin{array}{ccc}
    A & \nonconverted{longtwoheadrightarrow} & A''\\
    \longdownarrow &  & \\
    B &  & 
  \end{array}$in $\tmop{CAlg}^{\tmop{an}}$.
\end{lemma}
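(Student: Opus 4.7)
The plan is to take the hint in the text at face value and organize it into three clean steps. First, I would verify the "key observation": in $\tmop{AniPair} \simeq \tmop{Fun}(\Delta^1, \tmop{Ani}(\tmop{Ring}))_{\geq 0}$, the base-changed surjection $(B \twoheadrightarrow A'' \otimes_A^{\mathbb{L}} B)$ is canonically the pushout
\[ (A \twoheadrightarrow A'') \amalg_{(\tmop{id}_A)} (\tmop{id}_B) \]
of animated pairs, where $(\tmop{id}_A)$ and $(\tmop{id}_B)$ are the images of $A$ and $B$ under the fully faithful functor of \Cref{lem:ani-ring-as-pair}. This is almost formal: pushouts in $\tmop{Fun}(\Delta^1, \tmop{Ani}(\tmop{Ring}))$ are computed pointwise, giving $(B \amalg_A A \twoheadrightarrow B \amalg_A A'') = (B \twoheadrightarrow A'' \otimes_A^{\mathbb{L}} B)$, and the result lies in the full subcategory $\tmop{Fun}(\Delta^1, \tmop{Ani}(\tmop{Ring}))_{\geq 0}$ because surjectivity on $\pi_0$ is stable under cobase change.

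Second, I would apply the animated PD-envelope functor $\tmop{AniPDEnv} \of \tmop{AniPair} \to \tmop{AniPDPair}$. This is a left adjoint by \Cref{def:anipair-anipdpair}, so it sends the above pushout to a pushout in $\tmop{AniPDPair}$. By \Cref{lem:ani-ring-as-pdpair}, the animated PD-envelopes of $(\tmop{id}_A)$ and $(\tmop{id}_B)$ are, as animated PD-pairs, obtained by the fully faithful embedding $\tmop{Ani}(\tmop{Ring}) \hookrightarrow \tmop{AniPDPair}$. Hence the animated PD-envelope of $(B \twoheadrightarrow A'' \otimes_A^{\mathbb{L}} B)$ is the pushout
\[ (C \twoheadrightarrow C'', \gamma) \amalg_{(\tmop{id}_A)} (\tmop{id}_B) \]
taken in $\tmop{AniPDPair}$, where $(C \twoheadrightarrow C'', \gamma)$ is the animated PD-envelope of $(A \twoheadrightarrow A'')$.

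Third, I would push this identification back to $\tmop{AniPair}$ via the forgetful functor $\tmop{AniPDPair} \to \tmop{AniPair}$. By \Cref{prop:forget-PD-small-colim} this functor preserves small colimits, so the underlying animated pair of the pushout above is
\[ (C \twoheadrightarrow C'') \amalg_{(\tmop{id}_A)} (\tmop{id}_B), \]
computed in $\tmop{AniPair}$. Invoking the first step once more, applied to the surjection $C \twoheadrightarrow C''$ together with the canonical map $A \to B$, identifies this pushout with $(C \otimes_A^{\mathbb{L}} B \twoheadrightarrow C'' \otimes_A^{\mathbb{L}} B)$. Naturality with respect to the diagram $A \twoheadrightarrow A'',\, A \to B$ is automatic because each of the three steps is functorial in the input data; in particular, the equivalences are not chosen but canonical. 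I do not anticipate a genuine obstacle here: the only subtle point is that \Cref{prop:forget-PD-small-colim} is exactly what makes the argument work, since the corresponding statement for the classical forgetful $\tmop{PDPair} \to \tmop{Pair}$ fails (cf. \Cref{rem:pdpair-pair-forget-not-preserve-colim}), so the whole proof is really a recorded dividend of that proposition.
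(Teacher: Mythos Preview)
Your proposal is correct and follows essentially the same approach as the paper: the paragraph immediately preceding the lemma statement already records the proof as the observation that base change is a pushout along $(\tmop{id}_A) \to (\tmop{id}_B)$, combined with the fact that both the animated PD-envelope (a left adjoint) and the forgetful functor (\Cref{prop:forget-PD-small-colim}) preserve small colimits, together with \Cref{lem:ani-ring-as-pdpair}. Your three-step organization is a clean expansion of exactly that argument.
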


\begin{remark}[General base]
  Let $R$ be a ring. We could then replace $\mathbb{Z}$ by $R$ in the theory
  of animated pairs and PD-pairs. For example, the $1$\mbox{-}category
  $\tmop{Ab}$ is replaced by $\tmop{Mod}_R$, the $\infty$\mbox{-}category $D
  (\mathbb{Z})$ is replaced by $D (R)$, the $\infty$\mbox{-}category
  $\tmop{CAlg}^{\tmop{an}}$ is replaced by $\tmop{CAlg}_R^{\tmop{an}}$, the
  $1$\mbox{-}category $\tmop{Pair}^{\tmop{st}}$ is replaced by
  $\tmop{Pair}_R^{\tmop{st}}$ consisting the pairs of the form $(R [X, Y],
  (Y))$, and $\tmop{Pair}^{\gamma, \tmop{st}}$ is replaced by
  $\tmop{Pair}_R^{\gamma, \tmop{st}}$ consisting the PD-pairs of the form
  $\Gamma_{R [X]} (Y) \twoheadrightarrow R [X]$, etc. We get
  $\tmop{Pair}^{\tmop{an}}_R$ and $\tmop{Pair}^{\gamma, \tmop{an}}_R$. There
  are canonical base change functors $\tmop{CAlg}^{\tmop{an}} \rightarrow
  \tmop{CAlg}_R^{\tmop{an}}$, $\tmop{Pair}^{\tmop{an}} \rightarrow
  \tmop{Pair}^{\tmop{an}}_R$ and $\tmop{Pair}^{\gamma, \tmop{an}} \rightarrow
  \tmop{Pair}^{\gamma, \tmop{an}}_R$ essentially induced by the base change $D
  (\mathbb{Z}) \xrightarrow{\cdummy \otimes_{\mathbb{Z}}^{\mathbb{L}} R} D
  (R)$.
\end{remark}

It follows from \Cref{cor:ani-undercat} that

\begin{lemma}
  There are canonical equivalences of $\infty$\mbox{-}categories
  \begin{eqnarray*}
    \tmop{CAlg}_R^{\tmop{an}} & \xrightarrow{\simeq} &
    \tmop{CAlg}^{\tmop{an}}_{R \mathord{/}}\\
    \tmop{Pair}^{\tmop{an}}_R & \xrightarrow{\simeq} &
    \tmop{Pair}^{\tmop{an}}_{(\tmop{id}_R \of R \rightarrow R) \mathord{/}}\\
    \tmop{Pair}^{\gamma, \tmop{an}}_R & \xrightarrow{\simeq} &
    \tmop{Pair}^{\gamma, \tmop{an}}_{(\tmop{id}_R \of R \rightarrow R, 0)
    \mathord{/}}
  \end{eqnarray*}
\end{lemma}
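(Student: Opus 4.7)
The plan is to derive each of the three equivalences as an instance of \Cref{cor:ani-undercat}, which supplies, for any $n$-projectively generated $n$-category $\mathcal{C}$ and object $Z \in \mathcal{C}$, a canonical equivalence $\tmop{Ani}(\mathcal{C}_{Z \mathord{/}}) \xrightarrow{\simeq} \tmop{Ani}(\mathcal{C})_{Z \mathord{/}}$. In each case, the right-hand side of the desired equivalence is manifestly of this form, so the task reduces to recognizing the left-hand side as the animation of the appropriate undercategory of a $1$-projectively generated $1$-category.

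The first equivalence is immediate: take $\mathcal{C} = \tmop{Ring}$ (which is $1$-projectively generated by $\tmop{Poly}_{\mathbb{Z}}$) and $Z = R$, so that the tautological identification $\tmop{Ring}_{R \mathord{/}} = \tmop{Alg}_R$ converts \Cref{cor:ani-undercat} into the claim.

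For the second equivalence, take $\mathcal{C} = \mathcal{P}_{\Sigma, 1}(\mathcal{D}^0)$ and $Z = (R, 0)$, which by \Cref{thm:ani-smith-eq} corresponds to $(\tmop{id}_R \of R \rightarrow R)$. \Cref{cor:ani-undercat} then gives $\tmop{Ani}(\mathcal{P}_{\Sigma, 1}(\mathcal{D}^0)_{(R, 0) \mathord{/}}) \simeq \tmop{AniPair}_{(\tmop{id}_R \of R \rightarrow R) \mathord{/}}$. By the $1$-categorical analogue of \Cref{lem:undercat-proj-gen}, $\mathcal{P}_{\Sigma, 1}(\mathcal{D}^0)_{(R, 0) \mathord{/}}$ is $1$-projectively generated with compact $1$-projective generators of the form $(R, 0) \to (R, 0) \amalg X$ for $X \in \mathcal{D}^0$. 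The essential step is to identify the full subcategory of these generators with $\mathcal{D}_R^0$. Computing the coproduct $(R, 0) \amalg (\mathbb{Z}[X, Y], (Y))$ in $\mathcal{P}_{\Sigma, 1}(\mathcal{D}^0) \simeq \tau_{\leq 0}(\tmop{AniPair})$ via \Cref{thm:ani-smith-eq} recovers $(R[X, Y], (Y))$ (together with its canonical map from $(R, 0)$), which is exactly an object of $\mathcal{D}_R^0$ with its $R$-algebra structure; similarly, morphisms of generators become $R$-algebra maps of pairs. Hence $\tmop{Ani}(\mathcal{P}_{\Sigma, 1}(\mathcal{D}^0)_{(R, 0) \mathord{/}}) \simeq \mathcal{P}_{\Sigma}(\mathcal{D}_R^0) = \tmop{AniPair}_R$.

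The third equivalence proceeds analogously with $\mathcal{C} = \mathcal{P}_{\Sigma, 1}(\mathcal{E}^0)$ and $Z = (\tmop{id}_R \of R \rightarrow R, 0)$. The main point is to identify the generators of the undercategory with $\mathcal{E}_R^0$, which reduces to computing $(R, 0) \amalg (\Gamma_{\mathbb{Z}[X]}(Y) \twoheadrightarrow \mathbb{Z}[X])$ in $\tmop{AniPDPair}$. Here \Cref{prop:forget-PD-small-colim} is crucial: the forgetful functor $\tmop{AniPDPair} \rightarrow \tmop{AniPair}$ preserves small colimits, so the underlying coproduct of animated pairs already determines the result, and using $\Gamma_{R[X]}(Y) \simeq R \otimes_{\mathbb{Z}}^{\mathbb{L}} \Gamma_{\mathbb{Z}[X]}(Y)$ (which holds because the free divided power algebra is flat over $\mathbb{Z}$) yields $(\Gamma_{R[X]}(Y) \twoheadrightarrow R[X])$, as required. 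The only non-formal input is this coproduct identification, but given the colimit-preservation of the relevant forgetful functors it is essentially a bookkeeping verification.
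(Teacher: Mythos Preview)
Your proposal is correct and follows essentially the same approach as the paper, which simply cites \Cref{cor:ani-undercat}. You spell out in detail what the paper leaves implicit: identifying the compact projective generators of the undercategories with $\mathcal{D}_R^0$ and $\mathcal{E}_R^0$ via the coproduct computations (using flatness over $\mathbb{Z}$ of the relevant polynomial and divided-power algebras, together with \Cref{prop:forget-PD-small-colim} for the PD case).
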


By the proof of \Cref{lem:cot-cx-indep-base}, it follows from
\Cref{lem:base-chg-PD-env} that

\begin{lemma}
  \label{lem:PD-env-indep-base}Let $R$ be a ring. Then there is a canonical
  commutative diagram
  \[ \begin{array}{ccc}
       \tmop{Pair}^{\tmop{an}}_R & \longrightarrow & \tmop{Pair}^{\gamma,
       \tmop{an}}_R\\
       \longdownarrow &  & \longdownarrow\\
       \tmop{Pair}^{\tmop{an}} & \longrightarrow & \tmop{Pair}^{\gamma,
       \tmop{an}}
     \end{array} \]
  {\noindent}of $\infty$-categories where the vertical arrows are forgetful
  functors and the horizontal arrows are animated PD-envelope functors.
\end{lemma}

Moreover, again by \Cref{lem:base-chg-PD-env}, we have

\begin{lemma}
  \label{lem:PD-env-compat-base-chg}Let $R$ be a ring. Then there is a
  canonical commutative diagram
  \[ \begin{array}{ccc}
       \tmop{Pair}^{\tmop{an}} & \longrightarrow & \tmop{Pair}^{\gamma,
       \tmop{an}}\\
       \longdownarrow &  & \longdownarrow\\
       \tmop{Pair}^{\tmop{an}}_R & \longrightarrow & \tmop{Pair}^{\gamma,
       \tmop{an}}_R
     \end{array} \]
  {\noindent}of $\infty$\mbox{-}categories, where the horizontal arrows are
  animated PD-envelope functors and the vertical arrows are base change
  functors.
\end{lemma}

\subsection{Quasiregular pairs}\label{subsec:quasiregular}This subsection is
devoted to comparison of animated theory of pairs and PD-pairs with the
classical version. Quasiregularity, introduced by Quillen, play an important
role:

\begin{definition}[{\cite[Thm~6.13]{Quillen1967}}]
  \label{def:quasiregular}We say that a pair $(A, I) \in \tmop{Pair}$ is
  {\tmdfn{quasiregular}} if the shifted cotangent complex $L_{(A / I) / A} [-
  1] \in D (A / I)$ is a flat $A / I$-module. We will denote by $\tmop{We}
  \tmop{will} \tmop{denote} \tmop{by} \tmop{QReg}^{\tmop{an}} \subseteq
  \tmop{Pair}^{\tmop{an}} \tmop{the} \tmop{full} \tmop{subcategory}
  \tmop{spanned} \tmop{by} \tmop{quasiregular} \tmop{animated} \tmop{pairs} .
  \tmop{The} \tmop{same} \tmop{for} \tmop{QReg}^{\tmop{an}}_{\mathbb{F}_p}
  \subseteq \tmop{Pair}^{\tmop{an}}_{\mathbb{F}_p} . \subseteq \tmop{Pair}$
  the full subcategory spanned by quasiregular pairs.
\end{definition}

\begin{example}
  Let $A$ be a ring and $I \subseteq A$ an ideal generated by a Koszul-regular
  sequence. Then $L_{(A / I) / A} \simeq (I / I^2) [1]$
  {\cite[\href{https://stacks.math.columbia.edu/tag/08SJ}{Tag
  08SJ}]{stacks-project}}, and $I / I^2$ is a free $A / I$-module
  {\cite[\href{https://stacks.math.columbia.edu/tag/062I}{Tag
  062I}]{stacks-project}}. We warn the reader that Quillen's quasiregular is
  different from ``quasi-regular'' in
  {\cite[\href{https://stacks.math.columbia.edu/tag/061M}{Tag
  061M}]{stacks-project}}, and the later is not used in this article.
\end{example}

The first goal of this subsection is to show that there is a ``derived''
version of the adic filtration on animated pairs. Furthermore, for pairs,
there is a natural comparison map from the ``derived'' version to the
classical version (strictly speaking, our comparison is slightly more
general), which becomes an equivalence when the pair in question is
quasiregular. We refer to \Cref{subsec:graded-fil-objs} for concepts and
notations about filtrations. We need the following results, which relates the
cotangent complex to powers of ideals.

\begin{lemma}[{\cite[\href{https://stacks.math.columbia.edu/tag/08RA}{Tag
08RA}]{stacks-project}}]
  There exists a map $(I / I^2) [1] \rightarrow L_{(A / I) / A}$ in $D (A /
  I)$ which is functorial in $(A, I) \in \tmop{Pair}$, such that the composite
  map $(I / I^2) [1] \rightarrow L_{(A / I) / A} \rightarrow \tau_{\leq 1}
  L_{(A / I) / A}$ is an equivalence.
\end{lemma}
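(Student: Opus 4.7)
The plan is to construct the map via the left-Kan-extension principle (\Cref{prop:left-deriv-fun}) together with the embedding $\tmop{Pair} \hookrightarrow \tmop{AniPair}$ of \Cref{prop:infty-cat-forget-embedding}, reducing the question to the standard Koszul-regular pairs in $\mathcal{D}^0$ where everything is explicit. First I would promote both sides to sifted-colimit-preserving functors from $\tmop{AniPair}$ to $\tmop{Ani}(\tmop{Mod})$: to the animated pair $(A \twoheadrightarrow A'')$, associate the $A''$-module spectra $\Sigma\bigl(\tmop{fib}(A \to A'') \otimes_A^{\mathbb{L}} A''\bigr)$ and $\mathbb{L}_{A''/A}$ (using the cotangent-complex formalism of \Cref{def:cot-cx-fun}). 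The cotangent complex preserves sifted colimits by \Cref{lem:cot-cx-preserv-colim}, and the conormal preserves small colimits by inspection, so both functors are left Kan extended from their restrictions to $\mathcal{D}^0$.

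On a standard pair $P = (\mathbb{Z}[X, Y] \twoheadrightarrow \mathbb{Z}[X]) \in \mathcal{D}^0$, the transitivity cofiber sequence for $\mathbb{Z} \to \mathbb{Z}[X, Y] \to \mathbb{Z}[X]$ yields
\[
  \mathbb{L}_{\mathbb{Z}[X] / \mathbb{Z}[X, Y]} \;\simeq\; \Sigma\bigl((Y)/(Y)^2\bigr),
\]
the kernel of $\Omega^1_{\mathbb{Z}[X, Y]/\mathbb{Z}} \otimes_{\mathbb{Z}[X, Y]} \mathbb{Z}[X] \twoheadrightarrow \Omega^1_{\mathbb{Z}[X]/\mathbb{Z}}$ being the free $\mathbb{Z}[X]$-module on $\{d y_j\}$, which identifies with $(Y)/(Y)^2 \simeq (Y) \otimes_{\mathbb{Z}[X, Y]}^{\mathbb{L}} \mathbb{Z}[X]$ by flatness of $(Y)$. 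This produces a canonical natural equivalence between the restrictions of the two functors to $\mathcal{D}^0$; by the uniqueness clause of \Cref{prop:left-deriv-fun}, it extends uniquely to a natural transformation $\Sigma(I/I^2) \to \mathbb{L}_{(A/I)/A}$ of functors on $\tmop{AniPair}$, restricting to the desired functorial map on $\tmop{Pair}$.

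To verify the truncation property on a classical pair $(A, I)$, the standard facts $\pi_0(\mathbb{L}_{(A/I)/A}) = \Omega^1_{(A/I)/A} = 0$ and $\pi_1(\mathbb{L}_{(A/I)/A}) \cong I/I^2$ give $\tau_{\leq 1}\mathbb{L}_{(A/I)/A} \simeq \Sigma(I/I^2)$, and the composite $\Sigma(I/I^2) \to \mathbb{L}_{(A/I)/A} \to \tau_{\leq 1}\mathbb{L}_{(A/I)/A}$ is an equivalence because on $\mathcal{D}^0$ both sides coincide with $\Sigma((Y)/(Y)^2)$ under the identity map, and sifted colimits propagate this throughout. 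The main obstacle I anticipate is bookkeeping the mismatch between the animated conormal $\tmop{fib}(A \to A'') \otimes_A^{\mathbb{L}} A''$, whose higher homotopy can be nontrivial for non-quasiregular classical pairs, and the classical discrete $I/I^2$ appearing in the statement --- the truncation $\tau_{\leq 1}$ is precisely what forces the composite to become an equivalence only on the first nontrivial homotopy group.
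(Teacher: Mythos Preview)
Your argument contains a concrete error: the ideal $(Y)\subseteq\mathbb{Z}[X,Y]$ is \emph{not} flat as soon as $|Y|\ge 2$. Indeed, from the short exact sequence $0\to(Y)\to\mathbb{Z}[X,Y]\to\mathbb{Z}[X]\to 0$ one reads off
\[
\tmop{Tor}_i^{\mathbb{Z}[X,Y]}\bigl((Y),\mathbb{Z}[X]\bigr)\;\cong\;\tmop{Tor}_{i+1}^{\mathbb{Z}[X,Y]}\bigl(\mathbb{Z}[X],\mathbb{Z}[X]\bigr)\;\cong\;\textstyle\bigwedge^{\,i+1}_{\mathbb{Z}[X]}\bigl((Y)/(Y)^2\bigr)
\]
for $i\ge 1$, which is nonzero whenever $|Y|\ge 2$. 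Consequently $(Y)\otimes^{\mathbb{L}}_{\mathbb{Z}[X,Y]}\mathbb{Z}[X]$ is not concentrated in degree $0$, and your ``animated conormal'' $\Sigma\bigl(\tmop{fib}(A\to A'')\otimes_A^{\mathbb{L}}A''\bigr)$ does \emph{not} agree with $\mathbb{L}_{A''/A}$ on $\mathcal{D}^0$: the former has $\pi_j\cong\bigwedge^{\,j}((Y)/(Y)^2)$ for $1\le j\le |Y|$, while the latter is concentrated in degree $1$. The claimed identification on $\mathcal{D}^0$ therefore fails, and with it the left-Kan-extension step.

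The paper does not give a proof here; it simply quotes the Stacks Project, where the comparison is set up via the naive cotangent complex $NL_{(A/I)/A}$, which for a surjection is the two-term complex $[I/I^2\to 0]\simeq\Sigma(I/I^2)$, together with the canonical comparison between $\mathbb{L}$ and $NL$. If you want to run a left-Kan-extension argument in the spirit of the paper, the functor on $\mathcal{D}^0$ to derive is the \emph{classical} $(Y)/(Y)^2$ (equivalently $\tmop{gr}^1$ of the adic filtration), not the derived tensor $(Y)\otimes^{\mathbb{L}}\mathbb{Z}[X]$; that functor does match $\mathbb{L}[-1]$ on $\mathcal{D}^0$, and the comparison with the classical $I/I^2$ on $\tmop{Pair}$ then goes through the $0$th truncation --- which produces a map $\mathbb{L}_{(A/I)/A}\to\Sigma(I/I^2)$ exhibiting the target as $\tau_{\le 1}$, in line with the Stacks treatment.
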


\begin{remark}
  By abuse of terminology, by a map $M_{(A, I)} \rightarrow N_{(A, I)}$ in
  $D_{\geq 0} (A / I)$ being functorial in $(A, I) \in \tmop{Pair}$, we mean
  that the map in question is a map between two functors $(A, I)
  \rightrightarrows \tmop{Ani} (\tmop{Mod})$ given by $(A, I) \mapsto (A / I,
  M_{(A, I)})$ and $(A, I) \mapsto (A / I, N_{(A, I)})$ respectively.
\end{remark}

\begin{lemma}[{\cite[\href{https://stacks.math.columbia.edu/tag/08SI}{Tag
08SI}]{stacks-project}}]
  For any $(A, I) \in \tmop{Pair}^{\tmop{st}} \subseteq \tmop{Pair}$, the
  cotangent complex $L_{(A / I) / A}$ is $1$\mbox{-}truncated.
\end{lemma}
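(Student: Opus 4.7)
The plan is to leverage the very concrete form of the objects of $\mathcal{D}^0$: such a pair is $(A,I) = (\mathbb{Z}[X,Y],(Y))$ where $Y=(y_1,\ldots,y_n)$ is a finite tuple of free polynomial variables. The sequence $(y_1,\ldots,y_n)$ is clearly a (Koszul-)regular sequence in $\mathbb{Z}[X,Y]$, since the Koszul complex $K(y_1,\ldots,y_n)$ is a free resolution of $A/I = \mathbb{Z}[X]$ over $A = \mathbb{Z}[X,Y]$. So one can immediately invoke the computation already recalled in the Example following \Cref{def:quasiregular} (Stacks Tag 08SJ): for an ideal generated by a Koszul-regular sequence, the cotangent complex satisfies $\mathbb{L}_{(A/I)/A} \simeq (I/I^2)[1]$.

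Applying this to our case, $\mathbb{L}_{\mathbb{Z}[X]/\mathbb{Z}[X,Y]} \simeq (Y)/(Y^2)[1]$, and $(Y)/(Y^2)$ is the free $\mathbb{Z}[X]$-module on the symbols $\bar y_1,\ldots,\bar y_n$. In particular, the cotangent complex is concentrated in homological degree $1$, which trivially yields $1$-truncatedness ($\pi_i = 0$ for all $i > 1$, indeed for all $i \neq 1$).

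For a self-contained variant I might prefer (in case one wants to avoid citing Tag 08SJ at this point), one can use base change: write $\mathbb{Z}[X,Y] \simeq \mathbb{Z}[X] \otimes_{\mathbb{Z}} \mathbb{Z}[Y]$, observe that $\mathbb{Z}[X,Y] \twoheadrightarrow \mathbb{Z}[X]$ is the base change along $\mathbb{Z} \to \mathbb{Z}[X]$ of the augmentation $\mathbb{Z}[Y] \twoheadrightarrow \mathbb{Z}$, and appeal to base-change compatibility of the cotangent complex (\Cref{ex:cot-cx-base-chg}) together with the computation of $\mathbb{L}_{\mathbb{Z}/\mathbb{Z}[Y]}$ via the Koszul resolution. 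Either route confirms that $\mathbb{L}_{(A/I)/A}$ is a free $A/I$-module placed in degree $1$.

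There is essentially no obstacle here: the content is just recognizing that the objects of $\mathcal{D}^0$ are Koszul-regular pairs par excellence, and reading off the cotangent complex. The only minor care is to make sure the lemma is really only being invoked on pairs in $\mathcal{D}^0$ and not on general pairs, where of course $\mathbb{L}_{(A/I)/A}$ need not be $1$-truncated.
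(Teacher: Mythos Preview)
Your proposal is correct. The paper does not supply its own argument here—it simply cites Stacks Tag 08SI—and your reasoning (that pairs in $\mathcal{D}^0$ are manifestly Koszul-regular, so $\mathbb{L}_{(A/I)/A} \simeq (I/I^2)[1]$ is concentrated in degree $1$) is exactly the content behind that citation; in fact you are directly establishing the stronger statement that the paper records as the subsequent \Cref{cor:D0-cot-cx-square-quot}.
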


\begin{corollary}
  \label{cor:D0-cot-cx-square-quot}There exists an equivalence $(I / I^2) [1]
  \rightarrow L_{(A / I) / A}$ in $D_{\geq 0} (A / I)$ functorial in $(A, I)
  \in \tmop{Pair}^{\tmop{st}}$.
\end{corollary}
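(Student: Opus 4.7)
The plan is to combine the two lemmas immediately preceding the corollary. The first lemma provides a map $\eta_{(A,I)} \colon \Sigma(I/I^2) \to \mathbb{L}_{(A/I)/A}$, functorial in $(A,I) \in \tmop{Pair}$, such that $\tau_{\leq 1}(\eta_{(A,I)})$ is an equivalence. I would restrict this natural transformation along the full subcategory inclusion $\mathcal{D}^0 \hookrightarrow \tmop{Pair}$ to obtain a functorial map between the two desired functors $\mathcal{D}^0 \to D_{\geq 0}(\tmop{Ani}(\tmop{Mod}))$.

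The key observation is then a truncation-degree check. The object $\Sigma(I/I^2)$ lives in homotopical degree $1$ since $I/I^2$ is a static $A/I$-module; in particular it is connective and $1$-truncated, so the canonical map $\Sigma(I/I^2) \to \tau_{\leq 1}\Sigma(I/I^2)$ is an equivalence. By the second preceding lemma, the target $\mathbb{L}_{(A/I)/A}$ is $1$-truncated for $(A,I) \in \mathcal{D}^0$, and it is always connective; hence the canonical map $\mathbb{L}_{(A/I)/A} \to \tau_{\leq 1}\mathbb{L}_{(A/I)/A}$ is likewise an equivalence.

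Combining these in the naturality square
\[
\begin{array}{ccc}
\Sigma(I/I^2) & \longrightarrow & \mathbb{L}_{(A/I)/A} \\
\longdownarrow & & \longdownarrow \\
\tau_{\leq 1}\Sigma(I/I^2) & \longrightarrow & \tau_{\leq 1}\mathbb{L}_{(A/I)/A}
\end{array}
\]
the two vertical arrows and the bottom horizontal arrow are equivalences by the discussion above and the first lemma, respectively, so the top horizontal arrow is an equivalence as well. Since this argument applies pointwise and the map was given as a natural transformation on $\tmop{Pair}$, the restriction to $\mathcal{D}^0$ gives the desired functorial equivalence.

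There is essentially no obstacle here: the only content is that on $\mathcal{D}^0$ the cotangent complex is already concentrated in the range where the suspension of the conormal module lives, so the $\tau_{\leq 1}$-equivalence from the first lemma upgrades to an honest equivalence. The only mild point to keep track of is that functoriality is preserved because we simply restrict a pre-existing natural transformation rather than construct a new one, so no coherence verification is required.
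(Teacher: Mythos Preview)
Your proof is correct and follows exactly the approach the paper intends: the corollary is stated without proof precisely because it is the immediate combination of the two preceding lemmas, and your argument spells out that combination accurately.
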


{\construction[{Adic filtration}]{\label{cons:adic-fil}Consider the
{\tmdfn{classical adic filtration functor}} $\tmop{Fil}_{\tmop{ad},
\tmop{cl}}^{\ast} \of \tmop{Pair} \rightarrow \tmop{CAlg} (\tmop{DF}^{\geq 0}
(\mathbb{Z})), (A, I) \mapsto (I^n)_{n \in \mathbb{N}_{\geq 0}}$. Restricting
to the full subcategory $\tmop{Pair}^{\tmop{st}} \subseteq \tmop{Pair}$ and
applying \Cref{prop:left-deriv-n-fun}, we get a functor
$\tmop{Fil}_{\tmop{ad}}^{\ast} \of \tmop{Pair}^{\tmop{an}} \rightarrow
\tmop{CAlg} (\tmop{DF}^{\geq 0} (\mathbb{Z}))$, called the {\tmdfn{adic
filtration functor}}. Such a construction appears in
{\cite[Cor~4.14]{Bhatt2012}} in the language of model categories. In
{\cite[§6]{Hekking2021}}, Jeroen {\tmname{Hekking}} constructed the same
filtration via different approaches.}}

\begin{remark}
  By the same argument, there is a natural structure of filtered derived ring
  (\Cref{rem:fil-deriv-alg}) on $\tmop{Fil}_{\tmop{ad}}^{\ast} (A
  \twoheadrightarrow A'')$, which we will not use in this article.
\end{remark}

By \Cref{thm:ani-smith-eq}, we can identify $\tmop{Fil}_{\tmop{ad}}^0 \of
\tmop{Pair}^{\tmop{an}} \rightarrow \tmop{CAlg}_{\mathbb{Z}}$ with the
composite functor
\[ \tmop{Fun} (\Delta^1, \tmop{CAlg}^{\tmop{an}})
   \xrightarrow{\tmop{ev}_{[0]}} \tmop{CAlg}^{\tmop{an}} \rightarrow
   \tmop{CAlg}_{\mathbb{Z}}, (A \twoheadrightarrow A'') \mapsto A \]
and $\tmop{gr}_{\tmop{ad}}^0 \of \tmop{Pair}^{\tmop{an}} \rightarrow
\tmop{CAlg}_{\mathbb{Z}}$ with the composite functor
\[ \tmop{Fun} (\Delta^1, \tmop{CAlg}^{\tmop{an}})
   \xrightarrow{\tmop{ev}_{[1]}} \tmop{CAlg}^{\tmop{an}} \rightarrow
   \tmop{CAlg}_{\mathbb{Z}}, (A \twoheadrightarrow A'') \mapsto A'' \]
Combining \Cref{cor:D0-cot-cx-square-quot}, \Cref{prop:left-deriv-n-fun},
sifted-colimit-preserving properties of $\mathbb{L} \tmop{Sym}^{\ast}$, and
the concrete analysis of pairs in $\tmop{Pair}^{\tmop{st}} \subseteq
\tmop{Pair}^{\tmop{an}}$, we get

\begin{corollary}
  \label{cor:LAdFil-symm-cot-cx}For every $(A \twoheadrightarrow A'') \in
  \tmop{Pair}^{\tmop{an}}$, the shifted cotangent complex $L_{A'' / A} [- 1]
  \simeq \tmop{gr}_{\tmop{ad}}^1 (A \twoheadrightarrow A'')$ is connective,
  and there exists an equivalence
  \[ \mathbb{L} \tmop{Sym}_{A''}^{\ast} (\tmop{gr}_{\tmop{ad}}^1 (A
     \twoheadrightarrow A'')) \rightarrow \tmop{gr}_{\tmop{ad}}^{\ast} (A
     \twoheadrightarrow A'') \]
  of graded $\mathbb{E}_{\infty}$-$\mathbb{Z}$-algebras functorial in $(A
  \twoheadrightarrow A'') \in \tmop{Pair}^{\tmop{an}}$.
\end{corollary}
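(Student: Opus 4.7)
The plan is to assemble three ingredients: (1) the classical identification on the generating subcategory $\mathcal{D}^0 \subseteq \tmop{AniPair}$, (2) sifted-colimit-preservation of both sides as functors on $\tmop{AniPair}$, and (3) an invocation of \Cref{prop:left-deriv-fun} to extend from $\mathcal{D}^0$ to all animated pairs.

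First, I would establish the equivalence on $\mathcal{D}^0$. For a pair $(A, I) = (\mathbb{Z}[X, Y], (Y)) \in \mathcal{D}^0$, the quotient $I/I^2$ is the finite free $A/I = \mathbb{Z}[X]$-module on the symbols $y_j$, and the classical formula $\tmop{Sym}^n_{A/I}(I/I^2) \xrightarrow{\simeq} I^n/I^{n+1}$ furnishes a natural equivalence of graded $\mathbb{E}_\infty$-$\mathbb{Z}$-algebras $\tmop{Sym}^\ast_{A/I}(I/I^2) \simeq \tmop{gr}^\ast(\tmop{AdFil}(A, I))$. By \Cref{cor:D0-cot-cx-square-quot}, there is a functorial equivalence $\mathbb{L}_{(A/I)/A}[-1] \simeq I/I^2$ on $\mathcal{D}^0$, and since $I/I^2$ is finite free we have $\mathbb{L}\tmop{Sym}^\ast_{A/I}(I/I^2) \simeq \tmop{Sym}^\ast_{A/I}(I/I^2)$. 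Combining these yields a functorial equivalence of graded $\mathbb{E}_\infty$-$\mathbb{Z}$-algebras on $\mathcal{D}^0$.

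To promote this to all of $\tmop{AniPair}$, I would construct the comparison map globally using the universal property of $\mathbb{L}\tmop{Sym}_{A''}^\ast$, which reduces its construction to the natural map $\mathbb{L}_{A''/A}[-1] \to \tmop{gr}^1(\mathbb{L}\tmop{AdFil}(A \twoheadrightarrow A''))$ of $A''$-modules; the latter is visibly functorial. To check it is an equivalence, I verify that both sides, viewed as functors $\tmop{AniPair} \to \tmop{CAlg}(\tmop{Gr}^{\geq 0}(D(\mathbb{Z})))$, preserve sifted colimits. The right-hand side is the composition of the left-derived functor $\mathbb{L}\tmop{AdFil}$ (which preserves sifted colimits by definition) with the colimit-preserving functor $\tmop{gr}^\ast$. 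The left-hand side factors through the forgetful functor $\tmop{AniPair} \to \tmop{AniArr}$ (which preserves sifted colimits by \Cref{cor:forget-pair-conserv-sifted-colim} combined with \Cref{thm:ani-smith-eq}), then the cotangent complex functor (which preserves small colimits by \Cref{lem:cot-cx-preserv-colim}), and finally $\mathbb{L}\tmop{Sym}^\ast$ (which preserves sifted colimits). Since both functors are left Kan extended from their restrictions to $\mathcal{D}^0$, \Cref{prop:left-deriv-fun} upgrades the equivalence on $\mathcal{D}^0$ to an equivalence on all of $\tmop{AniPair}$.

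Finally, the connectivity claim is immediate from the identification: $\mathbb{L}\tmop{AdFil}$ takes values in $\tmop{DF}^{\geq 0}(\mathbb{Z})$ by construction, so $\tmop{gr}^1(\mathbb{L}\tmop{AdFil}(A \twoheadrightarrow A''))$ is connective, and this piece is equivalent to $\mathbb{L}_{A''/A}[-1]$ via the degree-$1$ component of the equivalence just constructed. The main obstacle I anticipate is bookkeeping the graded $\mathbb{E}_\infty$-algebra structure through the left Kan extension step — that is, ensuring the comparison map produced by the universal property of $\mathbb{L}\tmop{Sym}^\ast$ coincides on $\mathcal{D}^0$ with the classical algebra isomorphism rather than some different map of graded modules. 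Once this compatibility is checked at the level of the generators, the colimit-preservation argument takes care of everything at once.
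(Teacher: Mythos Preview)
Your proposal is correct and follows essentially the same route as the paper, which derives the corollary in one sentence by combining \Cref{cor:D0-cot-cx-square-quot}, the classical identification $\tmop{Sym}_{A/I}^{\ast}(I/I^2)\simeq \bigoplus I^{\ast}/I^{\ast+1}$ on $\mathcal{D}^0$, the sifted-colimit-preservation of $\mathbb{L}\tmop{Sym}^{\ast}$, and \Cref{prop:left-deriv-fun}. Your write-up merely makes these steps explicit.

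One small slip in the last paragraph: the superscript in $\tmop{DF}^{\geq 0}(\mathbb{Z})$ refers to the filtration being nonnegatively indexed, not to connectivity of the underlying spectra, so it does not by itself give connectivity of $\tmop{gr}^1$. The correct (and equally easy) argument is the one implicit in your colimit step: on $\mathcal{D}^0$ the pieces $I^n$ and $I^n/I^{n+1}$ are static, hence connective, and connectivity is preserved under the sifted colimits used to form $\mathbb{L}\tmop{AdFil}$ and under taking cofibers, so each $\tmop{gr}^n(\mathbb{L}\tmop{AdFil})$ is connective.
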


Now we construct a comparison map between the ``derived'' filtration
$\tmop{Fil}_{\tmop{ad}}^{\ast}$ and the ``non-derived'' filtration
$\tmop{Fil}_{\tmop{ad}, \tmop{cl}}^{\ast}$ on ring-ideal pairs. We apply a
trick used in the proof of \Cref{prop:characterize-pair,lem:ani-trunc}.

{\construction{\label{cons:deriv-non-deriv-adic-comp}\Cref{prop:left-deriv-n-fun}
and the universal property of left Kan extensions give us a comparison natural
transform from the functor $\tmop{Fil}_{\tmop{ad}}^{\ast} \of
\tmop{Pair}^{\tmop{an}} \rightarrow \tmop{CAlg} (\tmop{DF}^{\geq 0}
(\mathbb{Z}))$ to the composite functor $\tmop{Pair}^{\tmop{an}} \rightarrow
\tmop{Pair} \xrightarrow{\tmop{Fil}_{\tmop{ad}, \tmop{cl}}^{\ast}} \tmop{CAlg}
(\tmop{DF}^{\geq 0} (\mathbb{Z}))$ where the first functor
$\tmop{Pair}^{\tmop{an}} \rightarrow \tmop{Pair}$ is the localization
(\Cref{rem:localization-pair-pdpair}).}}

Our next goal is to show that the comparison map is an equivalence after
restriction to $\tmop{QReg} \subseteq \tmop{Pair}^{\tmop{an}}$. Since the
forgetful functor $\tmop{CAlg} (\tmop{DF}^{\geq 0} (\mathbb{Z})) \rightarrow
\tmop{DF}^{\geq 0} (\mathbb{Z})$ is conservative, we can show the equivalence
after forgetting the $\mathbb{E}_{\infty}$-structure.

The previous discussion show that the comparison map is an equivalence after
composing with $\tmop{Fil}^0 \of \tmop{DF}^{\geq 0} (\mathbb{Z}) \rightarrow D
(\mathbb{Z})$ and $\tmop{gr}^0 \of \tmop{DF}^{\geq 0} (\mathbb{Z}) \rightarrow
D (\mathbb{Z})$ on the $1$\mbox{-}category $\tmop{Pair}$ (not only for
quasiregular pairs). We define the functor $\tmop{gr}^{[0, n)} \of
\tmop{DF}^{\geq 0} (\mathbb{Z}) \rightarrow D (\mathbb{Z}), F \mapsto
\tmop{cofib} (\tmop{Fil}^n (F) \rightarrow \tmop{Fil}^0 (F))$. Thus it suffice
to prove that the comparison map is an equivalence after composing with
$\tmop{gr}^{[0, n)} \of \tmop{DF}^{\geq 0} (\mathbb{Z}) \rightarrow D
(\mathbb{Z})$ for all $n > 1$ for quasiregular pairs. Note that by definition,
the essential image of $\tmop{gr}_{\tmop{ad}}^{[0, n)}$ already lies in
$\tmop{Ab} \subseteq D (\mathbb{Z})$. We show a more general statement (cf.
the proof of \Cref{prop:characterize-pair,lem:ani-trunc}):

\begin{lemma}
  \label{lem:adfil-local-compatible}There is a commutative diagram
  \[ \begin{array}{ccc}
       \tmop{Pair}^{\tmop{an}} & \xrightarrow{\tmop{gr}_{\tmop{ad}}^{[0, n)}}
       & D (\mathbb{Z})_{\geq 0}\\
       \longdownarrow &  & \longdownarrow \nocomma \tau_{\leq 0}\\
       \tmop{Pair} & \xrightarrow{\tmop{gr}_{\tmop{ad}}^{[0, n)}} & \tmop{Ab}
     \end{array} \]
  {\noindent}of $\infty$\mbox{-}categories, where the homotopy from the
  top-right composition to the bottom-left composition is induced by the
  comparison map in Construction~\ref{cons:deriv-non-deriv-adic-comp}.
\end{lemma}

\begin{proof}
  The trick is to consider an auxiliary functor. Let $(A \twoheadrightarrow
  A'') \in \tmop{Pair}^{\tmop{an}}$ and let $I \assign \tmop{fib} (A
  \twoheadrightarrow A'') \in D (\mathbb{Z})_{\geq 0}$. We recall that, by
  \Cref{thm:ani-smith-eq}, and \eqref{diag:ani-smith-eq} in particular, the
  forgetful functor $\tmop{Pair}^{\tmop{an}} \rightarrow \tmop{Fun}
  ((\Delta^1)^{\tmop{op}}, D (\mathbb{Z})_{\geq 0})$ is just $(A
  \twoheadrightarrow A'') \mapsto (A \leftarrow I)$.
  
  Then the map $I \rightarrow A$ in $D (\mathbb{Z})_{\geq 0}$ induces a map
  $\mathbb{L} \tmop{Sym}_{\mathbb{Z}}^n I \rightarrow \mathbb{L}
  \tmop{Sym}_{\mathbb{Z}}^n A$. Composing with the multiplication $\mathbb{L}
  \tmop{Sym}_{\mathbb{Z}}^n A \rightarrow A$, we get the map $\mathbb{L}
  \tmop{Sym}_{\mathbb{Z}}^n I \rightarrow A$. We consider the functor $F \of
  \tmop{Pair}^{\tmop{an}} \rightarrow D (\mathbb{Z})_{\geq 0}, (A
  \twoheadrightarrow A'') \mapsto \tmop{cofib} (\mathbb{L}
  \tmop{Sym}_{\mathbb{Z}}^n I \rightarrow A)$.
  
  First, the functor $F$ preserves filtered colimits and geometric
  realizations, since the functor $\mathbb{L} \tmop{Sym}_{\mathbb{Z}}$ and the
  forgetful functor $\tmop{Pair}^{\tmop{an}} \rightarrow \tmop{Fun}
  ((\Delta^1)^{\tmop{op}}, D (\mathbb{Z})_{\geq 0})$ do
  (\Cref{lem:1-forget-pair}). In fact, $F$ is the left derived functor
  (\Cref{prop:left-deriv-n-fun}) of the functor $\tmop{Pair}^{\tmop{st}}
  \rightarrow D (\mathbb{Z})_{\geq 0}, (A, I) \mapsto \tmop{cofib}
  (\tmop{Sym}_{\mathbb{Z}}^n I \rightarrow A)$.
  
  Next, note that for $(A, I) \in \tmop{Pair}^{\tmop{st}}$, the map
  $\tmop{Sym}_{\mathbb{Z}}^n I \rightarrow A$ factors functorially as
  $\tmop{Sym}_{\mathbb{Z}}^n I \rightarrow I^n \rightarrow A$ and the map
  $\tmop{Sym}_{\mathbb{Z}}^n I \rightarrow I^n$ is surjective. It follows that
  there is a natural surjective map $\tmop{cofib} (\tmop{Sym}_{\mathbb{Z}}^n I
  \rightarrow A) \rightarrow A / I^n$, which gives rise to a map $F
  \rightarrow \tmop{gr}_{\tmop{ad}}^{[0, n)}$ of functors which becomes an
  equivalence after composition with $\tau_{\leq 0} \of D (\mathbb{Z})_{\geq
  0} \rightarrow \tmop{Ab}$.
  
  We now show that the functor $\tau_{\leq 0} \circ F \of
  \tmop{Pair}^{\tmop{an}} \rightarrow \tmop{Ab}$ factors through the
  localization $\tmop{Pair}^{\tmop{an}} \rightarrow \tmop{Pair}$. First, since
  $\tmop{Ab}$ is a $1$\mbox{-}category, it factors through
  $\tmop{Pair}^{\tmop{an}} \rightarrow \mathcal{P}_{\Sigma, 1}
  (\tmop{Pair}^{\tmop{st}})$. Given $(A \twoheadrightarrow A'') \in
  \mathcal{P}_{\Sigma, 1} (\tmop{Pair}^{\tmop{st}})$, let $I = \tmop{fib} (A
  \twoheadrightarrow A'')$ as before. Then $(A \leftarrow I) \in
  \mathcal{P}_{\Sigma, 1} (\tmop{Inj}^{\tmop{st}}) \simeq \tmop{Fun}
  ((\Delta^1)^{\tmop{op}}, \tmop{Ab})$, therefore $A, I$ are static. Let $I' =
  \tmop{im} (I \rightarrow A)$. It follows that the localization
  $\mathcal{P}_{\Sigma, 1} (\tmop{Pair}^{\tmop{st}}) \rightarrow \tmop{Pair}$
  maps $(A \twoheadrightarrow A'')$ to $(A, I') \in \tmop{Pair}$. By
  \Cref{prop:L-inv}, it suffices to show that $F$ maps $(A \twoheadrightarrow
  A'') \rightarrow (A, I')$ to an equivalence. This simply follows from the
  fact that $\mathbb{L} \tmop{Sym}_{\mathbb{Z}}^n I \rightarrow \mathbb{L}
  \tmop{Sym}^n_{\mathbb{Z}} I'$ is a surjection on $\pi_0$, and the
  ``multiplication'' map $\mathbb{L} \tmop{Sym}_{\mathbb{Z}}^n I \rightarrow
  A$ factors as $\mathbb{L} \tmop{Sym}_{\mathbb{Z}}^n I \rightarrow \mathbb{L}
  \tmop{Sym}_{\mathbb{Z}}^n I' \rightarrow A$.
  
  In conclusion, we have already shown that there exists an equivalence of two
  compositions in the diagram that we need to prove. To show that this
  equivalence is the equivalence that we want, we note that the top right
  composition preserves filtered colimits and geometric realizations, then the
  first paragraph of the proof of \Cref{lem:ani-pdpair-local-target} applies.
\end{proof}

In particular, when $(A, I)$ is quasiregular, it follows from
\Cref{cor:LAdFil-symm-cot-cx} that $\tmop{gr}_{\tmop{ad}}^n (A
\twoheadrightarrow A / I) \in D (\mathbb{Z})_{\geq 0}$ is static for all $n
\in \mathbb{N}$, which implies that $\tmop{gr}_{\tmop{ad}}^{[0, n)} (A
\twoheadrightarrow A / I)$ is static for all $n \in \mathbb{N}$. Consequently,
we have

\begin{proposition}
  The comparison natural transformation in
  Construction~\ref{cons:deriv-non-deriv-adic-comp} becomes an equivalence
  after restricting to the full subcategory $\tmop{QReg} \subseteq
  \tmop{Pair}^{\tmop{an}}$.
\end{proposition}

\begin{corollary}[{\cite[6.11]{Quillen1967}}]
  \label{cor:symm-assoc-gr-equiv-qreg}For every quasiregular pair $(A, I)$,
  the canonical map $\tmop{Sym}_{A / I}^{\ast} (I / I^2) \rightarrow \bigoplus
  I^{\ast} / I^{\mathord{\ast} + 1}$ of graded rings is an equivalence.
\end{corollary}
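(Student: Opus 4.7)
The statement is essentially a direct consequence of the preceding proposition combined with Corollary~\ref{cor:LAdFil-symm-cot-cx}. The plan is to apply the associated graded functor $\tmop{gr}^{\ast}$ to the equivalence $\mathbb{L}\tmop{AdFil}(A \twoheadrightarrow A/I) \simeq \tmop{AdFil}(A,I)$ furnished by the preceding proposition (valid because $(A,I)$ is quasiregular) and compare the two descriptions of the resulting graded object.

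First I would apply $\tmop{gr}^{\ast}$ to the classical adic filtration: by definition, $\tmop{gr}^n \tmop{AdFil}(A,I) \simeq I^n/I^{n+1}$ as an $A/I$-module (concentrated in degree $0$), and the multiplicative structure gives the canonical graded $A/I$-algebra $\bigoplus_n I^n/I^{n+1}$ on the right-hand side. Second, I would apply $\tmop{gr}^{\ast}$ to $\mathbb{L}\tmop{AdFil}(A \twoheadrightarrow A/I)$ and invoke Corollary~\ref{cor:LAdFil-symm-cot-cx}, which yields a natural equivalence
\[ \tmop{gr}^{\ast}(\mathbb{L}\tmop{AdFil}(A \twoheadrightarrow A/I)) \simeq \mathbb{L}\tmop{Sym}_{A/I}^{\ast}(\mathbb{L}_{(A/I)/A}[-1]) \]
of graded $\mathbb{E}_\infty$-$A/I$-algebras, using the identification $\tmop{gr}^1(\mathbb{L}\tmop{AdFil}(A \twoheadrightarrow A/I)) \simeq \mathbb{L}_{(A/I)/A}[-1]$ already noted there.

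Third, I would use the quasiregularity hypothesis to simplify the right-hand side. By Definition~\ref{def:quasiregular}, $\mathbb{L}_{(A/I)/A}[-1]$ is a flat (in particular static) $A/I$-module, and its underlying module is identified with $I/I^2$ via the canonical map $\Sigma(I/I^2) \to \mathbb{L}_{(A/I)/A}$: applying $\pi_0 \circ [-1]$ gives an isomorphism $I/I^2 \xrightarrow{\simeq} \pi_0(\mathbb{L}_{(A/I)/A}[-1])$, and quasiregularity says the higher homotopy groups vanish, so the shifted cotangent complex is equivalent to $I/I^2$ concentrated in degree $0$. Since $I/I^2$ is a flat $A/I$-module, its derived symmetric powers agree with the classical ones, i.e.\ $\mathbb{L}\tmop{Sym}_{A/I}^n(I/I^2) \simeq \tmop{Sym}_{A/I}^n(I/I^2)$ for every $n$ (this reduces, via filtered colimits and the compatibility of $\mathbb{L}\tmop{Sym}$ with such colimits, to the case of finite free modules, where it is immediate).

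Finally, I would chain these identifications: the preceding proposition gives the first equivalence, Corollary~\ref{cor:LAdFil-symm-cot-cx} the second, and the quasiregularity-plus-flatness argument the third in
\[ \bigoplus_n I^n/I^{n+1} \simeq \tmop{gr}^{\ast}(\mathbb{L}\tmop{AdFil}(A \twoheadrightarrow A/I)) \simeq \mathbb{L}\tmop{Sym}_{A/I}^{\ast}(I/I^2) \simeq \tmop{Sym}_{A/I}^{\ast}(I/I^2), \]
with the composite being (the inverse of) the canonical map of graded $A/I$-algebras asserted in the statement; naturality of every map involved ensures it is indeed the classical comparison map. There is no genuine obstacle: the only point to keep honest is that the equivalence produced this way matches the \emph{classical} multiplication-induced comparison map, which follows because the comparison map $\mathbb{L}\tmop{AdFil} \to \tmop{AdFil}$ constructed before Lemma~\ref{lem:adfil-local-compatible} is a map of filtered $\mathbb{E}_\infty$-algebras and Corollary~\ref{cor:LAdFil-symm-cot-cx} is an equivalence of graded $\mathbb{E}_\infty$-algebras, so each identification respects multiplicative structures.
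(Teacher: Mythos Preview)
Your overall approach is the same as the paper's: combine the preceding proposition (the comparison $\mathbb{L}\tmop{AdFil}\to\tmop{AdFil}$ is an equivalence on quasiregular pairs) with \Cref{cor:LAdFil-symm-cot-cx}, then use quasiregularity to replace $\mathbb{L}\tmop{Sym}^{\ast}(\mathbb{L}_{(A/I)/A}[-1])$ by $\tmop{Sym}^{\ast}(I/I^2)$. The one place your argument is looser than the paper's is the final step, where you must verify that the composite equivalence is the \emph{canonical} multiplication-induced map, not merely some graded-algebra isomorphism. Saying that every identification ``respects multiplicative structures'' is not sufficient: there can be many graded $\mathbb{E}_\infty$-algebra automorphisms of $\tmop{Sym}^{\ast}(I/I^2)$. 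What you need in addition is that the composite is the identity in degree~$1$ (then the universal property of $\tmop{Sym}$ finishes), and you do not explicitly check this.

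The paper handles this point by an element trace: given $\overline{x_1}\cdots\overline{x_n}\in\tmop{Sym}_{A/I}^n(I/I^2)$, lift to $x_1,\ldots,x_n\in I$, obtain a map $(\mathbb{Z}[X_1,\ldots,X_n],(X_1,\ldots,X_n))\to(A,I)$ in $\tmop{Pair}$, and use naturality of the equivalence in \Cref{cor:LAdFil-symm-cot-cx} (which on $\mathcal{D}^0$ is visibly the canonical map) to transport the identification along this map after applying $\tau_{\leq 0}$. This is more concrete than your multiplicativity argument and directly establishes the degree-$1$ compatibility you need; either route works once the degree-$1$ check is made explicit.
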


\begin{proof}
  It suffices to show that the equivalence given by
  \Cref{cor:LAdFil-symm-cot-cx} coincides with the canonical map induced by
  the multiplicative structure on $A$. For any element $\overline{x_1} \cdots
  \overline{x_n} \in \tmop{Sym}_{A / I}^n (I / I^2)$, we pick a lift $x_1,
  \ldots, x_n \in I$, which gives rise to a map $(B, J) \assign (\mathbb{Z}
  [X_1, \ldots, X_n], (X_1, \ldots, X_n)) \rightarrow (A, I)$ of pairs, which
  induces the commutative diagram
  
  \[\begin{tikzcd} {\operatorname{Sym}_{B/J}^n(J/J^2)} & {J^n/J^{n+1}} \\
  {\mathbb
  L\operatorname{Sym}_{A/I}^n(\operatorname{gr}_{\operatorname{ad}}^1(A\twoheadrightarrow
  A/I))} & {\operatorname{gr}_{\operatorname{ad}}^n(A\twoheadrightarrow A/I)}
  \arrow[from=1-1, to=1-2] \arrow[from=1-1, to=2-1] \arrow[from=2-1, to=2-2]
  \arrow[from=1-2, to=2-2] \end{tikzcd}\]
  
  \
  
  {\noindent}in the $\infty$-category $D (\mathbb{Z})_{\geq 0}$. Taking
  $\tau_{\leq 0}$ and trace the element $\overline{X_1} \cdots \overline{X_n}
  \in \tmop{Sym}_{B / J}^n (J / J^2)$, we get the result.
\end{proof}

We are unable to answer the following question in full generality:

\begin{question}
  \label{ques:qreg-ani-pd-env-static}Let $(A, I)$ be a quasiregular pair. Let
  $(B \twoheadrightarrow B'', \gamma)$ denote its animated PD-envelope. Is it
  true that $B, B''$ are static, so by
  \Cref{prop:characterize-pdpair,cor:pd-env-animated-pd-env}, it coincides
  with the classical PD-envelope?
\end{question}

However, we are able to answer it under certain flatness. First, it follows
from \Cref{lem:ani-PD-env-char-0} that when $A$ is a $\mathbb{Q}$-algebra, the
animated PD-envelope of $(A, I)$ is just $A \twoheadrightarrow A / I$, which
is also the classical PD-envelope.

Now we consider the characteristic $p > 0$ case, switching the ground ring
from $\mathbb{Z}$ to $\mathbb{F}_p$ (which is valid by
\Cref{lem:PD-env-indep-base}). We will use the notations
$\tmop{Pair}^{\tmop{st}}$ and $\tmop{Pair}^{\gamma, \tmop{st}}$ in
\Cref{subsec:ani-pairs-PD-pairs} but the occurrences of $\mathbb{Z}$ are
replaced by $\mathbb{F}_p$. We recall that the Frobenius map $A \rightarrow A,
x \mapsto x^p$ of an $\mathbb{F}_p$-algebra $A$ gives rise to an endomorphism
$\varphi \of \tmop{id}_{\tmop{Alg}_{\mathbb{F}_p}} \rightarrow
\tmop{id}_{\tmop{Alg}_{\mathbb{F}_p}}$ of the identity functor
$\tmop{id}_{\tmop{Alg}_{\mathbb{F}_p}} \of \tmop{Alg}_{\mathbb{F}_p}
\rightarrow \tmop{Alg}_{\mathbb{F}_p}$, which gives rise to an endomorphism
$\tmop{id}_{\tmop{Ani} (\tmop{Alg}_{\mathbb{F}_p})} \rightarrow
\tmop{id}_{\tmop{Ani} (\tmop{Alg}_{\mathbb{F}_p})}$ still denoted by
$\varphi$. We now introduce the {\tmdfn{conjugate filtration}} on the animated
PD-envelope of animated $\mathbb{F}_p$-pairs that we learned from
{\cite{Bhatt2012a}}.

{\construction{\label{cons:conj-fil-std-pair}Let $(A, I)$ be an
$\mathbb{F}_p$-pair such that the Frobenius $\varphi_A \of A \rightarrow A$ is
flat, and let $(B, J, \gamma)$ denote its PD-envelope. We first note that
there is a $\varphi_A^{\ast} (A / I)$-algebra structure on $B$ since $f^p = p
\gamma_p (f) = 0$ for all $f \in J$. Then we have a filtration on $B$ given by
$\tmop{Fil}_{\tmop{conj}, \tmop{cl}}^{- n} B$ for $n \geq 0$ to be the
$\varphi_A^{\ast}  (A / I)$-submodule of $B$ generated by $\left\{ \gamma_{k_1
p} (f_1) \cdots \gamma_{k_m p} (f_m) \barsuchthat k_1 + \cdots + k_m \leq n
\infixand f_1, \ldots, f_m \in I \right\}$, which gives rise to a structure of
nonpositively filtered $\varphi_A^{\ast}  (A / I)$-algebra. We note that the
filtration is exhaustive, i.e. $\tmop{Fil}_{\tmop{conj}, \tmop{cl}}^{- \infty}
B = \tmop{colim}_{n \in (\mathbb{Z}, \geq)} \tmop{Fil}^{- n} B \rightarrow B$
is an isomorphism, and we can rephrase the nonpositively filtered
$\varphi_A^{\ast}  (A / I)$-algebra structure as a map $\varphi_A^{\ast}  (A /
I) \rightarrow \tmop{Fil} B$\footnote{We will from time to time suppress the
asterisk in $\tmop{Fil}^{\ast}$ to avoid confusion with $\varphi^{\ast}$.} of
nonpositively filtered ring.}}

We need the following result:

\begin{lemma}[{\cite[Lem~3.42]{Bhatt2012a}}]
  \label{lem:bhatt-conj-fil}Let $(A, I)$ be an $\mathbb{F}_p$-pair such that
  $I / I^2$ is a flat $A / I$-module and the Frobenius $\varphi_A \of A
  \rightarrow A$ is flat, and let $(B, J, \gamma)$ denote the PD-envelope of
  $(A, I)$.
  
  Then there is a comparison map $\varphi_A^{\ast} (\Gamma_{A / I}^i (I /
  I^2)) \rightarrow \tmop{gr}_{\tmop{conj}, \tmop{cl}}^{- i} B$ (as in
  Construction~\ref{cons:conj-fil-std-pair}) of $\varphi_A^{\ast} (A /
  I)$-modules induced by the maps $(\gamma_{kp})_{k \in \mathbb{N}}$ which is
  functorial in $(A, I)$. For example, when $I / I^2$ is free, an element in
  $\Gamma_{A / I}^i (I / I^2)$ represented by $\frac{f^{\otimes i}}{i!}$ will
  be mapped to $\gamma_{ip} (f)$ for $f \in I$.
  
  Furthermore, if $I \subseteq A$ is generated by a Koszul-regular
  sequence\footnote{We only need the special case that $(A, I) \in
  \tmop{Pair}^{\tmop{st}}$.}, then the comparison map above is an isomorphism.
\end{lemma}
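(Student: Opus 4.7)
The plan is to construct the comparison map functorially and then verify it is an isomorphism on the standard Koszul-regular model, whence the general Koszul-regular case follows by base change.

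First, I would build the degree-one piece $\psi_1 \of \varphi_A^{\ast}(I/I^2) \to \tmop{gr}^{-1} B$. Define it at the level of $I$ by $f \mapsto \gamma_p(f) \bmod \tmop{Fil}^0 B$. The crucial observation, valid in characteristic $p$, is that $\gamma_j(x) = x^j/j!$ is a unit multiple of $x^j$ for $0 < j < p$, so $\gamma_j(x)$ lies in the image of $A \to B$, which equals $\tmop{Fil}^0 B = \varphi_A^{\ast}(A/I)$. Consequently, in the expansion
\[ \gamma_p(f+g) = \gamma_p(f) + \gamma_p(g) + \sum_{0 < i < p} \gamma_i(f)\gamma_{p-i}(g), \]
every cross term vanishes in $\tmop{gr}^{-1} B$, giving additivity. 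Combined with $\gamma_p(af) = a^p \gamma_p(f)$ and $\gamma_p(gh) = g^p \gamma_p(h) = 0$ for $g, h \in I$ (since $g^p = p!\gamma_p(g) = 0$ in $B$), this yields the required $\varphi_A$-linear factorization $\varphi_A^{\ast}(I/I^2) \to \tmop{gr}^{-1} B$, manifestly functorial in $(A, I)$.

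Second, to extend $\psi_1$ to all degrees I would verify that $\tmop{gr}^{\bullet} B$ is a graded $\varphi_A^{\ast}(A/I)$-algebra whose positive-degree part inherits a PD-structure from $B$, namely that $\gamma_k(\tmop{Fil}^{-i} B) \subseteq \tmop{Fil}^{-ki} B$ with well-defined operations on the associated graded. Then the universal property of the divided power algebra extends $\psi_1$ to a PD-algebra map $\varphi_A^{\ast}(\Gamma_{A/I}(I/I^2)) \to \tmop{gr}^{\bullet} B$, and by the Lucas/Kummer identity $\gamma_i(\gamma_p(f)) = c_i \gamma_{ip}(f)$ with $c_i \in \mathbb{F}_p^{\times}$, the degree-$i$ component $\psi_i$ is indeed (up to a unit) the map induced by $\gamma_{ip}$.

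Third, for the isomorphism claim, I would use that the classical PD-envelope and its conjugate filtration commute with base change along Frobenius-flat maps. Given a Koszul-regular $(A, I)$ with a chosen regular sequence $f_1, \ldots, f_n$ generating $I$, the map $\mathbb{F}_p[Y_1, \ldots, Y_n] \to A$ sending $Y_j \mapsto f_j$ reduces the verification to the standard pair $(\mathbb{F}_p[X, Y], (Y)) \in \mathcal{D}^0$. There $B = \Gamma_{\mathbb{F}_p[X]}(Y_1, \ldots, Y_n)$, $\varphi_A^{\ast}(A/I) = \mathbb{F}_p[X, Y]/(Y_1^p, \ldots, Y_n^p)$, and direct inspection using the Lucas identity $\binom{pk+a}{a} \equiv 1 \pmod{p}$ for $0 \le a < p$ exhibits $\{Y^{[p\vec k]} : \vec k \in \mathbb{N}^n\}$ as a $\varphi_A^{\ast}(A/I)$-basis of $B$, restricting to the basis $\{Y^{[p\vec k]} : |\vec k| = i\}$ of $\tmop{gr}^{-i} B$. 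Under this identification, $\psi_i$ becomes (up to units) the bijection $Y^{[\vec k]} \leftrightarrow Y^{[p\vec k]}$, giving the isomorphism.

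The main obstacle will be establishing the compatibility $\gamma_k(\tmop{Fil}^{-i} B) \subseteq \tmop{Fil}^{-ki} B$, which underlies both the PD-universal-property extension and the base-change reduction; here the Frobenius-flatness hypothesis on $A$ should ensure that $\varphi_A^{\ast}(A/I) \hookrightarrow \tmop{Fil}^0 B$ is an equality, pinning down the filtration tightly enough to verify the estimate by pulling back from the explicit standard model.
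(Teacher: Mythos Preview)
The paper does not supply its own proof of this lemma; it simply cites \cite[Lem~3.42]{Bhatt2012a}. Your argument is the natural one and is correct in outline: build the degree-one piece via $f \mapsto \gamma_p(f)$ using the characteristic-$p$ identities you list, extend by the divided-power universal property and the Lucas congruence $\gamma_i(\gamma_p(f)) = c_i\gamma_{ip}(f)$ with $c_i \in \mathbb{F}_p^\times$, then verify bijectivity on the standard model by the explicit basis calculation.

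Two small remarks. First, your identification $\tmop{Fil}^0 B = \varphi_A^\ast(A/I)$ tacitly asserts injectivity of $\varphi_A^\ast(A/I) \to B$, which is not needed for constructing the comparison map; all you actually use is that the image of $A \to B$ equals $\tmop{Fil}^0 B$, and that holds by definition of the filtration. Second, the filtration estimate $\gamma_k(\tmop{Fil}^{-i} B) \subseteq \tmop{Fil}^{-ki} B$ that you flag as the main obstacle can be verified directly from the PD axioms (using $\gamma_k(\gamma_{mp}(f)) = \tfrac{(kmp)!}{k!\,((mp)!)^k}\,\gamma_{kmp}(f)$, whose coefficient has $p$-valuation zero, together with $\gamma_k(xy) = x^k\gamma_k(y)$ and the additivity expansion), so there is no circularity with the base-change reduction.
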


Now we define the conjugate filtration on the animated PD-envelope.

\begin{definition}
  \label{def:conj-fil-pd-env}The {\tmdfn{conjugate filtration functor (on the
  animated PD-envelope)}} $\tmop{Fil}_{\tmop{conj}}^{\ast} \tmop{Env}^{\gamma,
  \tmop{an}} \of \tmop{Pair}^{\tmop{an}}_{\mathbb{F}_p} \rightarrow
  \tmop{CAlg} (\tmop{DF}^{\leq 0} (\mathbb{F}_p))$ together with the
  {\tmdfn{structure map}} of functors $\tmop{Pair}^{\tmop{an}}_{\mathbb{F}_p}
  \rightrightarrows \tmop{CAlg} (\tmop{DF}^{\leq 0} (\mathbb{F}_p))$ from $(A
  \twoheadrightarrow A'', \gamma) \mapsto \varphi_A^{\ast} (A'') = A''
  \otimes_{A, \varphi_A}^{\mathbb{L}} A$ to $\tmop{Fil}_{\tmop{conj}}^{\ast}
  \tmop{Env}^{\gamma, \tmop{an}}$, or equivalently, a functor
  \begin{eqnarray*}
    \tmop{Pair}^{\tmop{an}} & \longrightarrow & \tmop{Fun} (\Delta^1,
    \tmop{CAlg} (\tmop{DF}^{\leq 0} (\mathbb{F}_p)))\\
    (A \twoheadrightarrow A'') & \longmapsto & (\varphi_A^{\ast} (A'')
    \rightarrow \tmop{Fil}_{\tmop{conj}}^{\ast} \tmop{Env}^{\gamma, \tmop{an}}
    (A \twoheadrightarrow A''))
  \end{eqnarray*}
  is defined to be the left derived functor (\Cref{prop:left-deriv-n-fun}) of
  $\tmop{Pair}^{\tmop{st}} \ni (A, I) \mapsto (\varphi_A^{\ast} (A / I)
  \rightarrow \tmop{Fil}_{\tmop{conj}, \tmop{cl}}^{\ast} D_A (I)) \in
  \tmop{Fun} (\Delta^1, \tmop{CAlg} (\tmop{DF}^{\leq 0} (\mathbb{F}_p)))$ in
  Construction~\ref{cons:conj-fil-std-pair}, where $\varphi_A^{\ast} (A / I)$
  is constantly filtered.
\end{definition}

\begin{remark}
  Informally speaking, the functor $\tmop{Pair}^{\tmop{an}} \rightarrow
  \tmop{Fun} (\Delta^1, \tmop{CAlg} (\tmop{DF}^{\leq 0} (\mathbb{F}_p)))$ in
  \Cref{def:conj-fil-pd-env} is capturing two pieces of data:
  \begin{enumerate}
    \item ahe conjugate filtration $\tmop{Fil}_{\tmop{conj}}^{\ast}$ on the
    animated PD-envelope $\tmop{Env}^{\gamma, \tmop{an}} (A \twoheadrightarrow
    A'')$;
    
    \item an $\mathbb{E}_{\infty}$-$\varphi_A^{\ast} (A'')$-algebra structure
    on the conjugate filtration,
  \end{enumerate}
  and that these data are {\tmem{functorial}} in $(A \twoheadrightarrow A'')
  \in \tmop{Pair}^{\tmop{an}}$.
\end{remark}

\begin{remark}
  We note that the conjugate filtration is exhaustive, i.e. there is a
  canonical equivalence $\tmop{Fil}_{\tmop{conj}}^{- \infty}
  \tmop{Env}^{\gamma, \tmop{an}} \rightarrow \tmop{Env}^{\gamma, \tmop{an}}$
  of functors $\tmop{Pair}_{\mathbb{F}_p}^{\tmop{an}} \rightarrow \tmop{CAlg}
  (D (\mathbb{F}_p))$, which follows either from
  \Cref{prop:left-deriv-n-fun,lem:assoc-graded-union-Lan} or the fact that
  $\tmop{Pair}^{\tmop{an}} \simeq \mathcal{P}_{\Sigma}
  (\tmop{Pair}^{\tmop{st}}) \subseteq \mathcal{P} (\tmop{Pair}^{\tmop{st}})$
  is stable under filtered colimits (\Cref{prop:Psigma-n}).
\end{remark}

It follows from \Cref{lem:bhatt-conj-fil} that

\begin{corollary}
  \label{cor:conjfil-gr}For every $(A \twoheadrightarrow A'') \in
  \tmop{Pair}^{\tmop{an}}_{\mathbb{F}_p}$, there exists an equivalence
  \[ \varphi_A^{\ast} (\Gamma_{A''}^i (\tmop{gr}_{\tmop{ad}}^1 (A
     \twoheadrightarrow A''))) \rightarrow \tmop{gr}_{\tmop{conj}}^{- i}
     \tmop{Env}^{\gamma, \tmop{an}} (A \twoheadrightarrow A'') \]
  in $D (\varphi_A^{\ast} (A''))_{\geq 0}$ for all $i \in \mathbb{N}$ which is
  functorial in $(A \rightarrow A'') \in
  \tmop{Pair}^{\tmop{an}}_{\mathbb{F}_p}$.
\end{corollary}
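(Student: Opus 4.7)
The plan is to exhibit both sides as sifted-colimit-preserving functors from $\tmop{AniPair}_{\mathbb{F}_p}$ to the $\infty$-category $\tmop{Ani}(\tmop{Mod})$ of ring-module pairs (so that the varying base $\varphi_A^{\ast}(A'')$ is tracked alongside the module structure), verify that they agree on the subcategory $\mathcal{D}^0$ of standard Koszul-regular $\mathbb{F}_p$-pairs, and conclude via the universal property of left Kan extensions (\Cref{prop:left-deriv-fun}).

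First I would check that the right-hand side $\tmop{gr}^{-i} \circ \mathbb{L}\tmop{ConjFil}$ is left Kan extended from $\mathcal{D}^0$ along the inclusion $\mathcal{D}^0 \hookrightarrow \tmop{AniPair}_{\mathbb{F}_p}$. This is an immediate application of \Cref{lem:assoc-graded-union-Lan} to the filtered functor $\mathbb{L}\tmop{ConjFil}$, which by \Cref{def:conj-fil-pd-env} is the left derived functor of its restriction to $\mathcal{D}^0$.

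For the left-hand side, $\mathbb{L}\tmop{AdFil}$ is itself a left derived functor by construction, so $\tmop{gr}^1 \circ \mathbb{L}\tmop{AdFil}$ preserves sifted colimits (using that $\tmop{gr}^1$ is exact); concretely its value is $\mathbb{L}_{A''/A}[-1]$ by \Cref{cor:LAdFil-symm-cot-cx}. Postcomposing with the derived divided power $\Gamma^i$ (the left derived functor of the polynomial functor $M \mapsto \Gamma^i(M)$ on static modules) and with the Frobenius twist $\varphi_A^{\ast}(-) = (-) \otimes_{A, \varphi_A}^{\mathbb{L}} A$, each of which preserves sifted colimits, yields a sifted-colimit-preserving functor into $\tmop{Ani}(\tmop{Mod})$. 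Hence the LHS is also left Kan extended from $\mathcal{D}^0$ by \Cref{prop:left-deriv-fun}.

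Finally, on $\mathcal{D}^0$ every object is of the form $(\mathbb{F}_p[X,Y], (Y))$, where $(Y)$ is generated by the Koszul-regular sequence $Y$ and the Frobenius on $\mathbb{F}_p[X,Y]$ is flat, so the hypotheses of \Cref{lem:bhatt-conj-fil} are met and it supplies the natural isomorphism $\varphi^{\ast}(\Gamma^i_{\mathbb{F}_p[X]}((Y)/(Y)^2)) \xrightarrow{\sim} \tmop{gr}^{-i} B$, given on the generators by $\frac{f^{\otimes i}}{i!} \mapsto \gamma_{ip}(f)$; its naturality in $\mathcal{D}^0$ is built into this explicit formula and the functoriality of divided powers and Frobenius. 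The main anticipated obstacle is verifying that this pointwise equivalence really lifts to a natural transformation of functors valued in $\tmop{Ani}(\tmop{Mod})$ with matching $\varphi_A^{\ast}(A'')$-algebra structures on both sides; once this is in hand, the universal property of left Kan extensions transports the equivalence from $\mathcal{D}^0$ to all of $\tmop{AniPair}_{\mathbb{F}_p}$.
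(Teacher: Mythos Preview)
Your proposal is correct and follows the same approach as the paper, which simply records the corollary as an immediate consequence of \Cref{lem:bhatt-conj-fil}; you have made explicit the left-Kan-extension mechanism (\Cref{prop:left-deriv-fun}, \Cref{lem:assoc-graded-union-Lan}) that the paper leaves implicit. Your concern about the varying base $\varphi_A^{\ast}(A'')$ is exactly what the paper flags in \Cref{rem:functor-target-variable}: one formalizes it via structure maps as in \Cref{def:conj-fil-pd-env}, and the paper deliberately keeps the presentation informal on this point.
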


\begin{remark}
  \label{rem:functor-target-variable}One might wonder what precisely the
  functor is, since the target category $D (\varphi_A^{\ast} (A''))_{\geq 0}$
  depends on $(A \twoheadrightarrow A'') \in
  \tmop{Pair}^{\tmop{an}}_{\mathbb{F}_p}$. One can rigorously describe this
  $\varphi_A^{\ast} (A'')$-algebra structure in terms of structure maps (as in
  \Cref{def:conj-fil-pd-env}). However, this is cumbersome and we keep the
  current ``imprecise'' presentation.
\end{remark}

We now apply this to a quasiregular pair $(A, I) \in
\tmop{QReg}_{\mathbb{F}_p}$. We first recall that

\begin{definition}[{\cite[Def~7.2.2.10]{Lurie2017}}]
  \label{def:flat-module}Let $A$ be an $\mathbb{E}_1$-ring. We say that a
  right $A$-module spectrum $M$ is {\tmdfn{flat}} if
  \begin{enumerate}
    \item The homotopy group $\pi_0 (M)$ is a flat right $\pi_0 (A)$-module.
    
    \item For each $n \in \mathbb{Z}$, the canonical map $\pi_0 (M)
    \otimes_{\pi_0 (A)} \pi_n (A) \rightarrow \pi_n (M)$ is an isomorphism of
    abelian groups.
  \end{enumerate}
  The same concept applies to left $A$-module spectra.
\end{definition}

\begin{remark}[{\cite[Rem~7.2.2.11 \& 7.2.2.12]{Lurie2017}}]
  \label{rem:flat-connective-static}Let $R$ be an $\mathbb{E}_1$-ring and $M$
  a flat right $R$-module spectrum. By definition, if $R$ is connective (resp.
  static), then so is $R$. In particular, when $R$ is static, a flat
  $R$-module spectrum is simply a flat $R$-module, therefore we will sometimes
  refer to flat module spectra simply as flat modules since there is no
  ambiguity.
\end{remark}

\begin{lemma}
  \label{lem:flat-stable-ext}Let $A$ be a connective $\mathbb{E}_1$-ring, and
  $M' \rightarrow M \rightarrow M''$ a fiber sequence of right $A$-module
  spectra. If $M', M''$ are flat right $A$-modules, then so is $M$.
\end{lemma}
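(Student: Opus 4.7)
The plan is to reduce flatness of $M$ to a tensor-product computation, using the standard equivalent characterization \cite[Prop~7.2.2.13]{Lurie2017}: for a connective $\mathbb{E}_1$-ring $A$, a connective right $A$-module spectrum $M$ is flat if and only if $M \otimes_A^{\mathbb{L}} N$ is static for every static left $A$-module $N$ (i.e., every $\pi_0(A)$-module regarded as an $A$-module via $A \to \pi_0(A)$).

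First, I would observe that $M'$ and $M''$ are connective by \Cref{rem:flat-connective-static}, and then the long exact sequence of homotopy groups attached to the fiber sequence $M' \to M \to M''$ forces $\pi_i(M) = 0$ for all $i < 0$, so $M$ is connective. Next, fix a static left $A$-module $N$. Since the relative tensor product $(-) \otimes_A^{\mathbb{L}} N$ is an exact functor from right $A$-module spectra to spectra, applying it to the given fiber sequence produces a fiber sequence
\[ M' \otimes_A^{\mathbb{L}} N \longrightarrow M \otimes_A^{\mathbb{L}} N \longrightarrow M'' \otimes_A^{\mathbb{L}} N. \]
The two outer terms are static by the characterization applied to the flat modules $M'$ and $M''$. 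Reading off the long exact sequence in homotopy of this new fiber sequence, every neighbour of $\pi_n(M \otimes_A^{\mathbb{L}} N)$ for $n \neq 0$ is zero, so $M \otimes_A^{\mathbb{L}} N$ is static as well.

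Applying the characterization in the reverse direction, using the connectivity of $M$ established in the first step, gives that $M$ is flat. There is no real obstacle: the only non-formal input is the tensor-product characterization of flatness, which is a standard result; the exactness of $(-) \otimes_A^{\mathbb{L}} N$ on right $A$-modules is immediate since $N$ is an arbitrary left $A$-module spectrum.
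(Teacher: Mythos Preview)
Your proof is correct and follows essentially the same approach as the paper: establish connectivity of $M$ from that of $M'$ and $M''$, tensor the fiber sequence with a static left $A$-module $N$, observe the outer terms are static by flatness, deduce the middle term is static, and conclude. One minor bibliographic point: the paper separates the two directions of the characterization, citing \cite[Prop~7.2.2.13]{Lurie2017} for ``flat $\Rightarrow$ tensor with static is static'' and \cite[Thm~7.2.2.15]{Lurie2017} for the converse, whereas you fold both into the single reference to Prop~7.2.2.13.
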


\begin{proof}
  First, $M', M''$ are connective by flatness, therefore so is $M$. For every
  static left $A$-module $N$, we have a fiber sequence $N
  \otimes_A^{\mathbb{L}} M' \rightarrow N \otimes_A^{\mathbb{L}} M \rightarrow
  N \otimes_A^{\mathbb{L}} M''$. By flatness of $M'$ and $M''$ and
  {\cite[Prop~7.2.2.13]{Lurie2017}}, the spectra $N \otimes_A^{\mathbb{L}} M'$
  and $N \otimes_A^{\mathbb{L}} M''$ are static, therefore so is $N
  \otimes_A^{\mathbb{L}} M$. The result then follows from
  {\cite[Thm~7.2.2.15]{Lurie2017}}.
\end{proof}

For future usages, we need to generalize slightly the concept of quasiregular
pairs:

\begin{definition}
  We say that an animated pair $A \twoheadrightarrow A''$ is
  {\tmdfn{quasiregular}} if the shifted cotangent complex $L_{A'' / A} [- 1]
  \in D (A'')$ is a flat $A''$-module spectrum.
\end{definition}

\begin{corollary}
  \label{cor:Fp-qreg-pd-env-flat}Let $(A \twoheadrightarrow A'')$ be a
  quasiregular animated $\mathbb{F}_p$-pair, and let $(B \twoheadrightarrow
  B'', \gamma)$ denote its animated PD-envelope. Then $B$ is a flat
  $\varphi_A^{\ast} (A'')$-module spectrum.
\end{corollary}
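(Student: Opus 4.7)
The plan is to reduce the flatness of $B$ over $\varphi_A^{\ast}(A'')$ to the flatness of each associated graded piece for the conjugate filtration, and then to recognize those pieces as divided powers of a flat module pulled back along Frobenius.

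By \Cref{def:conj-fil-pd-env} the conjugate filtration $\mathbb{L}\tmop{ConjFil}(A \twoheadrightarrow A'')$ is an exhaustive nonpositive filtration on $B$ as a $\varphi_A^{\ast}(A'')$-algebra, so $B \simeq \tmop{colim}_n \tmop{Fil}^{-n} B$ in $D(\varphi_A^{\ast}(A''))_{\geq 0}$. Since flatness is stable under filtered colimits and, by \Cref{lem:flat-stable-ext}, under cofiber sequences of connective modules, it suffices to prove that every associated graded piece $\tmop{gr}^{-i} B$ is flat over $\varphi_A^{\ast}(A'')$. Applying \Cref{cor:conjfil-gr} together with the identification $\tmop{gr}^1(\mathbb{L}\tmop{AdFil}(A \twoheadrightarrow A'')) \simeq \mathbb{L}_{A''/A}[-1]$ from \Cref{cor:LAdFil-symm-cot-cx}, we get a functorial equivalence
\[ \tmop{gr}^{-i} B \simeq \varphi_A^{\ast}\bigl(\Gamma_{A''}^i(\mathbb{L}_{A''/A}[-1])\bigr) \]
of $\varphi_A^{\ast}(A'')$-module spectra.

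By the quasiregularity hypothesis, $M \assign \mathbb{L}_{A''/A}[-1]$ is a flat $A''$-module spectrum, and in particular (\Cref{rem:flat-connective-static}) a flat static $\pi_0(A'')$-module viewed through the equivalence $A'' \simeq \pi_0(A'')$ in the relevant sense. Then I would argue that $\Gamma_{A''}^i(M)$ is a flat $A''$-module by reducing to the case of free modules of finite rank via Lazard's theorem: writing $M \simeq \tmop{colim}_\alpha P_\alpha$ with $P_\alpha$ finite free and using that the derived divided power functor $\Gamma^i$ preserves sifted colimits and agrees with its classical counterpart on finite free modules (where $\Gamma^i$ of a finite free module is again a finite free module), we conclude that $\Gamma_{A''}^i(M)$ is a filtered colimit of finite free $A''$-modules, hence flat. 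Finally, $\varphi_A^{\ast}$ is base change along $\varphi_A \of A \rightarrow A$, so $\varphi_A^{\ast}(\Gamma_{A''}^i(M)) \simeq \Gamma_{A''}^i(M) \otimes_{A''}^{\mathbb{L}} \varphi_A^{\ast}(A'')$, which is flat over $\varphi_A^{\ast}(A'')$ as a base change of a flat $A''$-module.

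The main obstacle will be the second step: carefully checking that the derived divided power $\Gamma_{A''}^i$ of a flat (but possibly not finitely generated) $A''$-module is again flat. The reduction to finite free modules via Lazard requires that $\Gamma^i$ be computed as a left derived functor preserving sifted colimits, and that it agree with the classical divided power on projective modules; both hold because $\Gamma^i$ is itself the (non-abelian) left derived functor of the classical $\Gamma^i$ on free modules and the classical divided power of a free module of finite rank is free. Once this is settled, the combination of exhaustion, extension stability, and the base change description of $\varphi_A^{\ast}$ assemble into the desired flatness of $B$.
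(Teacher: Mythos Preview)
Your proposal is correct and follows essentially the same route as the paper: use the exhaustive conjugate filtration, identify the associated graded pieces via \Cref{cor:conjfil-gr} and \Cref{cor:LAdFil-symm-cot-cx} as $\varphi_A^{\ast}(\Gamma_{A''}^i(\mathbb{L}_{A''/A}[-1]))$, check these are flat over $\varphi_A^{\ast}(A'')$, and conclude by closure of flat modules under extensions (\Cref{lem:flat-stable-ext}) and filtered colimits. The only cosmetic difference is that where you unfold the argument for ``$\Gamma^i$ preserves flatness'' via Lazard's theorem and sifted-colimit preservation, the paper simply cites Lurie's {\cite[Cor~25.2.3.3]{Lurie2018}} for this fact and {\cite[Prop~7.2.2.16]{Lurie2017}} for flatness under base change.
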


\begin{proof}
  It follows from \Cref{cor:conjfil-gr}, $\Gamma^{\ast}$ and base change
  preserving flatness ({\cite[Cor~25.2.3.3]{Lurie2018}} \&
  {\cite[Prop~7.2.2.16]{Lurie2017}}) that $\tmop{gr}_{\tmop{conj}}^{- i}
  \tmop{Env}^{\gamma, \tmop{an}} (A \twoheadrightarrow A'')$ is a flat
  $\varphi_A^{\ast} (A'')$-module spectrum. The result follows from the fact
  that the full subcategory spanned by flat modules over a connective
  $\mathbb{E}_1$-ring is stable under extension (\Cref{lem:flat-stable-ext})
  and under filtered colimits by {\cite[Thm~7.2.2.14(1)]{Lurie2017}}.
\end{proof}

\begin{remark}
  In fact, by \Cref{lem:crit-faithful-flat-map}, the map $\varphi_A^{\ast}
  (A'') \rightarrow B$ in \Cref{cor:Fp-qreg-pd-env-flat} is faithfully flat.
\end{remark}

It follows from
\Cref{prop:characterize-pdpair,cor:pd-env-animated-pd-env,cor:Fp-qreg-pd-env-flat,rem:flat-connective-static}
that

\begin{corollary}
  \label{cor:Fp-qreg-frob-ani-pd-env}Let $(A, I) \in
  \tmop{QReg}_{\mathbb{F}_p}$ be a quasiregular pair. Suppose that
  $\varphi_A^{\ast} (A / I)$ is static. Then the animated PD-envelope $(B
  \twoheadrightarrow B'', \gamma)$ of $(A \twoheadrightarrow A / I)$ belongs
  to $\tmop{Pair}^{\gamma}_{\mathbb{F}_p}$, therefore coincides with the
  classical PD-envelope.
\end{corollary}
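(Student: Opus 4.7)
The plan is to chain together the four results cited immediately before the statement, so my task is mainly to spell out how they combine. The starting point is \Cref{cor:Fp-qreg-pd-env-flat}: since $(A, I)$ is quasiregular, its animated PD-envelope $(B \twoheadrightarrow B'', \gamma)$ has the property that $B$ is a flat $\varphi_A^{\ast}(A/I)$-module spectrum. This is where quasiregularity is really used; the flatness passes through the associated graded pieces of the conjugate filtration (\Cref{cor:conjfil-gr}), whose terms are divided powers of $\mathbb{L}_{A''/A}[-1]$, and is then preserved under extensions and filtered colimits.

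Next, I would use the hypothesis that $\varphi_A^{\ast}(A/I)$ is static, together with \Cref{rem:flat-connective-static}, to conclude that $B$ itself is static: a flat module spectrum over a static $\mathbb{E}_1$-ring is automatically static. For the target $B''$, I would invoke \Cref{lem:ani-pd-env-preserv-target}, which identifies $B''$ with $A/I$; since $(A, I) \in \tmop{Pair}$ is a classical pair, $A/I$ is static as well.

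With both $B$ and $B''$ static, \Cref{prop:characterize-pdpair} lets me conclude that $(B \twoheadrightarrow B'', \gamma)$ lies in the essential image of the fully faithful embedding $\tmop{PDPair}_{\mathbb{F}_p} \hookrightarrow \tmop{AniPDPair}_{\mathbb{F}_p}$; that is, it already is a classical PD-pair. Finally, \Cref{cor:pd-env-animated-pd-env} factors the classical PD-envelope as $\tmop{Pair} \hookrightarrow \tmop{AniPair} \xrightarrow{\tmop{AniPDEnv}} \tmop{AniPDPair} \rightarrow \tmop{PDPair}$, and since the animated PD-envelope is already classical, the final localization acts as the identity on it, so the classical PD-envelope and the animated PD-envelope agree.

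The only genuinely substantive step is the flatness assertion from \Cref{cor:Fp-qreg-pd-env-flat}, which has already been proved in the excerpt; everything else here is formal bookkeeping with the characterization results and the universal properties of the animation. I do not expect any additional obstacles beyond correctly invoking the Frobenius-twist hypothesis in the right place.
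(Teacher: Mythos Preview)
Your proposal is correct and takes essentially the same approach as the paper: the paper's entire proof is the single sentence ``It follows from \Cref{prop:characterize-pdpair,cor:pd-env-animated-pd-env,cor:Fp-qreg-pd-env-flat,rem:flat-connective-static} that\ldots'' immediately preceding the corollary, and you have accurately unpacked how those four ingredients fit together. Your additional explicit invocation of \Cref{lem:ani-pd-env-preserv-target} to see that $B'' \simeq A/I$ is static is a detail the paper leaves implicit but is indeed needed for \Cref{prop:characterize-pdpair} to apply.
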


We want to point out that such results for $\mathbb{F}_p$ will be used to
deduce integral results, which is based on the following lemmas, cf.
{\cite[Lem~6.1.2.4]{Lurie2018}}.

\begin{lemma}
  \label{lem:static-loc-global}Let $M \in \tmop{Sp}_{\geq 0}$ be a connective
  spectrum. Suppose that the rationalization $M
  \otimes_{\mathbb{S}}^{\mathbb{L}} \mathbb{Q}$ is static, and for every prime
  $p \in \mathbb{N}$, the homotopy groups of $M /^{\mathbb{L}} p \assign
  \tmop{cofib} \left( M \xrightarrow{p} M \right)$ are concentrated in degree
  $0, 1$. Then $M$ is static.
\end{lemma}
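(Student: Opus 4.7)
The plan is to extract two complementary constraints on each $\pi_i(M)$ for $i \geq 1$: one from the cofiber sequences $M \xrightarrow{p} M \to M/^{\mathbb{L}} p$ for each prime $p$, and one from the rationalization condition; then show together they force vanishing.

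First I would write down the long exact sequence in homotopy associated to the cofiber sequence $M \xrightarrow{p} M \to M/^{\mathbb{L}} p$:
\[ \cdots \to \pi_{i+1}(M/^{\mathbb{L}} p) \to \pi_i(M) \xrightarrow{p} \pi_i(M) \to \pi_i(M/^{\mathbb{L}} p) \to \cdots \]
Since by hypothesis $\pi_j(M/^{\mathbb{L}} p)$ vanishes for $j \geq 2$ (and for $j < 0$ by connectivity), reading off this sequence shows that for every $i \geq 2$ the multiplication-by-$p$ map is an isomorphism on $\pi_i(M)$, and for $i = 1$ it is at least injective. Hence each $\pi_i(M)$ with $i \geq 2$ is uniquely $p$-divisible, and $\pi_1(M)$ is $p$-torsion free; and this holds for every prime $p$.

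Next I would use the hypothesis that $M \otimes_{\mathbb{S}}^{\mathbb{L}} \mathbb{Q}$ is static. Since $\pi_i(M \otimes_{\mathbb{S}}^{\mathbb{L}} \mathbb{Q}) \cong \pi_i(M) \otimes_{\mathbb{Z}} \mathbb{Q}$, this gives $\pi_i(M) \otimes_{\mathbb{Z}} \mathbb{Q} = 0$ for all $i \geq 1$, i.e.\ every $\pi_i(M)$ with $i \geq 1$ is a torsion abelian group.

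Finally I would combine: for $i \geq 2$, every prime $p$ acts invertibly on $\pi_i(M)$, so every nonzero integer acts invertibly; but any torsion element $x$ of order $n$ satisfies $n x = 0$ with $n$ invertible, forcing $x = 0$, hence $\pi_i(M) = 0$. For $i = 1$, $\pi_1(M)$ is simultaneously torsion and $p$-torsion free for every prime $p$, so $\pi_1(M) = 0$. Together with connectivity this shows $M$ is static. I do not expect any genuine obstacle here; the only subtlety is organizing the two types of input (cofiber sequence per prime, and rationalization) so that the elementary group-theoretic argument at the end is clean.
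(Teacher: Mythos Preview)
Your proof is correct and follows essentially the same approach as the paper: use the long exact sequence of the cofiber $M \xrightarrow{p} M \to M/^{\mathbb{L}} p$ together with the flatness of $\mathbb{Q}$ over $\mathbb{S}$, then combine ``torsion'' with ``$p$-torsion free for all $p$'' to conclude. The only difference is cosmetic: your case split $i=1$ versus $i\geq 2$ is a harmless over-refinement, since injectivity of multiplication by $p$ (available for all $i>0$) already suffices, and the paper uses just that.
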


\begin{proof}
  Since $\mathbb{Q}$ is $\mathbb{S}$-flat, $\pi_i (M) \otimes_{\mathbb{Z}}
  \mathbb{Q} \cong \pi_i (M \otimes_{\mathbb{S}}^{\mathbb{L}} \mathbb{Q})
  \cong 0$ when $i \neq 0$. On the other hand, $\pi_{i + 1} (M /^{\mathbb{L}}
  p) \cong 0$ for $i > 0$ implies that the map $\pi_i (M) \xrightarrow{p}
  \pi_i (M)$ is injective for every prime $p \in \mathbb{N}$ and $i > 0$. It
  follows that $\pi_i (M) \cong 0$ for every $i > 0$.
\end{proof}

\begin{warning}
  \Cref{lem:static-loc-global} is false if $M$ is not assumed to be
  connective. A counterexample is given by $M = (\mathbb{Q}/\mathbb{Z}) [-
  1]$, for which $M \otimes_{\mathbb{S}}^{\mathbb{L}} \mathbb{Q} \simeq 0$ and
  $M /^{\mathbb{L}} p \simeq \mathbb{F}_p$ for every prime number $p \in
  \mathbb{N}$.
\end{warning}

\begin{lemma}[cf. {\cite[\href{https://stacks.math.columbia.edu/tag/039C}{Tag
039C}]{stacks-project}}]
  \label{lem:flat-loc-global}Let $A$ be an animated ring and $M \in D_{\geq 0}
  (A)$ a connective $A$-module spectrum. Then the following conditions are
  equivalent:
  \begin{enumerate}
    \item $M$ is a flat $A$-module.
    
    \item $M \otimes_{\mathbb{Z}}^{\mathbb{L}} \mathbb{Q}$ is a flat $A
    \otimes_{\mathbb{Z}}^{\mathbb{L}} \mathbb{Q}$-module, and for every prime
    $p \in \mathbb{N}$, $M /^{\mathbb{L}} p$ is a flat $A /^{\mathbb{L}}
    p$-module.
  \end{enumerate}
\end{lemma}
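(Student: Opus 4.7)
The plan is as follows. The implication (1) $\Rightarrow$ (2) is immediate from the preservation of flatness under base change \cite[Prop~7.2.2.16]{Lurie2017}: apply it to the base change maps $A \to A \otimes_{\mathbb{Z}}^{\mathbb{L}} \mathbb{Q}$ and $A \to A/^{\mathbb{L}} p$. So the content is (2) $\Rightarrow$ (1), which I will establish using the characterization \cite[Thm~7.2.2.15]{Lurie2017}: a connective $A$-module $M$ is flat if and only if $N \otimes_A^{\mathbb{L}} M$ is static for every static left $A$-module $N$ (equivalently, every static $\pi_0(A)$-module).

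Fix such an $N$. The plan is to verify that $N \otimes_A^{\mathbb{L}} M$ is static by applying \Cref{lem:static-loc-global} to it. Connectivity is immediate since $N$, $M$, and $A$ are all connective. For the rationalization, one computes
\[ (N \otimes_A^{\mathbb{L}} M) \otimes_{\mathbb{Z}}^{\mathbb{L}} \mathbb{Q} \simeq (N \otimes_{\mathbb{Z}}^{\mathbb{L}} \mathbb{Q}) \otimes_{A \otimes_{\mathbb{Z}}^{\mathbb{L}} \mathbb{Q}}^{\mathbb{L}} (M \otimes_{\mathbb{Z}}^{\mathbb{L}} \mathbb{Q}); \]
since $N_{\mathbb{Q}}$ is a static $\pi_0(A_{\mathbb{Q}})$-module and $M_{\mathbb{Q}}$ is flat over $A_{\mathbb{Q}}$ by hypothesis, this tensor product is static.

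For the mod-$p$ part, the analogous computation gives
\[ (N \otimes_A^{\mathbb{L}} M) /^{\mathbb{L}} p \simeq (N /^{\mathbb{L}} p) \otimes_{A /^{\mathbb{L}} p}^{\mathbb{L}} (M /^{\mathbb{L}} p). \]
Here the subtlety is that $N/^{\mathbb{L}} p$ is not static in general: its only nonzero homotopy groups are $\pi_0 = N/pN$ and $\pi_1 = N[p]$, both static $\pi_0(A/^{\mathbb{L}} p)$-modules. Using the Postnikov fiber sequence $N[p][1] \to N/^{\mathbb{L}} p \to N/pN$ in $D(A/^{\mathbb{L}} p)$ and tensoring with the flat $A/^{\mathbb{L}} p$-module $M/^{\mathbb{L}} p$, one obtains a fiber sequence whose outer terms are respectively a static module and its shift by $1$. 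Hence the middle term has homotopy concentrated in degrees $0$ and $1$, as required.

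With both hypotheses of \Cref{lem:static-loc-global} verified, $N \otimes_A^{\mathbb{L}} M$ is static, and by \cite[Thm~7.2.2.15]{Lurie2017} this completes the proof that $M$ is flat. The main (mild) obstacle is the bookkeeping around the fact that $N/^{\mathbb{L}} p$ has an extra homotopy group in degree $1$, so the vanishing of higher $\mathrm{Tor}$ between $N/^{\mathbb{L}} p$ and $M/^{\mathbb{L}} p$ over $A/^{\mathbb{L}} p$ only yields vanishing of $\pi_i$ for $i \geq 2$; this is, however, exactly the form of hypothesis that \Cref{lem:static-loc-global} accepts.
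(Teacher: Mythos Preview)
Your proof is correct and follows essentially the same approach as the paper: both reduce to \cite[Thm~7.2.2.15]{Lurie2017} and verify staticness of $N \otimes_A^{\mathbb{L}} M$ via \Cref{lem:static-loc-global}, with the same rationalization and mod-$p$ computations. The only cosmetic difference is that for the mod-$p$ part the paper invokes \cite[Prop~7.2.2.13]{Lurie2017} directly to read off $\pi_i((M/^{\mathbb{L}} p) \otimes_{A/^{\mathbb{L}} p}^{\mathbb{L}} (N/^{\mathbb{L}} p)) \cong \pi_0(M/^{\mathbb{L}} p) \otimes_{\pi_0(A/^{\mathbb{L}} p)} \pi_i(N/^{\mathbb{L}} p)$, whereas you phrase the same conclusion via the Postnikov fiber sequence for $N/^{\mathbb{L}} p$.
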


\begin{proof}
  The first implies the second by the stability of flatness under base change
  {\cite[Prop~7.2.2.16]{Lurie2017}}. We now assume the second. By
  {\cite[Thm~7.2.2.15]{Lurie2017}}, it suffices to show that for each static
  $A$-module $N$, the tensor product $M \otimes_A^{\mathbb{L}} N$ is also
  static. Indeed,
  \[ (M \otimes_A^{\mathbb{L}} N) \otimes_{\mathbb{S}}^{\mathbb{L}} \mathbb{Q}
     \simeq (M \otimes_A^{\mathbb{L}} N) \otimes_{\mathbb{Z}}^{\mathbb{L}}
     \mathbb{Q} \simeq (M \otimes_{\mathbb{Z}}^{\mathbb{L}} \mathbb{Q})
     \otimes_{A \otimes_{\mathbb{Z}}^{\mathbb{L}} \mathbb{Q}}^{\mathbb{L}} (N
     \otimes_{\mathbb{Z}}^{\mathbb{L}} \mathbb{Q}) \]
  is static by the $\mathbb{Z}$-flatness of $\mathbb{Q}$ and the flatness of
  $M \otimes_{\mathbb{Z}}^{\mathbb{L}} \mathbb{Q}$. On the other hand,
  \[ (M \otimes_A^{\mathbb{L}} N) /^{\mathbb{L}} p \simeq (M /^{\mathbb{L}} p)
     \otimes_{A /^{\mathbb{L}} p}^{\mathbb{L}} (N /^{\mathbb{L}} p) \]
  for every prime $p \in \mathbb{N}$. Since $M /^{\mathbb{L}} p$ is $A
  /^{\mathbb{L}} p$-flat,
  \[ \pi_i ((M /^{\mathbb{L}} p) \otimes_{A /^{\mathbb{L}} p}^{\mathbb{L}} (N
     /^{\mathbb{L}} p)) \simeq \pi_0 (M /^{\mathbb{L}} p) \otimes_{\pi_0 (A
     /^{\mathbb{L}} p)} \pi_i (N /^{\mathbb{L}} p) \cong 0 \]
  for all $i > 1$ by {\cite[Prop~7.2.2.13]{Lurie2017}}. It then follows from
  \Cref{lem:static-loc-global} that $M \otimes_A^{\mathbb{L}} N$ is static.
\end{proof}

We record a simple consequence (compare with {\cite[Lem~2.42]{Bhatt2019}}):

\begin{proposition}
  \label{prop:koszul-regular-ani-pd-env}Let $A$ be a ring and $I \subseteq A$
  an ideal generated by a Koszul-regular sequence. Then the animated
  PD-envelope $(B \twoheadrightarrow B'', \gamma)$ of $(A \twoheadrightarrow A
  / I)$ belongs to $\tmop{Pair}^{\gamma}$, therefore coincides with the
  classical PD-envelope.
\end{proposition}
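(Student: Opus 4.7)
The plan is to apply the local-global principle \Cref{lem:static-loc-global} to the underlying animated ring $B$ of the animated PD-envelope $(B \twoheadrightarrow B'', \gamma)$ of $(A \twoheadrightarrow A/I)$. By \Cref{prop:characterize-pdpair} and \Cref{cor:pd-env-animated-pd-env}, it suffices to show that both $B$ and $B''$ are static. The target $B''$ is $A/I$ by \Cref{lem:ani-pd-env-preserv-target}, already static, so the task reduces to showing $B$ is static. Since $B$ is connective, \Cref{lem:static-loc-global} further reduces this to checking that $B \otimes_{\mathbb{Z}}^{\mathbb{L}} \mathbb{Q}$ is static and that $B/^{\mathbb{L}} p$ is $1$-truncated for every prime $p$.

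For the rational check, \Cref{lem:ani-PD-env-char-0} gives that the unit map from $(A \twoheadrightarrow A/I)$ to the underlying animated pair of the animated PD-envelope is an equivalence after rationalization. Consequently $B \otimes_{\mathbb{Z}}^{\mathbb{L}} \mathbb{Q} \simeq A \otimes_{\mathbb{Z}} \mathbb{Q}$, a static ring.

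For the mod $p$ check, the base change compatibility \Cref{lem:base-chg-PD-env,lem:PD-env-compat-base-chg} identifies $B/^{\mathbb{L}} p$ with the underlying animated ring of the animated PD-envelope over $\mathbb{F}_p$ of the base-changed animated pair $(A/^{\mathbb{L}} p \twoheadrightarrow (A/I)/^{\mathbb{L}} p)$. This pair is quasiregular as an animated $\mathbb{F}_p$-pair: Koszul-regularity of $(x_1, \ldots, x_n)$ in $A$ gives $\mathbb{L}_{(A/I)/A}[-1] \simeq I/I^2$, free over $A/I$, and base change yields $\mathbb{L}_{(A/I)/^{\mathbb{L}} p / A/^{\mathbb{L}} p}[-1] \simeq (I/I^2) \otimes_{A/I} (A/I)/^{\mathbb{L}} p$, flat over $(A/I)/^{\mathbb{L}} p$. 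By \Cref{cor:Fp-qreg-pd-env-flat}, $B/^{\mathbb{L}} p$ is then flat over the Frobenius twist $\varphi^*_{A/^{\mathbb{L}} p}((A/I)/^{\mathbb{L}} p)$. Since $A/^{\mathbb{L}} p$ and $(A/I)/^{\mathbb{L}} p$ are $1$-truncated and flat modules over a $1$-truncated ring are $1$-truncated (by the formula for homotopy groups in \Cref{def:flat-module}), the task reduces to showing that the Frobenius twist $\varphi^*_{A/^{\mathbb{L}} p}((A/I)/^{\mathbb{L}} p)$ is itself $1$-truncated.

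The main obstacle is this last $1$-truncatedness of the Frobenius twist. When $A$ is $p$-torsion-free, $A/^{\mathbb{L}} p = A/p$ is a static $\mathbb{F}_p$-algebra, $\bar{I} = I/p$ is generated by the Koszul-regular sequence $(\bar{x}_1, \ldots, \bar{x}_n)$, and Bhatt's \cite[Lem~3.41]{Bhatt2012a} gives that $\varphi^*((A/p)/\bar{I})$ is static; in this case \Cref{cor:Fp-qreg-frob-ani-pd-env} applies directly to conclude that $B/^{\mathbb{L}} p$ is even static. For general $A$ with possible $p$-torsion, the plan is to reduce to the $p$-torsion-free case via a simplicial resolution of $(A \twoheadrightarrow A/I)$ by pairs in the projective generator subcategory $\mathcal{D}^0 \subseteq \tmop{AniPair}$ (whose objects $(\mathbb{Z}[X, Y], (Y))$ are $p$-torsion-free), computing the animated PD-envelope termwise and controlling the geometric realization using the already-established rational and $p$-torsion-free mod-$p$ estimates.
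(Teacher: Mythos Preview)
Your setup matches the paper exactly through the reduction to showing that the Frobenius twist $\varphi_{A_0}^{\ast}((A/I)/^{\mathbb{L}} p)$ is $1$-truncated, where $A_0 := A/^{\mathbb{L}} p$. The gap is in your handling of that last step when $A$ has $p$-torsion.

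Your proposed simplicial resolution by objects of $\mathcal{D}^0$ is essentially circular: that resolution \emph{is} how the animated PD-envelope is computed, and knowing each term $B_n/^{\mathbb{L}} p$ is static gives no control on the truncatedness of the geometric realization $|B_\bullet/^{\mathbb{L}} p| \simeq B/^{\mathbb{L}} p$. A geometric realization of static objects can have arbitrarily high homotopy. The Koszul-regularity of $I$ is the special feature that makes the claim true, and a generic resolution in $\mathcal{D}^0$ does not see it.

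The paper avoids this entirely by observing that Bhatt's computation works for the \emph{animated} ring $A_0$, not just for static $\mathbb{F}_p$-algebras. Let $(f_1,\ldots,f_r)$ be the Koszul-regular sequence generating $I$. Since Frobenius on an animated $\mathbb{F}_p$-algebra satisfies $\varphi_{A_0}^{\ast}(A_0/^{\mathbb{L}} f_i) \simeq A_0/^{\mathbb{L}} f_i^p$, one gets
\[
\varphi_{A_0}^{\ast}((A/I)/^{\mathbb{L}} p) \;\simeq\; A_0/^{\mathbb{L}}(f_1^p,\ldots,f_r^p) \;\simeq\; (A/^{\mathbb{L}}(f_1^p,\ldots,f_r^p))/^{\mathbb{L}} p.
\]
Now $(f_1^p,\ldots,f_r^p)$ is again Koszul-regular in $A$, so $A/^{\mathbb{L}}(f_1^p,\ldots,f_r^p)$ is static, and its derived mod~$p$ has homotopy concentrated in degrees $0,1$. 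This gives the required $1$-truncatedness uniformly in $A$, with no case distinction on $p$-torsion.
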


\begin{proof}
  Note that $B'' \simeq A / I$ is static by
  \Cref{lem:ani-pd-env-preserv-target}. It follows from
  \Cref{lem:ani-PD-env-char-0} that $B \otimes_{\mathbb{Z}}^{\mathbb{L}}
  \mathbb{Q} \simeq A$ is static. Let $(f_1, \ldots, f_r)$ be a Koszul-regular
  sequence which generates $I$. Fix a prime $p \in \mathbb{N}_{> 0}$. Let
  $A_0$ denote $A /^{\mathbb{L}} p$. We follow the argument in
  {\cite[Lem~3.41]{Bhatt2012a}}:
  \begin{eqnarray*}
    \varphi_{A_0}^{\ast} ((A / I) /^{\mathbb{L}} p) & \simeq &
    \varphi_{A_0}^{\ast} (A_0 /^{\mathbb{L}} (f_1)) \otimes_{A_0}^{\mathbb{L}}
    \cdots \otimes_{A_0}^{\mathbb{L}} \varphi_{A_0}^{\ast} (A_0 /^{\mathbb{L}}
    f_r)\\
    & \simeq & (A_0 /^{\mathbb{L}} f_1^p) \otimes_{A_0}^{\mathbb{L}} \cdots
    \otimes_{A_0}^{\mathbb{L}} (A_0 /^{\mathbb{L}} f_r^p)\\
    & \simeq & A_0 /^{\mathbb{L}} (f_1^p, \ldots, f_r^p)\\
    & \simeq & (A /^{\mathbb{L}} (f_1^p, \ldots, f_r^p)) /^{\mathbb{L}} p
  \end{eqnarray*}
  Note that since $(f_1, \ldots, f_r)$ is Koszul-regular, so is $(f_1^p,
  \ldots, f_r^p)$, which implies that $\pi_i (\varphi_{A_0}^{\ast} ((A / I)
  /^{\mathbb{L}} p)) \cong 0$ for $i \neq 0, 1$. It follows from
  \Cref{cor:Fp-qreg-pd-env-flat} and the base change property
  (\Cref{lem:PD-env-compat-base-chg}) that $\pi_i (B /^{\mathbb{L}} p) \cong
  0$ for $i \neq 0, 1$. The result then follows from
  \Cref{lem:static-loc-global}.
\end{proof}

\subsection{Illusie's question}\label{subsec:Illusie-ques}Given a ring $A$ and
an ideal $I \subseteq A$ generated by a Koszul-regular sequence, let $(B, J,
\gamma)$ denote the PD-envelope of $(A, I)$. It is known that the canonical
comparison map $\Gamma_{A / I}^{\ast} (I / I^2) \rightarrow J^{[\ast]} /
J^{\left[ \mathord{\ast} + 1 \right]}$ is an isomorphism, cf.
{\cite[I.~Prop~3.4.4]{Berthelot1974}}, where $J^{[\ast]}$ are divided powers
of $J$ in $B$. In {\cite[VIII.~Ques~2.2.4.2]{Illusie1972}}, Illusie asked
whether this holds for quasiregular pairs $(A, I)$. The answer is affirmative,
and the goal of this section is to furnish a proof by our theory of animated
PD-pairs.

Our strategy is similar to \Cref{subsec:quasiregular}: both the animated
PD-envelope and the PD-envelope of a pair $(A, I)$ admit a canonical
filtration, and there is a natural comparison between the two. Although for
general quasiregular pairs $(A, I)$ we do not know whether the animated
PD-envelope coincides with the PD-envelope, the comparison map induces
equivalences on graded pieces. The associated graded of the animated
PD-envelope admits a natural structure of divided power algebra, and an
element tracing proves that the equivalence coincides with the comparison map
in Illusie's question.

We start with the {\tmdfn{PD-filtration}} on animated PD-pairs. We refer to
\Cref{subsec:graded-fil-objs} for concepts and notations about filtrations.

{\construction{Let $(A, I, \gamma) \in \tmop{Pair}^{\gamma}$ be a PD-pair and
$n \in \mathbb{N}$ a natural number. The classical divided power ideal
$I^{[n]} \subseteq A$ is the ideal generated by elements $\gamma_{i_1} (x_1)
\cdots \gamma_{i_k} (x_k)$ where $x_1, \ldots, x_k \in I$ and $(i_1, \ldots,
i_k) \in \mathbb{N}^k$ with $i_1 + \cdots + i_k \geq n$. For example, for
$(\Gamma_{\mathbb{Z}} (x) \twoheadrightarrow \mathbb{Z}) \in
\tmop{Pair}^{\gamma}$ with kernel $I$, the kernel $I^{[n]} \subseteq
\Gamma_{\mathbb{Z}} (x)$ is generated by $\{ \gamma_i (x) \barsuchthat i \geq
n \}$ (which is different from the ideal $(\gamma_n (x))$). The
{\tmdfn{classical PD-filtration}} on $A$ is given by $A \supseteq I \supseteq
I^{[2]} \supseteq \cdots$ endowing $A$ with the structure of filtered ring. A
filtered ring is naturally a (nonnegatively) filtered
$\mathbb{E}_{\infty}$-ring, and we get a functor $\tmop{Fil}_{\tmop{PD},
\tmop{cl}}^{\ast} \of \tmop{Pair}^{\gamma} \rightarrow \tmop{CAlg}
(\tmop{DF}^{\geq 0} (\mathbb{Z}))$.}}

\begin{definition}
  \label{def:pd-fil}The {\tmdfn{PD-filtration functor}}
  $\tmop{Fil}_{\tmop{PD}}^{\ast} \of \tmop{Pair}^{\gamma, \tmop{an}}
  \rightarrow \tmop{CAlg} (\tmop{DF}^{\geq 0} (\mathbb{Z}))$ is defined to be
  the left derived functor (\Cref{prop:left-deriv-n-fun}) of the composite
  functor $\tmop{Pair}^{\gamma, \tmop{st}} \hookrightarrow
  \tmop{Pair}^{\gamma} \xrightarrow{\tmop{Fil}_{\tmop{PD}, \tmop{cl}}^{\ast}}
  \tmop{CAlg} (\tmop{DF}^{\geq 0} (\mathbb{Z}))$. For an animated PD-pair $(A
  \twoheadrightarrow A'', \gamma) \in \tmop{Pair}^{\gamma, \tmop{an}}$, we
  will call the image $\tmop{Fil}_{\tmop{PD}}^{\ast}  (A \twoheadrightarrow
  A'', \gamma)$ the $\mathbb{E}_{\infty}$-$\mathbb{Z}$-algebra $A$ with
  {\tmdfn{PD-filtration}}.
\end{definition}

\begin{remark}
  By the same argument, the PD-filtration in fact gives rise to a structure of
  filtered derived ring (\Cref{rem:fil-deriv-alg}), which we will not use in
  this article.
\end{remark}

Similar to \Cref{cor:LAdFil-symm-cot-cx}, by \Cref{prop:left-deriv-n-fun},
sifted-colimit-preserving property of the (derived) divided power functor
$((A, M) \mapsto \Gamma_A M) \of \tmop{Ani} (\tmop{Mod}) \rightarrow
\tmop{CAlg} (\tmop{Gr}^{\geq 0} (D (\mathbb{Z})))$ and the concrete analysis
of $(A, I, \gamma) \in \tmop{Pair}^{\gamma, \tmop{st}}$, we get

\begin{lemma}
  \label{lem:pdfil-free-pdalg}For every $(A \twoheadrightarrow A'', \gamma)
  \in \tmop{Pair}^{\gamma, \tmop{an}}$, there exists an equivalence
  \[ \Gamma_{A''}^{\ast} (\tmop{gr}_{\tmop{PD}}^1 \nobracket (A
     \twoheadrightarrow A'', \gamma))) \rightarrow
     \tmop{gr}_{\tmop{PD}}^{\ast} (A \twoheadrightarrow A'', \gamma) \]
  of graded $\mathbb{E}_{\infty}$-$\mathbb{Z}$-algebras which is functorial in
  $(A \twoheadrightarrow A'', \gamma) \in \tmop{Pair}^{\gamma, \tmop{an}}$.
\end{lemma}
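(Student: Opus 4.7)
The plan follows the strategy of \Cref{cor:LAdFil-symm-cot-cx}: identify both sides of the claimed equivalence as left derived functors of classical constructions on $\mathcal{E}^0$, so that a natural transformation established on $\mathcal{E}^0$ extends uniquely to all of $\tmop{AniPDPair}$ by \Cref{prop:left-deriv-fun}.

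First I would verify that both functors preserve sifted colimits. For the right-hand side $\tmop{gr}^{\ast} \circ \mathbb{L}\tmop{PDFil}$, this follows from \Cref{def:pd-fil}, which exhibits $\mathbb{L}\tmop{PDFil}$ as the left Kan extension from $\mathcal{E}^0$ (hence sifted-colimit-preserving), combined with \Cref{lem:assoc-graded-union-Lan}, which ensures the associated-graded functor commutes with this left Kan extension. For the left-hand side, the assignment $(A \twoheadrightarrow A'', \gamma) \mapsto (A'', \tmop{gr}^1 (\mathbb{L}\tmop{PDFil}(A \twoheadrightarrow A'', \gamma)))$ defines a functor to $\tmop{Ani}(\tmop{Mod})$ preserving sifted colimits (using \Cref{cor:forget-pair-conserv-sifted-colim} to track the base $A''$), and the derived divided power functor $\Gamma^{\ast} \of \tmop{Ani}(\tmop{Mod}) \rightarrow \tmop{CAlg}(\tmop{Gr}^{\geq 0}(D(\mathbb{Z})))$ preserves sifted colimits, being the animation of the classical $\Gamma^{\ast}$.

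Next, on the generating subcategory $\mathcal{E}^0$ I would check the equivalence directly for a standard free PD-pair $(\Gamma_{\mathbb{Z}[X]}(Y) \twoheadrightarrow \mathbb{Z}[X], \gamma)$. Here $\tmop{gr}^1 (\mathbb{L}\tmop{PDFil})$ recovers the free $\mathbb{Z}[X]$-module $Y \cdot \mathbb{Z}[X]$ on the generators $y_1, \ldots, y_n$, and the classical statement that the PD-filtration on a free PD-algebra has associated graded equal to the divided power algebra on its degree-one piece (essentially \cite[I.~Prop~3.4.4]{Berthelot1974}, already used in \Cref{subsec:Illusie-ques}) yields an isomorphism $\Gamma_{\mathbb{Z}[X]}^{\ast}(Y \cdot \mathbb{Z}[X]) \xrightarrow{\simeq} \tmop{gr}^{\ast}(\Gamma_{\mathbb{Z}[X]}(Y))$ of graded $\mathbb{Z}[X]$-algebras, manifestly functorial in the finite sets $X, Y$. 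Applying \Cref{prop:left-deriv-fun} then extends this comparison map to a natural equivalence on all of $\tmop{AniPDPair}$.

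The main bookkeeping obstacle is tracking the $A''$-algebra structure of the target coherently, as already flagged in \Cref{rem:functor-target-variable}: formally, one should work with the pair of functors valued in the arrow $\infty$-category $\tmop{Fun}(\Delta^1, \tmop{CAlg}(\tmop{Gr}^{\geq 0}(D(\mathbb{Z}))))$ whose source records the $A''$-algebra structure map, exactly as was done in \Cref{def:conj-fil-pd-env} for the conjugate filtration. Beyond this, no new input is required; the argument is formally parallel to \Cref{cor:LAdFil-symm-cot-cx} and \Cref{cor:conjfil-gr}.
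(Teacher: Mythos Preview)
Your proposal is correct and follows essentially the same approach as the paper: the paper states (in the sentence immediately preceding the lemma) that the result follows from \Cref{prop:left-deriv-fun}, the sifted-colimit-preserving property of the derived divided power functor $\Gamma^{\ast}$, and the concrete analysis of the PD-filtration on objects of $\mathcal{E}^0$, which is exactly what you have spelled out. Your extra care in handling the variable base $A''$ via \Cref{rem:functor-target-variable} is appropriate and matches the paper's conventions elsewhere.
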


Furthermore, we can compare the adic filtration on an animated pair and the
PD-filtration on the animated PD-filtration. We first compare them on
$\tmop{Pair}^{\tmop{st}}$, then extend the comparison to
$\tmop{Pair}^{\tmop{an}}$ by \Cref{prop:left-deriv-n-fun}, obtaining

\begin{lemma}
  \label{lem:adfil-pdfil}For every $(A \twoheadrightarrow A'') \in
  \tmop{Pair}^{\tmop{an}}$, let $(B \twoheadrightarrow B'', \gamma) \in
  \tmop{Pair}^{\gamma, \tmop{an}}$ denote its animated PD-envelope. Then there
  is a canonical comparison map
  \[ \tmop{gr}_{\tmop{ad}}^{\ast} (A \twoheadrightarrow A'') \rightarrow
     \tmop{gr}_{\tmop{PD}}^{\ast} (B \twoheadrightarrow B'', \gamma) \]
  of graded $\mathbb{E}_{\infty}$-$\mathbb{Z}$-algebras which is functorial in
  $(A \twoheadrightarrow A'') \in \tmop{Pair}^{\tmop{an}}$. Furthermore, this
  map induces equivalences in $D (\mathbb{Z})$ when $\mathord{\ast} = 0, 1$.
\end{lemma}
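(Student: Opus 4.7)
The plan is to produce the comparison map first on the generating full subcategory $\mathcal{D}^0 \subseteq \tmop{AniPair}$ and then propagate it to all animated pairs by left Kan extension. Concretely, for $(A, I) \in \mathcal{D}^0$, say $(A, I) = (\mathbb{Z} [X, Y], (Y))$, the classical PD-envelope $(D_A (I), J, \gamma) \in \mathcal{E}^0$ satisfies $I^n \subseteq J^n \subseteq J^{[n]}$ for every $n \in \mathbb{N}$, because $J^{[n]}$ is an ideal containing $J^n$ and $I \subseteq J$. The resulting map of pairs $(A, I) \to (D_A (I), J)$ therefore lifts tautologically to a map of nonnegatively filtered static commutative rings $(A, I^{\ast}) \to (D_A (I), J^{[\ast]})$, which is functorial in $(A, I) \in \mathcal{D}^0$. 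This defines a natural transformation between the two functors $\mathcal{D}^0 \to \tmop{CAlg}(\tmop{DF}^{\geq 0}(\mathbb{Z}))$ obtained by restricting $\mathbb{L}\tmop{AdFil}$ and $\mathbb{L}\tmop{PDFil} \circ \tmop{AniPDEnv}$.

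To propagate, I would invoke \Cref{prop:left-deriv-fun}: both $\mathbb{L}\tmop{AdFil}$ and $\mathbb{L}\tmop{PDFil} \circ \tmop{AniPDEnv}$ are left Kan extended along $\mathcal{D}^0 \hookrightarrow \tmop{AniPair}$, the second because $\tmop{AniPDEnv}$ is a left adjoint sending $\mathcal{D}^0$ into $\mathcal{E}^0$ and $\mathbb{L}\tmop{PDFil}$ is by definition the left Kan extension of the classical PD-filtration along $\mathcal{E}^0 \hookrightarrow \tmop{AniPDPair}$. The natural transformation constructed above then extends uniquely to a natural transformation of functors $\tmop{AniPair} \rightrightarrows \tmop{CAlg}(\tmop{DF}^{\geq 0}(\mathbb{Z}))$, and postcomposing with the symmetric monoidal functor $\tmop{gr}^{\ast} \of \tmop{Fil}(D(\mathbb{Z})) \to \tmop{Gr}(D(\mathbb{Z}))$ yields the asserted comparison of graded $\mathbb{E}_{\infty}$-$\mathbb{Z}$-algebras, functorial in $(A \twoheadrightarrow A'') \in \tmop{AniPair}$.

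For the final claim, I would note that both functors $\tmop{gr}^n \circ \mathbb{L}\tmop{AdFil}$ and $\tmop{gr}^n \circ \mathbb{L}\tmop{PDFil} \circ \tmop{AniPDEnv}$ from $\tmop{AniPair}$ to $D(\mathbb{Z})_{\geq 0}$ are themselves left Kan extended from $\mathcal{D}^0$ by \Cref{lem:assoc-graded-union-Lan}, so that checking the comparison is an equivalence reduces to the case $(A, I) = (\mathbb{Z}[X, Y], (Y)) \in \mathcal{D}^0$. In degree $0$, both sides are $\mathbb{Z}[X]$ and the comparison is the identity (equivalently, by \Cref{lem:ani-pd-env-preserv-target} the target animated ring of the animated PD-envelope is unchanged). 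In degree $1$, the left hand side is $(Y)/(Y)^2$, a free $\mathbb{Z}[X]$-module on $Y$, while the right hand side is $J/J^{[2]} = \Gamma^1_{\mathbb{Z}[X]}(Y)$, which is the same free $\mathbb{Z}[X]$-module, the comparison again being the classical isomorphism induced by $I \hookrightarrow J$.

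The only genuine subtlety is verifying that the map assembled on $\mathcal{D}^0$ is coherently a map in $\tmop{CAlg}(\tmop{DF}^{\geq 0}(\mathbb{Z}))$ rather than merely at the level of underlying filtered objects; since every object of $\mathcal{D}^0$ and every object of $\mathcal{E}^0$ is static and the classical PD-envelope is a $1$-functor, this amounts to a purely $1$-categorical statement about static filtered commutative rings and is immediate from the containment $I^n \subseteq J^{[n]}$ being compatible with multiplication.
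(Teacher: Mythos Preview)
Your proposal is correct and follows essentially the same approach as the paper, which merely sketches the argument in one sentence before the lemma statement (``We first compare them on $\mathcal{D}^0$, then extend the comparison to $\tmop{AniPair}$ by \Cref{prop:left-deriv-fun}''). Your write-up correctly fills in the details the paper leaves implicit, including the explicit verification in degrees $0$ and $1$ on the generators.
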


{\construction{\label{cons:PDFil-comp}Analogous to \Cref{subsec:quasiregular},
by universal property of left Kan extensions, there exists a essentially
unique comparison map $c_{\gamma}$ from the composite functor
$\tmop{Pair}^{\tmop{an}} \xrightarrow{\tmop{Env}^{\gamma, \tmop{an}}}
\tmop{Pair}^{\gamma, \tmop{an}} \xrightarrow{\tmop{Fil}_{\tmop{PD}}^{\ast}}
\tmop{CAlg} (\tmop{DF}^{\geq 0} (\mathbb{Z}))$ to the composite functor
$\tmop{Pair}^{\tmop{an}} \rightarrow \tmop{Pair}
\xrightarrow{\tmop{Env}^{\gamma}} \tmop{Pair}^{\gamma}
\xrightarrow{\tmop{Fil}_{\tmop{PD}, \tmop{cl}}^{\ast}} \tmop{CAlg}
(\tmop{DF}^{\geq 0} (\mathbb{Z}))$, where $\tmop{Pair}^{\tmop{an}} \rightarrow
\tmop{Pair}$ is the localization in \Cref{rem:localization-pair-pdpair}.}}

The main result of this subsection is the following:

\begin{proposition}
  \label{prop:comp-PDFil-equiv-qreg}The comparison map $c_{\gamma}$ in
  Construction~\ref{cons:PDFil-comp} becomes an equivalence after composition
  $\tmop{QReg} \hookrightarrow \tmop{Pair}^{\tmop{an}} \rightrightarrows
  \tmop{CAlg} (\tmop{DF}^{\geq 0} (\mathbb{Z})) \xrightarrow{\tmop{gr}^{\ast}}
  \tmop{CAlg} (\tmop{Gr}^{\geq 0} (\mathbb{Z}))$.
\end{proposition}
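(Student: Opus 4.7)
The plan is to compute both sides of $\tmop{gr}^{\ast} c_{\tmop{PDFil}}$ explicitly on a quasiregular pair $(A,I) \in \tmop{QReg}$ and identify the comparison map with the canonical map of free divided power algebras. The core calculation is already packaged into three earlier results: \Cref{lem:pdfil-free-pdalg} identifies the graded pieces of $\mathbb{L}\tmop{PDFil}$ with derived divided powers of its $\tmop{gr}^1$, \Cref{lem:adfil-pdfil} identifies $\tmop{gr}^1 \mathbb{L}\tmop{PDFil}$ of the animated PD-envelope with $\tmop{gr}^1 \mathbb{L}\tmop{AdFil}$, and \Cref{cor:LAdFil-symm-cot-cx} identifies the latter with the shifted cotangent complex.

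Chaining these, for any animated pair $(A \twoheadrightarrow A'')$ with animated PD-envelope $(B \twoheadrightarrow B'', \gamma)$, one has a canonical equivalence
\[ \tmop{gr}^{\ast} (\mathbb{L}\tmop{PDFil}(B \twoheadrightarrow B'', \gamma)) \simeq \Gamma_{A''}^{\ast}(\mathbb{L}_{A''/A}[-1]) \]
of graded $\mathbb{E}_{\infty}$-$\mathbb{Z}$-algebras. Restricting to $(A,I) \in \tmop{QReg}$, we have $A'' = A/I$ (static) and $\mathbb{L}_{(A/I)/A}[-1] \simeq I/I^2$ is a flat $A/I$-module by definition of quasiregularity; hence the derived divided power functor $\Gamma^{\ast}$ agrees with its classical counterpart on this input, yielding $\Gamma_{A/I}^{\ast}(I/I^2)$ in homotopical degree zero. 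This gives the LHS.

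Next I would identify $\tmop{gr}^{\ast} c_{\tmop{PDFil}}$ with the canonical map of graded PD-algebras. On $\mathcal{D}^0$, both sides agree and $c_{\tmop{PDFil}}$ is the identity (this is the classical Berthelot computation for Koszul-regular ideals, and also where the comparison map is constructed via left Kan extension). In general $\tmop{gr}^{\ast} c_{\tmop{PDFil}}$ is a map of graded commutative algebras, and on $\tmop{gr}^0, \tmop{gr}^1$ it is an equivalence, since on $\tmop{gr}^1$ both sides identify with $I/I^2$ via \Cref{lem:adfil-pdfil} applied to the classical PD-pair $(D_A(I), J, \gamma)$. Since the target $\tmop{gr}^{\ast} \tmop{PDFil}(D_A(I))$ carries a graded divided-power structure (inherited from the PD-structure on $D_A(I)$), the universal property of $\Gamma^{\ast}_{A/I}(I/I^2)$ as the free graded PD-algebra on $I/I^2$ forces $\tmop{gr}^{\ast} c_{\tmop{PDFil}}$ to agree with the canonical map $\Gamma^{\ast}_{A/I}(I/I^2) \to \tmop{gr}^{\ast}\tmop{PDFil}(D_A(I))$.

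The main obstacle is therefore showing that this canonical map is an isomorphism for arbitrary quasiregular $(A,I)$. Surjectivity is essentially by construction, since $D_A(I)$ is generated (as an $A/I$-module, filtered-compatibly) by divided powers of elements of $I$. For injectivity, the plan is to lift quasiregularity to a ``Koszul presentation'': choose a surjection $\pi \of A_0 \twoheadrightarrow A$ with $A_0$ a polynomial ring and lift generators of the flat $A/I$-module $I/I^2$ to obtain an ideal $I_0 \subseteq A_0$ generated by a Koszul-regular sequence, giving $(A_0, I_0) \in \mathcal{D}^0$ with a map $(A_0, I_0) \to (A, I)$ of pairs that induces $I_0/I_0^2 \twoheadrightarrow I/I^2$. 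Via \Cref{lem:base-chg-PD-env} and \Cref{prop:forget-PD-small-colim}, the LHS behaves well under the induced sifted resolutions; since the LHS computes $\Gamma^{\ast}_{A/I}(I/I^2)$ on both $(A_0,I_0)$ (where it matches the classical gr) and $(A,I)$, and the canonical surjection is compatible with these restrictions, one deduces that the surjection from $\Gamma^{\ast}_{A/I}(I/I^2)$ to $\tmop{gr}^{\ast} \tmop{PDFil}(D_A(I))$ must be injective, completing the proof.
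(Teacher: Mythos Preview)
Your computation of the left-hand side as $\Gamma_{A/I}^{\ast}(I/I^2)$ is correct and follows the paper's ingredients. The identification of the comparison map with the canonical map $\Gamma_{A/I}^{\ast}(I/I^2) \to \bigoplus J^{[\ast]}/J^{[\ast+1]}$ is also essentially what the paper does, though only \emph{after} proving the proposition (see \Cref{prop:qreg-assoc-gr-pd}).

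The genuine gap is in your final paragraph. Showing that the canonical map $\Gamma_{A/I}^{\ast}(I/I^2) \to \bigoplus J^{[\ast]}/J^{[\ast+1]}$ is an isomorphism for arbitrary quasiregular $(A,I)$ is precisely Illusie's question, which in the paper is \emph{deduced from} this proposition, not used to prove it. Your ``Koszul presentation'' sketch does not close the gap: the classical PD-envelope functor $\tmop{Pair} \to \tmop{PDPair}$ does not commute with sifted colimits (cf.\ \Cref{rem:pdpair-pair-forget-not-preserve-colim}), so the fact that the LHS behaves well under sifted resolutions by $\mathcal{D}^0$ tells you nothing about the RHS $\tmop{gr}^{\ast}\tmop{PDFil}(D_A(I))$. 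Your last sentence asserts that compatibility with a single map $(A_0,I_0)\to(A,I)$ forces injectivity, but no mechanism is given; the injection $\Gamma^{\ast}_{A_0/I_0}(I_0/I_0^2) \hookrightarrow \bigoplus J_0^{[\ast]}/J_0^{[\ast+1]}$ does not by itself control the kernel downstairs.

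The paper's route is entirely different and avoids comparing graded pieces directly. It replaces $\tmop{gr}^{\ast}$ by $\tmop{gr}^{[0,n)}$, introduces the reflective subcategory $\tmop{PDPair}^{[n]} \subseteq \tmop{PDPair}$ of PD-nilpotent pairs of height $n$, and defines a derived truncation functor $\tmop{Red}^{[n]}: \tmop{AniPDPair} \to \tmop{AniPDPair}$. The key step (\Cref{lem:comp-PDRed-equiv-qreg}) is to show that for quasiregular $(A,I)$, the animated object $\tmop{Red}^{[n]}(\tmop{AniPDEnv}(A,I))$ is already a \emph{static} PD-pair lying in $\tmop{PDPair}^{[n]}$; this uses your staticness observation plus a compact-projectivity argument to check nilpotence. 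Once both sides live in $\tmop{PDPair}^{[n]}$, they are identified via the universal property of the composite left adjoint $\tmop{Pair} \to \tmop{PDPair} \to \tmop{PDPair}^{[n]}$. This bypasses any direct control on the classical graded pieces.
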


\begin{remark}
  As seen in Question~\ref{ques:qreg-ani-pd-env-static}, we do not know
  whether the comparison is an equivalence when we replace $\tmop{gr}^{\ast}
  \of \tmop{CAlg} (\tmop{DF}^{\geq 0} (\mathbb{Z})) \rightarrow \tmop{CAlg}
  (\tmop{Gr}^{\geq 0} (\mathbb{Z}))$ by $\tmop{Fil}^0 \of \tmop{CAlg}
  (\tmop{DF}^{\geq 0} (\mathbb{Z})) \rightarrow \tmop{CAlg}_{\mathbb{Z}}$,
  though it is true under assumptions of \Cref{cor:Fp-qreg-frob-ani-pd-env},
  which is the only obstruction for the comparison map to be a filtered
  equivalence.
\end{remark}

We start to prove this. Unfortunately, we are unable to establish a strong
result like \Cref{lem:adfil-local-compatible} essentially due to the
complication discussed in
Warning~\ref{warn:local-anipair-pair-incompatible-forget}. Our trick is to
show that after replacing $\tmop{gr}^{\ast}$ by $\tmop{gr}^{[0, n)}$, both
functors satisfy a common universal property.

As in \Cref{subsec:quasiregular}, we can forget the
$\mathbb{E}_{\infty}$-algebra structure then replace $\tmop{gr}^{\ast}$ by
$\tmop{gr}^{[0, n)} \of \tmop{DF}^{\geq 0} (\mathbb{Z}) \rightarrow D
(\mathbb{Z}), F \mapsto \tmop{cofib} (\tmop{Fil}^n F \rightarrow \tmop{Fil}^0
F)$, i.e., it is equivalent to show that the natural comparison
$c_{\gamma}^{[0, n)}$ from the composite functor
\[ \tmop{Pair}^{\tmop{an}} \rightarrow \tmop{Pair}^{\gamma, \tmop{an}}
   \xrightarrow{\tmop{Fil}_{\tmop{PD}}^{\ast}} \tmop{DF}^{\geq 0} (D
   (\mathbb{Z})) \xrightarrow{\tmop{gr}^{[0, n)}} D (\mathbb{Z}) \]
to the composite functor
\[ \tmop{Pair}^{\tmop{an}} \rightarrow \tmop{Pair} \hookrightarrow
   \tmop{Pair}^{\gamma} \xrightarrow{\tmop{Fil}_{\tmop{PD}, \tmop{cl}}^{\ast}}
   \tmop{DF}^{\geq 0} (D (\mathbb{Z})) \xrightarrow{\tmop{gr}^{[0, n)}} D
   (\mathbb{Z}) \]
is an equivalence after restricting to the full subcategory $\tmop{QReg}
\subseteq \tmop{Pair}^{\tmop{an}}$. Note that the composite functor
$\tmop{gr}_{\tmop{PD}, \tmop{cl}}^{[0, n)} = \tmop{gr}^{[0, n)} \circ
\tmop{Fil}_{\tmop{PD}, \tmop{cl}}^{\ast}$ is concretely given by $(A, I,
\gamma) \mapsto A / I^{[n]}$, which motivates the following definition:

\begin{definition}
  We say that a PD-pair $(A, I, \gamma)$ is {\tmdfn{PD-nilpotent of height $n
  \in \mathbb{N}$}} if $I^{[n]} = 0$. We will denote by
  $\tmop{Pair}^{\gamma_n} \subseteq \tmop{Pair}^{\gamma}$ the full subcategory
  spanned by PD-nilpotent PD-pairs of height $n$.
\end{definition}

The following lemma could be checked directly, or (as $\infty$-categories) by
invoking {\cite[Prop~5.2.7.8]{Lurie2009}}:

\begin{lemma}
  Let $n \in \mathbb{N}$ be a natural number. Then the full subcategory
  $\tmop{Pair}^{\gamma_n} \hookrightarrow \tmop{Pair}^{\gamma}$ is reflective
  of which the localization $\tmop{Pair}^{\gamma} \rightarrow
  \tmop{Pair}^{\gamma_n}$, which will be denoted by $R_{\tmop{cl}}^{[n]}$, is
  given by killing the higher divided powers: $(A, I, \gamma) \mapsto (A /
  I^{[n]}, IA / I^{[n]}, \overline{\gamma})$ where $\overline{\gamma}
  (\overline{x}) = \overline{\gamma (x)}$ for all $x \in I$ and $\overline{x},
  \overline{\gamma (x)}$ are images of $x, \gamma (x)$ in $A / I^{[n]}$.
\end{lemma}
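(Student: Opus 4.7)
The plan is to verify both that the candidate functor $\tmop{Loc}^{[n]}$ is well-defined on PD-pairs and that it exhibits the asserted reflection. I will take the direct $1$-categorical route first, since it is more informative than the abstract appeal to \cite[Prop~5.2.7.8]{Lurie2009}, and then indicate how the latter furnishes the alternative proof.

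First I would check that $(A/I^{[n]}, IA/I^{[n]}, \overline{\gamma})$ is a PD-pair. The essential input is the standard fact that $I^{[n]}$ is a sub-PD-ideal of $(A,I,\gamma)$, i.e.\ $\gamma_m(I^{[n]}) \subseteq I^{[n]}$ for all $m \geq 1$, which is proved by induction using the identities $\gamma_m(x+y) = \sum_i \gamma_i(x)\gamma_{m-i}(y)$, $\gamma_m(ax) = a^m \gamma_m(x)$, and $\gamma_m(\gamma_k(x)) = \frac{(mk)!}{m!(k!)^m}\gamma_{mk}(x)$, together with the multiplicative property $I^{[p]}\cdot I^{[q]} \subseteq I^{[p+q]}$. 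Granted this, the formula $\overline{\gamma}_m(\overline{x}) \assign \overline{\gamma_m(x)}$ is independent of the lift $x$ of $\overline{x}$, so $(\overline{\gamma}_m)_{m \geq 1}$ is well-defined, and the axioms of \cite[\href{https://stacks.math.columbia.edu/tag/07GL}{Tag 07GL}]{stacks-project} for $\overline{\gamma}$ follow from those for $\gamma$. Moreover, by construction $(IA/I^{[n]})^{[n]} = I^{[n]}/I^{[n]} = 0$, so the resulting PD-pair lies in $\tmop{PDPair}^{[n]}$, and the canonical surjection $(A,I,\gamma) \twoheadrightarrow (A/I^{[n]}, IA/I^{[n]}, \overline{\gamma})$ is a map of PD-pairs.

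Next I would verify the universal property. Given any map $f \of (A,I,\gamma) \to (B,J,\delta)$ of PD-pairs with $(B,J,\delta) \in \tmop{PDPair}^{[n]}$, compatibility with divided powers gives $f(\gamma_{i_1}(x_1)\cdots\gamma_{i_k}(x_k)) = \delta_{i_1}(f(x_1))\cdots\delta_{i_k}(f(x_k))$, which lies in $J^{[i_1+\cdots+i_k]}$; if $i_1+\cdots+i_k \geq n$ this is in $J^{[n]} = 0$. Hence $f(I^{[n]}) = 0$, so $f$ factors uniquely through a ring map $A/I^{[n]} \to B$ carrying $IA/I^{[n]}$ into $J$, and this factorization is automatically compatible with the induced divided powers $\overline{\gamma}$ since $\overline{\gamma}$ is determined by the universal formula from $\gamma$. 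This shows that $\tmop{Loc}^{[n]}$ is left adjoint to the inclusion $\tmop{PDPair}^{[n]} \hookrightarrow \tmop{PDPair}$, which is precisely the content of reflectivity (\Cref{def:reflexive-cat}).

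Alternatively, viewing $\tmop{PDPair}$ as an $\infty$-category, the construction above gives, for every $(A,I,\gamma)$, a morphism to the object $(A/I^{[n]},IA/I^{[n]},\overline{\gamma}) \in \tmop{PDPair}^{[n]}$; the universal property just established is exactly the equivalence of mapping spaces required by \cite[Prop~5.2.7.8]{Lurie2009}, yielding reflectivity with the same localization. The only non-formal step is the sub-PD-ideal property $\gamma_m(I^{[n]}) \subseteq I^{[n]}$, which I view as the main technical point; all remaining verifications are direct manipulations with the divided power axioms.
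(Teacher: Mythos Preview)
Your proof is correct and follows exactly the two routes the paper indicates: a direct $1$-categorical verification of the universal property (which the paper leaves to the reader) and the appeal to \cite[Prop~5.2.7.8]{Lurie2009}. The paper gives no further details, so your write-up is strictly more explicit than the original.
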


Then the composite functor $\tmop{gr}_{\tmop{PD}, \tmop{cl}}^{[0, n)} \of
\tmop{Pair}^{\gamma} \rightarrow D (\mathbb{Z}), (A, I, \gamma) \mapsto A /
I^{[n]}$ could be rewritten as the composite $\tmop{Pair}^{\gamma} \rightarrow
\tmop{Pair}^{\gamma_n} \hookrightarrow \tmop{Pair}^{\gamma, \tmop{an}}
\rightarrow D (\mathbb{Z})$ where the last functor is the functor
$\tmop{Pair}^{\gamma, \tmop{an}} \rightarrow D (\mathbb{Z}), (A
\twoheadrightarrow A'', \gamma) \mapsto A$. We now show that the composite
functor $\tmop{gr}_{\tmop{PD}}^{[0, n)} \of \tmop{Pair}^{\gamma, \tmop{an}}
\rightarrow D (\mathbb{Z})$ could also factor through $\tmop{Pair}^{\gamma,
\tmop{an}} \rightarrow D (\mathbb{Z}), (A \twoheadrightarrow A'', \gamma)
\mapsto A$. In fact, it is a ``derived'' version of the previous
factorization.

\begin{notation}
  Let $n \in \mathbb{N}$ be a natural number. Then we will denote by $R^{[n]}
  \of \tmop{Pair}^{\gamma, \tmop{an}} \rightarrow \tmop{Pair}^{\gamma,
  \tmop{an}}$ the left derived functor (\Cref{prop:left-deriv-n-fun}) of the
  composite functor $\tmop{Pair}^{\gamma, \tmop{st}} \rightarrow
  \tmop{Pair}^{\gamma_n} \hookrightarrow \tmop{Pair}^{\gamma, \tmop{an}}$
  where the first functor $\tmop{Pair}^{\gamma, \tmop{st}} \rightarrow
  \tmop{Pair}^{\gamma_n}$ is the restriction of the localization
  $R_{\tmop{cl}}^{[n]} \of \tmop{Pair}^{\gamma} \rightarrow
  \tmop{Pair}^{\gamma_n}$ to the full subcategory $\tmop{Pair}^{\gamma,
  \tmop{st}} \subseteq \tmop{Pair}^{\gamma}$.
\end{notation}

We compose $R^{[n]}$ with the functor $\tmop{Pair}^{\gamma, \tmop{an}}
\rightarrow D (\mathbb{Z}), (A \twoheadrightarrow A'', \gamma) \mapsto A$, we
get a functor $\tmop{Pair}^{\gamma, \tmop{an}} \rightarrow D (\mathbb{Z})$,
which is equivalent to the composite functor $\tmop{gr}_{\tmop{PD}}^{[0, n)}$
by \Cref{prop:left-deriv-n-fun} since both functors preserves filtered
colimits and geometric realizations and they are canonically identified on the
full subcategory $\tmop{Pair}^{\gamma, \tmop{st}} \subseteq
\tmop{Pair}^{\gamma, \tmop{an}}$.

{\construction{Let $n \in \mathbb{N}$ be a natural number. Then there is an
essentially unique comparison map $c_{\gamma}^{[n]}$ from the composite
functor
\[ \tmop{Pair}^{\tmop{an}} \rightarrow \tmop{Pair}^{\gamma, \tmop{an}}
   \xrightarrow{R^{[n]}} \tmop{Pair}^{\gamma, \tmop{an}} \]
which preserves filtered colimits and geometric realizations, to the composite
functor
\[ \tmop{Pair}^{\tmop{an}} \rightarrow \tmop{Pair} \hookrightarrow
   \tmop{Pair}^{\gamma} \xrightarrow{R_{\tmop{cl}}^{[n]}}
   \tmop{Pair}^{\gamma_n} \hookrightarrow \tmop{Pair}^{\gamma, \tmop{an}} \]
which is equivalent to $c_{\gamma}^{[0, n)}$ after composing the
sifted-colimit-preserving functor $\tmop{Pair}^{\gamma, \tmop{an}} \rightarrow
D (\mathbb{Z})$ by checking on the full subcategory $\tmop{Pair}^{\tmop{st}}
\subseteq \tmop{Pair}^{\tmop{an}}$ and the universal property of the left Kan
extension.}}

It remains to show that

\begin{lemma}
  \label{lem:comp-PDRed-equiv-qreg}The comparison map $c_{\gamma}^{[n]}$ of
  functors $\tmop{Pair}^{\tmop{an}} \rightrightarrows \tmop{Pair}^{\gamma,
  \tmop{an}}$ becomes an equivalence after restricting to the full subcategory
  $\tmop{QReg} \subseteq \tmop{Pair}^{\tmop{an}}$.
\end{lemma}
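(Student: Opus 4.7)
Fix a quasiregular pair $X = (A \twoheadrightarrow A/I) \in \tmop{QReg}$. My plan is to show that the LHS $\tmop{Red}^{[n]}(\tmop{AniPDEnv}(X))$ already lies in the classical full subcategory $\tmop{PDPair}^{[n]} \subseteq \tmop{AniPDPair}$, and then to deduce $\text{LHS} \simeq \text{RHS}$ via a universal property argument in this $1$-category.

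For the first step, I would examine the PD-filtration $\mathbb{L}\tmop{PDFil}$ on LHS. By \Cref{lem:pdfil-free-pdalg}, its associated graded is $\tmop{gr}^k \simeq \Gamma^k_{A/I}(\tmop{gr}^1(\mathbb{L}\tmop{PDFil}))$ for $k < n$, with $\tmop{gr}^k \simeq 0$ for $k \geq n$ by construction of $\tmop{Red}^{[n]}$ (which on the generators $\mathcal{E}^0$ kills the $n$-th divided power, and this vanishing extends to sifted colimits). By \Cref{lem:adfil-pdfil}, $\tmop{gr}^1(\mathbb{L}\tmop{PDFil})$ is $\mathbb{L}_{(A/I)/A}[-1]$, which by quasiregularity coincides with the static flat $A/I$-module $I/I^2$. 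Since derived divided powers preserve flatness (as used in \Cref{cor:Fp-qreg-pd-env-flat}, via \cite[Cor~25.2.3.3]{Lurie2018}), each $\tmop{gr}^k$ is a static flat $A/I$-module. Applying \Cref{lem:flat-stable-ext} inductively to the finite $n$-step PD-filtration, the underlying $\mathbb{E}_\infty$-ring of LHS is static; combined with $\tmop{Red}^{[n]}$ preserving the second slot $A/I$ (by \Cref{lem:ani-pd-env-preserv-target} together with a direct check on $\mathcal{E}^0$), LHS is a classical PD-pair, and the vanishing $\tmop{Fil}^n = 0$ places it in $\tmop{PDPair}^{[n]}$.

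Next I would match LHS and RHS via a universal property. The RHS is by construction the initial object of $\tmop{PDPair}^{[n]}$ receiving a map from $X$, by the classical adjunctions $\tmop{PDEnv} \dashv \tmop{forget}$ and $\tmop{Loc}^{[n]} \dashv \tmop{incl}$. On the LHS side, $\tmop{AniPDEnv}$ is left adjoint to forget (\Cref{def:anipair-anipdpair}), and $\tmop{Red}^{[n]}$, being the left Kan extension of the essentially surjective functor $\mathcal{E}^0 \to \tmop{Loc}^{[n]}(\mathcal{E}^0)$, is a left adjoint onto the sub-$\infty$-category $\mathcal{P}_\Sigma(\tmop{Loc}^{[n]}(\mathcal{E}^0)) \hookrightarrow \tmop{AniPDPair}$ via \Cref{lem:left-deriv-fun-adjoint}. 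Since $\tmop{PDPair}^{[n]}$ lives inside this subcategory, both LHS and RHS corepresent the functor $Y \mapsto \tmop{Map}_{\tmop{Pair}}(X, \tmop{forget}(Y))$ on $\tmop{PDPair}^{[n]}$ (using the full faithfulness of $\tmop{Pair} \hookrightarrow \tmop{AniPair}$ from \Cref{prop:infty-cat-forget-embedding}); hence LHS and RHS are canonically equivalent, and the comparison $c^{[n]}_{\tmop{PDRed}}$ realizes this equivalence by the universal property of left Kan extensions under which it was defined.

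The main obstacle I anticipate is the precise identification of the universal property of LHS in the second step, since $\tmop{Red}^{[n]}$ is presented as a left Kan extension rather than directly as an adjoint; the subtlety is to pin down which sub-$\infty$-category of $\tmop{AniPDPair}$ admits $\tmop{Red}^{[n]}$ as a left adjoint to its inclusion and to verify that it contains all of $\tmop{PDPair}^{[n]}$. Should this prove troublesome, an alternative strategy would use the local-global principle \Cref{lem:static-loc-global}: rationally $\tmop{AniPDEnv}$ is the identity (\Cref{lem:ani-PD-env-char-0}), so both sides trivially agree with $\tmop{Loc}^{[n]}$ of $(A,I) \otimes \mathbb{Q}$ with its canonical divided powers; modulo each prime $p$, one can exploit the conjugate filtration (\Cref{def:conj-fil-pd-env}) and the flatness result \Cref{cor:Fp-qreg-pd-env-flat} to compare the two sides piece by piece.
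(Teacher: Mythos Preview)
Your overall strategy—show the LHS lies in $\tmop{PDPair}$, then upgrade to $\tmop{PDPair}^{[n]}$, then match with RHS via a universal property—is exactly the paper's. But the execution has a genuine gap in the middle step.

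In your first step you apply $\mathbb{L}\tmop{PDFil}$ to the LHS itself and invoke \Cref{lem:adfil-pdfil} to identify $\tmop{gr}^1$; but that lemma is stated for the animated PD-envelope of $X$, not for $\tmop{Red}^{[n]}$ of it, and your claim that $\tmop{gr}^k(\mathbb{L}\tmop{PDFil}(\text{LHS})) \simeq 0$ for $k \geq n$ is not justified: $\tmop{Loc}^{[n]}(E)$ for $E \in \mathcal{E}^0$ does not lie in $\mathcal{E}^0$, so $\mathbb{L}\tmop{PDFil}$ of it is not computed by the classical truncated PD-filtration. The paper avoids this by using the identification (proved just before the lemma) of the underlying ring of $\tmop{Red}^{[n]}(-)$ with $\tmop{gr}^{[0,n)} \circ \mathbb{L}\tmop{PDFil}(-)$, and applying \Cref{lem:pdfil-free-pdalg}, \Cref{lem:adfil-pdfil}, \Cref{cor:LAdFil-symm-cot-cx} to $\tmop{AniPDEnv}(X)$ rather than to the LHS. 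This yields a finite filtration on the underlying ring with static flat graded pieces $\Gamma^k_{A/I}(I/I^2)$, hence staticness. This is a minor reorganization of your idea.

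The real gap is your inference from ``$\tmop{Fil}^n = 0$'' to membership in $\tmop{PDPair}^{[n]}$. Even once you know the LHS is a classical PD-pair $(C,K,\gamma)$, the filtration you have produced on $C$ is not a priori the classical PD-filtration $K^{[\bullet]}$, so its vanishing at level $n$ does not give $K^{[n]} = 0$. The paper establishes $K^{[n]} = 0$ by direct element-chasing: write $(A,I)$ as a sifted colimit of $(B_j,J_j) \in \mathcal{D}^0$ in $\tmop{AniPair}$, so that $(C,K,\gamma)$ is the sifted colimit of the height-$n$ truncated PD-envelopes $(C_j/K_j^{[n]}, \ldots)$ in $\tmop{AniPDPair}$; any product $\gamma_{i_1}(x_1)\cdots\gamma_{i_m}(x_m)$ with $\sum i_\ell \geq n$ corresponds to a map from the compact projective generator $(\Gamma_{\mathbb{Z}}(X_1,\ldots,X_m) \twoheadrightarrow \mathbb{Z})$, which by siftedness factors through some stage $j$ where $\gamma_{i_1}(X_1)\cdots\gamma_{i_m}(X_m)$ is already zero. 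Your universal-property endgame via $\mathcal{P}_\Sigma(\tmop{Loc}^{[n]}(\mathcal{E}^0))$ is essentially the alternative route the paper sketches in the remark following the lemma, but that route also presupposes LHS $\in \tmop{PDPair}^{[n]}$, which is precisely the step you have not secured.
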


\begin{proof}
  It follows from
  \Cref{lem:pdfil-free-pdalg,lem:adfil-pdfil,cor:LAdFil-symm-cot-cx,prop:characterize-pdpair}
  and the fact that the derived divided powers $\Gamma^{\ast}$ of a flat
  module is flat therefore static, that the essential image of the composite
  functor
  \begin{equation}
    \tmop{QReg} \hookrightarrow \tmop{Pair}^{\tmop{an}} \rightarrow
    \tmop{Pair}^{\gamma, \tmop{an}} \xrightarrow{R^{[n]}} \tmop{Pair}^{\gamma,
    \tmop{an}} \label{eq:qreg-red-anipdpair}
  \end{equation}
  lies in the full subcategory $\tmop{Pair}^{\gamma} \subseteq
  \tmop{Pair}^{\gamma, \tmop{an}}$. We first show that the essential image
  further lies in the full subcategory $\tmop{Pair}^{\gamma_n} \subseteq
  \tmop{Pair}^{\gamma}$.
  
  We fix a quasiregular pair $(A, I) \in \tmop{QReg}$. Let $(C, K, \gamma) \in
  \tmop{Pair}^{\gamma}$ denote the image of $(A, I) \in \tmop{QReg}$ under the
  composite functor \eqref{eq:qreg-red-anipdpair}. Since $(A, I)$ could be
  rewritten as a sifted colimit $\tmop{colim}_{j \in \mathcal{I}}  (B_j, J_j)$
  taken in $\tmop{Pair}^{\tmop{an}}$, where $(B_j, J_j) \in
  \tmop{Pair}^{\tmop{st}}$. Let $(C_j, K_j, \gamma_j) \in \tmop{Pair}^{\gamma,
  \tmop{st}}$ be the PD-envelope of $(B_j, J_j)$. Then $(C, K, \gamma) \simeq
  \tmop{colim}_{j \in \mathcal{I}}  (C_j / K_j^{[n]}, K_j C_j / K_j^{[n]},
  \gamma_j)$ taken in $\tmop{Pair}^{\gamma, \tmop{an}}$. For every $x_1,
  \ldots, x_m \in K$ and $i_1, \ldots, i_m \in \mathbb{N}$ such that $i_1 +
  \cdots + i_m \geq n$, we need to show that $\gamma_{i_1} (x_1) \cdots
  \gamma_{i_m} (x_m) = 0$. The elements $x_1, \ldots, x_m$ define a map
  $\varphi \of (\Gamma_{\mathbb{Z}} (X_1, \ldots, X_m) \twoheadrightarrow
  \mathbb{Z}, \delta) \rightarrow (C, K, \gamma)$ in $\tmop{Pair}^{\gamma}
  \subseteq \tmop{Pair}^{\gamma, \tmop{an}}$. Since $(\Gamma_{\mathbb{Z}}
  (X_1, \ldots, X_m) \twoheadrightarrow \mathbb{Z}, \delta) \in
  \tmop{Pair}^{\gamma, \tmop{an}}$ is compact and projective and $\mathcal{I}$
  is sifted, the map $\varphi$ factors as $(\Gamma_{\mathbb{Z}} (X_1, \ldots,
  X_m) \twoheadrightarrow \mathbb{Z}, \delta) \rightarrow (C_j / K_j^{[n]},
  K_j C_j / K_j^{[n]}, \gamma_j) \rightarrow \tmop{colim}_{k \in \mathcal{I}} 
  (C_k / K_k^{[n]}, K_k C_k / K_k^{[n]}, \gamma_k)$ for some $j \in
  \mathcal{I}$. Then the element $\gamma_{i_1} (X_1) \cdots \gamma_{i_m} (X_m)
  \in \Gamma_{\mathbb{Z}} (X_1, \ldots, X_m)$ is killed by the first map,
  hence $\gamma_{i_1} (x_1) \cdots \gamma_{i_m} (x_m) = 0$.
  
  Note that the composite of left adjoints $\tmop{Pair} \rightarrow
  \tmop{Pair}^{\gamma} \xrightarrow{R_{\tmop{cl}}^{[n]}}
  \tmop{Pair}^{\gamma_n}$ preserves small colimits, $(C, K, \gamma) \in
  \tmop{Pair}^{\gamma_n}$ is isomorphic to the image of $(A, I) \in
  \tmop{QReg} \subseteq \tmop{Pair}$ under this composite functor and the map
  $(A, I) \rightarrow (C, K)$ is the unit map under this isomorphism. The
  result then follows from the uniqueness of universal objects.
\end{proof}

\begin{remark}
  In fact, there is an $\infty$-category $\tmop{Pair}^{\gamma_n, \tmop{an}}$
  of {\tmdfn{animated PD-pairs PD-nilpotent of height $n$}}, defined to be the
  nonabelian derived category of the essential image of $\tmop{Pair}^{\gamma,
  \tmop{st}} \subseteq \tmop{Pair}$ under the functor $R_{\tmop{cl}}^{[n]} \of
  \tmop{Pair}^{\gamma} \rightarrow \tmop{Pair}^{\gamma_n}$. Then there is a
  pair of adjoint functors $\tmop{Pair}^{\gamma_n, \tmop{an}} \rightleftarrows
  \tmop{Pair}^{\gamma_n, \tmop{an}}$ by
  \Cref{cor:nonab-deriv-cat-adjoint-fun}. Furthermore, by mimicking the proof
  of \Cref{lem:pdpair-Psigma1-full-faith}, the canonical functor
  $\tmop{Pair}^{\gamma_n} \rightarrow \tmop{Pair}^{\gamma_n, \tmop{an}}$ is
  fully faithful. This leads to an alternative proof of
  \Cref{lem:comp-PDRed-equiv-qreg}. Although the functor
  $\tmop{Pair}^{\gamma_n} \rightarrow \tmop{Pair}^{\gamma}$ is fully faithful,
  we conjecture that the functor $\tmop{Pair}^{\gamma_n, \tmop{an}}
  \rightarrow \tmop{Pair}^{\gamma, \tmop{an}}$ is not fully faithful, similar
  to the fact that the forgetful functor $D (\mathbb{Z}/ n\mathbb{Z})
  \rightarrow D (\mathbb{Z})$ is not fully faithful though
  $\tmop{Mod}_{\mathbb{Z}/ n\mathbb{Z}} \rightarrow \tmop{Ab}$ is so.
\end{remark}

Now we answer Illusie's question:

\begin{proposition}
  \label{prop:qreg-assoc-gr-pd}For every quasiregular pair $(A, I) \in
  \tmop{QReg}$, let $(B, J, \gamma)$ denote its PD-envelope. Then the
  canonical map $\Gamma_{A / I}^{\ast} (I / I^2) \rightarrow \bigoplus
  J^{[\ast]} / J^{\left[ \mathord{\ast} + 1 \right]}$ of graded rings induced
  by $\gamma_{\ast} \of I \rightarrow I$ is an equivalence.
\end{proposition}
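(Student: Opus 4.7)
The plan is to combine the three main ingredients developed in this subsection and the previous one: the identification of the associated graded of the PD-filtration via divided powers (\Cref{lem:pdfil-free-pdalg}), the comparison between the adic filtration on animated pairs and the PD-filtration on the animated PD-envelope (\Cref{lem:adfil-pdfil}), and the equivalence between animated and classical PD-filtration graded pieces under quasiregularity (\Cref{prop:comp-PDFil-equiv-qreg}).

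Concretely, let $(B \twoheadrightarrow B'', \gamma^{\tmop{ani}})$ denote the animated PD-envelope of $(A, I) \in \tmop{QReg} \subseteq \tmop{AniPair}$. First I would assemble the chain of equivalences
\[ \Gamma_{A/I}^{\ast}(I/I^2) \xrightarrow{\simeq} \Gamma_{B''}^{\ast}(\tmop{gr}^1(\mathbb{L}\tmop{PDFil}(B \twoheadrightarrow B'', \gamma^{\tmop{ani}}))) \xrightarrow{\simeq} \tmop{gr}^{\ast}(\mathbb{L}\tmop{PDFil}(B \twoheadrightarrow B'', \gamma^{\tmop{ani}})) \xrightarrow{\simeq} \bigoplus J^{[\ast]}/J^{[\ast+1]} \]
as graded $\mathbb{Z}$-algebras. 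The first equivalence uses that $B'' \simeq A/I$ by \Cref{lem:ani-pd-env-preserv-target}, combined with \Cref{lem:adfil-pdfil} in degree $1$ (giving $\tmop{gr}^1(\mathbb{L}\tmop{PDFil}) \simeq \tmop{gr}^1(\mathbb{L}\tmop{AdFil}) \simeq I/I^2$, the last identification from \Cref{cor:symm-assoc-gr-equiv-qreg} or directly from \Cref{cor:LAdFil-symm-cot-cx} since $I/I^2$ is flat over $A/I$). The second equivalence is \Cref{lem:pdfil-free-pdalg}. The third equivalence is the content of \Cref{prop:comp-PDFil-equiv-qreg} applied to $(A, I) \in \tmop{QReg}$, using that the classical PD-filtration on $(B, J, \gamma)$ computes $\tmop{gr}^n = J^{[n]}/J^{[n+1]}$.

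The remaining and technically most delicate step is to check that the resulting equivalence $\Gamma_{A/I}^{\ast}(I/I^2) \xrightarrow{\simeq} \bigoplus J^{[\ast]}/J^{[\ast+1]}$ agrees with the canonical comparison map induced by $\gamma_{\ast} \colon I \to J$. This will be done by an element-tracing argument, entirely parallel to the proof of \Cref{cor:symm-assoc-gr-equiv-qreg}: given a pure tensor $\gamma_{i_1}(\overline{x_1}) \cdots \gamma_{i_m}(\overline{x_m}) \in \Gamma_{A/I}^n(I/I^2)$ with $i_1 + \cdots + i_m = n$, choose lifts $x_1, \ldots, x_m \in I$; these determine a map $(\mathbb{Z}[X_1, \ldots, X_m], (X_1, \ldots, X_m)) \to (A, I)$ of ordinary pairs, and functoriality of each of the three equivalences above reduces the verification to the case of the Koszul-regular pair $(\mathbb{Z}[X_1, \ldots, X_m], (X_1, \ldots, X_m))$, where both maps are known to send the canonical divided-power generator to $\gamma_{i_1}(X_1) \cdots \gamma_{i_m}(X_m)$ in the classical PD-envelope $\Gamma_{\mathbb{Z}}(X_1, \ldots, X_m)$.

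The main obstacle is ensuring that the formal comparison coming from left Kan extension (which is a priori only $\mathbb{E}_{\infty}$-algebraic) genuinely records the effect of $\gamma_{\ast}$ on elements; this is essentially a bookkeeping issue handled by the universality arguments already used in \Cref{cor:symm-assoc-gr-equiv-qreg} and \Cref{lem:adfil-pdfil}, but one must carefully track the identifications through the equivalence $\tmop{gr}^1(\mathbb{L}\tmop{PDFil}) \simeq I/I^2$ so that the resulting map in degree $n$ is the one induced on symmetric/divided powers by $\gamma_{\ast}$ on the canonical generators, rather than some twisted variant.
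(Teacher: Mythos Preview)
Your proposal is correct and follows essentially the same route as the paper: assemble the equivalence from \Cref{cor:LAdFil-symm-cot-cx}, \Cref{lem:adfil-pdfil}, \Cref{lem:pdfil-free-pdalg}, and \Cref{prop:comp-PDFil-equiv-qreg}, then verify it agrees with the canonical map by element tracing as in \Cref{cor:symm-assoc-gr-equiv-qreg}. The paper phrases the tracing step as replacing $(\mathbb{Z}[X_1,\ldots,X_n],(X_1,\ldots,X_n))$ by $(\Gamma_{\mathbb{Z}}(X_1,\ldots,X_n)\twoheadrightarrow\mathbb{Z},\gamma)$, but since the comparison $c_{\tmop{PDFil}}$ is functorial in animated pairs and the PD-envelope functor carries your map of pairs to the corresponding map of PD-pairs, the two formulations are equivalent.
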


\begin{proof}
  It follows from
  \Cref{cor:LAdFil-symm-cot-cx,lem:adfil-pdfil,lem:pdfil-free-pdalg,prop:comp-PDFil-equiv-qreg}
  that there is a comparison map $\Gamma_{A / I}^{\ast} (I / I^2) \rightarrow
  \bigoplus J^{[\ast]} / J^{\left[ \mathord{\ast} + 1 \right]}$ of graded
  rings. Then the result follows from element tracing, a modification of the
  proof of \Cref{cor:symm-assoc-gr-equiv-qreg} by replacing $(\mathbb{Z} [X_1,
  \ldots, X_n], (X_1, \ldots, X_n))$ with $(\Gamma_{\mathbb{Z}} (X_1, \ldots,
  X_n) \twoheadrightarrow \mathbb{Z}, \gamma)$.
\end{proof}

\section{Derived crystalline cohomology}\label{sec:deriv-crys-coh}

In this section, we define and study the {\tmdfn{Hodge-filtered derived
crystalline cohomology}}, a filtered
$\mathbb{E}_{\infty}$-$\mathbb{Z}$-algebra functorially associated to an
animated PD-pair $(A \twoheadrightarrow A'', \gamma)$ along with a map $A''
\rightarrow R$ of animated rings. To do so, we will introduce an auxiliary
construction, the {\tmdfn{Hodge-filtered derived de Rham cohomology}},
functorially associated to a map $(A \twoheadrightarrow A'', \gamma)
\rightarrow (B \twoheadrightarrow B'', \delta)$ of animated PD-pairs, which
will be proved independent of the choice of $B$, and then we define the
Hodge-filtered derived crystalline cohomology for $(A \twoheadrightarrow A'',
\gamma)$ along with $A'' \rightarrow R$ as the Hodge-filtered derived de Rham
cohomology of the map $(A \twoheadrightarrow A'', \gamma) \rightarrow
(\tmop{id}_R \of R \rightarrow R, 0)$. This is a generalization of the (usual)
derived de Rham cohomology as introduced in Illusie's thesis
{\cite{Illusie1971}}.

Furthermore, we also define the {\tmdfn{cohomology of the affine crystalline
site}} which could be endowed with Hodge-filtration, whose definition is more
similar to the classical crystalline cohomology in {\cite{Berthelot1974}}. The
Hodge-filtered derived de Rham cohomology is, roughly speaking, equivalent to
the {\tmdfn{relative animated PD-envelope}} whenever $A'' \rightarrow R$ is
surjective (\Cref{prop:crys-PD-env-equiv}), and the Hodge-filtered derived de
Rham cohomology is equivalent to the cohomology of the affine crystalline site
with Hodge filtration when $\pi_0 (R)$ is a finitely generated $\pi_0
(A'')$-algebra (\Cref{prop:deriv-crys-coh-site-comp}) or when $R$ is a
quasisyntomic $A''$-algebra (\Cref{prop:site-comp-qsyn-integral}).
Furthermore, the cohomology of the affine crystalline site is equivalent to
the classical crystalline cohomology when everything is classically given, at
least up to $p$-completion, due to the fact that our theory is non-completed
(\Cref{prop:site-static-comp}). Our results could be understood as an
extrapolation of techniques in {\cite{Bhatt2012a}}.

\begin{remark}
  Our theory is characteristic-independent. As a cost, the derived de Rham
  cohomology does not coincide with algebraic de Rham cohomology even under
  smoothness condition, although this is true when the base is of
  characteristic $p$. In particular, for a map $(A \twoheadrightarrow A'',
  \gamma) \rightarrow (B \twoheadrightarrow B'', \delta)$ of animated PD-pairs
  where $A$ is an animated $\mathbb{Q}$-algebra, the underlying
  $\mathbb{E}_{\infty}$-ring of our Hodge-filtered derived de Rham cohomology
  is constantly $A$, cf. \Cref{lem:dR-rational}. However, in this case, the
  non-completed crystalline cohomology (\Cref{def:static-aff-crys-site}) is
  also $A$, so the derived de Rham cohomology is as ``bad'' as the
  non-completed derived crystalline cohomology. On the other hand, the
  Hodge-filtration allows us to recover the ``correct'' cohomology theory in
  characteristic $0$ after taking Hodge completion by
  {\cite[Rem~2.6]{Bhatt2012a}}.
\end{remark}

As a corollary, we deduce that the (usual) derived de Rham cohomology
$\tmop{dR}_{\mathbb{Z}/\mathbb{Z} [x]}$ is, as an
$\mathbb{E}_{\infty}$-$\mathbb{Z} [x]$-algebra, equivalent to the
PD-polynomial algebra $\Gamma_{\mathbb{Z}} (x)$. Bhatt showed an $p$-completed
version of this {\cite[Thm~3.27]{Bhatt2012a}}.

\begin{remark}
  In fact, our theory stems from the observation that the $p$-completed
  derived de Rham cohomology $(\tmop{dR}_{\mathbb{Z}/\mathbb{Z}
  [x]})_p^{\wedge}$ coincides with the $p$-completed PD-polynomial ring
  $\Gamma_{\mathbb{Z}} (x)_p^{\wedge}$, and the rationalization becomes
  $\mathbb{Q} [x]$.
\end{remark}

The virtue of our Hodge-filtered derived crystalline cohomology is that it
preserves small colimits. We will show that this implies several properties of
derived crystalline cohomology, such as ``Künneth formula'' and base change
property
(\Cref{cor:Hdg-fil-crys-coh-base-chg,cor:Hdg-fil-crys-coh-symm-mon,cor:Hdg-fil-crys-coh-transitive}).

\begin{remark}
  In a forthcoming work {\cite{Magidson}}, the Hodge-filtered derived de Rham
  cohomology admits a natural enrichment to derived PD-pairs,
  \Cref{rem:deriv-PD-pairs}, and the Hodge filtration is given by the
  PD-filtration of the derived PD-pair in question.
\end{remark}

\subsection{Derived de Rham cohomology}\label{subsec:deriv-dR}In this
subsection, we define the derived de Rham cohomology for maps of animated
PD-pairs. We need the definition of {\tmdfn{modules of
PD-differentials}}\footnote{It is about differentials preserving PD-structure,
rather than a module with a PD-structure.}.

\begin{definition}[{\cite[\href{https://stacks.math.columbia.edu/tag/07HQ}{Tag
07HQ}]{stacks-project}}]
  \label{def:pd-derivation}Let $(A, I, \gamma) \rightarrow (B, J, \delta)$ be
  a map of PD-pairs and $M$ an $B$-module. A {\tmdfn{PD $A$-derivation}} into
  $M$ is a map $\theta \of B \rightarrow M$ which is additive, $\theta (a) =
  0$ for $a \in A$, satisfies the Leibniz rule $\theta (bb') = b \theta (b') +
  b' \theta (b)$ and that
  \[ \theta (\delta_n (x)) = \delta_{n - 1} (x) \theta (x) \]
  for all $n \geq 1$ and $x \in J$.
  
  In this situation, there exists a {\tmdfn{universal PD $A$-derivation}}
  \[ \mathd_{(B, J) / (A, I)} \of B \rightarrow \Omega_{(B, J) / (A, I)}^1 \]
  such that for any PD $A$-derivation $\theta \of B \rightarrow M$, there
  exists a unique $B$-linear map $\xi \of \Omega_{(B, J) / (A, I)}^1
  \rightarrow M$ such that $\theta = \xi \circ \mathd_{(B, J) / (A, I)}$. We
  also call $\Omega_{(B, J) / (A, I)}^1$ the {\tmdfn{module of
  PD-differentials}}.
\end{definition}

\begin{remark}
  In \Cref{def:pd-derivation}, the PD-structure on $A$ is irrelevant. However,
  we will soon see that the derived version of module of PD-differentials does
  depend on the PD-structure on $A$.
\end{remark}

{\construction[{{\cite[\href{https://stacks.math.columbia.edu/tag/07HZ}{Tag
07HZ}]{stacks-project}}}]{\label{cons:PD-dR-cplx-cohom}Let $(A, I, \gamma) \rightarrow (B, J, \delta)$ be a
map of PD-pairs such that $\Omega_{(B, J) / (A, I)}^1$ is a flat
$B$-module\footnote{We assume the flatness only to avoid the appearance of the
ordinary tensor product $\otimes$ and the exterior power $\bigwedge$, since
for flat modules, these coincide with the derived versions. In fact, we only
need the very special case that $((A, I, \gamma) \rightarrow (B, J, \delta))
\in \tmop{dRCon}$ defined before \Cref{def:deriv-dR-pdpair}.}. The {\tmdfn{de
Rham complex}} $(\Omega_{(B, J) / (A, I)}^{\ast}, \mathd)$ is given by
$\Omega_{(B, J) / (A, I)}^i = \bigwedgestar_B^i \Omega_{(B, J) / (A, I)}^1$
and $\mathd \of \Omega_{(B, J) / (A, I)}^i \rightarrow \Omega_{(B, J) / (A,
I)}^{i + 1}$ is the unique $A$-linear map determined by
\[ \mathd (f_0 \mathd f_1 \wedge \cdots \wedge \mathd f_i) = \mathd f_0 \wedge
   \cdots \wedge \mathd f_i . \]
We recall that a {\tmdfn{commutative differential graded $A$-algebra}}
(abbrev. {\tmdfn{$A$-CDGA}}) is a commutative algebra object in the symmetric
monoidal abelian $1$\mbox{-}category $\tmop{Ch} (\tmop{Mod}_A)$ of chain
complexes\footnote{We identify cochain complexes $(K^{\ast}, \mathd)$ with
chain complexes $\left( K_{- \mathord{\ast}}, \mathd \right)$.} in static
$A$-modules for a ring $A$. Then any nonpositively graded $A$-CDGA gives rise
to an $\mathbb{E}_{\infty}$-$A$-algebra\footnote{This is in
{\cite[Notation~3.3.12]{Raksit2020}}, and we reproduce the argument as
follows. We can identify the heart $\tmop{DF} (A)^{\heartsuit}$ with respect
to the Beilinson $t$-structure (\Cref{prop:Beilinson-t-struct}) with the
abelian $1$\mbox{-}category $\tmop{Ch} (\tmop{Mod}_A)$. Furthermore, the fully
faithful embedding $\tmop{Ch} (\tmop{Mod}_A) \hookrightarrow \tmop{DF} (A)$ is
lax symmetric monoidal (\Cref{lem:heart-symm-mon}). Thus an $A$-CDGA gives
rise to an $\mathbb{E}_{\infty}$-algebra in $\tmop{DF} (A)$. The embedding
$\tmop{Ch} (\tmop{Mod}_A) \hookrightarrow \tmop{DF} (A)$ restricts to a lax
symmetric monoidal embedding $\tmop{Ch}_{\leq 0} (\tmop{Mod}_A) \rightarrow
\tmop{DF}^{\geq 0} (A)$. Thus a nonpositively graded $A$-CDGA gives rise to an
$\mathbb{E}_{\infty}$-algebra in $\tmop{DF}^{\geq 0} (A)$, which is mapped to
an $\mathbb{E}_{\infty}$-$A$-algebra by the symmetric monoidal functor
$\tmop{DF}^{\geq 0} (A) \rightarrow D (A)$.}, and in particular, the de Rham
complex constructed above gives rise to the {\tmdfn{de Rham cohomology}} of
$(A, I, \gamma) \rightarrow (B, J, \delta)$ as an
$\mathbb{E}_{\infty}$-$A$-algebra. Furthermore, the truncation map
$(\Omega_{(B, J) / (A, I)}^{\ast}, \mathd) \rightarrow \Omega_{(B, J) / (A,
I)}^0 = B$ is a map of CDGAs, where $B$ is concentrated in degree $0$. Passing
to the cohomology, we get a map of
$\mathbb{E}_{\infty}$-$\mathbb{Z}$-algebras, called the {\tmdfn{augmentation
map}} of the de Rham cohomology of $(A, I, \gamma) \rightarrow (B, J,
\delta)$.}}

\begin{remark}
  \label{rem:ch-cx-bnd-flat-symm-mon}When restricting to the full subcategory
  $\tmop{Ch}_{\gg - \infty} (\tmop{Mod}_A^{\flat}) \subseteq \tmop{Ch}
  (\tmop{Mod}_A)$ spanned by bounded below chain complexes of flat
  $A$-modules, the fully faithful embedding $\tmop{Ch}_{\gg - \infty}
  (\tmop{Mod}_A^{\flat}) \hookrightarrow \tmop{DF} (A)$ is in fact symmetric
  monoidal. We will refer to this later.
\end{remark}

\begin{remark}
  \label{rem:ch-cx-underlying-sp}The composite functor $\tmop{Ch}_{\leq 0}
  (\tmop{Mod}_A) \hookrightarrow \tmop{DF}^{\geq 0} (A) \rightarrow D_{\leq 0}
  (A)$ maps any complex to its underlying module spectrum.
\end{remark}

Now we define the derived de Rham cohomology for PD-pairs.

{\construction{\label{cons:dR-cohom-aug-dRCon0}By \Cref{cor:fun-cat-proj-gen},
the $\infty$\mbox{-}category $\tmop{dRCon} \assign \tmop{Fun} (\Delta^1,
\tmop{Pair}^{\gamma, \tmop{an}})$ (abbrev. for de Rham context) admits a set
of compact projective generators given by maps of PD-pairs of the form
$(\Gamma_{\mathbb{Z} [X]} (Y) \twoheadrightarrow \mathbb{Z} [X]) \rightarrow
(\Gamma_{\mathbb{Z} [X, X']} (Y, Y') \twoheadrightarrow \mathbb{Z} [X, X'])$
where each of $X, Y, X', Y'$ consists of a finite set (including empty) of
variables. These objects span a full subcategory $\tmop{dRCon}^0 \subseteq
\tmop{dRCon}$ stable under finite coproducts. Then it follows from
\Cref{prop:struct-n-proj-gen-cats} that there is an equivalence
$\mathcal{P}_{\Sigma} (\tmop{dRCon}^0) \rightarrow \tmop{dRCon}$ of
$\infty$\mbox{-}categories. The de Rham cohomology, equipped with the
augmentation map in Construction~\ref{cons:PD-dR-cplx-cohom}, restricts to a
functor $\tmop{dRCon}^0 \rightarrow \tmop{Fun} (\Delta^1,
\tmop{CAlg}_{\mathbb{Z}})$ where $\tmop{CAlg}_{\mathbb{Z}}$ is the
$\infty$\mbox{-}category of $\mathbb{E}_{\infty}$-$\mathbb{Z}$-algebras.}}

\begin{definition}
  \label{def:deriv-dR-pdpair}The {\tmdfn{derived de Rham cohomology functor}}
  $\tmop{dR}_{\cdummy / \cdummy} \of \tmop{dRCon} \rightarrow
  \tmop{CAlg}_{\mathbb{Z}}$ along with a canonical map $\tmop{dR}_{(B
  \twoheadrightarrow B'', \delta) / (A \twoheadrightarrow A'', \gamma)}
  \rightarrow B$ of functors $\tmop{dRCon} \rightrightarrows
  \tmop{CAlg}_{\mathbb{Z}}$ is defined to be the left derived functor
  (\Cref{prop:left-deriv-n-fun}) of the functor $\tmop{dRCon}^0 \rightarrow
  \tmop{Fun} (\Delta^1, \tmop{CAlg}_{\mathbb{Z}})$ in
  Construction~\ref{cons:dR-cohom-aug-dRCon0}. Given a map $(A
  \twoheadrightarrow A'', \gamma) \rightarrow (B \twoheadrightarrow B'',
  \delta)$ of animated PD-pairs, its {\tmdfn{derived de Rham cohomology}},
  i.e. the image under the derived de Rham cohomology functor, is denoted by
  $\tmop{dR}_{(B \twoheadrightarrow B'', \delta) / (A \twoheadrightarrow A'',
  \gamma)}$, or simply $\tmop{dR}_{(B \twoheadrightarrow B'') / (A
  \twoheadrightarrow A'')}$ when there is no ambiguity.
\end{definition}

We first explain that this is a generalization of classical derived de Rham
cohomology.

\begin{remark}
  We recall that the functor $\tmop{CAlg}^{\tmop{an}} \rightarrow
  \tmop{Pair}^{\gamma, \tmop{an}}, A \mapsto (\tmop{id}_A \of A \rightarrow A,
  0)$ is fully faithful (\Cref{lem:ani-ring-as-pdpair}), thus so is the
  induced functor $\tmop{Fun} (\Delta^1, \tmop{CAlg}^{\tmop{an}}) \rightarrow
  \tmop{Fun} (\Delta^1, \tmop{Pair}^{\gamma, \tmop{an}}) = \tmop{dRCon}$.
\end{remark}

\begin{lemma}
  \label{lem:dR-pdpair-classical}The composite functor $\tmop{Fun} (\Delta^1,
  \tmop{CAlg}^{\tmop{an}}) \rightarrow \tmop{dRCon}
  \xrightarrow{\tmop{dR}_{\cdummy / \cdummy}} \tmop{CAlg}_{\mathbb{Z}}, (A
  \rightarrow B) \mapsto \tmop{dR}_{(\tmop{id}_B \of B \rightarrow B, 0) /
  (\tmop{id}_A \of A \rightarrow A, 0)}$ is equivalent to the classical
  derived de Rham cohomology functor $(A \rightarrow B) \mapsto \tmop{dR}_{B /
  A}$.
\end{lemma}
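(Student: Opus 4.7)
The plan is to use the universal property of left derived functors (\Cref{prop:left-deriv-fun}): both functors will be identified as left derived of the same functor on polynomial-algebra maps.

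First I would observe that the functor $\tmop{Fun}(\Delta^1,\tmop{Ani}(\tmop{Ring})) \to \tmop{dRCon}$ induced by $A \mapsto (\tmop{id}_A \of A \rightarrow A, 0)$ is fully faithful (by \Cref{lem:ani-ring-as-pdpair} applied levelwise) and preserves small colimits (the embedding $\tmop{Ani}(\tmop{Ring}) \to \tmop{AniPDPair}$ is a left adjoint by \Cref{lem:ani-ring-as-pair,lem:ani-ring-as-pdpair}, and colimits in functor categories are pointwise). Since $\tmop{dR}_{\cdummy/\cdummy}$ is defined as a left derived functor, it preserves sifted colimits (\Cref{prop:left-deriv-fun}), hence so does the composite in question. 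On the other hand, by \Cref{cor:ani-arrow-cat} applied to $\tmop{Ani}(\tmop{Ring})$, the domain $\tmop{Fun}(\Delta^1,\tmop{Ani}(\tmop{Ring}))$ is projectively generated with a set of compact projective generators given by the maps $\mathbb{Z}[X] \to \mathbb{Z}[X,X']$ of polynomial rings. By \Cref{prop:left-deriv-fun}, it suffices to compare the two functors on this set of generators.

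Next I would identify where these generators land. Under the fully faithful embedding, $(\mathbb{Z}[X] \to \mathbb{Z}[X,X'])$ maps to the map of PD-pairs $(\mathbb{Z}[X] = \mathbb{Z}[X], 0) \to (\mathbb{Z}[X,X'] = \mathbb{Z}[X,X'], 0)$, which is precisely an object of $\tmop{dRCon}^0$ corresponding to the data $Y = \varnothing$, $Y' = \varnothing$. By construction of $\tmop{dR}_{\cdummy/\cdummy}$ in \Cref{def:deriv-dR-pdpair}, its value on such an object is the ordinary de Rham cohomology $\Omega^{\ast}_{\mathbb{Z}[X,X']/\mathbb{Z}[X]}$ of the map of PD-pairs $(\mathbb{Z}[X], 0) \to (\mathbb{Z}[X,X'], 0)$. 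Since the PD-ideals are zero, a PD-$\mathbb{Z}[X]$-derivation from $\mathbb{Z}[X,X']$ in the sense of \Cref{def:pd-derivation} is nothing but an ordinary $\mathbb{Z}[X]$-derivation, so $\Omega^1_{(\mathbb{Z}[X,X'],0)/(\mathbb{Z}[X],0)} \simeq \Omega^1_{\mathbb{Z}[X,X']/\mathbb{Z}[X]}$ and likewise for all exterior powers.

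Thus the composite $\tmop{Fun}(\Delta^1,\tmop{Ani}(\tmop{Ring})) \to \tmop{dRCon} \xrightarrow{\tmop{dR}} \tmop{CAlg}_{\mathbb{Z}}$ restricted to the polynomial-map generators agrees canonically with the classical de Rham complex functor, which is exactly the functor whose left derived functor defines classical derived de Rham cohomology. Since the classical derived de Rham cohomology $(A \to B) \mapsto \tmop{dR}_{B/A}$ also preserves sifted colimits and is left Kan extended from the same generators, invoking \Cref{prop:left-deriv-fun} gives the desired equivalence of functors. The only subtle point, which I would write out carefully, is the compatibility of the two presentations as left derived functors: the full subcategory of polynomial-to-polynomial maps sits inside both $\tmop{dRCon}^0$ (via the embedding above) and the standard generating subcategory for classical derived de Rham cohomology, and the respective restrictions of $\tmop{dR}$ to these subcategories are canonically identified by the computation of $\Omega^{\ast}_{(\cdummy,0)/(\cdummy,0)}$ above; this is the step that requires the most care, but is not deep.
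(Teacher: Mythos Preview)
Your proposal is correct and follows essentially the same approach as the paper: identify $\tmop{Fun}(\Delta^1,\tmop{Ani}(\tmop{Ring}))$ as projectively generated by polynomial-ring maps, check that the composite preserves sifted colimits, and verify agreement with the classical de Rham complex on these generators via \Cref{prop:left-deriv-fun}. The paper's proof is much terser but records exactly these ingredients; your write-up spells out the identification $\Omega^1_{(\mathbb{Z}[X,X'],0)/(\mathbb{Z}[X],0)} \simeq \Omega^1_{\mathbb{Z}[X,X']/\mathbb{Z}[X]}$ and the colimit-preservation of the embedding, which the paper leaves implicit.
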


\begin{proof}
  The crucial point is that, $\tmop{Fun} (\Delta^1, \tmop{CAlg}^{\tmop{an}})$
  is projectively generated for which $\{ (\mathbb{Z} [X] \rightarrow
  \mathbb{Z} [X, Y]) \}$ forms a set of compact projective generators, which
  follows from \Cref{cor:fun-cat-proj-gen,lem:ani-ring-as-pdpair}. The result
  then follows from \Cref{prop:left-deriv-n-fun} and the definition of the
  classical derived de Rham cohomology functor.
\end{proof}

We compute concretely the de Rham complex on $\tmop{dRCon}^0$. Fix an object
$(\Gamma_{\mathbb{Z} [X']} (Y') \twoheadrightarrow \mathbb{Z} [X'])
\rightarrow (\Gamma_{\mathbb{Z} [X, X']} (Y, Y') \twoheadrightarrow \mathbb{Z}
[X, X']) \in \tmop{dRCon}^0$, to simplify notations, we will write $A \assign
\Gamma_{\mathbb{Z} [X']} (Y'), A'' \assign \mathbb{Z} [X']$. Then this object
could be rewritten as $(A \twoheadrightarrow A'', \gamma) \rightarrow (B
\assign \Gamma_{A [X]} (Y) \twoheadrightarrow A'' [X], \tilde{\gamma})$ where
$X = (x_1, \ldots, x_m)$ and $Y = (y_1, \ldots, y_n)$ with the module of
PD-differentials $\Omega^1 = B \mathd x_1 \oplus \cdots \oplus B \mathd x_n
\oplus B \mathd y_1 \oplus \cdots \oplus B \mathd y_n$ and the universal
PD-derivation $B \rightarrow \Omega^1$ is determined by $\mathd (X^{\alpha}
\gamma_{\beta} (Y)) = \sum_{i = 1}^m \alpha_i x_1^{\alpha_1} \cdots
x_i^{\alpha_i - 1} \cdots x_m^{\alpha_m} \gamma_{\beta} (Y) \mathd x_i +
\sum_{j = 1}^n X^{\alpha} \gamma_{\beta_1} (y_1) \cdots \gamma_{\beta_j - 1}
(y_j) \cdots \gamma_{\beta_n} (y_n) \mathd y_j$ (with multi-index product).

As we mentioned earlier, derived de Rham cohomology is considered
uninteresting in characteristic $0$. Informally, the derived de Rham
cohomology $\tmop{dR}_{(B \twoheadrightarrow B'') / (A \twoheadrightarrow
A'')}$ is functorially equivalent to $A$ after rationalization. More
precisely, we will show that

\begin{lemma}
  \label{lem:dR-rational}There is a comparison map $A \rightarrow
  \tmop{dR}_{(B \twoheadrightarrow B'') / (A \twoheadrightarrow A'')}$,
  functorial in $((A \twoheadrightarrow A'') \rightarrow (B \twoheadrightarrow
  B'')) \in \tmop{dRCon}$, as a natural transformation of two functors
  $\tmop{dRCon} \rightrightarrows \tmop{CAlg}_{\mathbb{Z}}$. This natural
  transformation becomes an equivalence after composing with the
  rationalization $(-) \otimes_{\mathbb{Z}}^{\mathbb{L}} \mathbb{Q} \of
  \tmop{CAlg}_{\mathbb{Z}} \rightarrow \tmop{CAlg}_{\mathbb{Q}}$.
\end{lemma}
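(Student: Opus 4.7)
The plan is to construct the comparison map via left Kan extension from $\tmop{dRCon}^0$ and then verify the rationalized equivalence on this set of compact projective generators. For the construction: on $\tmop{dRCon}^0$ the functor $\tmop{dR}_{\cdummy/\cdummy}$ is realized by the classical de Rham complex $(\Omega^{\ast}_{(B,J)/(A,I)}, \mathd)$ viewed as a nonpositively graded $A$-CDGA, hence as an $\mathbb{E}_\infty$-algebra in $\tmop{DF}^{\geq 0}(\mathbb{Z})$ via Remark~\ref{rem:ch-cx-bnd-flat-symm-mon} (the modules of PD-differentials are visibly free). The unit map $A \hookrightarrow B = \Omega^0$ is a map of $A$-CDGAs (with $A$ concentrated in degree $0$), and thus yields a natural map $A \to \tmop{dR}_{(B \twoheadrightarrow B'', \delta)/(A \twoheadrightarrow A'', \gamma)}$ on $\tmop{dRCon}^0$. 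On the other hand, the functor $G \of \tmop{dRCon} \to \tmop{CAlg}_{\mathbb{Z}}$ sending $((A \twoheadrightarrow A'', \gamma) \to (B \twoheadrightarrow B'', \delta)) \mapsto A$ preserves small colimits (it factors through evaluation at the source and the forgetful functor $\tmop{AniPDPair} \to \tmop{Ani}(\tmop{Ring})$, cf.\ \Cref{lem:ani-pdpair-local-target}), so in particular it is left Kan extended from its restriction to $\tmop{dRCon}^0$. By the universal property of left Kan extensions (\Cref{prop:left-deriv-fun}) applied to $\tmop{dR}_{\cdummy/\cdummy}$, the unit maps on $\tmop{dRCon}^0$ promote to a natural transformation $G \Rightarrow \tmop{dR}_{\cdummy/\cdummy}$ on all of $\tmop{dRCon}$.

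To check that this comparison becomes an equivalence after composing with $\cdummy \otimes_{\mathbb{Z}}^{\mathbb{L}} \mathbb{Q} \of \tmop{CAlg}_{\mathbb{Z}} \to \tmop{CAlg}_{\mathbb{Q}}$, note that rationalization preserves small colimits, while both $G$ and $\tmop{dR}_{\cdummy/\cdummy}$ preserve sifted colimits. Hence the two rationalized functors $\tmop{dRCon} \rightrightarrows \tmop{CAlg}_{\mathbb{Q}}$ are both left Kan extended from their restrictions to $\tmop{dRCon}^0$, and by \Cref{prop:left-deriv-fun} it is enough to prove that the comparison is an equivalence when evaluated on objects of $\tmop{dRCon}^0$.

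Fix such an object $(A \twoheadrightarrow A'', \gamma) \to (B \twoheadrightarrow B'', \delta)$ with $A = \Gamma_{\mathbb{Z}[X']}(Y')$, $B = \Gamma_{A[X]}(Y)$. Since $n!$ is invertible in $\mathbb{Q}$, the rationalized free PD-algebras collapse to ordinary polynomial algebras: $A_{\mathbb{Q}} \simeq \mathbb{Q}[X', Y']$ and $B_{\mathbb{Q}} \simeq A_{\mathbb{Q}}[X, Y]$. Under this identification the PD-differentials coincide with the ordinary Kähler differentials, so the relative de Rham complex $\Omega^{\ast}_{(B,J)/(A,I)} \otimes_{\mathbb{Z}} \mathbb{Q}$ is the classical algebraic de Rham complex of the polynomial $A_{\mathbb{Q}}$-algebra $A_{\mathbb{Q}}[X, Y]$. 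By the standard Poincaré lemma over a $\mathbb{Q}$-algebra (iterated Künneth reduction to the one-variable case $A_{\mathbb{Q}}[x] \to A_{\mathbb{Q}}[x]\mathd x$, where the image of $\mathd$ is the whole degree-$1$ part because antiderivatives exist in characteristic $0$), this complex is quasi-isomorphic to $A_{\mathbb{Q}}$, and the quasi-isomorphism is precisely induced by the unit inclusion. This is exactly the rationalization of our comparison map on $\tmop{dRCon}^0$, which completes the argument.

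The main obstacle is the third step: one must confirm, at the level of the underlying filtered $\mathbb{E}_{\infty}$-$\mathbb{Q}$-algebra (not just complexes of modules), that the rationalization of the PD de Rham complex on a generator agrees with the classical de Rham complex of a polynomial algebra, and then invoke the Poincaré lemma. Both facts are essentially formal once one unwinds the definition of PD-differentials on a free PD-algebra and uses \Cref{rem:ch-cx-bnd-flat-symm-mon} to lift the Poincaré quasi-isomorphism to an equivalence of $\mathbb{E}_{\infty}$-algebras via the symmetric monoidal embedding $\tmop{Ch}_{\gg -\infty}(\tmop{Mod}_{A_{\mathbb{Q}}}^{\flat}) \hookrightarrow \tmop{DF}(A_{\mathbb{Q}})$.
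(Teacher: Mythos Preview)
Your proposal is correct and follows essentially the same route as the paper: construct the comparison on $\tmop{dRCon}^0$ via the $A$-CDGA structure map, extend by left Kan extension, reduce the rationalized equivalence to generators using sifted-colimit preservation, and apply the Poincar\'e lemma there. Two minor citation slips do not affect correctness: the colimit-preservation of $G$ follows from \Cref{prop:forget-PD-small-colim} (not \Cref{lem:ani-pdpair-local-target}), and passing from the Poincar\'e quasi-isomorphism to an equivalence in $\tmop{CAlg}_{\mathbb{Q}}$ uses \Cref{rem:ch-cx-underlying-sp} together with conservativity of the forgetful functor, rather than \Cref{rem:ch-cx-bnd-flat-symm-mon}.
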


\begin{proof}
  We first construct the comparison map in question. We have the composite of
  forgetful functors $\tmop{Pair}^{\gamma, \tmop{an}} \rightarrow
  \tmop{Pair}^{\tmop{an}} \rightarrow \tmop{CAlg}^{\tmop{an}} \rightarrow
  \tmop{CAlg}_{\mathbb{Z}}, (A \twoheadrightarrow A'', \gamma) \mapsto A$.
  Further composing with the evaluation map $\tmop{dRCon} \rightarrow
  \tmop{Pair}^{\gamma, \tmop{an}}$ at $[0] \in \Delta^1$, we get a functor
  $\tmop{dRCon} \rightarrow \tmop{CAlg}_{\mathbb{Z}}, ((A \twoheadrightarrow
  A'', \gamma) \rightarrow (B \twoheadrightarrow B'', \delta)) \mapsto A$. We
  restrict this functor to $\tmop{dRCon}^0$, getting a functor $\tmop{dRCon}^0
  \rightarrow \tmop{CAlg}_{\mathbb{Z}}$, which coincides with the composite
  functor $\tmop{dRCon}^0 \rightarrow \tmop{Ring} = \tmop{CAlg} (\tmop{Ab})
  \hookrightarrow \tmop{CAlg} (\tmop{Ch}_{\leq 0} (\tmop{Ab})) \rightarrow
  \tmop{CAlg}_{\mathbb{Z}}$ given by the ``same'' formula $((A
  \twoheadrightarrow A'', \gamma) \rightarrow (B \twoheadrightarrow B'',
  \delta)) \mapsto A$. Note that there is a canonical map of functors from
  $\tmop{dRCon}^0 \rightarrow \tmop{Ring} \rightarrow \tmop{CAlg}
  (\tmop{Ch}_{\leq 0} (\tmop{Ab}))$ to the de Rham complex functor
  $\tmop{dRCon}^0 \rightarrow \tmop{CAlg} (\tmop{Ch}_{\leq 0} (\tmop{Ab}))$,
  which is given by the $A$-CDGA structure on the de Rham complex. Now
  \Cref{prop:left-deriv-n-fun} gives us a comparison map of the left derived
  functors $\tmop{dRCon} \rightrightarrows \tmop{CAlg}_{\mathbb{Z}}$.
  
  It remains to see that this comparison map is an equivalence after
  rationalization. First, we note that the rationalization
  $\tmop{CAlg}_{\mathbb{Z}} \rightarrow \tmop{CAlg}_{\mathbb{Q}}$ preserves
  small colimits, and in particular, filtered colimits and geometric
  realizations, it follows from \Cref{prop:left-deriv-n-fun} that both
  functors are still left derived functors after rationalization, therefore it
  suffices to check the equivalence on $\tmop{dRCon}^0$. The Poincaré lemma
  imply that the comparison map of functors $\tmop{dRCon}^0 \rightrightarrows
  \tmop{CAlg} (\tmop{Ch}_{\leq 0} (\tmop{Ab}))$ becomes a homotopy equivalence
  after composing with $\tmop{CAlg} (\tmop{Ch}_{\leq 0} (\tmop{Ab}))
  \rightarrow \tmop{CAlg} (\tmop{Ch}_{\leq 0} (\tmop{Mod}_{\mathbb{Q}}))$,
  which implies that it becomes an equivalence after composing with
  $\tmop{CAlg} (\tmop{Ch}_{\leq 0} (\tmop{Ab})) \rightarrow
  \tmop{CAlg}_{\mathbb{Q}}$ by \Cref{rem:ch-cx-underlying-sp}.
\end{proof}

Another consequence of this computation is that the de Rham cohomology functor
$\tmop{dRCon}^0 \rightarrow \tmop{CAlg}_{\mathbb{Z}}$ preserves finite
coproducts, which follows from the fact that the de Rham cohomology functor
$\tmop{dRCon}^0 \rightarrow \tmop{CAlg} (\tmop{Ch}_{\gg - \infty}
(\tmop{FreeAb}))$ preserves finite coproducts, and that the composite functor
$\tmop{Ch}_{\gg - \infty} (\tmop{FreeAb}) \hookrightarrow \tmop{DF}^{\geq 0}
(\mathbb{Z}) \rightarrow D (\mathbb{Z})$ is symmetric monoidal, cf.
\Cref{rem:ch-cx-bnd-flat-symm-mon}. By \Cref{prop:left-deriv-n-fun}, we have

\begin{lemma}
  \label{lem:dR-preserv-colim}The derived de Rham cohomology functor
  $\tmop{dRCon} \rightarrow \tmop{CAlg}_{\mathbb{Z}}$ preserves small
  colimits.
\end{lemma}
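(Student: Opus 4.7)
The plan is to reduce the question to a statement about the restriction to the compact projective generators $\tmop{dRCon}^0$ and then verify that statement by the classical Künneth formula for de Rham complexes.

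First, since $\tmop{dR}_{\cdummy/\cdummy}$ is by definition the left derived functor of its restriction to $\tmop{dRCon}^0$, I would invoke \Cref{prop:left-deriv-fun}: a left derived functor out of $\tmop{dRCon} \simeq \mathcal{P}_\Sigma(\tmop{dRCon}^0)$ preserves all small colimits if (and only if) its restriction to $\tmop{dRCon}^0$ preserves finite coproducts. Thus it suffices to check that the de Rham cohomology $((A \twoheadrightarrow A'', \gamma) \to (B \twoheadrightarrow B'', \delta)) \mapsto \tmop{dR}_{(B \twoheadrightarrow B'')/(A \twoheadrightarrow A'')}$, viewed as a functor $\tmop{dRCon}^0 \to \tmop{CAlg}_{\mathbb{Z}}$, preserves finite coproducts.

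Next, I would factor this functor through the $1$-category $\tmop{CAlg}(\tmop{Ch}_{\gg -\infty}(\tmop{FreeAb}))$ of nonnegatively graded CDGAs consisting of bounded below chain complexes of free abelian groups. Concretely, an object of $\tmop{dRCon}^0$ has the explicit form described in the excerpt, with de Rham complex $\Omega^{\ast}_{B/A}$ a finite Koszul-like complex of free $B$-modules (hence of free abelian groups). The key computational fact to carry out at this step is the Künneth-type identity: for two objects $(A_i \twoheadrightarrow A_i'', \gamma_i) \to (B_i \twoheadrightarrow B_i'', \delta_i)$ in $\tmop{dRCon}^0$ ($i=1,2$) with coproduct given by the evident pushout of polynomial/PD-polynomial rings, one has a canonical isomorphism
\[ \Omega^{\ast}_{B_1/A_1} \otimes_{\mathbb{Z}} \Omega^{\ast}_{B_2/A_2} \xrightarrow{\;\simeq\;} \Omega^{\ast}_{B_1 \otimes_{\mathbb{Z}} B_2 / A_1 \otimes_{\mathbb{Z}} A_2} \]
of CDGAs. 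This is immediate from the explicit description of $\Omega^1$ in the standard/PD-polynomial case recalled just before the lemma, together with the standard compatibility between $\bigwedge$ and tensor products of flat modules. So the functor $\tmop{dRCon}^0 \to \tmop{CAlg}(\tmop{Ch}_{\gg -\infty}(\tmop{FreeAb}))$ preserves finite coproducts.

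Finally, to transfer this to $\tmop{CAlg}_{\mathbb{Z}}$, I would invoke \Cref{rem:ch-cx-bnd-flat-symm-mon}: the embedding $\tmop{Ch}_{\gg -\infty}(\tmop{FreeAb}) \hookrightarrow \tmop{DF}(\mathbb{Z})$ is symmetric monoidal (not merely lax), and composing with the symmetric monoidal forgetful functor $\tmop{DF}^{\geq 0}(\mathbb{Z}) \to D(\mathbb{Z})$ gives a symmetric monoidal functor, inducing a colimit-preserving functor $\tmop{CAlg}(\tmop{Ch}_{\gg -\infty}(\tmop{FreeAb})) \to \tmop{CAlg}_{\mathbb{Z}}$ on commutative algebras. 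Combining the preceding step with this, the functor $\tmop{dRCon}^0 \to \tmop{CAlg}_{\mathbb{Z}}$ preserves finite coproducts, and the conclusion follows from \Cref{prop:left-deriv-fun}.

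The only nontrivial ingredient is the Künneth isomorphism for the classical PD de Rham complex in the middle step; the rest is formal. I do not expect real obstacles, since everything is reduced to the finite free case where $\bigwedge$, $\otimes$, and $\tmop{Sym}$/$\Gamma$ all coexist nicely and commute with base change.
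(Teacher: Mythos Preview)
Your proposal is correct and follows essentially the same approach as the paper: reduce via \Cref{prop:left-deriv-fun} to checking that $\tmop{dRCon}^0 \to \tmop{CAlg}_{\mathbb{Z}}$ preserves finite coproducts, factor through $\tmop{CAlg}(\tmop{Ch}_{\gg -\infty}(\tmop{FreeAb}))$ where the Künneth identity for PD de Rham complexes gives the result, and then use the symmetric monoidality from \Cref{rem:ch-cx-bnd-flat-symm-mon} to pass to $\tmop{CAlg}_{\mathbb{Z}}$. The only minor wording issue is that what you need from the symmetric monoidal functor is preservation of finite coproducts (tensor products) of commutative algebras, not arbitrary colimits, but that is exactly what symmetric monoidality provides.
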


Now we show that the derived de Rham cohomology associated to the map $(A
\twoheadrightarrow A'', \gamma) \rightarrow (B \twoheadrightarrow B'',
\delta)$ does not depend on $B$, and define the derived crystalline
cohomology. To formally define the $\infty$-category of animated PD-pairs $(A
\twoheadrightarrow A'', \gamma)$ along with a map $A'' \rightarrow R$ of
animated rings, we need the concept of comma categories in
\Cref{subsec:comma-cat}. Consider the comma category $\tmop{CrysCon} \assign
\tmop{Pair}^{\gamma, \tmop{an}} \times_{\tmop{CAlg}^{\tmop{an}}} \tmop{Fun}
(\Delta^1, \tmop{CAlg}^{\tmop{an}})$ (abbrev. for crystalline context) where
the functor $\tmop{Pair}^{\gamma, \tmop{an}} \rightarrow
\tmop{CAlg}^{\tmop{an}}$ is the composite functor $\tmop{Pair}^{\gamma,
\tmop{an}} \rightarrow \tmop{Pair}^{\tmop{an}} \rightarrow
\tmop{CAlg}^{\tmop{an}}, (A \twoheadrightarrow A'', \gamma) \mapsto A''$ and
the functor $\tmop{Fun} (\Delta^1, \tmop{CAlg}^{\tmop{an}}) \rightarrow
\tmop{CAlg}^{\tmop{an}}$ is the evaluation $(A'' \rightarrow R) \mapsto A''$
at $0 \in \Delta^1$. It follows from \Cref{cor:comma-proj-gen} that
$\tmop{CrysCon}$ admits a set of compact projective generators of the form
$((\Gamma_{\mathbb{Z} [X]} (Y) \twoheadrightarrow \mathbb{Z} [X], \gamma),
\mathbb{Z} [X] \rightarrow \mathbb{Z} [X, Z])$ where each of $X, Y, Z$
consists of a finite set of variables, which spans a full subcategory
$\tmop{CrysCon}^0 \subseteq \tmop{CrysCon}$ stable under finite coproducts.

{\construction{\label{cons:canon-dRCon-CrysCon}We note that there is a
canonical functor $\tmop{dRCon} \rightarrow \tmop{CrysCon}$ induced by the
evaluation $\tmop{dRCon} = \tmop{Fun} (\Delta^1, \tmop{Pair}^{\gamma,
\tmop{an}}) \xrightarrow{\tmop{ev}_{[0]}} \tmop{Pair}^{\gamma, \tmop{an}}$ and
the functor $\tmop{dRCon} \rightarrow \tmop{Fun} (\Delta^1,
\tmop{CAlg}^{\tmop{an}})$ which is itself induced by the composite of the
forgetful functors $\tmop{Pair}^{\gamma, \tmop{an}} \rightarrow
\tmop{Pair}^{\tmop{an}} \rightarrow \tmop{CAlg}^{\tmop{an}}, (A
\twoheadrightarrow A'', \gamma) \mapsto A''$. Concretely, the functor
$\tmop{dRCon} \rightarrow \tmop{CrysCon}$ is given by $((A \twoheadrightarrow
A'', \gamma) \rightarrow (B \twoheadrightarrow B'', \delta)) \mapsto ((A
\twoheadrightarrow A'', \gamma), A'' \rightarrow B'')$. Since both functors
preserves small colimits (we have used \Cref{prop:forget-PD-small-colim}), we
deduce that}}

\begin{lemma}
  The functor $\tmop{dRCon} \rightarrow \tmop{CrysCon}$ in
  Construction~\ref{cons:canon-dRCon-CrysCon} preserves small colimits.
\end{lemma}

It follows from \Cref{prop:left-deriv-n-fun} that $\tmop{dRCon} \rightarrow
\tmop{CrysCon}$ is the left derived functor of the composite functor
$\tmop{dRCon}^0 \rightarrow \tmop{CrysCon}^0 \rightarrow \tmop{CrysCon},
((\Gamma_{\mathbb{Z} [X]} (Y) \twoheadrightarrow \mathbb{Z} [X]) \rightarrow
(\Gamma_{\mathbb{Z} [X, X']} (Y, Y') \twoheadrightarrow \mathbb{Z} [X, X']))
\mapsto ((\Gamma_{\mathbb{Z} [X]} (Y) \twoheadrightarrow \mathbb{Z} [X]),
\mathbb{Z} [X] \rightarrow \mathbb{Z} [X, X'])$. It then follows from
\Cref{cor:nonab-deriv-cat-adjoint-fun} that

\begin{lemma}
  \label{lem:dRCon-CrysCon-adjunct}The functor $\tmop{dRCon} \rightarrow
  \tmop{CrysCon}$ in Construction~\ref{cons:canon-dRCon-CrysCon} admits a
  right adjoint $\tmop{CrysCon} \rightarrow \tmop{dRCon}$ which preserves
  sifted colimits.
\end{lemma}

One can verify (see also \Cref{lem:ani-ring-as-pdpair}) that

\begin{lemma}
  \label{lem:RA-CrysCon-dRCon}The right adjoint $\tmop{CrysCon} \rightarrow
  \tmop{dRCon}$ is concretely given by $((A \twoheadrightarrow A'', \gamma),
  A'' \rightarrow R) \mapsto ((A \twoheadrightarrow A'', \gamma) \rightarrow
  (\tmop{id}_R \of R \rightarrow R, 0))$.
\end{lemma}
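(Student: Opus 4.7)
The plan is to verify the adjunction by direct computation of mapping spaces, using the full faithfulness of $\tmop{Ani}(\tmop{Ring}) \hookrightarrow \tmop{AniPDPair}, R \mapsto (\tmop{id}_R \of R \to R, 0)$ established in \Cref{lem:ani-ring-as-pdpair}, together with the pullback description of $\tmop{CrysCon}$.

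First I would observe that the candidate functor is well-defined: given $((A \twoheadrightarrow A'', \gamma), A'' \to R) \in \tmop{CrysCon}$, the map of animated PD-pairs $(A \twoheadrightarrow A'', \gamma) \to (\tmop{id}_R, 0)$ is obtained by noting that the composite $\tmop{Ani}(\tmop{Ring}) \xrightarrow{A \mapsto (\tmop{id}_A)} \tmop{AniPair} \xrightarrow{\tmop{AniPDEnv}} \tmop{AniPDPair}$ is a left adjoint (both factors are left adjoints, via \Cref{lem:ani-ring-as-pair} and \Cref{def:anipair-anipdpair}); its right adjoint is the composite $\tmop{AniPDPair} \to \tmop{AniPair} \xrightarrow{\tmop{ev}_{[1]}} \tmop{Ani}(\tmop{Ring}), (B \twoheadrightarrow B'', \delta) \mapsto B''$, giving the natural identification
\[
\tmop{Map}_{\tmop{AniPDPair}}((B \twoheadrightarrow B'', \delta), (\tmop{id}_R, 0)) \;\simeq\; \tmop{Map}_{\tmop{Ani}(\tmop{Ring})}(B'', R).
\]
In particular, the map $A'' \to R$ gives a canonical map $(A \twoheadrightarrow A'', \gamma) \to (\tmop{id}_R, 0)$ in $\tmop{AniPDPair}$, so the formula defines a functor $R \of \tmop{CrysCon} \to \tmop{dRCon}$.

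Next I would verify the adjunction by computing both mapping spaces for $X = ((A, A'', \gamma) \to (B, B'', \delta)) \in \tmop{dRCon}$ and $Y = ((A_1, A_1'', \gamma_1), A_1'' \to R) \in \tmop{CrysCon}$. Using the pullback description of $\tmop{CrysCon}$,
\[
\tmop{Map}_{\tmop{CrysCon}}(L(X), Y) \simeq \tmop{Map}_{\tmop{AniPDPair}}((A, A'', \gamma), (A_1, A_1'', \gamma_1)) \times_{\tmop{Map}(A'', R)} \tmop{Map}(B'', R),
\]
where the first factor maps to $\tmop{Map}(A'', R)$ by passing to targets and post-composing with $A_1'' \to R$. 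On the other hand, $\tmop{Map}_{\tmop{dRCon}}(X, R(Y))$ is the space of commutative squares in $\tmop{AniPDPair}$ with corners $(A, A'', \gamma), (A_1, A_1'', \gamma_1), (B, B'', \delta), (\tmop{id}_R, 0)$, which is the pullback $\tmop{Map}((A, A'', \gamma), (A_1, A_1'', \gamma_1)) \times_{\tmop{Map}((A, A'', \gamma), (\tmop{id}_R, 0))} \tmop{Map}((B, B'', \delta), (\tmop{id}_R, 0))$; applying the adjunction displayed above to the two mapping spaces into $(\tmop{id}_R, 0)$ produces the same pullback as the first computation.

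The only slightly delicate step is matching the two pullback diagrams naturally in $X$ and $Y$, rather than just on objects; this will follow from the naturality of the adjunction $\tmop{Map}_{\tmop{AniPDPair}}(-, (\tmop{id}_R, 0)) \simeq \tmop{Map}_{\tmop{Ani}(\tmop{Ring})}(\tmop{ev}_{[1]}(-), R)$. I expect this bookkeeping of naturality to be the main (though mild) obstacle; once it is in place, the identification of the two pullbacks is automatic and the proof concludes by invoking the uniqueness of right adjoints.
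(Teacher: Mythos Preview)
Your overall strategy---computing both mapping spaces as pullbacks and matching them via the identification $\tmop{Map}_{\tmop{AniPDPair}}((B \twoheadrightarrow B'', \delta), (\tmop{id}_R, 0)) \simeq \tmop{Map}_{\tmop{Ani}(\tmop{Ring})}(B'', R)$---is sound, and is essentially what the paper's terse ``one can verify'' is pointing at. However, your justification of that key identification contains an error in the direction of the adjunction.

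You claim that $R \mapsto (\tmop{id}_R, 0)$, as the composite $\tmop{AniPDEnv} \circ (A \mapsto \tmop{id}_A)$, is a left adjoint whose right adjoint is $(B \twoheadrightarrow B'', \delta) \mapsto B''$. But the right adjoint of this composite of left adjoints is $\tmop{ev}_{[0]} \circ \text{forget}$, i.e.\ $(B \twoheadrightarrow B'', \delta) \mapsto B$, not $B''$ (recall from \Cref{lem:ani-ring-as-pair} that $A \mapsto \tmop{id}_A$ is left adjoint to $\tmop{ev}_{[0]}$, and right adjoint to $\tmop{ev}_{[1]}$). And even if your claimed right adjoint were correct, a left-adjoint property of $R \mapsto (\tmop{id}_R,0)$ would compute $\tmop{Map}((\tmop{id}_R,0), -)$, not $\tmop{Map}(-, (\tmop{id}_R,0))$.

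What you actually need is that $R \mapsto (\tmop{id}_R, 0)$ is the \emph{right} adjoint to $(B \twoheadrightarrow B'', \delta) \mapsto B''$. This is true: the functor $(B \twoheadrightarrow B'', \delta) \mapsto B''$ is the composite of the forgetful functor $\tmop{AniPDPair} \to \tmop{AniPair}$ (a left adjoint by \Cref{prop:forget-PD-small-colim} and the adjoint functor theorem) with $\tmop{ev}_{[1]}$ (left adjoint to $A \mapsto \tmop{id}_A$). So its right adjoint exists; to see that it is $R \mapsto (\tmop{id}_R,0)$, the quickest route is to check on the generators $(\Gamma_{\mathbb{Z}[X]}(Y) \twoheadrightarrow \mathbb{Z}[X]) \in \mathcal{E}^0$: using $\tmop{AniPDEnv} \dashv \text{forget}$ and then $\tmop{ev}_{[1]} \dashv (A \mapsto \tmop{id}_A)$, both $\tmop{Map}_{\tmop{AniPDPair}}((\Gamma_{\mathbb{Z}[X]}(Y) \twoheadrightarrow \mathbb{Z}[X]), (\tmop{id}_R,0))$ and $\tmop{Map}_{\tmop{Ani}(\tmop{Ring})}(\mathbb{Z}[X], R)$ are identified with $\tmop{Map}_{\tmop{AniPair}}((\mathbb{Z}[X,Y] \twoheadrightarrow \mathbb{Z}[X]), \tmop{id}_R)$ (here \Cref{lem:ani-pd-env-preserv-target} ensures the target $\mathbb{Z}[X]$ is preserved). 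Once this is corrected, the rest of your argument goes through.
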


The independence of $\tmop{dR}_{(B \twoheadrightarrow B'') / (A
\twoheadrightarrow A'')}$ with respect to $B$ is formally formulated as
follows:

{\construction{\label{cons:dR-crys-comp}The counit map of the adjunction
$\tmop{dRCon} \rightleftarrows \tmop{CrysCon}$ in
\Cref{lem:dRCon-CrysCon-adjunct} is an equivalence by
\Cref{lem:RA-CrysCon-dRCon}, therefore the functor $\tmop{CrysCon} \rightarrow
\tmop{dRCon}$ is fully faithful. The unit map between functors $\tmop{dRCon}
\rightrightarrows \tmop{dRCon}$ is concretely given by $((A \twoheadrightarrow
A'', \gamma) \rightarrow (B \twoheadrightarrow B'', \delta)) \rightarrow ((A
\twoheadrightarrow A'', \gamma) \rightarrow (\tmop{id}_{B''} \of B''
\rightarrow B'', 0))$. Applying the derived de Rham functor $\tmop{dRCon}
\rightarrow \tmop{CAlg}_{\mathbb{Z}}$, we get the comparison map
$\tmop{dR}_{(B \twoheadrightarrow B'', \delta) / (A \twoheadrightarrow A'',
\gamma)} \rightarrow \tmop{dR}_{(\tmop{id}_{B''}, 0) / (A \rightarrow A'',
\gamma)}$ which is functorial in $((A \twoheadrightarrow A'', \gamma)
\rightarrow (B \twoheadrightarrow B'', \delta)) \in \tmop{dRCon}$, and the
comparison map is viewed as a natural transformation between two functors
$\tmop{dRCon} \rightrightarrows \tmop{CAlg}_{\mathbb{Z}}$.}}

\begin{proposition}
  \label{prop:dR-crys-inv}The natural transformation in
  Construction~\ref{cons:dR-crys-comp}. In other words, the derived de Rham
  cohomology functor $\tmop{dRCon} \rightarrow \tmop{CAlg}_{\mathbb{Z}}$ is
  $(\tmop{dRCon} \rightarrow \tmop{CrysCon})$-invariant (\Cref{def:L-inv}).
\end{proposition}

\begin{proof}
  Both functors preserves sifted colimits, so by \Cref{prop:left-deriv-n-fun},
  it suffices to establish the equivalence for the full subcategory
  $\tmop{dRCon}^0 \subseteq \tmop{dRCon}$. For every $(\Gamma_{\mathbb{Z}
  [X']} (Y') \twoheadrightarrow \mathbb{Z} [X']) \rightarrow
  (\Gamma_{\mathbb{Z} [X, X']} (Y, Y') \twoheadrightarrow \mathbb{Z} [X, X'])
  \in \tmop{dRCon}^0$ simply denoted by $((A \twoheadrightarrow A'', \gamma)
  \rightarrow (\Gamma_{A [X]} (Y) \twoheadrightarrow A'' [X], \gamma))$, we
  need to show that the map
  \[ \tmop{dR}_{(\Gamma_{A [X]} (Y) \twoheadrightarrow A'' [X]) / (A
     \twoheadrightarrow A'')} \rightarrow \tmop{dR}_{(\tmop{id}_{A'' [X]} \of
     A'' [X] \rightarrow A'' [X], 0) / (A \twoheadrightarrow A'')} \]
  is an equivalence. Note that the constructed map
  \[ (\Gamma_{A [X]} (Y) \twoheadrightarrow A'' [X], \gamma) \rightarrow
     (\tmop{id}_{A'' [X]} \of A'' [X] \rightarrow A'' [X], 0) \]
  in $\tmop{Pair}^{\gamma, \tmop{an}}_{/ (A \twoheadrightarrow A'', \gamma)}$
  factors as
  \[ (\Gamma_{A [X]} (Y) \twoheadrightarrow A'' [X], \gamma)
     \xrightarrow{\alpha} (A [X] \twoheadrightarrow A'' [X], \gamma)
     \xrightarrow{\beta} (\tmop{id}_{A'' [X]} \of A'' [X] \rightarrow A'' [X],
     0) \]
  Thus it suffices to show that both maps $\alpha$ and $\beta$ induces
  equivalences after passing to the functor $\tmop{dR}_{\cdummy / (A
  \twoheadrightarrow A'', \gamma)} \of \tmop{Pair}^{\gamma, \tmop{an}}_{/ (A
  \twoheadrightarrow A'', \gamma)} \rightarrow \tmop{CAlg}_{\mathbb{Z}}$. Note
  that $(A [X] \twoheadrightarrow A'' [X], \gamma) \in \tmop{dRCon}^0$,
  $\tmop{dR}_{\alpha / (A \twoheadrightarrow A'', \gamma)}$ could be computed
  by de Rham complexes, which corresponds a homotopy equivalence of de Rham
  complexes by the divided power Poincaré's lemma
  {\cite[\href{https://stacks.math.columbia.edu/tag/07LC}{Tag
  07LC}]{stacks-project}}.
  
  It remains to show that $\tmop{dR}_{\beta / (A \twoheadrightarrow A'',
  \gamma)}$ is also an equivalence. For this, we need to resolve
  $(\tmop{id}_{A'' [X]} \of A'' [X] \rightarrow A'' [X], 0)$ simplicially
  under $(A [X] \twoheadrightarrow A'' [X], \gamma)$. Recall that $A =
  \Gamma_{\mathbb{Z} [X']} (Y')$ and $A'' =\mathbb{Z} [X']$. The key point is
  that we can resolve $A''$ simplicially by divided power polynomial
  $A$-algebras, in the same way as resolving $\mathbb{Z}$ simplicially by
  polynomial $\mathbb{Z} [t]$-algebras, which essentially follows from a bar
  construction of $\mathbb{N}$, see {\cite[Rem~3.31]{Bhatt2012a}}. For every
  divided power polynomial $A$-algebra $\Gamma_A (Z)$, $(\Gamma_{A [X]} (Z)
  \twoheadrightarrow A'' [X], \gamma)$ belongs to $\tmop{dRCon}^0$, and the
  map $\tmop{dR}_{(A [X] \twoheadrightarrow A'' [X]) / (A \twoheadrightarrow
  A'')} \rightarrow \tmop{dR}_{(\Gamma_{A [X]} (Z) \twoheadrightarrow A'' [X])
  / (A \twoheadrightarrow A'')}$ (functorial in $\Gamma_A (Z)$) is an
  equivalence again by the divided power Poincaré's lemma
  {\cite[\href{https://stacks.math.columbia.edu/tag/07LC}{Tag
  07LC}]{stacks-project}}. It follows that $\tmop{dR}_{\beta / (A
  \twoheadrightarrow A'', \gamma)}$ is indeed an equivalence.
\end{proof}

In view of \Cref{prop:L-inv}, we define the derived crystalline cohomology
functor which corresponds to the $(\tmop{dRCon} \rightarrow
\tmop{CrysCon})$-invariant functor $\tmop{dR}_{\cdummy / \cdummy}$:

\begin{definition}
  \label{def:deriv-crys-cohomol}The {\tmdfn{derived crystalline cohomology
  functor}} $\tmop{CrysCoh} \of \tmop{CrysCon} \rightarrow
  \tmop{CAlg}_{\mathbb{Z}}$ is defined to be the composite $\tmop{CrysCon}
  \rightarrow \tmop{dRCon} \xrightarrow{\tmop{dR}_{\cdummy / \cdummy}}
  \tmop{CAlg}_{\mathbb{Z}}$.
\end{definition}

\begin{notation}
  \label{nota:deriv-crys-coh}We will denote the derived crystalline cohomology
  of $((A \twoheadrightarrow A'', \gamma), A'' \rightarrow R) \in
  \tmop{CrysCon}$ by $\tmop{CrysCoh}_{R / (A \twoheadrightarrow A'', \gamma)}$
  (or $\tmop{CrysCoh}_{R / (A \twoheadrightarrow A'')}$ even
  $\tmop{CrysCon}_{R / A}$ when there is no ambiguity).
\end{notation}

Now we show that

\begin{proposition}
  \label{prop:deriv-crys-coh-preserv-colim}The derived crystalline cohomology
  functor $\tmop{CrysCon} \rightarrow \tmop{CAlg}_{\mathbb{Z}}$ preserves
  small colimits.
\end{proposition}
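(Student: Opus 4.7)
The plan is to leverage Proposition~\ref{prop:dR-crys-inv} together with the fact that $\tmop{CrysCon}$ sits inside $\tmop{dRCon}$ as a reflective subcategory. Concretely, denote by $F \of \tmop{dRCon} \rightarrow \tmop{CrysCon}$ the canonical (left adjoint) functor and by $G \of \tmop{CrysCon} \rightarrow \tmop{dRCon}$ its right adjoint, which is fully faithful since the counit $FG \rightarrow \tmop{id}_{\tmop{CrysCon}}$ is an equivalence (as established just before Definition~\ref{def:deriv-crys-cohomol}). By definition, $\tmop{CrysCoh} \simeq \tmop{dR}_{\cdummy/\cdummy} \circ G$.

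The first step is to rewrite $\tmop{dR}_{\cdummy/\cdummy}$ in terms of $\tmop{CrysCoh}$. Proposition~\ref{prop:dR-crys-inv} states that $\tmop{dR}_{\cdummy/\cdummy}$ is $F$-invariant, so by Proposition~\ref{prop:L-inv} it factors essentially uniquely as $\tmop{dR}_{\cdummy/\cdummy} \simeq H \circ F$ for some functor $H \of \tmop{CrysCon} \rightarrow \tmop{CAlg}_{\mathbb{Z}}$. Composing with $G$ on the right and using $FG \simeq \tmop{id}$, one obtains $H \simeq H \circ F \circ G \simeq \tmop{dR}_{\cdummy/\cdummy} \circ G = \tmop{CrysCoh}$, hence
\[ \tmop{CrysCoh} \circ F \simeq \tmop{dR}_{\cdummy/\cdummy}. \]

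The second step is a diagram chase. Given a small simplicial set $K$ and a diagram $p \of K \rightarrow \tmop{CrysCon}$, use the equivalence $\tmop{id}_{\tmop{CrysCon}} \simeq F \circ G$ to write $p \simeq F \circ G \circ p$. Since $F$ is a left adjoint, it preserves small colimits, so
\[ \tmop{colim}\, p \simeq F\bigl(\tmop{colim}\,(G \circ p)\bigr). \]
Applying $\tmop{CrysCoh}$ and using the identification $\tmop{CrysCoh} \circ F \simeq \tmop{dR}_{\cdummy/\cdummy}$ together with Lemma~\ref{lem:dR-preserv-colim}, one gets
\[ \tmop{CrysCoh}(\tmop{colim}\,p) \simeq \tmop{dR}_{\cdummy/\cdummy}\bigl(\tmop{colim}\,(G \circ p)\bigr) \simeq \tmop{colim}\,\bigl(\tmop{dR}_{\cdummy/\cdummy} \circ G \circ p\bigr) = \tmop{colim}\,(\tmop{CrysCoh} \circ p). \]

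There is no genuine obstacle here; the argument is purely formal once Proposition~\ref{prop:dR-crys-inv} and Lemma~\ref{lem:dR-preserv-colim} are in hand. The only subtle point is the initial observation that although $G$ itself is merely a right adjoint (and thus not a priori colimit-preserving), the essential surjectivity of $F$ together with the factorization $p \simeq F \circ G \circ p$ lets one replace any colimit in $\tmop{CrysCon}$ with the image under $F$ of a colimit in $\tmop{dRCon}$, where Lemma~\ref{lem:dR-preserv-colim} becomes directly applicable.
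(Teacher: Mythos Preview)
Your argument is correct. The key observation, that $\tmop{CrysCoh} \circ F \simeq \tmop{dR}_{\cdummy/\cdummy}$ combined with $F \circ G \simeq \tmop{id}_{\tmop{CrysCon}}$ lets you reduce any colimit in $\tmop{CrysCon}$ to one computed in $\tmop{dRCon}$, is sound, and the chain of equivalences you write does identify the canonical comparison map. This is in fact a general statement about reflective localizations: if $L \of \mathcal{C} \rightarrow \mathcal{D}$ is a localization with fully faithful right adjoint $i$, and $H \of \mathcal{C} \rightarrow \mathcal{E}$ is $L$-invariant and colimit-preserving, then $H \circ i$ is automatically colimit-preserving.

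The paper takes a different route. Rather than exploiting the reflective structure directly, it splits the problem in two: sifted colimits are handled by the composition $\tmop{CrysCoh} = \tmop{dR}_{\cdummy/\cdummy} \circ G$ since $G$ preserves sifted colimits, and then Proposition~\ref{prop:left-deriv-fun} reduces the remaining case to checking that $\tmop{CrysCoh}|_{\tmop{CrysCon}^0}$ preserves finite coproducts, which is done by lifting the compact projective generators of $\tmop{CrysCon}^0$ back to $\tmop{dRCon}^0$. Your approach is more economical: it handles all colimits uniformly and avoids invoking the projectively generated structure of $\tmop{CrysCon}$ or tracking generators explicitly. The paper's approach, on the other hand, stays closer to the ambient framework of left derived functors used throughout and makes the role of the generators visible.
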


\begin{proof}
  The functor $\tmop{CrysCon} \rightarrow \tmop{dRCon}$ preserves sifted
  colimits and $\tmop{dR}_{\cdummy / \cdummy} \of \tmop{dRCon} \rightarrow
  \tmop{CAlg}_{\mathbb{Z}}$ preserves small colimits, it follows that the
  derived crystalline cohomology functor $\tmop{CrysCoh}$ preserves sifted
  colimits. By \Cref{prop:left-deriv-n-fun}, it remains to show that
  $\nobracket \tmop{CrysCoh} |_{\tmop{CrysCon}^0}$ preserves finite
  coproducts. The point is that every $(\Gamma_{\mathbb{Z} [X']} (Y')
  \twoheadrightarrow \mathbb{Z} [X'], \mathbb{Z} [X'] \rightarrow \mathbb{Z}
  [X, X']) \in \tmop{CrysCon}^0$ lifts to $(\Gamma_{\mathbb{Z} [X']} (Y')
  \twoheadrightarrow \mathbb{Z} [X']) \rightarrow (\Gamma_{\mathbb{Z} [X, X']}
  (Y, Y') \twoheadrightarrow \mathbb{Z} [X, X']) \in \tmop{dRCon}^0$, the
  functor $\tmop{dRCon}^0 \rightarrow \tmop{CrysCon}^0$ preserves finite
  coproducts, and the functor $\tmop{dR}_{\cdummy / \cdummy}$ preserves finite
  coproducts.
\end{proof}

Now we apply the discussions in \Cref{subsec:comma-cat} to deduce some formal
properties. First, by \Cref{rem:comma-base-chg}, we have

\begin{corollary}
  \label{cor:crys-coh-base-chg}The derived crystalline cohomology is
  compatible with base change. More precisely, let $((A \twoheadrightarrow
  A'', \gamma_A), A'' \rightarrow R) \in \tmop{CrysCon}$ and let $(A
  \twoheadrightarrow A'', \gamma_A) \rightarrow (B \twoheadrightarrow B'',
  \gamma_B)$ be a map of animated PD-pairs. Then the canonical map
  \[ \tmop{CrysCoh}_{R / (A \twoheadrightarrow A'', \gamma_A)}
     \otimes_A^{\mathbb{L}} B \longrightarrow \tmop{CrysCoh}_{(R
     \otimes_{A''}^{\mathbb{L}} B'') / (B \twoheadrightarrow B'', \gamma_B)}
  \]
  is an equivalence.
\end{corollary}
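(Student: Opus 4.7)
The strategy is to realize the base change as a pushout in $\tmop{CrysCon}$ and apply \Cref{prop:deriv-crys-coh-preserv-colim}. By \Cref{rem:comma-base-chg} applied to the comma description of $\tmop{CrysCon}$, the base-changed object is the pushout
\[
 ((A \twoheadrightarrow A'', \gamma_A), A'' \rightarrow R) \;\amalg_{((A \twoheadrightarrow A'', \gamma_A),\, \tmop{id}_{A''})}\; ((B \twoheadrightarrow B'', \gamma_B), \tmop{id}_{B''}),
\]
which by \Cref{lem:comma-colim} is equivalent to $((B \twoheadrightarrow B'', \gamma_B), B'' \rightarrow R \otimes_{A''}^{\mathbb{L}} B'')$. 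Applying $\tmop{CrysCoh}$ and using \Cref{prop:deriv-crys-coh-preserv-colim}, this pushout is sent to the pushout of
\[
 \tmop{CrysCoh}_{R / (A \twoheadrightarrow A'', \gamma_A)} \;\longleftarrow\; \tmop{CrysCoh}_{A''/(A \twoheadrightarrow A'', \gamma_A)} \;\longrightarrow\; \tmop{CrysCoh}_{B''/(B \twoheadrightarrow B'', \gamma_B)}
\]
in $\tmop{CAlg}_{\mathbb{Z}}$, which is the derived tensor product.

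The remaining key point, then, is to identify $\tmop{CrysCoh}_{A''/(A \twoheadrightarrow A'', \gamma_A)} \simeq A$ naturally in $(A \twoheadrightarrow A'', \gamma_A) \in \tmop{AniPDPair}$, and likewise for $B$. Unwinding the definitions, this is the derived de Rham cohomology of the identity map $(A \twoheadrightarrow A'', \gamma_A) \xrightarrow{\tmop{id}} (A \twoheadrightarrow A'', \gamma_A)$ in $\tmop{dRCon}$ (via \Cref{prop:dR-crys-inv}). Using \Cref{lem:dR-rational}, there is a canonical comparison map $A \rightarrow \tmop{dR}_{\tmop{id}/(A \twoheadrightarrow A'', \gamma_A)}$. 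Both the source and target, viewed as functors $\tmop{AniPDPair} \rightarrow \tmop{CAlg}_{\mathbb{Z}}$, preserve sifted colimits: the source trivially, and the target because the diagonal $\tmop{AniPDPair} \rightarrow \tmop{dRCon}$ preserves colimits (it is left adjoint to $\tmop{ev}_{[1]}$) while $\tmop{dR}_{\cdummy/\cdummy}$ preserves small colimits by \Cref{lem:dR-preserv-colim}. By \Cref{prop:left-deriv-fun} it suffices to check the equivalence on $\mathcal{E}^0 \subseteq \tmop{AniPDPair}$, where the identity map lies in $\tmop{dRCon}^0$ and has vanishing module of PD-differentials, so its de Rham complex is $A$ concentrated in degree $0$ and the comparison is tautologically an equivalence.

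Combining the two steps yields
\[
 \tmop{CrysCoh}_{R/(A \twoheadrightarrow A'', \gamma_A)} \otimes_A^{\mathbb{L}} B \;\simeq\; \tmop{CrysCoh}_{(R \otimes_{A''}^{\mathbb{L}} B'')/(B \twoheadrightarrow B'', \gamma_B)},
\]
and one checks that under all these identifications the produced equivalence is the canonical comparison map. The main potential obstacle is the identification $\tmop{CrysCoh}_{A''/(A \twoheadrightarrow A'', \gamma_A)} \simeq A$: one must be careful that the functoriality and $\mathbb{E}_{\infty}$-$A$-algebra structure match, and this is the place where \Cref{prop:dR-crys-inv} together with the Poincaré-type computation on $\mathcal{E}^0$ does the real work; everything else is formal manipulation of colimits in comma categories.
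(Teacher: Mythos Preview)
Your proposal is correct and follows exactly the paper's approach: the paper proves this corollary simply by citing \Cref{rem:comma-base-chg} after establishing \Cref{prop:deriv-crys-coh-preserv-colim}, and you have spelled out the implicit identification $\tmop{CrysCoh}_{A''/(A\twoheadrightarrow A'',\gamma_A)}\simeq A$ (the analogue of \Cref{lem:cot-cx-id-trivial}) that makes the formal pushout computation land on the claimed tensor product. One minor slip: the diagonal $\tmop{AniPDPair}\to\tmop{dRCon}=\tmop{Fun}(\Delta^1,\tmop{AniPDPair})$ is left adjoint to $\tmop{ev}_{[0]}$, not $\tmop{ev}_{[1]}$ (it is \emph{right} adjoint to $\tmop{ev}_{[1]}$); either way it is a left adjoint, so it preserves colimits and your argument goes through unchanged.
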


\begin{remark}
  Let $R$ be a (finitely generated) polynomial $\mathbb{F}_p$-algebra. Then by
  the $p$-completed derived crystalline cohomology of $R$ with respect to the
  PD-pair $(\mathbb{Z}_p, (p))$ is equivalent to the usual ($p$-completed)
  crystalline cohomology of $R$ with respect to $\mathbb{Z}_p$. In fact, later
  (\Cref{prop:site-comp-qsyn-integral,prop:site-static-comp}), we will show
  that the same $p$-completed comparison holds for quasisyntomic
  $\mathbb{F}_p$-algebras $R$ (and in particular, for smooth
  $\mathbb{F}_p$-algebras). However, our non-completed derived crystalline
  cohomology is not necessarily $p$-complete.
\end{remark}

Next, by \Cref{rem:comma-colim-fix-base}, we have

\begin{corollary}
  \label{cor:crys-coh-symm-mon}The derived crystalline cohomology is symmetric
  monoidal. More precisely, let $(A \twoheadrightarrow A'', \gamma_A) \in
  \tmop{Pair}^{\gamma, \tmop{an}}$ and let $A \rightarrow R$, $A \rightarrow
  S$ be two maps of animated rings. Then the canonical map
  \[ \tmop{CrysCoh}_{R / (A \twoheadrightarrow A'')} \otimes_A^{\mathbb{L}}
     \tmop{CrysCoh}_{S / (A \twoheadrightarrow A'')} \longrightarrow
     \tmop{CrysCoh}_{(R \otimes_{A''}^{\mathbb{L}} S) / (A \twoheadrightarrow
     A'')} \]
  is an equivalence.
\end{corollary}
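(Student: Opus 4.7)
The plan is to run the argument in close parallel to \Cref{cor:crys-coh-base-chg}, replacing \Cref{rem:comma-base-chg} by \Cref{rem:comma-colim-fix-base} and then invoking \Cref{prop:deriv-crys-coh-preserv-colim}. Recall the setup: the $\infty$-category $\tmop{CrysCon}$ is the comma category associated to the composite functor $F \of \tmop{AniPDPair} \rightarrow \tmop{Ani}(\tmop{Ring})$, $(A \twoheadrightarrow A'', \gamma) \mapsto A''$, which preserves small colimits by \Cref{prop:forget-PD-small-colim} combined with \Cref{lem:ani-ring-as-pair}. So \Cref{rem:comma-colim-fix-base} applies to this pair.

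Concretely, I would first identify the two given inputs $((A \twoheadrightarrow A'', \gamma_A), A'' \rightarrow R)$ and $((A \twoheadrightarrow A'', \gamma_A), A'' \rightarrow S)$ as objects in the slice $\tmop{CrysCon}_{((A \twoheadrightarrow A'', \gamma_A), \tmop{id}_{A''})\mathord{/}}$ (the ``colimits-over-a-fixed-base'' slice from \Cref{rem:comma-colim-fix-base}), and then compute their coproduct there. By \Cref{rem:comma-colim-fix-base} and \Cref{lem:comma-colim}, this coproduct is $((A \twoheadrightarrow A'', \gamma_A), A'' \rightarrow R \otimes_{A''}^{\mathbb{L}} S)$, since the coproduct of $A'' \rightarrow R$ and $A'' \rightarrow S$ in the slice $\tmop{Ani}(\tmop{Ring})_{A''\mathord{/}}$ is by definition the relative pushout $R \otimes_{A''}^{\mathbb{L}} S$.

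Second, I would apply \Cref{prop:deriv-crys-coh-preserv-colim}: $\tmop{CrysCoh}$ preserves small colimits, so it sends this coproduct in $\tmop{CrysCon}$ to the coproduct $\tmop{CrysCoh}_{R / (A \twoheadrightarrow A'')} \amalg \tmop{CrysCoh}_{S / (A \twoheadrightarrow A'')}$ taken in $\tmop{CAlg}_{\mathbb{Z}}$. The final step is to identify this $\mathbb{E}_{\infty}$-$\mathbb{Z}$-coproduct with $\otimes_A^{\mathbb{L}}$. For this, I observe (as in the construction of the comparison map $A \rightarrow \tmop{dR}_{(B\twoheadrightarrow B'')/(A \twoheadrightarrow A'')}$ used in \Cref{lem:dR-rational}) that $\tmop{CrysCoh}_{\cdummy / (A \twoheadrightarrow A'', \gamma_A)}$ restricted to the slice over $((A \twoheadrightarrow A'', \gamma_A), \tmop{id}_{A''})$ factors through $\tmop{CAlg}_A \rightarrow \tmop{CAlg}_{\mathbb{Z}}$; since the forgetful functor $\tmop{CAlg}_A \rightarrow \tmop{CAlg}_{\mathbb{Z}, A\mathord{/}}$ is an equivalence and coproducts in $\tmop{CAlg}_A$ are computed by $\otimes_A^{\mathbb{L}}$, the claimed identification follows.

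The only nontrivial step is the last one, namely checking that the comparison map produced by the colimit argument above is in fact the canonical map of the statement. This is essentially formal: both maps are constructed by functoriality of $\tmop{CrysCoh}$ applied to the two canonical inclusions $((A,\gamma_A), A'' \rightarrow R), ((A,\gamma_A), A'' \rightarrow S) \rightarrow ((A,\gamma_A), A'' \rightarrow R \otimes_{A''}^{\mathbb{L}} S)$ in $\tmop{CrysCon}$, so they agree by the universal property of the coproduct in $\tmop{CAlg}_A$.
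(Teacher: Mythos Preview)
Your proposal is correct and follows essentially the same approach as the paper: the paper deduces this corollary directly from \Cref{rem:comma-colim-fix-base} together with \Cref{prop:deriv-crys-coh-preserv-colim}, exactly as you do. Your additional remarks about why the coproduct in $\tmop{CAlg}_{\mathbb{Z}}$ is computed as $\otimes_A^{\mathbb{L}}$ (via the structure map $A \rightarrow \tmop{CrysCoh}$ from \Cref{lem:dR-rational}) are a reasonable elaboration of a point the paper leaves implicit.
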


Finally, by \Cref{rem:comma-transtive}, we have

\begin{corollary}
  \label{cor:crys-coh-transitive}The derived crystalline cohomology is
  transitive. More precisely, let $(A \twoheadrightarrow A'', \gamma_A)
  \rightarrow (B \twoheadrightarrow B'', \gamma_B)$ be a map of animated
  PD-pairs, and let $B'' \rightarrow R$ be a map of animated rings. Then the
  canonical map
  \[ \tmop{CrysCoh}_{R / (A \twoheadrightarrow A'')}
     \otimes_{\tmop{CrysCoh}_{B'' / (A \twoheadrightarrow A'')}}^{\mathbb{L}}
     B \longrightarrow \tmop{CrysCoh}_{R / (B \twoheadrightarrow B'')} \]
  is an equivalence, where the map $\tmop{CrysCoh}_{B'' / (A
  \twoheadrightarrow A'')} \rightarrow B$ is $\tmop{CrysCoh}_{B'' / (A
  \twoheadrightarrow A'')} \rightarrow \tmop{CrysCoh}_{B'' / (B
  \twoheadrightarrow B'')} \simeq B$.
\end{corollary}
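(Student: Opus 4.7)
\begin{proof*}{Proof proposal.}
The plan is to deduce the transitivity from Remark~\ref{rem:comma-transtive} together with the colimit-preserving property of the derived crystalline cohomology functor (Proposition~\ref{prop:deriv-crys-coh-preserv-colim}). Concretely, take $\mathcal{C} = \tmop{AniPDPair}$, $\mathcal{D} = \tmop{Ani}(\tmop{Ring})$, and let $F \of \tmop{AniPDPair} \rightarrow \tmop{Ani}(\tmop{Ring})$ be the composite of the forgetful functors sending $(A \twoheadrightarrow A'', \gamma) \mapsto A''$. Apply Remark~\ref{rem:comma-transtive} to the morphism $(A \twoheadrightarrow A'', \gamma_A) \rightarrow (B \twoheadrightarrow B'', \gamma_B)$ in $\mathcal{C}$ and the object $((B \twoheadrightarrow B'', \gamma_B), B'' \rightarrow R) \in \tmop{CrysCon}$; this writes the latter as the pushout in $\tmop{CrysCon}$ of the diagram
\[ ((A \twoheadrightarrow A'', \gamma_A), A'' \rightarrow R) \leftarrow ((A \twoheadrightarrow A'', \gamma_A), A'' \rightarrow B'') \rightarrow ((B \twoheadrightarrow B'', \gamma_B), \tmop{id}_{B''}). \]

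Next, apply $\tmop{CrysCoh} \of \tmop{CrysCon} \rightarrow \tmop{CAlg}_{\mathbb{Z}}$, which preserves small colimits by Proposition~\ref{prop:deriv-crys-coh-preserv-colim}, so the above pushout turns into a pushout of $\mathbb{E}_\infty$-$\mathbb{Z}$-algebras, i.e.\ a relative tensor product. The first two terms are by definition $\tmop{CrysCoh}_{R/(A \twoheadrightarrow A'')}$ and $\tmop{CrysCoh}_{B''/(A \twoheadrightarrow A'')}$. The key identification is therefore to show that the third term satisfies $\tmop{CrysCoh}_{B''/(B \twoheadrightarrow B'', \gamma_B)} \simeq B$, and that this identification is compatible with the structure map from $\tmop{CrysCoh}_{B''/(A \twoheadrightarrow A'')}$.

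For this identification, unwind the definition: $\tmop{CrysCoh}_{B''/(B \twoheadrightarrow B'', \gamma_B)} = \tmop{dR}_{(\tmop{id}_{B''},0)/(B \twoheadrightarrow B'', \gamma_B)}$, and by Proposition~\ref{prop:dR-crys-inv} this is equivalent to $\tmop{dR}_{(B \twoheadrightarrow B'', \gamma_B)/(B \twoheadrightarrow B'', \gamma_B)}$. To see that this equals $B$, note that the functor $\tmop{AniPDPair} \rightarrow \tmop{CAlg}_{\mathbb{Z}}$ given by $(A \twoheadrightarrow A'', \gamma) \mapsto \tmop{dR}_{(A\twoheadrightarrow A'',\gamma)/(A\twoheadrightarrow A'',\gamma)}$ preserves small colimits (it is the composite of the identity-insertion $\tmop{AniPDPair} \rightarrow \tmop{dRCon}$, which preserves small colimits, with $\tmop{dR}_{\cdummy/\cdummy}$, which does so by Lemma~\ref{lem:dR-preserv-colim}); by Proposition~\ref{prop:left-deriv-fun} it is therefore determined by its restriction to $\mathcal{E}^0$, where the de Rham complex of an identity map is concentrated in degree zero and equals the source ring $A$. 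Hence this functor is equivalent to the forgetful functor $(A \twoheadrightarrow A'', \gamma) \mapsto A$.

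The remaining task is to check that the induced map $\tmop{CrysCoh}_{B''/(A \twoheadrightarrow A'')} \rightarrow B$ arising from this pushout description agrees with the one stated in the corollary, namely the composite $\tmop{CrysCoh}_{B''/(A\twoheadrightarrow A'')} \rightarrow \tmop{CrysCoh}_{B''/(B \twoheadrightarrow B'')} \simeq B$; this is a formal functoriality check. The main obstacle here is really just the bookkeeping of the functoriality of the equivalences given by Proposition~\ref{prop:dR-crys-inv} in the second variable, ensuring that all identifications are compatible in the pushout diagram; once these are in place, the corollary follows.
\end{proof*}
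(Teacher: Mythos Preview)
Your proposal is correct and follows exactly the approach the paper takes: the corollary is stated immediately after ``Finally, by \Cref{rem:comma-transtive}, we have'' with no further proof, so the intended argument is precisely the one you spell out---apply Remark~\ref{rem:comma-transtive} to the comma category $\tmop{CrysCon}$, use Proposition~\ref{prop:deriv-crys-coh-preserv-colim} to turn the pushout into a relative tensor product, and identify $\tmop{CrysCoh}_{B''/(B\twoheadrightarrow B'')}\simeq B$ (the analogue of Lemma~\ref{lem:cot-cx-id-trivial} in this setting). Your write-up simply unpacks the details the paper leaves implicit.
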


\begin{remark}
  In particular, if we take $(A \twoheadrightarrow A'', \gamma_A) =
  (\mathbb{Z}, 0, 0)$ in \Cref{cor:crys-coh-transitive}, we see that, fix an
  animated PD-pair $(B \twoheadrightarrow B'', \gamma_B)$, any derived
  crystalline cohomology $\tmop{CrysCoh}_{R / (B \twoheadrightarrow B'')}$ is
  completely determined by the derived de Rham cohomology $\tmop{dR}_{R
  /\mathbb{Z}}$. However, without the theory of derived crystalline
  cohomology, we do not know how to construct the map $\tmop{dR}_{B''
  /\mathbb{Z}} \rightarrow B$ in terms of the PD-structure on $B
  \twoheadrightarrow B''$.
\end{remark}

\subsection{Filtrations}\label{subsec:dR-Hdg-conj-fil}In this subsection, we
will define the {\tmdfn{Hodge filtration}} on the derived de Rham cohomology
and show that most of our previous discussions are compatible with the Hodge
filtration. Furthermore, in characteristic $p$, we will define the
{\tmdfn{conjugate filtration}}, which is of technical importance to control
the cohomology. We start with the definition of the Hodge filtration.

\begin{definition}[cf.~{\cite[§6.13]{Berthelot1978}}]
  Let $(A, I, \gamma) \rightarrow (B, J, \delta)$ be a map of PD-pairs such
  that $\Omega_{(B, J) / (A, I)}^1$ is a flat $B$-module. The {\tmdfn{Hodge
  filtration}} $\tmop{Fil}_H^{\ast}$ on the de Rham complex $(\Omega_{(B, J) /
  (A, I)}^{\ast}, \mathd)$ is given by the differential graded ideals
  $\tmop{Fil}_H^m \Omega_{(B, J) / (A, I)}^{\ast} \assign J^{\left[ m -
  \mathord{\ast} \right]} \Omega_{(B, J) / (A, I)}^{\ast} \subseteq
  \Omega_{(B, J) / (A, I)}^{\ast}$.
\end{definition}

{\construction{\label{cons:Hdg-fil-dR-cohom-dRCon0}As CDGAs give rise to
$\mathbb{E}_{\infty}$-$\mathbb{Z}$-algebras, (nonnegatively) filtered CDGAs
give rise to (nonnegatively) filtered
$\mathbb{E}_{\infty}$-$\mathbb{Z}$-algebras. Moreover, the truncation map
$(\Omega_{(B, J) / (A, I)}^{\ast}, \mathd) \rightarrow B$ is a map of filtered
CDGAs, which gives rise to a map of filtered
$\mathbb{E}_{\infty}$-$\mathbb{Z}$-algebras. Thus we get a functor
$\tmop{dRCon}^0 \rightarrow \tmop{Fun} (\Delta^1, \tmop{CAlg} (\tmop{DF}^{\geq
0} (\mathbb{Z})))$.}}

\begin{definition}
  \label{def:Hdg-fil-deriv-dR}The {\tmdfn{Hodge-filtered derived de Rham
  cohomology functor}} $\tmop{Fil}_H^{\ast} \tmop{dR}_{\cdummy / \cdummy} \of
  \tmop{dRCon} \rightarrow \tmop{CAlg} (\tmop{DF}^{\geq 0} (\mathbb{Z}))$
  together with a canonical map $\tmop{Fil}_H^{\ast} \tmop{dR}_{(B
  \twoheadrightarrow B'') / (A \twoheadrightarrow A'')} \rightarrow
  \tmop{Fil}_{\tmop{PD}}^{\ast} B$ is defined to be the left derived functor
  (\Cref{prop:left-deriv-n-fun}) of the functor $\tmop{dRCon}^0 \rightarrow
  \tmop{Fun} (\Delta^1, \tmop{CAlg} (\tmop{DF}^{\geq 0} (\mathbb{Z})))$ in
  Construction~\ref{cons:Hdg-fil-dR-cohom-dRCon0}, where
  $\tmop{Fil}_{\tmop{PD}}^{\ast} B$ is an abbreviation of
  $\tmop{Fil}_{\tmop{PD}}^{\ast} (B \twoheadrightarrow B'', \gamma)$
  (\Cref{def:pd-fil}).
\end{definition}

Most of properties in \Cref{subsec:deriv-dR} hold with a similar proof:

\begin{lemma}
  The composite functor
  \[ \tmop{Fun} (\Delta^1, \tmop{CAlg}^{\tmop{an}}) \rightarrow \tmop{dRCon}
     \rightarrow \tmop{CAlg} (\tmop{DF}^{\geq 0} (\mathbb{Z})), (A \rightarrow
     B) \mapsto \tmop{Fil}_H^{\ast} \tmop{dR}_{(\tmop{id}_B \of B \rightarrow
     B, 0) / (\tmop{id}_A \of A \rightarrow A, 0)} \]
  is equivalent to the classical Hodge-filtered derived de Rham cohomology
  functor $(A \rightarrow B) \mapsto \tmop{Fil}_H^{\ast} \tmop{dR}_{B / A}$.
\end{lemma}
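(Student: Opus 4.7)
The plan is to mimic the argument of \Cref{lem:dR-pdpair-classical} but keeping track of the Hodge filtration. First I would observe that the $\infty$-category $\tmop{Fun}(\Delta^1, \tmop{Ani}(\tmop{Ring}))$ is projectively generated, with a set of compact projective generators given by $\{(\mathbb{Z}[X] \rightarrow \mathbb{Z}[X,Y])\}$ where $X, Y \in \tmop{Fin}$; this follows from \Cref{cor:fun-cat-proj-gen} applied to $\tmop{Ani}(\tmop{Ring})$ together with \Cref{lem:ani-ring-as-pair} and \Cref{lem:ani-ring-as-pdpair} identifying the ``identity'' embedding as fully faithful. Call $\mathcal{G}$ the full subcategory spanned by finite coproducts of such generators.

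Next I would check that both functors in question are left Kan extended from $\mathcal{G}$. The classical Hodge-filtered derived de Rham cohomology is, by definition, the left derived functor of $(A \rightarrow B) \mapsto \tmop{Fil}_H^{\ast} \Omega_{B/A}^{\ast}$ on polynomial extensions. For the other composite, note that the functor $\tmop{Fun}(\Delta^1, \tmop{Ani}(\tmop{Ring})) \rightarrow \tmop{dRCon}$ given by $(A \rightarrow B) \mapsto ((\tmop{id}_A, 0) \rightarrow (\tmop{id}_B, 0))$ preserves small colimits (it is a right adjoint composed with a fully faithful left adjoint at the level of identity-diagrams, but more concretely it commutes with the forgetful functors and with $\tmop{ev}_{[0]}, \tmop{ev}_{[1]}$), and $\tmop{Fil}_H^{\ast}\tmop{dR}_{\cdummy/\cdummy}$ itself is left Kan extended from $\tmop{dRCon}^0$ by construction (\Cref{def:Hdg-fil-deriv-dR}). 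Hence the composite preserves sifted colimits, and I only need finite-coproduct-preservation on $\mathcal{G}$. This is the same argument as in \Cref{lem:dR-preserv-colim}: on $\mathcal{G}$ the de Rham complex lands in bounded-below chain complexes of free abelian groups with its Hodge filtration, and the embedding $\tmop{Ch}_{\gg -\infty}(\tmop{FreeAb}) \hookrightarrow \tmop{DF}^{\geq 0}(\mathbb{Z})$ is symmetric monoidal (\Cref{rem:ch-cx-bnd-flat-symm-mon}), so finite coproducts are preserved; the Hodge filtration is additionally compatible with tensor products of de Rham complexes.

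Then I would compare the two functors on the generators $\mathcal{G}$. For the standard object $(\mathbb{Z}[X] \rightarrow \mathbb{Z}[X,Y]) \in \mathcal{G}$, its image in $\tmop{dRCon}^0$ is the map of PD-pairs $(\tmop{id}_{\mathbb{Z}[X]}, 0) \rightarrow (\tmop{id}_{\mathbb{Z}[X,Y]}, 0)$, where both PD-structures are trivial and each surjection is the identity. Under \Cref{def:Hdg-fil-deriv-dR}, its Hodge-filtered de Rham cohomology is simply the ordinary Hodge-filtered de Rham complex $\tmop{Fil}_H^{\ast}\Omega^{\ast}_{\mathbb{Z}[X,Y]/\mathbb{Z}[X]}$, because the kernel $J$ of $\tmop{id}_{\mathbb{Z}[X,Y]}$ is zero so the PD-adic filtration $J^{[m-\ast]}\Omega^{\ast}$ degenerates to the usual stupid filtration truncation, which coincides with the classical Hodge filtration on the polynomial de Rham complex. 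This matches the value of the classical Hodge-filtered derived de Rham cohomology functor on the same generator.

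Finally, I would invoke \Cref{prop:left-deriv-fun} (the universal property of left derived functors) to conclude that the two functors agree as functors $\tmop{Fun}(\Delta^1, \tmop{Ani}(\tmop{Ring})) \rightarrow \tmop{CAlg}(\tmop{DF}^{\geq 0}(\mathbb{Z}))$. The main obstacle I anticipate is a purely bookkeeping one: verifying carefully that on $\tmop{dRCon}^0$-objects coming from $\mathcal{G}$ the PD-filtration $J^{[m-\ast]}\Omega^{\ast}$ truly reduces to the classical Hodge filtration---the point being that when $(A,I,\gamma) = (A,0,0)$ and $(B,J,\delta) = (B,0,0)$ one has $J^{[0]} = B$ and $J^{[k]} = 0$ for $k > 0$, so $\tmop{Fil}_H^m\Omega^{\ast} = \Omega^{\geq m}$, which is precisely the classical Hodge filtration used in defining $\tmop{Fil}_H^{\ast}\tmop{dR}_{B/A}$. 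Once this identification is made, both sides are left Kan extensions from $\mathcal{G}$ of canonically equivalent functors, and the conclusion follows.
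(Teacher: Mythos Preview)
Your proposal is correct and follows the same approach as the paper, which simply remarks that the proof is analogous to that of \Cref{lem:dR-pdpair-classical}. The finite-coproduct-preservation step is unnecessary---once both functors preserve sifted colimits, \Cref{prop:left-deriv-fun} already lets you compare them directly on generators---but this detour does no harm.
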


\begin{lemma}
  The map in \Cref{lem:dR-rational} admits a natural enrichment, that is to
  say, a map $\tmop{Fil}_{\tmop{PD}}^{\ast} A \rightarrow \tmop{Fil}_H^{\ast}
  \tmop{dR}_{(B \twoheadrightarrow B'') / (A \twoheadrightarrow A'')}$ of
  functors $\tmop{dRCon} \rightrightarrows \tmop{CAlg} (\tmop{DF}^{\geq 0}
  (\mathbb{Z}))$.\footnote{Corrected thanks to a message from Lenny
  {\tmname{Taelman}}.}
\end{lemma}

\begin{lemma}
  The Hodge-filtered derived de Rham cohomology functor $\tmop{dRCon}
  \rightarrow \tmop{CAlg} (\tmop{DF}^{\geq 0} (\mathbb{Z}))$ preserves small
  colimits.
\end{lemma}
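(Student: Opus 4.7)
The plan is to imitate the argument for \Cref{lem:dR-preserv-colim}, enriched with the Hodge filtration. By \Cref{prop:left-deriv-fun}, since the Hodge-filtered derived de Rham cohomology functor is by construction a left derived functor along $\tmop{dRCon}^0 \hookrightarrow \tmop{dRCon}$, it already preserves sifted colimits, so it remains to show that its restriction to $\tmop{dRCon}^0$ preserves finite coproducts, and that the target category $\tmop{CAlg}(\tmop{DF}^{\geq 0}(\mathbb{Z}))$ admits these colimits (which it does, being presentable).

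First, I would factor the Hodge-filtered de Rham functor on $\tmop{dRCon}^0$ through a strict model category of bounded-below filtered CDGAs whose underlying graded pieces are flat $\mathbb{Z}$-modules. Concretely, for a standard object $((A \twoheadrightarrow A'', \gamma) \rightarrow (B \twoheadrightarrow B'', \delta)) \in \tmop{dRCon}^0$ with $B = \Gamma_{A[X]}(Y)$, the de Rham complex $(\Omega^{\ast}_{(B,J)/(A,I)}, \mathd)$ is a bounded-above chain complex of free $B$-modules, and the Hodge filtration $\tmop{Fil}_H^m \Omega^{\ast} = J^{[m-\ast]}\Omega^{\ast}$ gives each term a filtration by flat $\mathbb{Z}$-modules (indeed, flat $A''$-modules, by explicit description of the PD-polynomial structure). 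Thus this construction factors as $\tmop{dRCon}^0 \rightarrow \tmop{Fun}(\Delta^1, \tmop{CAlg}(\tmop{Ch}_H)) \rightarrow \tmop{Fun}(\Delta^1, \tmop{CAlg}(\tmop{DF}^{\geq 0}(\mathbb{Z})))$, where $\tmop{Ch}_H$ denotes bounded-above filtered chain complexes of flat abelian groups with flat associated graded.

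Next I would verify two compatibility statements. The first is a Künneth property on $\tmop{dRCon}^0$: for two standard objects $X_1, X_2 \in \tmop{dRCon}^0$, the coproduct $X_1 \amalg X_2$ (computed via the explicit formula for PD-polynomial coproducts used in \Cref{prop:forget-PD-small-colim}) has its Hodge-filtered de Rham complex canonically identified with the tensor product of the Hodge-filtered de Rham complexes of $X_1$ and $X_2$; this reduces to the classical identity $\Omega^{\ast}_{(B_1 \otimes_A B_2, J)/(A,I)} \cong \Omega^{\ast}_{(B_1,J_1)/(A,I)} \otimes_A \Omega^{\ast}_{(B_2,J_2)/(A,I)}$ with the matching filtration $J^{[m]}\Omega^{\ast} = \sum_{i+j=m} J_1^{[i]} J_2^{[j]}\Omega^{\ast}$, which one checks on monomials in PD-variables. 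The second is that the lax symmetric monoidal embedding $\tmop{Ch}_H \hookrightarrow \tmop{DF}^{\geq 0}(\mathbb{Z})$ from \Cref{prop:Beilinson-t-struct} is actually strictly symmetric monoidal on our full subcategory of flat bounded objects, exactly as in \Cref{rem:ch-cx-bnd-flat-symm-mon}; applied level-wise and using Day convolution, this ensures that tensor products of filtered CDGAs in $\tmop{Ch}_H$ are sent to coproducts in $\tmop{CAlg}(\tmop{DF}^{\geq 0}(\mathbb{Z}))$.

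Combining these, for $X_1, X_2 \in \tmop{dRCon}^0$ the Hodge-filtered de Rham cohomology of $X_1 \amalg X_2$ maps via an equivalence to the coproduct $\tmop{Fil}_H^{\ast}\tmop{dR}(X_1) \otimes^{\tmop{Day}}_{\mathbb{Z}} \tmop{Fil}_H^{\ast}\tmop{dR}(X_2)$ in $\tmop{CAlg}(\tmop{DF}^{\geq 0}(\mathbb{Z}))$, which yields finite-coproduct preservation on $\tmop{dRCon}^0$; together with the sifted-colimit preservation inherited from being a left derived functor, this gives preservation of all small colimits. The main obstacle I anticipate is the careful Künneth compatibility for the Hodge filtration on PD-envelopes of PD-polynomial coproducts: one must verify that $J^{[m]}$ in the tensor product matches the convolution of the $J_i^{[m_i]}$, rather than merely being generated by the analogous divided powers. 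This is combinatorial but not difficult because on $\tmop{dRCon}^0$ everything is free and explicitly indexed by multi-indices.
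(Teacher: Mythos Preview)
Your proposal is correct and follows essentially the same approach as the paper. The paper does not spell out a proof for this lemma but groups it under ``Most of properties in \Cref{subsec:deriv-dR} hold with a similar proof,'' referring to the argument for \Cref{lem:dR-preserv-colim}; your proof is precisely that adaptation to the Hodge-filtered setting, factoring through filtered CDGAs with flat graded pieces and invoking the filtered analogue of \Cref{rem:ch-cx-bnd-flat-symm-mon} together with \Cref{prop:left-deriv-fun}.
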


This allows us to define the Hodge-filtration on the derived crystalline
cohomology, due to the following proposition, which follows from the proof of
\Cref{prop:dR-crys-inv} by replacing the Poincaré lemma by the filtered
Poincaré lemma, cf.~{\cite[Thm~6.13]{Berthelot1978}}:

\begin{proposition}
  The map $\tmop{Fil}_H^{\ast} \tmop{dR}_{(B \twoheadrightarrow B'', \delta) /
  (A \twoheadrightarrow A'', \gamma)} \rightarrow \tmop{Fil}_H^{\ast}
  \tmop{dR}_{(\tmop{id}_{B''}, 0) / (A \rightarrow A'', \gamma)}$ of functors
  $\tmop{dRCon} \rightrightarrows \tmop{CAlg} (\tmop{DF}^{\geq 0}
  (\mathbb{Z}))$ induced by the counit map associated to $((A
  \twoheadrightarrow A'', \gamma) \rightarrow (B \twoheadrightarrow B'',
  \delta)) \in \tmop{dRCon}$ is an equivalence. In other words, the
  Hodge-filtered de Rham cohomology functor $\tmop{dRCon} \rightarrow
  \tmop{CAlg} (\tmop{DF}^{\geq 0} (\mathbb{Z}))$ is $(\tmop{dRCon} \rightarrow
  \tmop{CrysCon})$-invariant (\Cref{def:L-inv}).
\end{proposition}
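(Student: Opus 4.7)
The plan is to mirror the proof of \Cref{prop:dR-crys-inv} with filtrations taken seriously throughout, reducing to a filtered Poincaré lemma at the crucial step. First I would note that both functors $\tmop{Fil}_H^{\ast} \tmop{dR}_{(B \twoheadrightarrow B'') / (A \twoheadrightarrow A'')}$ and $\tmop{Fil}_H^{\ast} \tmop{dR}_{(\tmop{id}_{B''}, 0) / (A \twoheadrightarrow A'')}$ of $\tmop{dRCon}$, viewed with values in $\tmop{CAlg}(\tmop{DF}^{\geq 0}(\mathbb{Z}))$, preserve sifted colimits; for the first this is built into \Cref{def:Hdg-fil-deriv-dR}, and for the second it follows because the composite $\tmop{dRCon} \to \tmop{CrysCon} \to \tmop{dRCon}$ preserves sifted colimits (being left adjoint to a fully faithful inclusion whose right adjoint preserves sifted colimits). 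Hence by \Cref{prop:left-deriv-fun} it suffices to verify the equivalence on the full subcategory $\tmop{dRCon}^0$.

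Next, for a generator $(A \twoheadrightarrow A'', \gamma) \rightarrow (\Gamma_{A[X]}(Y) \twoheadrightarrow A''[X], \tilde{\gamma})$ with $A = \Gamma_{\mathbb{Z}[X']}(Y')$ and $A'' = \mathbb{Z}[X']$, I would factor the counit as
\[
(\Gamma_{A[X]}(Y) \twoheadrightarrow A''[X], \tilde{\gamma}) \xrightarrow{\alpha} (A[X] \twoheadrightarrow A''[X], 0) \xrightarrow{\beta} (\tmop{id}_{A''[X]}, 0),
\]
exactly as in the proof of \Cref{prop:dR-crys-inv}, and show that each of $\tmop{Fil}_H^{\ast}\tmop{dR}_{\alpha/(A\twoheadrightarrow A'',\gamma)}$ and $\tmop{Fil}_H^{\ast}\tmop{dR}_{\beta/(A\twoheadrightarrow A'',\gamma)}$ is an equivalence. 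Both objects lie in $\tmop{dRCon}^0$, so the Hodge-filtered de Rham cohomology is represented concretely by the filtered de Rham complex with filtration $\tmop{Fil}_H^m\Omega^{\bullet} = J^{[m-\bullet]}\Omega^{\bullet}$. For $\alpha$, the filtered divided power Poincaré lemma (cf.~\cite[Thm~6.13]{Berthelot1978}), applied variable by variable in $Y$, shows that the inclusion of filtered de Rham complexes is a filtered quasi-isomorphism; one may reduce to a single variable and compute the associated graded pieces directly using the identification $\Gamma^i(I/I^2)$ from \Cref{lem:pdfil-free-pdalg}.

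For $\beta$, I would resolve $(\tmop{id}_{A''[X]}, 0)$ simplicially under $(A[X] \twoheadrightarrow A''[X], 0)$ exactly as in the proof of \Cref{prop:dR-crys-inv}, using a bar-style resolution by divided power polynomial $A$-algebras $\Gamma_A(Z) \otimes_{A}^{\mathbb{L}}(\tmop{something})$ so that each term of the resolution still lies in $\tmop{dRCon}^0$; the required equivalence on each simplicial degree follows from the same filtered divided power Poincaré lemma. Passing to the geometric realization using that $\tmop{Fil}_H^{\ast}\tmop{dR}_{\cdummy/(A\twoheadrightarrow A'',\gamma)}$ preserves sifted colimits finishes the argument. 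The main obstacle will be to confirm that the filtered Poincaré lemma applies in the form needed, in particular that tensoring the filtered de Rham complex with a free divided power algebra is a filtered quasi-isomorphism onto the base; this is essentially a term-by-term verification on associated graded pieces, where the symmetric/divided-power interplay of \Cref{lem:pdfil-free-pdalg} and \Cref{cor:LAdFil-symm-cot-cx} makes the calculation transparent. Once this step is secured, the remainder is formal bookkeeping identical in structure to the non-filtered proof.
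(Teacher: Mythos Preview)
Your proposal is correct and follows essentially the same approach as the paper, which simply states that the result ``follows from the proof of \Cref{prop:dR-crys-inv} by replacing the Poincaré lemma by the filtered Poincaré lemma, cf.~\cite[Thm~6.13]{Berthelot1978}.'' One minor slip: the middle term of your factorization should carry the PD-structure $\gamma$ inherited from $(A,I,\gamma)$ (i.e.\ $(A[X]\twoheadrightarrow A''[X],\gamma)$ rather than $0$), since the Hodge filtration on that object is precisely the PD-filtration coming from $IA[X]$.
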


\begin{definition}
  The {\tmdfn{Hodge-filtered derived crystalline cohomology functor}}
  $\tmop{Fil}_H^{\ast} \tmop{CrysCoh} \of \tmop{CrysCon} \rightarrow
  \tmop{CAlg} (\tmop{DF}^{\geq 0} (\mathbb{Z}))$ is defined to be the
  composite $\tmop{CrysCon} \rightarrow \tmop{dRCon}
  \xrightarrow{\tmop{Fil}_H^{\ast} \tmop{dR}_{\cdummy / \cdummy}} \tmop{CAlg}
  (\tmop{DF}^{\geq 0} (\mathbb{Z}))$.
\end{definition}

\begin{proposition}
  The Hodge-filtered derived crystalline cohomology functor $\tmop{CrysCon}
  \rightarrow \tmop{CAlg} (\tmop{DF}^{\geq 0} (\mathbb{Z}))$ preserves small
  colimits.
\end{proposition}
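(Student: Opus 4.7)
The plan is to mimic the proof of \Cref{prop:deriv-crys-coh-preserv-colim}, upgrading it to the filtered setting. By definition, $\tmop{Fil}_H^{\ast} \tmop{CrysCoh}$ factors as the composite of the right adjoint $\tmop{CrysCon} \rightarrow \tmop{dRCon}$ with the Hodge-filtered derived de Rham functor $\tmop{Fil}_H^{\ast} \tmop{dR}_{\cdummy / \cdummy} \of \tmop{dRCon} \rightarrow \tmop{CAlg} (\tmop{DF}^{\geq 0} (\mathbb{Z}))$. The first functor preserves sifted colimits by the lemma established above (immediately after the adjunction statement), and the second preserves all small colimits by the filtered analogue of \Cref{lem:dR-preserv-colim} just stated. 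Composing, $\tmop{Fil}_H^{\ast} \tmop{CrysCoh}$ preserves sifted colimits.

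Since $\tmop{CrysCon} \simeq \mathcal{P}_{\Sigma} (\tmop{CrysCon}^0)$ by \Cref{prop:struct-proj-gen-cats} and sifted-colimit-preserving functors out of $\tmop{CrysCon}$ are determined by their restriction to the compact projective generators (via \Cref{prop:left-deriv-fun}), it remains only to verify that the restriction $\nobracket \tmop{Fil}_H^{\ast} \tmop{CrysCoh} |_{\tmop{CrysCon}^0}$ preserves finite coproducts. For this, the key observation is the same lifting used in \Cref{prop:deriv-crys-coh-preserv-colim}: every generator $((\Gamma_{\mathbb{Z}[X']}(Y') \twoheadrightarrow \mathbb{Z}[X']), \mathbb{Z}[X'] \rightarrow \mathbb{Z}[X,X']) \in \tmop{CrysCon}^0$ lifts canonically to $((\Gamma_{\mathbb{Z}[X']}(Y') \twoheadrightarrow \mathbb{Z}[X']) \rightarrow (\Gamma_{\mathbb{Z}[X,X']}(Y,Y') \twoheadrightarrow \mathbb{Z}[X,X'])) \in \tmop{dRCon}^0$, and the assignment $\tmop{dRCon}^0 \rightarrow \tmop{CrysCon}^0$ preserves finite coproducts.

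Thus one reduces the claim to showing that $\nobracket \tmop{Fil}_H^{\ast} \tmop{dR}_{\cdummy / \cdummy} |_{\tmop{dRCon}^0}$ preserves finite coproducts. This is the main verification: on these standard generators the Hodge-filtered de Rham cohomology is computed by the honest filtered de Rham complex $(\Omega^{\ast}, \tmop{Fil}_H)$ of free PD-polynomial algebras. Under the embedding $\tmop{Ch}_{\gg -\infty}(\tmop{Mod}_A^{\flat}) \hookrightarrow \tmop{DF}(A)$ from \Cref{rem:ch-cx-bnd-flat-symm-mon}, which is symmetric monoidal on bounded-below flat complexes, the preservation of finite coproducts reduces to the classical Künneth identity
\[
  \Omega^{\ast}_{(B \amalg B')/(A \amalg A')} \;\cong\; \Omega^{\ast}_{B/A} \otimes \Omega^{\ast}_{B'/A'}
\]
of filtered CDGAs for the relevant free PD-polynomial algebras, which is straightforward.

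The only nontrivial point is justifying that the filtered tensor product in $\tmop{DF}^{\geq 0}(\mathbb{Z})$ recovers the pointwise tensor product of filtered complexes on these generators; this is precisely what \Cref{rem:ch-cx-bnd-flat-symm-mon} provides, since the PD-polynomial algebras $\Gamma_{\mathbb{Z}[X]}(Y)$ are flat over $\mathbb{Z}$ and their Hodge-filtered de Rham complexes are bounded below with flat terms. With this in hand, every ingredient of the proof is in place.
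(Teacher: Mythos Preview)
Your argument is correct and follows exactly the route the paper intends: as with \Cref{prop:deriv-crys-coh-preserv-colim}, one uses that the right adjoint $\tmop{CrysCon} \rightarrow \tmop{dRCon}$ preserves sifted colimits, that $\tmop{Fil}_H^{\ast}\tmop{dR}_{\cdummy/\cdummy}$ preserves small colimits, and then checks finite coproducts on $\tmop{CrysCon}^0$ via the lift to $\tmop{dRCon}^0$.

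One small imprecision: your final appeal to \Cref{rem:ch-cx-bnd-flat-symm-mon} is not quite the right citation. That remark concerns the Beilinson-heart embedding $\tmop{Ch}_{\gg -\infty}(\tmop{Mod}_A^{\flat}) \hookrightarrow \tmop{DF}(A)$ sending a single chain complex to its stupid-truncation filtration, whereas here you need the compatibility of the passage from \emph{filtered} chain complexes (the Hodge-filtered de Rham complex, whose filtration is $J^{[m-\ast]}\Omega^{\ast}$, not the stupid truncation) to $\tmop{DF}^{\geq 0}(\mathbb{Z})$ with the Day convolution. The claim you want is that this passage is symmetric monoidal on filtered complexes whose terms and associated graded pieces are flat, which holds here since the PD-filtration on $\Gamma_{\mathbb{Z}[X]}(Y)$ has flat graded pieces (\Cref{lem:pdfil-free-pdalg}). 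The verification is routine (the Day-convolution colimit computing $\tmop{Fil}^n$ of the tensor product agrees with the classical filtered tensor product under these flatness hypotheses), but it is a filtered analogue of \Cref{rem:ch-cx-bnd-flat-symm-mon} rather than the remark itself.
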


Similar to
\Cref{cor:crys-coh-base-chg,cor:crys-coh-symm-mon,cor:crys-coh-transitive}, we
have

\begin{corollary}
  \label{cor:Hdg-fil-crys-coh-base-chg}The Hodge-filtered derived crystalline
  cohomology is compatible with base change. More precisely, let $((A
  \twoheadrightarrow A'', \gamma_A), A'' \rightarrow R) \in \tmop{CrysCon}$
  and let $(A \twoheadrightarrow A'', \gamma_A) \rightarrow (B
  \twoheadrightarrow B'', \gamma_B)$ be a map of animated PD-pairs. Then the
  canonical map
  \[ \tmop{Fil}_H \tmop{CrysCoh}_{R / (A \twoheadrightarrow A'', \gamma_A)}
     \otimes_{\tmop{Fil}_{\tmop{PD}} A}^{\mathbb{L}} \tmop{Fil}_{\tmop{PD}} B
     \longrightarrow \tmop{Fil}_H \tmop{CrysCoh}_{(R
     \otimes_{A''}^{\mathbb{L}} B'') / (B \twoheadrightarrow B'', \gamma_B)}
  \]
  is an equivalence.
\end{corollary}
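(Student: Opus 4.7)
The plan is to proceed exactly as in the proof of \Cref{cor:crys-coh-base-chg}, but with the Hodge filtration enhancement. The two main ingredients will be (a) that $\tmop{Fil}_H^{\ast} \tmop{CrysCoh} \of \tmop{CrysCon} \rightarrow \tmop{CAlg} (\tmop{DF}^{\geq 0} (\mathbb{Z}))$ preserves small colimits, and (b) that on the essentially unique object of the form $((A \twoheadrightarrow A'', \gamma_A), \tmop{id}_{A''})$, the Hodge-filtered derived crystalline cohomology is naturally equivalent to $\tmop{Fil}_{\tmop{PD}}^{\ast} A$.

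For (a), I would argue as in \Cref{prop:deriv-crys-coh-preserv-colim}. The functor $\tmop{CrysCon} \rightarrow \tmop{dRCon}$ preserves sifted colimits and the Hodge-filtered derived de Rham cohomology preserves small colimits (the filtered analogue established just after \Cref{def:Hdg-fil-deriv-dR}), hence $\tmop{Fil}_H^{\ast} \tmop{CrysCoh}$ preserves sifted colimits; preservation of finite coproducts then follows by restricting to $\tmop{CrysCon}^0$ and lifting each generator to an object of $\tmop{dRCon}^0$ where the Hodge-filtered de Rham complex manifestly preserves finite coproducts (since the filtered complex is a bounded below complex of flat modules, and \Cref{rem:ch-cx-bnd-flat-symm-mon} together with the explicit computation of the de Rham complex show that coproducts convert to tensor products).

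For (b), consider the two functors $\tmop{AniPDPair} \rightrightarrows \tmop{CAlg} (\tmop{DF}^{\geq 0} (\mathbb{Z}))$ given respectively by $(A \twoheadrightarrow A'', \gamma) \mapsto \tmop{Fil}_H^{\ast} \tmop{CrysCoh}_{A'' / (A \twoheadrightarrow A'', \gamma)}$ and by $\mathbb{L}\tmop{PDFil}$. Both preserve sifted colimits; on the generating subcategory $\mathcal{E}^0$, the first computes the Hodge-filtered de Rham complex of the identity map of $(\Gamma_{\mathbb{Z}[X]}(Y) \twoheadrightarrow \mathbb{Z}[X], \gamma)$, which has trivial differentials and whose Hodge filtration is exactly the PD-filtration of $\Gamma_{\mathbb{Z}[X]}(Y)$. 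Thus the two functors agree on $\mathcal{E}^0$, so by \Cref{prop:left-deriv-fun} they are equivalent.

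Finally, by \Cref{rem:comma-base-chg} applied to $F \of \tmop{AniPDPair} \rightarrow \tmop{Ani}(\tmop{Ring}), (A \twoheadrightarrow A'', \gamma) \mapsto A''$, the base-changed object $((B \twoheadrightarrow B'', \gamma_B), B'' \rightarrow R \otimes_{A''}^{\mathbb{L}} B'')$ is the pushout in $\tmop{CrysCon}$ of
\[ ((B \twoheadrightarrow B'', \gamma_B), \tmop{id}_{B''}) \longleftarrow ((A \twoheadrightarrow A'', \gamma_A), \tmop{id}_{A''}) \longrightarrow ((A \twoheadrightarrow A'', \gamma_A), A'' \rightarrow R). \]
Applying $\tmop{Fil}_H^{\ast} \tmop{CrysCoh}$, which preserves this pushout by (a), and identifying the two outer identity-corners via (b) with $\tmop{Fil}_{\tmop{PD}}^{\ast} B$ and $\tmop{Fil}_{\tmop{PD}}^{\ast} A$ respectively, yields the claimed equivalence. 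I expect the only subtle point to be verifying that the comparison map constructed by universal properties really agrees with the canonical one in the statement; this is routine but should be carefully spelled out by checking compatibility on the generating subcategory $\tmop{CrysCon}^0$, where everything reduces to an explicit identification of tensor products of classical PD-filtered de Rham complexes.
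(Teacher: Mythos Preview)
Your proposal is correct and follows exactly the paper's (largely implicit) approach: the paper simply states that \Cref{cor:Hdg-fil-crys-coh-base-chg} holds ``similar to \Cref{cor:crys-coh-base-chg}'', which in turn is deduced from \Cref{rem:comma-base-chg} together with the small-colimit preservation of the Hodge-filtered crystalline cohomology functor. Your steps (a), (b), (c) spell out precisely what is needed for that argument to go through in the filtered setting, in more detail than the paper itself provides.
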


\begin{corollary}
  \label{cor:Hdg-fil-crys-coh-symm-mon}The derived crystalline cohomology is
  symmetric monoidal. More precisely, let $(A \twoheadrightarrow A'',
  \gamma_A) \in \tmop{Pair}^{\gamma, \tmop{an}}$ and let $A \rightarrow R$, $A
  \rightarrow S$ be two maps of animated rings. Then the canonical map
  \[ \tmop{Fil}_H \tmop{CrysCoh}_{R / (A \twoheadrightarrow A'')}
     \otimes^{\mathbb{L}} \tmop{Fil}_H \tmop{CrysCoh}_{S / (A
     \twoheadrightarrow A'')} \rightarrow \tmop{Fil}_H \tmop{CrysCoh}_{(R
     \otimes_{A''}^{\mathbb{L}} S) / (A \twoheadrightarrow A'')} \]
  is an equivalence, where the tensor product on the left is relative to
  $\tmop{Fil}_{\tmop{PD}} A$.
\end{corollary}

\begin{corollary}
  \label{cor:Hdg-fil-crys-coh-transitive}The derived crystalline cohomology is
  transitive. More precisely, let $(A \twoheadrightarrow A'', \gamma_A)
  \rightarrow (B \twoheadrightarrow B'', \gamma_B)$ be a map of animated
  PD-pairs, and let $B'' \rightarrow R$ be a map of animated rings. Then the
  canonical map
  \[ \tmop{Fil}_H \tmop{CrysCoh}_{R / (A \twoheadrightarrow A'')}
     \otimes_{\tmop{Fil}_H \tmop{CrysCoh}_{B'' / (A \twoheadrightarrow
     A'')}}^{\mathbb{L}} \tmop{Fil}_{\tmop{PD}} B \longrightarrow \tmop{Fil}_H
     \tmop{CrysCoh}_{R / (B \twoheadrightarrow B'')} \]
  is an equivalence, where the map $\tmop{Fil}_H \tmop{CrysCoh}_{B'' / (A
  \twoheadrightarrow A'')} \rightarrow B$ is equivalent to the map
  \[ \tmop{Fil}_H \tmop{CrysCoh}_{B'' / (A \twoheadrightarrow A'')}
     \rightarrow \tmop{Fil}_H \tmop{CrysCoh}_{B'' / (B \twoheadrightarrow
     B'')} \simeq \tmop{Fil}_{\tmop{PD}} B. \]
\end{corollary}

Now we come to the characteristic $p > 0$ case. We start with an analysis of
the Frobenius map on an animated PD-$\mathbb{F}_p$-pair. Let $(A, I, \gamma)
\in \tmop{Pair}^{\gamma, \tmop{st}}_{\mathbb{F}_p}$ be an animated
PD-$\mathbb{F}_p$-pair of the form $\Gamma_{\mathbb{F}_p [X]} (Y)
\twoheadrightarrow \mathbb{F}_p [X]$. We also have similar definitions for
$\tmop{dRCon}_{\mathbb{F}_p}, \tmop{dRCon}_{\mathbb{F}_p}^0$ and
$\tmop{CrysCon}_{\mathbb{F}_p}, \tmop{CrysCon}_{\mathbb{F}_p}^0$, and a
parallel theory for $\mathbb{F}_p$-stuff. We first point out that, by
\Cref{cor:crys-coh-base-chg} along with the proof of
\Cref{lem:cot-cx-indep-base} (to compare with
\Cref{lem:PD-env-indep-base,lem:PD-env-compat-base-chg}), we have

\begin{lemma}
  \label{lem:crys-coh-indep-base-chg}The derived crystalline cohomology
  $\tmop{CrysCon}_{\mathbb{F}_p} \rightarrow \tmop{CAlg} (D (\mathbb{F}_p))$
  fits into the commutative diagram
  \[ \begin{array}{ccc}
       \tmop{CrysCon}_{\mathbb{F}_p} & \longrightarrow & \tmop{CrysCon}\\
       \longdownarrow &  & \longdownarrow\\
       \tmop{CAlg} (D (\mathbb{F}_p)) & \longrightarrow & \tmop{CAlg} (D
       (\mathbb{Z}))
     \end{array} \]
  {\noindent}of $\infty$-categories, where the horizontal arrows are forgetful
  functors. The same for the derived de Rham cohomology. Furthermore, this
  diagram is left-adjointable (roughly speaking, if we replace the horizontal
  arrows by their left adjoints, it is still a commutative diagram of
  $\infty$-categories).
\end{lemma}

Then the Frobenius map $\varphi_A \of A \rightarrow A$ factors uniquely
through the quotient map $A \twoheadrightarrow A / I$, which gives rise to a
map $A / I \rightarrow A$. It then follows from \Cref{prop:left-deriv-n-fun}
that

\begin{lemma}
  \label{lem:PD-frob}For any animated PD-$\mathbb{F}_p$-pair $(A
  \twoheadrightarrow A'', \gamma) \in \tmop{Pair}^{\gamma,
  \tmop{an}}_{\mathbb{F}_p}$, the Frobenius map $\varphi_A \of A \rightarrow
  A$ factors functorially through the map $A \twoheadrightarrow A''$, which
  gives rise to the a map $A'' \rightarrow A$, denoted by $\varphi_{(A
  \twoheadrightarrow A'', \gamma)}$ or $\varphi_{A \twoheadrightarrow A''}$
  when there is no ambiguity (when $(A \twoheadrightarrow A'', \gamma)$ comes
  from a PD-$\mathbb{F}_p$-pair $(A, I, \gamma)$, it will also be denoted by
  $\varphi_{(A, I, \gamma)}$ or $\varphi_{(A, I)}$). 
\end{lemma}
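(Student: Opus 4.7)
The plan is to construct the factorization first on the compact projective generators in $\mathcal{E}^0_{\mathbb{F}_p}$, where the statement is classical, then extend it uniquely to all animated PD-$\mathbb{F}_p$-pairs via the universal property of left derived functors.

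First I would note that the two evaluation functors $\tmop{ev}_{[0]}, \tmop{ev}_{[1]} \of \tmop{AniPDPair}_{\mathbb{F}_p} \rightarrow \tmop{Ani}(\tmop{Alg}_{\mathbb{F}_p})$ sending $(A \twoheadrightarrow A'', \gamma)$ to $A$ and to $A''$ respectively preserve small colimits. Indeed, the forgetful functor $\tmop{AniPDPair}_{\mathbb{F}_p} \rightarrow \tmop{AniPair}_{\mathbb{F}_p}$ preserves small colimits by \Cref{prop:forget-PD-small-colim}, and under the Smith-type equivalence $\tmop{AniPair}_{\mathbb{F}_p} \simeq \tmop{Fun}(\Delta^1, \tmop{Ani}(\tmop{Alg}_{\mathbb{F}_p}))_{\geq 0}$ of \Cref{thm:ani-smith-eq}, evaluation at either endpoint of $\Delta^1$ commutes with small colimits. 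In particular, \Cref{prop:left-deriv-fun} identifies each $\tmop{ev}_{[i]}$ with the left derived functor of its restriction to $\mathcal{E}^0_{\mathbb{F}_p}$.

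Next, on a classical PD-$\mathbb{F}_p$-pair $(A, I, \gamma)$, for any $x \in I$ we have $x^p = p!\,\gamma_p(x) = 0$, so the Frobenius $\varphi_A \of A \rightarrow A$ vanishes on $I$ and factors uniquely as $\varphi_A = \tilde\varphi_{(A,I,\gamma)} \circ q$ where $q \of A \twoheadrightarrow A/I$ is the quotient and $\tilde\varphi_{(A,I,\gamma)} \of A/I \rightarrow A$ is a ring homomorphism. This construction is clearly functorial in $(A, I, \gamma) \in \tmop{PDPair}_{\mathbb{F}_p}$, and in particular on the subcategory $\mathcal{E}^0_{\mathbb{F}_p}$ it furnishes a natural transformation $\tilde\varphi \of \tmop{ev}_{[1]}|_{\mathcal{E}^0_{\mathbb{F}_p}} \rightarrow \tmop{ev}_{[0]}|_{\mathcal{E}^0_{\mathbb{F}_p}}$ of functors into $\tmop{Ani}(\tmop{Alg}_{\mathbb{F}_p})$.

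Applying \Cref{prop:left-deriv-fun} to the pair of sifted-colimit-preserving functors from Step~1, the restriction $\tilde\varphi$ from Step~2 extends uniquely to a natural transformation $\varphi_{(A \twoheadrightarrow A'', \gamma)} \of \tmop{ev}_{[1]} \rightarrow \tmop{ev}_{[0]}$ on all of $\tmop{AniPDPair}_{\mathbb{F}_p}$. To see that the factorization identity $\varphi_A = \varphi_{(A \twoheadrightarrow A'', \gamma)} \circ q$ persists, observe that both sides are natural transformations between sifted-colimit-preserving functors that coincide on $\mathcal{E}^0_{\mathbb{F}_p}$, so by the uniqueness clause of \Cref{prop:left-deriv-fun} they agree globally. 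There is no genuine obstacle here: the only subtlety worth flagging is the colimit-preservation of $\tmop{ev}_{[0]}$, which is not immediate from the definition of $\tmop{AniPDPair}$ but is precisely supplied by \Cref{prop:forget-PD-small-colim}.
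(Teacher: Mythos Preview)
Your proof is correct and follows exactly the paper's approach: the paper establishes the factorization on the generators $\mathcal{E}^0_{\mathbb{F}_p}$ (where it is the classical observation that $x^p = p!\,\gamma_p(x) = 0$ for $x$ in the PD-ideal) and then simply invokes \Cref{prop:left-deriv-fun} to extend. Your write-up is more detailed than the paper's one-line proof, and your remark that the colimit-preservation of $\tmop{ev}_{[0]}$ relies on \Cref{prop:forget-PD-small-colim} is well observed.
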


Now we point out that in the $\tmop{char} p$-case, the de Rham complex is
``Frobenius-linear'' (compare with \Cref{def:conj-fil-pd-env}):

{\construction{\label{cons:Frob-lin-dR-cohom}Let $(A, I, \gamma) \rightarrow
(B, J, \delta)$ be an object in $\tmop{dRCon}_{\mathbb{F}_p}^0$. Each graded
piece $\Omega_{(B, J, \delta) / (A, I, \gamma)}^i$ admits a natural $B$-module
structure therefore also a $\varphi_{(A, I)}^{\ast} (B / J)$-module structure
induced by the map $\varphi_{(A, I)}^{\ast} (B / J) \assign (B / J) \otimes_{A
/ I, \varphi_{(A, I)}}^{\mathbb{L}} A \rightarrow B$, the linearization of
$\varphi_{(B, J)} \of B / J \rightarrow B$. Furthermore, the differential
$\mathd$ is $\varphi_{(A, I)}^{\ast} (B / J)$-linear, which makes the de Rham
complex $(\Omega_{(B, J, \delta) / (A, I, \gamma)}^{\ast}, \mathd)$ a
$\varphi_{(A, I)}^{\ast} (B / J)$-CDGA. In other words, there is a map
$\varphi_{(A, I)}^{\ast} (B / J) \rightarrow (\Omega_{(B, J, \delta) / (A, I,
\gamma)}^{\ast}, \mathd)$ of $\mathbb{F}_p$-CDGAs, where $\varphi_{(A,
I)}^{\ast} (B / J)$ is concentrated in degree $0$.}}

{\construction{\label{cons:conj-fil-dRCon0}Let $(A, I, \gamma) \rightarrow (B,
J, \delta)$ be an object in $\tmop{dRCon}_{\mathbb{F}_p}^0$. The derived de
Rham cohomology $\tmop{dR}_{(B, J, \delta) / (A, I, \gamma)}$ is computed by
the de Rham complex $(\Omega_{(B, J, \delta) / (A, I, \gamma)}^{\ast},
\mathd)$. The Whitehead tower $(\tau_{\geq n} \tmop{dR}_{(B, J, \delta) / (A,
I, \gamma)})_{n \in (\mathbb{Z}, \geq)}$ defines a nonpositive\footnote{In the
literature, the conjugate filtration is increasing. We make it decreasing by
negating the sign.} exhaustive filtration, thus the map $\varphi_{(A,
I)}^{\ast} (B / J) \rightarrow (\Omega_{(B, J, \delta) / (A, I,
\gamma)}^{\ast}, \mathd)$ is a map of filtered $\mathbb{F}_p$-CDGAs (where
$\varphi_{(A, I)}^{\ast} (B / J)$ is trivially filtered), which gives rise to
a map of filtered $\mathbb{E}_{\infty}$-$\mathbb{F}_p$-algebras, thus a
functor $\tmop{dRCon}_{\mathbb{F}_p}^0 \rightarrow \tmop{Fun} (\Delta^1,
\tmop{CAlg} (\tmop{DF}^{\leq 0} (\mathbb{F}_p)))$.}}

\begin{definition}
  The {\tmdfn{conjugate-filtered derived de Rham cohomology functor}}
  $\tmop{Fil}_{\tmop{conj}}^{\ast} \tmop{dR}_{\cdummy / \cdummy} \of
  \tmop{dRCon}_{\mathbb{F}_p} \rightarrow \tmop{CAlg} (\tmop{DF}^{\leq 0}
  (\mathbb{F}_p))$ along with the {\tmdfn{structure map}} $\varphi_{(A
  \twoheadrightarrow A'')}^{\ast} (B'') \rightarrow
  \tmop{Fil}_{\tmop{conj}}^{\ast} \tmop{dR}_{(B \twoheadrightarrow B'',
  \delta) / (A \twoheadrightarrow A'', \gamma)}$ is defined to be the left
  derived functor (\Cref{prop:left-deriv-n-fun}) of the functor
  $\tmop{dRCon}_{\mathbb{F}_p}^0 \rightarrow \tmop{Fun} (\Delta^1, \tmop{CAlg}
  (\tmop{DF}^{\leq 0} (\mathbb{F}_p)))$ in
  Construction~\ref{cons:conj-fil-dRCon0}.
\end{definition}

It follows either from \Cref{prop:left-deriv-n-fun,lem:assoc-graded-union-Lan}
or the fact that $\tmop{Pair}^{\tmop{an}} \simeq \mathcal{P}_{\Sigma}
(\tmop{Pair}^{\tmop{st}}) \subseteq \mathcal{P} (\tmop{Pair}^{\tmop{st}})$ is
stable under filtered colimits (\Cref{prop:Psigma-n}) that

\begin{lemma}
  \label{lem:conj-fil-dR-exhaustive}The conjugate filtration on the derived de
  Rham cohomology is exhaustive.
\end{lemma}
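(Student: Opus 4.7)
The plan is to reduce the statement to the tautological case of the generating subcategory $\tmop{dRCon}_{\mathbb{F}_p}^0$ via the universal property of left Kan extension, exploiting the fact that the ``exhaustion'' functor $\tmop{Fil}^{-\infty}$ is itself a filtered colimit and hence commutes with the left Kan extension defining the derived de Rham cohomology.

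First I would observe that on the full subcategory $\tmop{dRCon}_{\mathbb{F}_p}^0 \subseteq \tmop{dRCon}_{\mathbb{F}_p}$, exhaustiveness holds essentially by construction. Indeed, for each object in $\tmop{dRCon}_{\mathbb{F}_p}^0$, the derived de Rham cohomology is computed directly by the de Rham complex and the conjugate filtration is literally the Whitehead tower $(\tau_{\geq n} \tmop{dR}_{(B \twoheadrightarrow B'')/(A \twoheadrightarrow A'')})_{n \in (\mathbb{Z}, \geq)}$; since $D(\mathbb{F}_p)$ carries a left-complete (indeed accessible) $t$-structure, $\tmop{colim}_{n \to -\infty} \tau_{\geq n} X \simeq X$ for every $X \in D(\mathbb{F}_p)$. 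Thus on $\tmop{dRCon}_{\mathbb{F}_p}^0$ the canonical map $\tmop{Fil}^{-\infty} \tmop{Fil}_{\tmop{conj}}^{\ast} \tmop{dR}_{\cdummy / \cdummy} \rightarrow \tmop{dR}_{\cdummy / \cdummy}$ is an equivalence.

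Next I would extend this to all of $\tmop{dRCon}_{\mathbb{F}_p}$ by invoking \Cref{lem:assoc-graded-union-Lan}(2). By \Cref{def:deriv-dR-pdpair} and \Cref{conj-fil-deriv-dR}, both $\tmop{dR}_{\cdummy / \cdummy}$ and $\tmop{Fil}_{\tmop{conj}}^{\ast} \tmop{dR}_{\cdummy / \cdummy}$ are defined as left derived functors, that is, left Kan extensions along the fully faithful embedding $\tmop{dRCon}_{\mathbb{F}_p}^0 \hookrightarrow \tmop{dRCon}_{\mathbb{F}_p}$ (after forgetting $\mathbb{E}_{\infty}$-structures, which is harmless since the relevant forgetful functor $\tmop{CAlg}(D(\mathbb{F}_p)) \rightarrow D(\mathbb{F}_p)$ preserves sifted colimits). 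Applying \Cref{lem:assoc-graded-union-Lan}(2) to $\tilde{F} = \tmop{Fil}_{\tmop{conj}}^{\ast} \tmop{dR}_{\cdummy / \cdummy}$ shows that $\tmop{Fil}^{-\infty} \circ \tmop{Fil}_{\tmop{conj}}^{\ast} \tmop{dR}_{\cdummy / \cdummy}$ is likewise left Kan extended from $\tmop{dRCon}_{\mathbb{F}_p}^0$. Combining with the first step, both functors are left Kan extensions of $\tmop{dR}_{\cdummy / \cdummy}|_{\tmop{dRCon}_{\mathbb{F}_p}^0}$, and the natural comparison map between them is the identity after restriction, hence is an equivalence by the universal property of the left Kan extension (\Cref{prop:left-deriv-fun}).

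There is essentially no obstacle here; the content is entirely formal once one has \Cref{lem:assoc-graded-union-Lan} at hand. The only small point of care is to verify that the comparison map of functors $\tmop{dRCon}_{\mathbb{F}_p} \rightrightarrows \tmop{CAlg}(D(\mathbb{F}_p))$ produced by this Kan-extension argument agrees with the one built from the structure of the filtered object (i.e.\ the natural map from the colimit of the filtration to the underlying object); but this again reduces to checking on $\tmop{dRCon}_{\mathbb{F}_p}^0$, where both maps coincide with the standard identification $\tmop{colim}_{n} \tau_{\geq n} X \simeq X$ for the de Rham complex. As the author notes, an alternative route is available: the forgetful functor $\tmop{AniPair} \simeq \mathcal{P}_{\Sigma}(\mathcal{D}^0) \subseteq \mathcal{P}(\mathcal{D}^0)$ preserves filtered colimits by \Cref{prop:Psigma}, so the filtered colimit $\tmop{Fil}^{-\infty}$ of a left Kan extension can be computed pointwise on presheaves, reducing exhaustiveness to the tautological case on generators.
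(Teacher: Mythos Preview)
Your proposal is correct and follows exactly the approach the paper indicates: the paper simply records that the lemma follows either from \Cref{prop:left-deriv-fun} together with \Cref{lem:assoc-graded-union-Lan}, or from the stability of $\mathcal{P}_{\Sigma}(\mathcal{C}^0) \subseteq \mathcal{P}(\mathcal{C}^0)$ under filtered colimits (\Cref{prop:Psigma}), and you have spelled out both routes in detail.
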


We now prove the corresponding results of \Cref{subsec:deriv-dR} for the
conjugate filtration.

\begin{lemma}
  The conjugate-filtered derived de Rham cohomology functor
  $\tmop{dRCon}_{\mathbb{F}_p} \rightarrow \tmop{CAlg} (\tmop{DF}^{\leq 0}
  (\mathbb{F}_p))$ preserves small colimits (note that so does the functor
  $\tmop{dRCon}_{\mathbb{F}_p} \rightarrow \tmop{CAlg}_{\mathbb{F}_p}, ((A
  \twoheadrightarrow A'', \gamma) \rightarrow (B \twoheadrightarrow B'',
  \delta)) \mapsto \varphi_{(A \twoheadrightarrow A'')}^{\ast} (B'')$).
\end{lemma}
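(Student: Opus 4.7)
The plan is to apply \Cref{prop:left-deriv-fun} twice. Since $\tmop{Fil}_{\tmop{conj}}^\ast \tmop{dR}_{\cdummy/\cdummy}$ is defined as the left derived functor of its restriction to $\tmop{dRCon}_{\mathbb{F}_p}^0$, it automatically preserves sifted colimits. It therefore remains to show that it preserves finite coproducts, and \Cref{prop:left-deriv-fun} reduces this to verifying preservation of finite coproducts for its restriction to the full subcategory $\tmop{dRCon}_{\mathbb{F}_p}^0$ of standard compact projective generators. The same reduction handles the companion functor $((A \twoheadrightarrow A'', \gamma) \rightarrow (B \twoheadrightarrow B'', \delta)) \mapsto \varphi_{(A \twoheadrightarrow A'')}^{\ast}(B'')$ appearing as the source of the structure map.

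For the companion functor, on an object $(\Gamma_{\mathbb{F}_p[X']}(Y') \twoheadrightarrow \mathbb{F}_p[X']) \to (\Gamma_{\mathbb{F}_p[X,X']}(Y,Y') \twoheadrightarrow \mathbb{F}_p[X,X'])$ of $\tmop{dRCon}_{\mathbb{F}_p}^0$, by \Cref{lem:PD-frob} the map $\varphi_{(A\twoheadrightarrow A'')}$ is just the Frobenius $\mathbb{F}_p[X'] \to \Gamma_{\mathbb{F}_p[X']}(Y')$ sending each $x' \in X'$ to $(x')^p$. The resulting $\varphi_{(A \twoheadrightarrow A'')}^{\ast}(B'') = B'' \otimes_{A'', \varphi}^{\mathbb{L}} A$ is visibly a free $\mathbb{F}_p$-algebra formed from the variable sets $X, X', Y'$, and finite coproducts in $\tmop{dRCon}_{\mathbb{F}_p}^0$ simply disjoint-union these variable sets, so this functor preserves finite coproducts.

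For the conjugate-filtered de Rham functor itself, on standard objects the derived de Rham cohomology is computed by the explicit de Rham complex $(\Omega^{\ast}, \mathd)$, a bounded-below cochain complex of free $\mathbb{F}_p$-modules. As in the proof of \Cref{lem:dR-preserv-colim}, disjoint union of variable sets corresponds, via the symmetric monoidal embedding $\tmop{Ch}_{\gg -\infty}(\tmop{Mod}_{\mathbb{F}_p}^{\flat}) \hookrightarrow \tmop{DF}(\mathbb{F}_p)$ of \Cref{rem:ch-cx-bnd-flat-symm-mon}, to tensor products of de Rham complexes in $\tmop{DF}(\mathbb{F}_p)$. The conjugate filtration is the Whitehead tower of the underlying $\mathbb{F}_p$-module spectrum; on explicit complexes of flat modules, it agrees with the canonical filtration of $(\Omega^{\ast}, \mathd)$ by cohomological degree, which is itself a symmetric monoidal construction on bounded-below complexes of flat modules, hence preserves finite coproducts.

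The main obstacle is formalizing the identification of the Whitehead tower in $D(\mathbb{F}_p)$ with the canonical filtration on the concrete de Rham complex inside $\tmop{CAlg}(\tmop{DF}^{\leq 0}(\mathbb{F}_p))$, naturally in $\tmop{dRCon}_{\mathbb{F}_p}^0$ and compatibly with the tensor product of flat complexes. One way to arrange this cleanly is to factor the conjugate-filtered de Rham functor on $\tmop{dRCon}_{\mathbb{F}_p}^0$ through the symmetric monoidal functor $\tmop{Ch}_{\gg -\infty}(\tmop{Mod}_{\mathbb{F}_p}^{\flat}) \to \tmop{CAlg}(\tmop{DF}^{\leq 0}(\mathbb{F}_p))$ sending a flat complex to its canonical filtration; then both the underlying spectrum and the filtration are realized by a single symmetric monoidal construction, and preservation of finite coproducts is automatic. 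Combining these observations completes the reduction and yields the desired preservation of small colimits for both functors.
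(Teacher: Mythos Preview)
Your overall strategy is correct and matches the paper's: reduce via \Cref{prop:left-deriv-fun} to checking preservation of finite coproducts on $\tmop{dRCon}_{\mathbb{F}_p}^0$, and use that finite coproducts of $\mathbb{E}_\infty$-algebras are tensor products. The treatment of the companion functor is fine.

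The difference lies in how you handle the filtration. You try to realize the conjugate filtration at the chain-complex level via the canonical truncation on the explicit de~Rham complex, and then argue that this canonical-truncation construction is symmetric monoidal on bounded-below flat complexes. As you yourself note, making this precise and natural in $\tmop{dRCon}_{\mathbb{F}_p}^0$ is awkward: the map $\tau_{\geq m}K \otimes \tau_{\geq n}L \to \tau_{\geq m+n}(K\otimes L)$ is only a quasi-isomorphism, not an isomorphism of complexes, so one cannot stay inside $\tmop{Ch}_{\gg -\infty}(\tmop{Mod}_{\mathbb{F}_p}^{\flat})$ with its strict tensor product.

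The paper avoids this entirely by working one level up. It observes that the Whitehead-tower functor $D(\mathbb{F}_p) \to \tmop{DF}(\mathbb{F}_p)$, $M \mapsto (\tau_{\geq n}M)_n$, is lax symmetric monoidal for any connective base, and is genuinely symmetric monoidal over a field because $(\tau_{\geq m}M)\otimes^{\mathbb{L}}(\tau_{\geq n}N) \to \tau_{\geq m+n}(M\otimes^{\mathbb{L}}N)$ is then an equivalence. Since the conjugate-filtered functor on $\tmop{dRCon}_{\mathbb{F}_p}^0$ is, by definition, the composite of the (unfiltered) de~Rham cohomology functor (which preserves finite coproducts by \Cref{lem:dR-preserv-colim}) with this symmetric monoidal Whitehead tower, the composite preserves finite coproducts with no chain-level bookkeeping required. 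This is exactly the ``obstacle'' you flagged, dissolved rather than overcome.
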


\begin{proof}
  First, we note that, for any connective $\mathbb{E}_{\infty}$-ring $A$, the
  Whitehead-tower functor $D (A) \rightarrow \tmop{DF} (A), M \mapsto
  (\tau_{\geq n} M)_{n \in (\mathbb{Z}, \geq)}$ is canonically lax symmetric
  monoidal (recall that $\tmop{DF} (A)$ is endowed with the Day convolution).
  We give an informal description: given $M, N \in D (A)$, for all $m, n \in
  \mathbb{Z}$, the canonical map $\tau_{\geq m} M \rightarrow M$ and
  $\tau_{\geq n} N \rightarrow N$ gives rise to a map $(\tau_{\geq m} M)
  \otimes_A^{\mathbb{L}} (\tau_{\geq n} N) \rightarrow M
  \otimes_A^{\mathbb{L}} N$. Since $(\tau_{\geq m} M) \otimes_A^{\mathbb{L}}
  (\tau_{\geq n} N)$ is $(m + n)$-connective, this gives rise to a map
  $(\tau_{\geq m} M) \otimes_A^{\mathbb{L}} (\tau_{\geq n} N) \rightarrow
  \tau_{\geq m + n} (M \otimes_A^{\mathbb{L}} N)$. Assembling these maps, we
  get the lax symmetric monoidal structure. Next, when $A$ is given by a
  field, in particular, $A =\mathbb{F}_p$, the structure above is in fact
  symmetric monoidal, since $(\tau_{\geq m} M) \otimes_A^{\mathbb{L}}
  (\tau_{\geq n} N) \rightarrow \tau_{\geq m + n} (M \otimes_A^{\mathbb{L}}
  N)$ is an equivalence for all $m, n \in \mathbb{Z}$.
  
  Now recall that in a symmetric monoidal $\infty$-category, finite coproducts
  of commutative algebra objects are given by tensor products. It follows from
  \Cref{lem:dR-preserv-colim} that the conjugate-filtered derived de Rham
  cohomology functor $\tmop{dRCon}_{\mathbb{F}_p} \rightarrow \tmop{CAlg}
  (\tmop{DF}^{\leq 0} (\mathbb{F}_p))$ is the left derived functor of a
  finite-coproduct-preserving functor, and then the result follows from
  \Cref{prop:left-deriv-n-fun}.
\end{proof}

Note that by the divided power Poincaré's lemma
{\cite[\href{https://stacks.math.columbia.edu/tag/07LC}{Tag
07LC}]{stacks-project}}, the conjugate filtration on the divided power
polynomial algebra is trivial. The proof of \Cref{prop:dR-crys-inv} leads to

\begin{proposition}
  The natural transformation $\tmop{Fil}_{\tmop{conj}}^{\ast} \tmop{dR}_{(B
  \twoheadrightarrow B'', \delta) / (A \twoheadrightarrow A'', \gamma)}
  \rightarrow \tmop{Fil}_{\tmop{conj}}^{\ast} \tmop{dR}_{(\tmop{id}_{B''}, 0)
  / (A \rightarrow A'', \gamma)}$ of functors $\tmop{dRCon} \rightrightarrows
  \tmop{CAlg} (\tmop{DF}^{\leq 0} (\mathbb{Z}))$ induced by the counit map
  associated to $((A \twoheadrightarrow A'', \gamma) \rightarrow (B
  \twoheadrightarrow B'', \delta)) \in \tmop{dRCon}$ is an equivalence. In
  other words, the conjugate-filtered de Rham cohomology functor $\tmop{dRCon}
  \rightarrow \tmop{CAlg} (\tmop{DF}^{\leq 0} (\mathbb{Z}))$ is $(\tmop{dRCon}
  \rightarrow \tmop{CrysCon})$-invariant (\Cref{def:L-inv}) (note that so is
  the functor $\tmop{dRCon}_{\mathbb{F}_p} \rightarrow
  \tmop{CAlg}_{\mathbb{F}_p}, ((A \twoheadrightarrow A'', \gamma) \rightarrow
  (B \twoheadrightarrow B'', \delta)) \mapsto \varphi_{(A \twoheadrightarrow
  A'')}^{\ast} (B'')$).
\end{proposition}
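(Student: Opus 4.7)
The plan is to leverage Proposition~\ref{prop:dR-crys-inv} and deduce the filtered statement essentially formally, by observing that the conjugate filtration is produced from the underlying derived de Rham cohomology via the Whitehead tower, which is functorial. As in the proof of Proposition~\ref{prop:dR-crys-inv}, both the source functor $\tmop{Fil}_{\tmop{conj}}^{\ast}\tmop{dR}$ and the ``target'' functor obtained by precomposing with $\tmop{dRCon}_{\mathbb{F}_p}\rightarrow \tmop{CrysCon}_{\mathbb{F}_p}$ preserve sifted colimits (the preceding lemma gives it for the source, and the right adjoint $\tmop{CrysCon}\rightarrow\tmop{dRCon}$ constructed above is again sifted-colimit-preserving). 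Combined with Proposition~\ref{prop:left-deriv-fun}, it suffices to verify the equivalence on the full subcategory $\tmop{dRCon}_{\mathbb{F}_p}^{0}\subseteq\tmop{dRCon}_{\mathbb{F}_p}$.

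The first step is to dispose of the $\varphi^{\ast}$-``base'' structure. Given an object $((A\twoheadrightarrow A'',\gamma)\rightarrow (B\twoheadrightarrow B'',\delta))\in\tmop{dRCon}_{\mathbb{F}_p}^{0}$, its image under the counit is $((A\twoheadrightarrow A'',\gamma)\rightarrow (\tmop{id}_{B''}\colon B''\rightarrow B'',0))$, and in both cases the structure base is $\varphi_{(A\twoheadrightarrow A'')}^{\ast}(B'')$; the comparison map of structure maps is built into Definition~\ref{conj-fil-deriv-dR}. Once that identification is in place, by naturality it is enough to show that the counit map induces an equivalence of underlying filtered $\mathbb{E}_{\infty}$-$\mathbb{F}_p$-algebras obtained by applying the Whitehead tower $(\tau_{\geq n})_{n\in(\mathbb{Z},\geq)}$ to the derived de Rham cohomology. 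For the standard objects in $\tmop{dRCon}^{0}_{\mathbb{F}_p}$, the proof of Proposition~\ref{prop:dR-crys-inv} factors the map as $\alpha$ followed by a $\beta$ obtained via a simplicial resolution of $A''[X]$ by divided power polynomial $A[X]$-algebras, and exhibits each as a quasi-isomorphism of honest de Rham complexes using the divided power Poincaré lemma (\cite[\href{https://stacks.math.columbia.edu/tag/07LC}{Tag~07LC}]{stacks-project}).

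The step that requires genuine verification, and which I expect to be the only real obstacle, is that this quasi-isomorphism is functorial enough to lift to an equivalence of Whitehead towers in $\tmop{CAlg}(\tmop{DF}^{\leq 0}(\mathbb{F}_p))$ and not merely in $\tmop{CAlg}(D(\mathbb{F}_p))$. The key point is that the functor $D(\mathbb{F}_p)\rightarrow \tmop{DF}^{\leq 0}(\mathbb{F}_p)$, $M\mapsto (\tau_{\geq n}M)_{n\in(\mathbb{Z},\geq)}$, is symmetric monoidal over a field (as recorded in the proof of the preceding lemma about the conjugate-filtered de Rham cohomology preserving small colimits), so it takes maps of $\mathbb{E}_{\infty}$-algebras to maps of filtered $\mathbb{E}_{\infty}$-algebras and preserves equivalences; in particular the equivalence obtained in Proposition~\ref{prop:dR-crys-inv} lifts canonically along this functor. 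Putting these pieces together, the map of conjugate-filtered $\mathbb{E}_{\infty}$-$\mathbb{F}_p$-algebras is an equivalence on standard objects, which by the initial reduction proves that $\tmop{Fil}_{\tmop{conj}}^{\ast}\tmop{dR}_{\cdummy/\cdummy}$ is $(\tmop{dRCon}\rightarrow\tmop{CrysCon})$-invariant. The parenthetical remark about the functor $((A\twoheadrightarrow A'',\gamma)\rightarrow(B\twoheadrightarrow B'',\delta))\mapsto \varphi_{(A\twoheadrightarrow A'')}^{\ast}(B'')$ being $(\tmop{dRCon}\rightarrow\tmop{CrysCon})$-invariant is immediate, since neither $(A\twoheadrightarrow A'')$ nor $B''$ is affected by the counit.
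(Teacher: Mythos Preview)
Your reduction to $\tmop{dRCon}_{\mathbb{F}_p}^{0}$ via sifted colimits and your invocation of the factorization $\alpha,\beta$ from Proposition~\ref{prop:dR-crys-inv} are exactly the paper's approach; the paper just says ``the proof of \Cref{prop:dR-crys-inv} leads to'' this proposition, prefaced by the remark that the divided power Poincar\'e lemma makes the conjugate filtration on the divided power polynomial algebra trivial.

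One point of emphasis is slightly off. You single out the symmetric monoidality of the Whitehead tower over $\mathbb{F}_p$ as ``the key point'', but it is not actually needed here. What you must check is that for $\alpha$, and for each level of the simplicial resolution handling $\beta$, the induced map of conjugate filtrations is an equivalence. All of these are maps between objects of $\tmop{dRCon}_{\mathbb{F}_p}^{0}$, where by \emph{definition} the conjugate filtration is the Whitehead tower of the de Rham complex. Since the Poincar\'e lemma gives an equivalence of underlying objects in $D(\mathbb{F}_p)$, and $\tau_{\geq n}$ (being a functor) preserves equivalences, you get equivalences in $\tmop{DF}^{\leq 0}(\mathbb{F}_p)$; conservativity of the forgetful functor from $\tmop{CAlg}(\tmop{DF}^{\leq 0}(\mathbb{F}_p))$ then upgrades this to the $\mathbb{E}_\infty$-level. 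This is what the paper's phrase ``the conjugate filtration on the divided power polynomial algebra is trivial'' encodes. Your phrasing ``the equivalence obtained in Proposition~\ref{prop:dR-crys-inv} lifts canonically along $W$'' is slightly dangerous, because it suggests applying $W$ to the \emph{final} equivalence $\tmop{dR}(O)\simeq\tmop{dR}(O')$; since $O'\notin\tmop{dRCon}_{\mathbb{F}_p}^{0}$, one does not know a priori that $\tmop{Fil}_{\tmop{conj}}^{\ast}\tmop{dR}(O')=W(\tmop{dR}(O'))$ (and $W$ need not commute with geometric realizations). The argument works precisely because you stay inside $\tmop{dRCon}_{\mathbb{F}_p}^{0}$ at every Poincar\'e step and only pass to the realization afterward.
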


\begin{definition}
  \label{def:conj-fil-crys-coh}The {\tmdfn{conjugate-filtered derived
  crystalline cohomology functor}} $\tmop{Fil}_{\tmop{conj}}^{\ast}
  \tmop{CrysCoh} \of \tmop{CrysCon} \rightarrow \tmop{CAlg} (\tmop{DF}^{\leq
  0} (\mathbb{F}_p))$ along with the {\tmdfn{structure map}} $\varphi_{(A
  \twoheadrightarrow A'', \gamma)}^{\ast} (R) \rightarrow \tmop{CrysCoh}_{R /
  (A \twoheadrightarrow A'', \gamma)}$ is defined to be the composite
  $\tmop{CrysCon} \rightarrow \tmop{dRCon} \rightarrow \tmop{Fun} (\Delta^1,
  \tmop{CAlg} (\tmop{DF}^{\leq 0} (\mathbb{F}_p)))$, where the later functor
  is the conjugate-filtered derived de Rham cohomology functor combined with
  the structure map.
\end{definition}

By \Cref{lem:conj-fil-dR-exhaustive}, we have

\begin{lemma}
  \label{lem:conj-fil-deriv-crys-exhaustive}The conjugate filtration on the
  derived crystalline cohomology is exhaustive.
\end{lemma}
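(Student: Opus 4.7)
The plan is to reduce immediately to Lemma~\ref{lem:conj-fil-dR-exhaustive} via the very definition of the conjugate-filtered derived crystalline cohomology. Recall that Definition~\ref{def:conj-fil-crys-coh} realises $\tmop{Fil}_{\tmop{conj}}^{\ast} \tmop{CrysCoh}$ as the composite
\[
\tmop{CrysCon} \longrightarrow \tmop{dRCon} \xrightarrow{\tmop{Fil}_{\tmop{conj}}^{\ast} \tmop{dR}_{\cdummy / \cdummy}} \tmop{CAlg} (\tmop{DF}^{\leq 0} (\mathbb{F}_p)),
\]
where the first functor is the fully faithful right adjoint $((A \twoheadrightarrow A'', \gamma), A'' \rightarrow R) \mapsto ((A \twoheadrightarrow A'', \gamma) \rightarrow (\tmop{id}_R \of R \rightarrow R, 0))$ constructed before Proposition~\ref{prop:dR-crys-inv}.

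The property of being exhaustive, namely that $\tmop{Fil}^{- \infty} F \assign \tmop{colim}_{n \in (\mathbb{Z}, \geq)} \tmop{Fil}^{- n} F \rightarrow F^{-\infty}$ (the underlying object) is an equivalence, is detected after applying the forgetful functor $\tmop{CAlg} (\tmop{DF}^{\leq 0} (\mathbb{F}_p)) \rightarrow \tmop{DF}^{\leq 0} (\mathbb{F}_p)$. I would therefore transport exhaustiveness from $\tmop{dRCon}$ to $\tmop{CrysCon}$ by precomposing with the right adjoint: since Lemma~\ref{lem:conj-fil-dR-exhaustive} gives exhaustiveness of $\tmop{Fil}_{\tmop{conj}}^{\ast} \tmop{dR}_{(B \twoheadrightarrow B'', \delta) / (A \twoheadrightarrow A'', \gamma)}$ for every object of $\tmop{dRCon}$, we may simply evaluate it at $((A \twoheadrightarrow A'', \gamma) \rightarrow (\tmop{id}_R, 0))$ for each $((A \twoheadrightarrow A'', \gamma), A'' \rightarrow R) \in \tmop{CrysCon}$ to obtain exhaustiveness of $\tmop{Fil}_{\tmop{conj}}^{\ast} \tmop{CrysCoh}_{R / (A \twoheadrightarrow A'', \gamma)}$.

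There is no real obstacle here: the entire content has already been packaged into Lemma~\ref{lem:conj-fil-dR-exhaustive}, which itself was established either via Proposition~\ref{prop:left-deriv-fun} together with Lemma~\ref{lem:assoc-graded-union-Lan}, or via the stability of $\mathcal{P}_{\Sigma}$ under filtered colimits (Proposition~\ref{prop:Psigma}). The only thing worth noting is that one should verify that the union $\tmop{Fil}^{-\infty}$ of the conjugate-filtered derived crystalline cohomology agrees with $\tmop{CrysCoh}_{R / (A \twoheadrightarrow A'', \gamma)}$ itself, which is built into the construction: both are obtained by applying the corresponding (unfiltered or ``union'') functor on $\tmop{dRCon}$ to the same object, and the unfiltered derived crystalline cohomology was defined in Definition~\ref{def:deriv-crys-cohomol} precisely as this composite.
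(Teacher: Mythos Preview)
Your proposal is correct and matches the paper's approach exactly: the paper simply records ``By \Cref{lem:conj-fil-dR-exhaustive}, we have'' before stating the lemma, and your argument is the spelled-out version of that one-line reduction via Definition~\ref{def:conj-fil-crys-coh}.
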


Similar to \Cref{prop:deriv-crys-coh-preserv-colim}, we have

\begin{proposition}
  The conjugate-filtered derived crystalline cohomology functor
  $\tmop{CrysCon} \rightarrow \tmop{CAlg} (\tmop{DF}^{\leq 0} (\mathbb{F}_p))$
  preserves small colimits.
\end{proposition}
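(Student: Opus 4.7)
The plan is to follow the template of \Cref{prop:deriv-crys-coh-preserv-colim} essentially verbatim, upgrading each ingredient from $\tmop{CAlg}_{\mathbb{Z}}$ to $\tmop{CAlg}(\tmop{DF}^{\leq 0}(\mathbb{F}_p))$. Concretely, the conjugate-filtered derived crystalline cohomology factors as $\tmop{CrysCon}_{\mathbb{F}_p} \to \tmop{dRCon}_{\mathbb{F}_p} \to \tmop{CAlg}(\tmop{DF}^{\leq 0}(\mathbb{F}_p))$, where the first functor is the right adjoint to $\tmop{dRCon} \to \tmop{CrysCon}$ (restricted to the $\mathbb{F}_p$-setting via \Cref{lem:crys-coh-indep-base-chg}) and hence preserves sifted colimits by \Cref{cor:nonab-deriv-cat-adjoint-fun}, while the second was just shown to preserve all small colimits. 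Consequently, the composite preserves sifted colimits.

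By \Cref{prop:left-deriv-fun} applied to the projectively generated $\infty$-category $\tmop{CrysCon}_{\mathbb{F}_p}$, it then suffices to verify that the restriction of the conjugate-filtered derived crystalline cohomology to the full subcategory $\tmop{CrysCon}^0_{\mathbb{F}_p} \subseteq \tmop{CrysCon}_{\mathbb{F}_p}$ spanned by compact projective generators preserves finite coproducts. To reduce this to what we already know on $\tmop{dRCon}^0_{\mathbb{F}_p}$, I would use exactly the lifting trick from the proof of \Cref{prop:deriv-crys-coh-preserv-colim}: every object $((\Gamma_{\mathbb{F}_p[X']}(Y') \twoheadrightarrow \mathbb{F}_p[X']),\ \mathbb{F}_p[X'] \to \mathbb{F}_p[X, X']) \in \tmop{CrysCon}^0_{\mathbb{F}_p}$ admits the canonical lift to the $\tmop{dRCon}^0_{\mathbb{F}_p}$-object $(\Gamma_{\mathbb{F}_p[X']}(Y') \twoheadrightarrow \mathbb{F}_p[X']) \to (\Gamma_{\mathbb{F}_p[X, X']}(Y, Y') \twoheadrightarrow \mathbb{F}_p[X, X'])$ with $Y = \emptyset$, and this lift is itself compatible with finite coproducts.

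It thus remains to show that the conjugate-filtered derived de Rham cohomology $\tmop{Fil}_{\tmop{conj}}^{\ast} \tmop{dR}_{\cdummy / \cdummy}$ preserves finite coproducts on $\tmop{dRCon}^0_{\mathbb{F}_p}$. For these objects the de Rham complex is an honest filtered $\mathbb{F}_p$-CDGA given by an exterior algebra over a bounded below complex of flat $\mathbb{F}_p$-modules with the Whitehead-tower filtration, and since Whitehead truncation for $D(\mathbb{F}_p)$-module spectra is (genuinely) symmetric monoidal over a field (as recorded in the proof of the preceding lemma), the conjugate-filtered de Rham complex of a tensor product of such PD-polynomial presentations is the Day-convolution tensor product of the individual conjugate-filtered de Rham complexes. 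This coproduct-preservation statement is the only place where the field hypothesis $\mathbb{F}_p$ is genuinely used; modulo it, the argument is formal and the main obstacle is merely keeping track that the auxiliary structure map $\varphi_{(A \twoheadrightarrow A'')}^{\ast}(R) \to \tmop{Fil}_{\tmop{conj}}^{\ast} \tmop{CrysCoh}_{R / (A \twoheadrightarrow A'')}$ in the target $\tmop{Fun}(\Delta^1, \tmop{CAlg}(\tmop{DF}^{\leq 0}(\mathbb{F}_p)))$ is colimit-compatible, which is checked termwise and reduces to the same two inputs.
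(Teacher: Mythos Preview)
Your proposal is correct and follows essentially the same approach as the paper, which simply states that the proof is ``similar to \Cref{prop:deriv-crys-coh-preserv-colim}'' without spelling out details. You have faithfully reconstructed that template: the sifted-colimit preservation via the factorization through $\tmop{dRCon}_{\mathbb{F}_p}$, the reduction to finite coproducts on $\tmop{CrysCon}_{\mathbb{F}_p}^0$ via \Cref{prop:left-deriv-fun}, and the lifting trick to $\tmop{dRCon}_{\mathbb{F}_p}^0$ where the symmetric monoidality of the Whitehead tower over a field (already recorded in the proof of the preceding lemma) supplies the coproduct preservation.
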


Now we analyze the associated graded pieces of the conjugate filtration. Let
$(A, I, \gamma) \rightarrow (B, J, \delta)$ be an element in $\tmop{dRCon}^0$.
We recall the inverse\footnote{A priori, the ``inverse'' Cartier map $C^{- 1}$
is not defined to be the inverse of a map, but just defined to be a map.}
Cartier map $C^{- 1} \of \varphi_{(A, I)}^{\ast} (\Omega_{(B / J) / (A /
I)}^{\star}) \rightarrow H^{\star} (\Omega_{(B, J) / (A, I)}^{\ast}, \mathd)$
of graded $\varphi_{(A, I)}^{\ast} (B / J)$-algebras (where $\star$ is the
grading), then we deduce that this is in fact an isomorphism. Our presentation
is adapted from the proof of {\cite[Thm~7.2]{Katz1970}}.
\begin{description}
  \item[$\mathord{\star} = 0$] This is the composite map $\varphi_{(A,
  I)}^{\ast} (B / J) \rightarrow B \rightarrow H^0 (\Omega_{(B, J) / (A,
  I)}^{\ast}, \mathd)$, i.e., the $\varphi_{(A, I)}^{\ast} (B / J)$-algebra
  structure on $H^0 (\Omega_{(B, J) / (A, I)}, \mathd)$.
  
  \item[$\mathord{\star} = 1$] Consider the map $B \rightarrow H^1
  (\Omega_{(B, J) / (A, I)}^{\ast}, \mathd)$ of sets given by $f \mapsto [f^{p
  - 1} \mathd f]$. We first check that this map is additive: in
  $\Omega_{\mathbb{Z} [u, v] /\mathbb{Z}}^1$, we have
  \begin{eqnarray*}
    (u + v)^{p - 1} \mathd (u + v) - u^{p - 1} \mathd u - v^{p - 1} \mathd v &
    = & \frac{1}{p}  (\mathd ((u + v)^p) - \mathd (u^p) - \mathd (v^p))\\
    & = & \frac{1}{p} \mathd \left( \sum_{j = 1}^{p - 1} \binom{p}{j} u^j
    v^{p - 1 - j} \right)\\
    & = & \mathd \left( \sum_{j = 1}^{p - 1} \frac{1}{p}  \binom{p}{j} u^j
    v^{p - 1 - j} \right)
  \end{eqnarray*}
  We deduce the additivity by the map $\mathbb{Z} [u, v] \rightarrow B, u
  \mapsto f, v \mapsto g$.
  
  Now we note that the map $f \mapsto [f^{p - 1} \mathd f]$ satisfies Leibniz
  rule (recall that $H^1 (\Omega_{(B, J) / (A, I)}^{\ast}, \mathd)$ is a
  $\varphi_{(A, I)}^{\ast} (B / J)$-module, therefore a $B / J$-module).
  Indeed, $[(fg)^{p - 1} \mathd (fg)] = f^p  [g^{p - 1} \mathd g] + g^p  [f^{p
  - 1} \mathd f]$.
  
  Thus we get a derivation $B / J \rightarrow H^1 (\Omega_{(B, J) / (A,
  I)}^{\ast}, \mathd)$, which gives rise to a $B / J$-linear map $\Omega_{(B /
  J) / (A / I)}^1 \rightarrow H^1 (\Omega_{(B, J) / (A, I)}^{\ast}, \mathd)$
  and after linearization, we get $\varphi_{(A, I)}^{\ast} \Omega_{(B / J) /
  (A, I)}^1 \rightarrow H^1 (\Omega_{(B, J) / (A, I)}^{\ast}, \mathd)$.
  
  \item[$\mathord{\star} > 1$] Taking the exterior power of the map for
  $\mathord{\star} = 1$.
\end{description}
Now we show the Cartier isomorphism:

\begin{lemma}
  Let $(A, I, \gamma) \rightarrow (B, J, \delta)$ be an element in
  $\tmop{CrysCon}_{\mathbb{F}_p}^0$. Then the inverse Cartier map $C^{- 1} \of
  \varphi_{(A, I)}^{\ast} (\Omega_{(B / J) / (A / I)}^{\star}) \rightarrow
  H^{\star} (\Omega_{(B, J) / (A, I)}^{\ast}, \mathd)$ is an isomorphism of
  graded $\varphi_{(A, I)}^{\ast} (B / J)$-algebras.
\end{lemma}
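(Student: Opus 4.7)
The strategy is to reduce the statement to single-variable cases by a combination of multiplicativity and the Künneth formula, then verify the basic cases by direct computation.

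The map $C^{-1}$ is by construction a map of graded $\varphi_{(A,I)}^{\ast}(B/J)$-algebras, and both the source and the target of $C^{\star}$ are generated in degree $\leq 1$ (the source is literally the exterior algebra on $\Omega^1$; the target, once we know the isomorphism in degrees $0$ and $1$, will inherit the same via the multiplication). Hence it suffices to verify the map is an isomorphism in degrees $0$ and $1$. Next, every object of $\tmop{dRCon}_{\mathbb{F}_p}^0$ has the form $(A,I,\gamma) \to (B,J,\delta)$ with $B = \Gamma_{A[X']}(Y') = A \otimes_A \bigl(\bigotimes_{i,A} A[x'_i]\bigr) \otimes_A \bigl(\bigotimes_{j,A} \Gamma_A(y'_j)\bigr)$, so the de Rham complex $(\Omega^{\ast}_{(B,J)/(A,I)}, \mathd)$ decomposes as a graded tensor product over $A$ of the single-variable de Rham complexes. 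Each factor is a bounded complex of free $A$-modules, so Künneth holds strictly and $H^{\ast}$ commutes with the tensor factorization. The source $\varphi^{\ast}_{(A,I)}(\Omega^{\ast}_{(B/J)/(A/I)})$ admits an analogous decomposition over $\varphi^{\ast}_{(A,I)}(B/J)$, since $B/J \cong (A/I)[X']$ decomposes as a polynomial tensor product. As $C^{-1}$ is multiplicative, it suffices to check the isomorphism when $B$ is one of $A$, $A[x']$, or $\Gamma_A(y')$.

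For $B = A$ there is nothing to prove. For $B = \Gamma_A(y')$, we have $B/J = A/I$, so the source vanishes in positive degree and equals $\varphi^{\ast}_{(A,I)}(A/I)$ in degree $0$; on the target side, the complex $\Gamma_A(y') \xrightarrow{\mathd} \Gamma_A(y')\mathd y'$ with $\mathd(\gamma_n(y')) = \gamma_{n-1}(y')\mathd y'$ has $H^0 = A$ (constants) and $H^1 = 0$ since every $\gamma_n(y')\mathd y'$ is the image of $\gamma_{n+1}(y')$; the degree $0$ structure map $\varphi^{\ast}_{(A,I)}(A/I) \to A$ is tautologically an isomorphism here. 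For $B = A[x']$, in degree $0$ the source $\varphi^{\ast}_{(A,I)}((A/I)[x'])$ is identified with the Frobenius-twisted polynomial ring generated by $x'^{(1)}$; the structure map sends $x'^{(1)} \mapsto x'^p$, landing in $\ker(\mathd) = A[x'^p]$, and this is an isomorphism of graded pieces. In degree $1$, the source is the free rank-one module on $\mathd x'^{(1)}$ over this; multiplicativity forces the target to be determined by the image of $\mathd x'^{(1)}$, namely $[x'^{p-1}\mathd x']$, and tracing through shows the image is $\bigoplus_{k \geq 0} A \cdot x'^{(k+1)p - 1}\mathd x' = H^1(\Omega^{\ast}_{A[x']/A})$.

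The main obstacle is bookkeeping: one must carefully distinguish the $A$-module structure (in which the tensor decomposition of $\Omega^{\ast}_{(B,J)/(A,I)}$ takes place and Künneth applies), the $A/I$-module structure (in which the source $\Omega^{\ast}_{(B/J)/(A/I)}$ decomposes), and the $\varphi^{\ast}_{(A,I)}(B/J)$-module structure (which mediates between them via the Frobenius). The compatibility works out precisely because each single-variable cohomology group is free over $A$ or $A/I$, so the tensor products and the Frobenius base change are strict and commute with each other; had any of these flatness hypotheses failed, one would have to invoke a derived Künneth and the argument would require more care. Once the compatibility of $C^{-1}$ with the tensor decomposition is verified at the level of graded algebras (which follows from its construction from $f \mapsto [f^{p-1}\mathd f]$), the reduction above completes the proof.
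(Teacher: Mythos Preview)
Your proof is correct, but takes a somewhat different route from the paper's. The paper argues as follows: since $(B,J,\delta) = (\Gamma_{A[X]}(Y) \twoheadrightarrow (A/I)[X])$, the inverse Cartier map factors through $H^{\star}(\Omega^{\ast}_{(A[X],\,IA[X])/(A,I)})$, and the divided-power Poincar\'e lemma identifies the latter with $H^{\star}(\Omega^{\ast}_{(B,J)/(A,I)})$; this kills the $Y$-variables. Then a base-change argument along $(A,0,0) \to (A,I,\gamma)$ reduces to $I = 0$, which is the classical Cartier isomorphism cited from Katz. You instead decompose the de Rham complex as a tensor product over $A$ and invoke K\"unneth to reduce to single-variable factors, which you compute by hand. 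In effect your $\Gamma_A(y')$ computation \emph{is} the one-variable PD Poincar\'e lemma, and your $A[x']$ computation \emph{is} the one-variable Cartier isomorphism over an arbitrary $\mathbb{F}_p$-algebra $A$; you have unwound the paper's two structural reductions into an explicit K\"unneth calculation. Your version is more self-contained, while the paper's is more modular and reuses results already established elsewhere in the text.

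One organizational quibble: your opening claim that ``it suffices to verify the map is an isomorphism in degrees $0$ and $1$'' because both sides are generated in degree $\leq 1$ is not a freestanding reduction --- knowing that a graded-algebra map is an isomorphism in degrees $0$ and $1$, with source a free exterior algebra, only gives surjectivity in higher degrees; for injectivity you need to know the target is also the free exterior algebra on its degree-$1$ part over its degree-$0$ part, and that is precisely what your K\"unneth decomposition establishes. It would be cleaner to lead with K\"unneth: then each single-variable factor is a two-term complex, so only degrees $0$ and $1$ arise, and no separate ``generated in low degree'' argument is needed.
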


\begin{proof}
  Recall that $(B, J, \delta)$ is of the form $(\Gamma_{A [X]} (Y)
  \twoheadrightarrow (A / I) [X], \delta)$. It is then direct to check that
  the inverse Cartier map $C^{- 1}$ factors as $\varphi_{(A, I)}^{\ast}
  (\Omega_{(B / J) / (A / I)}^{\star}) \rightarrow H^{\star} (\Omega_{(A [X],
  IA [X]) / (A, I)}^{\ast}, \mathd) \rightarrow H^{\star} (\Omega_{(B, J) /
  (A, I)}^{\ast}, \mathd)$, where the first map is the inverse Cartier map
  associated to $(A \twoheadrightarrow A / I, \gamma) \rightarrow (A [X], IA
  [X], \gamma)$, and the second map is an isomorphism by the divided power
  Poincaré's lemma
  {\cite[\href{https://stacks.math.columbia.edu/tag/07LC}{Tag
  07LC}]{stacks-project}}.
  
  Thus we can assume that $(B, J, \delta) = (A [X], IA [X], \gamma)$. In this
  case, the inverse Cartier map is base-changed from that for $(A, 0, 0)
  \rightarrow (A [X], 0, 0)$ along $(A, 0, 0) \rightarrow (A, I, \gamma)$,
  thus we can assume that $I = 0$, which is {\cite[Thm~7.2]{Katz1970}}.
\end{proof}

It then follows from \Cref{prop:left-deriv-n-fun} that

\begin{proposition}
  There exists a natural isomorphism\footnote{To avoid the ambiguity of
  symbols, we suppress the asterisk on $\tmop{Fil}^{\ast}$ to avoid confusion
  with the pullback symbol $\varphi^{\ast}$.}
  \[ C^{- 1} \of \varphi_{(A \twoheadrightarrow A'')}^{\ast} \left(
     \bigwedgestar_{B''}^{\star} L_{B'' / A''} \right) \left[ -
     \mathord{\star} \right] \rightarrow \tmop{gr}_{\tmop{conj}}^{-
     \mathord{\star}} \tmop{dR}_{(B \twoheadrightarrow B'') / (A
     \twoheadrightarrow A'')} \]
  in $\tmop{CAlg} (\tmop{Gr}^{\leq 0} (D (\varphi_{A \twoheadrightarrow
  A''}^{\ast} (B''))))$, called the {\tmdfn{derived Cartier isomorphism}} (cf.
  {\cite[Prop~3.5]{Bhatt2012a}}), which is functorial\footnote{Here we use the
  same convention as in \Cref{rem:functor-target-variable}.} in $((A
  \twoheadrightarrow A'', \gamma) \rightarrow (B \twoheadrightarrow B'',
  \delta)) \in \tmop{dRCon}_{\mathbb{F}_p}$.
\end{proposition}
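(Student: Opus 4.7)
The plan is to reduce the derived statement to the classical Cartier isomorphism just proved, via the universal property of left Kan extensions, by showing both sides preserve sifted colimits and agree on the generating subcategory $\tmop{dRCon}_{\mathbb{F}_p}^0$.

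First I would verify that the right-hand side, as a functor $\tmop{dRCon}_{\mathbb{F}_p} \to \tmop{CAlg}(\tmop{Gr}^{\leq 0}(D(\mathbb{F}_p)))$ (with the varying $\varphi_{(A\twoheadrightarrow A'')}^\ast(B'')$-linear structure recorded as in \Cref{rem:functor-target-variable}), is left Kan extended from its restriction to $\tmop{dRCon}_{\mathbb{F}_p}^0$. This is immediate from \Cref{conj-fil-deriv-dR} combined with \Cref{lem:assoc-graded-union-Lan}, since passage to associated graded commutes with left Kan extension.

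Next I would check that the left-hand side $\varphi_{(A\twoheadrightarrow A'')}^\ast\!\bigl(\bigwedgestar_{B''}^{\star}\mathbb{L}_{B''/A''}\bigr)[-\star]$ is also a left derived functor of its restriction to $\tmop{dRCon}_{\mathbb{F}_p}^0$. The composite $\tmop{dRCon}_{\mathbb{F}_p} \to \tmop{Fun}(\Delta^1,\tmop{Ani}(\tmop{Alg}_{\mathbb{F}_p}))$, $((A\twoheadrightarrow A'',\gamma)\to(B\twoheadrightarrow B'',\delta))\mapsto(A''\to B'')$ preserves sifted colimits by \Cref{cor:forget-pair-conserv-sifted-colim}, and then $\mathbb{L}_{\cdummy/\cdummy}$, derived exterior powers, Frobenius pullback, and shifts all preserve sifted colimits. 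Hence by \Cref{prop:left-deriv-fun} it suffices to construct $C^{-1}$ and prove it is an equivalence on $\tmop{dRCon}_{\mathbb{F}_p}^0$.

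On $\tmop{dRCon}_{\mathbb{F}_p}^0$, every object is of the form $(A\twoheadrightarrow A'',\gamma)\to(\Gamma_{A[X]}(Y)\twoheadrightarrow A''[X],\delta)$ as analyzed in the proof of \Cref{prop:dR-crys-inv}. Here $B''=A''[X]$ is a polynomial $A''$-algebra, so $\mathbb{L}_{B''/A''}\simeq \Omega^1_{B''/A''}$ is static and free, and $\bigwedgestar_{B''}^{\star}\mathbb{L}_{B''/A''}$ coincides with $\Omega^{\star}_{(B/J)/(A/I)}$. On the other hand, $\tmop{gr}^{-\star}_{\tmop{conj}}\tmop{dR}$ computes $H^{\star}(\Omega^{\ast}_{(B,J)/(A,I)},\mathrm{d})[-\star]$ by the Whitehead-tower definition. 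The preceding lemma (the classical Cartier isomorphism) then supplies a natural equivalence $C^{-1}$ of graded $\varphi_{(A,I)}^{\ast}(B/J)$-algebras between these two functors $\tmop{dRCon}_{\mathbb{F}_p}^0 \rightrightarrows \tmop{CAlg}(\tmop{Gr}^{\leq 0}(D(\mathbb{F}_p)))$. Applying \Cref{prop:left-deriv-fun} twice (once to extend the natural transformation, once to propagate the ``being an equivalence'' from the generators) yields the desired derived Cartier isomorphism over all of $\tmop{dRCon}_{\mathbb{F}_p}$, with functoriality automatic.

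The main obstacle will be bookkeeping the $\varphi_{(A\twoheadrightarrow A'')}^\ast(B'')$-algebra structure: both sides live in a category that depends on the input, so one must assemble the comparison as a morphism in $\tmop{Fun}(\tmop{dRCon}_{\mathbb{F}_p}^0,\tmop{Fun}(\Delta^1,\tmop{CAlg}(\tmop{Gr}^{\leq 0}(D(\mathbb{F}_p)))))$ tracking structure maps from $\varphi_{(A\twoheadrightarrow A'')}^\ast(B'')$ to each side, rather than merely as a map of underlying graded module spectra. The same structure-map formalism used in \Cref{def:conj-fil-pd-env} and \Cref{def:conj-fil-crys-coh} handles this, and once the comparison is recorded at this level it is preserved by left Kan extension.
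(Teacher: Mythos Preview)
Your proposal is correct and is precisely the argument the paper has in mind: the paper's proof is the single sentence ``It then follows from \Cref{prop:left-deriv-fun} that'', invoking the left-derived-functor machinery to propagate the classical Cartier isomorphism on $\tmop{dRCon}_{\mathbb{F}_p}^0$ established in the preceding lemma. You have simply unpacked that one-liner in detail, including the bookkeeping of the $\varphi_{(A\twoheadrightarrow A'')}^\ast(B'')$-algebra structure via structure maps as in \Cref{rem:functor-target-variable}.
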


Note that both functors are $(\tmop{dRCon}_{\mathbb{F}_p} \rightarrow
\tmop{CrysCon}_{\mathbb{F}_p})$-invariant (\Cref{def:L-inv}), it follows from
\Cref{prop:L-inv} that

\begin{proposition}
  \label{prop:crys-Cartier-isom}There exists a natural isomorphism
  \[ C^{- 1} \of \varphi_{(A \twoheadrightarrow A'')}^{\ast} \left(
     \bigwedgestar_R^{\star} L_{R / A''} \right) \left[ - \mathord{\star}
     \right] \rightarrow \tmop{gr}_{\tmop{conj}}^{- \mathord{\star}}
     \tmop{CrysCoh}_{R / (A \twoheadrightarrow A'')} \]
  in $\tmop{CAlg} (\tmop{Gr}^{\leq 0} (D (\varphi_{A \twoheadrightarrow
  A''}^{\ast} (R))))$, called the {\tmdfn{derived Cartier isomorphism}}, which
  is functorial in $((A \twoheadrightarrow A'', \gamma), A'' \rightarrow R)
  \in \tmop{CrysCon}_{\mathbb{F}_p}$.
\end{proposition}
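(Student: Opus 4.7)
The plan is to deduce this proposition from the derived Cartier isomorphism established in the immediately preceding (unnumbered) proposition on $\tmop{dRCon}_{\mathbb{F}_p}$, by descending along the localization $\tmop{dRCon} \to \tmop{CrysCon}$ using the formalism of \Cref{subsec:refl-subcat}. Recall that the right adjoint $\tmop{CrysCon} \to \tmop{dRCon}$ sending $((A \twoheadrightarrow A''), A'' \to R)$ to $((A \twoheadrightarrow A'') \to (\tmop{id}_R, 0))$ is fully faithful, so by \Cref{prop:L-inv}, functors out of $\tmop{dRCon}_{\mathbb{F}_p}$ factor uniquely through $\tmop{CrysCon}_{\mathbb{F}_p}$ provided they are $(\tmop{dRCon} \to \tmop{CrysCon})$-invariant in the sense of \Cref{def:L-inv}, and likewise for natural transformations between such functors.

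First I would verify that both sides of the derived Cartier isomorphism, viewed as functors out of $\tmop{dRCon}_{\mathbb{F}_p}$, are $(\tmop{dRCon}_{\mathbb{F}_p} \to \tmop{CrysCon}_{\mathbb{F}_p})$-invariant. For the right-hand side $\tmop{gr}_{\tmop{conj}}^{-\star} \tmop{dR}_{(B \twoheadrightarrow B'')/(A \twoheadrightarrow A'')}$, invariance follows from the invariance of the conjugate-filtered derived de Rham cohomology (the unnumbered proposition following \Cref{conj-fil-deriv-dR}), since passing to associated graded pieces is a colimit-preserving (hence invariance-preserving) operation. For the left-hand side $\varphi_{(A \twoheadrightarrow A'')}^{\ast}\!\left(\bigwedgestar_{B''}^{\star} \mathbb{L}_{B''/A''}\right)[-\star]$, invariance is essentially immediate: the Frobenius twist $\varphi_{(A \twoheadrightarrow A'')}^{\ast}$ depends only on the animated PD-pair, the cotangent complex $\mathbb{L}_{B''/A''}$ depends only on the underlying map $A'' \to B''$ of animated rings (not on any lift $B$), and the derived exterior power is functorial. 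Equivalently, this functor factors through the projection $\tmop{dRCon} \to \tmop{CrysCon}$.

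With both sides invariant, \Cref{prop:L-inv} guarantees that the natural isomorphism $C^{-1}$ of the previous proposition descends uniquely to a natural isomorphism between the corresponding functors on $\tmop{CrysCon}_{\mathbb{F}_p}$. Unwinding the definitions --- noting that the composite $\tmop{CrysCon} \to \tmop{dRCon} \to \tmop{CrysCon}$ is the identity, so that under the right adjoint one has $B = B'' = R$ and $\mathbb{L}_{B''/A''} \simeq \mathbb{L}_{R/A''}$ --- one sees that the descended natural isomorphism has exactly the claimed form. The care needed in bookkeeping is the dependence of the target $\infty$-category $D(\varphi_{(A \twoheadrightarrow A'')}^{\ast}(R))$ on the input, as highlighted in \Cref{rem:functor-target-variable}, but this has already been addressed in the target formulation of the previous proposition and transports directly. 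I do not foresee a serious obstacle: the substantive content --- constructing the inverse Cartier map and verifying it is an isomorphism for standard objects of $\tmop{dRCon}_{\mathbb{F}_p}^0$ --- was carried out in the preceding proposition, and the present statement is essentially a formal consequence of localization-invariance.
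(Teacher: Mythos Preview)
Your proposal is correct and follows exactly the paper's approach: the paper simply notes that both functors are $(\tmop{dRCon}_{\mathbb{F}_p} \to \tmop{CrysCon}_{\mathbb{F}_p})$-invariant and invokes \Cref{prop:L-inv}. Your write-up is more detailed than the paper's one-line justification, but the content is identical.
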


\subsection{Relative animated PD-envelope}Recall that, in the classical case
{\cite[\href{https://stacks.math.columbia.edu/tag/07H9}{Tag
07H9}]{stacks-project}}, given a PD-pair $(A, I, \gamma) \in
\tmop{Pair}^{\gamma}$, and an ideal $J$ of $A$ containing $I$, there is a
relative PD-envelope $D_{(A, I, \gamma)} (J)$ which is essentially the same as
$D_A (J) \otimes_{D_A (I)} A$, where the map $D_A (I) \rightarrow A$ is
induced by the PD-structure $\gamma$ on $A$. We seek an animated version of
this construction, which is needed to study the derived crystalline
cohomology. Roughly speaking, we first construct a relative PD-envelope for
any animated PD-pair $(A \twoheadrightarrow A'', \gamma_A)$ along with a map
$(A \twoheadrightarrow A'') \rightarrow (B \twoheadrightarrow B'')$ of
animated pairs, then we restrict to the special case that the map $A
\rightarrow B$ is an equivalence. It turns out that the general case can be
recovered from the special case.

Similar to the classical case, the relative animated PD-envelope of $(A
\twoheadrightarrow A'', \gamma_A)$ along with $(A \twoheadrightarrow A'')
\rightarrow (B \twoheadrightarrow B'')$ is obtained by the base change
$\tmop{Env}^{\gamma, \tmop{an}} (B \twoheadrightarrow B'')
\amalg_{\tmop{Env}^{\gamma, \tmop{an}} (A \twoheadrightarrow A'')} (A
\twoheadrightarrow A'', \gamma_A)$ of the (absolute) animated PD-envelope,
where the map $\tmop{Env}^{\gamma, \tmop{an}} (A \twoheadrightarrow A'')
\rightarrow (A \twoheadrightarrow A'', \gamma_A)$ is induced by the
PD-structure $\gamma_A$. More formally, we start with reviewing the formal
category-theoretic fact about relative adjunctions:

\begin{lemma}[dual to {\cite[Prop~5.2.5.1]{Lurie2009}}]
  \label{lem:adjunct-undercat}Let $\mathcal{C}
  \underset{G}{\overset{F}{\longrightleftarrows}} \mathcal{D}$ be an adjoint
  pair of $\infty$-categories. Assume that the $\infty$-category $\mathcal{D}$
  admits pushouts and let $D \in \mathcal{D}$ be an object. Then
  \begin{enumerate}
    \item The induced functor $g \of \mathcal{D}_{D \mathord{/}} \rightarrow
    \mathcal{C}_{G D \mathord{/}}$ admits a left adjoint $f$.
    
    \item The functor $f$ is equivalent to the composition
    \[ \mathcal{C}_{G D \mathord{/}} \xrightarrow{f'} \mathcal{D}_{F G D
       \mathord{/}} \xrightarrow{f''} \mathcal{D}_{D \mathord{/}} \]
    where $f'$ is induced by $F$ and $f''$ is induced by the pushout along the
    counit map $F G D \rightarrow D$.
  \end{enumerate}
  We note that this construction is functorial in $D \in \mathcal{D}$.
\end{lemma}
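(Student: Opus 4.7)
The statement is the $\infty$-categorical dualization of \cite[Prop~5.2.5.1]{Lurie2009}, so the cleanest route is to reduce to that result by passing to opposite categories. The plan is as follows. First I would pass to opposite $\infty$-categories: the pair $F \dashv G$ becomes a pair $G^{\tmop{op}} \dashv F^{\tmop{op}}$ between $\mathcal{D}^{\tmop{op}}$ and $\mathcal{C}^{\tmop{op}}$, the undercategory $\mathcal{D}_{D \mathord{/}}$ identifies with $(\mathcal{D}^{\tmop{op}})_{\mathord{/} D}^{\tmop{op}}$, and the hypothesis that $\mathcal{D}$ admits pushouts becomes the hypothesis that $\mathcal{D}^{\tmop{op}}$ admits pullbacks. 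Applying \cite[Prop~5.2.5.1]{Lurie2009} to this dualized data then yields both the existence of $f$ and the formula in part~(2) up to direction reversal.

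For readers preferring a direct argument, the adjunction can be checked as follows. Given $(GD \to C) \in \mathcal{C}_{GD \mathord{/}}$ and $(D \to D') \in \mathcal{D}_{D \mathord{/}}$, I would compute mapping spaces by expressing an undercategory as a fiber of a mapping space over a point. On one side, the universal property of the pushout $D \amalg_{FGD} FC$ gives a natural equivalence
\[
\tmop{Map}_{\mathcal{D}_{D \mathord{/}}}\bigl(f(GD \to C), (D \to D')\bigr)
\simeq \tmop{Map}_{\mathcal{D}_{FGD \mathord{/}}}\bigl((FGD \to FC), (FGD \to D')\bigr),
\]
where the map $FGD \to D'$ is the composite of the counit $\varepsilon_D \of FGD \to D$ with $D \to D'$. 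On the other side, the base adjunction $F \dashv G$ upgrades to an adjunction between the undercategories $\mathcal{D}_{FGD \mathord{/}}$ and $\mathcal{C}_{GFGD \mathord{/}}$, and precomposing with the unit $GD \to GFGD$ (which is an equivalence on $GD$ after one further precomposition, by the triangle identity) yields a natural equivalence with $\tmop{Map}_{\mathcal{C}_{GD \mathord{/}}}((GD \to C), (GD \to GD'))$. The two are patently natural in both variables, giving the claimed adjunction.

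The main technical nuisance will be the bookkeeping in the second, direct argument: making precise the claim that $F \dashv G$ ``lifts'' to the undercategories requires either relative adjunctions in the sense of \cite[§7.3.2]{Lurie2017}, or a careful manipulation of the fiber sequences defining mapping spaces in undercategories. The dualization-based approach avoids this by reducing everything to a single black-box application of Lurie's proposition; I would prefer it and only sketch the direct verification as motivation.

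Finally, functoriality in $D$ follows automatically: the formula for $f$ in part~(2) expresses it as a composition of constructions (applying $F$, then pushing out along the counit) that are manifestly functorial in $D \in \mathcal{D}$, and uniqueness of left adjoints then promotes this to functoriality of the adjunction itself. No additional argument is needed.
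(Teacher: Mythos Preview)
Your proposal is correct and matches the paper's approach: the paper states the lemma without proof, citing it as ``dual to \cite[Prop~5.2.5.1]{Lurie2009}'', which is precisely your first (and preferred) route of passing to opposite categories and applying Lurie's proposition as a black box. Your additional direct mapping-space sketch goes beyond what the paper provides.
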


\begin{notation}
  We denote the comma category $\tmop{Pair}^{\gamma, \tmop{an}}
  \times_{\tmop{Pair}^{\tmop{an}}, \tmop{ev}_{[0]}} \tmop{Fun} (\Delta^1,
  \tmop{Pair}^{\tmop{an}})$ by $\tmop{PDEnvCon}$, an object of which is
  denoted by $(A \twoheadrightarrow A'', \gamma) \rightarrow (B
  \twoheadrightarrow B'')$, instead of the cumbersome notation $((A
  \twoheadrightarrow A'', \gamma), (A \twoheadrightarrow A'') \rightarrow (B
  \twoheadrightarrow B''))$.
\end{notation}

\begin{definition}
  \label{def:rel-PD-env}Let $(A \twoheadrightarrow A'', \gamma) \in
  \tmop{Pair}^{\gamma, \tmop{an}}$ be an animated PD-pair. The
  {\tmdfn{(relative) animated PD-envelope}} of an animated pair in
  $\tmop{Pair}^{\tmop{an}}_{(A \twoheadrightarrow A'') \mathord{/}}$ is the
  image under the functor $\tmop{Pair}^{\tmop{an}}_{(A \twoheadrightarrow A'')
  \mathord{/}} \rightarrow \tmop{Pair}^{\gamma, \tmop{an}}_{(A
  \twoheadrightarrow A'', \gamma) \mathord{/}}$ induced by the animated
  PD-envelope functor $\tmop{Pair}^{\tmop{an}} \rightarrow
  \tmop{Pair}^{\gamma, \tmop{an}}$ by \Cref{lem:adjunct-undercat}.
  
  Concretely, let $B \twoheadrightarrow B''$ be an object in
  $\tmop{Pair}^{\tmop{an}}_{(A \twoheadrightarrow A'') \mathord{/}}$. Then the
  relative animated PD-envelope of $B \twoheadrightarrow B''$ is given by
  \[ \tmop{Env}_{(A \twoheadrightarrow A'', \gamma_A)}^{\gamma, \tmop{an}} (B
     \twoheadrightarrow B'') \assign \tmop{Env}^{\gamma, \tmop{an}} (B
     \twoheadrightarrow B'') \amalg_{\tmop{Env}^{\gamma, \tmop{an}} (A
     \twoheadrightarrow A'')} (A \twoheadrightarrow A'', \gamma_A) \]
  where the map $\tmop{Env}^{\gamma, \tmop{an}} (A \twoheadrightarrow A'')
  \rightarrow (A \twoheadrightarrow A'', \gamma)$ is the counit map associated
  to $(A \twoheadrightarrow A'', \gamma) \in \tmop{Pair}^{\gamma, \tmop{an}}$
  and the map $\tmop{Env}^{\gamma, \tmop{an}} (A \twoheadrightarrow A'')
  \rightarrow \tmop{Env}^{\gamma, \tmop{an}} (B \twoheadrightarrow B'')$ is
  the image of $(A \twoheadrightarrow A'') \rightarrow (B \twoheadrightarrow
  B'')$ under the animated PD-envelope functor.
  
  This defines the {\tmdfn{(relative) animated PD-envelope functor}}
  $\tmop{PDEnvCon} \rightarrow \tmop{Fun} (\Delta^1, \tmop{Pair}^{\gamma,
  \tmop{an}}), ((A \twoheadrightarrow A'', \gamma) \rightarrow (B
  \twoheadrightarrow B'')) \mapsto \tmop{Env}_{(A \twoheadrightarrow A'',
  \gamma)}^{\gamma, \tmop{an}} (B \twoheadrightarrow B'')$.
\end{definition}

\begin{example}
  Let $(A \twoheadrightarrow A'', \gamma) \in \tmop{Pair}^{\gamma, \tmop{an}}$
  be an animated PD-pair. Then the animated PD-envelope of $A
  \twoheadrightarrow A''$ relative to $(A \twoheadrightarrow A'', \gamma)$ is
  given by $(A \twoheadrightarrow A'', \gamma)$. This follows from the fact
  that $A \twoheadrightarrow A''$ relative to $(A \twoheadrightarrow A'',
  \gamma)$ is the base change of $\tmop{id}_{\mathbb{Z}} \of \mathbb{Z}
  \rightarrow \mathbb{Z}$ relative to $(\tmop{id}_{\mathbb{Z}} \of \mathbb{Z}
  \rightarrow \mathbb{Z}, 0)$ along the map $(\tmop{id}_{\mathbb{Z}} \of
  \mathbb{Z} \rightarrow \mathbb{Z}, 0) \rightarrow (A \twoheadrightarrow A'',
  \gamma)$ of animated PD-pairs. Compare with
  \Cref{lem:rel-PD-env-cryscon-surj}.
\end{example}

\begin{example}
  Let $(A \twoheadrightarrow A'', \gamma) \in \tmop{Pair}^{\gamma, \tmop{an}}$
  be an animated PD-pair. Then the animated PD-envelope of $\tmop{id}_{A''}
  \of A'' \twoheadrightarrow A''$ relative to $(A \twoheadrightarrow A'',
  \gamma)$ is given by $(\tmop{id}_{A''} \of A'' \rightarrow A'', 0)$. This
  follows from checking the universal property of the unit map at
  $\tmop{id}_{A''} \of A'' \twoheadrightarrow A''$ of the adjunction
  $\tmop{Pair}^{\tmop{an}}_{(A \twoheadrightarrow A'') \mathord{/}}
  \rightleftarrows \tmop{Pair}^{\gamma, \tmop{an}}_{(A \twoheadrightarrow A'',
  \gamma) \mathord{/}}$.
\end{example}

It follows immediately from \Cref{lem:ani-PD-env-char-0} that

\begin{lemma}
  \label{lem:rel-pd-env-rat}Let $(A \twoheadrightarrow A'', \gamma) \in
  \tmop{Pair}^{\gamma, \tmop{an}}$ be an animated PD-pair, $(B
  \twoheadrightarrow B'') \in \tmop{Pair}^{\tmop{an}}_{(A \twoheadrightarrow
  A'') \mathord{/}}$ an animated pair under $A \twoheadrightarrow A''$. Let
  $(C \twoheadrightarrow B'', \delta)$ denote its relative animated
  PD-envelope. Then the unit map $(B \twoheadrightarrow B'') \rightarrow (C
  \twoheadrightarrow B'')$ becomes an equivalence after rationalization.
\end{lemma}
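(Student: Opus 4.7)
The plan is to reduce the statement to \Cref{lem:ani-PD-env-char-0} by expressing the relative animated PD-envelope as a pushout and exploiting the fact that rationalization and the relevant forgetful functor both preserve colimits.

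First, I would unfold the explicit description of the relative animated PD-envelope given in \Cref{def:rel-PD-env}: writing $(C' \twoheadrightarrow A'', \gamma_{C'})$ and $(D \twoheadrightarrow B'', \gamma_D)$ for the absolute animated PD-envelopes of $A \twoheadrightarrow A''$ and $B \twoheadrightarrow B''$ respectively, we have
\[
(C \twoheadrightarrow B'', \delta) \simeq (A \twoheadrightarrow A'', \gamma) \amalg_{(C' \twoheadrightarrow A'', \gamma_{C'})} (D \twoheadrightarrow B'', \gamma_D)
\]
in $\tmop{AniPDPair}$. Applying the forgetful functor $\tmop{AniPDPair} \rightarrow \tmop{AniPair}$, which preserves small colimits by \Cref{prop:forget-PD-small-colim}, the underlying animated pair $(C \twoheadrightarrow B'')$ is the corresponding pushout $(A \twoheadrightarrow A'') \amalg_{(C' \twoheadrightarrow A'')} (D \twoheadrightarrow B'')$ taken in $\tmop{AniPair}$. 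Moreover, under this identification, the unit map $(B \twoheadrightarrow B'') \rightarrow (C \twoheadrightarrow B'')$ is the canonical map from the right factor into the pushout, precomposed with the unit $(B \twoheadrightarrow B'') \rightarrow (D \twoheadrightarrow B'')$ of the absolute animated PD-envelope adjunction.

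Next, I would apply the rationalization functor $\tmop{AniPair} \rightarrow \tmop{AniPair}_{\mathbb{Q}}$, which preserves small colimits since it is a base change of animated rings and hence a left adjoint. By \Cref{lem:ani-PD-env-char-0}, both unit maps $(A \twoheadrightarrow A'') \rightarrow (C' \twoheadrightarrow A'')$ and $(B \twoheadrightarrow B'') \rightarrow (D \twoheadrightarrow B'')$ become equivalences after rationalization, so the rationalized pushout simplifies to
\[
(C \twoheadrightarrow B'')_{\mathbb{Q}} \simeq (A \twoheadrightarrow A'')_{\mathbb{Q}} \amalg_{(A \twoheadrightarrow A'')_{\mathbb{Q}}} (B \twoheadrightarrow B'')_{\mathbb{Q}} \simeq (B \twoheadrightarrow B'')_{\mathbb{Q}},
\]
and chasing the description of the unit map in the previous paragraph through this identification shows that it becomes homotopic to the identity on $(B \twoheadrightarrow B'')_{\mathbb{Q}}$.

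The main subtlety, and the only step requiring any care, is verifying that after these identifications the rationalized unit map really corresponds to the identity rather than to some twist of it; this should follow purely formally from the naturality of the unit and the universal property of pushouts, but it is where one must be precise about the construction in \Cref{def:rel-PD-env} (in particular the role of the counit $(C' \twoheadrightarrow A'', \gamma_{C'}) \rightarrow (A \twoheadrightarrow A'', \gamma)$). Once this bookkeeping is done, no further computation is needed.
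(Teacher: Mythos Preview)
Your proposal is correct and takes essentially the same approach as the paper: the paper's entire justification is the single sentence ``It follows immediately from \Cref{lem:ani-PD-env-char-0},'' and your argument via the pushout description of \Cref{def:rel-PD-env}, \Cref{prop:forget-PD-small-colim}, and colimit-preservation of rationalization is exactly how one unwinds that immediacy. The bookkeeping you flag about the unit map is indeed purely formal, since after rationalization the counit $(C' \twoheadrightarrow A'') \rightarrow (A \twoheadrightarrow A'')$ becomes an inverse to the unit and hence the pushout collapses as you describe.
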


Recall that given a PD-pair $(A, I, \gamma)$ and a map $(A, I) \rightarrow (B,
J)$ of pairs with $A \rightarrow B$ being flat, the PD-structure $\gamma$
extends to $B$, i.e, there exists a unique PD-structure $\overline{\gamma}$ on
$(B, IB)$ such that the map $(A, I) \rightarrow (B, J)$ of pairs gives rise to
a map $(A, I, \gamma) \rightarrow (B, IB, \overline{\gamma})$ of PD-pairs.
Then the PD-envelope of $(B, J)$ with respect to $(A, I, \gamma)$ is the same
as that with respect to the PD-pair $(B, IB, \overline{\gamma})$, which
corresponds to the crystalline cohomology of $B / J$ with respect to $(B, IB,
\overline{\gamma})$. We now show an animated analogue (without flatness).

\begin{notation}
  Let $\tmop{CrysCon}_{\tmop{surj}}$ denote the full subcategory
  $\tmop{Pair}^{\gamma, \tmop{an}} \times_{\tmop{CAlg}^{\tmop{an}}} \tmop{Fun}
  (\Delta^1, \tmop{CAlg}^{\tmop{an}})_{\geq 0} \subseteq \tmop{CrysCon}$
  spanned by objects $((A \twoheadrightarrow A'', \gamma), A'' \rightarrow R)$
  such that $A'' \rightarrow R$ is also surjective.
\end{notation}

{\construction{\label{cons:PD-env-con-crys-con-surj-retract}There is a
canonical functor $\tmop{PDEnvCon} \rightarrow \tmop{CrysCon}_{\tmop{surj}}$
given as follows: for every object $((A \twoheadrightarrow A'', \gamma)
\rightarrow (B \twoheadrightarrow B'')) \in \tmop{PDEnvCon}$, we get the
commutative diagram
\[ \begin{array}{ccc}
     A & \nonconverted{longtwoheadrightarrow} & A''\\
     \longdownarrow &  & \longdownarrow\\
     B & \nonconverted{longtwoheadrightarrow} & B''
   \end{array} \]
in $\tmop{CAlg}^{\tmop{an}}$, which gives rise to two surjective maps $B
\otimes_A^{\mathbb{L}} A'' \twoheadrightarrow B''$ \ and $B \twoheadrightarrow
B \otimes_A^{\mathbb{L}} A''$. Furthermore, the later admits a PD-structure:
it is the underlying animated pair of the pushout $(\tmop{id}_B \of B
\rightarrow B, 0) \amalg_{(\tmop{id}_A \of A \rightarrow A, 0)} (A
\twoheadrightarrow A'', \gamma)$ in $\tmop{Pair}^{\gamma, \tmop{an}}$. We
denote by $(B \twoheadrightarrow B \otimes_A^{\mathbb{L}} A'', \delta)$ this
pushout. Then we get an object $((B \twoheadrightarrow B
\otimes_A^{\mathbb{L}} A'', \delta), B \otimes_A^{\mathbb{L}} A''
\twoheadrightarrow B'')$ in $\tmop{CrysCon}_{\tmop{surj}}$.}}

One verifies that

\begin{lemma}
  \label{lem:PD-env-crys-con}The functor $\tmop{PDEnvCon} \rightarrow
  \tmop{CrysCon}_{\tmop{surj}}$ in
  Construction~\ref{cons:PD-env-con-crys-con-surj-retract} admits a fully
  faithful right adjoint $\tmop{CrysCon}_{\tmop{surj}} \rightarrow
  \tmop{PDEnvCon}$ given by $((A \twoheadrightarrow A'', \gamma), A''
  \twoheadrightarrow R) \mapsto ((A \twoheadrightarrow A'', \gamma)
  \rightarrow (A \twoheadrightarrow R))$.
\end{lemma}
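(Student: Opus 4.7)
The plan is to verify the adjunction $F \dashv G$ directly, exploiting two universal properties already at our disposal: the pushout in $\tmop{AniPDPair}$ defining $F$, and the fully faithful embedding $\tmop{Ani}(\tmop{Ring}) \hookrightarrow \tmop{AniPDPair}$, $B \mapsto (\tmop{id}_B, 0)$, from \Cref{lem:ani-ring-as-pdpair}. Full faithfulness of $G$ will follow automatically from the fact that its counit is an equivalence, so the crux is to produce the adjunction.

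First I would compute $FG$. Given $Y = ((A \twoheadrightarrow A'', \gamma), A'' \twoheadrightarrow R)$, the object $G(Y) = ((A \twoheadrightarrow A'', \gamma) \to (A \twoheadrightarrow R))$ has $B = A$, so $F(G(Y))$ involves the pushout $(\tmop{id}_A, 0) \amalg_{(\tmop{id}_A, 0)} (A \twoheadrightarrow A'', \gamma)$ in $\tmop{AniPDPair}$. One leg of this pushout being the identity, the pushout is $(A \twoheadrightarrow A'', \gamma)$, so $FG(Y) \simeq Y$ canonically. This provides a candidate counit $\epsilon: FG \Rightarrow \tmop{id}$ which is an equivalence.

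Next I would compare mapping spaces directly. For $X = ((A \twoheadrightarrow A'', \gamma) \to (B \twoheadrightarrow B''))$ and $Y = ((C \twoheadrightarrow C'', \delta), C'' \twoheadrightarrow R)$, a map $F(X) \to Y$ consists of an arrow $\alpha \of (B \twoheadrightarrow B \otimes_A^{\mathbb{L}} A'', \delta_B) \to (C \twoheadrightarrow C'', \delta)$ in $\tmop{AniPDPair}$ together with a compatible morphism $(B \otimes_A^{\mathbb{L}} A'' \twoheadrightarrow B'') \to (C'' \twoheadrightarrow R)$ of animated pairs. By the pushout universal property, $\alpha$ is equivalent to a compatible pair consisting of a map $(\tmop{id}_B, 0) \to (C \twoheadrightarrow C'', \delta)$, i.e.\ a ring map $B \to C$ by \Cref{lem:ani-ring-as-pdpair}, together with a map $f \of (A \twoheadrightarrow A'', \gamma) \to (C \twoheadrightarrow C'', \delta)$, glued over $(\tmop{id}_A, 0)$ so that the composite $A \to B \to C$ agrees with the underlying ring map of $f$. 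Combining with the $B'' \to R$ piece, this is precisely the data of a morphism $X \to G(Y)$ in $\tmop{PDEnvCon}$: namely $f$ together with a map $(B \twoheadrightarrow B'') \to (C \twoheadrightarrow R)$ in $\tmop{AniPair}$ compatible with $f$. All compatibility squares on both sides coincide, so the induced natural transformation of mapping spaces is an equivalence.

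The main obstacle is more bookkeeping than mathematical: $\tmop{PDEnvCon}$ and $\tmop{CrysCon}_{\tmop{surj}}$ are two-layered comma/fibre-product categories, so the mapping-space comparison must be formulated as an equivalence of $\infty$-groupoids over all the auxiliary projections, natural in both $X$ and $Y$. Alternatively one can avoid this by constructing the unit $\eta_X \of X \to GF(X)$ explicitly, with PD-pair component the coprojection $(A \twoheadrightarrow A'', \gamma) \to (B \twoheadrightarrow B \otimes_A^{\mathbb{L}} A'', \delta)$ and pair component $\tmop{id}_{(B \twoheadrightarrow B'')}$, and then verifying the two triangle identities against the counit $\epsilon$ constructed above.
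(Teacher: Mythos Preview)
Your proposal is correct and is precisely the kind of verification the paper has in mind: the paper states this lemma with only the phrase ``One verifies that'' and gives no further proof. Your argument via the pushout description of $F$, the adjunction underlying \Cref{lem:ani-ring-as-pair,lem:ani-ring-as-pdpair}, and the observation that the counit $FG \Rightarrow \tmop{id}$ is an equivalence (hence $G$ is fully faithful) is exactly the intended unwinding.
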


Thus $\tmop{CrysCon}_{\tmop{surj}}$ could be seen as a reflective subcategory
(\Cref{def:reflexive-cat}) of $\tmop{PDEnvCon}$. Now we claim that

\begin{lemma}
  \label{lem:rel-PD-env-crys-con}The relative animated PD-envelope functor
  $\tmop{PDEnvCon} \rightarrow \tmop{Fun} (\Delta^1, \tmop{Pair}^{\gamma,
  \tmop{an}})$ is $(\tmop{PDEnvCon} \rightarrow
  \tmop{CrysCon}_{\tmop{surj}})$-invariant (\Cref{def:L-inv}).
\end{lemma}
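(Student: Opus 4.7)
\begin{proof*}{Proof proposal.}
By \Cref{prop:L-inv} applied to the localization $L \assign G \circ F$ associated with the reflective subcategory $\tmop{CrysCon}_{\tmop{surj}} \subseteq \tmop{PDEnvCon}$ from \Cref{lem:PD-env-crys-con}, it suffices to show that for every $X \in \tmop{PDEnvCon}$ the unit map $\eta_X \of X \rightarrow GF(X)$ is carried to an equivalence by $\tmop{RelPDEnv}$. Unpacking, for $X = ((A \twoheadrightarrow A'', \gamma) \rightarrow (B \twoheadrightarrow B''))$, one has $GF(X) = ((B \twoheadrightarrow B \otimes_A^{\mathbb{L}} A'', \delta) \rightarrow (B \twoheadrightarrow B''))$, where by construction $(B \twoheadrightarrow B \otimes_A^{\mathbb{L}} A'', \delta)$ is the pushout $(\tmop{id}_B \of B \rightarrow B, 0) \amalg_{(\tmop{id}_A \of A \rightarrow A, 0)} (A \twoheadrightarrow A'', \gamma)$ in $\tmop{AniPDPair}$.

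The strategy is to compute both sides of $\tmop{RelPDEnv}(\eta_X)$ as explicit pushouts in $\tmop{AniPDPair}$ and then apply associativity of pushouts together with the base change formula for animated PD-envelopes (\Cref{lem:base-chg-PD-env}). Let $(C \twoheadrightarrow A'', \gamma_C)$ and $(D \twoheadrightarrow B'', \gamma_D)$ denote the absolute animated PD-envelopes of $(A \twoheadrightarrow A'')$ and $(B \twoheadrightarrow B'')$ respectively. By \Cref{def:rel-PD-env},
\[
\tmop{RelPDEnv}(X) \simeq (A \twoheadrightarrow A'', \gamma) \amalg_{(C \twoheadrightarrow A'', \gamma_C)} (D \twoheadrightarrow B'', \gamma_D).
\]
On the other hand, by \Cref{lem:base-chg-PD-env} the animated PD-envelope of $(B \twoheadrightarrow B \otimes_A^{\mathbb{L}} A'')$, which is the base change of $(A \twoheadrightarrow A'')$ along $A \rightarrow B$ in $\tmop{AniPair}$, is identified with $(\tmop{id}_B, 0) \amalg_{(\tmop{id}_A, 0)} (C \twoheadrightarrow A'', \gamma_C)$; so
\[
\tmop{RelPDEnv}(GF(X)) \simeq (B \twoheadrightarrow B \otimes_A^{\mathbb{L}} A'', \delta) \amalg_{(\tmop{id}_B, 0) \amalg_{(\tmop{id}_A, 0)} (C \twoheadrightarrow A'', \gamma_C)} (D \twoheadrightarrow B'', \gamma_D).
\]

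Now the two expressions for $(B \twoheadrightarrow B \otimes_A^{\mathbb{L}} A'', \delta)$ and the base-changed envelope above are themselves pushouts with a common $(\tmop{id}_A, 0)$-leg, so one can rearrange the iterated pushout via associativity to obtain
\[
\tmop{RelPDEnv}(GF(X)) \simeq (\tmop{id}_B \of B \rightarrow B, 0) \amalg_{(\tmop{id}_A \of A \rightarrow A, 0)} \tmop{RelPDEnv}(X).
\]
The final step is to exhibit a natural map $(\tmop{id}_B, 0) \rightarrow \tmop{RelPDEnv}(X)$ in $\tmop{AniPDPair}$ compatible with the map $(\tmop{id}_A, 0) \rightarrow (A \twoheadrightarrow A'', \gamma) \rightarrow \tmop{RelPDEnv}(X)$. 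Such a map exists because $(D \twoheadrightarrow B'', \gamma_D)$ receives a canonical map from $(\tmop{id}_B, 0)$ by \Cref{lem:ani-ring-as-pdpair} (the unit of the animated PD-envelope adjunction at $(B \twoheadrightarrow B)$), and composing with $(D \twoheadrightarrow B'', \gamma_D) \rightarrow \tmop{RelPDEnv}(X)$ gives the required map; its compatibility with the $A$-structure is enforced by the construction of $\tmop{RelPDEnv}$. By the universal property of the pushout this forces
\[
(\tmop{id}_B, 0) \amalg_{(\tmop{id}_A, 0)} \tmop{RelPDEnv}(X) \simeq \tmop{RelPDEnv}(X),
\]
and chasing the identification shows that this equivalence is the map $\tmop{RelPDEnv}(\eta_X)$, which finishes the proof.

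The main obstacle will be the final compatibility check: ensuring that the map $(\tmop{id}_B, 0) \rightarrow \tmop{RelPDEnv}(X)$ produced above coincides with the one that appears after applying $\tmop{RelPDEnv}$ to the unit $\eta_X$, so that the resulting equivalence is indeed $\tmop{RelPDEnv}(\eta_X)$ and not merely some abstract equivalence between the two objects. This amounts to a careful diagram chase using the explicit description of the unit of the adjunction in \Cref{lem:PD-env-crys-con} and the construction of the relative animated PD-envelope via \Cref{lem:adjunct-undercat}.
\end{proof*}
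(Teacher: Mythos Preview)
Your pushout rearrangement in step 3 is incorrect. You claim
\[
\bigl[(\tmop{id}_B,0)\amalg_{(\tmop{id}_A,0)}(A,\gamma)\bigr]\amalg_{\bigl[(\tmop{id}_B,0)\amalg_{(\tmop{id}_A,0)}(C,\gamma_C)\bigr]}(D,\gamma_D)\;\simeq\;(\tmop{id}_B,0)\amalg_{(\tmop{id}_A,0)}\tmop{RelPDEnv}(X),
\]
but such ``distributivity'' fails because the map $(E,\gamma_E)=(\tmop{id}_B,0)\amalg_{(\tmop{id}_A,0)}(C,\gamma_C)\to(D,\gamma_D)$ uses the leg $(\tmop{id}_B,0)\to(D,\gamma_D)$ nontrivially; it is not of the form $(\tmop{id}_B,0)\amalg_{(\tmop{id}_A,0)}(\text{map})$. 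Concretely, take $(A\twoheadrightarrow A'',\gamma)=(\tmop{id}_{\mathbb{Z}[s]},0)$ and $(B\twoheadrightarrow B'')=(\mathbb{Z}[s,t]\twoheadrightarrow\mathbb{Z}[s])$: then $\tmop{RelPDEnv}(X)=(\Gamma_{\mathbb{Z}[s]}(t)\twoheadrightarrow\mathbb{Z}[s])$, whereas $(\tmop{id}_B,0)\amalg_{(\tmop{id}_A,0)}\tmop{RelPDEnv}(X)$ has quotient ring $\mathbb{Z}[s,t]$, so the two cannot be equivalent. Step 4 cannot repair this: producing a compatible map $(\tmop{id}_B,0)\to\tmop{RelPDEnv}(X)$ only yields a map \emph{from} the pushout to $\tmop{RelPDEnv}(X)$, never an equivalence by ``the universal property'' alone.

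The correct rearrangement---which is the paper's argument---factors the other way. Since $(B,\delta)=(\tmop{id}_B,0)\amalg_{(\tmop{id}_A,0)}(A,\gamma)$ and $(E,\gamma_E)=(\tmop{id}_B,0)\amalg_{(\tmop{id}_A,0)}(C,\gamma_C)$, with the map $(E,\gamma_E)\to(B,\delta)$ induced by the counit $(C,\gamma_C)\to(A,\gamma)$ and identities on the other legs, pushout pasting gives $(B,\delta)\simeq(A,\gamma)\amalg_{(C,\gamma_C)}(E,\gamma_E)$. Then
\[
\tmop{RelPDEnv}(GF(X))=(B,\delta)\amalg_{(E,\gamma_E)}(D,\gamma_D)\simeq(A,\gamma)\amalg_{(C,\gamma_C)}(D,\gamma_D)=\tmop{RelPDEnv}(X)
\]
directly, with no further step needed. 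The key input is that the counit of the PD-envelope adjunction is compatible with pushouts (both adjoints preserve colimits by \Cref{prop:forget-PD-small-colim}) and is the identity on $(\tmop{id}_A,0)$ and $(\tmop{id}_B,0)$ (\Cref{lem:ani-ring-as-pdpair}).
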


\begin{proof}
  For every object $(A \twoheadrightarrow A'', \gamma) \rightarrow (B
  \twoheadrightarrow B'')$ in $\tmop{PDEnvCon}$, we have a map $(A
  \twoheadrightarrow A'', \gamma) \rightarrow (B \twoheadrightarrow A''
  \otimes_A^{\mathbb{L}} B, \delta)$ of animated PD-pairs. By the concrete
  description of the relative animated PD-envelope functor, it suffices to
  show that this map along with the counit maps forms a pushout diagram of
  animated PD-pairs. As discussed in
  Construction~\ref{cons:PD-env-con-crys-con-surj-retract}, $(B
  \twoheadrightarrow A'' \otimes_A^{\mathbb{L}} B, \delta)$ is the pushout
  $(\tmop{id}_B \of B \rightarrow B, 0) \amalg_{(\tmop{id}_A \of A \rightarrow
  A, 0)} (A \twoheadrightarrow A'', \gamma)$. The counit maps for
  $(\tmop{id}_A, 0)$ and $(\tmop{id}_B, 0)$ are identities
  (\Cref{lem:ani-ring-as-pdpair}). The result then follows from
  \Cref{prop:forget-PD-small-colim}, which implies that counit maps are
  compatible with small colimits.
\end{proof}

Consequently, in order to study the relative animated PD-envelope functor, it
suffices to study the composite $\tmop{CrysCon}_{\tmop{surj}} \rightarrow
\tmop{PDEnvCon} \rightarrow \tmop{Fun} (\Delta^1, \tmop{Pair}^{\gamma,
\tmop{an}})$. By abuse of terminology, we will simply denote this functor as
$\tmop{RelPDEnv}$ as well and call the image (or after evaluation at $1 \in
\Delta^1$) {\tmdfn{the animated PD-envelope}} of an object $((A
\twoheadrightarrow A'', \gamma), A'' \twoheadrightarrow R) \in
\tmop{CrysCon}_{\tmop{surj}}$. We remark that the functor
$\tmop{CrysCon}_{\tmop{surj}} \rightarrow \tmop{PDEnvCon}$ preserves small
colimits by \Cref{prop:forget-PD-small-colim}, therefore so does the composite
functor.

We note that $\tmop{CrysCon}_{\tmop{surj}}$ is projectively generated: let
$\tmop{CrysCon}_{\tmop{surj}}^0 \subseteq \tmop{CrysCon}_{\tmop{surj}}$ be the
full subcategory spanned by objects $((\Gamma_{\mathbb{Z} [Y, Z]} (X)
\twoheadrightarrow \mathbb{Z} [Y, Z], \gamma), \mathbb{Z} [Y, Z]
\twoheadrightarrow \mathbb{Z} [Z])$ for all finite sets $X, Y, Z$.

\begin{lemma}
  \label{lem:cryscon-comp-proj-gen}The full subcategory
  $\tmop{CrysCon}_{\tmop{surj}}^0 \subseteq \tmop{CrysCon}_{\tmop{surj}}$
  constitutes a set of compact projective generators for
  $\tmop{CrysCon}_{\tmop{surj}}$.
\end{lemma}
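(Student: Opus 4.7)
The plan is to apply \Cref{prop:adjoint-proj-gen} to an adjunction between $\tmop{AniPDPair} \times \tmop{AniPair}$ (projectively generated by \Cref{lem:prod-proj-gen}) and $\tmop{CrysCon}_{\tmop{surj}}$, in analogy with the proof of \Cref{cor:comma-proj-gen} but adapted to the pullback description $\tmop{CrysCon}_{\tmop{surj}} = \tmop{AniPDPair} \times_{\tmop{Ani} (\tmop{Ring})} \tmop{AniPair}$ along the target functor $(A \twoheadrightarrow A'', \gamma) \mapsto A''$ and the source functor $(A'' \twoheadrightarrow R) \mapsto A''$.

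First I would define the right adjoint $G \of \tmop{CrysCon}_{\tmop{surj}} \rightarrow \tmop{AniPDPair} \times \tmop{AniPair}$ by sending $((C \twoheadrightarrow C'', \delta), C'' \twoheadrightarrow S)$ to $((C \twoheadrightarrow C'', \delta), (C \twoheadrightarrow S))$, where the second coordinate is the \emph{composite} surjection $C \twoheadrightarrow C'' \twoheadrightarrow S$ regarded in $\tmop{AniPair}$. The candidate left adjoint $F$ sends $((A \twoheadrightarrow A'', \gamma), (B \twoheadrightarrow R))$ to
\[ ((A \otimes_{\mathbb{Z}}^{\mathbb{L}} B \twoheadrightarrow A'' \otimes_{\mathbb{Z}}^{\mathbb{L}} B, \gamma'), A'' \otimes_{\mathbb{Z}}^{\mathbb{L}} B \twoheadrightarrow A'' \otimes_{\mathbb{Z}}^{\mathbb{L}} R), \]
whose first coordinate is the coproduct in $\tmop{AniPDPair}$ of $(A \twoheadrightarrow A'', \gamma)$ with the trivial PD-pair $(\tmop{id}_B, 0)$ (the underlying animated pair being computed pointwise by \Cref{prop:forget-PD-small-colim}). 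Unpacking the universal property of this coproduct together with the description of morphisms in the pullback $\tmop{CrysCon}_{\tmop{surj}}$, a diagram-chase shows that $\tmop{Map}_{\tmop{CrysCon}_{\tmop{surj}}}(F((A, \gamma), (B, R)), ((C, \delta), S))$ decomposes naturally as $\tmop{Map}_{\tmop{AniPDPair}}((A, \gamma), (C, \delta)) \times \tmop{Map}_{\tmop{AniPair}}((B \twoheadrightarrow R), (C \twoheadrightarrow S))$, establishing $F \dashv G$.

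Next I would verify the hypotheses of \Cref{prop:adjoint-proj-gen}. Conservativity of $G$ is immediate by inspection of each coordinate. For preservation of sifted colimits, the first coordinate of $G$ is the projection from the pullback to $\tmop{AniPDPair}$, which preserves small colimits because both structure functors $\tmop{AniPDPair} \rightarrow \tmop{Ani} (\tmop{Ring})$ (via \Cref{prop:forget-PD-small-colim} composed with $\tmop{ev}_{[1]}$) and $\tmop{AniPair} \rightarrow \tmop{Ani} (\tmop{Ring})$ do, so that colimits in the pullback are componentwise (cf.\ \Cref{lem:comma-colim}); the second coordinate of $G$ factors as forgetting the PD-structure into $\tmop{AniPair} \times_{\tmop{Ani} (\tmop{Ring})} \tmop{AniPair}$ of composable surjections, followed by the composition functor, both of which preserve small colimits by the same pointwise principle. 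Invoking \Cref{prop:adjoint-proj-gen} then shows that $\tmop{CrysCon}_{\tmop{surj}}$ is projectively generated, with compact projective generators the $F$-images of those of the product.

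Finally I would identify these generators explicitly. By \Cref{lem:prod-proj-gen}, compact projective generators of $\tmop{AniPDPair} \times \tmop{AniPair}$ are $(g, (\tmop{id}_{\mathbb{Z}} \of \mathbb{Z} \rightarrow \mathbb{Z}))$ and $((\tmop{id}_{\mathbb{Z}}, 0), h)$ for $g = (\Gamma_{\mathbb{Z} [Z]} (X) \twoheadrightarrow \mathbb{Z} [Z], \gamma) \in \mathcal{E}^0$ and $h = (\mathbb{Z} [Y, Z] \twoheadrightarrow \mathbb{Z} [Z]) \in \mathcal{D}^0$. A direct computation sends these respectively to the ``boundary'' cases $((\Gamma_{\mathbb{Z} [Z]} (X) \twoheadrightarrow \mathbb{Z} [Z], \gamma), \tmop{id}_{\mathbb{Z} [Z]})$ (namely $Y = \emptyset$) and $((\tmop{id}_{\mathbb{Z} [Y, Z]}, 0), \mathbb{Z} [Y, Z] \twoheadrightarrow \mathbb{Z} [Z])$ (namely $X = \emptyset$) of the claimed family $\tmop{CrysCon}_{\tmop{surj}}^0$. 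Since $F$ preserves finite coproducts and coproducts in $\tmop{CrysCon}_{\tmop{surj}}$ are componentwise, every general member of $\tmop{CrysCon}_{\tmop{surj}}^0$ is obtained as a finite coproduct of such boundary objects (the $Z$-variable sets concatenating cleanly), hence is itself compact projective; collectively they generate. The main obstacle I anticipate is pinning down the correct right adjoint $G$: its second coordinate cannot be the naive projection to $(A'' \twoheadrightarrow R)$, but must be the composite $(C \twoheadrightarrow S)$, reflecting that $F$ modifies the PD-pair by base change against $B$ and so $G$ must see the ``total'' surjection in order to absorb the compatibility constraints in the pullback.
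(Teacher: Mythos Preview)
Your argument is correct and takes a genuinely different route from the paper. Both proofs invoke \Cref{prop:adjoint-proj-gen}, but against different bases. The paper forgets all the way down to $\tmop{Fun}(\Delta^2, D(\mathbb{Z})_{\geq 0})_{\tmop{surj}}$, the $\infty$-category of composable surjections of connective $\mathbb{Z}$-module spectra, via the functor $((A \twoheadrightarrow A'', \gamma), A'' \twoheadrightarrow R) \mapsto (A \twoheadrightarrow A'' \twoheadrightarrow R)$; it then has to establish separately (by the method of the proof of \Cref{thm:ani-smith-eq}) that this base is projectively generated with generators $\mathbb{Z}X \oplus \mathbb{Z}Y \oplus \mathbb{Z}Z \twoheadrightarrow \mathbb{Z}Y \oplus \mathbb{Z}Z \twoheadrightarrow \mathbb{Z}Z$. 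Your approach instead leverages the already-proved projective generation of $\tmop{AniPDPair}$ and $\tmop{AniPair}$ directly via the product, with the key insight being that the correct right adjoint uses the \emph{composite} surjection $C \twoheadrightarrow S$ in the second coordinate. Your route is arguably more economical, since it recycles the work already done for $\mathcal{D}^0$ and $\mathcal{E}^0$ and avoids analyzing $\tmop{Fun}(\Delta^2, D(\mathbb{Z})_{\geq 0})_{\tmop{surj}}$ from scratch; the paper's route is more in the spirit of its overall strategy of reducing everything linearly to $D(\mathbb{Z})_{\geq 0}$, and makes the shape of the generators transparent from the abelian-group side. One small remark: for the claim that colimits in the pullback $\tmop{CrysCon}_{\tmop{surj}} = \tmop{AniPDPair} \times_{\tmop{Ani}(\tmop{Ring})} \tmop{AniPair}$ are computed componentwise, the reference you want is {\cite[Lem~5.4.5.5]{Lurie2009}} (used in the paper for \Cref{lem:forget-ani-delta-pair-ani-pair}) rather than \Cref{lem:comma-colim}, and you should note that the source functor $\tmop{ev}_{[0]} \of \tmop{AniPair} \rightarrow \tmop{Ani}(\tmop{Ring})$ also preserves small colimits (since $\tmop{AniPair} \subseteq \tmop{Fun}(\Delta^1, \tmop{Ani}(\tmop{Ring}))$ is closed under colimits and the latter are pointwise).
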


\begin{proof}
  We only sketch the proof, which is similar to that of
  \Cref{thm:ani-smith-eq}. The key observation is that the composite of
  forgetful functors $\tmop{CrysCon}_{\tmop{surj}}^0 \rightarrow \tmop{Fun}
  (\Delta^2, \tmop{CAlg}^{\tmop{an}})_{\tmop{surj}} \rightarrow \tmop{Fun}
  (\Delta^2, D (\mathbb{Z})_{\geq 0})_{\tmop{surj}}, ((A \twoheadrightarrow
  A'', \gamma), A'' \twoheadrightarrow R) \mapsto (A \twoheadrightarrow A''
  \twoheadrightarrow R)$, which preserves filtered colimits and geometric
  realizations by \Cref{prop:forget-PD-small-colim}, admits a left adjoint,
  where $\tmop{Fun} (\Delta^2, \mathcal{C})_{\tmop{surj}} \subseteq \tmop{Fun}
  (\Delta^2, \mathcal{C})$ is the full subcategory spanned by $(X \rightarrow
  Y \rightarrow Z) \in \tmop{Fun} (\Delta^2, \mathcal{C})$ such that $X
  \rightarrow Y$ and $Y \rightarrow Z$ are surjective, for $\mathcal{C}= D
  (\mathbb{Z})_{\geq 0}$ and $\mathcal{C}= \tmop{CAlg}^{\tmop{an}}$.
  
  The $\infty$-category $\tmop{Fun} (\Delta^2, D (\mathbb{Z})_{\geq
  0})_{\tmop{surj}}$ admits a set $\{ \mathbb{Z}X \oplus \mathbb{Z}Y \oplus
  \mathbb{Z}Z \twoheadrightarrow \mathbb{Z}Y \oplus \mathbb{Z}Z
  \twoheadrightarrow \mathbb{Z}Z \barsuchthat X, Y, Z \in \tmop{Fin} \}$ of
  compact projective generators which spans the full subcategory $\tmop{Fun}
  (\Delta^2, D (\mathbb{Z})_{\geq 0})_{\tmop{surj}}^0$, which follows from the
  fact that the left adjoint to the left derived functor $\mathcal{P}_{\Sigma}
  (\tmop{Fun} (\Delta^2, D (\mathbb{Z})_{\geq 0})_{\tmop{surj}}^0) \rightarrow
  \tmop{Fun} (\Delta^2, D (\mathbb{Z})_{\geq 0})_{\tmop{surj}}^0$ is
  conservative (cf. the proof of {\cite[Prop~25.2.1.2]{Lurie2017}}).
  
  The result then follows from \Cref{prop:adjoint-n-proj-gen}.
\end{proof}

By \Cref{prop:left-deriv-n-fun}, the functor $\tmop{CrysCon}_{\tmop{surj}}
\rightarrow \tmop{Fun} (\Delta^1, \tmop{Pair}^{\gamma, \tmop{an}})$ is the
left derived functor of the restricted functor $\tmop{CrysCon}_{\tmop{surj}}^0
\rightarrow \tmop{Fun} (\Delta^1, \tmop{Pair}^{\gamma, \tmop{an}})$, which is
concretely given as follows:

\begin{lemma}
  \label{lem:rel-PD-env-cryscon-surj}The relative animated PD-envelope of an
  object $((\Gamma_{\mathbb{Z} [Y, Z]} (X) \twoheadrightarrow \mathbb{Z} [Y,
  Z], \gamma), \mathbb{Z} [Y, Z] \twoheadrightarrow \mathbb{Z} [Z]) \in
  \tmop{CrysCon}_{\tmop{surj}}^0$ is functorially given by
  $(\Gamma_{\mathbb{Z} [Z]} (X, Y) \twoheadrightarrow \mathbb{Z} [Z],
  \tilde{\gamma}) \in \tmop{Pair}^{\gamma, \tmop{an}}_{(\Gamma_{\mathbb{Z} [Y,
  Z]} (X) \twoheadrightarrow \mathbb{Z} [Y, Z], \gamma) \mathord{/}}$, i.e.
  coincides with the classical relative PD-envelope.
\end{lemma}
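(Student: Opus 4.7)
The plan is to compute the relative animated PD-envelope directly from its pushout description in Definition~\ref{def:rel-PD-env}, exploiting the colimit-preserving property of the animated PD-envelope functor and the triangle identity for its adjunction with the forgetful functor. Under the right adjoint of Lemma~\ref{lem:PD-env-crys-con}, our object lifts to $((A \twoheadrightarrow A'', \gamma) \to (A \twoheadrightarrow R))$ in $\tmop{PDEnvCon}$ with $A = \Gamma_{\mathbb{Z}[Y,Z]}(X)$, $A'' = \mathbb{Z}[Y,Z]$, $R = \mathbb{Z}[Z]$; in particular $B = A$, so
\[ \tmop{RelPDEnv} \simeq (A \twoheadrightarrow A'', \gamma) \amalg_{\tmop{AniPDEnv}(A \twoheadrightarrow A'')} \tmop{AniPDEnv}(A \twoheadrightarrow R). \]

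The crucial observation is that $A \twoheadrightarrow R$ admits a pushout decomposition in $\tmop{AniPair}$,
\[ (A \twoheadrightarrow R) \simeq (A \twoheadrightarrow A'') \amalg_{(\mathbb{Z}[Y,Z,X] \twoheadrightarrow \mathbb{Z}[Y,Z])} (\mathbb{Z}[Y,Z,X] \twoheadrightarrow \mathbb{Z}[Z]), \]
where the left vertical map is induced by the inclusion $\mathbb{Z}[Y,Z,X] \hookrightarrow A$. This can be verified componentwise under the equivalence $\tmop{AniPair} \simeq \tmop{Fun}(\Delta^1, \tmop{Ani}(\tmop{Ring}))_{\geq 0}$ of Theorem~\ref{thm:ani-smith-eq}: the pushout of rings gives $A \otimes^{\mathbb{L}}_{\mathbb{Z}[Y,Z,X]} \mathbb{Z}[Y,Z,X] \simeq A$ as the source and $\mathbb{Z}[Y,Z] \otimes^{\mathbb{L}}_{\mathbb{Z}[Y,Z]} \mathbb{Z}[Z] \simeq \mathbb{Z}[Z]$ as the target. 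Applying $\tmop{AniPDEnv}$, which preserves small colimits as a left adjoint of Definition~\ref{def:anipair-anipdpair}, and identifying its values on the standard generators in $\mathcal{D}^0$ with the corresponding classical PD-envelopes in $\mathcal{E}^0$ (namely $\Gamma_{\mathbb{Z}[Y,Z]}(X) \twoheadrightarrow \mathbb{Z}[Y,Z]$ and $\Gamma_{\mathbb{Z}[Z]}(X,Y) \twoheadrightarrow \mathbb{Z}[Z]$, respectively), we obtain
\[ \tmop{AniPDEnv}(A \twoheadrightarrow R) \simeq \tmop{AniPDEnv}(A \twoheadrightarrow A'') \amalg_{(\Gamma_{\mathbb{Z}[Y,Z]}(X) \twoheadrightarrow \mathbb{Z}[Y,Z], \gamma)} (\Gamma_{\mathbb{Z}[Z]}(X,Y) \twoheadrightarrow \mathbb{Z}[Z], \tilde{\gamma}). \]
Substituting this into the formula for $\tmop{RelPDEnv}$ and applying the pushout pasting lemma yields
\[ \tmop{RelPDEnv} \simeq (A \twoheadrightarrow A'', \gamma) \amalg_{(\Gamma_{\mathbb{Z}[Y,Z]}(X) \twoheadrightarrow \mathbb{Z}[Y,Z], \gamma)} (\Gamma_{\mathbb{Z}[Z]}(X,Y) \twoheadrightarrow \mathbb{Z}[Z], \tilde{\gamma}), \]
in which the left leg is the composite passing through the counit $\tmop{AniPDEnv}(A \twoheadrightarrow A'') \to (A \twoheadrightarrow A'', \gamma)$.

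The main obstacle I foresee is showing that this composite is the identity on $(A \twoheadrightarrow A'', \gamma) = (\Gamma_{\mathbb{Z}[Y,Z]}(X) \twoheadrightarrow \mathbb{Z}[Y,Z], \gamma)$, at which point the pushout collapses to its right-hand vertex. This is where the triangle identity for the adjunction $\tmop{AniPair} \rightleftarrows \tmop{AniPDPair}$ enters decisively: under the identification $\tmop{AniPDEnv}(\mathbb{Z}[Y,Z,X] \twoheadrightarrow \mathbb{Z}[Y,Z]) \simeq (A \twoheadrightarrow A'', \gamma)$, the inclusion $(\mathbb{Z}[Y,Z,X] \twoheadrightarrow \mathbb{Z}[Y,Z]) \hookrightarrow (A \twoheadrightarrow A'')$ is precisely the unit map at the source, so applying $\tmop{AniPDEnv}$ to this unit and then the counit at $(A \twoheadrightarrow A'', \gamma)$ yields the identity by the triangle identity. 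Once established, the pushout reduces to $(\Gamma_{\mathbb{Z}[Z]}(X,Y) \twoheadrightarrow \mathbb{Z}[Z], \tilde{\gamma})$. Functoriality in the triple $(X, Y, Z)$ is immediate from the naturality of each step.
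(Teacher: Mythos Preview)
Your proof is correct. The pushout decomposition of $(A \twoheadrightarrow R)$ in $\tmop{AniPair}$ is valid (checked componentwise via \Cref{thm:ani-smith-eq}), the animated PD-envelope preserves it, and the triangle identity $\epsilon_{F P} \circ F(\eta_P) = \tmop{id}_{F P}$ for the adjunction $F = \tmop{AniPDEnv}$ at $P = (\mathbb{Z}[Y,Z,X] \twoheadrightarrow \mathbb{Z}[Y,Z])$ does collapse the final pushout as you say.

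The paper's route is different: rather than decomposing the \emph{pair} $A \twoheadrightarrow R$, it decomposes the object of $\tmop{PDEnvCon}$ itself as a base change along the map of PD-pairs $(\tmop{id}_{\mathbb{Z}[Y,Z]}, 0) \to (\Gamma_{\mathbb{Z}[Y,Z]}(X) \twoheadrightarrow \mathbb{Z}[Y,Z], \gamma)$, reducing to the case of a trivial base PD-pair where the relative envelope is simply the absolute one, $(\Gamma_{\mathbb{Z}[Z]}(Y) \twoheadrightarrow \mathbb{Z}[Z])$; the answer then follows by pushing this out along the PD-pair map using the functoriality of \Cref{lem:adjunct-undercat}. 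The paper's argument is more conceptual---it isolates the variable $X$ from $Y$ by changing the base PD-pair, and appeals to the general base-change compatibility of relative adjunctions---whereas yours isolates $X$ from $Y$ inside $\tmop{AniPair}$ and performs the pushout calculus by hand. Your approach has the advantage of being entirely self-contained within the pushout formula of \Cref{def:rel-PD-env}, while the paper's approach makes the role of the base PD-pair more transparent and would generalize more readily to other ``change of base'' situations.
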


\begin{proof}
  First, by the adjointness, there exists a functorial comparison map from the
  relative animated PD-envelope to $(\Gamma_{\mathbb{Z} [Z]} (X, Y)
  \twoheadrightarrow \mathbb{Z} [Z], \tilde{\gamma})$. It suffices to show
  that this is an equivalence.
  
  In this case, $((\Gamma_{\mathbb{Z} [Y, Z]} (X) \twoheadrightarrow
  \mathbb{Z} [Y, Z], \gamma), \Gamma_{\mathbb{Z} [Y, Z]} (X)
  \twoheadrightarrow \mathbb{Z} [Z]) \in \tmop{PDEnvCon}$ is the base change
  of $((\tmop{id}_{\mathbb{Z} [Y, Z]}, 0), \mathbb{Z} [Y, Z]
  \twoheadrightarrow \mathbb{Z} [Z]) \in \tmop{PDEnvCon}$ along
  $(\tmop{id}_{\mathbb{Z} [Y, Z]}, 0) \rightarrow (\Gamma_{\mathbb{Z} [Y, Z]}
  (X) \twoheadrightarrow \mathbb{Z} [Y, Z], \gamma)$. The result then follows
  from the base-change property of the relative adjunction, along with the
  simple fact that the (absolute) animated PD-envelope of $\mathbb{Z} [Y, Z]
  \twoheadrightarrow \mathbb{Z} [Z]$ is $(\Gamma_{\mathbb{Z} [Z]} (Y)
  \twoheadrightarrow \mathbb{Z} [Z], \gamma)$.
\end{proof}

As a generalization of \Cref{def:conj-fil-pd-env}, we now introduce the
{\tmdfn{conjugate filtration}} on the relative animated PD-envelope in
$\tmop{char} p$.

{\construction{\label{cons:rel-conj-fil-cl}Let $(A \twoheadrightarrow A'',
\gamma) \in \tmop{Pair}^{\gamma}_{\mathbb{F}_p}$ be a PD-pair and $I \subseteq
A''$ an ideal. We recall that there is a canonical ``Frobenius'' map
$\varphi_{A \twoheadrightarrow A''} \of A'' \rightarrow A$ by
\Cref{lem:PD-frob}. We suppose that $\varphi_{A \twoheadrightarrow A''}$ is
flat\footnote{This is satisfied when $(A \twoheadrightarrow A'', \gamma) \in
\tmop{Pair}^{\gamma, \tmop{st}}$, which is the only case that we need to
develop the theory. For more examples, see \Cref{rem:pd-env-frob-flat}.}. Let
$(B, J, \delta)$ denote the classical PD-envelope of $(A \twoheadrightarrow
A'' / I)$ relative to $(A \twoheadrightarrow A'', \gamma)$. We note that $B /
J \cong A'' / I$. As in the absolute case, due to the PD-structure $(B, J,
\delta)$, there is a canonical $\varphi_{A \twoheadrightarrow A''}^{\ast} (A''
/ I)$-algebra structure on $B$, and we consider the nonpositive filtration on
$B$ given by $\tmop{Fil}_{\tmop{conj}, \tmop{cl}}^{- n} (B)$ for $n \in
\mathbb{N}$ being the $\varphi_{A \twoheadrightarrow A''}^{\ast} (A'' /
I)$-submodule of $B$ generated by $\left\{ \gamma_{i_1 p} (f_1) \cdots
\gamma_{i_m p} (f_m) \barsuchthat i_1 + \cdots + i_m \leq n \infixand f_1,
\ldots, f_m \in I \right\}$.}}

We have the following relative version of \Cref{lem:bhatt-conj-fil}, for which
the proof of {\cite[Lem~3.42]{Bhatt2012a}} adapts:

\begin{lemma}
  \label{lem:bhatt-rel-conj-fil}Let $(A \twoheadrightarrow A'', \gamma)$ be a
  PD-$\mathbb{F}_p$-pair such that $\varphi_{A \twoheadrightarrow A''}$ is
  flat, and let $I \subseteq A''$ be an ideal such that $I / I^2$ is a flat
  $A'' / I$-module. The relative PD-envelope $(B, J, \delta)$ and the
  filtration $\tmop{Fil}_{\tmop{conj}, \tmop{cl}}^{\ast} B$ are as in
  Construction~\ref{cons:rel-conj-fil-cl}.
  
  Then there is a comparison map $\varphi_{A \twoheadrightarrow A''}^{\ast}
  (\Gamma_{A'' / I}^i (I / I^2)) \rightarrow \tmop{gr}^{- i} B$ of $\varphi_{A
  \twoheadrightarrow A''}^{\ast} (A'' / I)$-modules induced by the maps
  $(\gamma_{ip})_{i \in \mathbb{N}}$ (as in \Cref{lem:bhatt-conj-fil}) which
  is functorial in $((A \twoheadrightarrow A'', \gamma), A''
  \twoheadrightarrow A'' / I)$ in a subcategory of
  $\tmop{CrysCon}_{\mathbb{F}_p, \tmop{surj}}$. Furthermore, if $I \subseteq
  A''$ is generated by a Koszul-regular sequence\footnote{We only need the
  simple case that $((A \twoheadrightarrow A'', \gamma), A''
  \twoheadrightarrow A'' / I) \in \tmop{CrysCon}_{\mathbb{F}_p, \tmop{conj}}$,
  which ``simplifies'' the proof in the sense that a ``brute-force''
  computation suffices.}, then this comparison map is an isomorphism.
\end{lemma}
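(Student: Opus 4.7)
The plan is to follow the template of the absolute case (Lemma \ref{lem:bhatt-conj-fil}, i.e.\ \cite[Lem~3.42]{Bhatt2012a}) by first exhibiting a comparison map abstractly, then identifying the target of this map as divided powers after reducing to a standard local model. The ingredient that makes the relative case run exactly as the absolute one is the flatness hypothesis on $\varphi_{A \twoheadrightarrow A''}$, which guarantees that $B$ is well behaved as a $\varphi_{A \twoheadrightarrow A''}^\ast(A''/I)$-algebra.

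First I would construct the comparison map. Since $A$ is an $\mathbb{F}_p$-algebra and $\delta$ is a PD-structure on $J \subseteq B$, for any $f \in J$ one has $f^p = p! \, \delta_p(f) = 0$ in $B$. Therefore the composite $A'' \twoheadrightarrow B/J \to B$ kills the $p$-th powers of elements in $I \subseteq A''$, so the Frobenius $\varphi_A \of A \to A$ together with the map $A \to B$ factors through $\varphi_{A \twoheadrightarrow A''}^\ast(A''/I) \to B$, endowing $B$ with a canonical $\varphi_{A \twoheadrightarrow A''}^\ast(A''/I)$-algebra structure which is visibly compatible with the filtration. For each $n \in \mathbb{N}$, the map $\delta_{np} \of I \to \tmop{Fil}^{-n} B$ is additive and satisfies $\delta_{np}(af) = a^{np}\delta_{np}(f) = \varphi_{A \twoheadrightarrow A''}^\ast(\overline{a})^n \delta_{np}(f)$ modulo $\tmop{Fil}^{-n+1} B$ for $a \in A''$ (using $a^p = \varphi_{A\twoheadrightarrow A''}(a)$), so $\delta_{np}$ descends to a PD-polynomial-law $I/I^2 \to \tmop{gr}^{-n}(B)$ of degree $n$ over $\varphi_{A \twoheadrightarrow A''}^\ast(A''/I)$. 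By the universal property of the divided power functor, this assembles into the desired graded map $\varphi_{A \twoheadrightarrow A''}^\ast(\Gamma^\ast_{A''/I}(I/I^2)) \to \tmop{gr}^{-\ast}(B)$. Functoriality in the appropriate subcategory is clear from the construction.

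Next, to prove this is an isomorphism when $I = (f_1,\ldots,f_n) \subseteq A''$ is generated by a Koszul-regular sequence, I would appeal to \Cref{lem:rel-PD-env-cryscon-surj}. After choosing a presentation, we reduce to showing the result for the relative PD-envelope of a standard surjection of polynomial rings, where $B$ is explicitly the free PD-polynomial $A$-algebra $\Gamma_{A''/I}$ on generators $\xi_i$ with $\xi_i \mapsto f_i$ and $J$ is generated by the PD-ideal spanned by the $\xi_i$. In this case the filtration $\tmop{Fil}^{-n} B$ is spanned by monomials $\delta_{k_1}(\xi_1)\cdots\delta_{k_n}(\xi_n)$ with $\sum_j \lfloor k_j/p\rfloor \le n$, and writing $k_j = p q_j + r_j$ with $0 \le r_j < p$ we have $\delta_{pq_j}(\xi_j) \cdot \xi_j^{r_j}/r_j!$-style decompositions; using $\xi_j^p = 0$ and the multiplicative formulas for PD-operations, one checks directly that $\tmop{gr}^{-n}(B)$ is a free $\varphi_{A \twoheadrightarrow A''}^\ast(A''/I)$-module on the monomials $\prod_j \delta_{pq_j}(\xi_j)$ with $\sum q_j = n$, and that the map from $\varphi_{A \twoheadrightarrow A''}^\ast(\Gamma^n_{A''/I}(I/I^2))$ sending $\prod \overline{f_j}^{[q_j]}$ to $\prod \delta_{pq_j}(\xi_j)$ is precisely the map just constructed.

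The main obstacle I anticipate is bookkeeping rather than a genuine mathematical difficulty: one must keep track of the interaction between the divided powers inherited from $(A,\gamma)$ (acting on elements coming from $I \cdot A$) and the divided powers generated freely by $\xi_1,\ldots,\xi_n$, so as to confirm that no ``extra'' relations contaminate the graded pieces beyond those accounted for by the divided power functor $\Gamma^\ast$. The flatness of $\varphi_{A \twoheadrightarrow A''}$ is what allows us to extend the PD-structure $\gamma$ compatibly (as in the discussion preceding this lemma), which is crucial for this decoupling. Once the standard case is verified, the general Koszul-regular case follows by base change using \Cref{cor:Hdg-fil-crys-coh-base-chg}, or equivalently by the fact that both sides are compatible with the presentation $A'' \to A''$ factoring through the standard free model.
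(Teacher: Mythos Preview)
Your proposal is essentially correct and matches the paper's own treatment, which simply asserts that ``the proof of \cite[Lem~3.42]{Bhatt2012a} adapts'' and notes in the footnote that only the brute-force computation for objects of $\tmop{CrysCon}_{\mathbb{F}_p,\tmop{surj}}^0$ is actually needed. Your construction of the comparison map via the factorization of Frobenius through $B/J$ and the PD-polynomial law induced by $(\gamma_{ip})_i$ is the right mechanism, and your direct verification in the free PD-polynomial case is exactly the computation the paper has in mind.

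Two small points of cleanup. First, in your base-change step for the general Koszul-regular case you cite \Cref{cor:Hdg-fil-crys-coh-base-chg}, but that statement concerns the Hodge filtration on derived crystalline cohomology, not the conjugate filtration on the relative PD-envelope; the reduction you want is the direct one from Bhatt's argument, namely that the relative PD-envelope of $((A\twoheadrightarrow A'',\gamma), A''\twoheadrightarrow A''/(f_1,\ldots,f_n))$ is obtained by (flat) base change along $A''[Y_1,\ldots,Y_n]\to A''$, $Y_i\mapsto f_i$ from the case where the ideal is $(Y_1,\ldots,Y_n)$, using Koszul-regularity to ensure the tensor products involved are underived. Second, your phrase ``the composite $A''\twoheadrightarrow B/J\to B$'' is a bit compressed: the map $B/J\to B$ is the one induced by the Frobenius $\varphi_B$ (which kills $J$ since $f^p=p!\,\delta_p(f)=0$), and it is this that realizes the $\varphi_{A\twoheadrightarrow A''}^\ast(A''/I)$-algebra structure. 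Neither point affects the validity of the argument.
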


\begin{definition}
  \label{def:conj-fil-rel-pd-env}The {\tmdfn{conjugate filtration functor (on
  the animated PD-envelope)}} $\tmop{Fil}_{\tmop{conj}}
  \tmop{Env}_{(-)}^{\gamma, \tmop{an}} (-) \of \tmop{CrysCon}_{\mathbb{F}_p,
  \tmop{surj}} \rightarrow \tmop{CAlg} (\tmop{DF}^{\leq 0} (\mathbb{F}_p))$
  together with the {\tmdfn{structure map}} of functors
  $\tmop{CrysCon}_{\mathbb{F}_p, \tmop{surj}} \rightrightarrows \tmop{CAlg}
  (\tmop{DF}^{\leq 0} (\mathbb{F}_p))$ from $((A \twoheadrightarrow A'',
  \gamma), A'' \twoheadrightarrow R) \mapsto \varphi_{A \twoheadrightarrow
  A''}^{\ast} (R) = R \otimes_{A'', \varphi_{A \twoheadrightarrow
  A''}}^{\mathbb{L}} A$ to $((A \twoheadrightarrow A'', \gamma), A''
  \twoheadrightarrow R) \mapsto \tmop{Fil}_{\tmop{conj}} \tmop{Env}_{(A
  \twoheadrightarrow A'', \gamma)}^{\gamma, \tmop{an}} (R)$ is defined to be
  the left derived functor (\Cref{prop:left-deriv-n-fun}) of
  $\tmop{CrysCon}_{\mathbb{F}_p, \tmop{surj}}^0 \ni ((A \twoheadrightarrow
  A'', \gamma), A'' \twoheadrightarrow A'' / I) \mapsto (\varphi_{A
  \twoheadrightarrow A''}^{\ast} (A'' / I) \rightarrow
  \tmop{Fil}_{\tmop{conj}, \tmop{cl}}^{\ast} B) \in \tmop{Fun} (\Delta^1,
  \tmop{CAlg} (\tmop{DF}^{\leq 0} (\mathbb{F}_p)))$ as in
  Construction~\ref{cons:rel-conj-fil-cl}.
\end{definition}

As in the absolute case (including \Cref{rem:functor-target-variable}), it
follows from \Cref{lem:bhatt-rel-conj-fil} that

\begin{corollary}
  \label{cor:conjfil-gr-rel}For every $((A \twoheadrightarrow A'', \gamma),
  A'' \twoheadrightarrow R) \in \tmop{CrysCon}_{\mathbb{F}_p, \tmop{surj}}$,
  there exists an equivalence
  \[ \varphi_{A \twoheadrightarrow A''}^{\ast} (\Gamma_R^i
     (\tmop{gr}_{\tmop{ad}}^1 (A'' \twoheadrightarrow R))) \rightarrow
     \tmop{gr}_{\tmop{conj}}^{- i} \tmop{Env}_{(A \twoheadrightarrow A'',
     \gamma)}^{\gamma, \tmop{an}} (R) \]
  in $D (\varphi_{A \twoheadrightarrow A''}^{\ast} (R))_{\geq 0}$ for all $i
  \in \mathbb{N}$ which is functorial in $((A \twoheadrightarrow A'', \gamma),
  A'' \twoheadrightarrow R) \in \tmop{CrysCon}_{\mathbb{F}_p, \tmop{surj}}$.
\end{corollary}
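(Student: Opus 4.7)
The plan is to mirror the proof of Corollary \ref{cor:conjfil-gr}, upgrading it from the absolute case to the relative one by combining left Kan extension along $\tmop{CrysCon}_{\mathbb{F}_p,\tmop{surj}}^0 \hookrightarrow \tmop{CrysCon}_{\mathbb{F}_p,\tmop{surj}}$ with the concrete classical input provided by Lemma \ref{lem:bhatt-rel-conj-fil}. By Lemma \ref{lem:cryscon-comp-proj-gen}, $\tmop{CrysCon}_{\mathbb{F}_p,\tmop{surj}}^0$ is a set of compact projective generators, so it suffices (by Proposition \ref{prop:left-deriv-fun}) to construct a natural transformation between the two functors that preserves sifted colimits and to verify that it is an equivalence on these generators.

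First, I would check that both sides preserve sifted colimits and are therefore left Kan extended from $\tmop{CrysCon}_{\mathbb{F}_p,\tmop{surj}}^0$. For the right-hand side, $\mathbb{L}\tmop{ConjFil}$ is defined as such a left derived functor (Definition \ref{def:conj-fil-rel-pd-env}), and Lemma \ref{lem:assoc-graded-union-Lan} ensures that taking associated graded $\tmop{gr}^{-i}$ commutes with left Kan extension. For the left-hand side, the shifted cotangent complex $\mathbb{L}_{R/A''}[-1] \simeq \tmop{gr}^1(\mathbb{L}\tmop{AdFil}(A'' \twoheadrightarrow R))$ (Corollary \ref{cor:LAdFil-symm-cot-cx}) is a left derived functor, and both the derived divided power $\Gamma^i$ and the Frobenius pullback $\varphi_{A \twoheadrightarrow A''}^\ast$ preserve sifted colimits. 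Concretely, as in Remark \ref{rem:functor-target-variable}, one should package the varying target $D(\varphi_{A\twoheadrightarrow A''}^\ast(R))$ via the structure map formalism used in Definition \ref{def:conj-fil-rel-pd-env}, exhibiting both functors as landing in a single $\infty$-category $\tmop{Fun}(\Delta^1, D(\mathbb{F}_p))$ of pointed module spectra.

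Next, I would construct the comparison map. On the generators, this is the map furnished by Lemma \ref{lem:bhatt-rel-conj-fil}, which is functorial in the subcategory $\tmop{CrysCon}_{\mathbb{F}_p,\tmop{surj}}^0$ (where $A''$ is a polynomial algebra, $\varphi_{A\twoheadrightarrow A''}$ is automatically flat since $A \simeq \Gamma_{\mathbb{Z}[Y,Z]}(X)$ is a divided power polynomial ring, and $I = (Y)$ is generated by a Koszul-regular sequence so that $I/I^2$ is free). By the universal property of left Kan extension (Proposition \ref{prop:left-deriv-fun}), this natural transformation extends uniquely to the entire $\infty$-category $\tmop{CrysCon}_{\mathbb{F}_p,\tmop{surj}}$.

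Finally, the equivalence on generators is exactly the isomorphism statement in Lemma \ref{lem:bhatt-rel-conj-fil}. Since both functors preserve sifted colimits and agree on a set of compact projective generators, the extended natural transformation is an equivalence everywhere. The main obstacle will be the bookkeeping in step one: carefully formalizing the ``variable target category'' so that the two functors really do land in a common $\infty$-category, and checking that the relative Frobenius pullback $\varphi_{A\twoheadrightarrow A''}^\ast$ behaves functorially under sifted colimits when the PD-pair $(A \twoheadrightarrow A'', \gamma)$ itself varies. Once this is set up, the rest of the argument is a direct transcription of the absolute case.
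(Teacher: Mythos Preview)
Your proposal is correct and follows exactly the paper's approach: the paper's proof is the one-line remark ``As in the absolute case (including \Cref{rem:functor-target-variable}), it follows from \Cref{lem:bhatt-rel-conj-fil},'' and you have simply unpacked what that means---both sides are left derived from $\tmop{CrysCon}_{\mathbb{F}_p,\tmop{surj}}^0$, the comparison map and its being an equivalence on generators come from Lemma~\ref{lem:bhatt-rel-conj-fil}, and \Cref{prop:left-deriv-fun} finishes. The only slip is cosmetic: on the generators $A \simeq \Gamma_{\mathbb{F}_p[Y,Z]}(X)$, not $\Gamma_{\mathbb{Z}[Y,Z]}(X)$, since you are working over $\mathbb{F}_p$.
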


As in the absolute case, we have

\begin{corollary}
  \label{cor:Fp-qreg-rel-pd-env-flat}For every $((A \twoheadrightarrow A'',
  \gamma), A'' \twoheadrightarrow R) \in \tmop{CrysCon}_{\mathbb{F}_p,
  \tmop{surj}}$ such that $A'' \twoheadrightarrow R$ is a quasiregular
  animated pair, let $(B \twoheadrightarrow R, \delta)$ denote the relative
  animated PD-envelope. Then $B$ is a flat $\varphi_{A \twoheadrightarrow
  A''}^{\ast} (R)$-module.
\end{corollary}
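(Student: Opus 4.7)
The plan is to adapt verbatim the strategy used to establish the absolute analogue \Cref{cor:Fp-qreg-pd-env-flat}, replacing the conjugate filtration on the (absolute) animated PD-envelope by the relative one introduced in \Cref{def:conj-fil-rel-pd-env}, and invoking \Cref{cor:conjfil-gr-rel} in place of \Cref{cor:conjfil-gr}.

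First, I would unpack the assumption: by \Cref{cor:LAdFil-symm-cot-cx}, $\tmop{gr}^1(\mathbb{L}\tmop{AdFil}(A'' \twoheadrightarrow R))$ is canonically equivalent to the shifted cotangent complex $\mathbb{L}_{R/A''}[-1]$. The hypothesis that $A'' \twoheadrightarrow R$ is quasiregular means precisely that this object is a flat $R$-module spectrum. Then by \Cref{cor:conjfil-gr-rel}, the $(-i)$-th associated graded piece of $\mathbb{L}\tmop{ConjFil}((A \twoheadrightarrow A'', \gamma), A'' \twoheadrightarrow R)$ is equivalent, as a $\varphi_{A \twoheadrightarrow A''}^{\ast}(R)$-module spectrum, to $\varphi_{A \twoheadrightarrow A''}^{\ast}(\Gamma_R^i(\mathbb{L}_{R/A''}[-1]))$. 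Since the derived divided power $\Gamma_R^i$ preserves flatness (\cite[Cor~25.2.3.3]{Lurie2018}) and flatness is preserved under base change (\cite[Prop~7.2.2.16]{Lurie2017}), each graded piece is a flat $\varphi_{A \twoheadrightarrow A''}^{\ast}(R)$-module.

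Next, I would assemble $B$ out of its conjugate filtration. The filtration being nonpositively indexed and exhaustive (for the relative version, this follows, as in the absolute case, either from \Cref{prop:left-deriv-fun}/\Cref{lem:assoc-graded-union-Lan} applied to $\tmop{Fil}^{-\infty} \circ \mathbb{L}\tmop{ConjFil}$, or from the fact that $\tmop{CrysCon}_{\mathbb{F}_p, \tmop{surj}} \simeq \mathcal{P}_{\Sigma}(\tmop{CrysCon}_{\mathbb{F}_p,\tmop{surj}}^0)$ is stable under filtered colimits), the underlying $\varphi_{A\twoheadrightarrow A''}^{\ast}(R)$-module spectrum of $B$ is the filtered colimit of the $\tmop{Fil}^{-n} B$, each of which is built from the flat graded pieces by finitely many fiber sequences. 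Using \Cref{lem:flat-stable-ext} (stability of flatness under extensions, noting that $\varphi_{A\twoheadrightarrow A''}^{\ast}(R)$ is connective since $A''$ and $R$ are) and \cite[Thm~7.2.2.14(1)]{Lurie2017} (stability under filtered colimits), one concludes that $B$ itself is a flat $\varphi_{A \twoheadrightarrow A''}^{\ast}(R)$-module spectrum.

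The only subtlety I anticipate is checking that the exhaustiveness and the sifted colimit preservation statements used in the absolute proof indeed transport to the relative setting; but this is structurally identical because $\mathbb{L}\tmop{ConjFil}$ in \Cref{def:conj-fil-rel-pd-env} is, by construction, left Kan extended from $\tmop{CrysCon}_{\mathbb{F}_p,\tmop{surj}}^0$, and the relevant compact projective generators there (guaranteed by \Cref{lem:cryscon-comp-proj-gen}) behave exactly like the objects of $\mathcal{D}^0$ in the absolute case—so \Cref{lem:bhatt-rel-conj-fil} provides the isomorphism on graded pieces at the level of generators, and \Cref{prop:left-deriv-fun} propagates it to all of $\tmop{CrysCon}_{\mathbb{F}_p,\tmop{surj}}$.
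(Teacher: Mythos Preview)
Your proposal is correct and follows exactly the approach the paper intends: the paper gives no explicit proof, simply writing ``As in the absolute case, we have'' before the statement, referring to the proof of \Cref{cor:Fp-qreg-pd-env-flat}. You have faithfully unpacked that absolute argument and substituted the relative ingredients (\Cref{cor:conjfil-gr-rel} for \Cref{cor:conjfil-gr}, \Cref{def:conj-fil-rel-pd-env} for \Cref{def:conj-fil-pd-env}), with correct references for exhaustiveness, flatness of divided powers, base change, and closure under extensions and filtered colimits.
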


Similar to \Cref{prop:koszul-regular-ani-pd-env}, we have

\begin{proposition}
  Let $(A \twoheadrightarrow A'', \gamma) \in \tmop{Pair}^{\gamma}$ be a
  PD-pair and $I \subseteq A''$ an ideal generated by a Koszul-regular
  sequence. Let $(B \twoheadrightarrow B'', \delta)$ denote the relative
  animated PD-envelope of $((A \twoheadrightarrow A'', \gamma), A''
  \rightarrow A'' / I) \in \tmop{CrysCon}$. Then $(B \twoheadrightarrow B'',
  \delta)$ is a PD-pair, therefore coincides with the classical relative
  PD-envelope.
\end{proposition}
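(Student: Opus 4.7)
The plan is to mimic the proof of the absolute version (Proposition~\ref{prop:koszul-regular-ani-pd-env}) in the relative setting, reducing everything to the local‐global principle (Lemma~\ref{lem:static-loc-global}) applied separately after rationalization and after derived reduction modulo each prime $p$. By Proposition~\ref{prop:characterize-pdpair}, it suffices to show that both $B$ and $B''$ are static, since the animated PD‐pair structure will then automatically coincide with a classical PD‐structure.

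First, I would verify that $B''$ is static: the relative animated PD‐envelope functor fits into the composition $\tmop{CrysCon}_{\tmop{surj}} \to \tmop{PDEnvCon} \to \tmop{Fun}(\Delta^1, \tmop{AniPDPair})$, and by the argument of Lemma~\ref{lem:ani-pd-env-preserv-target} (more precisely by Lemma~\ref{lem:rel-PD-env-cryscon-surj} and checking on the generators $\tmop{CrysCon}_{\tmop{surj}}^0$, where the quotient is visibly preserved), the $[1]$-evaluation of $\tmop{RelPDEnv}$ recovers $A''/I$, which is static. Next, Lemma~\ref{lem:rel-pd-env-rat} gives that the unit map $(A'' \twoheadrightarrow A''/I) \to (B \twoheadrightarrow B'', \delta)$ is an equivalence after rationalization, so $B \otimes_{\mathbb{Z}}^{\mathbb{L}} \mathbb{Q} \simeq A'' \otimes_{\mathbb{Z}}^{\mathbb{L}} \mathbb{Q}$ is static.

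Now I fix a prime $p$ and work with the derived reduction $A_0 \assign A /^{\mathbb{L}} p$, $A''_0 \assign A'' /^{\mathbb{L}} p$. By the base change compatibility of the animated PD‐envelope functor (Lemma~\ref{lem:PD-env-compat-base-chg}) together with its relative generalization via Lemma~\ref{lem:rel-PD-env-crys-con}, the derived reduction mod $p$ of $(B \twoheadrightarrow B'', \delta)$ is the relative animated PD‐envelope of $A''_0 \twoheadrightarrow (A''/I) /^{\mathbb{L}} p$ relative to the reduced PD‐pair. Since $I$ is generated by a Koszul‐regular sequence $(f_1,\ldots,f_r)$, the same computation as in the proof of Proposition~\ref{prop:koszul-regular-ani-pd-env}, following \cite[Lem~3.41]{Bhatt2012a}, shows
\[
\varphi_{(A_0'' \twoheadrightarrow A_0'')}^{\ast}\bigl((A''/I) /^{\mathbb{L}} p\bigr) \simeq (A /^{\mathbb{L}}(f_1^p,\ldots,f_r^p)) /^{\mathbb{L}} p,
\]
whose homotopy groups are concentrated in degrees $0$ and $1$ by Koszul regularity of $(f_1^p,\ldots,f_r^p)$. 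Together with the fact that $A'' \twoheadrightarrow A''/I$ is quasiregular, Corollary~\ref{cor:Fp-qreg-rel-pd-env-flat} applied after the $\mathbb{F}_p$‐base change shows that $B /^{\mathbb{L}} p$ is a flat module over this Frobenius twist; in particular $\pi_i(B /^{\mathbb{L}} p) \cong 0$ for $i \neq 0,1$.

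Combining the connectivity of $B$ (coming from it being a connective object of $\tmop{AniPair}$), the staticness of $B \otimes_{\mathbb{Z}}^{\mathbb{L}} \mathbb{Q}$, and the above homotopy concentration of $B /^{\mathbb{L}} p$ for every prime $p$, Lemma~\ref{lem:static-loc-global} forces $B$ to be static. Since both $B$ and $B''$ are static, Proposition~\ref{prop:characterize-pdpair} yields $(B \twoheadrightarrow B'', \delta) \in \tmop{PDPair}$. The fact that this is the classical relative PD‐envelope then follows from the universal property, as the functors $\tmop{Pair} \hookrightarrow \tmop{AniPair}$ and $\tmop{PDPair} \hookrightarrow \tmop{AniPDPair}$ are fully faithful and the relative animated PD‐envelope satisfies the universal property characterising the classical relative PD‐envelope on PD‐pairs. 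The main technical obstacle is verifying the base‐change compatibility of the \emph{relative} animated PD‐envelope under derived reduction modulo $p$, which requires combining Lemmas~\ref{lem:base-chg-PD-env} and \ref{lem:rel-PD-env-crys-con} with the colimit‐preservation of both the absolute animated PD‐envelope functor and of pushouts in $\tmop{AniPDPair}$.
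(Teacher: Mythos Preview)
Your overall architecture is exactly what the paper intends: the paper states this result with the remark ``Similar to \Cref{prop:koszul-regular-ani-pd-env}'', so the plan of checking staticness of $B''$, handling the rationalization via \Cref{lem:rel-pd-env-rat}, controlling $B/^{\mathbb L}p$ through the relative conjugate filtration (\Cref{cor:Fp-qreg-rel-pd-env-flat}), and concluding via \Cref{lem:static-loc-global} and \Cref{prop:characterize-pdpair} is correct.

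There are, however, two inaccuracies in your mod-$p$ step that you should tighten. First, the input pair in $\tmop{PDEnvCon}$ obtained from $((A\twoheadrightarrow A'',\gamma),A''\twoheadrightarrow A''/I)$ via \Cref{lem:PD-env-crys-con} is $(A\twoheadrightarrow A''/I)$, not $(A''\twoheadrightarrow A''/I)$; so \Cref{lem:rel-pd-env-rat} gives $B\otimes^{\mathbb L}_{\mathbb Z}\mathbb Q\simeq A\otimes^{\mathbb L}_{\mathbb Z}\mathbb Q$ (still static, so your conclusion survives). Second, and more substantively, the Frobenius in \Cref{cor:Fp-qreg-rel-pd-env-flat} is the PD-Frobenius $\varphi_{A_0\twoheadrightarrow A_0''}\colon A_0''\to A_0$ (\Cref{lem:PD-frob}), not $\varphi_{A_0''\twoheadrightarrow A_0''}$. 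Consequently the twist you must bound is
\[
\varphi_{A_0\twoheadrightarrow A_0''}^{\ast}\bigl((A''/I)/^{\mathbb L}p\bigr)\;\simeq\;A_0/^{\mathbb L}(\tilde f_1^p,\ldots,\tilde f_r^p)
\]
for lifts $\tilde f_i\in A$ of the $f_i\in A''$. Your displayed identity writes this as $(A/^{\mathbb L}(f_1^p,\ldots,f_r^p))/^{\mathbb L}p$, which is not literally meaningful since $f_i\notin A$, and the Koszul-regularity hypothesis is only given in $A''$. You should justify why this Koszul complex over $A_0$ still has homotopy concentrated in degrees $0$ and $1$; this is the genuine extra wrinkle in the relative case compared with \Cref{prop:koszul-regular-ani-pd-env}, and your write-up currently asserts it by analogy rather than arguing it.
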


\begin{remark}
  \label{rem:pd-env-frob-flat}More precisely, in
  \Cref{cor:Fp-qreg-pd-env-flat}, the map $\varphi_{A \twoheadrightarrow
  A''}^{\ast} (R) \rightarrow B$ is induced by the Frobenius map $\varphi_{B
  \twoheadrightarrow R} \of R \rightarrow B$ (which could be seen by left
  deriving the special case that $((A \twoheadrightarrow A'', \gamma), A''
  \rightarrow R) \in \tmop{CrysCon}_{\mathbb{F}_p, \tmop{surj}}^0$). In
  particular, if the Frobenius map $\varphi_{A \twoheadrightarrow A''} \of A''
  \rightarrow A$ is flat, then so is the Frobenius map $\varphi_{B
  \twoheadrightarrow R} \of R \rightarrow B$.
  
  For example, when $R$ is a quasiregular semiperfect $\mathbb{F}_p$-algebra
  {\cite[Def~8.8]{Bhatt2018}}, we set $(A \twoheadrightarrow A'', \gamma) =
  (\tmop{id}_{R^{\flat}} \of R^{\flat} \rightarrow R^{\flat}, 0)$ and the map
  $A'' \rightarrow R$ to be the canonical map, by definition, $R^{\flat}$ is a
  perfect $\mathbb{F}_p$-algebra therefore $\varphi_{R^{\flat}}$ is flat. Then
  the animated PD-envelope $B \twoheadrightarrow R$ of $A'' \rightarrow R$
  satisfies the condition that the Frobenius map $\varphi_{B \rightarrow R}
  \of R \rightarrow B$ is flat and hence $B$ is static. It follows that $(R
  \twoheadrightarrow B, \delta)$ is a PD-pair
  (\Cref{prop:characterize-pdpair}).
\end{remark}

Note that the associated graded pieces of derived crystalline cohomology and
relative animated PD-envelope of a ``surjective'' crystalline context $((A
\twoheadrightarrow A'', \gamma), A'' \twoheadrightarrow R) \in
\tmop{CrysCon}_{\mathbb{F}_p, \tmop{surj}}$, with respect to conjugate
filtrations, are equivalent by
\Cref{cor:conjfil-gr-rel,prop:crys-Cartier-isom,cor:LAdFil-symm-cot-cx}. In
fact, we have

\begin{lemma}
  \label{lem:Fp-crys-PD-env-equiv}There is a canonical equivalence
  \[ \tmop{Fil}_{\tmop{conj}} \tmop{CrysCoh}_{R / (A \twoheadrightarrow A'',
     \gamma)} \rightarrow \tmop{Fil}_{\tmop{conj}} \tmop{Env}_{(A
     \twoheadrightarrow A'', \gamma)}^{\gamma, \tmop{an}} (R) \]
  in $\tmop{CAlg} (\tmop{DF}^{\leq 0} (\varphi_{A \twoheadrightarrow
  A''}^{\ast} (R)))$ which is functorial\footnote{Here we apply the same
  convention as in \Cref{rem:functor-target-variable}.} in $((A
  \twoheadrightarrow A'', \gamma), A'' \twoheadrightarrow R) \in
  \tmop{CrysCon}_{\mathbb{F}_p, \tmop{surj}}$.
\end{lemma}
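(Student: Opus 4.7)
The plan is to reduce, via \Cref{lem:cryscon-comp-proj-gen} and \Cref{prop:left-deriv-fun}, to constructing a natural equivalence on the compact projective generators $\tmop{CrysCon}^0_{\mathbb{F}_p, \tmop{surj}}$. First I would verify that both functors preserve sifted colimits as functors to $\tmop{CAlg}(\tmop{DF}^{\leq 0}(\mathbb{F}_p))$: the RHS by construction as a left derived functor (\Cref{def:conj-fil-rel-pd-env}); the LHS because the conjugate-filtered derived de Rham functor on $\tmop{dRCon}_{\mathbb{F}_p}$ preserves small colimits (by the argument of \Cref{lem:dR-preserv-colim} combined with the fact that the Postnikov tower $D(\mathbb{F}_p) \to \tmop{DF}^{\leq 0}(\mathbb{F}_p)$ is symmetric monoidal over the field $\mathbb{F}_p$), while $\tmop{CrysCon}_{\mathbb{F}_p, \tmop{surj}} \hookrightarrow \tmop{CrysCon}_{\mathbb{F}_p}$ preserves sifted colimits since $\pi_0$-surjectivity is stable under them.

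On a generator $((A \twoheadrightarrow A'', \gamma), A'' \twoheadrightarrow R)$, let $(B \twoheadrightarrow R, \delta)$ be the classical relative PD-envelope, identified with the relative animated PD-envelope by \Cref{lem:rel-PD-env-cryscon-surj}, so the underlying $\mathbb{E}_\infty$-algebras of both sides are $B$ (via \Cref{prop:crys-PD-env-equiv}). I would construct the comparison map as follows: by the filtered version of \Cref{prop:dR-crys-inv}, the LHS at this generator identifies with $\tmop{Fil}_{\tmop{conj}} \tmop{dR}_{(B, J, \delta)/(A, I, \gamma)}$, a left Kan extension of the Whitehead-tower filtration on the classical de Rham complex. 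On such classical PD-generators the inclusion $B = \Omega^0_{(B,J,\delta)/(A,I,\gamma)} \hookrightarrow \Omega^\bullet$ sends an element $\gamma_{k_1 p}(f_1) \cdots \gamma_{k_m p}(f_m) \in \tmop{Fil}^{-n} B$ (with $\sum_i k_i \leq n$) into the truncation $\tau_{\geq -n}\Omega^\bullet$, giving a natural map of filtered objects that assembles into a transformation $\mathbb{L}\tmop{ConjFil} \to \tmop{Fil}_{\tmop{conj}} \tmop{CrysCoh}$.

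To verify this is an equivalence, I would compute graded pieces: both $\tmop{gr}^{-n}$ equal $\varphi^{\ast}_{A \twoheadrightarrow A''}(\Gamma^n_R(\mathbb{L}_{R/A''}[-1]))$ — for the RHS by \Cref{cor:conjfil-gr-rel}, for the LHS by \Cref{prop:crys-Cartier-isom} combined with the décalage identity $\mathbb{L}\Gamma^n(M[-1]) \simeq (\bigwedgestar^n M)[-n]$ for flat $M$ (applied to $\mathbb{L}_{R/A''} \simeq \tmop{gr}^1(\mathbb{L}\tmop{AdFil}(A'' \twoheadrightarrow R))[1]$ via \Cref{cor:LAdFil-symm-cot-cx}). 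Since both filtrations are exhaustive (\Cref{lem:conj-fil-deriv-crys-exhaustive} and the same colimit argument after \Cref{def:conj-fil-rel-pd-env} for $\mathbb{L}\tmop{ConjFil}$), both have $\tmop{Fil}^0 = \varphi^{\ast}(R)$ matching via the structure maps, and the comparison induces the stated equivalence on each $\tmop{gr}^{-n}$, an induction on $n$ using the fiber sequences $\tmop{Fil}^{-n-1} \to \tmop{Fil}^{-n} \to \tmop{gr}^{-n}$ yields a filtered equivalence.

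The main obstacle will be producing the comparison as a fully coherent natural transformation on $\tmop{CrysCon}^0_{\mathbb{F}_p, \tmop{surj}}$ rather than merely as a pointwise map, since the LHS passes through the LKE from $\tmop{dRCon}^0_{\mathbb{F}_p}$ but our generators' images do not lie in $\tmop{dRCon}^0_{\mathbb{F}_p}$. The inclusion $B \hookrightarrow \Omega^\bullet$ must be shown to arise naturally from a PD-polynomial resolution of $(B \twoheadrightarrow R, \delta)$ within $\tmop{dRCon}^0_{\mathbb{F}_p}$, imitating the resolution argument in the proof of \Cref{prop:dR-crys-inv} and checking at each term that the PD-order filtration is compatible with the termwise Whitehead truncation; tracking this compatibility through the colimit is the essential technical point.
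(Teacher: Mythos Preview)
Your overall strategy—reduce to $\tmop{CrysCon}^0_{\mathbb{F}_p,\tmop{surj}}$ via \Cref{lem:cryscon-comp-proj-gen} and \Cref{prop:left-deriv-fun}, then verify the equivalence on associated graded pieces using \Cref{prop:crys-Cartier-isom}, \Cref{cor:conjfil-gr-rel}, and the d\'ecalage $\Gamma^n(M[-1]) \simeq (\bigwedge^n M)[-n]$—matches the paper's. But your construction of the comparison map has a genuine gap: the ``inclusion $B = \Omega^0 \hookrightarrow \Omega^\bullet$'' is \emph{not} a chain map, because the differential $d\colon \Omega^0 \to \Omega^1$ is nonzero (for instance $d(\gamma_{kp}(y)) = \gamma_{kp-1}(y)\,dy \neq 0$ on the generators you write down). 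Hence it does not define a morphism in $D(\mathbb{F}_p)$, let alone a filtered one, and the transformation $\mathbb{L}\tmop{ConjFil} \to \tmop{Fil}_{\tmop{conj}}\tmop{CrysCoh}$ you assemble from it is not well-posed. (Note also that this would go in the direction opposite to the one stated.)

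The paper instead uses the truncation map $(\Omega^\bullet,\mathd) \to \Omega^0 = B$ (sending $\Omega^{\geq 1}$ to zero), which \emph{is} a map of CDGAs. By \Cref{def:deriv-dR-pdpair} this is exactly the canonical functorial map $\tmop{dR}_{(B\twoheadrightarrow R,\delta)/(A\twoheadrightarrow A'',\gamma)} \to B$; combined with \Cref{prop:dR-crys-inv} it yields the comparison $\tmop{CrysCoh}_{R/(A\twoheadrightarrow A'')} \to B$ on underlying $\mathbb{E}_\infty$-$\mathbb{F}_p$-algebras, and functoriality is built in—so your coherence worry dissolves once you use this map. For the filtered lift on generators, the paper does not trace a direct map of filtrations as you attempt; rather, it observes that both sides are \emph{static} (the relative PD-envelope by construction, and $\tmop{dR}$ via \Cref{prop:crys-Cartier-isom} plus closure of static modules under extensions and filtered colimits), after which the filtered equivalence follows from an explicit simplicial resolution, citing {\cite[Lem~3.29 \& Thm~3.27]{Bhatt2012a}}. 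Your graded-piece verification would then confirm this equivalence, but only once the comparison map has been correctly set up.
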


\begin{proof}
  We first point out how to produce the comparison map of underlying
  $\mathbb{E}_{\infty}$-$\mathbb{F}_p$-algebras, i.e., ignoring the
  $\varphi_{A \twoheadrightarrow A''}^{\ast} (R)$-algebra structures and
  conjugate filtrations. This is logically not necessary but it benefits our
  understanding. Given $((A \twoheadrightarrow A'', \gamma), A''
  \twoheadrightarrow R)$, let $(B \twoheadrightarrow R, \delta)$ denote its
  relative animated PD-envelope. It follows from \Cref{prop:dR-crys-inv} that
  the crystalline cohomology $\tmop{CrysCoh}_{R / (A \twoheadrightarrow A'',
  \gamma)}$ is naturally equivalent to the derived de Rham cohomology
  $\tmop{dR}_{(B \twoheadrightarrow R, \delta) / (A \twoheadrightarrow A'',
  \gamma)}$, and by definition, it is equipped with a map $\tmop{dR}_{(B
  \twoheadrightarrow R, \delta) / (A \twoheadrightarrow A'', \gamma)}
  \rightarrow B$ of $\mathbb{E}_{\infty}$-$\mathbb{F}_p$-algebras, which gives
  rise to the underlying comparison map that we want.
  
  By \Cref{lem:cryscon-comp-proj-gen,prop:left-deriv-n-fun}, it suffices to
  construct the equivalence restricted to the full subcategory
  $\tmop{CrysCon}_{\mathbb{F}_p, \tmop{surj}}^0 \subseteq
  \tmop{CrysCon}_{\mathbb{F}_p, \tmop{surj}}$, i.e., to establish the
  equivalence for all $((\Gamma_{\mathbb{F}_p [Y, Z]} (X) \twoheadrightarrow
  \mathbb{F}_p [Y, Z], \gamma_0), \mathbb{F}_p [Y, Z] \twoheadrightarrow
  \mathbb{F}_p [Z]) \in \tmop{CrysCon}_{\mathbb{F}_p, \tmop{surj}}^0$. This is
  essentially {\cite[Lem~3.29 \& Thm~3.27]{Bhatt2012a}}. We will briefly
  sketch the argument. The preceding paragraph has already established a
  comparison map of underlying $\mathbb{E}_{\infty}$-$\mathbb{F}_p$-algebras.
  The key point is that both sides are static: the relative animated
  PD-envelope is static by definition, and the derived crystalline cohomology
  is static by Cartier isomorphism (\Cref{prop:crys-Cartier-isom}) and the
  fact that static modules are closed under extension and filtered colimits,
  see \Cref{cor:Fp-qreg-frob-ani-pd-env} for a similar argument. Then the
  result follows from explicit simplicial resolution.
\end{proof}

We now deduce the integral version of \Cref{lem:Fp-crys-PD-env-equiv}. We
recall that $\tmop{Fil}_{\tmop{PD}} \of \tmop{Pair}^{\gamma, \tmop{an}}
\rightarrow \tmop{CAlg} (\tmop{DF}^{\geq 0} (\mathbb{Z}))$ is the
PD-filtration functor (\Cref{def:pd-fil}), and $\tmop{Fil}_H$ is the
Hodge-filtration.

\begin{proposition}
  \label{prop:crys-PD-env-equiv}There is a canonical equivalence
  \[ \tmop{Fil}_H \tmop{CrysCoh} \rightarrow \tmop{Fil}_{\tmop{PD}} \circ
     \tmop{Env}_{(-)}^{\gamma, \tmop{an}} (-) \]
  of functors $\tmop{CrysCon}_{\tmop{surj}} \rightrightarrows \tmop{CAlg}
  (\tmop{DF}^{\geq 0} (\mathbb{Z}))$.
\end{proposition}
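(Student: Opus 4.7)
The plan is to construct a comparison map using the crystalline-invariance of the derived de Rham cohomology, verify the requisite colimit-preservation properties so that the problem reduces to a small full subcategory of compact projective generators, and finally identify the map on those generators with the classical filtered Poincaré lemma for PD-pairs.

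First, I would construct the map. By Proposition~\ref{prop:dR-crys-inv} and its Hodge-filtered refinement, there is a natural identification $\tmop{Fil}_H\tmop{CrysCoh}_{R / (A \twoheadrightarrow A'', \gamma)} \simeq \tmop{Fil}_H\tmop{dR}_{(B \twoheadrightarrow R, \delta) / (A \twoheadrightarrow A'', \gamma)}$ where $(B \twoheadrightarrow R, \delta) = \tmop{RelPDEnv}((A \twoheadrightarrow A'', \gamma), A'' \twoheadrightarrow R)$. The augmentation built into Definition~\ref{def:Hdg-fil-deriv-dR} supplies a functorial map $\tmop{Fil}_H\tmop{dR}_{(B, \delta) / (A, \gamma)} \to \tmop{Fil}_{\tmop{PD}} B = (\mathbb{L}\tmop{PDFil} \circ \tmop{RelPDEnv})((A \twoheadrightarrow A'', \gamma), A'' \twoheadrightarrow R)$. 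Composing gives the desired comparison in $\tmop{CAlg}(\tmop{DF}^{\geq 0}(\mathbb{Z}))$.

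Next, I would check that both functors preserve small colimits. For the left-hand side, this is the Hodge-filtered analogue of Proposition~\ref{prop:deriv-crys-coh-preserv-colim}, already invoked in the discussion of Hodge filtrations. For the right-hand side, $\mathbb{L}\tmop{PDFil}$ preserves small colimits as a left derived functor (Definition~\ref{def:pd-fil} and Proposition~\ref{prop:left-deriv-fun}), and $\tmop{RelPDEnv}$ preserves small colimits by the observation following Lemma~\ref{lem:rel-PD-env-crys-con} together with Proposition~\ref{prop:forget-PD-small-colim}. Combining Lemma~\ref{lem:cryscon-comp-proj-gen} with Proposition~\ref{prop:left-deriv-fun}, it then suffices to prove the equivalence on the full subcategory $\tmop{CrysCon}_{\tmop{surj}}^0$ of compact projective generators of the form $((\Gamma_{\mathbb{Z}[Y, Z]}(X) \twoheadrightarrow \mathbb{Z}[Y, Z], \gamma), \mathbb{Z}[Y, Z] \twoheadrightarrow \mathbb{Z}[Z])$.

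On such a generator, Lemma~\ref{lem:rel-PD-env-cryscon-surj} identifies the relative PD-envelope with the classical object $(\Gamma_{\mathbb{Z}[Z]}(X, Y) \twoheadrightarrow \mathbb{Z}[Z], \tilde{\gamma})$, which is a PD-polynomial algebra over the base. In particular, the modules of PD-differentials are free, so both $\tmop{Fil}_H \tmop{dR}_{(B, \delta)/(A, \gamma)}$ and $\tmop{Fil}_{\tmop{PD}} B$ are computed by honest (filtered) chain complexes, and the comparison map becomes the truncation of the Hodge-filtered PD de Rham complex $(J^{[i - \ast]}\Omega_{(B, J)/(A, I)}^\ast, \mathd)$ onto $J^{[i]}$ in degree zero. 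This is precisely the filtered divided-power Poincaré lemma of Berthelot--Ogus~\cite[Thm~6.13]{Berthelot1978}, which guarantees it is a filtered quasi-isomorphism.

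The main obstacle I anticipate is bookkeeping: passing from the filtered quasi-isomorphism of chain complexes to an equivalence in $\tmop{CAlg}(\tmop{DF}^{\geq 0}(\mathbb{Z}))$ in a manner natural in the generator, so that left Kan extension can be applied cleanly. This is handled using Remark~\ref{rem:ch-cx-bnd-flat-symm-mon} to interpret the bounded-below, degreewise-flat filtered CDGA computing $\tmop{Fil}_H\tmop{dR}$ inside $\tmop{DF}^{\geq 0}(\mathbb{Z})$ via a symmetric monoidal embedding of its heart, ensuring that the augmentation is genuinely a map of filtered $\mathbb{E}_\infty$-algebras and that the Poincaré-lemma equivalence takes place in the correct $\infty$-category. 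Once this is settled on $\tmop{CrysCon}_{\tmop{surj}}^0$, extending by colimits via Proposition~\ref{prop:left-deriv-fun} completes the proof.
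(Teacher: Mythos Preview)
Your construction of the comparison map and the reduction to $\tmop{CrysCon}_{\tmop{surj}}^0$ via colimit-preservation are both correct and match the paper's setup. The gap is in the final step on generators.

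The claim that the relative PD-envelope $B = \Gamma_{\mathbb{Z}[Z]}(X,Y)$ is ``a PD-polynomial algebra over the base'' $A = \Gamma_{\mathbb{Z}[Y,Z]}(X)$ is false: the variables $Y$ already live in $A$ (as polynomial generators of $A'' = \mathbb{Z}[Y,Z]$), and passing to $B$ promotes them to PD-generators rather than adjoining fresh ones. In fact one computes $\Omega^1_{(B,J)/(A,I)} = 0$, since any PD-derivation over $A$ kills $Z$, $\gamma_n(x_i)$, and $y_j$ (all in $A$), hence also $\gamma_n(y_j)$. Consequently the map $(A \twoheadrightarrow A'', \gamma) \to (B \twoheadrightarrow R, \delta)$ does \emph{not} lie in $\tmop{dRCon}^0$, and the derived de Rham cohomology $\tmop{Fil}_H\tmop{dR}_{(B \twoheadrightarrow R)/(A \twoheadrightarrow A'')}$ is \emph{not} a priori computed by an honest de Rham complex. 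The filtered Poincar\'e lemma of Berthelot--Ogus concerns the inclusion $A \hookrightarrow \Omega^\ast$ for genuine PD-polynomial extensions, not the truncation $\Omega^\ast \to B$ you invoke; and even granting a comparison to a classical complex, that complex is just $B$ in degree~$0$, so the assertion ``derived dR $\simeq B$'' becomes exactly the statement to be proved. The argument is circular at this point.

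The paper proceeds differently. For the underlying $\mathbb{E}_\infty$-algebra it uses the local--global principle for $\mathbb{Z}$: after rationalization both sides become $A$ (\Cref{lem:dR-rational,lem:rel-pd-env-rat}), while modulo each prime $p$ one has the $\mathbb{F}_p$-version \Cref{lem:Fp-crys-PD-env-equiv}, whose proof on $\tmop{CrysCon}_{\mathbb{F}_p,\tmop{surj}}^0$ rests on the conjugate filtration and the Cartier isomorphism to force staticness of both sides, followed by an explicit simplicial resolution (essentially Bhatt's computation of $\tmop{dR}_{\mathbb{F}_p/\mathbb{F}_p[x]}$). Only for the associated graded pieces of the Hodge filtration does the paper restrict to $\tmop{CrysCon}_{\tmop{surj}}^0$ and cite the identification $\tmop{gr}_H^\ast \simeq \Gamma^\ast$ due to Illusie. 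So the missing ingredient in your approach is precisely a substitute for the characteristic-$p$ conjugate-filtration argument that makes the comparison tractable on the generators.
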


\begin{proof}
  The comparison map is established in the same way as in the proof of
  \Cref{lem:Fp-crys-PD-env-equiv}. It suffices to show that this is an
  equivalence.
  
  We first show that this becomes an equivalence after passing to underlying
  $\mathbb{E}_{\infty}$-$\mathbb{Z}$-algebras, i.e. ignoring the Hodge
  filtration. By conservativity of the forgetful functor
  $\tmop{CAlg}_{\mathbb{Z}} \rightarrow D (\mathbb{Z})$, it suffices to show
  the equivalence for underlying $\mathbb{Z}$-module spectra. Note that it is
  an equivalence after $(-) \otimes_{\mathbb{Z}}^{\mathbb{L}} \mathbb{F}_p$
  and rationalization by
  \Cref{lem:Fp-crys-PD-env-equiv,lem:rel-pd-env-rat,lem:dR-rational}, thus it
  is itself an equivalence.
  
  To establish the equivalence of filtered
  $\mathbb{E}_{\infty}$-$\mathbb{Z}$-algebras, it remains to show that the
  comparison map induces equivalences after passing to associated graded
  pieces, and by \Cref{lem:left-deriv-fun-adjoint}, it suffices to prove the
  result restricted to the full subcategory $\tmop{CrysCon}_{\tmop{surj}}^0
  \subseteq \tmop{CrysCon}_{\tmop{surj}}$, which is essentially due to
  {\cite[Cor~VIII.2.2.8]{Illusie1972}}, see {\cite[Rem~3.33]{Bhatt2012a}}.
\end{proof}

\subsection{Affine crystalline site}We now turn to the site-theoretic aspects
of the derived crystalline cohomology by showing that the derived crystalline
cohomology is equivalent to the cohomology of the {\tmdfn{affine crystalline
site}} under a mild smoothness condition. We warn the reader again that our
theory is non-completed. Fix a crystalline context $((A \twoheadrightarrow
A'', \gamma_A), A'' \rightarrow R) \in \tmop{CrysCon}$.

\begin{definition}
  \label{def:aff-crys-site}The {\tmdfn{affine crystalline site}} $\tmop{Cris}
  (R / (A \twoheadrightarrow A'', \gamma_A))$ is defined to be the opposite
  $\infty$-category of animated PD-pairs $(B \twoheadrightarrow B'',
  \gamma_B)$ under $(A \twoheadrightarrow A'', \gamma_A)$ along with an
  equivalence $R \xrightarrow{\simeq} B''$ of $A$-algebras, depicted by the
  diagram
  
  \[
  \xymatrix{
A\ar[rr]\ar@{->>}[d]&&B\ar@{->>}[d]\\
A''\ar[r]&R\ar^{\simeq}[r]&B''
}
  \]
  
  {\noindent}which we will simply denoted by $\left( R \xrightarrow{\simeq}
  B'' \twoheadleftarrow B \right) \in \tmop{Cris} (R / (A \twoheadrightarrow
  A'', \gamma_A))$. More formally, it is the homotopy fiber of the functor
  $\tmop{Pair}^{\gamma, \tmop{an}}_{(A \twoheadrightarrow A'') \mathord{/}}
  \rightarrow \tmop{CAlg}_{A''}^{\tmop{an}}, (B \twoheadrightarrow B'',
  \gamma_B) \mapsto B''$ at the object $R \in \tmop{CAlg}^{\tmop{an}}_{A''}$.
  The endowed Grothendieck topology is indiscrete.
  
  The {\tmdfn{structure presheaf}} $\tmop{Cris} (R / (A \twoheadrightarrow
  A'', \gamma_A))^{\tmop{op}} \rightarrow \tmop{CAlg}_A$, denoted by
  $\mathcal{O}_{\tmop{CRIS} (R / (A \twoheadrightarrow A'', \gamma_A))}$ (or
  simply $\mathcal{O}$ when there is no ambiguity), is induced by the
  evaluation $\tmop{Pair}^{\gamma, \tmop{an}}_{(A \twoheadrightarrow A'',
  \gamma_A) \mathord{/}} \rightarrow \tmop{CAlg}_A^{\tmop{an}} \rightarrow
  \tmop{CAlg} (D (A))$. Concretely, it is given by $\left( R
  \xrightarrow{\simeq} B'' \twoheadleftarrow B \right) \mapsto B$.
\end{definition}

Although the affine crystalline site is not small, the cohomology of the
structure presheaf exists in $\tmop{CAlg}_A$ by Čech--Alexander calculation
(which we will reproduce in \Cref{prop:crys-site-comp-desc}). We will simply
call it the {\tmdfn{cohomology of the crystalline site}} and denote by $R
\Gamma (\tmop{Cris} (R / (A \twoheadrightarrow A'', \gamma_A)), \mathcal{O})$.
Furthermore, the structure sheaf admits the PD-filtration (\Cref{def:pd-fil}),
which gives rise to a filtration on the cohomology of the crystalline site,
called the {\tmdfn{Hodge-filtration}} and denoted by $\tmop{Fil}_H$. We now
have a comparison between the derived crystalline cohomology and the
cohomology of the crystalline site, which becomes an equivalence after
Hodge-completion:

\begin{proposition}
  \label{prop:deriv-crys-coh-site-comp}There is a natural comparison map
  \[ \tmop{Fil}_H \tmop{CrysCoh}_{R / (A \twoheadrightarrow A'', \gamma_A)}
     \rightarrow \tmop{Fil}_H R \Gamma (\tmop{Cris} (R / (A \twoheadrightarrow
     A'', \gamma_A)), \mathcal{O}) \]
  in the $\infty$-category $\tmop{CAlg} (\tmop{DF}^{\geq 0} (A))$. After
  passing to the associated graded pieces, i.e. composition with the functor
  $\tmop{CAlg} (\tmop{DF}^{\geq 0} (A)) \rightarrow \tmop{CAlg}
  (\tmop{Gr}^{\geq 0} (A))$, this comparison map becomes an equivalence.
  Moreover, when $\pi_0 (R)$ is a finitely generated $\pi_0 (A'')$-algebra,
  then the comparison map is an equivalence.
\end{proposition}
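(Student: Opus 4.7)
The plan is to split the proof into three parts: constructing the comparison map, proving the equivalence on associated graded pieces (which is the crux), and upgrading to the filtered equivalence under the finite-generation hypothesis.

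For the construction, I would use the universal property of the limit defining $R\Gamma(\tmop{Cris}(R/(A\twoheadrightarrow A'',\gamma_A)),\mathcal{O})$. Given any $(R\xrightarrow{\simeq} B''\twoheadleftarrow B,\gamma_B)\in\tmop{Cris}$, the datum assembles into a de Rham context $((A\twoheadrightarrow A'',\gamma_A)\to(B\twoheadrightarrow B'',\gamma_B))\in\tmop{dRCon}$. By the Hodge-filtered enhancement of \Cref{prop:dR-crys-inv} (discussed in \Cref{subsec:dR-Hdg-conj-fil}), the Hodge-filtered derived de Rham cohomology of this context is naturally equivalent to $\tmop{Fil}_H\tmop{CrysCoh}_{R/(A\twoheadrightarrow A'',\gamma_A)}$. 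The tautological truncation map $\tmop{Fil}_H\tmop{dR}\to\tmop{Fil}_{\tmop{PD}}B$ from \Cref{def:Hdg-fil-deriv-dR} then gives a cocone on $\mathcal{O}$, and the universal property of the limit produces the comparison map.

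For the equivalence on associated graded pieces, I would use the Čech--Alexander trick. Pick a surjection $P\twoheadrightarrow R$ with $P$ an $A''$-polynomial algebra on enough generators (indexed by $\pi_0(R)$ if necessary) so that the relative animated PD-envelope $(D_P\twoheadrightarrow R,\gamma_P)$ of $(P\twoheadrightarrow R)$ over $(A\twoheadrightarrow A'',\gamma_A)$ is weakly final in $\tmop{Cris}$. By the universal property of the relative animated PD-envelope (\Cref{def:rel-PD-env}) and the preservation of colimits by the forgetful functor (\Cref{prop:forget-PD-small-colim}), the Čech nerve $D^{\bullet}$ in $\tmop{Cris}$ has $n$-th term equal to the relative animated PD-envelope of $P^{\otimes_{A''}^{\mathbb{L}}(n+1)}\twoheadrightarrow R$, so that $\tmop{Fil}_H R\Gamma(\tmop{Cris},\mathcal{O})\simeq\tmop{Tot}(\tmop{Fil}_{\tmop{PD}}D^{\bullet})$. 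On the derived crystalline side, the preservation of colimits by Hodge-filtered derived crystalline cohomology reduces everything to a simplicial polynomial resolution of $R$. Matching the two computations on associated graded pieces is a derived, Hodge-filtered analogue of \cite[Prop~3.25]{Bhatt2012a}, and proceeds by invoking the Cartier isomorphism (\Cref{prop:crys-Cartier-isom}) and the Katz-Oda filtration mentioned in the introduction to identify graded pieces with $\bigwedge^i\mathbb{L}_{R/A''}[-i]$ (up to conjugate twisting) on both sides. The argument is naturally done in characteristic $p$ using the conjugate filtration developed in \Cref{subsec:dR-Hdg-conj-fil}, combined with \Cref{lem:dR-rational,lem:rel-pd-env-rat} for the rational case, and then glued integrally via the local-global principle \Cref{lem:static-loc-global}.

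The main obstacle will be exactly this graded identification: although both sides \emph{should} compute $\bigwedge^i\mathbb{L}_{R/A''}[-i]$, making this precise requires tracking the Katz-Oda filtration on the totalization of the Čech nerve and showing that the spectral sequence it produces degenerates appropriately. Once the graded equivalence is established, the finite-generation hypothesis comes into play for the last statement as follows: when $\pi_0(R)$ is finitely generated over $\pi_0(A'')$, one can choose $P=A''[x_1,\ldots,x_n]$ with finitely many variables, so each term of the Čech nerve is of finite presentation and the associated graded pieces in each degree are concentrated in a bounded range of the Hodge filtration. Consequently both Hodge filtrations are complete and separated in each cohomological degree, and the equivalence on associated graded pieces upgrades to a filtered equivalence by a standard completeness argument (or a convergent spectral sequence).
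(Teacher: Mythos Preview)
Your construction of the comparison map and the Čech--Alexander setup are essentially the same as the paper's, and you correctly identify the Katz--Oda filtration as the key tool. However, there are two substantive issues.

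First, your proposed route through the Cartier isomorphism, the conjugate filtration in characteristic $p$, and the local--global principle is misplaced here. The Cartier isomorphism (\Cref{prop:crys-Cartier-isom}) controls the graded pieces of the \emph{conjugate} filtration, not the Hodge filtration, and the paper does not need any characteristic-$p$ reduction for this proposition. The argument for the associated graded equivalence is entirely integral: one rewrites the cosimplicial comparison as $\tmop{Fil}_H\tmop{CrysCoh}_{R/(A\twoheadrightarrow A'')}\to\tmop{Fil}_H\tmop{CrysCoh}_{R/(P^\bullet\twoheadrightarrow P^\bullet\otimes_A^{\mathbb{L}}A'')}$ via \Cref{prop:crys-PD-env-equiv}, then applies the Katz--Oda filtration (\Cref{def:Katz-Oda-fil}). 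By \Cref{lem:Katz-Oda-gr-cpl}, the induced Katz--Oda filtration on each $\tmop{gr}_H^j$ is \emph{finite} (it vanishes above degree $j$), and for $i>0$ the pieces $\tmop{gr}_{\tmop{KO}}^i$ are cosimplicially homotopy equivalent to zero because they are tensored with $\bigwedge^i\mathbb{L}_{P^\bullet/A}$, which is null-homotopic by \cite[Lem~2.6]{Bhatt2012}. This yields the graded equivalence directly. (The conjugate-filtration argument you sketch is what the paper uses later, in \Cref{lem:crys-coh-char-p-desc}, for the \emph{quasisyntomic} case of \Cref{prop:site-comp-qsyn-integral}, not here.)

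Second, and more seriously, your argument for the finite-generation upgrade has a genuine gap. You assert that ``both Hodge filtrations are complete and separated in each cohomological degree,'' but this is false: the Hodge filtration on derived crystalline cohomology is \emph{not} complete in general, even for finitely generated $R$. For instance, if $R=A''/(f)$ with $f$ a non-zero-divisor, then $\mathbb{L}_{R/A''}\simeq R[1]$ and $\tmop{gr}_H^i\tmop{CrysCoh}\simeq\Gamma_R^i(R)\simeq R$ for all $i\geq 0$, so the filtration never terminates. Equivalence on associated graded pieces therefore does \emph{not} formally upgrade to a filtered equivalence. The paper's argument is different: when $P$ can be chosen with finitely many variables, the Katz--Oda filtration on the \emph{entire} filtered object $\tmop{Fil}_H\tmop{CrysCoh}_{R/(A\twoheadrightarrow A'')}$ (not just on each $\tmop{gr}_H^j$) is bounded, because $\bigwedge^i\mathbb{L}_{(P^\bullet\twoheadrightarrow\cdots)/(A\twoheadrightarrow A'')}=0$ once $i$ exceeds the number of variables. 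The same null-homotopy argument then applies directly at the filtered level, bypassing any completeness hypothesis on the Hodge filtration.

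A minor point: the paper takes $P$ to be a polynomial $A$-algebra rather than an $A''$-algebra, which is needed so that the animated PD-pair $(P^\bullet\twoheadrightarrow P^\bullet\otimes_A^{\mathbb{L}}A'',\gamma_{P^\bullet})$ is well-formed and the transitivity/base-change formalism applies cleanly.
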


We need some preparation about cosimplicial objects in $\infty$-categories.

\begin{definition}[{\cite[Def~6.1.2.2]{Lurie2009}}]
  Let $\mathcal{C}$ be an $\infty$-category. A {\tmdfn{cosimplicial object}}
  of $\mathcal{C}$ is a functor $X^{\bullet} \of \tmmathbf{\Delta} \rightarrow
  \mathcal{C}$. The value of this functor at $[\nu] \in \tmmathbf{\Delta}$ is
  denoted by $X^{\nu}$. A {\tmdfn{map of cosimplicial objects}} $X^{\bullet}
  \rightarrow Y^{\bullet}$ is simply a map of functors.
\end{definition}

We note that there are two inclusions $\{ 0 \} \hookrightarrow [1]
\hookleftarrow \{ 1 \}$ viewed as two maps $[0] \rightrightarrows [1]$, and a
constant map $[1] \rightarrow [0]$, which induce three functors $i_0$, $i_1$:
$\tmmathbf{\Delta} \simeq \tmmathbf{\Delta}_{/ [0]} \rightrightarrows
\tmmathbf{\Delta}_{/ [1]}$ and $\delta \of \tmmathbf{\Delta}_{/ [1]}
\rightarrow \tmmathbf{\Delta}_{/ [0]} \simeq \tmmathbf{\Delta}$. For any
$\infty$-category $\mathcal{C}$, let $i_0^{\ast}$ (resp. $i_1^{\ast}$) denote
the induced functor $\tmop{Fun} (\tmmathbf{\Delta}_{/ [1]}, \mathcal{C})
\rightarrow \tmop{Fun} (\tmmathbf{\Delta}, \mathcal{C})$, and let
$\delta^{\ast}$ denote the induced functor $\tmop{Fun} (\tmmathbf{\Delta},
\mathcal{C}) \rightarrow \tmop{Fun} (\tmmathbf{\Delta}_{/ [1]}, \mathcal{C})$.

\begin{definition}[{\cite[Def~7.2.1.6]{Lurie2017}}]
  Let $\mathcal{C}$ be an $\infty$-category and let $f$ and $g$ be two maps
  $X^{\bullet} \rightrightarrows Y^{\bullet}$ of cosimplicial objects. A
  {\tmdfn{simplicial homotopy from $f$ to $g$}} is a map $h \of \delta^{\ast}
  (X^{\bullet}) \rightarrow \delta^{\ast} (Y^{\bullet})$ of functors
  $\tmmathbf{\Delta}_{/ [1]} \rightrightarrows \mathcal{C}$ such that the map
  $i_0^{\ast} (h) \of X^{\bullet} \rightarrow Y^{\bullet}$ (resp. $i_1^{\ast}
  (h) \of X^{\bullet} \rightarrow Y^{\bullet}$), being a map of cosimplicial
  objects, is equivalent to $f$ (resp. $g$). When $X^{\bullet} = Y^{\bullet}$,
  we say that the simplicial homotopy $h \of \delta^{\ast} (X^{\bullet})
  \rightarrow \delta^{\ast} (X^{\bullet})$ is {\tmdfn{constant}} if it is
  equivalent to $\tmop{id}_{\delta^{\ast} (X^{\bullet})}$.
\end{definition}

\begin{lemma}
  \label{lem:simpl-homot-maps-eq}Let $\mathcal{C}$ be an $\infty$-category and
  $X^{\bullet}, Y^{\bullet}$ two cosimplicial objects of which the
  totalization exist in $\mathcal{C}$. Let $f$ and $g$ be two maps
  $X^{\bullet} \rightrightarrows Y^{\bullet}$ of cosimplicial objects such
  that there exists a simplicial homotopy from $f$ to $g$. Then the maps $f,
  g$ induces equivalent\footnote{Or called ``homotopic''. We avoid the
  terminology ``homotopic'' to avoid confusion with the simplicial homotopy.}
  maps $\lim_{\tmmathbf{\Delta}} X^{\bullet} \rightrightarrows
  \lim_{\tmmathbf{\Delta}} Y^{\bullet}$ of totalizations.
\end{lemma}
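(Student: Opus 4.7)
The plan is to reduce the statement to the \emph{coinitiality} of the forgetful functor $\delta \of \tmmathbf{\Delta}_{/[1]} \to \tmmathbf{\Delta}$. Once we know $\delta$ is coinitial, pullback along $\delta$ preserves limits; combined with the two factorizations $\delta \circ i_0 = \delta \circ i_1 = \tmop{id}_{\tmmathbf{\Delta}}$, this will force the two canonical restriction maps associated to $i_0$ and $i_1$ to coincide, so that $f$ and $g$ must induce the same morphism on totalizations.

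First I would verify that $\delta$ is coinitial, i.e., that $\delta^{\tmop{op}}$ is cofinal in the sense of {\cite[Thm~4.1.3.1]{Lurie2009}}. By that criterion, it suffices to check that for each $[m] \in \tmmathbf{\Delta}$, the comma category $\tmmathbf{\Delta}_{/[1]} \times_{\tmmathbf{\Delta}} \tmmathbf{\Delta}_{/[m]}$ has weakly contractible nerve. Unwinding, an object is $[n]$ together with a map $[n] \to [1] \times [m]$ (product in $\tmop{Cat}$), so the comma is the category of simplices of $\Delta^1 \times \Delta^m$; its nerve is the barycentric subdivision of $\Delta^1 \times \Delta^m$, with geometric realization $|\Delta^1 \times \Delta^m|$, which is contractible. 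Consequently, for every $Z \in \tmop{Fun}(\tmmathbf{\Delta}, \mathcal{C})$ whose totalization exists, the canonical map $\alpha^Z \of \lim_{\tmmathbf{\Delta}} Z \to \lim_{\tmmathbf{\Delta}_{/[1]}} \delta^* Z$ is an equivalence in $\mathcal{C}$.

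Next, since $\delta \circ i_k = \tmop{id}_{\tmmathbf{\Delta}}$ for $k = 0, 1$, the canonical restriction map $r_k^Z \of \lim_{\tmmathbf{\Delta}_{/[1]}} \delta^* Z \to \lim_{\tmmathbf{\Delta}} i_k^* \delta^* Z = \lim_{\tmmathbf{\Delta}} Z$ satisfies $r_k^Z \circ \alpha^Z \simeq \tmop{id}_{\lim_{\tmmathbf{\Delta}} Z}$. Since $\alpha^Z$ is an equivalence, its left inverse is unique up to a contractible space of choices, so $r_0^Z \simeq r_1^Z$ as edges of the mapping space. Finally, applying $\lim_{\tmmathbf{\Delta}_{/[1]}}$ to $h \of \delta^* X^{\bullet} \to \delta^* Y^{\bullet}$ produces a morphism $H \of \lim_{\tmmathbf{\Delta}_{/[1]}} \delta^* X^{\bullet} \to \lim_{\tmmathbf{\Delta}_{/[1]}} \delta^* Y^{\bullet}$; by naturality of the restriction maps, $r_k^Y \circ H \simeq \lim_{\tmmathbf{\Delta}}(i_k^* h) \circ r_k^X$. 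Combining $i_0^* h \simeq f$, $i_1^* h \simeq g$ with $r_0 \simeq r_1$ shows that both $\lim_{\tmmathbf{\Delta}} f$ and $\lim_{\tmmathbf{\Delta}} g$ agree with the conjugate $r_0^Y \circ H \circ (r_0^X)^{-1}$, giving the desired equivalence.

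The main obstacle is $\infty$-categorical coherence: the arguments above use $r_0 \simeq r_1$ as edges (not merely that source and target agree), and likewise a coherent version of the naturality square for $h$. Both are handled uniformly by packaging everything as natural transformations between the functors $\lim_{\tmmathbf{\Delta}}$ and $\lim_{\tmmathbf{\Delta}_{/[1]}} \circ \delta^*$ from $\tmop{Fun}(\tmmathbf{\Delta}, \mathcal{C})$ to $\mathcal{C}$, and invoking the essential uniqueness of inverses to an equivalence in an $\infty$-category.
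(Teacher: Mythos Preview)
Your proof is correct and follows essentially the same approach as the paper's: both arguments prove the coinitiality of $\delta \of \tmmathbf{\Delta}_{/[1]} \to \tmmathbf{\Delta}$ via Quillen's Theorem~A and the contractibility of $\Delta^1 \times \Delta^m$, then observe that the two restriction maps induced by $i_0$ and $i_1$ must coincide since both invert the canonical equivalence $\alpha$, and finally factor $\lim_{\tmmathbf{\Delta}} f$ and $\lim_{\tmmathbf{\Delta}} g$ through $\lim_{\tmmathbf{\Delta}_{/[1]}} h$. The paper phrases the second step with maps going in the opposite direction (sections rather than retractions of $\alpha$), but the content is identical.
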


\begin{proof}[Denis {\tmname{Nardin}}]
  For every cosimplicial object $X^{\bullet}$ in $\mathcal{C}$, there are two
  observations:
  \begin{enumerate}
    \item The canonical map $\lim_{\tmmathbf{\Delta}_{/ [1]}} \delta^{\ast}
    (X^{\bullet}) \rightarrow \lim_{\tmmathbf{\Delta}} X^{\bullet}$ is an
    equivalence (this involves the existence of the limit as the source).
    Indeed, it suffices to show that the map $\delta \of \tmmathbf{\Delta}_{/
    [1]} \rightarrow \tmmathbf{\Delta}$ is coinitial. By Joyal's version of
    Quillen's Theorem A {\cite[Thm~4.1.3.1]{Lurie2009}}, it suffices to show
    that, for every $[n] \in \tmmathbf{\Delta}$, the category
    $\tmmathbf{\Delta}_{/ [1]} \times_{\tmmathbf{\Delta}} \tmmathbf{\Delta}_{/
    [n]}$ is weakly contractible. Its geometric realization is $\Delta^1
    \times \Delta^n$, which is known to be weakly contractible.
    
    \item The two maps $\lim_{\tmmathbf{\Delta}} X = \lim_{\tmmathbf{\Delta}}
    i_{\nu}^{\ast} \delta^{\ast} (X^{\bullet}) \rightarrow
    \lim_{\tmmathbf{\Delta}_{/ [1]}} \delta^{\ast} (X^{\bullet})$ for $\nu =
    0, 1$ are equivalences, and these two maps are equivalent. Indeed, both
    are inverses of the equivalence $\lim_{\tmmathbf{\Delta}_{/ [1]}}
    \delta^{\ast} (X^{\bullet}) \rightarrow \lim_{\tmmathbf{\Delta}}
    X^{\bullet}$ above.
  \end{enumerate}
  Note that the map $\lim_{\tmmathbf{\Delta}} f$ (resp.
  $\lim_{\tmmathbf{\Delta}} g$) could be identified with the composite
  \[ \lim_{\tmmathbf{\Delta}} X = \lim_{\tmmathbf{\Delta}} i_{\nu}^{\ast}
     \delta^{\ast} (X^{\bullet}) \longrightarrow \lim_{\tmmathbf{\Delta}_{/
     [1]}} \delta^{\ast} (X^{\bullet}) \xrightarrow{\lim_{\tmmathbf{\Delta}_{/
     [1]}} (h)} \lim_{\tmmathbf{\Delta}_{/ [1]}} \delta^{\ast} (Y^{\bullet})
     \longrightarrow \lim_{\tmmathbf{\Delta}} Y^{\bullet} \]
  for $\nu = 0$ (resp. $\nu = 1$). The result then follows.
\end{proof}

\begin{proof*}{Proof of \Cref{prop:deriv-crys-coh-site-comp}}
  There is a map from the constant presheaf $\underline{\tmop{Fil}_H
  \tmop{CrysCoh}_{R / (A \twoheadrightarrow A'', \gamma_A)}}$ on the affine
  crystalline site to the structure presheaf $\mathcal{O}$ given by the
  canonical map in \Cref{def:Hdg-fil-deriv-dR}, which induces the comparison
  map in question.
  
  Now we show that this map becomes an equivalence after passing to the
  associated graded pieces. We first note that, when the map $A'' \rightarrow
  R$ is surjective, i.e. $((A \twoheadrightarrow A'', \gamma_A), A''
  \rightarrow R) \in \tmop{CrysCon}_{\tmop{surj}}$, the result follows
  directly from \Cref{prop:crys-PD-env-equiv}. Our strategy is to reduce the
  general case to this special case via Čech--Alexander computation.
  
  We pick a polynomial $A$-algebra $P$ (of possibly infinitely many variables)
  along with a surjection $P \twoheadrightarrow R$ of $A$-algebras. Let
  $P^{\bullet} \rightarrow R$ denote the Čech conerve of the object $P
  \rightarrow R$ in the $\infty$-category $\tmop{CAlg}^{\tmop{an}}_{A
  \mathord{/}}$. Concretely, it is given by $P^{\nu} \assign
  P^{\otimes_A^{\mathbb{L}} (\nu + 1)}$, and the map $P^{\nu} \rightarrow R$
  is simply given by the composite map $P^{\nu} \rightarrow P \rightarrow R$
  which is surjective. In other words, we get a cosimplicial object
  $(P^{\bullet} \twoheadrightarrow R) \in \tmop{Fun} \left( \Delta,
  \tmop{Pair}^{\tmop{an}}_{(A \twoheadrightarrow A'') \mathord{/}} \right)$.
  Let $(D^{\bullet} \twoheadrightarrow R, \gamma_{D^{\bullet}}) \in \tmop{Fun}
  \left( \Delta, \tmop{Pair}^{\gamma, \tmop{an}}_{(A \twoheadrightarrow A'',
  \gamma_A) \mathord{/}} \right)$ denote the cosimplicial relative animated
  PD-envelope, i.e. applying the functor $\tmop{Pair}^{\tmop{an}}_{(A
  \twoheadrightarrow A'') \mathord{/}} \rightarrow \tmop{Pair}^{\gamma,
  \tmop{an}}_{(A \twoheadrightarrow A'', \gamma_A) \mathord{/}}$
  (\Cref{def:rel-PD-env}) pointwise. This effectively gives rise to a
  cosimplicial object $\Delta \rightarrow \tmop{Cris} (R / (A
  \twoheadrightarrow A'', \gamma_A))^{\tmop{op}}$. Composing with the
  Hodge-filtered presheaf $\tmop{Fil}_H \mathcal{O} \of \tmop{Cris} (R / (A
  \twoheadrightarrow A'', \gamma_A))^{\tmop{op}} \rightarrow \tmop{CAlg}
  (\tmop{DF}^{\geq 0} (A))$, we get a cosimplicial filtered
  $\mathbb{E}_{\infty}$-$A$-algebra $\Delta \rightarrow \tmop{CAlg}
  (\tmop{DF}^{\geq 0} (A))$, the limit of which computes the cohomology
  $\tmop{Fil}_H R \Gamma (\tmop{Cris} (R / (A \twoheadrightarrow A'',
  \gamma_A)), \mathcal{O})$. In plain terms, this cosimplicial filtered
  $\mathbb{E}_{\infty}$-$A$-algebra is just the PD-filtration of the
  cosimplicial animated PD-pair $(D^{\bullet} \twoheadrightarrow R,
  \gamma_{D^{\bullet}})$.
  
  For this cosimplicial object, the comparison map constructed above is
  concretely given by
  \begin{equation}
    \tmop{Fil}_H \tmop{dR}_{(D^{\bullet} \twoheadrightarrow R,
    \gamma_{D^{\bullet}}) / (A \twoheadrightarrow A'', \gamma_A)} \rightarrow
    \tmop{Fil}_{\tmop{PD}} D^{\bullet} \label{eq:cosimpl-comp}
  \end{equation}
  Now \Cref{prop:crys-PD-env-equiv,lem:PD-env-crys-con} gives us an
  equivalence
  \[ \tmop{Fil}_H \tmop{CrysCoh}_{R / (P^{\bullet} \twoheadrightarrow
     P^{\bullet} \otimes_A^{\mathbb{L}} A'', \gamma_{P^{\bullet}})}
     \rightarrow \tmop{Fil}_{\tmop{PD}} D^{\bullet} \]
  which is effectively given by
  \[ \tmop{Fil}_H \tmop{dR}_{(D^{\bullet} \twoheadrightarrow R,
     \gamma_{D^{\bullet}}) / (P^{\bullet} \twoheadrightarrow P^{\bullet}
     \otimes_A^{\mathbb{L}} A'', \gamma_{P^{\bullet}})} \rightarrow
     \tmop{Fil}_{\tmop{PD}} D^{\bullet} \]
  by chasing the proof. In other words, \eqref{eq:cosimpl-comp} could be
  rewritten as the natural map
  \[ \tmop{Fil}_H \tmop{dR}_{(D^{\bullet} \twoheadrightarrow R,
     \gamma_{D^{\bullet}}) / (A \twoheadrightarrow A'', \gamma_A)} \rightarrow
     \tmop{Fil}_H \tmop{dR}_{(D^{\bullet} \twoheadrightarrow R,
     \gamma_{D^{\bullet}}) / (P^{\bullet} \twoheadrightarrow P^{\bullet}
     \otimes_A^{\mathbb{L}} A'', \gamma_{P^{\bullet}})} \]
  or equivalently, the natural map
  \[ \tmop{Fil}_H \tmop{CrysCoh}_{R / (A \twoheadrightarrow A'', \gamma_A)}
     \rightarrow \tmop{Fil}_H \tmop{CrysCoh}_{R / (P^{\bullet}
     \twoheadrightarrow P^{\bullet} \otimes_A^{\mathbb{L}} A'',
     \gamma_{P^{\bullet}})} \]
  It remains to show that this cosimplicial map gives rise to an equivalence
  after taking the limit, i.e., the totalization, and passing to associated
  graded pieces. We isolate the remaining part into
  \Cref{lem:crys-coh-gr-desc}.
\end{proof*}

Before proving \Cref{lem:crys-coh-gr-desc}, we isolate an important
observation in the previous proof into a proposition:

\begin{proposition}
  \label{prop:crys-site-comp-desc}For every crystalline context $((A
  \twoheadrightarrow A'', \gamma_A), A'' \rightarrow R) \in \tmop{CrysCon}$,
  the followings are equivalent:
  \begin{enumerate}
    \item The comparison map in \Cref{prop:deriv-crys-coh-site-comp} is an
    equivalence.
    
    \item There exists a polynomial $A$-algebra $P$ (of possibly infinitely
    many variables) along with a surjection $P \twoheadrightarrow R$ of
    $A$-algebras, and letting $P^{\bullet} \rightarrow R$ denote the Čech
    conerve of $P \rightarrow R$ in the $\infty$-category
    $\tmop{CAlg}_A^{\tmop{an}}$ as in the proof of
    \Cref{prop:deriv-crys-coh-site-comp}, then the natural maps
    \begin{equation}
      \tmop{Fil}_H \tmop{CrysCoh}_{R / (A \twoheadrightarrow A'', \gamma_A)}
      \rightarrow \tmop{Fil}_H \tmop{CrysCoh}_{R / (P^{\bullet}
      \twoheadrightarrow P^{\bullet} \otimes_A^{\mathbb{L}} A'',
      \gamma_{P^{\bullet}})} \label{eq:Hdg-cryscoh-desc}
    \end{equation}
    form a limit diagram in $\tmop{CAlg} (\tmop{DF}^{\geq 0} (A))$.
    
    \item For all polynomial $A$-algebras $P$ (of possibly infinitely many
    variables) along with a surjection $P \twoheadrightarrow R$ of
    $A$-algebras, and letting $P^{\bullet} \rightarrow R$ denote the Čech
    conerve of $P \rightarrow R$ in the $\infty$-category
    $\tmop{CAlg}_A^{\tmop{an}}$, then the natural maps
    \eqref{eq:Hdg-cryscoh-desc} form a limit diagram in $\tmop{CAlg}
    (\tmop{DF}^{\geq 0} (A))$.
    
    \item (After proving \Cref{lem:crys-coh-gr-desc}) There exists a (or
    equivalently, for every) polynomial $A$-algebra $P$ (of possibly
    infinitely many variables) along with a surjection $P \twoheadrightarrow
    R$ of $A$-algebras, and letting $P^{\bullet} \rightarrow R$ denote the
    Čech conerve of $P \rightarrow R$ in the $\infty$-category
    $\tmop{CAlg}_A^{\tmop{an}}$ as in the proof of
    \Cref{prop:deriv-crys-coh-site-comp}, then the natural maps
    \[ \tmop{CrysCoh}_{R / (A \twoheadrightarrow A'', \gamma_A)} \rightarrow
       \tmop{CrysCoh}_{R / (P^{\bullet} \twoheadrightarrow P^{\bullet}
       \otimes_A^{\mathbb{L}} A'', \gamma_{P^{\bullet}})} \]
    form a limit diagram in $\tmop{CAlg} (D (A))$.
  \end{enumerate}
\end{proposition}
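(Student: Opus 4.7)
The plan is to establish the cycle $(3) \Rightarrow (2) \Rightarrow (1) \Rightarrow (3)$ together with the separate equivalence $(1) \Leftrightarrow (4)$; the implication $(3) \Rightarrow (2)$ is trivial.

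I begin by observing that the \v{C}ech--Alexander computation carried out in the proof of \Cref{prop:deriv-crys-coh-site-comp} applies verbatim to any polynomial $A$-algebra $P \twoheadrightarrow R$, not just a fixed one. More precisely, letting $P^{\bullet} \to R$ be the \v{C}ech conerve in $(\tmop{Ani}(\tmop{Ring})_{A \mathord{/}})_{/ R}$ and $(D^{\bullet} \twoheadrightarrow R, \gamma_{D^{\bullet}})$ its termwise relative animated PD-envelope over $(A \twoheadrightarrow A'', \gamma_A)$, the cosimplicial object $\Delta \to \tmop{Cris}(R / (A \twoheadrightarrow A'', \gamma_A))^{\tmop{op}}$ so obtained pairs with the Hodge-filtered structure presheaf to give a cosimplicial filtered $\mathbb{E}_{\infty}$-$A$-algebra whose totalization computes $\tmop{Fil}_H R \Gamma(\tmop{Cris}, \mathcal{O})$; this relies on PD-envelopes of polynomial covers being weakly initial in the site, since every object $(R \xrightarrow{\simeq} B'' \twoheadleftarrow B)$ admits a lift $P \to B$ of some polynomial cover $P \twoheadrightarrow R$ (pick a polynomial algebra surjecting onto $B$ and compose with $B \to B'' \simeq R$), and the universal property of the relative animated PD-envelope then yields a map $D \to B$ in the site. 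Via \Cref{prop:crys-PD-env-equiv} and \Cref{lem:PD-env-crys-con}, this totalization is equivalent to $\lim_{\Delta} \tmop{Fil}_H \tmop{CrysCoh}_{R / (P^{\bullet} \twoheadrightarrow P^{\bullet} \otimes_A^{\mathbb{L}} A'', \gamma_{P^{\bullet}})}$, and under this identification the comparison map of \Cref{prop:deriv-crys-coh-site-comp} is precisely the natural map \eqref{eq:Hdg-cryscoh-desc} factored through the totalization. This yields both $(2) \Rightarrow (1)$ and $(1) \Rightarrow (3)$ simultaneously: the comparison is an equivalence iff \eqref{eq:Hdg-cryscoh-desc} is a limit diagram, and this is independent of $P$ because both are identified with $\tmop{Fil}_H R \Gamma(\tmop{Cris}, \mathcal{O})$.

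For the equivalence $(1) \Leftrightarrow (4)$, I would invoke \Cref{lem:crys-coh-gr-desc}, which asserts that the natural map \eqref{eq:Hdg-cryscoh-desc} is a limit diagram on each associated graded piece $\tmop{gr}_H^n$. Since $\lim_{\Delta}$ in a stable $\infty$-category commutes with fibers and shifts (hence with $\tmop{gr}^n = \Sigma \tmop{fib}(\tmop{Fil}^{n+1} \to \tmop{Fil}^n)$), the associated graded of both sides of the filtered comparison agree. Then the comparison map of nonnegatively filtered $\mathbb{E}_{\infty}$-$A$-algebras is an equivalence on $\tmop{Fil}^n$ for every $n \geq 0$ iff it is one on $\tmop{Fil}^0$, by induction on $n$ using the fiber sequence $\tmop{Fil}^{n+1} \to \tmop{Fil}^n \to \tmop{gr}^n$ (the base case $n = 0$ being the underlying comparison in (4), and the inductive step using two-out-of-three in the stable setting). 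Since $\tmop{Fil}_H^0$ recovers the underlying $\mathbb{E}_{\infty}$-$A$-algebra, this exactly identifies (1) with (4).

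The main obstacle will be making the weak initiality and \v{C}ech--Alexander computation precise, in particular justifying that $\tmop{Fil}_H R \Gamma(\tmop{Cris}, \mathcal{O})$ is well-defined in $\tmop{CAlg}(\tmop{DF}^{\geq 0}(A))$ and is genuinely computed by the totalization of the \v{C}ech conerve of any polynomial cover, given that the affine crystalline site itself is not small. I expect this to be addressed by a cofinality-type argument exhibiting the full subcategory of PD-envelopes of polynomial covers as cofinal in the opposite site for the purposes of taking limits of the structure presheaf, with independence of the choice of $P$ then following from the fact that any two polynomial covers are dominated by their tensor product $P \otimes_A Q$, itself a polynomial cover.
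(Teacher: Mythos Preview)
Your proposal is correct and matches the paper's approach. The paper does not give a separate proof of this proposition; it is stated as ``an important observation'' extracted from the proof of \Cref{prop:deriv-crys-coh-site-comp}, and your write-up is exactly a fleshing out of that observation: the \v{C}ech--Alexander computation there identifies the comparison map with the map \eqref{eq:Hdg-cryscoh-desc} into the totalization (for any choice of polynomial cover $P$, since all compute the same site cohomology), giving $(1)\Leftrightarrow(2)\Leftrightarrow(3)$, and the equivalence $(1)\Leftrightarrow(4)$ then follows by the inductive argument on $\tmop{Fil}^n$ using \Cref{lem:crys-coh-gr-desc} exactly as you indicate. One small wording issue: for weak initiality of the PD-envelope of a \emph{fixed} $P$, the lift $P\to B$ should come from the freeness of $P$ (lift generators along the surjection $B\twoheadrightarrow B''\simeq R$), not from choosing a new polynomial algebra surjecting onto $B$; but your domination argument in the final paragraph makes this immaterial.
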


In order to deal with associated graded pieces of the Hodge filtration, we
need a variant of the {\tmdfn{Katz--Oda filtration}} in
{\cite[Cons~3.12]{Guo2020}}. We need an auxiliary construction:

\begin{definition}
  The {\tmdfn{cotangent complex functor}} $L_{\cdummy / \cdummy} \of
  \tmop{dRCon} \rightarrow \tmop{Ani} (\tmop{Mod})$ is defined to be the left
  derived functor (\Cref{prop:left-deriv-n-fun}) of the functor
  $\tmop{dRCon}^0 \rightarrow \tmop{Ani} (\tmop{Mod}), ((A, I, \gamma_A)
  \rightarrow (B, J, \gamma_B)) \mapsto (B, \Omega_{(B, J) / (A, I)}^1)$.
\end{definition}

The proof of \Cref{lem:dR-pdpair-classical} leads to

\begin{lemma}
  The composite functor $\tmop{Fun} (\Delta^1, \tmop{CAlg}^{\tmop{an}})
  \rightarrow \tmop{dRCon} \rightarrow \tmop{Ani} (\tmop{Mod})$ is equivalent
  to the classical cotangent complex functor.
\end{lemma}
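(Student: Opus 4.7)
The plan is to mimic the proof of Lemma~\ref{lem:dR-pdpair-classical}, upgrading it from $\mathbb{E}_{\infty}$-algebras to modules. First I would observe that the fully faithful embedding $\tmop{Fun}(\Delta^1, \tmop{Ani}(\tmop{Ring})) \hookrightarrow \tmop{dRCon}$ induced by $A \mapsto (\tmop{id}_A \of A \rightarrow A, 0)$ (obtained from \Cref{lem:ani-ring-as-pdpair} applied pointwise on $\Delta^1$) sends the set $\tmop{AniArr}^0 = \{\mathbb{Z}[X] \rightarrow \mathbb{Z}[X, Y]\}$ of compact projective generators into the subcategory $\tmop{dRCon}^0 \subseteq \tmop{dRCon}$: namely, into the special case $Y' = Y = \emptyset$ of the parametrization of $\tmop{dRCon}^0$ as $(\Gamma_{\mathbb{Z}[X']}(Y') \twoheadrightarrow \mathbb{Z}[X']) \rightarrow (\Gamma_{\mathbb{Z}[X,X']}(Y,Y') \twoheadrightarrow \mathbb{Z}[X,X'])$.

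Next I would verify that both functors under comparison preserve sifted colimits. For the classical cotangent complex functor this is \Cref{lem:cot-cx-preserv-colim}. For the composite, the second arrow is a left derived functor by definition of $\mathbb{L}_{\cdummy / \cdummy} \of \tmop{dRCon} \rightarrow \tmop{Ani}(\tmop{Mod})$, hence preserves sifted colimits by \Cref{prop:left-deriv-fun}, while the first arrow preserves small colimits since the forgetful functor $\tmop{AniPDPair} \rightarrow \tmop{AniPair}$ does (\Cref{prop:forget-PD-small-colim}) and similarly through the identifications of \Cref{thm:ani-smith-eq}. By \Cref{prop:left-deriv-fun}, both functors are therefore left Kan extended from their restrictions to $\tmop{AniArr}^0$, and it suffices to produce a natural equivalence there.

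On an object $(\mathbb{Z}[X] \rightarrow \mathbb{Z}[X, Y]) \in \tmop{AniArr}^0$, the image in $\tmop{dRCon}^0$ is a map of PD-pairs both of whose PD-ideals are zero. Inspecting \Cref{def:pd-derivation}, the axiom $\theta(\delta_n(x)) = \delta_{n-1}(x)\theta(x)$ is vacuous when $J = 0$, so PD-$\mathbb{Z}[X]$-derivations $\mathbb{Z}[X,Y] \rightarrow M$ coincide with ordinary $\mathbb{Z}[X]$-derivations. This gives a canonical isomorphism $\Omega^1_{(\mathbb{Z}[X,Y], 0)/(\mathbb{Z}[X], 0)} \cong \Omega^1_{\mathbb{Z}[X,Y]/\mathbb{Z}[X]}$, manifestly functorial in $(\mathbb{Z}[X] \rightarrow \mathbb{Z}[X, Y])$.

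The only potentially delicate point is the bookkeeping: one must confirm that the restriction of the cotangent complex functor $\tmop{dRCon} \rightarrow \tmop{Ani}(\tmop{Mod})$ to the image of $\tmop{AniArr}^0$ in $\tmop{dRCon}^0$ actually agrees, and not merely agrees up to a noncanonical isomorphism, with the functor $\tmop{AniArr}^0 \rightarrow \tmop{Ani}(\tmop{Mod})$ that left-derives to the classical cotangent complex. This follows because both are, by construction, left Kan extended from the same diagram $\tmop{AniArr}^0 \rightarrow \tmop{Ani}(\tmop{Mod}), (\mathbb{Z}[X] \rightarrow \mathbb{Z}[X, Y]) \mapsto (\mathbb{Z}[X, Y], \Omega^1_{\mathbb{Z}[X,Y]/\mathbb{Z}[X]})$, and the universal property of \Cref{prop:left-deriv-fun} then supplies the desired equivalence of functors on $\tmop{Fun}(\Delta^1, \tmop{Ani}(\tmop{Ring}))$.
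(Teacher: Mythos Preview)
Your proposal is correct and follows essentially the same approach as the paper, which simply says ``The proof of \Cref{lem:dR-pdpair-classical} leads to'' this lemma. One small remark: your justification that the first arrow preserves small colimits by citing \Cref{prop:forget-PD-small-colim} points at the wrong functor (that proposition concerns the forgetful functor $\tmop{AniPDPair} \rightarrow \tmop{AniPair}$, not the embedding $\tmop{Ani}(\tmop{Ring}) \hookrightarrow \tmop{AniPDPair}$); the correct reason is that $A \mapsto (\tmop{id}_A, 0)$ is a left adjoint by \Cref{lem:ani-ring-as-pair,lem:ani-ring-as-pdpair}, or alternatively that it sends $\tmop{AniArr}^0$ into $\tmop{dRCon}^0$ so the composite is automatically a left derived functor.
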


We now introduce the {\tmdfn{``stupid'' filtration $\tmop{Fil}_B$}} on the
Hodge-filtered derived de Rham cohomology $\tmop{Fil}_H \tmop{dR}_{\cdummy /
\cdummy}$. For each $((A, I, \gamma_A) \rightarrow (B, J, \gamma_B)) \in
\tmop{dRCon}^0$, consider the filtration $(\Omega_{(B, J, \gamma_B) / (A, I,
\gamma_A)}^{\geq n}, \mathd)_{n \in (\mathbb{N}, \geq)}$ of the Hodge-filtered
CDGA, which gives rise to a bifiltered
$\mathbb{E}_{\infty}$-$\mathbb{Z}$-algebra. By \Cref{prop:left-deriv-n-fun},
we get a functor $\tmop{CAlg} (\tmop{Fun} ((\mathbb{N}, \geq) \times
(\mathbb{N}, \geq), D (\mathbb{Z}))), ((A \twoheadrightarrow A'', \gamma_A)
\rightarrow (B \twoheadrightarrow B'', \gamma_B)) \mapsto \tmop{Fil}_B
\tmop{Fil}_H \tmop{dR}_{(B \twoheadrightarrow B'') / (A \twoheadrightarrow
A'')}$.

\begin{warning}
  Unlike the Hodge filtration, the ``stupid'' filtration does not descend to
  $\tmop{CrysCon}$, that is to say, it depends on the choice of $B$ in
  question.
\end{warning}

We now analyze the associated graded pieces with respect to the ``stupid''
filtration:

\begin{lemma}
  \label{lem:brute-fil-gr}Let $((A \twoheadrightarrow A'', \gamma_A)
  \rightarrow (B \twoheadrightarrow B'', \gamma_B)) \in \tmop{dRCon}$ be a de
  Rham context. Then associated graded pieces $\tmop{gr}_B^i \tmop{Fil}_H
  \tmop{dR}_{(B \twoheadrightarrow B'') / (A \twoheadrightarrow A'')}$ could
  be functorially identified with $\tmop{ins}^i \left( \bigwedgestar_B^i L_{(B
  \twoheadrightarrow B'') / (A \twoheadrightarrow A'')} [- i] \right)
  \otimes_B^{\mathbb{L}} \tmop{Fil}_{\tmop{PD}} B$ as a
  $\tmop{Fil}_{\tmop{PD}} B$-module in $\tmop{DF}^{\geq 0} (B)$ (where
  $\cdummy \otimes_B^{\mathbb{L}} \tmop{Fil}_{\tmop{PD}} B$ is the base change
  from $\tmop{DF}^{\geq 0} (B)$ to the $\infty$-category of
  $\tmop{Fil}_{\tmop{PD}} B$-modules). Furthermore, $\tmop{Fil}_B^i
  \tmop{gr}_H^j \tmop{dR}_{(B \twoheadrightarrow B'') / (A \twoheadrightarrow
  A'')} \simeq 0$ when $i > j$.
\end{lemma}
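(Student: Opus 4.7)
The plan is to verify both assertions first on the full subcategory $\tmop{dRCon}^0 \subseteq \tmop{dRCon}$ by direct computation with the classical de Rham complex, and then extend to all of $\tmop{dRCon}$ via \Cref{prop:left-deriv-fun}. For a generator $(\Gamma_{A''[X']}(Y') \twoheadrightarrow A''[X']) \to (\Gamma_{B''[X,X']}(Y,Y') \twoheadrightarrow B''[X,X'])$, the module $\Omega^1_{(B,J)/(A,I)}$ is a finite free $B$-module, so each $\Omega^i = \bigwedgestar_B^i \Omega^1$ is finite free as well. The stupid filtration of the CDGA $\Omega^{\bullet}$ has associated graded $\Omega^i$ placed cohomologically in degree $i$, which under the symmetric monoidal embedding $\tmop{Ch}_{\gg-\infty}(\tmop{Mod}_B^{\flat}) \hookrightarrow \tmop{DF}^{\geq 0}(B)$ (\Cref{rem:ch-cx-bnd-flat-symm-mon}) corresponds to $\tmop{ins}^i(\Omega^i[-i])$.

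Next, the Hodge filtration restricts to $\tmop{Fil}_H^m \Omega^i = J^{[m-i]} \Omega^i$ in cohomological degree $i$, and $B$-flatness of $\Omega^i$ lets us rewrite $J^{[m-i]}\Omega^i = \tmop{Fil}_{\tmop{PD}}^{m-i} B \otimes_B \Omega^i$ (with the convention $\tmop{Fil}_{\tmop{PD}}^{k} B = B$ for $k \leq 0$). Combining with the preceding paragraph, this realizes $\tmop{gr}_B^i \tmop{Fil}_H \tmop{dR}_{(B \twoheadrightarrow B'')/(A \twoheadrightarrow A'')}$ on $\tmop{dRCon}^0$ as the Day convolution $\tmop{ins}^i(\Omega^i[-i]) \otimes_B^{\mathbb{L}} \tmop{Fil}_{\tmop{PD}} B$, canonically in $(A \twoheadrightarrow A'',\gamma_A) \to (B \twoheadrightarrow B'', \gamma_B)$. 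On $\tmop{dRCon}^0$ we also have $\mathbb{L}_{(B \twoheadrightarrow B'')/(A\twoheadrightarrow A'')} \simeq \Omega^1_{(B,J)/(A,I)}$ by definition, giving the claimed formula on generators.

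To propagate the equivalence to $\tmop{dRCon}$, both sides, viewed as functors $\tmop{dRCon} \to \tmop{CAlg}(\tmop{DF}^{\geq 0}(\mathbb{Z}))$-equipped with structure maps from $\tmop{Fil}_{\tmop{PD}} B$ in the spirit of \Cref{rem:functor-target-variable}, preserve sifted colimits and must be identified as left Kan extensions from $\tmop{dRCon}^0$. For the left-hand side, $\tmop{Fil}_B \tmop{Fil}_H \tmop{dR}$ is itself a left derived functor, and $\tmop{gr}_B^i$ (a cofiber of a sifted-colimit-preserving pair) preserves sifted colimits. For the right-hand side, $\mathbb{L}_{\cdummy/\cdummy} \of \tmop{dRCon} \to \tmop{Ani}(\tmop{Mod})$ and the derived exterior powers preserve sifted colimits, the functor $(B \twoheadrightarrow B'', \gamma_B) \mapsto \tmop{Fil}_{\tmop{PD}} B$ preserves sifted colimits (by \Cref{prop:forget-PD-small-colim} and \Cref{def:pd-fil}), and the Day convolution (together with $\tmop{ins}^i$) is exact. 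By \Cref{prop:left-deriv-fun} the comparison equivalence constructed on $\tmop{dRCon}^0$ extends uniquely.

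For the furthermore, we read off from the filtration-degree-$j$ part of the formula that
\[ \tmop{Fil}_H^j \tmop{gr}_B^i \tmop{dR} \;\simeq\; \bigwedgestar_B^i \mathbb{L}[-i] \otimes_B^{\mathbb{L}} \tmop{Fil}_{\tmop{PD}}^{j-i} B, \]
and hence after swapping the order of gradings, $\tmop{Fil}_B^i \tmop{gr}_H^j \tmop{dR} \simeq \bigwedgestar_B^i \mathbb{L}[-i] \otimes_B^{\mathbb{L}} \tmop{gr}_{\tmop{PD}}^{j-i} B$. When $i > j$, the index $j-i$ is negative, and since $\tmop{Fil}_{\tmop{PD}} B \in \tmop{DF}^{\geq 0}(\mathbb{Z})$ is nonnegatively filtered, both $\tmop{Fil}_{\tmop{PD}}^{j-i} B$ and $\tmop{Fil}_{\tmop{PD}}^{j-i+1} B$ are canonically $B$, so their cofiber vanishes. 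The main obstacle is the bookkeeping for the bifiltered/base-varying structure in the reduction, i.e. making precise the assertion that the identification is as $\tmop{Fil}_{\tmop{PD}} B$-modules with $B$ varying functorially — this is done by replacing the ambiguous target by $\tmop{Fun}(\Delta^1, \tmop{CAlg}(\tmop{DF}^{\geq 0}(\mathbb{Z})))$ encoding the algebra structure map, to which \Cref{prop:left-deriv-fun} applies verbatim.
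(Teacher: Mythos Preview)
Your approach matches the paper's: verify on $\tmop{dRCon}^0$ by explicit computation with the PD de Rham complex, then extend via \Cref{prop:left-deriv-fun}. Your treatment of the first assertion is correct and more detailed than the paper's one-line ``follows from definitions''.

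There is, however, a genuine gap in your argument for the ``furthermore''. The ``swap'' you perform takes you from $\tmop{Fil}_H^j \tmop{gr}_B^i$ to its $H$-associated graded, which is $\tmop{gr}_H^j \tmop{gr}_B^i = \tmop{gr}_B^i \tmop{gr}_H^j$, \emph{not} $\tmop{Fil}_B^i \tmop{gr}_H^j$. The formula you obtain, $\bigwedgestar_B^i \mathbb{L}[-i] \otimes_B^{\mathbb{L}} \tmop{gr}_{\tmop{PD}}^{j-i} B$, is the formula for $\tmop{gr}_B^i \tmop{gr}_H^j$, and its vanishing for $i>j$ only tells you that all $B$-graded pieces of $\tmop{gr}_H^j$ above degree $j$ vanish. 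That does not by itself force $\tmop{Fil}_B^i \tmop{gr}_H^j \simeq 0$: after left Kan extension you have no control over $\lim_k \tmop{Fil}_B^k \tmop{gr}_H^j$, so you cannot conclude the filtration level vanishes from the vanishing of its graded pieces.

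The fix is to do exactly what you did for the first assertion. On $\tmop{dRCon}^0$ one computes directly that $\tmop{Fil}_B^i \tmop{gr}_H^j$ is the complex with $(J^{[j-k]}/J^{[j+1-k]})\Omega^k$ in cohomological degree $k \geq i$; for $i > j$ every such $k$ satisfies $j-k < 0$ and $j+1-k \leq 0$, so both PD-powers equal $B$ and the quotient is zero. Since the bifiltered functor is a left derived functor and the operations $\tmop{Fil}_B^i$ (evaluation) and $\tmop{gr}_H^j$ (cofiber) preserve sifted colimits, the vanishing extends to all of $\tmop{dRCon}$ by \Cref{prop:left-deriv-fun}. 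This is precisely what the paper does when it says ``check the equivalences on $\tmop{dRCon}^0$''.
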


\begin{proof}
  By \Cref{prop:left-deriv-n-fun}, it suffices to check the equivalences on
  $\tmop{dRCon}^0$, which follows from definitions.
\end{proof}

We are now ready to introduce the Katz--Oda filtration:

\begin{definition}[cf. {\cite[Cons~3.12]{Guo2020}}]
  \label{def:Katz-Oda-fil}Let $(A \twoheadrightarrow A'', \gamma_A)
  \rightarrow (B \twoheadrightarrow B'', \gamma_B)$ be a map of animated
  PD-pairs and $B'' \rightarrow R$ a map of animated rings. The
  {\tmdfn{Katz--Oda filtration}} on the Hodge-filtered derived crystalline
  cohomology $\tmop{Fil}_H \tmop{CrysCoh}_{R / (A \twoheadrightarrow A'')}$
  rewritten as
  \[ \tmop{Fil}_H \tmop{CrysCoh}_{R / (A \twoheadrightarrow A'')}
     \otimes_{\tmop{Fil}_H \tmop{dR}_{(B \twoheadrightarrow B'') / (A
     \twoheadrightarrow A'')}}^{\mathbb{L}} \tmop{Fil}_H \tmop{dR}_{(B
     \twoheadrightarrow B'') / (A \twoheadrightarrow A'')} \]
  is the nonnegative filtration induced by the ``stupid'' filtration on
  $\tmop{Fil}_H \tmop{dR}_{(B \twoheadrightarrow B'') / (A \twoheadrightarrow
  A'')}$.
\end{definition}

We now have

\begin{lemma}[cf. {\cite[Lem~3.13]{Guo2020}}]
  \label{lem:Katz-Oda-gr-cpl}Let $(A \twoheadrightarrow A'', \gamma_A)
  \rightarrow (B \twoheadrightarrow B'', \gamma_B)$ be a map of animated
  PD-pairs and $B'' \rightarrow R$ a map of animated rings. Then
  \begin{enumerate}
    \item The associated graded pieces $\tmop{gr}_{\tmop{KO}}^i \tmop{Fil}_H
    \tmop{CrysCoh}_{R / (A \twoheadrightarrow A'')}$ are functorially
    equivalent to
    \[ \tmop{Fil}_H \tmop{CrysCoh}_{R / (B \twoheadrightarrow B'')}
       \otimes_{\tmop{Fil}_{\tmop{PD}} B}^{\mathbb{L}} \left( \tmop{ins}^i
       \left( \bigwedgestar_B^i L_{(B \twoheadrightarrow B'') / (A
       \twoheadrightarrow A'')} [- i] \right) \otimes_B^{\mathbb{L}}
       \tmop{Fil}_{\tmop{PD}} B \right) \]
    as $\tmop{Fil}_{\tmop{PD}} B$-modules in $\tmop{DF}^{\geq 0} (\mathbb{Z})$
    for all $i \in \mathbb{N}$, where the functor $\tmop{ins}^i$ is defined in
    \Cref{subsec:graded-fil-objs}.
    
    \item The induced Katz--Oda filtration on $\tmop{gr}_H \tmop{CrysCoh}_{R /
    (A \twoheadrightarrow A'')}$ is complete. In fact, for $i > j$, we have
    \[ \tmop{Fil}_{\tmop{KO}}^i \tmop{gr}_H^j \tmop{CrysCoh}_{R / (A
       \twoheadrightarrow A'')} \simeq 0. \]
  \end{enumerate}
\end{lemma}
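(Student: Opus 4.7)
The plan is to derive both parts from Lemma~\ref{lem:brute-fil-gr} on the ``stupid'' filtration, combined with the transitivity of the Hodge-filtered crystalline cohomology (Corollary~\ref{cor:Hdg-fil-crys-coh-transitive}) and the base-independence of Hodge-filtered derived de Rham cohomology (Proposition~\ref{prop:dR-crys-inv}).

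For part (1), I would first unwind Definition~\ref{def:Katz-Oda-fil}: since the Katz-Oda filtration is induced from the ``stupid'' filtration on the tensor factor $\tmop{Fil}_H \tmop{dR}_{(B \twoheadrightarrow B'') / (A \twoheadrightarrow A'')}$, and the relative tensor product of filtered objects commutes with the formation of associated graded pieces, I would obtain immediately
\[ \tmop{gr}_{\tmop{KO}}^i \tmop{Fil}_H \tmop{CrysCoh}_{R/(A\twoheadrightarrow A'')} \simeq \tmop{Fil}_H \tmop{CrysCoh}_{R/(A\twoheadrightarrow A'')} \otimes_{\tmop{Fil}_H \tmop{dR}_{(B\twoheadrightarrow B'')/(A\twoheadrightarrow A'')}}^{\mathbb{L}} \tmop{gr}_B^i \tmop{Fil}_H \tmop{dR}_{(B\twoheadrightarrow B'')/(A\twoheadrightarrow A'')}. \]
The next step is to use Proposition~\ref{prop:dR-crys-inv} to identify $\tmop{Fil}_H \tmop{dR}_{(B\twoheadrightarrow B'')/(A\twoheadrightarrow A'')}$ with $\tmop{Fil}_H \tmop{CrysCoh}_{B''/(A\twoheadrightarrow A'')}$, and then to factor the relative tensor product through $\tmop{Fil}_{\tmop{PD}} B$ by inserting $\tmop{Fil}_H \tmop{CrysCoh}_{B''/(A\twoheadrightarrow A'')} \to \tmop{Fil}_{\tmop{PD}} B$. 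Corollary~\ref{cor:Hdg-fil-crys-coh-transitive} recognizes the first part of this factorization as $\tmop{Fil}_H \tmop{CrysCoh}_{R/(B\twoheadrightarrow B'')}$, and substituting the explicit formula for $\tmop{gr}_B^i$ from Lemma~\ref{lem:brute-fil-gr} yields the claimed identification.

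For part (2), the vanishing $\tmop{Fil}_B^i \tmop{gr}_H^j \tmop{dR}_{(B\twoheadrightarrow B'')/(A\twoheadrightarrow A'')} \simeq 0$ for $i > j$ from the second half of Lemma~\ref{lem:brute-fil-gr} propagates through the relative tensor product defining the Katz-Oda filtration, since that construction is compatible with the Hodge grading. Hence for each fixed $j$, only the terms $i = 0, 1, \ldots, j$ in the induced Katz-Oda filtration on $\tmop{gr}_H^j \tmop{CrysCoh}_{R/(A\twoheadrightarrow A'')}$ can be nontrivial. Eventual vanishing at each Hodge weight gives completeness automatically.

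The main technical obstacle I foresee is justifying that the relative tensor product $\cdummy \otimes_{\tmop{Fil}_H \tmop{dR}}^{\mathbb{L}} \cdummy$ commutes with the associated graded and the Hodge graded operations in the bifiltered setting; once the interaction between the Day convolution, the ``stupid'' filtration, and the Hodge filtration is set up cleanly, both parts become essentially formal consequences of Lemma~\ref{lem:brute-fil-gr}. One has to be careful that the stupid filtration is defined on $\tmop{dRCon}$ rather than on $\tmop{CrysCon}$ (it depends on $B$), but this is exactly why the Katz-Oda filtration is attached to a choice of PD-resolution $(B \twoheadrightarrow B'', \gamma_B)$ of $(A \twoheadrightarrow A'', \gamma_A)$ rather than being intrinsic to the crystalline datum.
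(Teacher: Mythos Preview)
Your proposal is correct and follows essentially the same approach as the paper: the paper's proof invokes Corollary~\ref{cor:Hdg-fil-crys-coh-transitive} to rewrite $\tmop{Fil}_H \tmop{CrysCoh}_{R/(A\twoheadrightarrow A'')} \otimes_{\tmop{Fil}_H \tmop{dR}_{(B\twoheadrightarrow B'')/(A\twoheadrightarrow A'')}}^{\mathbb{L}} \tmop{Fil}_{\tmop{PD}} B$ as $\tmop{Fil}_H \tmop{CrysCoh}_{R/(B\twoheadrightarrow B'')}$, and then says both parts follow from Lemma~\ref{lem:brute-fil-gr}. Your version spells out the intermediate step of passing to associated graded through the tensor product and inserts the reference to Proposition~\ref{prop:dR-crys-inv}, but the logical content is the same.
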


\begin{proof}
  We have seen (\Cref{cor:Hdg-fil-crys-coh-transitive}) that the canonical map
  \[ \tmop{Fil}_H \tmop{CrysCoh}_{R / (A \twoheadrightarrow A'')}
     \otimes_{\tmop{Fil}_H \tmop{dR}_{(B \twoheadrightarrow B'') / (A
     \twoheadrightarrow A'')}}^{\mathbb{L}} \tmop{Fil}_{\tmop{PD}} B
     \rightarrow \tmop{Fil}_H \tmop{CrysCoh}_{R / (B \twoheadrightarrow B'')}
  \]
  is an equivalence. Then both follow from \Cref{lem:brute-fil-gr}.
\end{proof}

The convergence of Katz--Oda filtration on associated graded pieces is the key
to \Cref{lem:crys-coh-gr-desc}.

\begin{lemma}
  \label{lem:crys-coh-gr-desc}In \Cref{prop:crys-site-comp-desc}, the maps
  \eqref{eq:Hdg-cryscoh-desc} form a limit diagram after passing to the
  associated graded pieces, i.e. after passing along the functor $\tmop{CAlg}
  (\tmop{DF}^{\geq 0} (A)) \rightarrow \tmop{CAlg} (\tmop{Gr}^{\geq 0} (A))$.
  Furthermore, if the $\pi_0 (A'')$-algebra $\pi_0 (R)$ is of finite type,
  then the maps \eqref{eq:Hdg-cryscoh-desc} form a limit diagram.
\end{lemma}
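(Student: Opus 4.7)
The strategy is to use the Katz-Oda filtration to reduce the statement to a vanishing for a bar-type cosimplicial limit. For each cosimplicial level $\nu$, the map $A_0 := (A \twoheadrightarrow A'', \gamma_A) \to Q^\nu := (P^\nu \twoheadrightarrow P^\nu \otimes_A^{\mathbb{L}} A'', \gamma_{P^\nu})$ gives rise, via Definition~\ref{def:Katz-Oda-fil}, to a $\nu$-varying filtration on the constant object $\tmop{Fil}_H \tmop{CrysCoh}_{R/A_0}$. By construction and Corollary~\ref{cor:Hdg-fil-crys-coh-transitive}, its zeroth associated graded piece is canonically $\tmop{Fil}_H \tmop{CrysCoh}_{R/Q^\nu}$, so the map~\eqref{eq:Hdg-cryscoh-desc} at level $\nu$ identifies with the projection $\tmop{Fil}^0_{KO}[\nu] \twoheadrightarrow \tmop{gr}^0_{KO}[\nu]$, whose fiber is $\tmop{Fil}^{\geq 1}_{KO}[\nu]$. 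Fixing a Hodge degree $j$, Lemma~\ref{lem:Katz-Oda-gr-cpl}(2) forces the restricted Katz-Oda filtration to have only finitely many nonzero steps (indices $0,\ldots,j$); taking $\lim_\Delta$ of the cosimplicial fiber sequence, the desired graded equivalence reduces to showing $\lim_\Delta \tmop{gr}^i_{KO}[\bullet]|_{\tmop{gr}_H^j} \simeq 0$ for each $1 \leq i \leq j$.

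By Lemma~\ref{lem:Katz-Oda-gr-cpl}(1), this Katz-Oda piece takes the cosimplicial form $\tmop{gr}_H^{j-i} \tmop{CrysCoh}_{R/Q^\nu} \otimes_{P^\nu}^{\mathbb{L}} \bigwedge^i \mathbb{L}_{P^\nu/A}[-i]$. The key computation exploits the decomposition $\mathbb{L}_{P^\nu/A} \simeq \bigoplus_{k=0}^\nu \mathbb{L}_{P/A} \otimes_{P,\iota_k}^{\mathbb{L}} P^\nu$ arising from $P^\nu = P^{\otimes_A^{\mathbb{L}}(\nu+1)}$. Since every iterated inclusion $\iota_k \of P \to P^\nu \to R$ equals the original map $P \to R$, tensoring down to $R$ presents this cosimplicial object as a bar complex $\nu \mapsto N^{\oplus(\nu+1)}$ with ``insert zero'' cofaces and ``sum adjacent'' codegeneracies (twisted by the cosimplicially varying factor $\tmop{gr}_H^{j-i} \tmop{CrysCoh}_{R/Q^\bullet}$). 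A direct Dold-Kan computation shows that the associated normalized cochain complex has the form $N \to N \to N \to \cdots$ with alternating differentials $\pm 1, 0, \pm 1, 0, \ldots$, hence is acyclic; equivalently, Lemma~\ref{lem:simpl-homot-maps-eq} applies to the contracting simplicial homotopy induced by deleting the first tensor factor of $P^{\otimes_A^{\mathbb{L}}(\nu+1)}$, so the totalization vanishes.

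For the finite type part of the statement, choose $P$ to be a finitely generated polynomial $A$-algebra surjecting onto $\pi_0(R)$. Then $\mathbb{L}_{P/A}$ is free of finite rank, and the Katz-Oda reduction makes the Hodge filtration on $\tmop{Fil}_H \tmop{CrysCoh}_{R/A_0}$ effectively bounded in each cosimplicial piece (the $\bigwedge^i$ factor controlling the $i$-th Katz-Oda step is uniformly bounded once one fixes the number of variables). This uniform boundedness enables a Mittag-Leffler argument to commute $\lim_\Delta$ with the Hodge filtration functor, upgrading the graded equivalence to the full filtered equivalence.

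The main technical obstacle is the cosimplicial vanishing in paragraph~2: the factor $\tmop{gr}_H^{j-i} \tmop{CrysCoh}_{R/Q^\nu}$ itself varies cosimplicially in a nontrivial way (by a Cartier-type identification it is essentially $\bigwedge^{j-i} \mathbb{L}_{R/(P^\nu \otimes_A^{\mathbb{L}} A'')}[-(j-i)]$), so the cosimplicial object is not simply a bar complex on a constant coefficient. An induction on $j$ handles this: using the transitivity fiber sequence $\mathbb{L}_{(P^\nu \otimes_A A'')/A''} \otimes_{P^\nu \otimes_A A''}^{\mathbb{L}} R \to \mathbb{L}_{R/A''} \to \mathbb{L}_{R/(P^\nu \otimes_A A'')}$ and taking wedge powers, one reduces to iterated instances of the pure bar vanishing established above.
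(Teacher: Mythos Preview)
Your overall strategy via the Katz--Oda filtration matches the paper's, and the reduction to showing $\lim_{\tmmathbf{\Delta}} \tmop{gr}^{i,(\bullet)}_{\mathrm{KO}} \simeq 0$ for $i>0$ is correct. The gap lies in how you establish this vanishing.

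The observation you touch on in paragraph~2 but then abandon in paragraph~4 is exactly what the paper uses: the cosimplicial object $\bigwedgestar^i \mathbb{L}_{(P^{\bullet}\twoheadrightarrow P^{\bullet}\otimes_A^{\mathbb{L}}A'')/(A\twoheadrightarrow A'')}[-i]$ is nullhomotopic \emph{as a cosimplicial $P^{\bullet}$-module}, not merely as a cosimplicial spectrum (this is {\cite[Lem~2.6]{Bhatt2012}}, and your ``deleting the first tensor factor'' homotopy is the right one). The $P^{\bullet}$-linearity dissolves your ``main technical obstacle'' at a stroke: a simplicial homotopy of cosimplicial $P^{\bullet}$-modules survives tensoring over $P^{\bullet}$ with any cosimplicial $P^{\bullet}$-module whatsoever ({\cite[\href{https://stacks.math.columbia.edu/tag/07KQ}{Tag 07KQ}]{stacks-project}}), so the entire Katz--Oda graded piece of \Cref{lem:Katz-Oda-gr-cpl}(1) is cosimplicially nullhomotopic regardless of how $\tmop{Fil}_H \tmop{CrysCoh}_{R/Q^{\bullet}}$ varies in $\nu$. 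No induction is needed. Your proposed induction in paragraph~4 also leans on a wrong identification: $\tmop{gr}_H^{j-i}\tmop{CrysCoh}_{R/Q^{\nu}}$ is \emph{not} $\bigwedgestar^{j-i}\mathbb{L}_{R/(P^{\nu}\otimes_A^{\mathbb{L}}A'')}[-(j-i)]$; the Cartier isomorphism (\Cref{prop:crys-Cartier-isom}) computes conjugate-graded pieces, whereas the Hodge-graded pieces here involve the PD-filtration of $P^{\bullet}$ via \Cref{lem:brute-fil-gr}.

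For the finite-type part, your claim that the $\bigwedgestar^i$ factor ``is uniformly bounded once one fixes the number of variables'' is not right as stated (the rank of $\mathbb{L}_{P^{\nu}/A}$ is $n(\nu+1)$, growing with $\nu$), and no Mittag--Leffler input is used. The point is that once the $P^{\bullet}$-linear nullhomotopy is established, it already applies to the full Hodge-\emph{filtered} Katz--Oda graded pieces $\tmop{gr}^{i,(\bullet)}_{\mathrm{KO}}\tmop{Fil}_H\tmop{CrysCoh}$, not only to their $\tmop{gr}_H^j$; the finite-type hypothesis lets one choose $P$ a finite polynomial algebra so that, at each cosimplicial level, the Katz--Oda filtration is finite and one can pass directly from the vanishing of each $\lim_{\tmmathbf{\Delta}}\tmop{gr}^{i,(\bullet)}_{\mathrm{KO}}$ to that of the fiber.
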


\begin{proof}
  For every $[\nu] \in \tmmathbf{\Delta}$, let $\tmop{Fil}_{\tmop{KO}, \nu}
  \tmop{Fil}_H \tmop{CrysCoh}_{R / (A \twoheadrightarrow A'')}$ denote the
  Katz--Oda filtration with respect to the setup $((A \twoheadrightarrow A'',
  \gamma_A) \rightarrow (P^{\nu} \rightarrow P^{\nu} \otimes_A^{\mathbb{L}}
  A'', \gamma_{P^{\nu}}), P^{\nu} \rightarrow R)$. This construction is
  canonically functorial in $[\nu] \in \tmmathbf{\Delta}$. Note that the map
  \eqref{eq:Hdg-cryscoh-desc} is the canonical map
  \begin{equation}
    \tmop{Fil}_H \tmop{CrysCoh}_{R / (A \twoheadrightarrow A'')} =
    \tmop{Fil}_{\tmop{KO}, \nu}^0 \tmop{Fil}_H \tmop{CrysCoh}_{R / (A
    \twoheadrightarrow A'')} \longrightarrow \tmop{gr}_{\tmop{KO}, \nu}^0
    \tmop{Fil}_H \tmop{CrysCoh}_{R / (A \twoheadrightarrow A'')} .
    \label{eq:Katz-Oda-gr0}
  \end{equation}
  Now we show that \eqref{eq:Katz-Oda-gr0} becomes an equivalence after
  replacing $\tmop{Fil}_H$ by $\tmop{gr}_H$ and taking limit over $[\nu] \in
  \tmmathbf{\Delta}$. That is, by the completeness in
  \Cref{lem:Katz-Oda-gr-cpl}, for every $i \in \mathbb{N}_{> 0}$, the
  totalization $\lim_{\nu \in \tmmathbf{\Delta}} \tmop{gr}_{\tmop{KO}, \nu}^i
  \tmop{gr}_H \tmop{CrysCoh}_{R / (A \twoheadrightarrow A'')}$ is
  contractible. We show the slightly stronger statement that, for every $i \in
  \mathbb{N}_{> 0}$, the totalization
  \begin{equation}
    \lim_{\nu \in \tmmathbf{\Delta}} \tmop{gr}_{\tmop{KO}, \nu}^i \tmop{Fil}_H
    \tmop{CrysCoh}_{R / (A \twoheadrightarrow A'')}
    \label{eq:Katz-Oda-gr-total}
  \end{equation}
  is contractible.
  
  The key observation is that $\tmop{ins}^i \left( \bigwedgestar^i
  L_{(P^{\bullet} \rightarrow P^{\bullet} \otimes_A^{\mathbb{L}} A'') / (A
  \twoheadrightarrow A'')} [- i] \right)$ is homotopy equivalent to $0$ as a
  cosimplicial $B^{\bullet}$-module spectrum by {\cite[Lem~2.6]{Bhatt2012}}
  when $i > 0$ (this is of course false when $i = 0$). It follows that the
  cosimplicial object $\tmop{gr}_{\tmop{KO}}^{i, (\bullet)} \tmop{Fil}_H
  \tmop{CrysCoh}_{R / (A \twoheadrightarrow A'')}$ is homotopy equivalent to
  $0$ as $\tmop{Fil}_{\tmop{PD}} B$-modules by
  {\cite[\href{https://stacks.math.columbia.edu/tag/07KQ}{Tag
  07KQ}]{stacks-project}} and \Cref{lem:Katz-Oda-gr-cpl}.
  
  Finally, when $\pi_0 (R)$ is a finitely generated $\pi_0 (A'')$-algebra, we
  pick a polynomial $A$-algebra $P$ of finitely many variables along with a
  surjection $P \twoheadrightarrow R$. For every $[\nu] \in
  \tmmathbf{\Delta}$, since the animated $A$-algebra $P$ is polynomial of
  finite type, so is the animated $A$-algebra $P^{\nu}$, thus the Katz--Oda
  filtration is finite, i.e. $\tmop{Fil}_{\tmop{KO}, \nu} \tmop{Fil}_H
  \tmop{CrysCoh}_{R / (A \twoheadrightarrow A'')}$ is finite in the
  $\tmop{Fil}_{\tmop{KO}, \nu}$-direction, and in particular, it is a complete
  filtration. Since completely filtered objects are stable under small limits,
  it follows that the object
  \[ \lim_{[\nu] \in \tmmathbf{\Delta}} \tmop{Fil}_{\tmop{KO}, \nu}
     \tmop{Fil}_H \tmop{CrysCoh}_{R / (A \twoheadrightarrow A'')} \]
  is completely filtered in the ``$\lim_{[\nu] \in \tmmathbf{\Delta}}
  \tmop{Fil}_{\tmop{KO}, \nu}$''-direction. Now for every $i \in \mathbb{N}_{>
  0}$, since the totalization \eqref{eq:Katz-Oda-gr-total} is contractible,
  the functorial map \eqref{eq:Katz-Oda-gr0} becomes an equivalence after
  taking $\lim_{[\nu] \in \tmmathbf{\Delta}}$ and the result follows.
\end{proof}

\begin{warning}
  One should be careful about homotopy equivalences. In an earlier draft of
  this article, we came up with the following ``proof'': the Hodge-filtered
  derived de Rham cohomology $\tmop{Fil}_H \tmop{CrysCoh}_{R / (A
  \twoheadrightarrow A'')}$ could be rewritten as
  \[ \tmop{Fil}_H \tmop{CrysCoh}_{R / (A \twoheadrightarrow A'')}
     \otimes_{\tmop{Fil}_H \tmop{dR}_{(P^{\bullet} \twoheadrightarrow
     P^{\bullet} \otimes_A^{\mathbb{L}} A'') / (A \twoheadrightarrow
     A'')}}^{\mathbb{L}} \tmop{Fil}_H \tmop{dR}_{(P^{\bullet}
     \twoheadrightarrow P^{\bullet} \otimes_A^{\mathbb{L}} A'') / (A
     \twoheadrightarrow A'')} \]
  and since the map $A \rightarrow P^{\bullet}$ is a homotopy equivalence as
  $A$-algebras, the map $\tmop{Fil}_H \tmop{dR}_{(P^{\bullet}
  \twoheadrightarrow P^{\bullet} \otimes_A^{\mathbb{L}} A'') / (A
  \twoheadrightarrow A'')} \rightarrow \tmop{Fil}_{\tmop{PD}} P^{\bullet}$ is
  also a homotopy equivalence ``therefore'' the constant cosimplicial algebra
  $\tmop{Fil}_H \tmop{CrysCoh}_{R / (A \twoheadrightarrow A'')}$ is homotopy
  equivalent to
  \[ \tmop{Fil}_H \tmop{CrysCoh}_{R / (A \twoheadrightarrow A'')}
     \otimes_{\tmop{Fil}_H \tmop{dR}_{(P^{\bullet} \twoheadrightarrow
     P^{\bullet} \otimes_A^{\mathbb{L}} A'') / (A \twoheadrightarrow
     A'')}}^{\mathbb{L}} \tmop{Fil}_{\tmop{PD}} P^{\bullet} \simeq
     \tmop{Fil}_H \tmop{CrysCoh}_{R / (P^{\bullet} \twoheadrightarrow
     P^{\bullet} \otimes_A^{\mathbb{L}} A'')} \]
  therefore the conditions in \Cref{prop:crys-site-comp-desc}.
  
  This argument is incorrect: when playing with homotopy equivalences, one
  cannot replace the base cosimplicial algebra by a homotopy equivalent
  algebra without justification. In fact, the last homotopy equivalence
  obtained above is also incorrect: if it {\tmem{were}} the case, we consider
  the special case that $(A \twoheadrightarrow A'', \gamma_A)$ is given by
  $(\tmop{id}_A \of A \rightarrow A, 0)$, and $\tmop{CrysCoh}_{R /
  P^{\bullet}}$ is just the animated PD-envelope of $P^{\bullet}
  \twoheadrightarrow R$ (see the proof of
  \Cref{prop:deriv-crys-coh-site-comp}). We inspect the homotopy equivalence
  of cosimplicial objects that we assumed:
  \[ \tmop{dR}_{R / A} \simeq^{\tmop{HoEq}} \tmop{dR}_{R / P^{\bullet}} \]
  when $A$ is a static $\mathbb{F}_p$-algebra and $R$ is a smooth $A$-algebra
  such that $\tmop{dR}_{R / A}$ is not static, the map $P^{\bullet}
  \twoheadrightarrow R$ is Koszul regular and the derived de Rham cohomology
  $\tmop{dR}_{R / P^{\bullet}}$ is simply the PD-envelope, therefore static.
  Applying $\pi_i$ to the homotopy equivalence, where $i \neq 0$ is so chosen
  that $\pi_i (\tmop{dR}_{R / A}) \neq 0$, we get a contradiction.
  
  In view of this warning, our proof of \Cref{lem:crys-coh-gr-desc} tells us
  that the associated graded pieces with respect to the Katz--Oda filtration
  are homotopy equivalent, but the homotopy equivalences could not be glued,
  even after forgetting all the richer structures to the underlying
  $\infty$-category $D (\mathbb{Z})$.
\end{warning}

When the $\pi_0 (A)$-algebra $\pi_0 (R)$ is not of finite type, we can still
prove that the comparison map is an equivalence with mild smoothness of $A''
\rightarrow R$ (\Cref{prop:site-comp-qsyn-integral}). We start with another
sufficient condition in characteristic $p$ which is essentially a variant of
{\cite[Prop~2.17]{Li2020}} by \Cref{prop:crys-site-comp-desc}.

\begin{lemma}
  \label{lem:crys-coh-char-p-desc}Let $((A \twoheadrightarrow A'', \gamma_A),
  A'' \rightarrow R) \in \tmop{CrysCon}_{\mathbb{F}_p}$. Suppose that
  \begin{enumerate}
    \item The cotangent complex $L_{R / A''} \in D_{\geq 0} (R)$ has
    $\tmop{Tor}$-amplitude in $[0, 1]$.
    
    \item The derived Frobenius twist $\varphi_{A \twoheadrightarrow
    A''}^{\ast} (R)$ (see \Cref{lem:PD-frob}) is bounded above, i.e. $\pi_i
    (\varphi_{A \twoheadrightarrow A''}^{\ast} (R)) \cong 0$ for $i \gg 0$.
  \end{enumerate}
  Then the comparison map in \Cref{prop:deriv-crys-coh-site-comp} is an
  equivalence.
\end{lemma}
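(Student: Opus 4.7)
The plan is to reduce to a \v{C}ech--Alexander descent and then exploit the conjugate filtration, using the two characteristic-$p$ hypotheses to justify the requisite limit--colimit interchange. Fix a polynomial $A$-algebra $P$ together with a surjection $P \twoheadrightarrow R$ of $A$-algebras, and let $P^{\bullet} \to R$ denote its \v{C}ech conerve in $(\tmop{Ani}(\tmop{Ring})_{A \mathord{/}})_{/R}$. By the fourth criterion of Proposition~\ref{prop:crys-site-comp-desc} (available once Lemma~\ref{lem:crys-coh-gr-desc} is established), it suffices to show that the canonical cosimplicial map
\[
  \tmop{CrysCoh}_{R / (A \twoheadrightarrow A'', \gamma_A)}
  \longrightarrow
  \lim_{\nu \in \tmmathbf{\Delta}}
  \tmop{CrysCoh}_{R / (P^{\nu} \twoheadrightarrow P^{\nu} \otimes_A^{\mathbb{L}} A'', \gamma_{P^{\nu}})}
\]
is an equivalence in $\tmop{CAlg}(D(A))$.

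Both sides are equipped, functorially over $\tmop{CrysCon}_{\mathbb{F}_p}$, with the exhaustive conjugate filtration of Definition~\ref{def:conj-fil-crys-coh} (cf.~Lemma~\ref{lem:conj-fil-deriv-crys-exhaustive}). I would first check the equivalence on associated graded pieces: by the derived Cartier isomorphism (Proposition~\ref{prop:crys-Cartier-isom}), the $(-i)$-th graded piece on the left-hand side is $\varphi_{(A \twoheadrightarrow A'')}^{\ast}(\bigwedgestar_R^i \mathbb{L}_{R/A''})[-i]$, while the $\nu$-th term of the right-hand cosimplicial diagram contributes $\varphi_{(P^{\nu} \twoheadrightarrow P^{\nu} \otimes A'')}^{\ast}(\bigwedgestar_R^i \mathbb{L}_{R/(P^{\nu} \otimes_A^{\mathbb{L}} A'')})[-i]$. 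The transitivity triangle
\[
  \mathbb{L}_{(P^{\nu} \otimes A'')/A''} \otimes_{(P^{\nu} \otimes A'')}^{\mathbb{L}} R
  \longrightarrow
  \mathbb{L}_{R/A''}
  \longrightarrow
  \mathbb{L}_{R/(P^{\nu} \otimes A'')}
\]
together with the cosimplicial contractibility of $\mathbb{L}_{(P^{\bullet} \otimes A'')/A''}$ (by the same cosimplicial $P$-module argument invoked in the proof of Lemma~\ref{lem:crys-coh-gr-desc}) shows that $\mathbb{L}_{R/(P^{\bullet} \otimes A'')}$ is cosimplicially equivalent to the constant object $\mathbb{L}_{R/A''}$, whence so is each derived exterior power; tracking the Frobenius pullbacks via naturality of $\varphi$ in the map $A \to P^{\nu}$ then yields the desired equivalence on each graded piece.

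To promote this to an equivalence of filtered objects, one must show that the cosimplicial totalisation commutes with the exhaustive filtered colimit presenting the conjugate filtration. This is where both hypotheses enter: condition~(i) implies each $\bigwedgestar_R^i \mathbb{L}_{R/A''}$ has $R$-module Tor-amplitude in $[0,i]$ (via the Koszul-type filtration of derived exterior powers of modules of Tor-amplitude~$[0,1]$), and condition~(ii) ensures that applying $\varphi_{(A \twoheadrightarrow A'')}^{\ast}$ does not produce unbounded amplitude. Together with the Cartier shift by $[-i]$, this gives uniform control on the amplitudes of $\tmop{gr}_{\tmop{conj}}^{-i}$, enough for the cosimplicial totalisation to commute with the colimit defining exhaustivity, via a Milnor-style bookkeeping of homotopy groups through the Bousfield--Kan spectral sequence for the totalisation.

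The principal obstacle will be this amplitude estimate: producing a sufficiently clean uniform bound on the Tor-amplitude of $\tmop{gr}_{\tmop{conj}}^{-i}$ depending tamely on $i$, and then verifying that it really does suffice to interchange the cosimplicial $\lim$ with the filtered colimit defining the conjugate filtration. This is precisely where the boundedness of the Frobenius twist is essential, and is where the characteristic-$p$ nature of the statement genuinely enters, explaining why an analogue in mixed characteristic or characteristic~$0$ requires a different argument.
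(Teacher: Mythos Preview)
Your overall architecture---reduce via Proposition~\ref{prop:crys-site-comp-desc}, endow both sides with the conjugate filtration, check graded pieces via Cartier, then justify the limit--colimit interchange by a uniform coconnectivity bound---matches the paper. The gap is in your treatment of the graded pieces, specifically the phrase ``tracking the Frobenius pullbacks via naturality of $\varphi$ in the map $A \to P^{\nu}$.''

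The difficulty is that the Frobenius pullback $\varphi_{P^{\nu} \twoheadrightarrow P^{\nu} \otimes A''}^{\ast}$ is a base change along a map that \emph{varies} with $\nu$: writing $R_1 \assign \varphi_{A \twoheadrightarrow A''}^{\ast}(R)$ and factoring the Frobenius through the relative Frobenius $\varphi_A^{\ast}(P^{\nu}) \to P^{\nu}$, the right-hand graded piece is $\bigl(\bigwedgestar^{i}_{R_1}\mathbb{L}_{R_1/\varphi_A^{\ast}(P^{\nu})}\bigr)[-i] \otimes_{\varphi_A^{\ast}(P^{\nu})}^{\mathbb{L}} P^{\nu}$. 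Even if you could show that $\bigwedgestar^i_R\mathbb{L}_{R/(P^{\bullet}\otimes A'')}$ is cosimplicially equivalent to the constant object (and note: your transitivity argument only gives equality on totalisations, which does \emph{not} pass to exterior powers, since those do not commute with limits), you would still have to show that tensoring over the varying base $\varphi_A^{\ast}(P^{\bullet})$ with $P^{\bullet}$ does not destroy this. This is precisely the trap flagged in the Warning following Lemma~\ref{lem:crys-coh-gr-desc}: one cannot replace a cosimplicial module by a homotopy-equivalent one when the base algebra is itself varying. The paper handles this by rewriting everything over $R_1$ and $\varphi_A^{\ast}(P^{\bullet})$, applying the Katz--Oda filtration for $A \to \varphi_A^{\ast}(P^{\bullet}) \to R_1$, and then invoking Lemma~\ref{lem:bhatt-fix-homotopy-eq} (a nontrivial descent statement for cosimplicial modules along a faithfully flat map of \v{C}ech conerves) for the degree-$0$ piece. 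Your sketch does not supply this ingredient.

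On convergence: your phrasing ``depending tamely on $i$'' undersells what is actually needed and available. Under hypothesis~(i) one finds that $\mathbb{L}_{R_1/\varphi_A^{\ast}(P^{\nu})}[-1]$ is a \emph{flat} $R_1$-module, so $\bigwedgestar^i\mathbb{L}[-i] \simeq \Gamma^i(\mathbb{L}[-1])$ is flat over $R_1$ for every $i$; since $R_1$ is bounded above by hypothesis~(ii) and the relative Frobenius $\varphi_A^{\ast}(P^{\nu}) \to P^{\nu}$ is flat, the graded pieces are bounded above \emph{uniformly in $i$}, which is exactly what Lemma~\ref{lem:fil-colim-tot-commute} requires.
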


\begin{proof}
  Our proof is also adapted from {\cite[Prop~2.17]{Li2020}}. By
  \Cref{prop:crys-site-comp-desc,lem:crys-coh-gr-desc}, it suffices to that
  the natural maps
  \begin{equation}
    \tmop{CrysCoh}_{R / (A \twoheadrightarrow A'', \gamma_A)} \longrightarrow
    \tmop{CrysCoh}_{R / (P^{\bullet} \twoheadrightarrow P^{\bullet}
    \otimes_A^{\mathbb{L}} A'', \gamma_{P^{\bullet}})} \label{eq:cryscoh-desc}
  \end{equation}
  form a limit diagram in $\tmop{CAlg} (D (A))$. We endow both sides with
  conjugate filtration (\Cref{def:conj-fil-crys-coh}), and show that this
  forms in fact a limit diagram in $\tmop{CAlg} (\tmop{DF}^{\leq 0} (A))$.
  
  We show that, after passing to associated graded pieces with respect to the
  conjugate filtration, the maps \eqref{eq:cryscoh-desc} form a limit diagram,
  which implies that the natural maps \eqref{eq:cryscoh-desc} form limit
  diagrams after passing to finite level of quotients, and then we control the
  convergence to deduce the result. To show the result for associated graded
  pieces, by \Cref{prop:crys-Cartier-isom}, it suffices to show that the maps
  \begin{equation}
    \varphi_{A \twoheadrightarrow A''}^{\ast} \left( \bigwedgestar_R^{\star}
    L_{R / A''} \right) \left[ - \mathord{\star} \right] \longrightarrow
    \varphi_{P^{\bullet} \twoheadrightarrow P^{\bullet} \otimes_A^{\mathbb{L}}
    A''}^{\ast} \left( \bigwedgestar_R^{\star} L_{R / (P^{\bullet}
    \otimes_A^{\mathbb{L}} A'')} \right) \left[ - \mathord{\star} \right]
    \label{eq:ext-cot-cx-frob-twist-desc}
  \end{equation}
  form a limit diagram in $\tmop{Gr}^{\geq 0} (D (A))$.
  
  Let $R_1 \assign \varphi_{A \twoheadrightarrow A''}^{\ast} (R)$. Note that
  the Frobenius map $\varphi_{P^{\bullet} \twoheadrightarrow P^{\bullet}
  \otimes_A^{\mathbb{L}} A''}$ factors as $P^{\bullet} \otimes_A^{\mathbb{L}}
  A'' \rightarrow \varphi_A^{\ast} (P^{\bullet}) \rightarrow P^{\bullet}$
  where the second map is the Frobenius map of $P^{\bullet}$ relative to $A$.
  Then the maps \eqref{eq:ext-cot-cx-frob-twist-desc} could be rewritten as
  the maps
  \[ \bigwedgestar_{R_1}^{\star} L_{R_1 / A} \left[ - \mathord{\star} \right]
     \longrightarrow \left( \bigwedgestar_{R_1}^{\star} L_{R_1 /
     \varphi_A^{\ast} (P^{\bullet})} \right) \left[ - \mathord{\star} \right]
     \otimes_{\varphi_A^{\ast} (P^{\bullet})}^{\mathbb{L}} P^{\bullet} \]
  or equivalently, the maps
  \[ \tmop{gr}_H^{\star} \tmop{dR}_{R_1 / A} \longrightarrow
     \tmop{gr}_H^{\star} \tmop{dR}_{R_1 / \varphi_A^{\ast} (P^{\bullet})}
     \otimes_{\varphi_A^{\ast} (P^{\bullet})}^{\mathbb{L}} P^{\bullet} \]
  by an inverse application of \Cref{lem:brute-fil-gr} (recall that for
  derived de Rham cohomology of animated rings, the ``stupid'' filtration
  coincides with the Hodge filtration). We again consider the Katz--Oda
  filtration associated to the cosimplicial system $A \rightarrow
  \varphi_A^{\ast} (P^{\bullet}) \rightarrow R_1$ (\Cref{lem:Katz-Oda-gr-cpl})
  and by completeness, we could pass to associated graded pieces for $i = 0$:
  \begin{equation}
    \tmop{gr}_H^{\star} \tmop{dR}_{R_1 / \varphi_A^{\ast} (P^{\bullet})}
    \longrightarrow \tmop{gr}_H^{\star} \tmop{dR}_{R_1 / \varphi_A^{\ast}
    (P^{\bullet})} \otimes_{\varphi_A^{\ast} (P^{\bullet})}^{\mathbb{L}}
    P^{\bullet} \label{eq:dR-frob-twist-desc}
  \end{equation}
  and $i \in \mathbb{N}_{> 0}$:
  \[ \tmop{gr}_H^{\star} \tmop{dR}_{R_1 / \varphi_A^{\ast} (P^{\bullet})}
     \otimes_{\varphi_A^{\ast} (P^{\bullet})}^{\mathbb{L}} \left(
     \bigwedgestar_{\varphi_A^{\ast} (P^{\bullet})}^i L_{\varphi_A^{\ast}
     (P^{\bullet}) / A} [- i] \right) \longrightarrow 0 \]
  As in \Cref{lem:crys-coh-gr-desc}, the later maps constitute a homotopy
  equivalence by {\cite[Lem~2.6]{Bhatt2012}} and
  {\cite[\href{https://stacks.math.columbia.edu/tag/07KQ}{Tag
  07KQ}]{stacks-project}}, therefore constitutes a limit diagram by
  \Cref{lem:simpl-homot-maps-eq}. On the other hand, by
  \Cref{lem:bhatt-fix-homotopy-eq}, the maps \eqref{eq:dR-frob-twist-desc}
  constitute a limit diagram.
  
  Now we control the convergence. Again by \Cref{lem:brute-fil-gr}, we rewrite
  the maps \eqref{eq:dR-frob-twist-desc} as the maps
  \[ \bigwedgestar_{R_1}^{\star} L_{R_1 / \varphi_A^{\ast} (P^{\bullet})}
     \left[ - \mathord{\star} \right] \longrightarrow \left(
     \bigwedgestar_{R_1}^{\star} L_{R_1 / \varphi_A^{\ast} (P^{\bullet})}
     \left[ - \mathord{\star} \right] \right) \otimes_{\varphi_A^{\ast}
     (P^{\bullet})}^{\mathbb{L}} P^{\bullet} \]
  Now consider the transitivity sequence
  \[ L_{\varphi_A^{\ast} (P^{\bullet}) / A} \otimes_{\varphi_A^{\ast}
     (P^{\bullet})}^{\mathbb{L}} R_1 \longrightarrow L_{R_1 / A}
     \longrightarrow L_{R_1 / \varphi_A^{\ast} (P^{\bullet})} \]
  For every static $R_1$-module $M$, we get the fiber sequence
  \[ L_{\varphi_A^{\ast} (P^{\bullet}) / A} \otimes_{\varphi_A^{\ast}
     (P^{\bullet})}^{\mathbb{L}} M \longrightarrow L_{R_1 / A}
     \otimes_{R_1}^{\mathbb{L}} M \longrightarrow L_{R_1 / \varphi_A^{\ast}
     (P^{\bullet})} \otimes_{R_1}^{\mathbb{L}} M \]
  Since $L_{R / A''} \in D_{\geq 0} (R)$ has $\tmop{Tor}$-amplitude in $[0,
  1]$, so does $L_{R_1 / A} \in D_{\geq 0} (R_1)$, therefore $\pi_j (L_{R_1 /
  A} \otimes_{R_1}^{\mathbb{L}} M) \cong 0$ for $j \neq 0, 1$. Note that
  $L_{\varphi_A^{\ast} (P^{\bullet}) / A}$ is a flat $\varphi_A^{\ast}
  (P^{\bullet})$-module. It follows that $\pi_j (L_{R_1 / \varphi_A^{\ast}
  (P^{\bullet})} \otimes_{R_1}^{\mathbb{L}} M) \cong 0$ for $j \neq 0, 1$.
  Furthermore, since $\varphi_A^{\ast} (P^{\bullet}) \rightarrow R_1$ is
  surjective, $\pi_0 (L_{R_1 / \varphi_A^{\ast} (P^{\bullet})}
  \otimes_{R_1}^{\mathbb{L}} M) \cong 0$. It follows that $L_{R_1 /
  \varphi_A^{\ast} (P^{\bullet})} [- 1]$ is a flat $R_1$-module, and so is
  $\bigwedgestar_{R_1}^{\star} L_{R_1 / \varphi_A^{\ast} (P^{\bullet})} \left[
  - \mathord{\star} \right] \simeq \Gamma_{R_1}^{\star} (L_{R_1 /
  \varphi_A^{\ast} (P^{\bullet})} [- 1])$. By assumption, $R_1$ is bounded
  above, therefore so is $\bigwedgestar_{R_1}^{\star} L_{R_1 /
  \varphi_A^{\ast} (P^{\bullet})} \left[ - \mathord{\star} \right]$.
  
  It remains to show that the associated graded pieces are uniformly bounded
  above, which implies that \eqref{eq:cryscoh-desc} form a limit diagram, by
  \Cref{lem:fil-colim-tot-commute} and that the conjugate filtration is
  exhaustive (\Cref{lem:conj-fil-deriv-crys-exhaustive}). Suppose that the
  homotopy groups of $R_1$ are concentrated in the range $[a, b]$, then the
  associated graded pieces of the target could be rewritten as
  $\Gamma_{R_1}^{\star} (L_{R_1 / \varphi_A^{\ast} (P^{\bullet})} [- 1])
  \otimes_{\varphi_A^{\ast} (P^{\bullet})}^{\mathbb{L}} P^{\bullet}$, where
  $\Gamma_{R_1}^{\star} (L_{R_1 / \varphi_A^{\ast} (P^{\bullet})} [- 1])$ is a
  flat $R_1$-module therefore the homotopy groups of it is also concentrated
  in the range $[a, b]$. Since the relative Frobenius $\varphi_A^{\ast}
  (P^{\bullet}) \rightarrow P^{\bullet}$ is flat, we get $\pi_j
  (\Gamma_{R_1}^{\star} (L_{R_1 / \varphi_A^{\ast} (P^{\bullet})} [- 1])
  \otimes_{\varphi_A^{\ast} (P^{\bullet})}^{\mathbb{L}} P^{\bullet}) \cong
  \pi_j (\Gamma_{R_1}^{\star} (L_{R_1 / \varphi_A^{\ast} (P^{\bullet})} [-
  1])) \otimes_{\pi_0 (\varphi_A^{\ast} (P^{\bullet}))} \pi_0 (P^{\bullet})
  \cong 0$ for $j \nin [a, b]$.
\end{proof}

We need the following lemmas:

\begin{lemma}
  \label{lem:Cech-two-maps-homotopic}Let $\mathcal{C}$ be an $\infty$-category
  which admits finite coproducts. Let $\varnothing$ denote the initial object
  of $\mathcal{C}$, and let $X, Y$ be two objects of $\mathcal{C}$. Then for
  any two maps $g_0, g_1 \in \tmop{Hom}_{\mathcal{C}} (X, Y)$, the induced
  maps $X^{\bullet} \rightrightarrows Y^{\bullet}$ of Čech conerves
  $X^{\bullet}$ of $X$ (i.e. of $\varnothing \rightarrow X$) and $Y^{\bullet}$
  of $Y$ (i.e. of $\varnothing \rightarrow Y$) are homotopic. More precisely,
  there exists a simplicial homotopy from $g_0^{\bullet}$ to $g_1^{\bullet}$
  which is functorial in $g_0$ and $g_1$. In particular, if $X = Y$ and $g_0 =
  g_1$, then the simplicial homotopy is constant.
\end{lemma}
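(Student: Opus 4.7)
The plan is to exhibit the simplicial homotopy $h$ by identifying $\delta^\ast(X^\bullet)$ as a left Kan extension from a two-object subcategory of $\tmmathbf{\Delta}_{/[1]}$, and then defining $h$ via the universal property of Kan extensions.

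Let $\iota \of \mathcal{J} \hookrightarrow \tmmathbf{\Delta}_{/[1]}$ denote the full subcategory spanned by the two objects $i_0([0])$ and $i_1([0])$, corresponding to the maps $0, 1 \of [0] \to [1]$. I first claim that $\delta^\ast(X^\bullet)$ is canonically equivalent to the pointwise left Kan extension along $\iota$ of the constant diagram $(X, X) \of \mathcal{J} \to \mathcal{C}$. By the pointwise formula, the value at $([\nu], \phi)$ is $\coprod_{([0], k) \to ([\nu], \phi)} X$, and a morphism $([0], k) \to ([\nu], \phi)$ in $\tmmathbf{\Delta}_{/[1]}$ is precisely an element $i \in [\nu]$ with $\phi(i) = k$; summing over $k \in \{0, 1\}$ partitions $[\nu]$, yielding $\coprod_{i \in [\nu]} X \simeq X^{\amalg(\nu + 1)} = \delta^\ast(X^\bullet)([\nu], \phi)$. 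The analogous identification holds for $Y^\bullet$.

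With these identifications in hand, the universal property of left Kan extensions provides a canonical equivalence
\[ \tmop{Map}_{\tmop{Fun}(\tmmathbf{\Delta}_{/[1]}, \mathcal{C})}(\delta^\ast(X^\bullet), \delta^\ast(Y^\bullet)) \simeq \tmop{Map}_{\mathcal{C}}(X, Y) \times \tmop{Map}_{\mathcal{C}}(X, Y), \]
and I define $h$ to correspond to the pair $(g_0, g_1)$ under this equivalence. The construction is manifestly functorial in $g_0, g_1$. To check the boundary conditions, note that $\delta \circ i_k = \tmop{id}_{\tmmathbf{\Delta}}$ and that the pointwise formula for $i_k^\ast \tmop{Lan}_\iota(X, X)([\nu])$ receives contributions only from the summand indexed by $([0], k)$, since the constant map $\phi = \tmop{const}_k$ forces $j = k$. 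Hence $i_k^\ast(h) \of X^\bullet \to Y^\bullet$ applies $g_k$ on each summand and identifies with $g_k^\bullet$. For the final claim, when $X = Y$ and $g_0 = g_1 = g$, the pair $(g, g)$ coincides with $\iota^\ast \delta^\ast(g^\bullet)$, and so $h \simeq \delta^\ast(g^\bullet)$, which is constant in the sense that it is the image under $\delta^\ast$ of the cosimplicial map $g^\bullet$.

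The main obstacle will be rigorously establishing the Kan extension identification $\delta^\ast(X^\bullet) \simeq \tmop{Lan}_\iota(X, X)$ in the $\infty$-categorical setting, since a pointwise object-level check does not immediately give a coherent functorial equivalence. A convenient route is to first observe that $X^\bullet$ itself is the left Kan extension of $X$ along $\{[0]\} \hookrightarrow \tmmathbf{\Delta}$, which follows from the universal property of the Čech conerve of $\varnothing \to X$ (since $\varnothing$ is initial, the iterated pushouts defining $X^\bullet$ become iterated coproducts), and then apply base change for Kan extensions along $\delta \of \tmmathbf{\Delta}_{/[1]} \to \tmmathbf{\Delta}$, noting that the pullback of the full subcategory $\{[0]\} \hookrightarrow \tmmathbf{\Delta}$ along $\delta$ is precisely $\mathcal{J}$.
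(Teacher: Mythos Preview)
Your proof is correct and takes a genuinely different route from the paper's. The paper first handles the case where $\mathcal{C}$ is a $1$-category by writing down the homotopy explicitly---setting $h_{\alpha_n} = \coprod_{i=0}^{n} g_{\alpha_n(i)}$ on each $([\nu],\alpha_n) \in \tmmathbf{\Delta}_{/[1]}$ and checking naturality by hand---and then reduces the general $\infty$-categorical statement to this case by exhibiting a universal $1$-category $\mathcal{C}_0$ with finite coproducts and two parallel arrows (namely, the subcategory of $\mathcal{P}(\bullet\rightrightarrows\bullet)$ spanned by finite coproducts of representables). Your argument instead identifies $\delta^\ast(X^\bullet)$ as $\tmop{Lan}_\iota(X,X)$ and reads off the homotopy from the Kan-extension adjunction, which works uniformly in any $\infty$-category without the $1$-categorical detour. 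The paper's approach has the virtue of making the homotopy completely explicit at the level of coproduct summands; yours is more conceptual and makes functoriality and the boundary conditions $i_k^\ast h \simeq g_k^\bullet$ essentially automatic. The one point that deserves a sentence more of care is the Beck--Chevalley identification $\delta^\ast \tmop{Lan}_j X \simeq \tmop{Lan}_\iota(X,X)$: you have correctly noted that the induced functor $\mathcal{J}_{/([\nu],\phi)} \to \{[0]\}_{/[\nu]}$ is an equivalence of discrete categories for every $([\nu],\phi)$, which is exactly the pointwise condition needed, so this is fine.
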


\begin{proof}
  We start with the special case that $\mathcal{C}$ is a $1$-category. We
  define the simplicial homotopy $h \of \delta^{\ast} (X^{\bullet})
  \rightarrow \delta^{\ast} (Y^{\bullet})$ as follows: for every $(\alpha_n
  \of [n] \rightarrow [1]) \in \tmmathbf{\Delta}_{/ [1]}$, we note that
  $(X^{\bullet}) (\alpha_n) = X^n = X \amalg \cdots \amalg X$ and
  $(Y^{\bullet}) (\alpha_n) = Y^n = Y \amalg \cdots \amalg Y$, and we set
  $h_{\alpha_n} = \coprod_{i = 0}^n g_{\alpha_n (i)} \of X^n \rightarrow Y^n$.
  By construction, $i_0^{\ast} (h)^{\nu} = h_{[\nu] \xrightarrow{0} [1]} =
  \coprod_{i = 0}^n g_0 = g_0^{\nu}$ and $i_1^{\ast} (h)^{\nu} = g_1^{\nu}$.
  
  We need to check that this is a map of functors. For every map $\psi \of
  (\alpha_n \of [n] \rightarrow [1]) \rightarrow (\alpha_m \of [m] \rightarrow
  [1])$ in $\tmmathbf{\Delta}_{/ [1]}$, we need to check that the diagram
  \[ \begin{array}{ccc}
       X^n & \xrightarrow{h_{\alpha_n}} & Y^n\\
       \longdownarrow \nocomma \psi_{\ast} &  & \longdownarrow \nocomma
       \psi_{\ast}\\
       X^m & \xrightarrow{h_{\alpha_m}} & Y^m
     \end{array} \]
  commutes, where the vertical maps $\psi_{\ast} \of X^n \rightarrow X^m$ and
  $\psi_{\ast} \of Y^n \rightarrow Y^m$ are induced by $\psi$. For this end,
  let $j_i \of X \rightarrow X^n$ be the $i$-th canonical map for $0 \leq i
  \leq n$.
  
  Then the composite $h_{\alpha_n} \circ j_i \of X \rightarrow X^n \rightarrow
  Y^n$ could be rewritten as the composite $j_i \circ g_{a_n (i)} \of X
  \rightarrow Y \rightarrow Y^n$, and the composite $\psi_{\ast} \circ
  h_{\alpha_n} \circ j_i \of X \rightarrow Y^m$ is equivalent to the composite
  $j_{\psi (i)} \circ g_{\alpha_n (i)} \of X \rightarrow Y \rightarrow Y^m$.
  Similarly, the composite $\psi_{\ast} \circ j_i \of X \rightarrow X^n
  \rightarrow X^m$ is equivalent to the $\psi (i)$-th canonical map $j_{\psi
  (i)} \of X \rightarrow X^m$, and the composite $h_{\alpha_m} \circ
  \psi_{\ast} \circ j_i$ could be identified with the composite $j_{\psi (i)}
  \circ g_{\alpha_m (\psi (i))} \of X \rightarrow Y \rightarrow Y^m$.
  
  Since $\alpha_m (\psi (i)) = \alpha_n (i)$, it follows that $\psi_{\ast}
  \circ h_{\alpha_n} \circ j_i = h_{\alpha_m} \circ \psi_{\ast} \circ j_i$ for
  every $0 \leq i \leq n$. It follows that $\psi_{\ast} \circ h_{\alpha_n} =
  h_{\alpha_m} \circ \psi_{\ast}$. The other claims for the $1$-category
  $\mathcal{C}$ follow directly from the construction.
  
  Now we claim that the result for $\infty$-categories follows from that for
  $1$-categories. The point is that there exists a universal
  $1$-category\footnote{This is informed to us by Denis {\tmname{Nardin}}.}
  $\mathcal{C}_0$ along with two objects $X_0, Y_0 \in \mathcal{C}$ and two
  maps $X_0 \rightrightarrows Y_0$, which admits all finite products, such
  that for every $\infty$-category $\mathcal{C}$ as in the assumption of this
  lemma, there exists an essentially unique functor $\mathcal{C}_0 \rightarrow
  \mathcal{C}$ which preserves finite coproducts: let $K$ be the diagram
  $\bullet \rightrightarrows \bullet$, and then take the presheaf
  $\infty$-category $\mathcal{P} (K) = \tmop{Fun} (K^{\tmop{op}}, \tmop{An})$.
  Then we can take $\mathcal{C}_0$ to be the full subcategory of $\mathcal{P}
  (K)$ spanned by finite coproducts of the two vertices of $K$.
\end{proof}

\begin{corollary}
  \label{cor:retract-Cech-homotopy-eq}Let $\mathcal{C}$ be an
  $\infty$-category with finite coproducts, and two objects $X, Y$ in
  $\mathcal{C}$. Let $i \of X \rightarrow Y$ be a map which admits a left
  inverse $r \of Y \rightarrow X$. Then there is a ``strong deformation
  retract'', i.e. a simplicial homotopy from $\tmop{id}_{Y^{\bullet}}$ to
  $i^{\bullet} \circ r^{\bullet}$, which restricts to a constant simplicial
  homotopy of $X^{\bullet}$ along $i^{\bullet} \of X^{\bullet} \rightarrow
  Y^{\bullet}$, where $X^{\bullet}$ (resp. $Y^{\bullet}$) is the Čech conerve
  of $X$ (resp. $Y$), and $i^{\bullet} \of X^{\bullet} \rightarrow
  Y^{\bullet}$ and $r^{\bullet} \of Y^{\bullet} \rightarrow X^{\bullet}$ are
  induced simplicial maps.
\end{corollary}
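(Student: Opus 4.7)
\begin{proof*}{Proof proposal.}
The plan is to deduce this directly from Lemma~\ref{lem:Cech-two-maps-homotopic} by applying it to the pair of endomorphisms $(g_0, g_1) \assign (\tmop{id}_Y, i \circ r)$ of $Y$, and then to exploit the functoriality of the simplicial homotopy in $(g_0, g_1)$ asserted in that lemma.

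First, applying Lemma~\ref{lem:Cech-two-maps-homotopic} to $g_0 = \tmop{id}_Y$ and $g_1 = i \circ r$, viewed as two morphisms $Y \rightrightarrows Y$, we obtain a simplicial homotopy $h \of \delta^{\ast}(Y^{\bullet}) \rightarrow \delta^{\ast}(Y^{\bullet})$ from $g_0^{\bullet} = \tmop{id}_{Y^{\bullet}}$ to $g_1^{\bullet} = i^{\bullet} \circ r^{\bullet}$. This already produces the ``strong deformation retract'' in the statement.

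Next, to handle the restriction to $X^{\bullet}$ along $i^{\bullet}$, I would precompose $h$ with $\delta^{\ast}(i^{\bullet}) \of \delta^{\ast}(X^{\bullet}) \rightarrow \delta^{\ast}(Y^{\bullet})$. The resulting homotopy is, by naturality, the simplicial homotopy associated by Lemma~\ref{lem:Cech-two-maps-homotopic} to the pair of maps $(g_0 \circ i, g_1 \circ i) = (i, i \circ r \circ i)$ from $X$ to $Y$. Since $r$ is a left inverse of $i$, i.e.\ $r \circ i \simeq \tmop{id}_X$, both components of this pair are equivalent to $i$ itself. The functoriality statement in Lemma~\ref{lem:Cech-two-maps-homotopic} then identifies the resulting simplicial homotopy with the one attached to the pair $(i, i)$, which by the last sentence of that lemma (the ``in particular'' clause applied to $g_0 = g_1 = i$, after transporting along the identification $X = X$ on the source and considering $i$ as a map into $Y$) is the constant simplicial homotopy at $i^{\bullet}$. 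This gives the required constancy of $h$ when restricted along $i^{\bullet}$.

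The two moving parts I expect to need most care are: (a) making the ``restriction along $i^{\bullet}$'' and the functoriality of $h$ in $(g_0, g_1)$ precise as an equivalence in $\tmop{Fun}(\tmmathbf{\Delta}_{/[1]}, \mathcal{C})$, which is essentially bookkeeping once one invokes the universal $1$-category $\mathcal{C}_0$ trick from the end of the proof of Lemma~\ref{lem:Cech-two-maps-homotopic}; and (b) propagating the homotopy $r \circ i \simeq \tmop{id}_X$ from a single equivalence in $\mathcal{C}$ to an equivalence of pairs of parallel arrows in the universal category, so that the ``in particular'' clause applies. Neither step is substantive; the real content is already contained in Lemma~\ref{lem:Cech-two-maps-homotopic}, and the corollary is essentially a formal consequence.
\end{proof*}
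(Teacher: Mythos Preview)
Your proposal is correct and follows essentially the same route as the paper: apply \Cref{lem:Cech-two-maps-homotopic} to $(\tmop{id}_Y, i \circ r)$, then use the functoriality of the homotopy together with $r \circ i \simeq \tmop{id}_X$ to obtain constancy upon restriction along $i^{\bullet}$. The only cosmetic difference is that the paper packages the functoriality step as a commutative square with $(\tmop{id}_X, \tmop{id}_X)$ on the bottom row, so that the ``in particular'' clause of the lemma applies verbatim (with source and target both equal to $X$), whereas you invoke it for the pair $(i, i) \of X \rightrightarrows Y$; the construction in the lemma's proof makes clear this is harmless, but the paper's phrasing avoids the need to extend the clause.
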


\begin{proof}
  We apply \Cref{lem:Cech-two-maps-homotopic} to $\tmop{id}_Y, i \circ r \in
  \tmop{Hom}_{\mathcal{C}} (Y, Y)$, getting the desired simplicial homotopy.
  To see the later statement, it suffices to inspect the commutative diagram
  \[ \begin{array}{ccc}
       Y & \underset{i \circ r}{\overset{\tmop{id}_Y}{\longrightrightarrows}}
       & Y\\
       \longuparrow &  & \longuparrow\\
       X &
       \underset{\tmop{id}_X}{\overset{\tmop{id}_X}{\longrightrightarrows}} &
       X
     \end{array} \]
  and invoke the functoriality.
\end{proof}

\begin{lemma}
  \label{lem:bhatt-fix-homotopy-eq}Let $A \in \tmop{CAlg}^{\tmop{cn}}$ be a
  connective $\mathbb{E}_{\infty}$-ring and let $B \rightarrow C$ be a
  faithfully flat map of connective $\mathbb{E}_{\infty}$-$A$-algebras. Let
  $B^{\bullet}$ (resp. $C^{\bullet}$) denote the Čech conerve of the map $A
  \rightarrow B$ (resp. $A \rightarrow C$). Then for any cosimplicial
  $B^{\bullet}$-module $N^{\bullet}$, the natural cosimplicial map
  \[ N^{\bullet} \longrightarrow N^{\bullet}
     \otimes_{B^{\bullet}}^{\mathbb{L}} C^{\bullet} \]
  induces an equivalence after totalization $\lim_{\bullet \in
  \tmmathbf{\Delta}}$in $D (A)$, where the cosimplicial map $B^{\bullet}
  \rightarrow C^{\bullet}$ is induced by $B \rightarrow C$.
\end{lemma}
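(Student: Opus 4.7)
\begin{proof*}{Proof plan.}
The plan is to study the bicosimplicial object
\[ Q^{m,n} \;:=\; N^n \otimes_{B^n}^{\mathbb{L}} (C^n)^{\otimes_{B^n}(m+1)} \]
for $m, n \geq 0$, augmented in the $m$-direction by the natural map $N^n \to Q^{0, n}$. Since $B \to C$ is faithfully flat, each iterated base change $B^n = B^{\otimes_A(n+1)} \to C^n = C^{\otimes_A(n+1)}$ is faithfully flat as well. By faithfully flat descent for module spectra over connective $\mathbb{E}_\infty$-rings, the augmented cosimplicial object $N^n \to Q^{\bullet, n}$ is a limit diagram for each $n$, so $N^n \simeq \tmop{Tot}_m Q^{m, n}$. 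Taking $\tmop{Tot}_n$ and invoking that limits commute with limits gives $\tmop{Tot}(N^{\bullet}) \simeq \tmop{Tot}_m \tmop{Tot}_n Q^{m, n}$.

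Next, set $D_m := C^{\otimes_B(m+1)}$, so that a rearrangement of tensor products identifies $(C^n)^{\otimes_{B^n}(m+1)} \simeq D_m^{\otimes_A(n+1)}$, i.e.\ $Q^{m, n} = N^n \otimes_{B^n}^{\mathbb{L}} D_m^{\otimes_A(n+1)}$. The inclusion $i \of C = D_0 \to D_m$ as the first tensor factor admits a retract $r \of D_m \to C$ given by multiplication, and both $i$ and $r$ are $B$-linear maps of $\mathbb{E}_\infty$-$A$-algebras. Applying \Cref{cor:retract-Cech-homotopy-eq} in $\mathbb{E}_\infty$-$A$-algebras furnishes a simplicial homotopy between $\tmop{id}_{D_m^{\bullet}}$ and $i^{\bullet} \circ r^{\bullet}$ whose structure maps are tensor products over $A$ of $\tmop{id}$, $i$, and $r$, hence $B^{\bullet}$-linear levelwise. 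Tensoring this simplicial homotopy with $N^{\bullet}$ over $B^{\bullet}$ and invoking \Cref{lem:simpl-homot-maps-eq} shows that $i^{\bullet}$ and $r^{\bullet}$ induce mutually inverse equivalences $\tmop{Tot}_n Q^{0, \bullet} \simeq \tmop{Tot}_n Q^{m, \bullet}$ for every $m \geq 0$.

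Finally, the multiplications $r_m \of D_m \to C$ assemble into a map of augmented cosimplicial $A$-algebras from the Čech conerve $(B \to D_{\bullet})$ to the constant augmented object at $C$ (augmented via $B \to C$); the required compatibility with cofaces and codegeneracies follows from the elementary identity $\mathrm{mult}_{m+1} \circ d^i = \mathrm{mult}_m$ checked before. Applying $N^{\bullet} \otimes_{B^{\bullet}}^{\mathbb{L}} -$ and $\tmop{Tot}_n$ produces a map of augmented cosimplicial objects in $D(A)$: the source, with augmentation value $\tmop{Tot}(N^{\bullet})$ and levels $\tmop{Tot}_n Q^{m, \bullet}$, is an augmented limit diagram by the first paragraph; the target, constant at $\tmop{Tot}_n Q^{0, \bullet} = \tmop{Tot}(N^{\bullet} \otimes_{B^{\bullet}}^{\mathbb{L}} C^{\bullet})$, is trivially one. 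The map is an equivalence at every nonaugmentation level by the second paragraph, hence induces an equivalence on $\tmop{Tot}$ in $m$, and therefore also an equivalence on augmentation values, yielding the desired equivalence $\tmop{Tot}(N^{\bullet}) \xrightarrow{\ \sim\ } \tmop{Tot}(N^{\bullet} \otimes_{B^{\bullet}}^{\mathbb{L}} C^{\bullet})$.

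The main technical point is verifying in the second paragraph that the retract and its associated simplicial homotopy survive the cosimplicial tensor $N^{\bullet} \otimes_{B^{\bullet}}^{\mathbb{L}} -$ and that the comparison across varying $m$ is genuinely cosimplicial, not just levelwise-equivalent; once the $B^{\bullet}$-linearity of $i$, $r$, and the homotopy is in hand, the remaining steps (faithfully flat descent, Fubini, and the fact that a map of augmented limit diagrams which is an equivalence at every nonaugmentation level is also an equivalence on augmentation values) are formal.
\end{proof*}
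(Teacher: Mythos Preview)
Your proposal is correct and follows the same strategy as the paper: introduce the bicosimplicial object (the paper's $M^{\nu,\mu}$ is your $Q^{m,n}$), use faithfully flat descent in one direction, and the retract/simplicial-homotopy argument via \Cref{cor:retract-Cech-homotopy-eq} together with the rearrangement $(C^n)^{\otimes_{B^n}(m+1)}\simeq D_m^{\otimes_A(n+1)}$ in the other. The only cosmetic difference is in the concluding step: the paper shows every injection $[\mu_1]\hookrightarrow[\mu_2]$ induces an equivalence $\tmop{Tot}_\nu M^{\nu,\mu_1}\to\tmop{Tot}_\nu M^{\nu,\mu_2}$ and then invokes weak contractibility of $\tmmathbf{\Delta}_{\tmop{inj}}$ (via {\cite[Cor~4.4.4.10]{Lurie2009}}), whereas you assemble the multiplication maps $r_m$ into an explicit augmented cosimplicial comparison with the constant diagram.
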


\begin{proof}
  Let $D^{\bullet, \bullet}$ denote the cosimplicial Čech conerve of
  $B^{\bullet} \rightarrow C^{\bullet}$ (each $D^{\nu, \bullet}$ is the Čech
  conerve of $B^{\nu} \rightarrow C^{\nu}$), which is a bicosimplicial object
  in $\tmop{CAlg}_A$. We note that there is a unique cosimplicial map
  $B^{\bullet} \rightarrow D^{\bullet, \mu}$ for all $[\mu] \in
  \tmmathbf{\Delta}$. Consider the bicosimplicial object $M^{\bullet,
  \bullet}$:
  \begin{eqnarray*}
    \tmmathbf{\Delta}^2 & \longrightarrow & D (A)\\
    ([\nu], [\mu]) & \longmapsto & N^{\nu} \otimes_{B^{\nu}}^{\mathbb{L}}
    D^{\nu, \mu}
  \end{eqnarray*}
  and its limit $I \assign \lim_{([\nu], [\mu])} M^{\nu, \mu}$. The map which
  we need to show to be an equivalence factors as $\lim_{[\nu]} N^{\nu}
  \rightarrow \lim_{([\nu], [\mu])} M^{\nu, \mu} \rightarrow \lim_{[\nu]}
  M^{\nu, 0} \simeq \lim_{[\nu]} N^{\nu} \otimes_{B^{\nu}}^{\mathbb{L}}
  C^{\nu}$. It suffices to show that both maps are equivalences.
  
  For the first map, in fact, for every $[\nu] \in \tmmathbf{\Delta}$, the map
  $N^{\nu} \rightarrow \lim_{[\mu] \in \tmmathbf{\Delta}} M^{\nu, \mu}$ is an
  equivalence by faithfully flat descent.
  
  For the second map, since $\tmmathbf{\Delta}_{\tmop{inj}}^{\tmop{op}}
  \rightarrow \tmmathbf{\Delta}^{\tmop{op}}$ is cofinal
  {\cite[Lem~6.5.3.7]{Lurie2009}} where $\tmmathbf{\Delta}_{\tmop{inj}}
  \subseteq \tmmathbf{\Delta}$ is the (non-full) subcategory with strictly
  increasing maps $[m] \rightarrow [n]$, we can replace
  $\lim_{\tmmathbf{\Delta}}  (\cdummy)$ by
  $\lim_{\tmmathbf{\Delta}_{\tmop{inj}}}  (\cdummy)$. By
  {\cite[Cor~4.4.4.10]{Lurie2009}}, it suffices to show that, for every
  injective map $[\mu_1] \rightarrow [\mu_2]$ in $\tmmathbf{\Delta}$, the
  induced map $\lim_{[\nu]} M^{\nu, \mu_1} \rightarrow \lim_{[\nu]} M^{\nu,
  \mu_2}$ is an equivalence. Every injective map $[\mu_1] \rightarrow [\mu_2]$
  admits a retract in $\tmmathbf{\Delta}$, therefore by
  \Cref{cor:retract-Cech-homotopy-eq}, the induced map $D^{\bullet, \mu_1}
  \rightarrow D^{\bullet, \mu_2}$ is a homotopy equivalence of cosimplicial
  $\mathbb{E}_{\infty}$-$B^{\bullet}$-algebras, therefore $M^{\bullet, \mu_1}
  \rightarrow M^{\bullet, \mu_2}$ is a homotopy equivalence of cosimplicial
  $A$-modules by {\cite[\href{https://stacks.math.columbia.edu/tag/07KQ}{Tag
  07KQ}]{stacks-project}}. The result then follows from
  \Cref{lem:simpl-homot-maps-eq}.
\end{proof}

\begin{remark}
  When $A$ is a static $\mathbb{F}_p$-algebra, $B$ is a polynomial $A$-algebra
  and $C = B \otimes_{A, \varphi_A}^{\mathbb{L}} A$ is the Frobenius twist of
  $B$, we recover {\cite[Lem~5.4]{Bhatt2019}}.
\end{remark}

\begin{lemma}
  \label{lem:fil-colim-tot-commute}Let $(M_i^{\bullet})_{i \in
  (\mathbb{Z}_{\leq 0}, \geq)} \in \tmop{Fun} (\tmmathbf{\Delta} \times
  (\mathbb{Z}_{\leq 0}, \geq), \tmop{Sp})$ be a cosimplicial filtered spectra.
  Suppose that it is uniformly bounded above, i.e. there exists $N \in
  \mathbb{N}$ such that for every $i \in \mathbb{Z}_{\leq 0}$ and $\nu \in
  \tmmathbf{\Delta}$, we have $\pi_j (M_i^{\nu}) \cong 0$ for all $j > N$. Let
  $M^{\bullet} \assign \tmop{colim}_{i \rightarrow - \infty} M_i^{\bullet}$.
  Then the canonical map $\tmop{colim}_{i \rightarrow - \infty} \lim_{\nu \in
  \tmmathbf{\Delta}} M_i^{\nu} \rightarrow \lim_{\nu \in \tmmathbf{\Delta}}
  M^{\nu}$ is an equivalence.
\end{lemma}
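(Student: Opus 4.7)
My plan is to reduce totalization to partial totalizations $\tmop{Tot}^n X^\bullet \assign \lim_{[\nu] \in \tmmathbf{\Delta}_{\leq n}} X^\nu$. By cofinality of $\tmmathbf{\Delta}_{\tmop{inj}, \leq n}^{\tmop{op}} \hookrightarrow \tmmathbf{\Delta}_{\leq n}^{\tmop{op}}$ (reducing to the cofinality of $\tmmathbf{\Delta}_{\tmop{inj}}^{\tmop{op}} \to \tmmathbf{\Delta}^{\tmop{op}}$ already cited in the proof of \Cref{lem:bhatt-fix-homotopy-eq}), the functor $\tmop{Tot}^n$ can be computed as a limit over a finite category and hence is a finite limit in $\tmop{Sp}$. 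Since filtered colimits commute with finite limits in $\tmop{Sp}$, the natural map
\[ \tmop{colim}_{i \to -\infty} \tmop{Tot}^n M_i^\bullet \longrightarrow \tmop{Tot}^n M^\bullet \]
is an equivalence for each $n$.

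Next I would control how well $\tmop{Tot}^n X^\bullet$ approximates $\tmop{Tot} X^\bullet$ in terms of the bound $N$. The fiber of $\tmop{Tot}^n X^\bullet \to \tmop{Tot}^{n-1} X^\bullet$ is an $n$-fold loop spectrum of a ``normalized $n$-cochain'' spectrum $Z^n$, itself built as a finite limit of $X^m$ with $m \leq n$; hence $Z^n \in \tmop{Sp}_{\leq N}$ whenever each $X^\nu \in \tmop{Sp}_{\leq N}$, and the fiber $\Omega^n Z^n$ has $\pi_j = 0$ for $j > N - n$. The long exact sequence of homotopy groups then shows that $\pi_k \tmop{Tot}^n X^\bullet \to \pi_k \tmop{Tot}^{n-1} X^\bullet$ is an isomorphism whenever $n > N - k + 1$. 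In particular the tower $(\pi_k \tmop{Tot}^n X^\bullet)_n$ stabilizes, and so does $(\pi_{k+1} \tmop{Tot}^n X^\bullet)_n$; there is no $\lim^1$ obstruction in the Milnor sequence and $\pi_k \tmop{Tot}^n X^\bullet \cong \pi_k \tmop{Tot} X^\bullet$ for all $n \geq N - k + 2$.

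To finish, I fix $k \in \mathbb{Z}$ and set $n \assign \max(0, N - k + 2)$. Since filtered colimits commute with $\pi_j$ in $\tmop{Sp}$, the bound $\pi_j M_i^\nu = 0$ for $j > N$ passes to $\pi_j M^\nu = 0$ for $j > N$, so the convergence estimate of the previous paragraph applies uniformly to every $M_i^\bullet$ and to $M^\bullet$. Combining with the first paragraph,
\[ \pi_k \bigl( \tmop{colim}_i \lim_{\tmmathbf{\Delta}} M_i^\nu \bigr) \cong \tmop{colim}_i \pi_k \tmop{Tot}^n M_i^\bullet \cong \pi_k \tmop{Tot}^n M^\bullet \cong \pi_k \lim_{\tmmathbf{\Delta}} M^\nu, \]
where the outer isomorphisms are the convergence estimate applied to $M_i^\bullet$ and to $M^\bullet$ respectively (together with $\pi_k$ commuting with filtered colimits), and the middle one comes from the first paragraph. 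These identifications are compatible with the canonical comparison map in the statement of the lemma, so the comparison induces an equivalence on every $\pi_k$ and hence an equivalence in $\tmop{Sp}$.

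The main obstacle I anticipate is making the normalized-cochain fiber identification rigorous in the $\infty$-categorical setting; the classical statement is well known for cosimplicial spaces or chain complexes, and the underlying Bousfield--Kan spectral sequence converges strongly under our uniform-boundedness hypothesis, but a clean citation for stable $\infty$-categories may need some care. An alternative route is to apply the truncation $\tau_{\leq N}$ first, work in $\tmop{Sp}_{\leq N}$ where the relevant totalization tower manifestly stabilizes in each range of degrees, and recover the result by the fact that $\tau_{\leq N}$ preserves both $\tmop{Tot}$ and filtered colimits for uniformly bounded-above input.
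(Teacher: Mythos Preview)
Your proposal is correct and follows essentially the same approach as the paper: reduce to partial totalizations $\tmop{Tot}^n$, use the cofinality of $\tmmathbf{\Delta}_{\tmop{inj},\leq n}^{\tmop{op}}\to\tmmathbf{\Delta}_{\leq n}^{\tmop{op}}$ to make these finite limits (hence commuting with filtered colimits), and then control the passage from $\tmop{Tot}^n$ to $\tmop{Tot}$ via a coconnectivity estimate. Your worry about a clean $\infty$-categorical citation for the convergence step is resolved in the paper by directly invoking \cite[Cor~1.2.4.18]{Lurie2017}, which gives that $\tmop{fib}(\tmop{Tot}\,X^\bullet\to\tmop{Tot}^n X^\bullet)$ has coconnectivity tending to $-\infty$ for uniformly bounded-above $X^\bullet$; this replaces your normalized-cochain and Milnor-sequence analysis.
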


\begin{proof}
  We could rewrite $\lim_{\nu \in \tmmathbf{\Delta}}$ as $\lim_{n \rightarrow
  \infty} \lim_{\nu \in \tmmathbf{\Delta}_{\leq [n]}}$. Furthermore, the
  functor $\tmmathbf{\Delta}_{\leq [n]}^{\tmop{inj}} \rightarrow
  \tmmathbf{\Delta}_{\leq [n]}$ is right cofinal, therefore we can replace
  $\lim_{\nu \in \tmmathbf{\Delta}_{\leq [n]}}$ by $\lim_{\nu \in
  \tmmathbf{\Delta}_{\leq [n]}^{\tmop{inj}}}$ which is a finite limit,
  therefore commutes with $\tmop{colim}_{i \rightarrow - \infty}$. For any
  cosimplicial spectrum $X^{\bullet}$, there is a canonical map $\lim_{\nu \in
  \tmmathbf{\Delta}} X^{\nu} \rightarrow \lim_{\nu \in \tmmathbf{\Delta}_{\leq
  [n]}} X^{\nu}$. If $X^{\bullet}$ is assumed to be uniformly bounded above,
  then the coconnectivity of $\tmop{fib} (\lim_{\nu \in \tmmathbf{\Delta}}
  X^{\nu} \rightarrow \lim_{\nu \in \tmmathbf{\Delta}_{\leq [n]}} X^{\nu})$
  tends to $- \infty$ as $n \rightarrow \infty$ by
  {\cite[Cor~1.2.4.18]{Lurie2017}}. The result then follows.
\end{proof}

For the integral version, we need to extend quasisyntomicity to animated
rings:

\begin{definition}[cf. {\cite[Def~4.9]{Bhatt2018}}]
  \label{def:qsyn}We say that a map $R \rightarrow S$ of animated rings is
  {\tmdfn{quasisyntomic}} if it is flat and the cotangent complex $L_{S / R}$
  has $\tmop{Tor}$-amplitude in $[0, 1]$.
\end{definition}

\begin{example}
  Any smooth map, or more generally, any syntomic map of static rings is
  quasisyntomic. Quasisyntomic maps are stable under base change.
\end{example}

We now phrase the integral comparison:

\begin{proposition}
  \label{prop:site-comp-qsyn-integral}Let $((A \twoheadrightarrow A'',
  \gamma_A), A'' \rightarrow R) \in \tmop{CrysCon}$ such that $A$ is bounded
  above (that is, $\pi_n (A) \cong 0$ for $n \gg 0$) and the map $A''
  \rightarrow R$ is quasisyntomic. Then the comparison map in
  \Cref{prop:deriv-crys-coh-site-comp} is an equivalence.
\end{proposition}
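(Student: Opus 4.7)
The plan is to combine Proposition \ref{prop:crys-site-comp-desc} with the local-global principle for $\mathbb{Z}$, reducing the integral comparison to the rational case (which is trivial) and the characteristic $p$ case (which is exactly Lemma \ref{lem:crys-coh-char-p-desc}).

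First, note that Proposition \ref{prop:deriv-crys-coh-site-comp} together with Lemma \ref{lem:crys-coh-gr-desc} already gives an equivalence on all associated graded pieces with respect to the Hodge filtration. Using the fiber sequences $\tmop{Fil}_H^{n+1} \to \tmop{Fil}_H^n \to \tmop{gr}_H^n$ (inductively in $n$, starting from $n=0$), equivalence on associated graded pieces together with equivalence on $\tmop{Fil}_H^0$ upgrades to equivalence on every $\tmop{Fil}_H^n$. So it suffices to prove the comparison is an equivalence on underlying $\mathbb{E}_\infty$-$A$-algebras. By Proposition \ref{prop:crys-site-comp-desc}, this reduces to showing that for a fixed polynomial $A$-algebra $P$ with surjection $P \twoheadrightarrow R$ and Čech conerve $P^\bullet$, the canonical maps
\[
\tmop{CrysCoh}_{R/(A \twoheadrightarrow A'', \gamma_A)} \longrightarrow \tmop{CrysCoh}_{R/(P^\bullet \twoheadrightarrow P^\bullet \otimes_A^{\mathbb{L}} A'', \gamma_{P^\bullet})}
\]
form a limit diagram in $\tmop{CAlg}(D(A))$.

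Next, I would apply the local-global principle for $\mathbb{Z}$, in the spirit of Lemmas \ref{lem:static-loc-global} and \ref{lem:flat-loc-global}: after showing the fiber (or cofiber) of the comparison lies in a suitably bounded region, verifying its vanishing reduces to checking it after rationalization and after derived modulo $p$ for every prime $p$. Rationally, Lemma \ref{lem:dR-rational} identifies $\tmop{CrysCoh}_{R/(A\twoheadrightarrow A'')} \otimes_{\mathbb{Z}}^{\mathbb{L}} \mathbb{Q}$ with $A \otimes_{\mathbb{Z}}^{\mathbb{L}} \mathbb{Q}$, and likewise $\tmop{CrysCoh}_{R/(P^\nu \twoheadrightarrow P^\nu \otimes_A^{\mathbb{L}} A'')} \otimes_{\mathbb{Z}}^{\mathbb{L}} \mathbb{Q}$ with $P^\nu \otimes_{\mathbb{Z}}^{\mathbb{L}} \mathbb{Q}$. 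The cosimplicial diagram thus becomes the Čech conerve of the faithfully flat polynomial extension $A \otimes \mathbb{Q} \to P \otimes \mathbb{Q}$, whose totalization is $A \otimes \mathbb{Q}$ by faithfully flat descent.

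For the mod $p$ case, using the base-change compatibility of Lemma \ref{lem:crys-coh-indep-base-chg}, the comparison problem descends to the characteristic $p$ datum $((A/^{\mathbb{L}} p \twoheadrightarrow A''/^{\mathbb{L}} p, \gamma), A''/^{\mathbb{L}} p \to R/^{\mathbb{L}} p) \in \tmop{CrysCon}_{\mathbb{F}_p}$, and this is exactly the content of Lemma \ref{lem:crys-coh-char-p-desc} (combined again with Proposition \ref{prop:crys-site-comp-desc} in characteristic $p$). One then has to check its two hypotheses: the Tor-amplitude of $\mathbb{L}_{R/^{\mathbb{L}} p / A''/^{\mathbb{L}} p} \simeq \mathbb{L}_{R/A''} \otimes_R^{\mathbb{L}}(R/^{\mathbb{L}} p)$ in $[0,1]$ is inherited from the quasisyntomic hypothesis (which is stable under base change), while the boundedness above of $\varphi_{(A/^{\mathbb{L}} p \twoheadrightarrow A''/^{\mathbb{L}} p)}^{\ast}(R/^{\mathbb{L}} p)$ will follow from $A$ being bounded above together with the flatness of $A'' \to R$ propagating boundedness through the Frobenius twist.

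The main technical obstacle is the application of the local-global principle to a cofiber in $\tmop{CAlg}(D(A))$ that is a priori unbounded: the naive form (Lemma \ref{lem:static-loc-global}) applies only to connective spectra with controlled homotopy in low degrees. The strategy would be to use the Hodge filtration and the quasisyntomicity (which bounds $\bigwedge^i \mathbb{L}_{R/A''}$) to show that the relevant cofiber is bounded, reducing to a situation where the local-global principle applies in the form required. This is the one point where the assumption that $A$ be bounded above enters crucially: it propagates through the whole cosimplicial crystalline diagram and guarantees the required boundedness both for applying Lemma \ref{lem:crys-coh-char-p-desc} and for the local-global step.
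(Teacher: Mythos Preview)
Your proposal is correct and follows essentially the same route as the paper's proof: reduce via \Cref{prop:crys-site-comp-desc} to the unfiltered limit diagram, then verify it after derived reduction modulo~$p$ (using \Cref{lem:crys-coh-indep-base-chg} and \Cref{lem:crys-coh-char-p-desc}) and after rationalization (using \Cref{lem:dR-rational} and faithfully flat descent for $A\otimes\mathbb{Q}\to P\otimes\mathbb{Q}$). Your verification of the hypotheses of \Cref{lem:crys-coh-char-p-desc} also matches the paper's.

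The concern you raise in the final paragraph is legitimate and is a point the paper treats tersely. The subtlety is that while $(-)/^{\mathbb{L}}p$ commutes with totalization (being a finite colimit in a stable category), $(-)\otimes_{\mathbb{Z}}^{\mathbb{L}}\mathbb{Q}$ does not in general, so knowing the rationalized cosimplicial diagram is a limit diagram does not immediately give vanishing of the rationalized fiber. Your proposed fix via boundedness is the right one and can be made precise: since the contexts $(P^\nu\twoheadrightarrow P^\nu\otimes_A^{\mathbb{L}}A'',\,P^\nu\otimes_A^{\mathbb{L}}A''\twoheadrightarrow R)$ lie in $\tmop{CrysCon}_{\tmop{surj}}$, \Cref{prop:crys-PD-env-equiv} identifies each $Y^\nu:=\tmop{CrysCoh}_{R/(P^\nu\twoheadrightarrow\cdots)}$ with a relative animated PD-envelope, hence $Y^\nu$ is connective. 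The bound on $A$ and the flatness in the quasisyntomic hypothesis then give uniform (in $\nu$ and in $p$) bounds on $\pi_*(Y^\nu/^{\mathbb{L}}p)$ and $\pi_*(Y^\nu\otimes\mathbb{Q})$, from which one deduces that the $Y^\nu$ themselves are uniformly bounded above. Then \Cref{lem:fil-colim-tot-commute} (applied to $\otimes\mathbb{Q}$ as a filtered colimit) gives $\bigl(\lim_\nu Y^\nu\bigr)\otimes\mathbb{Q}\simeq\lim_\nu(Y^\nu\otimes\mathbb{Q})$, closing the gap.
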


\begin{proof}
  We again appeal to \Cref{prop:crys-site-comp-desc}. It suffices to show that
  the map
  \[ \tmop{CrysCoh}_{R / (A \twoheadrightarrow A'', \gamma_A)} \longrightarrow
     \lim_{\nu \in \tmmathbf{\Delta}} \tmop{CrysCoh}_{R / (P^{\nu}
     \twoheadrightarrow P^{\nu} \otimes_A^{\mathbb{L}} A'', \gamma_{P^{\nu}})}
  \]
  is an equivalence of $\mathbb{Z}$-module spectra (since the forgetful
  functor is conservative), which could be checked by base change along
  $\mathbb{Z} \rightarrow \mathbb{Z}/ p$ for all prime numbers $p \in
  \mathbb{N}_{> 0}$ and along $\mathbb{Z} \rightarrow \mathbb{Q}$. The latter
  follows from \Cref{lem:dR-rational} and that the map $A \rightarrow P$ is
  faithfully flat therefore the canonical map $A \rightarrow \lim_{\nu \in
  \tmmathbf{\Delta}} P^{\nu}$ is an equivalence (in fact, this is induced by a
  homotopy equivalence of cosimplicial objects, but we do not need this). For
  every prime number $p$, by base change property
  (\Cref{lem:crys-coh-indep-base-chg}) and \Cref{lem:crys-coh-char-p-desc},
  where the flatness of $A'' \rightarrow R$ implies the flatness of $A
  /^{\mathbb{L}} p \rightarrow \varphi_{A /^{\mathbb{L}} p \twoheadrightarrow
  A'' /^{\mathbb{L}} p}^{\ast} (R /^{\mathbb{L}} p)$, therefore the Frobenius
  twist in question is bounded above.
\end{proof}

Finally, we want to compare the cohomology of the affine crystalline site and
the classical crystalline cohomology. We first describe a non-complete variant
of the classical affine crystalline site, which we will name after
{\tmdfn{static affine crystalline site}}.

\begin{definition}
  \label{def:static-aff-crys-site}Let $(A, I, \gamma_A) \in
  \tmop{Pair}^{\gamma}$ be a PD-pair and let $A / I \rightarrow R$ be a map of
  rings. Note that $((A \twoheadrightarrow A / I, \gamma_A), A / I \rightarrow
  R) \in \tmop{CrysCon}$ is a crystalline context. The {\tmdfn{static affine
  crystalline site}} $\tmop{Cris}^{\tmop{st}} (R / (A, I, \gamma_A))$ is the
  full subcategory of $\tmop{Cris} (R / (A \twoheadrightarrow A / I,
  \gamma_A))$ spanned by those $(B \twoheadrightarrow B / J, \gamma_B) \in
  \tmop{Pair}^{\gamma}$ along with a map $R \rightarrow B / J$, i.e., the
  animated PD-pair in question is given by a PD-pair, equipped with the
  indiscrete topology.
  
  We note that the structure presheaf $\mathcal{O}$ on $\tmop{Cris} (R / (A
  \twoheadrightarrow A / I, \gamma_A))$ restricts to a presheaf
  $\tmop{Cris}^{\tmop{st}} (R / (A, I, \gamma_A))$, still called the
  {\tmdfn{structure presheaf}}, which is canonically equipped with
  PD-filtration, of which the cohomology is called the {\tmdfn{cohomology of
  the static crystalline site}} (resp. {\tmdfn{Hodge-filtered cohomology of
  the static crystalline site}}), denoted by $R \Gamma
  (\tmop{Cris}^{\tmop{st}} (R / (A, I, \gamma_A)), \mathcal{O})$ (resp.
  $\tmop{Fil}_H R \Gamma (\tmop{Cris}^{\tmop{st}} (R / (A, I, \gamma_A)),
  \mathcal{O})$.
\end{definition}

\begin{warning}
  Here the PD-filtration is that for animated PD-envelope, although we are
  considering PD-pairs. However, when $I = 0$, thanks to
  \Cref{prop:comp-PDFil-equiv-qreg}, we can consider the classical PD-envelope
  instead.
\end{warning}

Now the cohomology of the affine crystalline site coincides with the classical
version:

\begin{proposition}
  \label{prop:site-static-comp}Let $(A, I, \gamma_A) \in \tmop{Pair}^{\gamma}$
  be a PD-pair and $A / I \rightarrow R$ a quasisyntomic map of rings ($R$ is
  static by flatness). Then the comparison map
  \[ \tmop{Fil}_H R \Gamma (\tmop{Cris} (R / (A \twoheadrightarrow A / I,
     \gamma_A)), \mathcal{O}) \rightarrow \tmop{Fil}_H R \Gamma
     (\tmop{Cris}^{\tmop{st}} (R / (A, I, \gamma_A)), \mathcal{O}) \]
  of filtered $\mathbb{E}_{\infty}$-$A$-algebras induced by the inclusion
  $\tmop{Cris}^{\tmop{st}} (R / (A, I, \gamma_A)) \hookrightarrow \tmop{Cris}
  (R / (A, I, \gamma_A))$ is an equivalence.
\end{proposition}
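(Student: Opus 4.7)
The plan is to reduce both sides to a common Čech--Alexander computation and then perform a level-wise comparison. Fix a polynomial $A$-algebra $P$ together with a surjection $P \twoheadrightarrow R$ (possible since $R$ is a static $A$-algebra), and let $P^\bullet \twoheadrightarrow R$ be the Čech conerve of $A \to P$ in $\tmop{Ani}(\tmop{Ring})_{A/}$. By the argument already carried out in the proof of \Cref{prop:deriv-crys-coh-site-comp}, the cosimplicial relative animated PD-envelope $(D^\bullet \twoheadrightarrow R, \gamma_{D^\bullet})$ computes $\tmop{Fil}_H R\Gamma(\tmop{Cris}(R/(A \twoheadrightarrow A/I, \gamma_A)), \mathcal{O})$ as a totalization in $\tmop{CAlg}(\tmop{DF}^{\geq 0}(A))$. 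For the static side, the corresponding cosimplicial \emph{classical} relative PD-envelope $(D^{\bullet,\tmop{cl}}, J^\bullet, \gamma^{\bullet,\tmop{cl}})$ should compute $\tmop{Fil}_H R\Gamma(\tmop{Cris}^{\tmop{st}}(R/(A, I, \gamma_A)), \mathcal{O})$ by the classical Čech--Alexander argument, which applies once one checks that polynomial $A$-algebras form a weakly initial family in the static site (each object is covered by a polynomial envelope). The natural comparison map in the statement restricts term-wise to the canonical map $(D^\nu, \tmop{Fil}_{\tmop{PD}}) \to (D^{\nu,\tmop{cl}}, \tmop{Fil}_{\tmop{PD}})$, and it suffices to show this is an equivalence for every $\nu$.

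The key step is therefore to identify $D^\nu$ with a classical PD-pair. Since $A \to P^\nu$ is flat, the PD-structure $\gamma_A$ extends uniquely to $(P^\nu, IP^\nu)$, and by \Cref{lem:rel-PD-env-crys-con} the computation reduces to the crystalline context $((P^\nu \twoheadrightarrow P^\nu/IP^\nu, \gamma_{P^\nu}), P^\nu/IP^\nu \twoheadrightarrow R)$. The crucial observation is that the animated pair $P^\nu/IP^\nu \twoheadrightarrow R$ is quasiregular: combining the transitivity sequence for $A/I \to P^\nu/IP^\nu \to R$ with the flatness of $A/I \to P^\nu/IP^\nu$ and the quasisyntomic hypothesis $\mathbb{L}_{R/(A/I)} \in D(R)^{[0,1]}$, one sees that $\mathbb{L}_{R/(P^\nu/IP^\nu)}$ is concentrated in degree $1$ with flat $R$-module of homotopy, giving quasiregularity by \Cref{cor:LAdFil-symm-cot-cx}.

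With quasiregularity in hand, the plan is to invoke the local-global principle \Cref{lem:static-loc-global} to show that the underlying $\mathbb{E}_\infty$-ring of $D^\nu$ is static. Rationally, $D^\nu \otimes_{\mathbb{Z}}^\mathbb{L} \mathbb{Q} \simeq P^\nu \otimes_{\mathbb{Z}}^\mathbb{L} \mathbb{Q}$ by \Cref{lem:rel-pd-env-rat} and \Cref{lem:ani-PD-env-char-0}. Modulo $p$, one applies \Cref{cor:Fp-qreg-rel-pd-env-flat} to conclude that $D^\nu/^\mathbb{L} p$ is flat over $\varphi_{P^\nu \twoheadrightarrow P^\nu/IP^\nu}^{\ast}(R/^\mathbb{L} p)$, so its homotopy inherits the connectivity bounds of this Frobenius twist; a direct computation using flatness of $A/I \to R$ and the polynomiality of $P^\nu$ shows that $\varphi^{\ast}(R/^\mathbb{L} p)$ has homotopy concentrated in degrees $[0,1]$, meeting the hypotheses of \Cref{lem:static-loc-global}. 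Once $D^\nu$ is static, \Cref{prop:characterize-pdpair} identifies $(D^\nu \twoheadrightarrow R, \gamma_{D^\nu})$ with a classical PD-pair; universality then gives $D^\nu \simeq D^{\nu,\tmop{cl}}$, and compatibility of the PD-filtrations follows from \Cref{prop:qreg-assoc-gr-pd} together with \Cref{prop:comp-PDFil-equiv-qreg}.

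The main obstacle will be the mod-$p$ bookkeeping: carefully verifying that $\varphi^{\ast}(R/^\mathbb{L} p)$ really has the required connectivity so that flatness of $D^\nu/^\mathbb{L} p$ over it forces the needed vanishing of homotopy in degrees $\geq 2$, and then that the cosimplicial comparison assembles to an equivalence of filtered $\mathbb{E}_\infty$-$A$-algebras rather than merely of underlying spectra. A secondary (but more routine) obstacle is justifying the static Čech--Alexander formula without having a projectively generated structure on the static site; here the fact that every object of $\tmop{Cris}^{\tmop{st}}(R/(A,I,\gamma_A))$ receives a map from some classical PD-envelope of a polynomial cover of $R$ should suffice to identify the totalization with the cohomology, by the standard cofinality argument.
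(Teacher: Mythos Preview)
Your approach is essentially the same as the paper's: reduce to the Čech--Alexander computation and show termwise that the relative animated PD-envelope $D^\nu$ is static, using the local-global principle together with \Cref{lem:rel-pd-env-rat} rationally and \Cref{cor:Fp-qreg-rel-pd-env-flat} modulo $p$. The paper packages the mod-$p$ step slightly differently: it isolates the stronger statement that $D$ is a \emph{flat $A$-module} as a separate lemma (\Cref{lem:flat-pd-env}), proved via \Cref{lem:flat-loc-global} rather than \Cref{lem:static-loc-global}; the argument that the composite $A_0 \to P_0 \to \varphi^{\ast}(R_0)$ is flat is exactly the ``direct computation'' you describe, and flatness over $A_0 = A/^{\mathbb{L}} p$ immediately gives the $[0,1]$ connectivity you need.

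One small point: your final sentence about invoking \Cref{prop:qreg-assoc-gr-pd} and \Cref{prop:comp-PDFil-equiv-qreg} for the PD-filtration is unnecessary. As the warning preceding the proposition makes explicit, the Hodge filtration on the static site is \emph{defined} using the animated PD-filtration $\mathbb{L}\tmop{PDFil}$, not the classical one; so once $D^\nu$ is static and hence coincides with the classical PD-envelope as an animated PD-pair, the filtered comparison is automatic.
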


\begin{proof}
  We adapt the Čech--Alexander computation in
  \Cref{prop:deriv-crys-coh-site-comp}. We pick a polynomial $A$-algebra $P$
  (of possibly infinitely many variables) along with a surjection $P
  \twoheadrightarrow R$. Let $P^{\bullet} \rightarrow R$ denote the Čech
  conerve of the object $P \rightarrow R$ in $(\tmop{Alg}_A)_{/ R}$.
  Concretely, $P^{\nu} = P^{\otimes_A (\nu + 1)}$. Note that since $A
  \rightarrow P$ is flat, the classical tensor product coincides with the
  derived tensor product, therefore the cosimplicial pair $P^{\bullet}
  \rightarrow R$ coincides with the cosimplicial animated pair in the proof of
  \Cref{prop:deriv-crys-coh-site-comp}, and then $\tmop{Fil}_H R \Gamma
  (\tmop{Cris}^{\tmop{st}} (R / (A, I, \gamma_A)), \mathcal{O})$ is computed
  by the classical PD-envelope of $P^{\bullet} \twoheadrightarrow R$ with
  respect to $(A, I, \gamma_A)$, equipped with the PD-filtration.
  
  Let $(D^{\bullet} \twoheadrightarrow R, \gamma_{D^{\bullet}})$ denote the
  cosimplicial animated PD-envelope of $(P^{\bullet} \twoheadrightarrow R)$
  relative to $(A, I, \gamma_A)$. It suffices to show that $(D^{\nu}
  \twoheadrightarrow R, \gamma_{D^{\bullet}})$ is given by a PD-pair for all
  $\nu \in \tmmathbf{\Delta}$, or equivalently, the underlying animated ring
  $D^{\nu}$ is static, by virtue of
  \Cref{prop:characterize-pdpair,lem:ani-pd-env-preserv-target}, which follows
  from \Cref{lem:flat-pd-env} below.
\end{proof}

\begin{lemma}
  \label{lem:flat-pd-env}Let $(A \twoheadrightarrow A'', \gamma_A)$ be an
  animated PD-pair, $A'' \rightarrow R$ a quasisyntomic map of animated rings,
  $P$ a polynomial $A$-algebra (of possibly infinitely many variables) and $P
  \twoheadrightarrow R$ a surjection of $A$-algebras. Let $(D
  \twoheadrightarrow R, \gamma_D)$ denote the animated PD-envelope of $P
  \twoheadrightarrow R$ relative to $(A \twoheadrightarrow A'', \gamma_A)$.
  Then $D$ is a flat $A$-module.
\end{lemma}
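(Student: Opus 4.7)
The plan is to combine the characteristic-independent local-global principle with the conjugate-filtration machinery already developed for animated PD-envelopes. Namely, I would invoke \Cref{lem:flat-loc-global} to reduce the flatness of $D$ over $A$ to checking flatness of $D\otimes_{\mathbb{Z}}^{\mathbb{L}}\mathbb{Q}$ over $A\otimes_{\mathbb{Z}}^{\mathbb{L}}\mathbb{Q}$ and of $D/^{\mathbb{L}}p$ over $A/^{\mathbb{L}}p$ for each prime $p$ (noting that $D$ is connective, being an animated ring).

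For the rational side, I would appeal directly to \Cref{lem:rel-pd-env-rat}: the unit map of the relative animated PD-envelope adjunction becomes an equivalence after rationalization, so $D\otimes_{\mathbb{Z}}^{\mathbb{L}}\mathbb{Q}\simeq P\otimes_{\mathbb{Z}}^{\mathbb{L}}\mathbb{Q}$, which is visibly flat over $A\otimes_{\mathbb{Z}}^{\mathbb{L}}\mathbb{Q}$ since $P$ is a polynomial $A$-algebra.

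For the characteristic $p$ side, I would first use the base-change compatibility of the relative animated PD-envelope (\Cref{lem:base-chg-PD-env} together with the left-adjoint nature of $\tmop{RelPDEnv}$) to identify $D/^{\mathbb{L}}p$ with the relative animated PD-envelope of the base-changed datum $(A/^{\mathbb{L}}p\twoheadrightarrow A''/^{\mathbb{L}}p,\gamma_A/^{\mathbb{L}}p)\to (P/^{\mathbb{L}}p\twoheadrightarrow R/^{\mathbb{L}}p)$. Then, via \Cref{lem:rel-PD-env-crys-con,lem:PD-env-crys-con}, I would replace this by the animated PD-envelope of the surjective crystalline context $((P/^{\mathbb{L}}p\twoheadrightarrow (P\otimes_A^{\mathbb{L}}A'')/^{\mathbb{L}}p,\delta),\,(P\otimes_A^{\mathbb{L}}A'')/^{\mathbb{L}}p\twoheadrightarrow R/^{\mathbb{L}}p)$. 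The quasisyntomicity of $A''\to R$ implies, via the transitivity sequence for $A''\to P\otimes_A^{\mathbb{L}}A''\to R$ combined with the flatness of $\mathbb{L}_{P/A}$, that $\mathbb{L}_{R/(P\otimes_A^{\mathbb{L}}A'')}[-1]$ is flat over $R$; base changing along $\mathbb{Z}\to \mathbb{F}_p$, the surjection $(P\otimes_A^{\mathbb{L}}A'')/^{\mathbb{L}}p\twoheadrightarrow R/^{\mathbb{L}}p$ is a quasiregular animated pair. Now \Cref{cor:Fp-qreg-rel-pd-env-flat} applies and gives $D/^{\mathbb{L}}p$ flat over the Frobenius twist $M\coloneqq \varphi^{\ast}_{P/^{\mathbb{L}}p\twoheadrightarrow (P\otimes_A^{\mathbb{L}}A'')/^{\mathbb{L}}p}(R/^{\mathbb{L}}p)$.

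It then remains to prove $M$ is flat over $A/^{\mathbb{L}}p$, which I expect to be the main technical obstacle and the heart of the argument. The key observation is that the Frobenius factorization is compatible with the pushout structure of animated PD-pairs: functoriality of $\varphi_{(\cdot\twoheadrightarrow\cdot,\cdot)}$ in \Cref{lem:PD-frob} identifies $\varphi_{P/^{\mathbb{L}}p\twoheadrightarrow (P\otimes_A^{\mathbb{L}}A'')/^{\mathbb{L}}p}$ with the base change of $\varphi_{A\twoheadrightarrow A''}\colon A''/^{\mathbb{L}}p\to A/^{\mathbb{L}}p$ along $A/^{\mathbb{L}}p\to P/^{\mathbb{L}}p$. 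Rewriting yields $M\simeq \varphi^{\ast}_{A\twoheadrightarrow A''}(R/^{\mathbb{L}}p)\otimes^{\mathbb{L}}_{A/^{\mathbb{L}}p}P/^{\mathbb{L}}p$; the first tensor factor is $A/^{\mathbb{L}}p$-flat because $R/^{\mathbb{L}}p$ is flat over $A''/^{\mathbb{L}}p$ and flatness is preserved under base change (\Cref{rem:flat-connective-static} and \cite[Prop~7.2.2.16]{Lurie2017}), while $P/^{\mathbb{L}}p$ is $A/^{\mathbb{L}}p$-flat since $P$ is $A$-polynomial; their tensor product is therefore $A/^{\mathbb{L}}p$-flat. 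Combining with transitivity of flatness gives $D/^{\mathbb{L}}p$ flat over $A/^{\mathbb{L}}p$, completing the characteristic $p$ step and hence the proof.
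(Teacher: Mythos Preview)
Your overall strategy matches the paper's: apply \Cref{lem:flat-loc-global}, handle the rational case via \Cref{lem:rel-pd-env-rat}, and for each prime $p$ reduce to \Cref{cor:Fp-qreg-rel-pd-env-flat} after establishing the relevant quasiregularity. Your transitivity argument for quasiregularity of $(P\otimes_A^{\mathbb{L}}A'')/^{\mathbb{L}}p \twoheadrightarrow R/^{\mathbb{L}}p$ via $A''\to P\otimes_A^{\mathbb{L}}A''\to R$ is correct and arguably cleaner than the paper's phrasing.

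However, your final identification of the Frobenius twist $M$ is wrong. Functoriality of $\varphi_{(\cdot\twoheadrightarrow\cdot)}$ gives only a commutative square, not that $\varphi_{P_0\twoheadrightarrow P_0\otimes_{A_0}^{\mathbb{L}}A_0''}$ is the base change of $\varphi_{A_0\twoheadrightarrow A_0''}$ along $A_0\to P_0$. Concretely, restricting $\varphi_{P_0\twoheadrightarrow P_0\otimes_{A_0}^{\mathbb{L}}A_0''}$ to the first factor $P_0$ recovers the absolute Frobenius $\varphi_{P_0}$, not $\mathrm{id}_{P_0}$; in the toy case $A_0=A_0''=\mathbb{F}_p$, $P_0=\mathbb{F}_p[x]$, $R_0=\mathbb{F}_p$, one gets $M=\mathbb{F}_p[x]/(x^p)$, whereas your formula gives $\mathbb{F}_p[x]$. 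The paper instead factors $\varphi_{P_0\twoheadrightarrow P_0\otimes_{A_0}^{\mathbb{L}}A_0''}$ as
\[
P_0\otimes_{A_0}^{\mathbb{L}}A_0'' \longrightarrow \varphi_{A_0}^{\ast}(P_0) \longrightarrow P_0,
\]
the second map being the \emph{relative} Frobenius of $P_0$ over $A_0$, which is flat because $P_0$ is a polynomial $A_0$-algebra. Writing $R_1\coloneqq\varphi_{A_0\twoheadrightarrow A_0''}^{\ast}(R_0)$ (flat over $A_0$ since $A_0''\to R_0$ is flat), one obtains $M\simeq R_1\otimes_{\varphi_{A_0}^{\ast}(P_0)}^{\mathbb{L}}P_0$, so $R_1\to M$ is flat and hence $A_0\to M$ is flat. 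Your conclusion is salvageable, but the computation of $M$ needs this relative-Frobenius correction.
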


\begin{proof}
  This is a ``quasi'' variant of ``flatness of PD-envelope''
  {\cite[Lem~2.42]{Bhatt2019}}. By \Cref{lem:flat-loc-global}, it suffices to
  show that $D \otimes_{\mathbb{Z}}^{\mathbb{L}} \mathbb{Q}$ is a flat $A
  \otimes_{\mathbb{Z}}^{\mathbb{L}} \mathbb{Q}$-module, and for every prime $p
  \in \mathbb{N}$, $D /^{\mathbb{L}} p$ is a flat $A /^{\mathbb{L}} p$-module.
  
  By \Cref{lem:rel-pd-env-rat}, the map $P \otimes_{\mathbb{Z}}^{\mathbb{L}}
  \mathbb{Q} \rightarrow D \otimes_{\mathbb{Z}}^{\mathbb{L}} \mathbb{Q}$ is an
  equivalence. Since $A \rightarrow P$ is flat, so is the map $A
  \otimes_{\mathbb{Z}}^{\mathbb{L}} \mathbb{Q} \rightarrow D
  \otimes_{\mathbb{Z}}^{\mathbb{L}} \mathbb{Q}$.
  
  For every prime $p \in \mathbb{N}$, by base change property (a relative
  version of \Cref{lem:PD-env-compat-base-chg}, with a similar proof), $D_0
  \assign D /^{\mathbb{L}} p$ is the animated PD-envelope of $P /^{\mathbb{L}}
  p \twoheadrightarrow R /^{\mathbb{L}} p$ relative to the animated PD-pair
  $(A \twoheadrightarrow A'', \gamma_A)$. To simplify notations, we let $P_0
  \assign P /^{\mathbb{L}} p, R_0 \assign R /^{\mathbb{L}} p, A_0 \assign A
  /^{\mathbb{L}} p, A_0'' \assign A'' /^{\mathbb{L}} p$. Since $A \rightarrow
  R$ is quasisyntomic, so is $A_0 \rightarrow R_0$. Consider the transitivity
  sequence
  \[ L_{P_0 / A_0} \otimes_{P_0}^{\mathbb{L}} R_0 \longrightarrow L_{R_0 /
     A_0} \longrightarrow L_{R_0 / P_0} \]
  For every static $R_0$-module $M$, we get a fiber sequence
  \[ L_{P_0 / A_0} \otimes_{P_0}^{\mathbb{L}} M \longrightarrow L_{R_0 / A_0}
     \otimes_{R_0}^{\mathbb{L}} M \longrightarrow L_{R_0 / P_0}
     \otimes_{R_0}^{\mathbb{L}} M \]
  Since $P_0$ is a polynomial $A_0$-algebra, $\mathbb{L}_{P_0 / A_0}$ is a
  flat $P_0$-module. The map $A_0 \rightarrow R_0$ is quasisyntomic, therefore
  $\pi_{\ast} (L_{R_0 / A_0} \otimes_{R_0}^{\mathbb{L}} M) \cong 0$ for
  $\mathord{\ast} \neq 0, 1$. It follows that $\pi_{\ast} (L_{R_0 / P_0}
  \otimes_{R_0}^{\mathbb{L}} M) \cong 0$ for $\mathord{\ast} \neq 0, 1$.
  Furthermore, since $P_0 \rightarrow R_0$ is surjective, $\pi_0 (L_{R_0 /
  P_0} \otimes_{R_0}^{\mathbb{L}} M) \cong 0$. It follows that $P_0
  \twoheadrightarrow R_0$ is a quasiregular animated pair. By
  \Cref{cor:Fp-qreg-rel-pd-env-flat}, $D_0$ is a flat $\varphi_{P_0
  \twoheadrightarrow P_0 \otimes_{A_0}^{\mathbb{L}} A_0''}^{\ast}
  (R_0)$-module where $\varphi_{P_0 \twoheadrightarrow P_0
  \otimes_{A_0}^{\mathbb{L}} A_0''} \of P_0 \otimes_{A_0}^{\mathbb{L}} A_0''
  \rightarrow P_0$ is the Frobenius map (\Cref{lem:PD-frob}). It remains to
  see that the composite map $A_0 \rightarrow P_0 \rightarrow \varphi_{P_0
  \twoheadrightarrow P_0 \otimes_{A_0}^{\mathbb{L}} A_0''}^{\ast} (R_0) = R_0
  \otimes_{P_0 \otimes_{A_0}^{\mathbb{L}} A_0''}^{\mathbb{L}} P_0$ is flat,
  where the second map is the ``map into the second factor''.
  
  We note that the Frobenius $\varphi_{P_0 \twoheadrightarrow P_0
  \otimes_{A_0}^{\mathbb{L}} A_0''}$ factors as $P_0
  \otimes_{A_0}^{\mathbb{L}} A_0'' \rightarrow \varphi_{A_0}^{\ast} (P_0)
  \rightarrow P_0$ where the second map is the Frobenius of $P_0$ relative to
  $A_0$. Let $R_1$ denote $R_0 \otimes_{A_0'', \varphi_{A_0 \twoheadrightarrow
  A_0''}}^{\mathbb{L}} A_0$. Since $A_0'' \rightarrow R_0$ is flat, so is $A_0
  \rightarrow R_1$, and we have
  \[ \varphi_{P_0 \twoheadrightarrow P_0 \otimes_{A_0}^{\mathbb{L}}
     A_0''}^{\ast} (R_0) \simeq R_1 \otimes_{\varphi_{A_0}^{\ast}
     (P_0)}^{\mathbb{L}} P_0 \]
  as a pushout of $A_0$-algebras. The relative Frobenius $\varphi_{A_0}^{\ast}
  (P_0) \rightarrow P_0$ is flat, therefore the map $R_1 \rightarrow R_1
  \otimes_{\varphi_{A_0}^{\ast} (P_0)}^{\mathbb{L}} P_0$. The result then
  follows since flatness is stable under composition.
\end{proof}

\begin{remark}
  If we examine the proof of \Cref{lem:flat-pd-env} closely, we see that,
  instead of being a polynomial, what we really need to impose on the map $A
  \rightarrow P$ is that the map is {\tmdfn{quasismooth}} (i.e. it is flat and
  $L_{P / A}$ is a flat $P$-module), and for every prime $p \in \mathbb{N}$,
  the Frobenius of $P /^{\mathbb{L}} p$ relative to $A /^{\mathbb{L}} p$ is
  flat.
\end{remark}

\section{Animated prismatic structures}\label{sec:prism}

We fix a prime $p \in \mathbb{N}$. In this section, we will develop the theory
of animated $\delta$-rings, that of animated $\delta$-pairs, a non-complete
theory of prisms and prove a variant of the Hodge--Tate comparison, from which
we deduce a result about ``flat covers of the final object''. Almost every
ring that we will discuss is a $\mathbb{Z}_{(p)}$-algebra, we will simply
denote $\tmop{Pair}^{\tmop{an}}_{\mathbb{Z}_{(p)}}$ by
$\tmop{Pair}^{\tmop{an}}$ and $\tmop{Pair}^{\gamma,
\tmop{an}}_{\mathbb{Z}_{(p)}}$ by $\tmop{Pair}^{\gamma, \tmop{an}}$.

\subsection{Animated $\delta$-rings and $\delta$-pairs}In this section, we
will define {\tmdfn{animated $\delta$-rings}} and {\tmdfn{animated
$\delta$-pairs}} and discuss the interaction between the $\delta$-structure
and the PD-structure. Recall that

\begin{definition}[{\cite[Def~2.1]{Bhatt2019}}]
  A {\tmdfn{$\delta$-ring}} is a pair $(R, \delta)$ where $R$ is a
  $\mathbb{Z}_{(p)}$-algebra and $\delta \of R \rightarrow R$ is an
  endomorphism of the underlying set $R$ such that
  \begin{enumerate}
    \item $\delta (x + y) = \delta (x) + \delta (y) - P (x, y)$ for all $x, y
    \in R$ where $P (X, Y) \in \mathbb{Z} [X, Y]$ is the polynomial
    \[ \frac{(X + Y)^p - X^p - Y^p}{p} \assign \sum_{j = 1}^{p - 1}
       \frac{1}{p}  \binom{p}{j} X^{p - j} Y^j \]
    \item $\delta (xy) = x^p \delta (y) + y^p \delta (x) + p \delta (x) \delta
    (y)$.
    
    \item $\delta (1) = 0$.
  \end{enumerate}
  A {\tmdfn{map $f \of (R, \delta) \rightarrow (S, \delta)$ of
  $\delta$-rings}} is a map $f \of R \rightarrow S$ of rings such that $f
  \circ \delta = \delta \circ g$ as maps of sets. These form the
  {\tmdfn{$1$-category of $\delta$-rings}}, denoted by $\tmop{Ring}_{\delta}$.
\end{definition}

\begin{remark}[{\cite[Rem~2.2]{Bhatt2019}}]
  \label{rem:delta-ring-frob}Given a $\delta$-ring $(R, \delta)$, we write
  $\varphi \of R \rightarrow R$ for the map $x \mapsto x^p + p \delta (x)$.
  Then $\varphi$ is a ring endomorphism of $R$ which lifts the Frobenius map
  $R / p \rightarrow R / p$, i.e. $\varphi (x) \equiv x^p \hspace{0.2em}
  \pmod{p}$ for every $x \in R$.
\end{remark}

The $1$-category $\tmop{Ring}_{\delta}$ admits an initial object
$\mathbb{Z}_{(p)}$ {\cite[Ex~2.6]{Bhatt2019}}, and more generally, all small
colimits and small limits, and the forgetful functor $\tmop{Ring}_{\delta}
\rightarrow \tmop{Alg}_{\mathbb{Z}_{(p)}}$ preserves them
{\cite[Rem~2.7]{Bhatt2019}}. The forgetful functor $\tmop{Ring}_{\delta}
\rightarrow \tmop{Set}$ admits a left adjoint $\tmop{Set} \rightarrow
\tmop{Ring}_{\delta}$, which sends a set $S$ to the free $\delta$-ring
generated by $S$, denoted by $\mathbb{Z}_{(p)} \{ S \}$. Indeed, when $S = \{
x \}$ is a singleton, it is given by the free $\delta$-ring $\mathbb{Z}_{(p)}
\{ x \}$ of which the underlying $\mathbb{Z}_{(p)}$-algebra is isomorphic to
the polynomial $\mathbb{Z}_{(p)}$-algebra $\mathbb{Z}_{(p)} [x, \delta (x),
\delta^2 (x), \ldots]$ {\cite[Lem~2.11]{Bhatt2019}}, and the general case
follows by taking the coproduct of $S$-copies of $\mathbb{Z}_{(p)} \{ x \}$.
It then follows from \Cref{cor:ani-adjoint-funs} that

\begin{lemma}
  \label{lem:delta-ring-1-proj-gen}The $1$-category $\tmop{Ring}_{\delta}$ is
  $1$-projectively generated, therefore presentable.
\end{lemma}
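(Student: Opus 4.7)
The plan is to invoke \Cref{cor:ani-adjoint-funs} applied to the free-forgetful adjunction $\tmop{Set} \underset{U}{\overset{F}{\longrightleftarrows}} \tmop{Ring}_{\delta}$, where $F(S) = \mathbb{Z}_{(p)} \{ S \}$ is the free $\delta$-ring on a set $S$ (whose existence as a left adjoint was recalled just before the statement) and $U$ is the forgetful functor. This is in direct analogy with the remark after the examples in \Cref{subsec:main-tech}, which observes that the compact $1$-projective generators in the projectively generated categories $\tmop{Mod}_R$ and $\tmop{Ring}$ arise from such adjunctions via \Cref{prop:adjoint-n-proj-gen}.

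Concretely, I would verify the three hypotheses of \Cref{cor:ani-adjoint-funs} in turn. First, $\tmop{Set}$ is (trivially) $1$-projectively generated, with $\{\ast\}$ a compact $1$-projective generator. Second, $U$ is conservative: a map of $\delta$-rings which is bijective on underlying sets is an isomorphism of $\delta$-rings, since the inverse is automatically a ring map and automatically commutes with $\delta$. Third, the excerpt already cites \cite[Rem~2.7]{Bhatt2019} for the fact that $\tmop{Ring}_{\delta}$ admits all small colimits and that the forgetful functor $\tmop{Ring}_{\delta} \rightarrow \tmop{Alg}_{\mathbb{Z}_{(p)}}$ preserves them; composing with the forgetful $\tmop{Alg}_{\mathbb{Z}_{(p)}} \rightarrow \tmop{Set}$ (which preserves filtered colimits and reflexive coequalizers) shows that $U$ preserves filtered colimits and reflexive coequalizers. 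Thus all hypotheses of \Cref{cor:ani-adjoint-funs} are satisfied, and we conclude that $\tmop{Ring}_{\delta}$ is $1$-projectively generated. Presentability is then automatic from \Cref{def:proj-gen} (any $1$-projectively generated $1$-category is presentable).

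Unpacking the proof of \Cref{cor:ani-adjoint-funs} via \Cref{prop:adjoint-n-proj-gen} further yields an explicit set of compact $1$-projective generators, namely the free $\delta$-rings $\{ \mathbb{Z}_{(p)} \{ x_1, \ldots, x_n \} \mid n \in \mathbb{N} \}$ on finite sets, obtained as the images under $F$ of the compact $1$-projective generator of $\tmop{Set}$ and finite coproducts thereof. I expect no real obstacle here: everything reduces to the three bookkeeping checks above, all of which are either explicit in the paragraph immediately preceding the lemma or are standard facts about forgetful functors from algebraic categories to $\tmop{Set}$.
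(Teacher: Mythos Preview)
Your proposal is correct and takes exactly the same approach as the paper: the paper's proof is simply the phrase ``It then follows from \Cref{cor:ani-adjoint-funs} that'', applied to the free-forgetful adjunction $\tmop{Set} \rightleftarrows \tmop{Ring}_{\delta}$ set up in the preceding paragraph. Your write-up just spells out the hypothesis checks in more detail; the only minor quibble is that presentability is more directly a consequence of \Cref{prop:struct-n-proj-gen-cats} than of \Cref{def:proj-gen}.
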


\begin{definition}
  \label{def:ani-delta-ring}The {\tmdfn{$\infty$-category of animated
  $\delta$-rings}}, denoted by $\tmop{CAlg}_{\delta}^{\tmop{an}}$, is defined
  to be the animation $\tmop{Ani} (\tmop{Ring}_{\delta})$. An {\tmdfn{animated
  $\delta$-ring}} is formally denoted by $(R, \delta)$, where $R$ is the image
  of $(R, \delta)$ under the forgetful functor
  $\tmop{CAlg}_{\delta}^{\tmop{an}} \rightarrow
  \tmop{CAlg}_{\mathbb{Z}_{(p)}}^{\tmop{an}}$, or simply by $R$ when the
  $\delta$-structure is unambiguously obvious.
\end{definition}

\begin{remark}
  In {\cite[App~A]{Bhatt2022a}}, they give an alternative description of
  animated $\delta$-rings in terms of derived Frobenius lift, similar to
  \Cref{rem:delta-ring-frob}.
\end{remark}

By the adjoint functor theorem, the forgetful functor $\tmop{Ring}_{\delta}
\rightarrow \tmop{Alg}_{\mathbb{Z}_{(p)}}$ admits a left adjoint. A further
application of \Cref{cor:ani-adjoint-funs} leads to

\begin{lemma}
  \label{lem:ani-ring-delta-ring-adjoint}There is a pair
  $\tmop{CAlg}_{\mathbb{Z}_{(p)}}^{\tmop{an}} \rightleftarrows
  \tmop{CAlg}_{\delta}^{\tmop{an}}$ of adjoint functors, being the animation
  of the pair $\tmop{Alg}_{\mathbb{Z}_{(p)}} \rightleftarrows
  \tmop{Ring}_{\delta}$ of adjoint functors. We will call the functor
  $\tmop{CAlg}_{\delta}^{\tmop{an}} \rightarrow
  \tmop{CAlg}_{\mathbb{Z}_{(p)}}^{\tmop{an}}$ the {\tmdfn{free animated
  $\delta$-ring functor}}\footnote{The non-animated version was called the
  ``$\delta$-envelope'' in {\cite[Def~1.1]{Gros2020}}.}. The functor
  $\tmop{CAlg}_{\delta}^{\tmop{an}} \rightarrow
  \tmop{CAlg}_{\mathbb{Z}_{(p)}}^{\tmop{an}}$, called the {\tmdfn{forgetful
  functor}}, is conservative and preserves small colimits (and as a right
  adjoint, it preserves small limits as well).
\end{lemma}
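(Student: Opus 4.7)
The plan is to apply \Cref{cor:ani-adjoint-funs} directly to the adjoint pair $\tmop{Alg}_{\mathbb{Z}_{(p)}} \rightleftarrows \tmop{Ring}_{\delta}$ where the right adjoint $G$ is the forgetful functor and the left adjoint $F$ is the free $\delta$-ring functor (whose existence on sets is recalled in the excerpt and extended to rings by the adjoint functor theorem, as already used to prove \Cref{lem:delta-ring-1-proj-gen}). To invoke \Cref{cor:ani-adjoint-funs}, I need to verify the three hypotheses: that $\tmop{Alg}_{\mathbb{Z}_{(p)}}$ is $1$-projectively generated, that $\tmop{Ring}_{\delta}$ admits filtered colimits and reflexive coequalizers with $G$ preserving them, and that $G$ is conservative.

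First I would observe that $\tmop{Alg}_{\mathbb{Z}_{(p)}}$ is $1$-projectively generated with polynomial rings $\mathbb{Z}_{(p)}[x_1,\ldots,x_n]$ forming a set of compact $1$-projective generators (this is the $\mathbb{Z}_{(p)}$-relative variant of the archetypal example recalled in \Cref{subsec:main-tech}). For the second condition, I would cite the fact recalled just before \Cref{lem:delta-ring-1-proj-gen} that $\tmop{Ring}_{\delta}$ admits all small colimits and small limits, and that the forgetful functor to $\tmop{Alg}_{\mathbb{Z}_{(p)}}$ preserves them (this is a basic fact about algebras over a free algebraic monad). Conservativity of $G$ follows since the further forgetful functor $\tmop{Ring}_{\delta} \rightarrow \tmop{Set}$ is conservative (a map of $\delta$-rings is an isomorphism iff its underlying map of sets is bijective, and the latter factors through $\tmop{Alg}_{\mathbb{Z}_{(p)}}$).

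Having verified the hypotheses, \Cref{cor:ani-adjoint-funs} immediately produces the adjoint pair $\tmop{Ani}(\tmop{Alg}_{\mathbb{Z}_{(p)}}) \rightleftarrows \tmop{Ani}(\tmop{Ring}_{\delta})$ and asserts that $\tmop{Ani}(G)$ is conservative and preserves sifted colimits. Moreover, since the original $G$ preserves all small colimits (and not merely the sifted ones), the last clause of \Cref{cor:ani-adjoint-funs} upgrades this to preservation of arbitrary small colimits by $\tmop{Ani}(G)$. As a right adjoint, $\tmop{Ani}(G)$ of course also preserves small limits, giving the parenthetical remark in the statement.

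There is no real obstacle here, since every ingredient is already in place: the lemma is essentially a packaging of \Cref{cor:ani-adjoint-funs} applied to the adjunction already used to establish \Cref{lem:delta-ring-1-proj-gen}. The only minor point to be careful about is that \Cref{cor:ani-adjoint-funs} requires $G$ to preserve filtered colimits and reflexive coequalizers (rather than all small colimits) to guarantee the existence of the animated adjunction, and we separately feed in the stronger property to obtain the final colimit-preservation statement for $\tmop{Ani}(G)$.
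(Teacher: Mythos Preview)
Your proposal is correct and takes exactly the same approach as the paper: the paper's entire proof is the single sentence ``A further application of \Cref{cor:ani-adjoint-funs} leads to'' immediately preceding the lemma, and you have simply spelled out the verification of the hypotheses. One tiny inaccuracy: the adjoint functor theorem is invoked \emph{after} \Cref{lem:delta-ring-1-proj-gen} (which gives presentability) to produce the left adjoint of the forgetful functor to $\tmop{Alg}_{\mathbb{Z}_{(p)}}$, not as part of its proof; \Cref{lem:delta-ring-1-proj-gen} itself uses the adjunction with $\tmop{Set}$.
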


Concretely, a set of compact projective generators for
$\tmop{CAlg}_{\delta}^{\tmop{an}}$ is given by free $\delta$-rings generated
by a finite set, which spans a full subcategory $\tmop{Ring}_{\delta}^0
\subseteq \tmop{Ring}_{\delta}$. Recall that $\tmop{Ring}_{\delta}
\hookrightarrow \tmop{CAlg}_{\delta}^{\tmop{an}}$ is a full subcategory
(\Cref{rem:ani-trunc}). Now we characterize this full subcategory in terms of
the underlying animated ring:

\begin{lemma}
  \label{lem:ani-delta-ring-trunc}Let $(R, \delta) \in
  \tmop{CAlg}_{\delta}^{\tmop{an}}$ be an animated $\delta$-ring. Then the
  followings are equivalent:
  \begin{enumerate}
    \item \label{item:delta-R-trunc}The animated $\delta$-ring $(R, \delta)
    \in \tmop{CAlg}_{\delta}^{\tmop{an}}$ is $n$-truncated.
    
    \item \label{item:underlying-R-trunc}The underlying animated ring $R \in
    \tmop{CAlg}_{\mathbb{Z}_{(p)}}^{\tmop{an}}$ is $n$-truncated.
    
    \item \label{item:homotopy-grps-vanish}For every $m \in \mathbb{N}_{> n}$,
    the homotopy group $\pi_m (R)$ vanishes.
  \end{enumerate}
\end{lemma}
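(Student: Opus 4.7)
The plan is to recall that an object in a projectively generated $\infty$-category is $n$-truncated if and only if the mapping space from each compact projective generator is an $n$-truncated anima, and then to identify both mapping-space criteria (in $\tmop{Ani}(\tmop{Ring}_\delta)$ and in $\tmop{Ani}(\tmop{Alg}_{\mathbb{Z}_{(p)}})$) with the condition on the underlying anima of $R$.

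First, for the equivalence \ref{item:underlying-R-trunc} $\Leftrightarrow$ \ref{item:homotopy-grps-vanish}: this is the standard characterization of $n$-truncated objects in the animation $\tmop{Ani}(\tmop{Alg}_{\mathbb{Z}_{(p)}})$, since the compact projective generators $\mathbb{Z}_{(p)}[x_1,\ldots,x_k]$ corepresent the functor sending an animated ring to the $k$-th power of its underlying anima, so $R$ is $n$-truncated iff $R^k$ is an $n$-truncated anima for all $k \in \mathbb{N}$, iff the underlying anima of $R$ is $n$-truncated, iff $\pi_m(R) \cong 0$ for $m > n$.

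Next, for \ref{item:delta-R-trunc} $\Leftrightarrow$ \ref{item:underlying-R-trunc}, the key point is that the free $\delta$-ring functor $\tmop{Set} \rightarrow \tmop{Ring}_\delta$, $S \mapsto \mathbb{Z}_{(p)}\{S\}$, and the composition of forgetful functors $\tmop{Ring}_\delta \rightarrow \tmop{Alg}_{\mathbb{Z}_{(p)}} \rightarrow \tmop{Set}$ form an adjoint pair satisfying the hypotheses of \Cref{cor:ani-adjoint-funs}. Animating, we obtain an adjunction $\mathcal{S} \rightleftarrows \tmop{Ani}(\tmop{Ring}_\delta)$ whose right adjoint factors through the forgetful functor $\tmop{Ani}(\tmop{Ring}_\delta) \rightarrow \tmop{Ani}(\tmop{Alg}_{\mathbb{Z}_{(p)}})$. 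Consequently, for any finite set $S$ and any animated $\delta$-ring $(R,\delta)$, one has a natural equivalence
\[
\tmop{Map}_{\tmop{Ani}(\tmop{Ring}_\delta)}(\mathbb{Z}_{(p)}\{S\},(R,\delta)) \simeq R^{|S|}
\]
of animae, where on the right $R$ denotes the underlying anima of $R$. Since the objects $\mathbb{Z}_{(p)}\{S\}$ for $S$ finite form a set of compact projective generators of $\tmop{Ani}(\tmop{Ring}_\delta)$ (\Cref{lem:delta-ring-1-proj-gen} and the discussion following \Cref{lem:ani-ring-delta-ring-adjoint}), the animated $\delta$-ring $(R,\delta)$ is $n$-truncated iff $R^k$ is an $n$-truncated anima for all $k$, iff the underlying anima of $R$ is $n$-truncated. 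By the argument of the previous paragraph, this is equivalent to \ref{item:underlying-R-trunc}.

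The only mildly subtle step is the identification of the mapping space out of the free $\delta$-ring, but this is formal from the adjunction between $\tmop{Set}$ and $\tmop{Ring}_\delta$ passed through \Cref{cor:ani-adjoint-funs}, so no new computation is needed. I do not expect any genuine obstacle.
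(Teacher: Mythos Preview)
Your proof is correct and follows essentially the same approach as the paper: both arguments characterize $n$-truncatedness via the mapping spaces from the compact projective generators $\mathbb{Z}_{(p)}\{S\}$ and identify these with powers of the underlying anima of $R$ via the free--forgetful adjunction. The paper is slightly terser, reducing at once to the single generator $\mathbb{Z}_{(p)}\{x\}$ and citing \cite[Prop~25.1.3.3]{Lurie2018} for \ref{item:underlying-R-trunc}~$\Leftrightarrow$~\ref{item:homotopy-grps-vanish}, but the substance is the same.
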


\begin{proof}
  The equivalence of
  parts~\ref{item:underlying-R-trunc}~and~\ref{item:homotopy-grps-vanish} is
  {\cite[Prop~25.1.3.3]{Lurie2018}}. On the other hand,
  part~\ref{item:delta-R-trunc} is equivalent to say that, for every free
  $\delta$-ring $F$ generated by a finite set, the mapping anima
  $\tmop{Map}_{\tmop{CAlg}_{\delta}^{\tmop{an}}} (F, R)$ is $n$-truncated by
  {\cite[Rem~5.5.8.26]{Lurie2009}}. Since any such $F$ is a finite coproduct
  of $\mathbb{Z}_{(p)} \{ x \}$, it is equivalent to
  $\tmop{Map}_{\tmop{CAlg}_{\delta}^{\tmop{an}}} (\mathbb{Z}_{(p)} \{ x \},
  R)$ being $n$-truncated, which is equivalent to
  part~\ref{item:homotopy-grps-vanish} since
  \[ \tmop{Map}_{\tmop{CAlg}_{\delta}^{\tmop{an}}} (\mathbb{Z}_{(p)} \{ x \},
     R) \simeq \tmop{Map}_{\tmop{An}} (\{ x \}, R) \simeq R \]
\end{proof}

Now we define the Frobenius map on animated $\delta$-rings. We note that the
identity functor $\tmop{id} \of \tmop{CAlg}_{\delta}^{\tmop{an}} \rightarrow
\tmop{CAlg}_{\delta}^{\tmop{an}}$ is the animation of the identity functor
$\tmop{id} \of \tmop{Ring}_{\delta} \rightarrow \tmop{Ring}_{\delta}$.

\begin{definition}
  The {\tmdfn{Frobenius endomorphism}} is the endomorphism of the identity
  functor $\tmop{id} \of \tmop{CAlg}_{\delta}^{\tmop{an}} \rightarrow
  \tmop{CAlg}_{\delta}^{\tmop{an}}$ defined to be the animation of the
  Frobenius endomorphism (described in \Cref{rem:delta-ring-frob}) of the
  identity functor $\tmop{id} \of \tmop{Ring}_{\delta} \rightarrow
  \tmop{Ring}_{\delta}$.
\end{definition}

Recall that a {\tmdfn{$\delta$-pair}} is the datum $(A, I)$ of a $\delta$-ring
$A$ along with an ideal $I \subseteq A$ {\cite[Def~3.2]{Bhatt2019}}. Similar
to animated pairs, we have an ``animated version'' of $\delta$-pairs:

\begin{definition}
  \label{def:ani-delta-pair}The $\infty$-category
  $\tmop{Pair}^{\tmop{an}}_{\delta}$ of {\tmdfn{animated $\delta$-pairs}} is
  defined to be the fiber product $\tmop{CAlg}_{\delta}^{\tmop{an}}
  \times_{\tmop{CAlg}_{\mathbb{Z}_{(p)}}^{\tmop{an}}} \tmop{Pair}^{\tmop{an}}$
  where the functor $\tmop{CAlg}_{\delta}^{\tmop{an}} \rightarrow
  \tmop{CAlg}_{\mathbb{Z}_{(p)}}^{\tmop{an}}$ is the forgetful functor and the
  functor $\tmop{Pair}^{\tmop{an}} \rightarrow
  \tmop{CAlg}_{\mathbb{Z}_{(p)}}^{\tmop{an}}$ is the evaluation $(A
  \twoheadrightarrow A'') \mapsto A$. An {\tmdfn{animated $\delta$-pair}} is
  an object in $\tmop{Pair}^{\tmop{an}}_{\delta}$ which we will denote by
  $((A, \delta), A \twoheadrightarrow A'')$, or simply by $A
  \twoheadrightarrow A''$ when there is no ambiguity.
\end{definition}

It follows from \Cref{lem:ani-ring-delta-ring-adjoint} and
{\cite[Lem~5.4.5.5]{Lurie2009}} which characterizes colimits in the fiber
products, that

\begin{lemma}
  \label{lem:forget-ani-delta-pair-ani-pair}The $\infty$-category
  $\tmop{Pair}^{\tmop{an}}_{\delta}$ is cocomplete, and the forgetful functor
  $\tmop{Pair}^{\tmop{an}}_{\delta} \rightarrow \tmop{Pair}^{\tmop{an}}$ is
  conservative and preserves small colimits.
\end{lemma}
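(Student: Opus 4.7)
The plan is to apply \cite[Lem~5.4.5.5]{Lurie2009} to the pullback square defining $\tmop{AniPair}_{\delta}$, after verifying that all three categories in the span are cocomplete and that both functors into $\tmop{Ani}(\tmop{Alg}_{\mathbb{Z}_{(p)}})$ preserve small colimits. First, the vertex categories are cocomplete: $\tmop{Ani}(\tmop{Ring}_{\delta})$ is cocomplete because it is projectively generated (\Cref{lem:delta-ring-1-proj-gen} combined with the general fact about animations), $\tmop{AniPair}$ is cocomplete by its description as the nonabelian derived category $\mathcal{P}_\Sigma(\mathcal{D}^0)$ (equivalently by \Cref{thm:ani-smith-eq}), and $\tmop{Ani}(\tmop{Alg}_{\mathbb{Z}_{(p)}})$ is presentable hence cocomplete.

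Next, I would check that the structure functors preserve small colimits. The forgetful functor $\tmop{Ani}(\tmop{Ring}_{\delta}) \to \tmop{Ani}(\tmop{Alg}_{\mathbb{Z}_{(p)}})$ preserves small colimits by \Cref{lem:ani-ring-delta-ring-adjoint}. The functor $\tmop{AniPair} \to \tmop{Ani}(\tmop{Alg}_{\mathbb{Z}_{(p)}})$ given by $(A \twoheadrightarrow A'') \mapsto A$ is, under the identification of \Cref{thm:ani-smith-eq}, the evaluation at $[0] \in \Delta^1$ restricted to the full subcategory of $1$-connective maps; by \Cref{lem:ani-ring-as-pair} this is a left adjoint (namely to $A \mapsto (\tmop{id}_A \of A \to A)$), so it preserves all small colimits. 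Applying \cite[Lem~5.4.5.5]{Lurie2009} now yields that $\tmop{AniPair}_{\delta}$ admits all small colimits and that both projections, in particular the projection $\tmop{AniPair}_{\delta} \to \tmop{AniPair}$, preserve them.

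For conservativity, I would argue directly from the description of the fiber product: a morphism $\varphi$ in $\tmop{AniPair}_{\delta}$ is an equivalence if and only if both its projections to $\tmop{Ani}(\tmop{Ring}_{\delta})$ and to $\tmop{AniPair}$ are equivalences. Suppose the image in $\tmop{AniPair}$ is an equivalence. Composing with the forgetful $\tmop{AniPair} \to \tmop{Ani}(\tmop{Alg}_{\mathbb{Z}_{(p)}})$ (which is the underlying ring), the image in $\tmop{Ani}(\tmop{Alg}_{\mathbb{Z}_{(p)}})$ of the $\delta$-ring projection is an equivalence. Since the forgetful functor $\tmop{Ani}(\tmop{Ring}_{\delta}) \to \tmop{Ani}(\tmop{Alg}_{\mathbb{Z}_{(p)}})$ is conservative by \Cref{lem:ani-ring-delta-ring-adjoint}, the projection of $\varphi$ to $\tmop{Ani}(\tmop{Ring}_{\delta})$ is also an equivalence, hence $\varphi$ itself is. No step is a real obstacle here — the whole argument is a formal assembly of the pullback description of $\tmop{AniPair}_{\delta}$ together with the previously established properties of the two forgetful functors.
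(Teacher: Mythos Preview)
Your approach is exactly the paper's approach: invoke \cite[Lem~5.4.5.5]{Lurie2009} on the pullback defining $\tmop{AniPair}_{\delta}$, using \Cref{lem:ani-ring-delta-ring-adjoint} for the colimit-preservation and conservativity of the $\delta$-side forgetful functor. The paper's proof is a one-line citation of exactly these two inputs; you have simply spelled out the verification.

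One small slip: you cite \Cref{lem:ani-ring-as-pair} to say that $\tmop{ev}_{[0]}\colon (A\twoheadrightarrow A'')\mapsto A$ is \emph{left} adjoint to $A\mapsto(\tmop{id}_A)$, but that lemma states it is the \emph{right} adjoint (the left adjoint is $\tmop{ev}_{[1]}$). Your desired conclusion that $\tmop{ev}_{[0]}$ preserves small colimits is nonetheless correct, but for a different reason: the full subcategory $\tmop{AniPair}\subseteq\tmop{Fun}(\Delta^1,\tmop{Ani}(\tmop{Ring}))$ is stable under small colimits, and in the arrow category colimits are computed pointwise, so $\tmop{ev}_{[0]}$ preserves them. (Alternatively, $\tmop{ev}_{[0]}$ on $\tmop{AniPair}$ is left adjoint to $A\mapsto(A\to 0)$.) With this correction the argument goes through unchanged.
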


Explicitly, an {\tmdfn{animated $\delta$-pair}} is given by an animated
$\delta$-ring $(A, \delta)$ along with a surjection $A \twoheadrightarrow A''$
of animated $\mathbb{Z}_{(p)}$-algebras. Since $\tmop{Pair} \subseteq
\tmop{Pair}^{\tmop{an}}$ is a full subcategory
(\Cref{prop:infty-cat-forget-embedding}) and so is $\tmop{Ring}_{\delta}
\subseteq \tmop{CAlg}_{\delta}^{\tmop{an}}$ (\Cref{rem:ani-trunc}), the
$1$-category of $\delta$-pairs is a full subcategory of the $\infty$-category
of animated $\delta$-pairs. Similar to the $\infty$-category of animated
pairs, we have

\begin{lemma}
  \label{lem:adjunct-ani-pair-ani-delta-pair}The forgetful functor
  $\tmop{Pair}^{\tmop{an}}_{\delta} \rightarrow \tmop{Pair}^{\tmop{an}}$
  admits a left adjoint, and the $\infty$-category
  $\tmop{Pair}^{\tmop{an}}_{\delta}$ is projectively generated.
\end{lemma}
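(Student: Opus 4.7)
My plan is to exhibit the left adjoint explicitly using the free animated $\delta$-ring adjunction, and then transfer projective generation from $\tmop{AniPair}$ along that adjunction via \Cref{prop:adjoint-proj-gen}.

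Let $F \of \tmop{Ani}(\tmop{Alg}_{\mathbb{Z}_{(p)}}) \to \tmop{Ani}(\tmop{Ring}_{\delta})$ denote the free animated $\delta$-ring functor from \Cref{lem:ani-ring-delta-ring-adjoint}, with unit $\eta_A \of A \to F(A)$. For an animated pair $(A \twoheadrightarrow A'')$, I would define
\[
L(A \twoheadrightarrow A'') \assign \bigl((F(A), \delta),\ F(A) \twoheadrightarrow A'' \otimes_A^{\mathbb{L}} F(A)\bigr),
\]
where the target surjection is the base change of $A \twoheadrightarrow A''$ along $\eta_A$; this remains a surjection in the sense of \Cref{def:surj-map-ani-ring} because surjectivity on $\pi_0$ is preserved under base change of animated rings. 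The key step is a mapping-space calculation: for any $((B, \delta_B), B \twoheadrightarrow B'') \in \tmop{AniPair}_{\delta}$, I would unfold the universal property of the pushout together with the adjunction $F \dashv U$ to obtain a natural equivalence
\[
\tmop{Map}_{\tmop{AniPair}_{\delta}}\!\bigl(L(A \twoheadrightarrow A''),\ ((B,\delta_B), B \twoheadrightarrow B'')\bigr) \simeq \tmop{Map}_{\tmop{AniPair}}\!\bigl((A \twoheadrightarrow A''),\ (B \twoheadrightarrow B'')\bigr),
\]
which exhibits $L$ as left adjoint to the forgetful functor.

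Once the adjunction is established, projective generation of $\tmop{AniPair}_{\delta}$ follows by invoking \Cref{prop:adjoint-proj-gen}: the forgetful functor $\tmop{AniPair}_{\delta} \to \tmop{AniPair}$ is conservative and preserves small colimits (in particular filtered colimits and reflexive coequalizers) by \Cref{lem:forget-ani-delta-pair-ani-pair}, the target $\tmop{AniPair}$ is projectively generated by \Cref{thm:ani-smith-eq}, and a left adjoint $L$ has just been produced. As a byproduct, a set of compact projective generators is given by the images under $L$ of the generators $\{\mathbb{Z}_{(p)}[X, Y] \twoheadrightarrow \mathbb{Z}_{(p)}[X] \barsuchthat X, Y \in \tmop{Fin}\}$ of $\tmop{AniPair}$; concretely, since $F$ sends $\mathbb{Z}_{(p)}[X, Y]$ to the free $\delta$-ring $\mathbb{Z}_{(p)}\{X, Y\}$, these generators take the form $\bigl((\mathbb{Z}_{(p)}\{X, Y\}, \delta),\ \mathbb{Z}_{(p)}\{X, Y\} \twoheadrightarrow \mathbb{Z}_{(p)}[X] \otimes_{\mathbb{Z}_{(p)}[X, Y]}^{\mathbb{L}} \mathbb{Z}_{(p)}\{X, Y\}\bigr)$.

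I do not anticipate any serious obstacle: the construction is forced by the fiber product structure and the known ring-level adjunction, and the verification is the standard pattern of transferring projective generation along a monadic adjunction between $\infty$-categories.
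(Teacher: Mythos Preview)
Your proposal is correct and follows essentially the same approach as the paper: construct the left adjoint by applying the free animated $\delta$-ring functor to the source and base-changing the surjection, then transfer projective generation along the resulting adjunction. Your invocation of \Cref{prop:adjoint-proj-gen} together with \Cref{lem:forget-ani-delta-pair-ani-pair} is in fact the cleanest way to phrase this step (the paper's citation of \Cref{cor:ani-adjoint-funs} is slightly indirect here), and your computation of the generators agrees with the paper's explicit description $\{(\mathbb{Z}_{(p)}\{X,Y\},(Y))\}$ since $\mathbb{Z}_{(p)}[X] \otimes_{\mathbb{Z}_{(p)}[X,Y]}^{\mathbb{L}} \mathbb{Z}_{(p)}\{X,Y\} \simeq \mathbb{Z}_{(p)}\{X,Y\}/(Y)$ by flatness.
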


\begin{proof}
  The left adjoint $\tmop{Pair}^{\tmop{an}} \rightarrow
  \tmop{Pair}^{\tmop{an}}_{\delta}$ concretely given by $(A \twoheadrightarrow
  A'') \mapsto ((A^{\delta}, \delta), (A^{\delta} \twoheadrightarrow A''
  \otimes_A^{\mathbb{L}} A^{\delta}))$ where $A^{\delta}$ is the image of $A
  \in \tmop{CAlg}_{\mathbb{Z}_{(p)}}^{\tmop{an}}$ under the free animated
  $\delta$-ring functor $\tmop{CAlg}_{\mathbb{Z}_{(p)}}^{\tmop{an}}
  \rightarrow \tmop{CAlg}_{\delta}^{\tmop{an}}$. Now the result follows from
  \Cref{cor:ani-adjoint-funs,lem:delta-ring-1-proj-gen,lem:ani-ring-delta-ring-adjoint}.
\end{proof}

Concretely, a set of compact projective generators for
$\tmop{Pair}^{\tmop{an}}_{\delta}$ is given by the set $\{ (\mathbb{Z}_{(p)}
\{ X, Y \}, (Y)) \barsuchthat X, Y \in \tmop{Fin} \}$ of {\tmdfn{standard
$\delta$-pairs}}, which spans a full subcategory
$\tmop{Pair}^{\tmop{st}}_{\delta} \subseteq \tmop{Pair}^{\tmop{an}}_{\delta}$.
Now we turn to the PD-structure. Recall that

\begin{lemma}[{\cite[Lem~2.11]{Bhatt2019}}]
  \label{lem:frob-free-delta-flat}The Frobenius endomorphism
  $\varphi_{\mathbb{Z}_{(p)} \{ x \}} \of \mathbb{Z}_{(p)} \{ x \} \rightarrow
  \mathbb{Z}_{(p)} \{ x \}$ on the free $\delta$-ring $\mathbb{Z}_{(p)} \{ x
  \}$, which is in fact induced by $x \mapsto \varphi (x) = x^p + p \delta
  (x)$, is faithfully flat. The same holds for free $\delta$-rings generated
  by arbitrary sets (not-necessarily finite).
\end{lemma}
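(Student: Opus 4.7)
The plan is to reduce to a single generator and then exhibit $\mathbb{Z}_{(p)}\{x\}$ as a free module over $\varphi(\mathbb{Z}_{(p)}\{x\})$ via a graded rank-counting argument combined with a terminating rewriting.

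First I reduce to $|S|=1$. The free $\delta$-ring functor $\tmop{Set} \to \tmop{Ring}_\delta$, being a left adjoint, preserves coproducts, and coproducts in $\tmop{Ring}_\delta$ are computed as tensor products over $\mathbb{Z}_{(p)}$ of the underlying algebras. A direct check on generators shows that Frobenius on a tensor product of $\delta$-rings is the tensor product of the Frobenii (both sides send $r \otimes s$ to $\varphi(r) \otimes \varphi(s)$, using $(r \otimes s)^p = r^p \otimes s^p$ and the Leibniz rule for $\delta$). Since each $\mathbb{Z}_{(p)}\{s\}$ is $\mathbb{Z}_{(p)}$-flat (it is a polynomial ring), faithful flatness in the single-generator case (to be established below) propagates to arbitrary $S$ via finite tensor products and filtered colimits over finite subsets of $S$.

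Now set $A = \mathbb{Z}_{(p)}\{x\} \simeq \mathbb{Z}_{(p)}[x_0, x_1, \ldots]$ with $x_i = \delta^i(x)$, so that $\varphi(x_i) = y_i \assign x_i^p + p x_{i+1}$. A Jacobian check at the origin shows that the $y_i$ are algebraically independent, so $\varphi(A) = \mathbb{Z}_{(p)}[y_0, y_1, \ldots]$ is again a polynomial subring. Equip $A$ with the grading $\deg(x_i) = p^i$; then each $y_i$ is homogeneous of degree $p^{i+1}$ and $\varphi$ is degree-preserving. The main claim is that $A$ is a free $\varphi(A)$-module on the basis
\[
  B = \Bigl\{\textstyle\prod_i x_i^{a_i} \suchthat 0 \leq a_i < p,\ \text{almost all } a_i = 0 \Bigr\}.
\]
Since $1 \in B$, freeness gives a $\varphi(A)$-linear splitting $\varphi(A) \hookrightarrow A$, which immediately upgrades flatness to faithful flatness.

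To verify the claim, observe that both $A$ and $\varphi(A) \otimes_{\mathbb{Z}_{(p)}} \mathbb{Z}_{(p)}\langle B \rangle$ are graded $\mathbb{Z}_{(p)}$-free modules with finite-rank graded pieces. By uniqueness of base-$p$ expansion, $B$ has exactly one element of each degree, so its Hilbert series is $(1-t)^{-1}$, and the source has Hilbert series $(1-t)^{-1}\prod_{j \geq 1}(1-t^{p^j})^{-1} = \prod_{j \geq 0}(1-t^{p^j})^{-1}$, matching that of $A$. Since a surjection of $\mathbb{Z}_{(p)}$-free modules of equal finite rank is an isomorphism, it suffices to prove the natural map $\varphi(A) \otimes_{\mathbb{Z}_{(p)}} \mathbb{Z}_{(p)}\langle B \rangle \to A$ is surjective in each graded piece. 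This follows from the rewriting $x_i^p = y_i - p x_{i+1}$: given a monomial $m = \prod x_j^{a_j}$ with some $a_j \geq p$, apply the rewriting at the largest such index $i$ to replace $m$ by two monomials of the same total degree, both strictly smaller than $m$ in the lexicographic order on exponent tuples $(a_0, a_1, \ldots)$ read from the lowest index (the first differing coordinate is $i$, where the exponent drops from $\geq p$ to $< p$). Each graded piece contains only finitely many monomials, so this well-founded process terminates in a $\varphi(A)$-linear combination of elements of $B$.

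The main obstacle will be ensuring termination of the rewriting, since each step introduces the strictly higher-index variable $x_{i+1}$ and could naively keep expanding the set of active indices. The grading $\deg(x_i)=p^i$ resolves this: it is preserved by the rewriting, each graded piece $A_d$ has only finitely many monomials (since $a_i p^i \le d$ bounds both $i$ and each $a_i$), and the lex order from the lowest index strictly decreases at each step. Thus the rewriting is effectively a finite combinatorial process within each degree.
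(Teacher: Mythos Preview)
Your argument is correct and complete. The paper itself does not supply a proof: it simply cites \cite[Lem~2.11]{Bhatt2019} and remarks afterwards that, thanks to \Cref{lem:flat-loc-global}, one need not reduce to finitely many variables to invoke the fiberwise criterion of flatness. That hints at the Bhatt--Scholze approach: check flatness modulo $p$ (where Frobenius on $\mathbb{F}_p[x_0,x_1,\ldots]$ is visibly free on monomials with exponents $<p$) and after inverting $p$ (where $\varphi$ becomes an isomorphism since $x_{i+1}=(y_i-x_i^p)/p$), then glue via a local criterion.

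Your route is different and more direct: you exhibit $A=\mathbb{Z}_{(p)}\{x\}$ as a free $\varphi(A)$-module integrally, with the explicit basis $B$, by combining a Hilbert-series count with a terminating rewriting for surjectivity. This avoids any appeal to a fiberwise or local-global flatness principle and immediately gives faithful flatness from $1\in B$. The trade-off is that the graded rank-counting argument is specific to this polynomial situation, whereas the fiberwise approach is a general-purpose tool; but for this particular lemma your proof is cleaner and more self-contained.

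One small imprecision: in your parenthetical you say the exponent at index $i$ ``drops from $\ge p$ to $<p$'', but if $a_i\ge 2p$ this is false---it merely drops by $p$. That is all you need for strict decrease in the lex order, so the termination argument is unaffected; just adjust the wording. Also, the Jacobian check at the origin is a slightly awkward justification for algebraic independence of the $y_i$ (the matrix there is strictly subdiagonal with $p$'s); it is cleaner to observe that $\varphi$ is injective because $A$ is $p$-torsion-free and $\varphi\otimes\mathbb{Q}$ is an isomorphism, so $\varphi(A)\cong A$ is a polynomial ring on the $y_i$.
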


We remark that, thanks to \Cref{lem:flat-loc-global}, it is not necessary to
pass to the polynomial ring of finitely many variables to invoke the fiberwise
criterion of flatness.

We now relate $\delta$-structure to divided powers. Note that, for any
$p$-torsion free $\mathbb{Z}_{(p)}$-algebra $A$, any element $y \in A$ and any
$n \in \mathbb{N}$, we have
\[ \frac{y^n}{n!} \in \frac{y^n}{p^{v_p (n!)}} \tmop{GL}_1 (\mathbb{Z}_{(p)})
\]
In particular, $y^p / p!$ (resp. $y^{p^2} / (p^2) !$) differs multiplicatively
from $y^p / p$ (resp. $y^{p^2} / p^{p + 1}$) by a unit. When $A$ is a
$p$-torsion free $\delta$-ring, we have $\varphi (y) = y^p + p \delta (y)$ and
$y^p / p! \in A [p^{- 1}]$ belongs to $A$ if and only if $\varphi (y)$ is
divisible by $p$.

Now we define the animated $\delta$-ring $\mathbb{Z}_{(p)} \{ x, \varphi (x) /
p \}$ to be the pushout of the diagram
\[ \begin{array}{ccc}
     \mathbb{Z}_{(p)} \{ y \} & \xrightarrow{y \mapsto pz} & \mathbb{Z}_{(p)}
     \{ z \}\\
     \longdownarrow \nocomma y \mapsto \varphi (x) &  & \\
     \mathbb{Z}_{(p)} \{ x \} &  & 
   \end{array} \]
in the $\infty$-category $\tmop{CAlg}_{\delta}^{\tmop{an}}$. Since the
Frobenius map $\varphi \of \mathbb{Z}_{(p)} \{ y \} \rightarrow
\mathbb{Z}_{(p)} \{ x \}$ is faithfully flat, so is the map $\mathbb{Z}_{(p)}
\{ z \} \rightarrow \mathbb{Z}_{(p)} \{ x, \varphi (x) / p \}$. It follows
that $\mathbb{Z}_{(p)} \{ x, \varphi (x) / p \}$ is static and
$p$-torsion-free by \Cref{rem:flat-connective-static}, therefore it is a
$\delta$-ring by \Cref{lem:ani-delta-ring-trunc} (this is essentially
{\cite[Lem~2.36]{Bhatt2019}}). We need another characterization of the
underlying ring of $\mathbb{Z}_{(p)} \{ x, \varphi (x) / p \}$:

\begin{lemma}[{\cite[Lem~2.36]{Bhatt2019}}]
  There is a natural isomorphism
  \[ D_{\mathbb{Z}_{(p)} \{ x \}} (x) \longrightarrow \mathbb{Z}_{(p)} \{ x,
     \varphi (x) / p \} \]
  of $p$-torsion-free $\mathbb{Z}_{(p)}$-algebras.
\end{lemma}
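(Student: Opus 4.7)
The plan is to exhibit both sides as the same $p$-torsion-free subring of $\mathbb{Z}_{(p)} \{ x \} [p^{- 1}]$, which contains $\mathbb{Z}_{(p)} \{ x \}$. Both are $p$-torsion-free: $\mathbb{Z}_{(p)} \{ x, \varphi (x) / p \}$ was shown so in the paragraph preceding the lemma (it is faithfully flat over $\mathbb{Z}_{(p)} \{ z \}$, using \Cref{lem:frob-free-delta-flat}), and $D_{\mathbb{Z}_{(p)} \{ x \}} (x)$ is $p$-torsion-free by the classical construction of PD-envelopes over $p$-torsion-free rings. Furthermore, after inverting $p$, each of them coincides with $\mathbb{Z}_{(p)} \{ x \} [p^{- 1}]$ since $\varphi (x) / p$ and all $\gamma_n (x) = x^n / n!$ automatically exist there. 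Hence it suffices to construct mutually inverse maps of rings between them.

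First I would construct a map $D_{\mathbb{Z}_{(p)} \{ x \}} (x) \longrightarrow \mathbb{Z}_{(p)} \{ x, \varphi (x) / p \}$ via the universal property of the PD-envelope; the content is to verify that $(x)$ has divided powers in $\mathbb{Z}_{(p)} \{ x, \varphi (x) / p \}$. The starting identity $\varphi (x) = x^p + p \delta (x)$ places $x^p / p = \varphi (x) / p - \delta (x)$ in the target, whence $\gamma_p (x) = x^p / p!$ is there up to a unit $(p - 1) !^{- 1} \in \mathbb{Z}_{(p)}^\times$. Iterating, the $\delta$-identity $\varphi (x^p / p) = (x^p / p)^p + p \delta (x^p / p) = \varphi (x)^p / p$ rearranges to $x^{p^2}/p^{p+1} = p^{-2}\varphi(x)^p - \delta(x^p/p)$, showing $\gamma_{p^2} (x) \in \mathbb{Z}_{(p)} \{ x, \varphi (x) / p \}$, and induction yields $\gamma_{p^n} (x)$ for all $n$. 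For an arbitrary $m$, the $p$-adic expansion together with multiplicativity $\gamma_{p^i} (x)^{a_i} = a_i! \cdot \gamma_{a_i p^i}(x)$ and the standard identities for $\gamma_j \gamma_k$ recovers $\gamma_m (x)$ up to units in $\mathbb{Z}_{(p)}$.

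Next I would construct the inverse map $\mathbb{Z}_{(p)} \{ x, \varphi (x) / p \} \longrightarrow D_{\mathbb{Z}_{(p)} \{ x \}} (x)$ from the universal property of the defining pushout. This requires equipping $D \assign D_{\mathbb{Z}_{(p)} \{ x \}} (x)$ with a $\delta$-ring structure and producing an element lifting $\varphi (x) / p$. Since $D$ is a $p$-torsion-free subring of $\mathbb{Z}_{(p)} \{ x \} [p^{- 1}]$, it inherits a $\delta$-structure as soon as Frobenius stabilizes $D$, and then $\delta (y) = (\varphi (y) - y^p) / p$ lives in $D$ automatically provided $\varphi (y) - y^p \in pD$, which follows from $\varphi \equiv \mathrm{Frob} \pmod{p}$ together with the saturated-in-$[p^{-1}]$ property of $D$. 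Frobenius stability reduces to $\varphi (\gamma_n (x)) = \varphi (x)^n / n! \in D$; expanding $\varphi (x)^n = (x^p + p \delta (x))^n$ by the binomial theorem and dividing by $n!$ yields a $\mathbb{Z}_{(p)}$-linear combination of terms $\gamma_k (x^p) \cdot \delta (x)^{n - k}$ multiplied by integer binomial coefficients absorbed into the divided powers, all of which lie in $D$ since $\gamma_k (x^p) = (p!)^k / (kp)! \cdot \gamma_{kp}(x)$ up to units. The element $x^p / p + \delta (x) \in D$ then serves as the image of $\varphi (x) / p$, giving the pushout map.

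Finally, the two composites send $x \mapsto x$ and $\varphi (x) / p \mapsto \varphi (x) / p$ respectively, so they are identities on generators, hence on the rings. The \textbf{main obstacle} I anticipate is the Frobenius-stability of $D$: the combinatorial verification that $\varphi (x)^n / n! \in D$ requires carefully tracking $p$-adic valuations in the binomial expansion, and is essentially equivalent to the classical fact that the PD-envelope of an ideal in a $p$-torsion-free $\delta$-ring is again a $\delta$-ring. Once this is in hand, the rest of the proof is formal bookkeeping with the universal properties of the PD-envelope and the free $\delta$-ring pushout.
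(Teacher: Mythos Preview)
The paper does not give its own proof of this lemma; it is stated with a citation to {\cite[Lem~2.36]{Bhatt2019}} and used as input. Your outline is essentially the Bhatt--Scholze argument and is correct in spirit.

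One genuine soft spot: the ``saturated-in-$[p^{-1}]$'' justification for $\varphi(y) - y^p \in pD$ does not work as stated. Knowing $\varphi(y) - y^p$ lies in $\mathbb{Z}_{(p)}\{x\}[p^{-1}]$ is vacuous, and $p$-torsion-freeness of $D$ alone does not force $\varphi \equiv \mathrm{Frob} \pmod{pD}$. What is actually needed is the direct check that $\delta(\gamma_n(x)) \in D$ for each $n$; this is {\cite[Lem~2.35]{Bhatt2019}}, as you correctly identify. Equivalently and slightly more cleanly: having established $D \subseteq \mathbb{Z}_{(p)}\{x,\varphi(x)/p\}$ as subrings of $\mathbb{Z}_{(p)}\{x\}[p^{-1}]$, prove the reverse inclusion by checking that each ring generator $\delta^i(\varphi(x)/p)$ of the right-hand side lies in $D$. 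This is the same inductive $\delta$-identity you used for the forward direction (the formula for $\delta(y^p/p)$ in terms of $y^{p^2}/p^{p+1}$ and lower terms), run in reverse, and it sidesteps the need to separately endow $D$ with a $\delta$-structure before comparing.

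Minor correction: the relation you want is $\gamma_k(x^p) = x^{pk}/k! = \tfrac{(kp)!}{k!}\,\gamma_{kp}(x)$ with $(kp)!/k! \in \mathbb{Z}$, not $(p!)^k/(kp)!$.
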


This map transfers the surjective map $D_{\mathbb{Z}_{(p)} \{ x \}} (x)
\twoheadrightarrow \mathbb{Z}_{(p)} \{ x \} / (x)$ to a surjective map
$\mathbb{Z}_{(p)} \{ x, \varphi (x) / p \} \twoheadrightarrow \mathbb{Z}_{(p)}
\{ x \} / (x)$, the existence of which does not seem to be {\tmem{a priori}}
clear (which is implicitly involved in {\cite[Lem~2.35]{Bhatt2019}}).

Note that since $x \in D_{\mathbb{Z}_{(p)} \{ x \}} (x)$ is a
non-zero-divisor, the map from the animated PD-envelope of $(\mathbb{Z}_{(p)}
\{ x \}, (x))$ to the classical PD-envelope is an equivalence, by base change
of $(\mathbb{Z}_{(p)} [x], (x))$ along the flat map $\mathbb{Z}_{(p)} [x]
\rightarrow \mathbb{Z}_{(p)} \{ x \} \simeq \mathbb{Z}_{(p)} [x, \delta (x),
\delta^2 (x), \ldots]$, or alternatively by
\Cref{prop:koszul-regular-ani-pd-env}. We could replace $x$ by a finite number
of variables, which leads to

\begin{corollary}
  \label{cor:delta-pair-ani-PD-env-proj-gen}There exists a canonical
  $\delta$-pair structure on the animated PD-envelope of every $\delta$-pair
  $(\mathbb{Z}_{(p)} \{ X, Y \}, (Y)) \in \tmop{Pair}_{\delta}^{\tmop{st}}$.
  More formally, there exists a canonical functor
  $\tmop{Pair}_{\delta}^{\tmop{st}} \rightarrow
  \tmop{Pair}_{\delta}^{\tmop{an}}$ which fits into a commutative diagram
  
  \[\begin{tikzcd} {\operatorname{Pair}_\delta^{\operatorname{st}}} &&
  {\operatorname{Pair}_\delta^{\operatorname{an}}} \\
  {\operatorname{Pair}^{\operatorname{an}}} &
  {\operatorname{Pair}^{\gamma,\operatorname{an}}} &
  {\operatorname{Pair}^{\operatorname{an}}} \arrow[from=1-1, to=2-1]
  \arrow["{\operatorname{Env}^{\gamma,\operatorname{an}}}", from=2-1, to=2-2]
  \arrow[from=2-2, to=2-3] \arrow[from=1-3, to=2-3] \arrow[dashed, from=1-1,
  to=1-3] \end{tikzcd}\]
  
  {\noindent}of $\infty$-categories.
\end{corollary}

\begin{proof}
  The functoriality of $\tmop{Pair}_{\delta}^{\tmop{st}} \rightarrow
  \tmop{Pair}_{\delta}^{\tmop{an}}$ needs explanation: a map
  $(\mathbb{Z}_{(p)} \{ X, Y \}, (Y)) \rightarrow (\mathbb{Z}_{(p)} \{ X', Y'
  \}, (Y'))$ of $\delta$-pairs induces a map $(\mathbb{Q} \{ X, Y \}, (Y))
  \rightarrow (\mathbb{Q} \{ X', Y' \}, (Y'))$ of pairs after inverting $p$
  which is ``Frobenius''-equivariant, where $\mathbb{Q} \{ X, Y \} \assign
  \mathbb{Z}_{(p)} \{ X, Y \} [p^{- 1}]$. A careful $v_p$-analysis implies
  that this map restricts to a map $\mathbb{Z}_{(p)} \{ X, Y, \varphi (Y) / p
  \} \rightarrow \mathbb{Z}_{(p)} \{ X', Y', \varphi (Y') / p \}$ of
  $\mathbb{Z}_{(p)}$-subalgebras, which gives rise to the functoriality.
\end{proof}

It follows from
\Cref{prop:left-deriv-n-fun,prop:forget-PD-small-colim,lem:forget-ani-delta-pair-ani-pair,cor:delta-pair-ani-PD-env-proj-gen}
that

\begin{corollary}
  There exists a canonical animated $\delta$-pair structure on the animated
  PD-envelope of every animated $\delta$-pair. More formally, there exists a
  canonical functor $\tmop{Pair}_{\delta}^{\tmop{an}} \rightarrow
  \tmop{Pair}_{\delta}^{\tmop{an}}$ which fits into a commutative diagram
  
  \[\begin{tikzcd} {\operatorname{Pair}_\delta^{\operatorname{an}}} &&
  {\operatorname{Pair}_\delta^{\operatorname{an}}} \\
  {\operatorname{Pair}^{\operatorname{an}}} &
  {\operatorname{Pair}^{\gamma,\operatorname{an}}} &
  {\operatorname{Pair}^{\operatorname{an}}} \arrow[from=1-1, to=2-1]
  \arrow["{\operatorname{Env}^{\gamma,\operatorname{an}}}", from=2-1, to=2-2]
  \arrow[from=2-2, to=2-3] \arrow[from=1-3, to=2-3] \arrow[dashed, from=1-1,
  to=1-3] \end{tikzcd}\]
  
  {\noindent}of $\infty$-categories. Moreover, the functor
  $\tmop{Pair}_{\delta}^{\tmop{an}} \rightarrow
  \tmop{Pair}_{\delta}^{\tmop{an}}$ preserves small colimits.
\end{corollary}

We give an analysis of the conjugate filtration on the PD-envelope of
$(\mathbb{F}_p \{ x \}, (x))$ where $\mathbb{F}_p \{ x \} \assign
\mathbb{Z}_{(p)} \{ x \} /^{\mathbb{L}} p$, which is the base change of the
PD-envelope of $(\mathbb{Z}_{(p)} \{ x \}, (x))$ along $\mathbb{Z}_{(p)}
\rightarrow \mathbb{F}_p$. Recall that
\begin{enumerate}
  \item The (animate) PD-envelope $D_{\mathbb{F}_p [x]} (x)$ is a free
  $\mathbb{F}_p [x] / (x^p)$-module generated by the set $\{ \gamma_{kp} (x)
  \barsuchthat k \in \mathbb{N} \}$ of divided powers of $x$.
  
  \item For $i \in \mathbb{N}_{\geq 0}$, the $(- i)$-th piece of the conjugate
  filtration of $D_{\mathbb{F}_p [x]} (x)$ is generated by $\{ \gamma_{kp} (x)
  \barsuchthat k \leq i \}$ as an $\mathbb{F}_p [x] / (x^p)$-submodule.
\end{enumerate}
By the base change property (\Cref{lem:PD-env-compat-base-chg}), we have
\begin{enumerate}
  \item The (animate) PD-envelope $D_{\mathbb{F}_p \{ x \}} (x)$ is a free
  $\mathbb{F}_p \{ x \} / (x^p)$-module generated by the set $\{ \gamma_{kp}
  (x) \barsuchthat k \in \mathbb{N} \}$.
  
  \item For $i \in \mathbb{N}_{\geq 0}$, the $(- i)$-th piece of the conjugate
  filtration of $D_{\mathbb{F}_p \{ x \}} (x)$ is generated by $\{ \gamma_{kp}
  (x) \barsuchthat k \leq i \}$ as an $\mathbb{F}_p \{ x \} /
  (x^p)$-submodule.
\end{enumerate}
We follow the argument of {\cite[Lem~2.35]{Bhatt2019}}: for every $y \in
\mathbb{Z}_{(p)} \{ x \}$ with $y^p / p \in \mathbb{Z}_{(p)} \{ x \}$, we have
\begin{eqnarray}
  \delta \left( \frac{y^p}{p} \right) & = & \frac{1}{p}  \left( \frac{\varphi
  (y)^p}{p} - \left( \frac{y^p}{p} \right)^p \right) \nonumber\\
  & = & \frac{(y^p + p \delta (y))^p}{p^2} - \frac{y^{p^2}}{p^{p + 1}}
  \nonumber\\
  & = & \frac{1}{p^2}  \left( y^{p^2} + p^2 y^{p (p - 1)} \delta (y) +
  \sum_{k = 0}^{p - 2} \binom{p}{k} y^{kp}  (p \delta (y))^{p - k} \right) -
  \frac{y^{p^2}}{p^{p + 1}} \nonumber\\
  & = & \frac{p^{p - 1} - 1}{p^{p + 1}} y^{p^2} + y^{p (p - 1)} \delta (y) +
  \sum_{k = 0}^{p - 2} p^{p - 2 - k}  \binom{p}{k} y^{kp} \delta (y)^{p - k} 
\end{eqnarray}
Letting $z = x^p / p$, it follows from $p^{p - 1} - 1 \in \tmop{GL}_1
(\mathbb{Z}_{(p)})$ that
\begin{enumerate}
  \item The set $\{ z^{a_0} \delta (z)^{a_1}  (\delta^2 (z))^{a_2} \cdots
  (\delta^r (z))^{a_r} \barsuchthat r \in \mathbb{N}, 0 \leq a_0, a_1, \ldots,
  a_r < p \}$ forms a basis of the free $\mathbb{F}_p \{ x \} / (x^p)$-module
  $\mathbb{Z}_{(p)} \{ x, \varphi (x) / p \} /^{\mathbb{L}} p \simeq
  D_{\mathbb{F}_p \{ x \}} (x)$.
  
  \item For every $i \in \mathbb{N}$, the $(- i)$-th piece of the conjugate
  filtration of $D_{\mathbb{F}_p \{ x \}} (x)$ is generated by $\{ z^{a_0}
  \delta (z)^{a_1}  (\delta^2 (z))^{a_2} \cdots (\delta^r (z))^{a_r}
  \barsuchthat 0 \leq a_0, a_1, \ldots, a_r < p, a_0 + a_1 p + a_2 p^2 +
  \cdots + a_r p^r \leq i \}$.
\end{enumerate}
\begin{remark}
  \label{rem:delta-divided-powers-Frob}In a bit more imprecise terms,
  $\delta^k (z)$ differs from $\gamma_{p^k} (x)$ up to a unit, modulo ``lower
  terms''.
\end{remark}

This generalizes to multivariable case with the same argument:

\begin{lemma}
  Let $(A, I) \assign (\mathbb{Z}_{(p)} \{ X, Y \}, (Y)) \in
  \tmop{Pair}_{\delta}^{\tmop{st}}$ be a standard $\delta$-pair and let $(B,
  J, \gamma)$ be the (animated) PD-envelope of $(\mathbb{F}_p \{ X, Y \},
  (Y))$. Let $Y = \{ y_1, y_2, \ldots \}$ and $z_j \assign \varphi (y_j) / p$.
  Then
  \begin{enumerate}
    \item The $\varphi_A^{\ast} (A / I)$-module $B$ is freely generated by the
    subset $\left\{ \prod_{j, k} (\delta^k (z_j))^{a_{j, k}} \barsuchthat 0
    \leq a_{j, k} < p \right\} \subseteq B$.
    
    \item For every $i \in \mathbb{N}$, the $(- i)$-th piece of the conjugate
    filtration of $B$ is generated by
    \[ \left\{ \prod_{j, k} (\delta^k (z_j))^{a_{j, k}} \barsuchthat 0 \leq
       a_{j, k} < p, \sum_{j, k} a_{j, k} p^k \leq i \right\} \]
    as a $\varphi_A^{\ast} (A / I)$-submodule.
  \end{enumerate}
\end{lemma}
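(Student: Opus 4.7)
The plan is to reduce the multivariable statement to the single-variable computation that was just carried out in the excerpt, exploiting the fact that the animated PD-envelope functor preserves small colimits (\Cref{prop:forget-PD-small-colim}) and that the conjugate filtration functor $\mathbb{L}\tmop{ConjFil}$ is a left derived functor into the symmetric monoidal $\infty$-category $\tmop{CAlg}(\tmop{DF}^{\leq 0}(\mathbb{F}_p))$ with Day convolution.

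First, I would observe that $(\mathbb{F}_p\{X,Y\}, (Y))$ is canonically the coproduct in $\tmop{AniPair}_{\mathbb{F}_p}$ (or equivalently in $\tmop{AniPair}_{\delta}$, by \Cref{lem:forget-ani-delta-pair-ani-pair}) of the pairs $(\mathbb{F}_p\{x_i\}, 0)$ for $x_i \in X$ and $(\mathbb{F}_p\{y_j\}, (y_j))$ for $y_j \in Y$. Since the animated PD-envelope preserves coproducts, $B$ identifies with the (derived) tensor product over $\mathbb{F}_p$ of the individual animated PD-envelopes; similarly $\varphi_A^{\ast}(A/I)$ is the tensor product of the individual Frobenius twists. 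For the $x_i$-factors the animated PD-envelope of $(\mathbb{F}_p\{x_i\}, 0)$ is $\mathbb{F}_p\{x_i\}$ itself with trivial conjugate filtration (everything in degree $0$), so these factors contribute only to the coefficient ring $\varphi_A^{\ast}(A/I)$. For each $y_j$-factor the single-variable analysis in the paragraph preceding the lemma gives a free basis $\{(\delta^k(z_j))^{a_k} : 0 \leq a_k < p\}$ with the stated conjugate-filtration degree $\sum_k a_k p^k$.

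Second, I would invoke the symmetric monoidality of $\mathbb{L}\tmop{ConjFil}$ on coproducts in $\tmop{AniPair}_{\mathbb{F}_p}^0$: the classical conjugate filtration restricted to $\mathcal{D}^0$ is compatible with tensor products of PD-envelopes (this is direct from the explicit description of the filtration by ideals generated by $\gamma_{kp}(f)$), so left derivation preserves this compatibility. Under Day convolution the filtration degree of a pure tensor is the sum of the degrees of the factors, which matches the index $\sum_{j,k} a_{j,k}p^k$. Tensoring the single-variable $\varphi^{\ast}(\mathbb{F}_p)$-bases for the $y_j$-factors and extending scalars along the $x_i$-contributions produces precisely the stated basis $\left\{\prod_{j,k}(\delta^k(z_j))^{a_{j,k}} : 0 \leq a_{j,k} < p\right\}$ of $B$ over $\varphi_A^{\ast}(A/I)$, with the claimed generating description of the conjugate filtration.

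The main obstacle I expect is the bookkeeping around the symmetric monoidal compatibility of $\mathbb{L}\tmop{ConjFil}$ with coproducts in the presence of the $\varphi_A^{\ast}(A/I)$-algebra structure map (cf.\ \Cref{rem:functor-target-variable}); one must verify that the multivariable filtered $\varphi_A^{\ast}(A/I)$-algebra structure on $B$ arises as the Day convolution of the single-variable filtered structures. Everything else is a free-module and base-change argument on top of the single-variable identity for $\delta(y_j^p/p)$ already established.
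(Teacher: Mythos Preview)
Your proposal is correct. The paper's own proof is the single sentence ``This generalizes to multivariable case with the same argument,'' meaning one simply redoes the explicit identity $\delta(y^p/p) = \frac{p^{p-1}-1}{p^{p+1}}y^{p^2} + \cdots$ for each variable $y_j$ separately and observes that distinct variables do not interfere. Your tensor-product reduction is a more structural packaging of exactly the same content: the non-interference of the variables is precisely the statement that the conjugate-filtered PD-envelope of a coproduct is the Day convolution of the factors, which (as you note) one checks directly on $\mathcal{D}^0_{\mathbb{F}_p}$ and then propagates by \Cref{prop:left-deriv-fun}. The paper does not state this symmetric monoidality explicitly for $\mathbb{L}\tmop{ConjFil}$ on $\tmop{AniPair}_{\mathbb{F}_p}$, so the bookkeeping you flag is genuinely something you would have to spell out; but it is routine, since on $(\mathbb{F}_p[X,Y],(Y))$ the filtration is visibly multiplicative under $\Gamma_{\mathbb{F}_p[X]}(Y)\otimes_{\mathbb{F}_p}\Gamma_{\mathbb{F}_p[X']}(Y')\cong\Gamma_{\mathbb{F}_p[X,X']}(Y,Y')$. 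Either route gives the result with equal ease.
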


\subsection{Oriented prisms}In this subsection, we will study animated
$\delta$-rings viewed as ``non-complete oriented animated
prisms''\footnote{They are non-complete oriented analogues of
{\cite[Def~2.4]{Bhatt2022a}}.}. Recall that a {\tmdfn{orientable prism}} is a
$\delta$-pair $(A, I)$ such that the ideal $I \subseteq A$ is principal, the
$\delta$-ring $A$ is $I$-torsion free, derived $(p, I)$-complete, and $p \in I
+ \varphi (I) A$ {\cite[Def~3.2]{Bhatt2019}}. For technical reasons, we will
study the ``non-complete'' analogues where the completeness and the
torsion-freeness are dropped.

We fix a $\delta$-ring $A$ along with a chosen non-zero-divisor $d \in A$. In
practice, we are only interested in the special case that $A =\mathbb{Z}_{(p)}
\{ d \}$ and some variants like $A =\mathbb{Z}_{(p)} \{ d, \delta (d)^{- 1}
\}$. We denote by $\tmop{Ring}_{\delta, A}$ the $1$-category
$(\tmop{Ring}_{\delta})_{A \mathord{/}}$ of $\delta$-$A$-algebras. It follows
from \Cref{lem:undercat-proj-gen} that

\begin{lemma}
  The $1$-category $\tmop{Ring}_{\delta, A}$ is $1$-projectively generated,
  therefore presentable. A set of compact $1$-projective generators is given
  by $\{ A \{ X \} \assign A \otimes_{\mathbb{Z}_{(p)}}^{\mathbb{L}}
  \mathbb{Z}_{(p)} \{ X \} \barsuchthat X \in \tmop{Fin} \}$, which spans a
  full subcategory of $\tmop{Ring}_{\delta, A}$ denoted by
  $\tmop{Ring}_{\delta, A}^0$.
\end{lemma}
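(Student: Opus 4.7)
The plan is to deduce this directly from the $n$-categorical analogue of \Cref{lem:undercat-proj-gen} (the unlabeled $n$-version stated immediately after it in the excerpt), applied with $n = 1$, $\mathcal{C} = \tmop{Ring}_{\delta}$ and $Z = A$. By \Cref{lem:delta-ring-1-proj-gen}, $\tmop{Ring}_{\delta}$ is $1$-projectively generated, and the discussion following that lemma identifies a set of compact $1$-projective generators as the free $\delta$-rings $\mathbb{Z}_{(p)}\{X\}$ for $X \in \tmop{Fin}$. Applying the undercategory lemma then yields that $\tmop{Ring}_{\delta, A} = (\tmop{Ring}_{\delta})_{A\mathord{/}}$ is $1$-projectively generated, with compact $1$-projective generators given by the canonical maps $A \to \mathbb{Z}_{(p)}\{X\} \amalg A$, where the coproduct is taken in $\tmop{Ring}_{\delta}$.

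The only remaining step is to identify $\mathbb{Z}_{(p)}\{X\} \amalg A$ with the ring $A\{X\} = A \otimes_{\mathbb{Z}_{(p)}}^{\mathbb{L}} \mathbb{Z}_{(p)}\{X\}$ appearing in the statement. As noted in the paragraph between \Cref{lem:delta-ring-1-proj-gen} and \Cref{lem:ani-ring-delta-ring-adjoint}, the forgetful functor $\tmop{Ring}_{\delta} \to \tmop{Alg}_{\mathbb{Z}_{(p)}}$ preserves small colimits, so the coproduct in $\tmop{Ring}_{\delta}$ is computed as the ordinary tensor product $A \otimes_{\mathbb{Z}_{(p)}} \mathbb{Z}_{(p)}\{X\}$ of the underlying $\mathbb{Z}_{(p)}$-algebras (with the induced $\delta$-structure). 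Since $\mathbb{Z}_{(p)}\{X\}$ is a polynomial $\mathbb{Z}_{(p)}$-algebra on the variables $\{\delta^k(x) : x \in X, k \in \mathbb{N}\}$ by \cite[Lem~2.11]{Bhatt2019}, it is flat over $\mathbb{Z}_{(p)}$, so this tensor product agrees with the derived tensor product $A \otimes_{\mathbb{Z}_{(p)}}^{\mathbb{L}} \mathbb{Z}_{(p)}\{X\} = A\{X\}$ as defined in the statement.

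Presentability follows automatically since $1$-projectively generated $1$-categories are presentable (they are reflective subcategories of presheaf categories on the generating subcategory, by the general theory recalled in \Cref{app:animation}). There is no genuine obstacle here; the proof is essentially a bookkeeping exercise combining the structural result about undercategories of projectively generated categories with the explicit description of free $\delta$-rings, and the main thing to verify is merely the colimit-preserving property of the forgetful functor $\tmop{Ring}_{\delta} \to \tmop{Alg}_{\mathbb{Z}_{(p)}}$, which has already been invoked earlier in the section.
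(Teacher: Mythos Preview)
Your proof is correct and follows exactly the approach the paper intends: the paper simply writes ``It follows from \Cref{lem:undercat-proj-gen} that'' before stating the lemma, and you have spelled out the details of that deduction, including the identification of the coproduct $\mathbb{Z}_{(p)}\{X\} \amalg A$ with $A\{X\}$ via the colimit-preserving forgetful functor.
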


\begin{definition}
  Let $B$ be an animated $\delta$-ring. The {\tmdfn{$\infty$-category of
  animated $\delta$-$B$-algebras}}, denoted by $\tmop{CAlg}_{\delta,
  B}^{\tmop{an}}$, is defined to be the undercategory $\tmop{Ani}
  (\tmop{Ring}_{\delta})_{B \mathord{/}}$. When $B$ is static, it is
  equivalent to the animation $\tmop{Ani} (\tmop{Ring}_{\delta, B})$ by
  \Cref{cor:ani-undercat}.
\end{definition}

By \Cref{lem:adjunct-undercat}, we get an adjunction
$\tmop{CAlg}_A^{\heartsuit} \rightleftarrows \tmop{Ring}_{\delta, A}$, where
the forgetful functor $\tmop{Ring}_{\delta, A} \rightarrow
\tmop{CAlg}_A^{\heartsuit}$ preserves all small colimits (and as a right
adjoint, it preserves small limits as well). It follows from
\Cref{cor:ani-adjoint-funs} that

\begin{lemma}
  There is a pair $\tmop{CAlg}_A^{\tmop{an}} \rightleftarrows
  \tmop{CAlg}_{\delta, A}^{\tmop{an}}$ of adjoint functors, being the
  animation of the pair $\tmop{CAlg}_A^{\heartsuit} \rightleftarrows
  \tmop{Ring}_{\delta, A}$ of adjoint functors. We will call the functor
  $\tmop{CAlg}_A^{\tmop{an}} \rightarrow \tmop{CAlg}_{\delta, A}^{\tmop{an}}$
  the {\tmdfn{free animated $\delta$-$A$-algebra functor}}. The functor
  $\tmop{CAlg}_{\delta, A}^{\tmop{an}} \rightarrow \tmop{CAlg}_A^{\tmop{an}}$,
  called the {\tmdfn{forgetful functor}}, is conservative and preserves small
  colimits (and as a right adjoint, it preserves small limits as well).
\end{lemma}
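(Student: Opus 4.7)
The plan is to apply Corollary~\ref{cor:ani-adjoint-funs} to the adjoint pair $\tmop{Alg}_A \rightleftarrows \tmop{Ring}_{\delta, A}$ obtained from \Cref{lem:adjunct-undercat} applied to the previously established adjunction $\tmop{Alg}_{\mathbb{Z}_{(p)}} \rightleftarrows \tmop{Ring}_{\delta}$ (with $A \in \tmop{Ring}_{\delta}$). To invoke that corollary I need to verify three hypotheses: (i) $\tmop{Alg}_A$ is $1$-projectively generated, (ii) $\tmop{Ring}_{\delta, A}$ admits filtered colimits and reflexive coequalizers, which are preserved by the forgetful functor $G \of \tmop{Ring}_{\delta, A} \rightarrow \tmop{Alg}_A$, and (iii) $G$ is conservative.

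Hypothesis (i) is standard: the polynomial $A$-algebras $A[X]$ for $X \in \tmop{Fin}$ form a set of compact $1$-projective generators. For (iii), conservativity is immediate since a map of $\delta$-$A$-algebras is an isomorphism exactly when its underlying map of $A$-algebras is. The substantive point is (ii): by the discussion preceding \Cref{lem:delta-ring-1-proj-gen} (reference to \cite[Rem~2.7]{Bhatt2019}), the forgetful functor $\tmop{Ring}_{\delta} \rightarrow \tmop{Alg}_{\mathbb{Z}_{(p)}}$ preserves \emph{all} small colimits, and this property is inherited by the functor between undercategories $\tmop{Ring}_{\delta, A} \rightarrow \tmop{Alg}_A$, since colimits in undercategories are computed at the level of the underlying categories. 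In particular, $G$ preserves filtered colimits and reflexive coequalizers; these exist in $\tmop{Ring}_{\delta, A}$ because $\tmop{Ring}_{\delta, A}$ is presentable by \Cref{lem:undercat-proj-gen}.

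With hypotheses (i)--(iii) verified, \Cref{cor:ani-adjoint-funs} produces the pair $\tmop{Ani} (\tmop{Alg}_A) \rightleftarrows \tmop{Ani} (\tmop{Ring}_{\delta, A})$ as the animation of the classical adjoint pair, and guarantees that the right adjoint $\tmop{Ani}(G)$ is conservative and preserves sifted colimits. The final clause — that $\tmop{Ani}(G)$ preserves all small colimits — follows from the last sentence of \Cref{cor:ani-adjoint-funs}, since we have already observed that $G$ itself preserves all small colimits. Preservation of small limits by $\tmop{Ani}(G)$ is automatic from its being a right adjoint.

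This is a routine application rather than a proof with a real obstacle; the only point worth flagging is the mild sleight of hand that colimits in $\tmop{Ring}_{\delta, A} = (\tmop{Ring}_{\delta})_{A \mathord{/}}$ and $\tmop{Alg}_A = (\tmop{Alg}_{\mathbb{Z}_{(p)}})_{A \mathord{/}}$ are created by the forgetful functors down to $\tmop{Ring}_{\delta}$ and $\tmop{Alg}_{\mathbb{Z}_{(p)}}$ respectively, which is what lets us transport the colimit-preservation of $\tmop{Ring}_{\delta} \rightarrow \tmop{Alg}_{\mathbb{Z}_{(p)}}$ to $G$.
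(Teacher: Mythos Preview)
Your proposal is correct and follows the same route as the paper: both obtain the undercategory adjunction from \Cref{lem:adjunct-undercat}, note that the forgetful functor $\tmop{Ring}_{\delta, A} \rightarrow \tmop{Alg}_A$ preserves all small colimits (inherited from the absolute case via \cite[Rem~2.7]{Bhatt2019}), and then invoke \Cref{cor:ani-adjoint-funs}. You have simply spelled out the hypothesis verification more explicitly than the paper, which states the lemma as an immediate consequence without further argument.
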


\begin{definition}
  \label{def:ani-delta-A-pairs}The {\tmdfn{$\infty$-category of animated
  $\delta$-$A$-pairs}} $\tmop{Pair}_{\delta, A}^{\tmop{an}}$ is defined to be
  the undercategory $(\tmop{Pair}_{\delta}^{\tmop{an}})_{(\tmop{id}_A \of A
  \rightarrow A) \mathord{/}}$, which is equivalent to the fiber product
  $\tmop{CAlg}_{\delta, A}^{\tmop{an}} \times_{\tmop{CAlg}_A^{\tmop{an}}}
  \tmop{Pair}^{\tmop{an}}_A$ by {\cite[Lem~5.4.5.4]{Lurie2009}}.
\end{definition}

\begin{remark}
  By {\cite[Cor~2.10]{Bhatt2022a}}, our $\infty$-category
  $\tmop{Pair}_{\delta, A}^{\tmop{an}}$ is a non-complete version of the
  $\infty$-category of animated prisms over $(A, d)$. Their setup is more
  general in the sense that the base prism could be both animated and
  non-orientable.
\end{remark}

The set $\{ (A \{ X, Y \}, (Y)) \barsuchthat X, Y \in \tmop{Fin} \}$ form a
set of compact projective generators for $\tmop{Pair}_{\delta, A}^{\tmop{an}}$
by by \Cref{lem:undercat-proj-gen}, which spans a full subcategory
$\tmop{Pair}_{\delta, A}^{\tmop{st}} \subseteq \tmop{Pair}_{\delta,
A}^{\tmop{an}}$. It follows from
\Cref{lem:adjunct-undercat,lem:adjunct-ani-pair-ani-delta-pair} that

\begin{lemma}
  \label{lem:adjunct-ani-A-pair-ani-delta-A-pair}The forgetful functor
  $\tmop{Pair}_{\delta, A}^{\tmop{an}} \rightarrow \tmop{Pair}^{\tmop{an}}_A$
  admits a left adjoint.
\end{lemma}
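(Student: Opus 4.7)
The plan is to apply Lemma~\ref{lem:adjunct-undercat} to the pair of adjoint functors $F \of \tmop{AniPair} \rightleftarrows \tmop{AniPair}_{\delta} \of G$ supplied by Lemma~\ref{lem:adjunct-ani-pair-ani-delta-pair}, where $G$ is the forgetful functor and $F$ is its left adjoint. First I would verify the hypothesis of that lemma: the $\infty$-category $\tmop{AniPair}_{\delta}$ must admit pushouts, which is immediate from Lemma~\ref{lem:forget-ani-delta-pair-ani-pair} since $\tmop{AniPair}_{\delta}$ is in fact cocomplete.

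Next I would take the distinguished object to be $D \assign (\tmop{id}_A \of A \rightarrow A) \in \tmop{AniPair}_{\delta}$, with $A$ equipped with its given $\delta$-structure. Lemma~\ref{lem:adjunct-undercat} then produces a left adjoint to the induced functor
\[ (\tmop{AniPair}_{\delta})_{D \mathord{/}} \longrightarrow \tmop{AniPair}_{G(D) \mathord{/}}. \]
By Definition~\ref{def:ani-delta-A-pairs}, the source is precisely $\tmop{AniPair}_{\delta, A}$. The target $\tmop{AniPair}_{(\tmop{id}_A) \mathord{/}}$ is canonically equivalent to $\tmop{AniPair}_A$ by the identification recorded in the ``General base'' remark following Lemma~\ref{lem:base-chg-PD-env} (a special case of Corollary~\ref{cor:ani-undercat}). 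Under these two identifications, the induced functor tautologically becomes the forgetful functor of the statement, so the left adjoint produced by Lemma~\ref{lem:adjunct-undercat} is the desired one.

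I do not expect a real obstacle here: the only thing to verify is that the two identifications (on source and target) intertwine the induced functor with the forgetful functor of the statement, which is immediate from unwinding the definitions of both undercategories and observing that $G$ fixes $(\tmop{id}_A \of A \rightarrow A)$ on the nose. As a bonus, part~(2) of Lemma~\ref{lem:adjunct-undercat} yields an explicit description of the left adjoint: it sends an animated $A$-pair $(B \twoheadrightarrow B'')$ first to its free animated $\delta$-pair (via $F$, cf.\ the proof of Lemma~\ref{lem:adjunct-ani-pair-ani-delta-pair}) and then pushes out along the counit map $FG(\tmop{id}_A) \rightarrow (\tmop{id}_A)$ in $\tmop{AniPair}_{\delta}$.
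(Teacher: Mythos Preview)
Your proposal is correct and follows exactly the paper's approach: the paper simply states that the lemma follows from \Cref{lem:adjunct-undercat} and \Cref{lem:adjunct-ani-pair-ani-delta-pair}, and you have spelled out precisely how these two ingredients combine (with the same choice of $D=(\tmop{id}_A)$ and the same identifications of the undercategories).
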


There is a canonical functor $\tmop{CAlg}_{\delta, A}^{\tmop{an}} \rightarrow
\tmop{Pair}_{\delta, A}^{\tmop{an}}$ given by $B \mapsto (B \twoheadrightarrow
B /^{\mathbb{L}} d)$. We observe that

\begin{lemma}
  \label{lem:prism-env}The functor $\tmop{CAlg}_{\delta, A}^{\tmop{an}}
  \rightarrow \tmop{Pair}_{\delta, A}^{\tmop{an}}, B \mapsto (B
  \twoheadrightarrow B /^{\mathbb{L}} d)$ admits a left adjoint
  $\tmop{Pair}_{\delta, A}^{\tmop{an}} \rightarrow \tmop{CAlg}_{\delta,
  A}^{\tmop{an}}$, given by the left derived functor
  (\Cref{prop:left-deriv-n-fun}) of $\tmop{Pair}_{\delta, A}^{\tmop{st}}
  \rightarrow \tmop{CAlg}_{\delta, A}^{\tmop{an}}, (A \{ X, Y \}, (Y)) \mapsto
  A \{ X, Y / d \}$ where $A \{ X, Y / d \}$ is an abbreviation for the free
  $\delta$-$A$-algebra $A \{ x_1, x_2, \ldots, y_1 / d, y_2 / d, \ldots
  \}$\footnote{These generators $y_i / d$ are in fact formal variables $z_i$.
  This notation indicates that the counit map $(A \{ X, Y \}, (Y)) \rightarrow
  (A \{ X, Y / d \} \twoheadrightarrow A \{ X, Y / d \} /^{\mathbb{L}} d)$ is
  induced by $x_i \mapsto x_i$ and $y_i \mapsto z_i d$.}.
\end{lemma}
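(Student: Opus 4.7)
The plan is to invoke Lemma~\ref{lem:left-deriv-fun-adjoint} for an appropriate finite-coproduct-preserving functor $F^0 \colon \tmop{AniPair}_{\delta, A}^0 \to \tmop{Ani}(\tmop{Ring}_{\delta, A})$ sending $(A\{X, Y\}, (Y))$ to $A\{X, Y/d\}$, and then to identify the right adjoint it produces with the functor $G \colon B \mapsto (B \twoheadrightarrow B/^{\mathbb{L}} d)$ of the statement. Once $F^0$ is defined as a functor, preservation of finite coproducts is immediate from the explicit formula for coproducts in the fibre product $\tmop{AniPair}_{\delta, A} = \tmop{Ani}(\tmop{Ring}_{\delta, A}) \times_{\tmop{Ani}(\tmop{Alg}_A)} \tmop{AniPair}_A$ of Definition~\ref{def:ani-delta-A-pairs}, since the coproduct of $(A\{X, Y\}, (Y))$ and $(A\{X', Y'\}, (Y'))$ is $(A\{X \sqcup X', Y \sqcup Y'\}, (Y \sqcup Y'))$, on which the prescription is manifestly multiplicative.

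The key computation is to establish, for each $P = (A\{X, Y\}, (Y)) \in \tmop{AniPair}_{\delta, A}^0$ and each $B \in \tmop{Ani}(\tmop{Ring}_{\delta, A})$, a natural equivalence
\[
\tmop{Map}_{\tmop{AniPair}_{\delta, A}}\bigl(P,\,(B \twoheadrightarrow B/^{\mathbb{L}} d)\bigr) \;\simeq\; \tmop{Map}_{\tmop{Ani}(\tmop{Ring}_{\delta, A})}\bigl(A\{X, Y/d\},\, B\bigr).
\]
The right-hand side is the space of $(X \sqcup Y)$-indexed tuples of points of $B$, by the universal property of the free $\delta$-$A$-algebra. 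For the left-hand side, Definition~\ref{def:ani-delta-A-pairs} and Theorem~\ref{thm:ani-smith-eq} unwind the mapping anima as the pullback whose data is a $\delta$-$A$-algebra map $f \colon A\{X, Y\} \to B$ (i.e.\ an $(X \sqcup Y)$-indexed tuple $((b_x), (b_y))$ in $B$) together with the data needed to factor the composite $\pi \circ f \colon A\{X, Y\} \to B/^{\mathbb{L}} d$ through the quotient $A\{X, Y\} \twoheadrightarrow A\{X, Y\}/(Y)$. This factorisation amounts to a coherent null-homotopy of each $\pi(b_y)$ in $B/^{\mathbb{L}} d$. Since $B/^{\mathbb{L}} d \simeq \tmop{cofib}(B \xrightarrow{\cdot d} B)$, the rotated fibre sequence identifies such a null-homotopy with a preimage $c_y \in B$ together with an equivalence $d c_y \simeq b_y$, so the resulting data $((b_x)_{x \in X}, (c_y)_{y \in Y})$ gives the promised description.

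The Yoneda lemma then upgrades the object-level prescription $P \mapsto A\{X, Y/d\}$ into a genuine $\infty$-functor $F^0 \colon \tmop{AniPair}_{\delta, A}^0 \to \tmop{Ani}(\tmop{Ring}_{\delta, A})$ representing $B \mapsto \tmop{Map}_{\tmop{AniPair}_{\delta, A}}(-, G(B))$; by the first paragraph it preserves finite coproducts, so Lemma~\ref{lem:left-deriv-fun-adjoint} produces the adjoint pair $\tmop{AniPair}_{\delta, A} = \mathcal{P}_{\Sigma}(\tmop{AniPair}_{\delta, A}^0) \rightleftarrows \tmop{Ani}(\tmop{Ring}_{\delta, A})$, with left adjoint the left derived functor of $F^0$ in the sense of Proposition~\ref{prop:left-deriv-fun}, and with right adjoint tautologically identified with $G$ by the mapping-space equivalence above.

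The main obstacle is the central mapping-space identification: the classical condition ``$b_y \in dB$'' must be reinterpreted in the $\infty$-categorical setting not as a property but as a coherent datum, namely a null-homotopy in $B/^{\mathbb{L}} d$, and its equivalence with a single preimage $c_y$ under $\cdot d$ relies on unwinding the defining cofibre sequence of $B/^{\mathbb{L}} d$ together with the fibre description of mapping animae out of a cofibre. Once this translation is made, the remaining ingredients --- functoriality of $F^0$ via Yoneda, finite-coproduct preservation, and the formal adjunction of Lemma~\ref{lem:left-deriv-fun-adjoint} --- are straightforward.
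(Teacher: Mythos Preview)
Your proposal is correct and follows essentially the same approach as the paper. Both arguments reduce to computing the mapping anima $\tmop{Map}_{\tmop{AniPair}_{\delta,A}}\bigl((A\{X,Y\},(Y)),\,(B\twoheadrightarrow B/^{\mathbb L}d)\bigr)$ and identifying it with $B^{|X|+|Y|}$; the only difference is packaging. The paper passes through the adjunction chain $\tmop{Fun}((\Delta^1)^{\tmop{op}},D(\mathbb Z)_{\geq 0})\rightleftarrows\tmop{AniPair}_A\rightleftarrows\tmop{AniPair}_{\delta,A}$ so that the computation becomes $\tmop{Map}(X\mathbb Z\oplus Y\mathbb Z\leftarrow Y\mathbb Z,\;B\xleftarrow{d}B)$, i.e.\ it invokes the Smith-ideal picture of \Cref{thm:ani-smith-eq} to replace the surjection $B\twoheadrightarrow B/^{\mathbb L}d$ by the arrow $B\xleftarrow{d}B$. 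Your argument instead unwinds the fibre-product description of $\tmop{AniPair}_{\delta,A}$ directly and interprets the factorisation datum as null-homotopies, then uses the fibre sequence $B\xrightarrow{d}B\to B/^{\mathbb L}d$ to translate a null-homotopy of $\pi(b_y)$ into a preimage under $d$. These are two phrasings of the same fact (namely $\tmop{fib}(\pi)\simeq B$ via $d$), so the proofs coincide in content.
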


\begin{proof}
  Let $G$ denote the functor $\tmop{CAlg}_{\delta, A}^{\tmop{an}} \rightarrow
  \tmop{Pair}_{\delta, A}^{\tmop{an}}, B \mapsto (B \twoheadrightarrow B
  /^{\mathbb{L}} d)$. Then we have a functor $F \of
  \tmop{Pair}_{\delta}^{\tmop{an}} \rightarrow \tmop{Fun}
  (\tmop{CAlg}_{\delta}^{\tmop{an}}, \tmop{An})^{\tmop{op}}$ which preserves
  small colimits and sends $(B \twoheadrightarrow B'') \in
  \tmop{Pair}_{\delta, A}^{\tmop{st}}$ to the functor
  $\tmop{Map}_{\tmop{Pair}_{\delta, A}^{\tmop{an}}} (B \twoheadrightarrow B'',
  G (\cdummy))$. By \Cref{prop:left-deriv-n-fun}, it is the left derived
  functor of its restriction to the full subcategory $\tmop{Pair}_{\delta,
  A}^{\tmop{st}} \subseteq \tmop{Pair}_{\delta, A}^{\tmop{an}}$.
  
  We now show that, for every $(A \{ X, Y \}, (Y)) \in \tmop{Pair}_{\delta,
  A}^{\tmop{st}}$, the functor $F ((A \{ X, Y \}, (Y)))$ is equivalent to the
  functor $\tmop{Map}_{\tmop{CAlg}_{\delta, A}^{\tmop{an}}} (A \{ X, Y / d \},
  \cdummy)$. In other words, the essential image of $\nobracket F
  |_{\tmop{Pair}_{\delta, A}^{\tmop{st}}}$ lies in the full subcategory
  $\tmop{Ring}_{\delta, A}^0 \hookrightarrow \tmop{CAlg}_{\delta,
  A}^{\tmop{an}} \hookrightarrow \tmop{Fun} (\tmop{CAlg}_{\delta,
  A}^{\tmop{an}}, \tmop{An})^{\tmop{op}}$. By adjunctions $\tmop{Fun}
  ((\Delta^1)^{\tmop{op}}, D (\mathbb{Z})_{\geq 0}) \rightleftarrows
  \tmop{Pair}^{\tmop{an}}_A \rightleftarrows \tmop{Pair}_{\delta,
  A}^{\tmop{an}}$
  (\Cref{def:anipair-anipdpair,lem:adjunct-ani-A-pair-ani-delta-A-pair}), we
  have
  \begin{eqnarray*}
    F (A \{ X, Y \}, (Y)) (B) & \simeq &
    \tmop{Map}_{\tmop{Pair}^{\tmop{an}}_A} (A [X, Y] \twoheadrightarrow A [X],
    B \twoheadrightarrow B /^{\mathbb{L}} d)\\
    & \simeq & \tmop{Map}_{\tmop{Fun} ((\Delta^1)^{\tmop{op}}, D
    (\mathbb{Z})_{\geq 0})} \left( X\mathbb{Z} \oplus Y\mathbb{Z} \leftarrow
    Y\mathbb{Z}, B \xleftarrow{d} B \right)\\
    & \simeq & B^{\tmop{Card} (Y)} \times B^{\tmop{Card} (X)}\\
    & \simeq & \tmop{Map}_{\tmop{CAlg}_{\delta, A}^{\tmop{an}}} (A \{ X, Y /
    d \}, B)
  \end{eqnarray*}
  which are functorial in $B \in \tmop{CAlg}_{\delta, A}^{\tmop{an}}$ (note
  that naively speaking, the ``values'' of $Y / d$ correspond to the
  ``preimages'' of $Y$ under the map $B \xleftarrow{d} B$, therefore the
  formal notation $Y / d$).
  
  Since the Yoneda embedding $\tmop{CAlg}_{\delta, A}^{\tmop{an}}
  \hookrightarrow \tmop{Fun} (\tmop{CAlg}_{\delta, A}^{\tmop{an}},
  \tmop{An})^{\tmop{op}}$ is stable under small colimits, it follows that the
  essential image of $F$ lies in $\tmop{CAlg}_{\delta, A}^{\tmop{an}}$, which
  proves that $G$ admits a left adjoint $L \of \tmop{Pair}_{\delta,
  A}^{\tmop{an}} \rightarrow \tmop{CAlg}_{\delta, A}^{\tmop{an}}$, and that $L
  (A \{ X, Y / d \}) \simeq A \{ X, Y / d \}$.
  
  We still need to show that $\nobracket L |_{\tmop{Pair}_{\delta,
  A}^{\tmop{st}}}$ coincides with the functor defined in the obvious way. We
  have shown this objectwise, and since $L (\tmop{Pair}_{\delta,
  A}^{\tmop{st}})$ lies in the full subcategory $\tmop{Ring}_{\delta, A}^0
  \hookrightarrow \tmop{CAlg}_{\delta, A}^{\tmop{an}}$ which is a
  $1$-category, we only need to show that the image of morphisms coincide with
  the ``obvious'' choice, i.e. without higher categorical complication. This
  can be checked putting different $d$-torsion-free $\delta$-$A$-algebras $B
  \in \tmop{Ring}_{\delta, A}$ into the functorial isomorphism
  \[ \tmop{Hom}_{\tmop{Pair}_{\delta, A}} ((A \{ X, Y \}, (Y)), (B, (d)))
     \cong \tmop{Hom}_{\tmop{Ring}_{\delta, A}} (A \{ X, Y / d \}, B) \]
  given by the adjunction.
\end{proof}

Now we introduce a variant of \Cref{def:ani-delta-A-pairs}:

\begin{definition}
  The {\tmdfn{$\infty$-category of animated $\delta$-$(A, d)$-pairs}}
  $\tmop{Pair}_{\delta, (A, d)}^{\tmop{an}}$ is defined to be the
  undercategory $(\tmop{Pair}_{\delta, A}^{\tmop{an}})_{(A \twoheadrightarrow
  A /^{\mathbb{L}} d) \mathord{/}}$, which is equivalent to the fiber product
  $\tmop{CAlg}_{\delta, A}^{\tmop{an}} \times_{\tmop{CAlg}_A^{\tmop{an}}}
  \tmop{Pair}^{\tmop{an}}_{(A \twoheadrightarrow A /^{\mathbb{L}} d)
  \mathord{/}}$ by {\cite[Lem~5.4.5.4]{Lurie2009}}.
\end{definition}

By \Cref{lem:undercat-proj-gen}, we have

\begin{lemma}
  The $\infty$-category $\tmop{Pair}_{\delta, (A, d)}^{\tmop{an}}$ is
  projectively generated. A set of compact projective generators is given by
  $\{ (A \{ X, Y \}, (d, Y)) \barsuchthat X, Y \in \tmop{Fin} \}$, which spans
  a full subcategory of $\tmop{Pair}_{\delta, (A, d)}^{\tmop{an}}$ denoted by
  $\tmop{Pair}_{\delta, (A, d)}^{\tmop{st}}$.
\end{lemma}

Note that $A$ is initial in $\tmop{Pair}_{\delta, A}^{\tmop{an}}$. It follows
from \Cref{lem:adjunct-undercat,lem:prism-env} that

\begin{corollary}
  \label{cor:prism-env}The functor $\tmop{CAlg}_{\delta, A}^{\tmop{an}}
  \rightarrow \tmop{Pair}_{\delta, (A, d)}^{\tmop{an}}, B \mapsto (B
  \twoheadrightarrow B /^{\mathbb{L}} d)$ admits a left adjoint
  $\tmop{Pair}_{\delta, (A, d)}^{\tmop{an}} \rightarrow \tmop{CAlg}_{\delta,
  A}^{\tmop{an}}$, which will be denoted by
  $\tmop{Env}^{\Prism}$\footnote{This is understood as a ``non-complete''
  prismatic envelope when $p$ lies in the Jacobson radical $\tmop{Rad} (A)$
  and $d \in A$ is weakly distinguished (\Cref{def:weak-dist}).}, given by the
  left derived functor (\Cref{prop:left-deriv-n-fun}) of $\tmop{Pair}_{\delta,
  (A, d)}^{\tmop{st}} \rightarrow \tmop{CAlg}_{\delta, A}^{\tmop{an}}, (A \{
  X, Y \}, (d, Y)) \mapsto A \{ X, Y / d \}$.
\end{corollary}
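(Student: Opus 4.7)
The plan is to deduce this directly from Lemma~\ref{lem:adjunct-undercat} applied to the adjunction of Lemma~\ref{lem:prism-env}, and then check that the resulting left adjoint has the stated explicit description on the generating full subcategory $\tmop{AniPair}_{\delta, (A, d)}^0$.

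First I would apply Lemma~\ref{lem:adjunct-undercat} to the adjoint pair $F \of \tmop{AniPair}_{\delta, A} \rightleftarrows \tmop{Ani}(\tmop{Ring}_{\delta, A}) \of G$ of Lemma~\ref{lem:prism-env} (so $G(B) = (B \twoheadrightarrow B /^{\mathbb{L}} d)$), taking as base object the initial object $A \in \tmop{Ani}(\tmop{Ring}_{\delta, A})$. Since $A$ is initial, $\tmop{Ani}(\tmop{Ring}_{\delta, A})_{A \mathord{/}} \simeq \tmop{Ani}(\tmop{Ring}_{\delta, A})$, and since $G(A) = (A \twoheadrightarrow A /^{\mathbb{L}} d)$, we have $\tmop{AniPair}_{\delta, A, (A \twoheadrightarrow A /^{\mathbb{L}} d) \mathord{/}} = \tmop{AniPair}_{\delta, (A, d)}$ by \Cref{def:ani-delta-A-pairs}. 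This produces the required adjunction.

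Next, to identify the left adjoint concretely, I would note that $G$ is fully faithful: a map $(B \twoheadrightarrow B/^{\mathbb{L}} d) \to (B' \twoheadrightarrow B'/^{\mathbb{L}} d)$ in $\tmop{AniPair}_{\delta, A}$ is determined by its top row $B \to B'$, the bottom row being forced by functoriality of $(-)/^{\mathbb{L}} d$. Consequently the counit of $F \dashv G$ is an equivalence; in particular the map $FGA \to A$ occurring in the factorization of Lemma~\ref{lem:adjunct-undercat}(2) is an equivalence, so the pushout step is trivial, and the left adjoint is the functor on undercategories induced by $F$ itself. By \Cref{prop:left-deriv-fun}, this left adjoint, which preserves small colimits, coincides with the left derived functor of its restriction to $\tmop{AniPair}_{\delta, (A, d)}^0$.

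It remains to compute this restriction on a generator $(A \{X, Y\}, (d, Y))$. I would express it as a pushout in $\tmop{AniPair}_{\delta, A}$: introducing an auxiliary variable $y_0$ standing for $d$, one has
\[
  (A \{X, Y\}, (d, Y)) \simeq (A \{X, Y \cup \{y_0\}\}, (Y \cup \{y_0\})) \amalg_{A \{y_0\}}^{\mathbb{L}} A,
\]
where the right map sends $y_0 \mapsto d$. Applying $F$ (which preserves colimits) and using the formula of Lemma~\ref{lem:prism-env} for the left-hand factor yields $A \{X, Y/d, z_0\} \otimes_{A \{y_0\}}^{\mathbb{L}} A$ with $y_0 \mapsto z_0 d$ on the left and $y_0 \mapsto d$ on the right; this equals $A \{X, Y/d, z_0\} /^{\mathbb{L}} (d (z_0 - 1))$, and since $d$ is a non-zero-divisor in $A$ and the extension $A \to A \{X, Y/d, z_0\}$ is flat (being a free $\delta$-$A$-algebra), $d$ remains a non-zero-divisor, so the derived quotient collapses to $A \{X, Y/d, z_0\}/(z_0 - 1) \simeq A \{X, Y/d\}$. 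Functoriality in $X$ and $Y$ is routine.

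The main obstacle I expect is this final identification on generators. The naive pushout carries the torsion relation $d(z_0 - 1) = 0$, and its reduction to the clean formula $A \{X, Y/d\}$ is precisely where the non-zero-divisor hypothesis on $d$ enters; the rest of the argument is formal and uses only the abstract adjunction machinery together with the colimit-preservation properties established earlier.
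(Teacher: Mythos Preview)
Your first paragraph correctly produces the adjunction via \Cref{lem:adjunct-undercat} applied at $D=A$. The gap is in the second paragraph: the functor $G\colon \tmop{Ani}(\tmop{Ring}_{\delta,A})\to\tmop{AniPair}_{\delta,A}$ is \emph{not} fully faithful, and the assertion that ``the bottom row is forced by functoriality'' is only $1$-categorical intuition. Since $B/^{\mathbb L}d \simeq B\amalg_A (A/^{\mathbb L}d)$ in $\tmop{Ani}(\tmop{Alg}_A)$, one computes
\[
\tmop{Map}_{\tmop{AniPair}_{\delta,A}}\bigl(G(B),G(B')\bigr)\;\simeq\;\tmop{Map}_{\tmop{Ani}(\tmop{Ring}_{\delta,A})}(B,B')\times\tmop{Map}_{\tmop{Ani}(\tmop{Alg}_A)}\bigl(A/^{\mathbb L}d,\,B'/^{\mathbb L}d\bigr),
\]
and the second factor is the space of nullhomotopies of $d$ in $B'/^{\mathbb L}d$, a loop space that is not contractible once $B'/^{\mathbb L}d$ has nonzero higher homotopy. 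Hence even the single counit $FGA\to A$ fails to be an equivalence in general, so the pushout step of \Cref{lem:adjunct-undercat}(2) cannot be discarded. Your third paragraph then computes $F$ rather than $\tmop{PrismEnv}$, and the simplification there is also flawed: the $\delta$-ring pushout $A\{X,Y/d,z_0\}\amalg_{A\{y_0\}}A$ imposes $d(z_0-1)=0$ together with all of its $\delta$-iterates, and this does not force $z_0=1$ (the element $d$ may become a zero-divisor in the quotient), so one cannot pass from $d(z_0-1)$ to $z_0-1$.

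The paper's (implicit) route is shorter and avoids the issue. By \Cref{lem:undercat-proj-gen} the generator $(A\{X,Y\},(d,Y))$ of $\tmop{AniPair}_{\delta,(A,d)}$ is the coproduct $G(A)\amalg P$ taken in $\tmop{AniPair}_{\delta,A}$, where $P=(A\{X,Y\},(Y))$. Feeding this into the formula of \Cref{lem:adjunct-undercat}(2) gives
\[
A\amalg_{FGA}F\bigl(G(A)\amalg P\bigr)\;=\;A\amalg_{FGA}\bigl(FGA\amalg F(P)\bigr)\;=\;F(P)\;=\;A\{X,Y/d\},
\]
so whatever $FGA$ is, it cancels in the pushout. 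The full faithfulness you were reaching for does hold for the functor landing in $\tmop{AniPair}_{\delta,(A,d)}$, but the paper deduces that \emph{from} this explicit formula in \Cref{lem:prism-full-faithful-delta-pair}, not the other way around.
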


Furthermore, for every $B \in \tmop{CAlg}_{\delta, A}^{\tmop{an}}$, by
unrolling the definitions, the counit map $\tmop{Env}^{\Prism} (B
\twoheadrightarrow B /^{\mathbb{L}} d) \rightarrow B$ is an equivalence,
therefore

\begin{lemma}
  \label{lem:prism-full-faithful-delta-pair}The functor $\tmop{CAlg}_{\delta,
  A}^{\tmop{an}} \rightarrow \tmop{Pair}_{\delta, (A, d)}^{\tmop{an}}$ is in
  fact fully faithful, the image of which is a reflective subcategory
  (\Cref{def:reflexive-cat}).
\end{lemma}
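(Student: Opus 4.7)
The plan is to reduce the claim to the fact that a right adjoint is fully faithful (with essentially image a reflective subcategory) precisely when its counit is an equivalence, which is the general criterion underlying \Cref{def:reflexive-cat} and \Cref{prop:L-inv}. Concretely, let $G \of \tmop{Ani}(\tmop{Ring}_{\delta,A}) \rightarrow \tmop{AniPair}_{\delta,(A,d)}$ denote the functor $B \mapsto (B \twoheadrightarrow B /^{\mathbb{L}} d)$; by \Cref{cor:prism-env}, it admits the left adjoint $\tmop{PrismEnv}$, and the whole statement reduces to verifying that the counit $\epsilon_B \of \tmop{PrismEnv}(G(B)) \rightarrow B$ is an equivalence for every animated $\delta$-$A$-algebra $B$.

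First I would argue that both functors $\tmop{PrismEnv} \circ G$ and $\tmop{id}_{\tmop{Ani}(\tmop{Ring}_{\delta,A})}$ preserve sifted colimits. Indeed, $\tmop{PrismEnv}$ preserves all small colimits as a left adjoint, and $G$ is built by composing the forgetful functor to $\tmop{Ani}(\tmop{Alg}_A)$ (which preserves small colimits by \Cref{cor:ani-adjoint-funs}) with the ``derived quotient by $d$'' functor, which is again a left adjoint (base change along $A \rightarrow A /^{\mathbb{L}} d$) landing in $\tmop{AniPair}_{\delta,(A,d)}$ by \Cref{lem:forget-ani-delta-pair-ani-pair} together with the surjectivity of $\pi_0(A) \twoheadrightarrow \pi_0(A/^{\mathbb{L}}d)$. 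Hence, by \Cref{prop:left-deriv-fun}, the natural transformation $\epsilon$ is an equivalence as soon as its restriction to the full subcategory $\tmop{Ring}_{\delta,A}^0 \subseteq \tmop{Ani}(\tmop{Ring}_{\delta,A})$ of free $\delta$-$A$-algebras $A\{X\}$ on finite sets $X$ is an equivalence.

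Next I would verify the equivalence on these generators by unwinding the definition of $\tmop{PrismEnv}$ from \Cref{cor:prism-env}. For $B = A\{X\}$, the image $G(B) = (A\{X\} \twoheadrightarrow A\{X\} /^{\mathbb{L}} d)$ is precisely the generator $(A\{X, Y\}, (d, Y)) \in \tmop{AniPair}_{\delta,(A,d)}^0$ corresponding to the empty set $Y = \varnothing$ of additional ``divisible'' variables. The formula defining $\tmop{PrismEnv}$ on $\tmop{AniPair}_{\delta,(A,d)}^0$ then gives $\tmop{PrismEnv}(A\{X, \varnothing\}, (d, \varnothing)) = A\{X, \varnothing / d\} = A\{X\}$, and tracing through the proof of \Cref{lem:prism-env} (whose adjunction combinatorics is what really powers \Cref{cor:prism-env}) identifies the counit $\epsilon_{A\{X\}}$ with the identity on $A\{X\}$, functorially in $X$.

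Once $\epsilon$ is shown to be a natural equivalence, fully faithfulness of $G$ is a formal consequence of the adjunction $\tmop{PrismEnv} \dashv G$ (the standard fact that the counit being an equivalence is equivalent to the right adjoint being fully faithful), and the essential image of $G$ is then a reflective subcategory of $\tmop{AniPair}_{\delta,(A,d)}$ with localization functor $\tmop{PrismEnv}$, as in \Cref{def:reflexive-cat}. The only real obstacle is the bookkeeping of the ``empty $Y$'' identification in the previous paragraph, but this is a direct check against the explicit description of generators in $\tmop{AniPair}_{\delta,(A,d)}^0$ rather than a substantive difficulty.
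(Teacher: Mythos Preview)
Your proposal is correct and follows essentially the same approach as the paper: the paper's proof is the single sentence preceding the lemma, which asserts that ``by unrolling the definitions, the counit map $\tmop{PrismEnv}(B \twoheadrightarrow B/^{\mathbb{L}} d) \rightarrow B$ is an equivalence,'' and you have supplied the details of this unrolling by reducing to the compact projective generators $A\{X\}$ via sifted colimits. One small remark: your justification that $G$ preserves sifted colimits is slightly imprecise in its bookkeeping of the $\delta$-structure (the target is a fiber product over $\tmop{Ani}(\tmop{Alg}_A)$, not just $\tmop{AniPair}$), but the conclusion holds because colimits in $\tmop{AniPair}_{\delta,(A,d)}$ are computed componentwise by \Cref{lem:forget-ani-delta-pair-ani-pair} and {\cite[Lem~5.4.5.5]{Lurie2009}}, and both components of $G$ are colimit-preserving.
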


The following concept is not strictly necessary, but it would help us to
understand when we need to ``divide by $d$'':

\begin{definition}
  Let $A$ be a $\delta$-ring and $d \in A$ a non-zero-divisor. Let $M \in D (A
  /^{\mathbb{L}} d)$ be a $A /^{\mathbb{L}} d$-module spectrum. For every $n
  \in \mathbb{Z}$, the {\tmdfn{$n$-th Breuil--Kisin twist}} of $M$ with
  respect to $(A, d)$, denoted by $M \{ n \}$, is defined to be $M \otimes_{A
  /^{\mathbb{L}} d}^{\mathbb{L}} (dA / d^2 A)^{\otimes_{A /^{\mathbb{L}}
  d}^{\mathbb{L}} n}$.
\end{definition}

Note that when $d \in A$ is a non-zero-divisor, the $A /^{\mathbb{L}}
d$-module $d^n A / d^{n + 1} A$ is a free of rank $1$, therefore equivalent to
$A /^{\mathbb{L}} d$. The Breuil--Kisin twists are strictly necessary when we
want to generalize to non-orientable prisms. In our case, we understand $M \{
1 \}$ ``formally multiplied by $d$'' and $M \{ - 1 \}$ ``formally divided by
$d$'', just as the formal notations $y_i / d$ in \Cref{lem:prism-env}.

Finally, we introduce a variant of the concept of {\tmdfn{distinguished
elements}} {\cite[Def~2.19]{Bhatt2019}}:

\begin{definition}
  \label{def:weak-dist}Let $A$ be a $\delta$-ring. We say that an element $d
  \in A$ is {\tmdfn{weakly distinguished}} if the ideal $(d, \delta (d))$ is
  the unital ideal $A$, or equivalently, $\delta (d)$ is invertible in $A /
  d$.
\end{definition}

\begin{remark}
  Let $A$ be a $\delta$-ring and $d \in \tmop{Rad} (A)$ an element in the
  Jacobson radical. Then $d$ is weakly distinguished if and only if it is
  distinguished.
\end{remark}

The following lemma is a motivation for the introduction of weakly
distinguished elements:

\begin{lemma}[cf. {\cite[Lem~2.23]{Bhatt2019}}]
  \label{lem:delta-mod-I-indep-gen}Let $A$ be a $\delta$-ring, $I = (d)
  \subseteq A$ a principal ideal. Then for any invertible element $u \in
  \tmop{GL}_1 (A)$, the principal ideals $\delta (d)  (A / I)$ and $\delta
  (ud)  (A / I)$ are the same. In particular, when $I$ is generated by a
  non-zero-divisor, the principal ideal $\delta (d)  (A / I)$ does not depend
  on the choice of the generator $d \in I$.
\end{lemma}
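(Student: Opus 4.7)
The plan is to use the multiplicative axiom for $\delta$ together with the fact that the Frobenius lift preserves units. First I would compute $\delta(ud)$ directly from the axiom
\[
\delta(ud) = u^p \delta(d) + d^p \delta(u) + p\,\delta(u)\delta(d).
\]
Reducing modulo $I = (d)$ kills the middle term, leaving
\[
\delta(ud) \equiv u^p \delta(d) + p\,\delta(u)\delta(d) = (u^p + p\delta(u))\,\delta(d) = \varphi(u)\,\delta(d) \pmod{d},
\]
where $\varphi$ is the Frobenius endomorphism from \Cref{rem:delta-ring-frob}.

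The key observation is that since $\varphi \colon A \to A$ is a ring homomorphism and $u \in \mathrm{GL}_1(A)$, we have $\varphi(u) \in \mathrm{GL}_1(A)$, and hence its image in $A/I$ is still a unit. Consequently $\varphi(u)\,\delta(d)$ and $\delta(d)$ generate the same principal ideal in $A/I$, which proves the first assertion.

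For the ``in particular'' clause, suppose $d$ is a non-zero-divisor and $d' \in A$ is another generator of $I = (d)$. Then $d' = u d$ and $d = v d'$ for some $u, v \in A$, so $d = v u d$, i.e.\ $(1 - vu)d = 0$; since $d$ is a non-zero-divisor this forces $vu = 1$, so $u$ is a unit. The first part then gives $\delta(d')(A/I) = \delta(d)(A/I)$. There is no serious obstacle here — the proof is a direct unravelling of the definitions, and the only subtle point is making sure to invoke $d$ being a non-zero-divisor precisely to upgrade ``same principal ideal'' to ``unit scalar multiple''.
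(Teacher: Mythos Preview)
Your proof is correct and follows essentially the same route as the paper: compute $\delta(ud)$ via the multiplicative axiom, reduce modulo $d$ to get $\varphi(u)\delta(d)$, and observe that $\varphi$ preserves units. The paper is terser (it writes $\delta(ud) = \varphi(u)\delta(d) + \delta(u)d^p$ directly and leaves the ``in particular'' clause implicit), but the argument is identical.
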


\begin{proof}
  We have $\delta (ud) = \varphi (u) \delta (d) + \delta (u) d^p \equiv
  \varphi (u) \delta (d) \hspace{0.2em} \pmod{ud}$. Since $u$ is invertible,
  so is $\varphi (u)$, and the result follows.
\end{proof}

\begin{corollary}
  \label{cor:weak-dist-gen-indep}Let $A$ be a $\delta$-ring, $I \subseteq A$ a
  principal ideal generated by a non-zero-divisor. Then the followings are
  equivalent:
  \begin{enumerate}
    \item There exists a weakly distinguished generator $d$ of $I$.
    
    \item Every generator $d$ of $I$ is weakly distinguished.
  \end{enumerate}
\end{corollary}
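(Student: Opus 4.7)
The plan is to deduce this directly from Lemma \ref{lem:delta-mod-I-indep-gen}. Any two generators $d, d'$ of the principal ideal $I$ differ by a unit: since $I$ is generated by a non-zero-divisor, $d' = ud$ for some $u \in \operatorname{GL}_1(A)$ (uniqueness of $u$ follows from $d$ being a non-zero-divisor).

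I would then observe that, by definition, a generator $d$ of $I$ is weakly distinguished precisely when $\delta(d)(A/I) = A/I$, i.e., when the principal ideal of $A/I$ generated by the image of $\delta(d)$ is the unit ideal. Lemma \ref{lem:delta-mod-I-indep-gen} tells us that this principal ideal $\delta(d)(A/I) \subseteq A/I$ is independent of the choice of generator $d$ of $I$. Hence the property ``$\delta(d)(A/I) = A/I$'' either holds for every generator or for none, which is precisely the equivalence of (1) and (2).

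There is no real obstacle here; the content is entirely packaged in the preceding lemma, and the corollary is a formal restatement once one recognizes that weak distinguishedness of $d$ depends only on the ideal $\delta(d)(A/I)$ rather than on $\delta(d)$ itself.
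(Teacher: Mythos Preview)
Your proposal is correct and matches the paper's approach exactly: the paper states this corollary without proof, treating it as an immediate consequence of Lemma~\ref{lem:delta-mod-I-indep-gen}, which is precisely the deduction you spell out.
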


\begin{remark}[Bhatt]
  We have a variant of \Cref{cor:weak-dist-gen-indep} which does not involve
  non-zero-divisors, by replacing a principal ideal $I$ by the equivalence
  classes of maps $A \rightarrow A$ of $A$-modules, and the proof of
  \Cref{lem:delta-mod-I-indep-gen} implies that the concept of ``weakly
  distinguished'' is invariant under this equivalence. More generally, we can
  consider the equivalence classes of an invertible $A$-module $I$ along with
  a map $I \rightarrow A$, and define the concept of such a map $I \rightarrow
  A$ being weakly distinguished when $p \in \tmop{Rad} (A)$. This generalizes
  to animated $\delta$-rings.
\end{remark}

Recall that an animated ring $A$ is {\tmdfn{$p$-local}} if the element $p \in
\pi_0 (A)$ lies in the Jacobson radical\footnote{The {\tmdfn{Jacobson
radical}} $\tmop{Rad} (A)$ of a ring $A$ is defined to be the subset (and
{\tmem{a fortiori}}, the ideal) of elements $x \in A$ such that for every $a
\in A$, the element $1 + ax$ is invertible in $A$.} $\tmop{Rad} (\pi_0 (A))$.

\begin{lemma}
  Let $A$ be a $p$-local $\delta$-ring and $d \in A$ a weakly distinguished
  element. Then for every $n \in \mathbb{N}$, $\varphi^n (\delta (d))$ is
  invertible in $A / d$.
\end{lemma}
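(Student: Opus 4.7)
The plan is to proceed by induction on $n$, with the base case $n = 0$ being exactly the hypothesis that $d$ is weakly distinguished (cf. Definition~\ref{def:weak-dist}). For the inductive step, I would invoke the defining relation $\varphi(y) = y^p + p\delta(y)$ from Remark~\ref{rem:delta-ring-frob} applied to $y = \varphi^n(\delta(d))$, so that
\[
\varphi^{n+1}(\delta(d)) = \varphi^n(\delta(d))^p + p \cdot \delta(\varphi^n(\delta(d))).
\]

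The key observation is that $p$-locality is preserved under quotients: since $p \in \tmop{Rad}(\pi_0(A))$ by assumption, and the surjection $\pi_0(A) \twoheadrightarrow \pi_0(A)/d$ sends Jacobson radical elements to Jacobson radical elements (as $1 + \bar{a}\bar{p}$ is the image of the unit $1 + ap$, hence a unit), we have $p \in \tmop{Rad}(\pi_0(A)/d)$. In particular, any element of the form $1 + pa$ is invertible in $A/d$.

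Given the inductive hypothesis that $\varphi^n(\delta(d))$ is invertible in $A/d$, I would factor out the $p$-th power: modulo $d$,
\[
\varphi^{n+1}(\delta(d)) \;=\; \varphi^n(\delta(d))^p \cdot \bigl(1 + p \cdot \varphi^n(\delta(d))^{-p} \cdot \delta(\varphi^n(\delta(d)))\bigr).
\]
The first factor is invertible as a $p$-th power of an invertible element, and the second factor is of the form $1 + p \cdot (\text{something})$, hence invertible in $A/d$ by the previous paragraph. Thus $\varphi^{n+1}(\delta(d))$ is invertible in $A/d$, completing the induction.

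I do not anticipate a genuine obstacle here; the only point needing mild care is justifying that $p$-locality descends to $A/d$, which is immediate from the elementary property of Jacobson radicals under surjections. The argument is essentially an abstract reformulation of the observation that reducing mod $d$ and mod $p$ trivializes the difference between $\varphi$ and the $p$-th power map.
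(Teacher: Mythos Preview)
Your proof is correct and follows essentially the same approach as the paper: both argue by induction on $n$, use the identity $\varphi(u) = u^p + p\delta(u)$, and invoke $p \in \tmop{Rad}(A/d)$ to conclude that adding a $p$-multiple to an invertible element preserves invertibility. Your version is slightly more explicit in justifying why $p$-locality descends to $A/d$ and in factoring out $\varphi^n(\delta(d))^p$, whereas the paper phrases the inductive step as the general claim that $\varphi$ preserves invertibility modulo $d$.
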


\begin{proof}
  By induction, it suffices to show that, for every $u \in A$ of which the
  image in $A / d$ is invertible, then so is the image of $\varphi (u)$ in $A
  / d$. It follows from the identity $\varphi (u) = u^p + p \delta (u)$, since
  the image of $u^p$ in $A / d$ invertible, and $p \in \tmop{Rad} (A / d)$.
\end{proof}

\subsection{Conjugate filtration}\label{subsec:prism-conj-fil}In this
subsection, we will introduce the {\tmdfn{conjugate filtration}} on
``non-complete prismatic envelopes'', which plays a similar role as the
conjugate filtrations on animated PD-envelopes and derived crystalline
cohomology. Let $A$ be a $p$-local $\delta$-ring, $d \in A$ a weakly
distinguished non-zero-divisor. To simplify the presentation, we mostly
concentrate on the ``single variable'' case: $\tmop{Env}^{\Prism} (A \{ y \},
(d, y)) /^{\mathbb{L}} d \simeq A \{ y / d \} /^{\mathbb{L}} d$ as an $A \{ y
\} /^{\mathbb{L}} (d, y)$-algebra (or module).

First, note that the identity
\begin{eqnarray}
  \delta (u^p) & = & (\varphi (u^p) - u^{p^2}) / p \nonumber\\
  & = & ((u^p + p \delta (u))^p - u^{p^2}) / p \nonumber\\
  & = & \sum_{k = 1}^p \binom{p}{k} u^{p (p - k)} p^{k - 1} \delta (u)^k 
  \label{eq:delta-power}
\end{eqnarray}
holds in the free $\delta$-ring $\mathbb{Z}_{(p)} \{ u \}$, therefore it is an
identity in any $\delta$-ring.

We now compute $\delta^n (y)$ in terms of $\delta^n (z)$ where $y = zd$ in the
free $\delta$-$A$-algebra $A \{ y \}$:
\begin{eqnarray*}
  \delta (y) & = & \delta (zd)\\
  & = & \delta (z) \varphi (d) + z^p \delta (d)\\
  \delta^2 (y) & = & \delta (\delta (z) \varphi (d) + z^p \delta (d))\\
  & = & \delta (\delta (z) \varphi (d)) + \delta (z^p \delta (d)) -
  \underbrace{\sum_{k = 1}^{p - 1} \frac{1}{p}  \binom{p}{k}  (\delta (z)
  \varphi (d))^{p - k}  (z^p \delta (d))^k}_{\backassign R_2}\\
  & = & \delta^2 (z) \varphi^2 (d) + \delta (z)^p \delta (\varphi (d)) +
  \delta (z^p) \delta (\varphi (d)) + z^{p^2} \delta^2 (d) - R_2\\
  & = & \delta^2 (z) \varphi^2 (d) + \varphi (\delta (d))  (1 + p^{p - 1})
  \delta (z)^p + \sum_{k = 1}^{p - 1} \cdots + z^{p^2} \delta^2 (d) - R_2
\end{eqnarray*}
where we used the fact that $\varphi \circ \delta = \delta \circ \varphi$ and
\eqref{eq:delta-power} (which leads to the summand $\sum_{k = 1}^{p - 1}
\cdots$), and in general, we have

\begin{lemma}
  \label{lem:expand-delta-n}Let $A \{ z \}$ be the free $\delta$-$A$-algebra
  and $y \assign zd$. For every $n \in \mathbb{N}$, there exists a unique
  polynomial $P_n \in A [X_0, \ldots, X_{n - 1}]$ with $\deg_{X_{n - 1}} P_n
  \leq p$ such that
  \[ \delta^n (y) = \delta^n (z) \varphi^n (d) + P_n (z, \delta (z), \ldots,
     \delta^{n - 1} (z)) \]
  Moreover, there exists a unique $Q_n \in A [X_0, \ldots, X_{n - 1}]$ with
  $\deg_{X_{n - 1}} Q_n < p$ such that $P_n = a_n \varphi^{n - 1} (\delta (d))
  X_{n - 1}^p + Q_n$ where $a_n$ are partial sums $\sum_{k = 0}^{n - 1} p^{k
  (p - 1)}$ of the geometric progression $(p^{k (p - 1)})_{k \in \mathbb{N}}$.
  Note that $a_n \in \tmop{GL}_1 (\mathbb{Z}_{(p)})$ for $n > 0$. On the other
  hand, if we endow $X_i$ with degree $p^i$, then $P_n$ is homogeneous of
  degree $p^n$.
\end{lemma}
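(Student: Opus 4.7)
The plan is to proceed by induction on $n \geq 1$ (the case $n = 0$ holds vacuously with $P_0 = 0$). Uniqueness is immediate since the underlying $A$-algebra of $A\{z\}$ is the polynomial ring $A[z, \delta z, \delta^2 z, \ldots]$, so $P_n \in A[X_0, \ldots, X_{n-1}]$ is uniquely determined by the stated formula, and the decomposition $P_n = a_n \varphi^{n-1}(\delta d) X_{n-1}^p + Q_n$ with $\deg_{X_{n-1}} Q_n < p$ then follows from polynomial division. The base case $n = 1$ is the direct computation $\delta(zd) = \varphi(d)\delta(z) + z^p \delta(d)$ via the identity $\delta(ab) = \varphi(a)\delta b + b^p \delta a$ (equivalent to the product axiom).

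For the inductive step, I would apply $\delta$ to the assumed formula using $\delta(a+b) = \delta a + \delta b - P(a,b)$ with $P(X,Y) = \frac{(X+Y)^p - X^p - Y^p}{p}$. Using $\varphi \circ \delta = \delta \circ \varphi$ and $\varphi^n(d)^p + p\varphi^n(\delta d) = \varphi^{n+1}(d)$, one computes $\delta(\delta^n(z)\varphi^n(d)) = \varphi^{n+1}(d)\delta^{n+1}(z) + (\delta^n z)^p \varphi^n(\delta d)$, which gives the recursion
\[
P_{n+1}(z, \ldots, \delta^n z) = (\delta^n z)^p \varphi^n(\delta d) + \delta(P_n(z, \ldots, \delta^{n-1} z)) - P(\delta^n(z)\varphi^n(d), P_n(z, \ldots, \delta^{n-1} z)).
\]
For the weighted-homogeneity of $P_n$ (with $|X_i| = p^i$), I would use a scaling argument: extend $A\{z\}$ to $A\{z\}[t, t^{-1}]$ with $\delta(t) = 0$ (so $\varphi(t) = t^p$ and $\delta(t^{p^k}) = 0$ by iterating \eqref{eq:delta-power}), and consider the $\delta$-$A$-algebra map $\sigma \colon A\{z\} \to A\{z\}[t, t^{-1}]$ with $\sigma(z) = tz$. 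From $\delta(tx) = \varphi(t)\delta x + x^p \delta t = t^p \delta x$ one obtains inductively $\sigma(\delta^n x) = t^{p^n}\delta^n x$ for any $x$; applying $\sigma$ to the formula for $\delta^n(y)$ and comparing yields the identity $P_n(tX_0, t^p X_1, \ldots, t^{p^{n-1}} X_{n-1}) = t^{p^n} P_n(X_0, \ldots, X_{n-1})$, which proves the homogeneity.

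The hard part is tracking the coefficient of $X_n^p$ in $P_{n+1}$ to confirm $a_{n+1} = 1 + a_n p^{p-1}$, whence $a_n = \sum_{k=0}^{n-1} p^{k(p-1)}$. The key degree bound is that for any monomial $m = \prod_k (\delta^k z)^{\alpha_k}$ with $\alpha_{n-1} < p$, iterated use of the product rule combined with $\delta(u^k) = \sum_{j=1}^k \binom{k}{j} u^{p(k-j)} p^{j-1}(\delta u)^j$ shows that $\delta(m)$ has $\delta^n z$-degree at most $\alpha_{n-1} < p$. Consequently, writing $P_n = a_n \varphi^{n-1}(\delta d) X_{n-1}^p + Q_n$, the contribution $\delta(Q_n(z, \ldots, \delta^{n-1} z))$ has no $X_n^p$ term, and the cross term $P(\delta^n(z)\varphi^n(d), P_n)$ has $X_n$-degree at most $p-1$ since $P_n$ is $X_n$-free. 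The only non-trivial contribution beyond the explicit $(\delta^n z)^p\varphi^n(\delta d)$ comes from
\[
\delta(a_n\varphi^{n-1}(\delta d)(\delta^{n-1}z)^p) = a_n \varphi^n(\delta d) \cdot \delta((\delta^{n-1}z)^p) + (\delta^{n-1}z)^{p^2}\delta(a_n\varphi^{n-1}(\delta d)),
\]
whose $(\delta^n z)^p$-coefficient is $a_n p^{p-1}\varphi^n(\delta d)$ by extracting the $k = p$ term of \eqref{eq:delta-power}. Adding this to the $\varphi^n(\delta d)$ from the explicit summand yields the recursion; since $a_n \equiv 1 \pmod{p}$, $a_n$ is a unit in $\mathbb{Z}_{(p)}$ for $n \geq 1$.
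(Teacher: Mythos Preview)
Your proof is correct and follows essentially the same inductive approach as the paper: both apply $\delta$ to the inductive formula, expand via the sum and product rules, and isolate the coefficient of $(\delta^n z)^p$ by splitting $P_n$ into its top-degree part and $Q_n$, extracting the recursion $a_{n+1} = 1 + a_n p^{p-1}$ from the $k=p$ term of \eqref{eq:delta-power}. Your treatment of the degree bounds is slightly more explicit than the paper's.

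The one genuine addition is your scaling argument for the weighted homogeneity. The paper leaves this claim to the reader (it follows from the recursion, since each step of the $\delta$-calculus preserves weighted homogeneity), whereas you give a clean global proof: adjoin a variable $t$ with $\delta(t)=0$, use the $\delta$-map $z \mapsto tz$ to obtain $\delta^k(tz) = t^{p^k}\delta^k(z)$, and read off the homogeneity identity for $P_n$ directly. This avoids tracking degrees through the inductive computation and is a nice self-contained argument.
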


\begin{proof}
  The uniqueness follows from the freeness. We prove the existence inductively
  on $n \in \mathbb{N}$. When $n = 0$, this is obvious. Now let $n \in
  \mathbb{N}_{> 0}$, and assume that this is true for every $m < n$, Now we
  have
  \begin{eqnarray*}
    \delta^n (y) & = & \delta (\delta^{n - 1} (y))\\
    & = & \delta (\delta^{n - 1} (z) \varphi^{n - 1} (d) + P_{n - 1} (z,
    \delta (z), \ldots, \delta^{n - 2} (z)))\\
    & = & \delta (\delta^{n - 1} (z) \varphi^{n - 1} (d)) + \delta (P_{n - 1}
    (z, \delta (z), \ldots, \delta^{n - 2} (z))) - R_n
  \end{eqnarray*}
  where $\delta (\delta^{n - 1} (z) \varphi^{n - 1} (d)) = \delta^n (z)
  \varphi^n (d) + (\delta^{n - 1} (z))^p \varphi^{n - 1} (\delta (d))$ and
  \[ R_n \assign \sum_{k = 1}^{p - 1} \frac{1}{p}  \binom{p}{k}  (\delta^{n -
     1} (z) \varphi^{n - 1} (d))^{p - k}  (P_{n - 1} (z, \delta (z), \ldots,
     \delta^{n - 2} (z)))^k \]
  Note that the ``degree'' of $\delta^{n - 1} (z)$ in $R_n$ is strictly less
  than $p$. Let $b_{n - 1} = a_{n - 1} \varphi^{n - 2} (\delta (d))$, we have
  \begin{eqnarray*}
    \delta (P_{n - 1} (z, \delta (z), \ldots)) & = & \delta (b_{n - 1} 
    (\delta^{n - 2} (z))^p + Q_{n - 1} (z, \delta (z), \ldots))\\
    & = & \delta (b_{n - 1}  (\delta^{n - 2} (z))^p) + \delta (Q_{n - 1} (z,
    \delta (z), \ldots)) - \underbrace{\sum_{k = 1}^{p - 1}
    \cdots}_{\backassign R_n'}\\
    & = & \varphi (b_{n - 1}) \delta ((\delta^{n - 2} (z))^p) + \delta (Q_{n
    - 1} (\cdots)) + \delta (b_{n - 1})  (\delta^{n - 2} (z))^{p^2}\\
    &  & - R_n'
  \end{eqnarray*}
  and only $\varphi (b_{n - 1}) \delta ((\delta^{n - 2} (z))^p)$ has
  contribution on $\delta^{n - 1} (z)^p$, and
  \[ \delta ((\delta^{n - 2} (z))^p) = \sum_{k = 1}^p \binom{p}{k}  (\delta^{n
     - 2} (z))^{p (p - k)} p^{k - 1}  (\delta^{n - 1} (z))^k \]
  has contribution on $\delta^{n - 1} (z)^p$ only at $k = p$, i.e. $p^{p - 1}
  \delta^{n - 1} (z)^p$. Note that $\varphi (b_{n - 1}) = a_{n - 1} \varphi^{n
  - 1} (\delta (d))$, the result then follows.
\end{proof}

{\construction{\label{cons:conj-fil-prism-env}We now rewrite $A \{ y \}
\rightarrow A \{ z \}, y \mapsto zd$ as the sequential composite (i.e. the $A
\{ y \}$-algebra $A \{ z \}$ is equivalent to the sequential colimit of)
\begin{equation}
  A \{ y \} \longrightarrow A \{ y \} \otimes_{B_0}^{\mathbb{L}} C_0
  \longrightarrow A \{ y \} \otimes_{B_1}^{\mathbb{L}} C_1 \longrightarrow
  \cdots \label{eq:Ay-in-Az}
\end{equation}
where $A_n \assign A [z, \ldots, \delta^{n - 1} (z)], B_n \assign A_n
[\delta^n (y)]$ and $C_n \assign A_n [\delta^n (z)]$ are polynomial algebras,
and the map $B_n \rightarrow C_n$ is given by the evaluation $\delta^n (y)
\mapsto \delta^n (z) \varphi^n (d) + P_n (z, \delta (z), \ldots, \delta^{n -
1} (z))$ by \Cref{lem:expand-delta-n}. Thus $B_n \rightarrow C_n$ could be
written as the composite (where we replace $\delta^n (y)$ by $u$ and $\delta^n
(z)$ by $v$)
\begin{equation}
  B_n = A_n [u] \rightarrow A_n [u, v] / (u - \varphi^n (d) v - P_n (z, \delta
  (z), \ldots, \delta^{n - 1} (z))) \cong A_n [v] = C_n \label{eq:Bn-to-Cn}
\end{equation}
In other words, $B_n \rightarrow C_n$ is essentially formally
adjoining\footnote{Note that this is true although $\varphi^n (d)$ is not
necessarily a non-zero-divisor.} $(\delta^n (y) - P_n (z, \delta (z), \ldots,
\delta^{n - 1} (z))) / \varphi^n (d)$ to $B_n$ as an (animated) $A$-algebra,
and the $A \{ y \}$-algebra $A \{ z \}$ is obtained by formally adjoining
$(\delta^n (y) - P_n (z, \delta (z), \ldots, \delta^{n - 1} (z))) / \varphi^n
(d)$ iteratively from $A \{ y \}$. The {\tmdfn{conjugate filtration}} on $A \{
y / d \} /^{\mathbb{L}} d$ is given by $\tmop{Fil}^{- i}_{\tmop{conj}} (A \{ y
/ d \} /^{\mathbb{L}} d)$ being the $A \{ y \} /^{\mathbb{L}} (y,
d)$-submodule of $A \{ y / d \} /^{\mathbb{L}} d$ spanned by $\{ (y / d)^{a_0}
\delta (y / d)^{a_1}  (\delta^2 (y / d))^{a_2} \cdots (\delta^r (y / d))^{a_r}
\barsuchthat r \in \mathbb{N}, 0 \leq a_0, a_1, \ldots, a_r < p \}$.}}

Passing to the multivariable version, we get:

\begin{lemma}
  \label{lem:conj-fil-prism-env}Let $A$ be a $p$-local $\delta$-ring and $d
  \in A$ a weakly distinguished non-zero-divisor. Then there exists a
  canonical functor $\tmop{Fil}_{\tmop{conj}}^{\ast} \left(
  \tmop{Env}^{\Prism} (\cdummy) /^{\mathbb{L}} d \right) \of
  \tmop{Pair}_{\delta, (A, d)}^{\tmop{st}} \rightarrow \tmop{CAlg}
  (\tmop{DF}^{\leq 0} (A /^{\mathbb{L}} d))$ which preserves finite
  coproducts, along with a functorial map $\tmop{Fil}_{\tmop{conj}}^{\ast}
  \left( \tmop{Env}^{\Prism} (B, J) /^{\mathbb{L}} d \right) \rightarrow
  \tmop{Env}^{\Prism} (B, J) /^{\mathbb{L}} d$, understood as the
  {\tmdfn{conjugate filtration}} on $\tmop{Env}^{\Prism} (B, J) /^{\mathbb{L}}
  d$, such that
  \begin{enumerate}
    \item The conjugate filtration is exhaustive, that is to say, the induced
    map $\tmop{Fil}_{\tmop{conj}}^{- \infty} \left( \tmop{Env}^{\Prism} (B, J)
    /^{\mathbb{L}} d \right) \rightarrow \tmop{Env}^{\Prism} (B, J)
    /^{\mathbb{L}} d$ is an equivalence in $D (A)$.
    
    \item The filtration $\tmop{Fil}_{\tmop{conj}}^{\ast} \left(
    \tmop{Env}^{\Prism} (A \{ y \} \twoheadrightarrow A \{ y \} /^{\mathbb{L}}
    (y, d)) /^{\mathbb{L}} d \right)$ coincides with the filtration
    $\tmop{Fil}_{\tmop{conj}}^{\ast} (A \{ y / d \} /^{\mathbb{L}} d)$ in
    Construction~\ref{cons:conj-fil-prism-env}.
    
    \item The maps $\tmop{Fil}_{\tmop{conj}}^{- i} \left( \tmop{Env}^{\Prism}
    (A \{ x \} \twoheadrightarrow A \{ x \} /^{\mathbb{L}} d) /^{\mathbb{L}} d
    \right) \rightarrow \tmop{Env}^{\Prism} (A \{ x \} \twoheadrightarrow A \{
    x \} /^{\mathbb{L}} d) /^{\mathbb{L}} d \simeq A \{ x \} /^{\mathbb{L}} d$
    are equivalences for all $i \in \mathbb{N}$, that is to say, the conjugate
    filtration on $(A /^{\mathbb{L}} d) \{ x \}$ is ``constant\footnote{More
    precisely, it is constant after restriction to $\mathbb{Z}_{\leq 0}$, but
    this restriction is expected as the conjugate filtration is
    non-positive.}''.
  \end{enumerate}
\end{lemma}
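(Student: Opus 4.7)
My approach is to define the functor object-wise on the generators and then verify functoriality, preservation of finite coproducts, and exhaustiveness separately. On an object $(A\{X, Y\}, (d, Y)) \in \tmop{AniPair}_{\delta, (A, d)}^0$, the prismatic envelope mod $d$ is $A\{X, Y/d\}/^{\mathbb{L}} d$, and I would define $\tmop{Fil}_{\tmop{conj}}^{-i}$ as the $A\{X\}/^{\mathbb{L}} d$-submodule generated by monomials of the form $\prod_{y \in Y, k \geq 0} (\delta^k(y/d))^{a_{k,y}}$ with $0 \leq a_{k,y} < p$ and total weighted degree $\sum_{k, y} a_{k,y} p^k \leq i$. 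Multiplicativity $\tmop{Fil}^{-i} \cdot \tmop{Fil}^{-j} \subseteq \tmop{Fil}^{-(i+j)}$ holds by the weighting, and the identities $\delta(x+y) = \delta(x) + \delta(y) - P(x,y)$ and $\delta(xy) = x^p \delta(y) + y^p \delta(x) + p\delta(x)\delta(y)$ together with $\varphi(x) = x^p + p\delta(x)$ propagate filtrations appropriately. This endows $A\{X, Y/d\}/^{\mathbb{L}} d$ with the structure of a nonpositively filtered $\mathbb{E}_\infty$-$A/^{\mathbb{L}} d$-algebra.

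Preservation of finite coproducts follows from the natural identification $\tmop{PrismEnv}(A\{X \sqcup X', Y \sqcup Y'\}, (d, Y \sqcup Y'))/^{\mathbb{L}} d \simeq A\{X, Y/d\}/^{\mathbb{L}} d \otimes_{A/^{\mathbb{L}} d}^{\mathbb{L}} A\{X', Y'/d\}/^{\mathbb{L}} d$, which is compatible with the Day convolution on the filtration side because the weighted-degree monomial basis splits multiplicatively across the two sets of $\delta$-variables. Exhaustiveness (item~1) is immediate since every element of $A\{X, Y/d\}/^{\mathbb{L}} d$ can be written as a polynomial in finitely many $\delta^k(y/d)$'s and thus lies in some $\tmop{Fil}^{-i}$. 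Item~2 holds by construction: in the single-variable case $Y = \{y\}$ with $X = \emptyset$, our definition is verbatim the explicit filtration described before the lemma. Item~3 is likewise immediate: when $Y = \emptyset$, no generators of positive weighted degree are introduced, so $\tmop{Fil}^{-i}$ equals $A\{X\}/^{\mathbb{L}} d$ for every $i \in \mathbb{N}$.

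The main obstacle is functoriality. A morphism $f : (A\{X, Y\}, (d, Y)) \to (A\{X', Y'\}, (d, Y'))$ sends $d$ to $d$ and each $y \in Y$ to an element of the ideal $(d, Y') \subseteq A\{X', Y'\}$, say $f(y) = d g_y + \sum_i y'_i h_{i,y}$. Passing into $A\{X', Y'/d\}$ and using $y'_i = d \cdot (y'_i/d)$, this rewrites as $f(y) = d \cdot (g_y + \sum_i (y'_i/d) h_{i,y})$, so that the induced $\delta$-$A$-algebra map $\tilde f$ sends $y/d$ to $g_y + \sum_i (y'_i/d) h_{i,y}$. The delicate point is to bound the filtration level of this image and, more generally, of $\tilde f(\delta^k(y/d))$, after reducing modulo $d$. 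Using \Cref{lem:expand-delta-n} to expand $\delta^k$'s applied to products like $d \cdot (y'_i/d)$, one can track the Frobenius-twisted contributions of $\varphi^k(d)$ and $P_k$ (which reduce modulo $d$ to elements involving $\delta(d)$'s and $p$'s). The verification reduces, by multiplicativity and $\delta$-compatibility of the filtration, to the case where $f$ sends each $y$ to either a generator $y'_i$ or to $d$ times a polynomial expression, which can then be checked using the explicit formulae.

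Overall, items 1, 2, and 3 are formal consequences of the construction, while the hard step is establishing well-definedness as a functor to $\tmop{CAlg}(\tmop{DF}^{\leq 0}(A/^{\mathbb{L}} d))$, namely that the filtration is preserved under arbitrary morphisms of $\tmop{AniPair}_{\delta, (A, d)}^0$. The bookkeeping of how $\delta$-operations interact with ``division by $d$'' under the prismatic envelope will be the crux of the argument.
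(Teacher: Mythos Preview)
Your overall strategy---define the filtration explicitly on each $(A\{X,Y\},(d,Y))$ by declaring $\delta^r(y/d)$ to have weight $p^r$, then check functoriality---is exactly the paper's. The paper is equally terse on the functoriality verification (it writes only ``one verifies that this indeed gives rise to a functor'') and also offers a shortcut under the stronger hypothesis that $d$ is weakly transversal, reducing the check to the mod-$p$ and rationalized pictures via \Cref{lem:prism-conj-fil-mod-p-rat}.

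There is, however, a genuine error in your object-wise definition: the base over which you take the submodule must be $A\{X,Y\}/^{\mathbb L}(d,Y)$ (i.e.\ the quotient $B''$), not $A\{X\}/^{\mathbb L}d$. With your choice, $\tmop{Fil}^0_{\tmop{conj}}$ is only $A\{X\}/d$, whereas for instance the image of $\delta(y)$ in $A\{X,Y/d\}/d$, namely $\delta(d)\,\varphi(y/d)=\delta(d)\bigl((y/d)^p+p\,\delta(y/d)\bigr)$, must already sit in $\tmop{Fil}^0_{\tmop{conj}}$ (it does for the paper's base, since $\delta(y)\in A\{X,Y\}/(d,Y)$). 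This breaks item~(2): the single-variable construction given just before the lemma uses $A\{y\}/^{\mathbb L}(y,d)$ as its base, so your filtration does not reproduce it ``verbatim'' as you claim. It also matters downstream: the Hodge--Tate comparison (\Cref{thm:Hdg-Tate}) forces $\tmop{gr}^0_{\tmop{conj}}\simeq B''=A\{X,Y\}/(d,Y)$, not $A\{X\}/d$. Once you replace the base by $A\{X,Y\}/(d,Y)$, your argument realigns with the paper's.
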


\begin{proof}
  The conjugate filtration on each object $\tmop{Env}^{\Prism} (B, J)$ for
  $(B, J) \in \tmop{Pair}_{\delta, (A, d)}^{\tmop{st}}$ is completely
  determined by these properties and that the functor preserves finite
  coproducts, since every $(B, J)$ could be written as a coproduct of $A \{ x
  \} \twoheadrightarrow A \{ x \} /^{\mathbb{L}} d$ and $A \{ y \}
  \twoheadrightarrow A \{ y \} /^{\mathbb{L}} (y, d)$. Concretely,
  $\tmop{Fil}_{\tmop{conj}}^{- i} (A \{ X, Y / d \} /^{\mathbb{L}} d)$ are
  generated, as an $A \{ X, Y \} / (d, Y)$-submodule, by ``standard
  monomials'' $\prod_{(r, y) \in E} \delta^r (y / d)$ ``of total
  degree$\nosymbol \leq i$'' where $E \subseteq \mathbb{N} \times Y$ is a
  finite subset and the element $\delta^r (y / d)$ is of degree $p^r$ for $r
  \in \mathbb{N}$ and $y \in Y$. One verifies that this indeed gives rise to a
  functor.
  
  Alternatively, if we further assume that $d$ is weakly transversal
  (\Cref{def:weakly-transversal}), then we can invoke
  \Cref{lem:prism-conj-fil-mod-p-rat} to reduce significantly the
  computations.
\end{proof}

\begin{definition}
  Let $A$ be a $p$-local $\delta$-ring and $d \in A$ a weakly distinguished
  non-zero-divisor. Then the {\tmdfn{conjugate filtration}} on
  $\tmop{Env}^{\Prism} (B \twoheadrightarrow B'') /^{\mathbb{L}} d$ for $(B
  \twoheadrightarrow B'') \in \tmop{Pair}_{\delta, (A, d)}^{\tmop{an}}$ is
  given by the left derived functor (\Cref{prop:left-deriv-n-fun})
  $\tmop{Pair}_{\delta, (A, d)}^{\tmop{an}} \rightarrow \tmop{CAlg}
  (\tmop{DF}^{\leq 0} (A /^{\mathbb{L}} d))$ of the functor
  $\tmop{Pair}_{\delta, (A, d)}^{\tmop{st}} \rightarrow \tmop{CAlg}
  (\tmop{DF}^{\leq 0} (A /^{\mathbb{L}} d))$ in \Cref{lem:conj-fil-prism-env}.
\end{definition}

It follows from \Cref{lem:assoc-graded-union-Lan} that

\begin{lemma}
  Let $A$ be a $p$-local $\delta$-ring and $d \in A$ a weakly distinguished
  non-zero-divisor. Then the {\tmdfn{conjugate filtration}} on
  $\tmop{Env}^{\Prism} (B \twoheadrightarrow B'') /^{\mathbb{L}} d$ for every
  $(B \twoheadrightarrow B'') \in \tmop{Pair}_{\delta, (A, d)}^{\tmop{an}}$ is
  exhaustive, i.e. $\tmop{Fil}^{- \infty} \tmop{Env}^{\Prism} (B
  \twoheadrightarrow B'') /^{\mathbb{L}} d \rightarrow \tmop{Env}^{\Prism} (B
  \twoheadrightarrow B'') /^{\mathbb{L}} d$ is an equivalence.
\end{lemma}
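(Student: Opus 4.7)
The plan is to deduce the exhaustiveness from the corresponding statement on the full subcategory $\tmop{AniPair}_{\delta, (A, d)}^0$ of compact projective generators (established as part of \Cref{lem:conj-fil-prism-env}) by combining \Cref{lem:assoc-graded-union-Lan} with \Cref{prop:left-deriv-fun}. Concretely, I would set up the comparison map
\[ \eta \colon \tmop{Fil}^{-\infty} \circ \tmop{Fil}_{\tmop{conj}}^{\ast}(\tmop{PrismEnv}(\cdummy)/^{\mathbb{L}}d) \longrightarrow \tmop{PrismEnv}(\cdummy)/^{\mathbb{L}}d \]
as a natural transformation of functors $\tmop{AniPair}_{\delta,(A,d)} \rightarrow D(A/^{\mathbb{L}}d)$, induced by the structure maps $\tmop{Fil}^i \to \tmop{Fil}^{-\infty}$ and the canonical map of \Cref{lem:conj-fil-prism-env} (which exhibits the filtration as targeting $\tmop{PrismEnv}(\cdummy)/^{\mathbb{L}}d$).

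I would then argue that both the source and target of $\eta$ are left Kan extended along the fully faithful inclusion $\tmop{AniPair}_{\delta,(A,d)}^0 \hookrightarrow \tmop{AniPair}_{\delta,(A,d)}$. For the target, this follows from \Cref{cor:prism-env}, which identifies $\tmop{PrismEnv}$ as the left derived functor of the explicit formula $(A\{X,Y\},(d,Y)) \mapsto A\{X, Y/d\}$, composed with the colimit-preserving functor $\cdummy/^{\mathbb{L}}d$; by \Cref{prop:left-deriv-fun} the composite is again left Kan extended. For the source, this is exactly the content of \Cref{lem:assoc-graded-union-Lan} applied to the functor $\tmop{Fil}_{\tmop{conj}}^{\ast}(\tmop{PrismEnv}(\cdummy)/^{\mathbb{L}}d)$, which is left Kan extended by the very definition preceding the statement.

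Having both sides expressed as left Kan extensions from $\tmop{AniPair}_{\delta,(A,d)}^0$, the uniqueness part of \Cref{prop:left-deriv-fun} reduces the claim to checking that $\eta$ is an equivalence on the generators $(A\{X,Y\},(d,Y))$. But this is precisely the exhaustiveness statement built into \Cref{lem:conj-fil-prism-env}(1), so we are done.

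I do not expect any serious obstacle; the main conceptual point is simply to keep track of which functors are left Kan extended from $\tmop{AniPair}_{\delta,(A,d)}^0$ so that \Cref{lem:assoc-graded-union-Lan} applies cleanly, and to note that $\cdummy/^{\mathbb{L}}d$ (being a colimit-preserving functor $D(A) \rightarrow D(A/^{\mathbb{L}}d)$) does not interfere with the left-Kan-extension property of $\tmop{PrismEnv}$.
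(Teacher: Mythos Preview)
Your proposal is correct and follows essentially the same approach as the paper. The paper's proof is the single line ``It follows from \Cref{lem:assoc-graded-union-Lan} that'', and your argument unpacks exactly what is implicit in that citation: both source and target are left Kan extended from $\tmop{AniPair}_{\delta,(A,d)}^0$ (using \Cref{lem:assoc-graded-union-Lan} for the source and \Cref{cor:prism-env} for the target), so by \Cref{prop:left-deriv-fun} one reduces to the generators, where exhaustiveness is part of \Cref{lem:conj-fil-prism-env}.
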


We now analyze the ``denominators'' $\varphi^n (d)$ when $A$ is $p$-local and
$d$ is weakly distinguished:

\begin{lemma}[cf. {\cite[Lem~3.5]{Anschuetz2019}}]
  \label{lem:phi-d-p-local}Let $A$ be a $p$-local $\delta$-ring and $d \in A$
  a weakly distinguished element. Then for every $n \in \mathbb{N}_{> 0}$,
  there exists a unit $u \in \tmop{GL}_1 (A / d)$ such that $\varphi^n (d)
  \equiv pu \hspace{0.2em} \pmod{d}$.
\end{lemma}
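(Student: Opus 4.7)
The plan is to proceed by induction on $n$, with a strengthened hypothesis: for each $n \geq 1$, I will produce a decomposition $\varphi^n(d) = p w_n + d z_n$ in $A$ such that (i) the image $\bar w_n \in A/d$ is a unit, and additionally (ii) $z_n \in (d^{p-1})$. The extra clause (ii) is the technical point: in a naive induction on (i) alone, applying $\varphi$ produces a cross term $\delta(d)\varphi(z_n)$ contributing to the new ``$p$-part'', and without control on $z_n$ there is no reason for this term to land in the Jacobson radical of $A/d$.

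The base case $n=1$ is immediate from $\varphi(d) = d^p + p\delta(d)$: take $w_1 = \delta(d)$, which is a unit mod $d$ by weak distinguishedness, and $z_1 = d^{p-1}$, which manifestly lies in $(d^{p-1})$. For the inductive step, I would apply $\varphi$ to the decomposition $\varphi^n(d) = p w_n + d z_n$ (using that $\varphi$ fixes $p$) and then expand $\varphi(d) = p\delta(d) + d^p$ to rearrange as
\[
\varphi^{n+1}(d) \;=\; p\bigl(\varphi(w_n) + \delta(d)\varphi(z_n)\bigr) \;+\; d\bigl(d^{p-1}\varphi(z_n)\bigr).
\]
The new ``$d$-part'' is visibly in $(d^{p-1})$, so (ii) propagates. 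For (i), reduce mod $d$: the term $\varphi(w_n) \equiv \bar w_n^p + p\,\overline{\delta(w_n)}$ is a unit because $\bar w_n$ is a unit and $p$ lies in $\mathrm{Rad}(A/d)$ by $p$-locality; meanwhile $\varphi(z_n) \in (\varphi(d)^{p-1})$, whose reduction mod $d$ lies in $(p^{p-1}\,\overline{\delta(d)}^{p-1})\cdot(A/d) \subseteq p\cdot(A/d) \subseteq \mathrm{Rad}(A/d)$, where we use $p \geq 2$ so that $p^{p-1}$ is divisible by $p$. Hence the new $w_{n+1}$ is a unit plus a radical element, so still a unit.

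The hard part is identifying the correct strengthening of the inductive hypothesis. The statement itself mentions only (i), but trying to prove (i) alone by induction stalls at exactly the cross term $\delta(d)\varphi(z_n)$; the divisibility (ii), invisible in the final statement, is what forces that cross term to acquire an extra factor of $p^{p-1}$ and thus be absorbed into the radical. Once (ii) is built into the induction, the computation uses only the definition $\varphi(x) = x^p + p\delta(x)$, the fact that $\varphi$ is a ring homomorphism, $p$-locality, and the numerical input $p \geq 2$.
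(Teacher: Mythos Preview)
Your proof is correct and follows essentially the same approach as the paper: an induction with a strengthened hypothesis controlling the $d$-part of $\varphi^n(d)$, so that the cross term in the inductive step lands in $p\cdot(A/d)\subseteq\mathrm{Rad}(A/d)$. The paper's strengthening is the slightly sharper $\varphi^n(d)=d^{p^n}+pu_n$ (and it factors the induction as $\varphi^{n-1}\circ\varphi$ rather than your $\varphi\circ\varphi^n$), but these are cosmetic differences.
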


\begin{proof}
  We will construct inductively on $n \in \mathbb{N}_{> 0}$ a sequence
  $(u_n)_n \in A^{\mathbb{N}_{> 0}}$ such that for every $n \in \mathbb{N}_{>
  0}$, the image of $u_n$ in $A / d$ is invertible, and $\varphi^n (d) -
  d^{p^n} = pu_n$. We take $u_1 = \delta (d)$, and suppose that $u_m$ are
  already constructed for $1 \leq m < n$, then
  \begin{eqnarray*}
    \varphi^n (d) & = & \varphi^{n - 1} (\varphi (d))\\
    & = & \varphi^{n - 1} (d^p + p \delta (d))\\
    & = & (\varphi^{n - 1} (d))^p + p \varphi (\delta (d))\\
    & = & (d^{p^{n - 1}} + pu_{n - 1})^p + p \varphi (\delta (d))\\
    & = & d^{p^n} + p \left( \varphi (\delta (d)) + \sum_{k = 1}^p
    \binom{p}{k} d^{p^{n - 1}  (p - k)} p^{k - 1} u_{n - 1}^k \right)
  \end{eqnarray*}
  We pick $u_n = \delta (d) + \sum_{k = 1}^p \binom{p}{k} d^{p^{n - 1}  (p -
  k)} p^{k - 1} u_{n - 1}^k$. Note that the second summand $\sum_{k = 1}^p
  \cdots$ is canonically divisible by $p$ (separating the cases $k = 0$ and $k
  > 1$), thus $u_n \equiv \delta (d) \hspace{0.2em} \pmod{p}$ of which the
  image in $A / (p, d)$ is invertible. The result then follows from the fact
  that $p \in \tmop{Rad} (A / d)$.
\end{proof}

We introduce the following temporary terminology:

\begin{definition}
  \label{def:weakly-transversal}Let $A$ be a $\delta$-ring. We say that an
  element $d \in A$ is {\tmdfn{weakly transversal}} if it is weakly
  distinguished and the sequence $(d, p)$ is regular in $A$, that is to say,
  $d$ is a non-zero-divisor and $A / d$ is $p$-torsion-free.
\end{definition}

Recall that for a ring $A$, the {\tmdfn{Zariski localization}} of $A$ along an
ideal $I \subseteq A$ is defined to be the localization of $A$ at the
multiplicative set $1 + I$. The image of $I$ in $(1 + I)^{- 1} A$ lies in the
Jacobson radical.

\begin{example}
  The element $d$ in the $p$-local $\delta$-ring $\mathbb{Z}_{(p)} \{ d,
  \delta (d)^{- 1} \}_{(p)}$ is weakly transversal. In fact, this special case
  suffices for our applications.
\end{example}

Now we assume that $d \in A$ is weakly transversal. In the ``single variable''
case $A \{ y \} /^{\mathbb{L}} (d, y) \rightarrow A \{ y / d \} /^{\mathbb{L}}
d$, by \Cref{lem:phi-d-p-local,lem:expand-delta-n}, the sequence $(z, \delta
(z), \delta^2 (z), \ldots)$ forms a system similar to that of divided
$p^r$-powers $(\gamma_{p^r})_{r \in \mathbb{N}}$ up to a multiplication of a
unit after modulo $d$:
\begin{eqnarray*}
  p \delta (z) & \equiv & - a_1 \delta (d) z^p \hspace{0.2em} \pmod{B}\\
  p \delta^2 (z) & \equiv & - a_2 \varphi (\delta (d)) \delta (z)^p
  \hspace{0.2em} \pmod{B [\delta (z)]}\\
  p \delta^3 (z) & \equiv & - a_3 \varphi^2 (\delta (d)) \delta^2 (z)^p
  \hspace{0.2em} \pmod{B [\delta (z), \delta^2 (z)]}
\end{eqnarray*}
where $B \assign A \{ y \} /^{\mathbb{L}} (d, y)$ and $a_n \varphi^{n - 1}
(\delta (d)) \in \tmop{GL}_1 (A / d)$ (cf.
\Cref{rem:delta-divided-powers-Frob}). We now translate this observation to an
analysis of the conjugate filtration, which seems hard to attack directly. We
look at the maps $B_0 /^{\mathbb{L}} (d, y) \rightarrow C_0 /^{\mathbb{L}} d$
and $B_n /^{\mathbb{L}} d \rightarrow C_n /^{\mathbb{L}} d$ for $n \in
\mathbb{N}_{> 0}$ induced by the map \eqref{eq:Bn-to-Cn}. We first note that
the map $B_0 /^{\mathbb{L}} (d, y) \rightarrow C_0 /^{\mathbb{L}} d$ is the
polynomial algebra in single variable $z$.

If we further (derived) modulo $p$, we see that $B_n /^{\mathbb{L}} (d, p)
\rightarrow C_n /^{\mathbb{L}} (d, p)$ for $n \in \mathbb{N}_{> 0}$ is killing
a polynomial $\delta^n (y) - P_n (z, \delta (z), \ldots, \delta^{n - 1} (z))$
monic in $\delta^{n - 1} (z)$ of degree $p$, and then adjoining a formal
variable $\delta^n (z)$. In view of \eqref{eq:Ay-in-Az}, we see that the map
$A \{ y \} /^{\mathbb{L}} (d, p, y) \xrightarrow{y \mapsto zd} A \{ z \}
/^{\mathbb{L}} (d, p)$ is the composition of consecutively adjoining a root of
a monic polynomial of degree $p$, and consequently, as a $A \{ y \}
/^{\mathbb{L}} (d, p, y)$-module, $A \{ z \} /^{\mathbb{L}} (d, p)$ is freely
generated by
\[ \{ z^{a_0} \delta (z)^{a_1}  (\delta^2 (z))^{a_2} \cdots (\delta^r
   (z))^{a_r} \barsuchthat r \in \mathbb{N}, 0 \leq a_0, a_1, \ldots, a_r < p
   \} . \]
On the other hand, if we invert $p$, we see that, for every $n \in
\mathbb{N}_{> 0}$, the maps $(B_n /^{\mathbb{L}} d) [p^{- 1}] \rightarrow (C_n
/^{\mathbb{L}} d) [p^{- 1}]$ are equivalences, therefore $(A \{ y \}
/^{\mathbb{L}} (d, y)) [p^{- 1}] \rightarrow (A \{ z \} /^{\mathbb{L}} d)
[p^{- 1}]$ is the polynomial algebra in one variable $z$.

The $\tmop{mod} p$ conjugate filtration $\tmop{Fil}_{\tmop{conj}}^{- i} (A \{
z \} /^{\mathbb{L}} (d, p)) /^{\mathbb{L}} p$ is then freely generated by
\[ \{ z^{a_0} \delta (z)^{a_1}  (\delta^2 (z))^{a_2} \cdots (\delta^r
   (z))^{a_r} \barsuchthat r \in \mathbb{N}, 0 \leq a_0, a_1, \ldots, a_r < p,
   a_0 + pa_1 + \cdots + p^r a_r \leq i \} . \]
On the other hand, the rationalized conjugate filtration
$\tmop{Fil}_{\tmop{conj}}^{- i} (A \{ z \} /^{\mathbb{L}} (d, p)) [p^{- 1}]$
is given by the $(A \{ y \} /^{\mathbb{L}} (d, y)) [p^{- 1}]$-polynomials in
$z$ of degree$\nosymbol \leq i$. This follows from the following lemma, which
can be established by induction on $n$:

\begin{lemma}
  \label{lem:delta-n-rat}In the rationalized free $\delta$-ring
  $\mathbb{Z}_{(p)} \{ x \} [p^{- 1}] \cong \mathbb{Q} [x, \varphi (x),
  \varphi^2 (x), \ldots]$, for every $n \in \mathbb{N}$, the image of
  $\delta^n (x) \in \mathbb{Z}_{(p)} \{ x \}$ in $\mathbb{Q} [x, \varphi (x),
  \varphi^2 (x), \ldots]$ is given by a polynomial $D_n (x, \varphi (x),
  \ldots, \varphi^n (x))$ such that $\deg_x D_n = p^n$ with leading term $(-
  p^{- 1})^{1 + p + \cdots + p^n} x^{p^n}$ for all $n \in \mathbb{N}$.
\end{lemma}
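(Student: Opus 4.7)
The plan is to prove this by induction on $n$, using the defining relation $\delta^{n+1}(x) = p^{-1}\bigl(\varphi(\delta^n(x)) - \delta^n(x)^p\bigr)$ together with the fact that $\varphi$ commutes with $\delta$.

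For the base case $n=0$, one has $D_0 = x$, which has $\deg_x D_0 = 1 = p^0$ and leading term $x$, which is of the stated form with an empty (or trivially zero) exponent. For the inductive step, assume inductively that $\delta^n(x)$ is represented by a polynomial $D_n \in \mathbb{Q}[x,\varphi(x),\ldots,\varphi^n(x)]$ with $\deg_x D_n = p^n$ and leading term $c_n x^{p^n}$ for some known $c_n \in \mathbb{Q}^\times$. Since $\varphi(\delta^n(x)) = \delta^n(\varphi(x))$ is obtained from $D_n$ by replacing each $\varphi^k(x)$ by $\varphi^{k+1}(x)$, it lies in $\mathbb{Q}[\varphi(x),\ldots,\varphi^{n+1}(x)]$ and hence has $\deg_x = 0$. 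On the other hand, $\delta^n(x)^p = D_n^p$ lies in $\mathbb{Q}[x,\varphi(x),\ldots,\varphi^n(x)]$ with $\deg_x D_n^p = p^{n+1}$ and leading term $c_n^p x^{p^{n+1}}$. Consequently $\delta^{n+1}(x) = D_{n+1}(x,\varphi(x),\ldots,\varphi^{n+1}(x))$ with $\deg_x D_{n+1} = p^{n+1}$ and leading coefficient $c_{n+1} = -p^{-1} c_n^p$.

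Writing $c_n = (-p^{-1})^{e_n}$, the recursion becomes $e_{n+1} = 1 + p\,e_n$ with $e_0 = 0$, whose closed-form solution is the geometric sum $e_n = 1 + p + p^2 + \cdots + p^{n-1} = \frac{p^n - 1}{p - 1}$, matching the statement of the lemma. It remains to verify that $D_{n+1}$ does depend genuinely on all variables $x,\varphi(x),\ldots,\varphi^{n+1}(x)$, but this is automatic from the leading-term analysis for $x$, and the appearance of $\varphi^{n+1}(x)$ is visible in the summand $p^{-1}\varphi(\delta^n(x))$ where the variable $\varphi^n(x)$ of $D_n$ gets shifted to $\varphi^{n+1}(x)$.

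There is no serious obstacle: the argument is purely bookkeeping of the leading $x$-degree under the recursion $\delta^{n+1} = p^{-1}(\varphi\circ\delta^n - (\delta^n)^p)$. The only mild subtlety is that we are working in the rationalized polynomial ring $\mathbb{Q}[x,\varphi(x),\varphi^2(x),\ldots]$, in which each $\varphi^k(x)$ for $k \geq 1$ is by definition a variable algebraically independent from $x$, so that the $\varphi(\delta^n(x))$ term contributes zero in $\deg_x$ and the leading term comes entirely from $-p^{-1}(\delta^n(x))^p$.
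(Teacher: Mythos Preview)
Your proof is correct and follows exactly the approach the paper indicates (``which can be established by induction on $n$''): the recursion $\delta^{n+1}(x) = p^{-1}(\varphi(\delta^n(x)) - \delta^n(x)^p)$ together with the observation that $\varphi(\delta^n(x)) = D_n(\varphi(x),\ldots,\varphi^{n+1}(x))$ has $x$-degree~$0$ is precisely the intended argument.

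One minor point: your computed exponent $e_n = 1 + p + \cdots + p^{n-1}$ (with $e_0 = 0$) is correct, but it does \emph{not} literally match the exponent $1 + p + \cdots + p^n$ printed in the lemma, which would give $e_0 = 1$ and contradict $D_0 = x$. This is an off-by-one typo in the paper's statement; you should flag it rather than claim agreement.
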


We summarize the ``multi-variable'' version as follows:

\begin{lemma}
  \label{lem:prism-conj-fil-mod-p-rat}Let $A$ be a $p$-local $\delta$-ring and
  $d \in A$ a weakly transversal element. Let $(A \{ X, Y \}, (d, Y)) \in
  \tmop{Pair}_{\delta, (A, d)}^{\tmop{st}}$. Then
  \begin{enumerate}
    \item The generator $\left\{ \prod_{(r, y) \in E} \delta^r (y / d)
    \right\}_E$ for $\tmop{Fil}_{\tmop{conj}}^{- i} (A \{ X, Y / d \}
    /^{\mathbb{L}} d)$ as an $A \{ X, Y \} /^{\mathbb{L}} (d, Y)$-submodule,
    ``of total degree$\nosymbol \leq i$'' where $E \subseteq \mathbb{N} \times
    Y$ is a finite subset and the element $\delta^r (y / d)$ is of degree
    $p^r$, becomes an basis after (derived) modulo $p$. This also holds for $i
    = + \infty$.
    
    \item The $(- i)$-th piece of the rationalized conjugate filtration
    $\tmop{Fil}_{\tmop{conj}}^{- i} (A \{ X, Y / d \} /^{\mathbb{L}} d) [p^{-
    1}] \subseteq (A \{ X, Y / d \} /^{\mathbb{L}} d) [p^{- 1}]$ is given by
    the $A \{ X, Y \} /^{\mathbb{L}} (d, Y)$-polynomials in variables $Y / d$
    of total degree$\nosymbol \leq i$. This also holds for $i = + \infty$.
  \end{enumerate}
  Furthermore, an element $x \in A \{ X, Y / d \} /^{\mathbb{L}} d$ belongs to
  the $(- i)$-th piece of the conjugate filtration
  $\tmop{Fil}_{\tmop{conj}}^{- i} (A \{ X, Y / d \} /^{\mathbb{L}} d)$ if and
  only if so does it after (derived) modulo $p$ and after rationalization.
\end{lemma}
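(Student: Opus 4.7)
The plan is to reduce to the single-variable case treated in the paragraphs preceding the lemma, then assemble via the coproduct-preservation of the conjugate filtration, and finally deduce the \textbf{furthermore} clause from a local-global principle for $p$-torsion-free connective modules.

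First, I would use that $(A\{X,Y\},(d,Y))$ is the coproduct in $\tmop{AniPair}_{\delta,(A,d)}^0$ of the objects $(A\{x\},(d))$ for $x\in X$ and $(A\{y\},(d,y))$ for $y\in Y$. Since $\tmop{Fil}_{\tmop{conj}}^{\ast}(\tmop{PrismEnv}(\cdummy)/^{\mathbb{L}}d)$ preserves finite coproducts (\Cref{lem:conj-fil-prism-env}) and the conjugate filtration on each $A\{x\}/^{\mathbb{L}}d$ is constant, the filtration on $A\{X,Y/d\}/^{\mathbb{L}}d$ is the Day convolution (over $A\{X,Y\}/^{\mathbb{L}}(d,Y)$) of the single-variable filtrations on the $A\{y/d\}/^{\mathbb{L}}d$. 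Thus the two claimed descriptions for general $X,Y$ will follow from the corresponding single-$y$ statements by the definition of the Day-convoluted filtration, which assigns to a product $\prod_{(r,y)\in E}\delta^r(y/d)$ the total weight $\sum_{(r,y)\in E}p^r$.

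Second, for a single variable $y$, both descriptions are essentially already extracted in the preceding discussion. The mod-$p$ basis $\{z^{a_0}\delta(z)^{a_1}\cdots(\delta^r(z))^{a_r}:0\le a_j<p\}$ of $A\{z\}/^{\mathbb{L}}(d,p)$ as an $A\{y\}/^{\mathbb{L}}(d,p,y)$-module was obtained from the factorization \eqref{eq:Ay-in-Az}–\eqref{eq:Bn-to-Cn}, together with \Cref{lem:expand-delta-n} and the fact that after modding by $p$ each map $B_n/^{\mathbb{L}}(d,p)\to C_n/^{\mathbb{L}}(d,p)$ kills a polynomial monic of degree $p$ in $\delta^{n-1}(z)$ (its leading coefficient $a_n\varphi^{n-1}(\delta(d))$ is a unit mod~$d$); the filtration piece is then read off from the construction in \Cref{lem:conj-fil-prism-env}. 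For the rationalization, \Cref{lem:phi-d-p-local} shows $\varphi^n(d)$ is a unit in $A/d[p^{-1}]$ for $n>0$, so each $B_n/^{\mathbb{L}}d\to C_n/^{\mathbb{L}}d$ becomes an equivalence after inverting $p$; consequently $(A\{y/d\}/^{\mathbb{L}}d)[p^{-1}]$ is the polynomial algebra in $z$ over $(A\{y\}/^{\mathbb{L}}(d,y))[p^{-1}]$. Invoking \Cref{lem:delta-n-rat}, the element $\delta^n(z)$ is a polynomial in $z$ of degree exactly $p^n$ with invertible leading coefficient, which identifies $\tmop{Fil}_{\tmop{conj}}^{-i}$ with the polynomial-degree-$\le i$ subspace after inverting $p$.

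Finally, the \textbf{furthermore} statement is the assertion that the inclusion $\tmop{Fil}_{\tmop{conj}}^{-i}(A\{X,Y/d\}/^{\mathbb{L}}d)\hookrightarrow A\{X,Y/d\}/^{\mathbb{L}}d$ and the quotient on it are detected integrally by the pair (reduction mod~$p$, rationalization). Since $d$ is weakly transversal, $A/d$ is $p$-torsion-free; by the parts already proved, both $\tmop{Fil}_{\tmop{conj}}^{-i}$ and its cofiber inside $A\{X,Y/d\}/^{\mathbb{L}}d$ are free modules over $A\{X,Y\}/^{\mathbb{L}}(d,Y)$ on the standard monomials of appropriate weight, in particular connective and $p$-torsion-free. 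Then \Cref{lem:static-loc-global} (applied to a would-be-witness $x$ minus its putative projection into $\tmop{Fil}_{\tmop{conj}}^{-i}$) concludes: an element of a $p$-torsion-free static module lies in a submodule whose quotient is $p$-torsion-free iff its reduction mod~$p$ and its image after inverting $p$ do.

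The main obstacle will be the bookkeeping in the single-variable mod-$p$ computation: one must verify that after all the triangular substitutions coming from the identities $p\delta^n(z)\equiv-a_n\varphi^{n-1}(\delta(d))\delta^{n-1}(z)^p\pmod{\text{lower terms}}$, the resulting family of standard monomials indeed forms a free basis compatible with the filtration by total weight $\sum a_jp^j$, rather than merely generating.
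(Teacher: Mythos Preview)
Your approach is essentially the paper's: the lemma is stated there as a ``multi-variable summary'' of the single-variable analysis that precedes it, and you correctly reconstruct that summary via the coproduct-preservation of \Cref{lem:conj-fil-prism-env}, the mod-$p$ tower computation from \eqref{eq:Ay-in-Az}--\eqref{eq:Bn-to-Cn} with \Cref{lem:expand-delta-n}, and the rational identification via \Cref{lem:phi-d-p-local} and \Cref{lem:delta-n-rat}.

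Two small points on the \textbf{furthermore} clause. First, your citation of \Cref{lem:static-loc-global} is not apt: that lemma shows a connective spectrum is static, whereas here you need a submodule-membership criterion. The elementary principle you actually state (``an element of a $p$-torsion-free static module lies in a submodule with $p$-torsion-free quotient iff it does so mod $p$ and after inverting $p$'') is correct and is all that is required; in fact once the quotient is $p$-torsion-free, the rationalization condition alone already suffices. Second, your assertion that ``by the parts already proved'' both $\tmop{Fil}_{\tmop{conj}}^{-i}$ and its cofiber are \emph{integrally} free on the standard monomials does need one line of justification beyond parts~1 and~2: write $f\colon N\to M$ for the map from the free module on the standard monomials to $M=A\{X,Y/d\}/^{\mathbb L}d$; both $N$ and $M$ are polynomial algebras over the $p$-torsion-free ring $A/d$, hence $p$-torsion-free; part~2 gives $\ker(f)[p^{-1}]=0$, so $\ker(f)=0$; then tensoring the short exact sequence $0\to N\to M\to C\to 0$ with $\mathbb Z/p$ and using part~1 gives $C/p=0$ and $\mathrm{Tor}_1(C,\mathbb Z/p)=0$, while part~2 gives $C[p^{-1}]=0$, forcing $C=0$. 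With this in hand the \textbf{furthermore} follows immediately.
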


\begin{remark}
  In some vague terms, in \Cref{lem:prism-conj-fil-mod-p-rat}, the derived
  modulo $p$ is about ``controlling the denominators'', and the
  rationalization is about ``controlling the degree''.
\end{remark}

Recall that for every $(B, J) \in \tmop{Pair}_{\delta, (A, d)}^{\tmop{st}}$,
there exists a canonical map $B /^{\mathbb{L}} d \simeq B
\otimes_A^{\mathbb{L}} (A /^{\mathbb{L}} d) \rightarrow B / J$ which is in
fact surjective. Then we have the following ``multivariable'' version:

\begin{lemma}
  \label{lem:conj-fil-prism-comp}Let $A$ be a $p$-local $\delta$-ring and $d
  \in A$ a weakly transversal element. For every $(A \{ X, Y \}, (d, Y))
  \backassign (B, J) \in \tmop{Pair}_{\delta, (A, d)}^{\tmop{st}}$, let $K
  \assign \ker (B /^{\mathbb{L}} d \rightarrow B / J)$. Note that $K / K^2$ is
  naturally a $B / J$-module. Then there exists a comparison map
  \[ \Gamma_{B / J}^{\ast} ((K / K^2) \{ - 1 \}) \longrightarrow
     \tmop{gr}_{\tmop{conj}}^{- \mathord{\ast}} \left( \tmop{Env}^{\Prism} (B,
     J) /^{\mathbb{L}} d \right) \]
  of graded $B / J$-algebras induced by $[\gamma_n (z_i)] \mapsto \prod_{j =
  0}^r \left( \frac{\delta^j (z_i)}{- a_j \varphi^j (\delta (d))}
  \right)^{n_j}$ where $Y = \{ y_1, \ldots \}$, $z_i = y_i / p$ and $n =
  \sum_{j = 0}^r n_j p^j$ is the $p$-adic expansion of $n$. The comparison map
  is functorial in $(B, J) \in \tmop{Pair}_{\delta, (A, d)}^{\tmop{st}}$.
\end{lemma}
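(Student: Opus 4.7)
The plan is to define the map explicitly on divided power generators using the given formula, verify well-definedness via the basis description of \Cref{lem:prism-conj-fil-mod-p-rat}, and then check that it respects the divided power algebra relations via a local--global analysis separating behavior modulo $p$ and after rationalization.

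First I would identify the source concretely. Since $B = A\{X,Y\}$ and $J = (d,Y)$, the kernel $K = \ker(B/^{\mathbb{L}} d \to B/J)$ equals the ideal $(Y) \subseteq A\{X,Y\}/^{\mathbb{L}} d$, so $K/K^2$ is a free $B/J$-module on the classes of the $y_i$, and after the Breuil--Kisin twist $(K/K^2)\{-1\}$ is freely generated by the symbols $z_i := y_i \otimes d^{-1}$, matching the formal notation $z_i = y_i/d$ used for the prismatic envelope. Consequently, $\Gamma_{B/J}^{\ast}((K/K^2)\{-1\})$ is the free graded divided power $B/J$-algebra on $\{z_i\}$, and defining a map out of it amounts to giving, for each $i$ and each $n \in \mathbb{N}$, an element of $\tmop{gr}_{\tmop{conj}}^{-n}$ satisfying the divided power relations.

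Second I would check well-definedness of the proposed assignment. By \Cref{lem:conj-fil-prism-env} together with the basis description of \Cref{lem:prism-conj-fil-mod-p-rat}, each $\delta^j(z_i)$ lies in $\tmop{Fil}_{\tmop{conj}}^{-p^j}$ with a nonzero class in $\tmop{gr}_{\tmop{conj}}^{-p^j}$. By \Cref{lem:phi-d-p-local} the element $\varphi^j(\delta(d))$ is a unit modulo $d$ (since $A$ is $p$-local and $d$ weakly distinguished), while $a_j \in \mathbb{Z}_{(p)}^{\times}$ for $j \geq 1$; adopting the convention that the $j=0$ factor is interpreted as $z_i^{n_0}$ (the missing $a_0$), the displayed product is a well-defined element of the graded piece of total degree $n$. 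Extending $B/J$-linearly across distinct indices then gives a candidate graded $B/J$-linear map.

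Third I would verify the divided power relations $\gamma_n(z_i)\gamma_m(z_i) = \binom{n+m}{n}\gamma_{n+m}(z_i)$. The essential input, derived from \Cref{lem:expand-delta-n} reduced modulo $d$ together with \Cref{lem:phi-d-p-local}, is the congruence
\[
\delta^j(z_i)^p \equiv -\,a_{j+1}\,\varphi^j(\delta(d)) \cdot p \cdot \delta^{j+1}(z_i) \pmod{\tmop{Fil}_{\tmop{conj}}^{-(p^{j+1}-1)}},
\]
which converts a $p$-th power in position $j$ into $p$ times the generator in position $j+1$ (modulo lower filtration). Combined with Lucas's theorem for $\binom{n+m}{n}\bmod p$, this matches the carry structure in base-$p$ addition of $n$ and $m$: each carry from position $j$ produces a factor of $p$ on the prismatic-envelope side that exactly cancels the corresponding vanishing of the binomial coefficient modulo $p$. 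To streamline the verification, I would exploit the decomposition in \Cref{lem:prism-conj-fil-mod-p-rat}: after rationalization the divided powers become ordinary polynomials (as $n!$ is invertible) and the formula matches trivially, while after derived reduction modulo $p$ the standard monomials form an actual basis and the check reduces to Lucas. The last sentence of \Cref{lem:prism-conj-fil-mod-p-rat} then glues these two checks into the required identity.

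The main obstacle is this third step --- matching the base-$p$ carry combinatorics with the divided power binomial normalization $-a_j\varphi^j(\delta(d))$. Functoriality in $(B,J) \in \tmop{AniPair}_{\delta,(A,d)}^0$ is then automatic, since both the formula and the graded $B/J$-algebra structure are natural in the generators by construction of $\tmop{Fil}_{\tmop{conj}}^{\ast}$ in \Cref{lem:conj-fil-prism-env}.
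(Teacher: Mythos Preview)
Your construction of the map for a \emph{fixed} $(B,J)$ is reasonable: since $K/K^2$ is free on the classes of the $y_i$, the divided power algebra is the tensor product of one-variable free divided power algebras, and defining an algebra map out of it amounts to checking the single-variable relation $\gamma_n(z_i)\gamma_m(z_i) = \binom{n+m}{n}\gamma_{n+m}(z_i)$ in the target, which you sketch via carries and Lucas. That part is fine in spirit, though your key congruence has extra ``lower terms'' coming from $Q_n$ in \Cref{lem:expand-delta-n} that you are suppressing.

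The gap is your last paragraph: functoriality is \emph{not} automatic, and this is exactly what the paper's proof is about. A morphism $(A\{X,Y\},(d,Y)) \to (A\{X',Y'\},(d,Y'))$ in $\tmop{AniPair}_{\delta,(A,d)}^0$ may send a generator $y_i$ to an arbitrary element $\tilde y_i \in (d,Y')$, not to another generator. On the target side the image of $\prod_j(\delta^j(z_i)/\cdots)^{n_j}$ is $\prod_j(\delta^j(\tilde y_i/d)/\cdots)^{n_j}$ by $\delta$-compatibility, but on the source side $\gamma_n(z_i)$ maps to $\gamma_n$ of the class of $\tilde y_i/d$ in $K'/K'^2\{-1\}$, which must then be expanded via the divided power axioms before applying your formula on generators. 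For the square to commute you need the formula $[\gamma_n(y/d)] \mapsto \prod_j(\delta^j(y/d)/\cdots)^{n_j}$ to be well-defined for \emph{every} $y$ in the ideal, depending only on the class of $y/d$ in $K/K^2\{-1\}$. The paper identifies this as ``the most nontrivial part'' and reduces it to showing the formula vanishes in $\tmop{gr}_{\tmop{conj}}^{-p^r}$ when $y \in (d,Y^2)$; the argument treats the representative case $y = y_1 y_2$ and analyzes $\delta^r(y_1 y_2/d)$ separately after rationalization (using \Cref{lem:delta-n-rat} and $y_1 y_2/d \equiv 0 \pmod d$) and modulo $p$ (using \Cref{lem:expand-delta-n} and \Cref{lem:phi-d-p-local}), then glues via the last part of \Cref{lem:prism-conj-fil-mod-p-rat}. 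Your proposal does not address this computation at all.
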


\begin{proof}
  The comparison map is induced by $\left[ \gamma_n \left( \frac{y}{d} \right)
  \right] \mapsto \prod_{j = 0}^r \left( \frac{\delta^j (y / d)}{- a_j
  \varphi^j (\delta (d))} \right)^{n_j}$ for every $y$ in the ideal $(d, Y)$.
  To see that this is well-defined, the most nontrivial part is to show that
  this vanishes when $y \in (d, Y^2)$. By the multiplicity of the conjugate
  filtration, we can assume that $n_r = 1$ and $n_j = 0$ for $j \neq r$, and
  it suffices to analyze $\delta^r (y / d)$ when $y \in (d, Y^2)$, which can
  be reduced to the special case that $y = y_1 y_2$ where $y_1, y_2 \in Y$.
  
  By \Cref{lem:delta-n-rat}, the element $\delta^r (y_1 y_2 / d) \in A [p^{-
  1}] [X, Y / d, \varphi (Y), \varphi^2 (Y / d), \ldots]$ is a polynomial in
  $y_1 y_2 / d = y_1 z_2, \varphi (y_1 y_2), \ldots, \varphi^r (y_1 y_2)$. The
  crucial point is that $y_1 y_2 / d = (y_1 / d)  (y_2 / d) d = 0$ in $A \{ X,
  Y / d \} /^{\mathbb{L}} d$, therefore after rationalization, $\delta^r (y_1
  y_2 / d)$ lies in $\tmop{Fil}_{\tmop{conj}}^0 \left( \tmop{Env}^{\Prism} (B,
  J) /^{\mathbb{L}} d \right) [p^{- 1}]$.
  
  By \Cref{lem:expand-delta-n}, $\delta^r (y_1 z_2) = \delta^r (z_2) \varphi^r
  (y_1) + P_r (z_2, \ldots, \delta^{r - 1} (z_2))$ where $P_r$ is an $A \{ y_1
  \}$-polynomial. Note that $\varphi^r (y_1) = \varphi^r (z_1 d) = \varphi^r
  (z_1) \varphi^r (d) \equiv 0 \hspace{0.2em} \pmod{(d, p)}$ by
  \Cref{lem:phi-d-p-local}. Since $P_r$ is homogeneous of degree $p^r$ when
  $\deg (\delta^j (z_2)) = p^j$, it follows that for every monomial $\prod_j
  T_j^{n_j}$ of $P_r$, there exists a $j$ such that $n_j \geq p$, but then
  $\delta^j (z_2)^{n_j}$ is a linear combination of basis elements in
  \Cref{lem:prism-conj-fil-mod-p-rat} which shows that $\prod_j (\delta^j
  (z_2))^{n_j} \in \tmop{Fil}_{\tmop{conj}}^{- (p^r - 1)} \left(
  \tmop{Env}^{\Prism} (B, J) /^{\mathbb{L}} d \right) /^{\mathbb{L}} p$. The
  result then follows from the last part of
  \Cref{lem:prism-conj-fil-mod-p-rat}.
\end{proof}

It again follows from \Cref{lem:prism-conj-fil-mod-p-rat}, via derived modulo
$p$ and rationalization, that

\begin{lemma}
  \label{lem:conj-fil-prism-comp-eq}Let $A$ be a $p$-local $\delta$-ring and
  $d \in A$ a weakly transversal element. For every $(B, J) \in
  \tmop{Pair}_{\delta, (A, d)}^{\tmop{st}}$, the comparison map in
  \Cref{lem:conj-fil-prism-comp} is an equivalence.
\end{lemma}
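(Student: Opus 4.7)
The plan is to verify that the comparison map of graded $B/J$-module spectra is an equivalence by checking separately after derived reduction modulo $p$ and after rationalization, combining these via a local-global principle analogous to \Cref{lem:static-loc-global}. Both source and target are connective $B/J$-module spectra (indeed, the target is given by an animated ring modulo $d$, while the source is built from the divided power functor applied to a flat module with a Breuil-Kisin twist), so the cofiber of the comparison map is connective; it suffices to show this cofiber vanishes after $(-)\otimes^{\mathbb{L}}_{\mathbb{Z}}\mathbb{F}_p$ and after $(-)\otimes^{\mathbb{L}}_{\mathbb{Z}}\mathbb{Q}$, which reduces to checking the comparison map becomes an equivalence after these two base changes.

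For the mod-$p$ check, I would apply \Cref{lem:prism-conj-fil-mod-p-rat}(1): the $(-i)$-th graded piece of the conjugate filtration on $A\{X,Y/d\}/^{\mathbb{L}}d$ modulo $p$ is a free $B/J\otimes^{\mathbb{L}}_{\mathbb{Z}}\mathbb{F}_p$-module on the set of standard monomials $\prod_{(j,y)\in E}(\delta^j(y/d))^{a_{j,y}}$ with $0\le a_{j,y}<p$ and total weighted degree $\sum a_{j,y}p^j=i$. On the other hand, for any flat module, the derived divided power $\Gamma^i$ modulo $p$ has a classical free basis of divided power monomials. Writing each divided power index via its $p$-adic expansion and invoking the formula $[\gamma_n(z_i)]\mapsto\prod_j(\delta^j(z_i)/(-a_j\varphi^j(\delta(d))))^{n_j}$ from \Cref{lem:conj-fil-prism-comp}, together with \Cref{lem:phi-d-p-local} asserting $\varphi^j(\delta(d))\in\tmop{GL}_1(A/d)$ after modulo $p$, gives a bijective correspondence between these two bases up to units. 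Hence the comparison map is an equivalence modulo $p$.

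For the rational check, I would use \Cref{lem:prism-conj-fil-mod-p-rat}(2): after inverting $p$, the $(-i)$-th graded piece of the conjugate filtration on $A\{X,Y/d\}/^{\mathbb{L}}d$ becomes the module of $B/J$-polynomials in the formal variables $y/d$ of total degree exactly $i$. On the source side, once $p$ is invertible, the derived divided power algebra $\Gamma^{\ast}_{B/J}((K/K^2)\{-1\})[p^{-1}]$ coincides with the symmetric algebra $\tmop{Sym}^{\ast}_{B/J}((K/K^2)\{-1\})[p^{-1}]$, which is the same polynomial algebra on the generators $[y/d]$. One then verifies that the formula of \Cref{lem:conj-fil-prism-comp} recovers, rationally, exactly the identification sending $[y/d]$ to the class of $y/d$, which is compatible with the polynomial structure by \Cref{lem:delta-n-rat}.

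The main obstacle is a bookkeeping one: to ensure the bijection of standard-monomial bases in the mod-$p$ regime is truly induced by the explicit formula in \Cref{lem:conj-fil-prism-comp}, one must check that the ``lower-order terms'' appearing in the expansion of $\delta^n(y/d)$ via \Cref{lem:expand-delta-n} genuinely lie in strictly deeper pieces of the conjugate filtration, so that in the associated graded the $p$-adic digits pattern $n=\sum n_jp^j$ corresponds cleanly to the product $\prod_j(\delta^j(y/d))^{n_j}$ on the nose (up to units from $\varphi^j(\delta(d))$ and the constants $a_j$). Once this filtration-compatibility of the comparison map is established, both the mod-$p$ and rational cases become a direct identification of free modules on matching index sets, and the local-global principle concludes.
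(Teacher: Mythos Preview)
Your proposal is correct and follows essentially the same route as the paper: the paper's proof is a single line saying the result ``follows from \Cref{lem:prism-conj-fil-mod-p-rat}, via derived modulo $p$ and rationalization,'' and you have simply unpacked this, matching the free bases on both sides degree by degree after reduction mod $p$ (via the bijection given by $p$-adic expansion of the divided-power indices) and after inverting $p$ (via $\Gamma^{\ast}[p^{-1}]\simeq\tmop{Sym}^{\ast}[p^{-1}]$). One small remark: the ``main obstacle'' you flag about lower-order terms in $\delta^n(y/d)$ is really the well-definedness issue already dealt with in \Cref{lem:conj-fil-prism-comp}; once the comparison map exists as a map of graded algebras, the equivalence check is a clean basis comparison with no further filtration bookkeeping needed.
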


After such a long march, let us harvest the Hodge--Tate comparison, which is a
prismatic analogue of \Cref{cor:conjfil-gr}. Note that for every $(B
\twoheadrightarrow B'') \in \tmop{Pair}_{\delta, (A, d)}^{\tmop{an}}$, note
that the commutative diagram
\[ \begin{array}{ccc}
     A & \longrightarrow & B\\
     \longdownarrow &  & \longdownarrow\\
     A /^{\mathbb{L}} d & \longrightarrow & B''
   \end{array} \]
induces a natural map $B /^{\mathbb{L}} d \simeq B \otimes_A^{\mathbb{L}} (A
/^{\mathbb{L}} d) \rightarrow B''$ which is surjective, that is to say, $B
/^{\mathbb{L}} d \twoheadrightarrow B''$ is an animated pair. It then follows
from \Cref{lem:conj-fil-prism-comp-eq,prop:left-deriv-n-fun} that

\begin{theorem}[Hodge--Tate]
  \label{thm:Hdg-Tate}Let $A$ be a $p$-local $\delta$-ring and $d \in A$ a
  weakly transversal element. Then for every animated $\delta$-$(A, d)$-pair
  $(B \twoheadrightarrow B'') \in \tmop{Pair}_{\delta, (A, d)}^{\tmop{an}}$,
  there exists a canonical equivalence
  \[ \Gamma_{B''}^i (\tmop{gr}_{\tmop{ad}}^1 (B /^{\mathbb{L}} d
     \twoheadrightarrow B'') \{ - 1 \}) \longrightarrow
     \tmop{gr}_{\tmop{conj}}^{- i} \left( \tmop{Env}^{\Prism} (B
     \twoheadrightarrow B'') /^{\mathbb{L}} d \right) \]
  which is functorial in $(B \twoheadrightarrow B'') \in \tmop{Pair}_{\delta,
  (A, d)}^{\tmop{an}}$, where $\tmop{Fil}_{\tmop{ad}}$ is the adic filtration
  functor defined in Construction~\ref{cons:adic-fil}.
\end{theorem}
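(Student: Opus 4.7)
My plan is to reduce the statement to the case of the compact projective generators $(A\{X,Y\},(d,Y))\in\tmop{AniPair}_{\delta,(A,d)}^{0}$, where the equivalence has essentially already been established, and then extend by a formal left Kan extension argument along the inclusion $\tmop{AniPair}_{\delta,(A,d)}^{0}\hookrightarrow\tmop{AniPair}_{\delta,(A,d)}$.

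First, I would upgrade the comparison map of Lemma~\ref{lem:conj-fil-prism-comp} to a natural transformation of functors $\tmop{AniPair}_{\delta,(A,d)}^{0}\rightrightarrows \tmop{CAlg}(\tmop{Gr}^{\leq 0}(A/^{\mathbb{L}}d))$, using the formula $[\gamma_n(y/d)]\mapsto\prod_j(\delta^{j}(y/d)/(-a_{j}\varphi^{j}(\delta(d))))^{n_{j}}$, which is manifestly functorial in $(B,J)\in\tmop{AniPair}_{\delta,(A,d)}^{0}$. Both sides define functors $F_{-i},G_{-i}\colon\tmop{AniPair}_{\delta,(A,d)}^{0}\to D(A/^{\mathbb{L}}d)$: the right-hand side is the restriction of $\tmop{gr}_{\tmop{conj}}^{-i}(\tmop{PrismEnv}(-)/^{\mathbb{L}}d)$, which by construction (Lemma~\ref{lem:conj-fil-prism-env}) is defined by left derivation from $\tmop{AniPair}_{\delta,(A,d)}^{0}$, and Lemma~\ref{lem:assoc-graded-union-Lan} ensures that passing to $\tmop{gr}^{-i}$ remains left Kan extended from the generators.

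Next, I would show that the left-hand side functor $L_{-i}\colon(B\twoheadrightarrow B'')\mapsto\Gamma_{B''}^{i}(\tmop{gr}^{1}(\mathbb{L}\tmop{AdFil}(B/^{\mathbb{L}}d\twoheadrightarrow B''))\{-1\})$ on $\tmop{AniPair}_{\delta,(A,d)}$ preserves sifted colimits, hence by Proposition~\ref{prop:left-deriv-fun} it is the left derived functor of its restriction to $\tmop{AniPair}_{\delta,(A,d)}^{0}$. This decomposes into: (i) the functor $(B\twoheadrightarrow B'')\mapsto(B/^{\mathbb{L}}d\twoheadrightarrow B'')$ from $\tmop{AniPair}_{\delta,(A,d)}\to\tmop{AniPair}_{A/^{\mathbb{L}}d}$, which preserves small colimits since the forgetful to $\tmop{AniPair}$ does (Lemma~\ref{lem:forget-ani-delta-pair-ani-pair}) and base change along $A\to A/^{\mathbb{L}}d$ preserves colimits; (ii) $\tmop{gr}^{1}\circ\mathbb{L}\tmop{AdFil}$, which gives the shifted cotangent complex and preserves small colimits (cf.~Corollary~\ref{cor:LAdFil-symm-cot-cx} and the $\mathord{\ast}=1$ part of Lemma~\ref{lem:adfil-pdfil}); (iii) the Breuil–Kisin twist $\{-1\}$, which is tensoring with an invertible $A/^{\mathbb{L}}d$-module, hence preserves small colimits; (iv) the derived divided power $\Gamma_{B''}^{i}$, which preserves sifted colimits as the left derived functor of a non-additive polynomial functor.

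Finally, by the universal property of left Kan extensions (Proposition~\ref{prop:left-deriv-fun}) applied to the projectively generated $\infty$-category $\tmop{AniPair}_{\delta,(A,d)}$, the natural transformation $L_{-i}|_{\tmop{AniPair}_{\delta,(A,d)}^{0}}\to G_{-i}|_{\tmop{AniPair}_{\delta,(A,d)}^{0}}$ produced in step one extends uniquely to a natural transformation $L_{-i}\to G_{-i}$ of sifted-colimit-preserving functors on all of $\tmop{AniPair}_{\delta,(A,d)}$. Lemma~\ref{lem:conj-fil-prism-comp-eq} asserts that this transformation is a pointwise equivalence on $\tmop{AniPair}_{\delta,(A,d)}^{0}$, and since sifted colimits in $D(A/^{\mathbb{L}}d)$ detect equivalences pointwise, the extended map is an equivalence everywhere, with the required multiplicative/graded-algebra structure since all the functors above lift to $\tmop{CAlg}(\tmop{Gr}^{\leq 0}(-))$. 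The main obstacle I anticipate is a notational one rather than a mathematical one, namely formally tracking the variable target ring $B''$ (in the spirit of Remark~\ref{rem:functor-target-variable}) and the Breuil–Kisin twist through the left derivation so that the resulting equivalence is truly a map of graded $B''$-algebras functorial in $(B\twoheadrightarrow B'')$; this requires packaging everything through structure maps as was done for Definition~\ref{def:conj-fil-pd-env}.
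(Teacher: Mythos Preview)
Your proposal is correct and follows essentially the same approach as the paper: the paper's proof is simply ``It then follows from \Cref{lem:conj-fil-prism-comp-eq,prop:left-deriv-fun},'' and you have spelled out precisely this left-derived-functor argument, verifying that both sides are sifted-colimit-preserving functors agreeing on the compact projective generators $\tmop{AniPair}_{\delta,(A,d)}^{0}$ via \Cref{lem:conj-fil-prism-comp,lem:conj-fil-prism-comp-eq}. Your additional care in checking that the left-hand side preserves sifted colimits and in tracking the variable target $B''$ is exactly the routine verification the paper leaves implicit.
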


Let $R$ be an $\mathbb{E}_1$-ring. Recall that a right $R$-module $M$ is
{\tmdfn{faithfully flat}} if it is flat (\Cref{def:flat-module}) and $\pi_0
(M)$ is a faithfully flat right $\pi_0 (R)$-module. A map $R \rightarrow S$ of
$\mathbb{E}_{\infty}$-rings is {\tmdfn{faithfully flat}} if $S$ is faithfully
flat as an $R$-module. There is a useful characterization of faithfully flat
algebras:

\begin{lemma}[{\cite[Lemma~5.5]{Lurie2004}}]
  \label{lem:crit-static-faithful-flat-map}Let $f \of R \rightarrow S$ be a
  map of static (commutative) rings. Then $f$ is faithfully flat if and only
  if $f$ is flat, injective and that $\tmop{coker} (f)$ taken in the category
  of $R$-modules is flat.
\end{lemma}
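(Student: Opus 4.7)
The plan is to prove the two directions separately, using the exact sequence $0 \to R \xrightarrow{f} S \to \tmop{coker}(f) \to 0$ as the central tool throughout. This is a purely $1$-categorical statement about static rings, so no animated machinery is needed; it follows from standard commutative algebra.

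For the forward direction, assume $f$ is faithfully flat. Flatness is built into the definition. To see injectivity, observe that the map $f \otimes_R S \of S \to S \otimes_R S$ admits a retraction given by the multiplication $S \otimes_R S \to S$, so it is injective; then tensoring the inclusion $\ker(f) \hookrightarrow R$ with the faithfully flat $R$-module $S$ gives $\ker(f) \otimes_R S = 0$, whence $\ker(f) = 0$ by faithful flatness. For flatness of $\tmop{coker}(f)$, fix an $R$-module $M$ and write down the long exact sequence
\[ \tmop{Tor}_1^R(S, M) \longrightarrow \tmop{Tor}_1^R(\tmop{coker}(f), M) \longrightarrow M \xrightarrow{f \otimes_R M} S \otimes_R M. \]
The first term vanishes by flatness of $S$, and the last map is injective because faithfully flat maps are universally injective (apply the retraction argument above after tensoring with $M$). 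Hence $\tmop{Tor}_1^R(\tmop{coker}(f), M) = 0$ for every $M$, proving that $\tmop{coker}(f)$ is flat.

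For the backward direction, assume $f$ is flat and injective with $\tmop{coker}(f)$ flat. Flatness of $S$ is given. To check faithful flatness, it suffices to show that for every $R$-module $M$, the vanishing $S \otimes_R M = 0$ forces $M = 0$. Tensoring the short exact sequence $0 \to R \to S \to \tmop{coker}(f) \to 0$ with $M$ and using flatness of $\tmop{coker}(f)$ (which makes $\tmop{Tor}_1^R(\tmop{coker}(f), M) = 0$), we obtain an exact sequence
\[ 0 \longrightarrow M \longrightarrow S \otimes_R M \longrightarrow \tmop{coker}(f) \otimes_R M \longrightarrow 0. \]
Thus $M$ embeds into $S \otimes_R M$, so $S \otimes_R M = 0$ implies $M = 0$, as required.

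There is no real obstacle here; the only mildly subtle point is the forward direction's injectivity of $f \otimes_R M$, for which one uses that the multiplication $S \otimes_R S \to S$ splits $f \otimes_R S$ so that $f$ is pure, and this purity is preserved by tensoring with any $M$. Everything else is an immediate application of the long exact sequence of $\tmop{Tor}$.
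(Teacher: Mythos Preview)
Your proof is correct. The paper does not give its own proof of this lemma; it simply cites the result from \cite{Lurie2004}, so there is nothing to compare against beyond noting that your argument is a standard and complete treatment of this classical fact.
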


\begin{lemma}
  \label{lem:crit-faithful-flat-map}Let $f \of R \rightarrow S$ a map of
  $\mathbb{E}_{\infty}$-rings. If $f$ is faithfully flat, then $\tmop{cofib}
  (f)$ taken in the $\infty$-category of $R$-module spectra is flat. The
  converse is true if $R$ is supposed to be connective.
\end{lemma}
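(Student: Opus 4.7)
The plan is to translate the statement, in both directions, to the static level via the homotopy-group criterion in Definition~\ref{def:flat-module}, then invoke Lemma~\ref{lem:crit-static-faithful-flat-map}.

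For the forward direction, suppose $f$ is faithfully flat and set $C \assign \operatorname{cofib}(f)$. I would analyze the long exact sequence of homotopy groups associated to the cofiber sequence $R \to S \to C$ of $R$-module spectra. By flatness of $S$, one has $\pi_n(S) \simeq \pi_n(R) \otimes_{\pi_0(R)} \pi_0(S)$ and, under this identification, $\pi_n(f)$ is the base change of $\pi_0(f)$ along $\pi_n(R)$. Since $\pi_0(S)$ is faithfully flat over $\pi_0(R)$, Lemma~\ref{lem:crit-static-faithful-flat-map} furnishes a short exact sequence $0 \to \pi_0(R) \to \pi_0(S) \to \pi_0(S)/\pi_0(R) \to 0$ of $\pi_0(R)$-modules in which the cokernel is flat; tensoring this with $\pi_n(R)$ over $\pi_0(R)$ remains exact. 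This forces $\pi_n(f)$ to be injective with cokernel $\pi_n(R) \otimes_{\pi_0(R)} (\pi_0(S)/\pi_0(R))$, so the long exact sequence collapses to short exact sequences
\[
0 \to \pi_n(R) \to \pi_n(S) \to \pi_n(C) \to 0,
\]
identifying $\pi_n(C) \simeq \pi_n(R) \otimes_{\pi_0(R)} \pi_0(C)$ with $\pi_0(C) = \pi_0(S)/\pi_0(R)$ flat. Both clauses of Definition~\ref{def:flat-module} are then verified.

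For the converse, assume $R$ is connective and $C \assign \operatorname{cofib}(f)$ is flat. I would first observe that $C$ is then connective (Remark~\ref{rem:flat-connective-static}), hence so is $S$ by the cofiber sequence. Applying Lemma~\ref{lem:flat-stable-ext} to $R \to S \to C$ (with $R$ and $C$ both flat) yields flatness of $S$. To upgrade to faithful flatness, tensor the cofiber sequence with $\pi_0(R)$ over $R$: by \cite[Prop~7.2.2.13]{Lurie2017} applied to the flat modules $S$ and $C$, this produces a cofiber sequence $\pi_0(R) \to \pi_0(S) \to \pi_0(C)$ in $D(\pi_0(R))$ of static, connective terms, hence a short exact sequence of $\pi_0(R)$-modules. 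Since $\pi_0(C)$ is flat and $\pi_0(R) \hookrightarrow \pi_0(S)$ has flat cokernel $\pi_0(C)$, Lemma~\ref{lem:crit-static-faithful-flat-map} gives faithful flatness of $\pi_0(f)$, and thus of $f$.

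The main obstacle, such as it is, lies in the forward direction: one must check that flatness of $S$ together with faithful flatness on $\pi_0$ really implies injectivity of each $\pi_n(f)$, and this hinges on the observation that faithful flatness of $\pi_0(S)$ over $\pi_0(R)$ is equivalent to having a flat cokernel, so that the standard base-change-preserving-exactness argument applies to $\pi_n(R)$. Once this injectivity is in hand, the rest is bookkeeping with the long exact sequence and the identifications from Definition~\ref{def:flat-module}.
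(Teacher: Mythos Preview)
Your proposal is correct and follows essentially the same route as the paper: both directions reduce to Lemma~\ref{lem:crit-static-faithful-flat-map} via the long exact sequence and the homotopy-group characterization of flatness. The only differences are cosmetic, in the converse direction: you invoke Lemma~\ref{lem:flat-stable-ext} to get flatness of $S$ (the paper instead checks directly that $M \otimes_R^{\mathbb{L}} S$ is static for every static $M$, sandwiched between $M$ and $M \otimes_R^{\mathbb{L}} C$), and you tensor the cofiber sequence with $\pi_0(R)$ to extract the short exact sequence on $\pi_0$ (the paper reads it off the long exact sequence directly); both shortcuts are clean and arguably tidier than the paper's phrasing.
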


\begin{proof}
  Assume first that $f$ is faithfully flat. Let $M \assign \tmop{coker} (\pi_0
  (R) \rightarrow \pi_0 (S))$. By \Cref{lem:crit-static-faithful-flat-map},
  the map $\pi_0 (R) \rightarrow \pi_0 (S)$ is injective and the $\pi_0
  (R)$-module $M$ is flat, Then for every $n \in \mathbb{Z}$, we have the
  exact sequence
  \[ \tmop{Tor}_1^{\pi_0 (R)} (\pi_n (R), M) \rightarrow \pi_n (R) \rightarrow
     \pi_n (R) \otimes_{\pi_0 (R)} \pi_0 (S) \rightarrow \pi_n (R)
     \otimes_{\pi_0 (R)} M \rightarrow 0 \]
  which implies that the map $\pi_n (R) \rightarrow \pi_n (R) \otimes_{\pi_0
  (R)} \pi_0 (S)$ is injective. Since $f$ is flat, the canonical map $\pi_n
  (R) \otimes_{\pi_0 (R)} \pi_0 (S) \rightarrow \pi_n (S)$ is an isomorphism,
  therefore the map $\pi_n (R) \rightarrow \pi_n (S)$ is injective. Then the
  long exact sequence associated to the fiber sequence $R \rightarrow S
  \rightarrow \tmop{cofib} (f)$ splits into short exact sequences
  \[ 0 \longrightarrow \pi_n (R) \longrightarrow \pi_n (S) \longrightarrow
     \pi_n (\tmop{cofib} (f)) \longrightarrow 0 \]
  which implies that the canonical map $M \rightarrow \pi_0 (\tmop{cofib}
  (f))$ is an isomorphism. Furthermore, we have a morphism of short exact
  sequences
  
  \[
  \xymatrix{
0\ar[r]&\pi_n(R)\ar[r]\ar^{\sim}[d]
&\pi_n(R)\otimes_{\pi_0(R)}\pi_0(S)\ar[r]\ar[d]^{\sim}
&\pi_n(R)\otimes_{\pi_0(R)}M\ar[r]\ar[d]&0\\
0\ar[r]&\pi_n(R)\ar[r]&\pi_n(S)\ar[r]
&\pi_n(\operatorname{cofib}(f))\ar[r]&0
}
  \]
  
  {\noindent}By the short five lemma, the map $\pi_n (R) \otimes_{\pi_0 (R)} M
  \rightarrow \pi_n (\tmop{cofib} (f))$ is an isomorphism, therefore
  $\tmop{cofib} (f)$ is flat.
  
  Now we assume that $R$ is connective and that $\tmop{cofib} (f)$ is flat. By
  definition, $\tmop{cofib} (f)$ is connective and so is $S$ by the fiber
  sequence $R \rightarrow S \rightarrow \tmop{cofib} (f)$. For every static
  $R$-module $M$, we have the fiber sequence
  \[ M \longrightarrow M \otimes_R^{\mathbb{L}} S \longrightarrow M
     \otimes_R^{\mathbb{L}} \tmop{cofib} (f) \]
  By flatness of $\tmop{cofib} (f)$ and {\cite[Prop~7.2.2.13]{Lurie2017}}, $M
  \otimes_R^{\mathbb{L}} \tmop{cofib} (f)$ is static, therefore so is $M
  \otimes_R^{\mathbb{L}} S$. It then follows from
  {\cite[Thm~7.2.2.15]{Lurie2017}} that $S$ is a flat $R$-module. It remains
  to show that the map $\pi_0 (R) \rightarrow \pi_0 (S)$ is faithfully flat.
  By \Cref{lem:crit-static-faithful-flat-map}, it suffices to show that $\pi_0
  (R) \rightarrow \pi_0 (S)$ is injective and $\tmop{coker} (\pi_0 (R)
  \rightarrow \pi_0 (S))$ is flat. The first follows from the connectivity of
  $\tmop{cofib} (f)$, and the later follows from the isomorphism $\tmop{coker}
  (\pi_0 (R) \rightarrow \pi_0 (S)) \cong \pi_0 (\tmop{cofib} (f))$ and the
  flatness of $\tmop{cofib} (f)$.
\end{proof}

Now we have a prismatic analogue of \Cref{cor:Fp-qreg-pd-env-flat}, with a
similar argument:

\begin{proposition}
  \label{prop:quasireg-prism-env}Let $A$ be a $p$-local $\delta$-ring and $d
  \in A$ a weakly transversal element. Let $(B \twoheadrightarrow B'') \in
  \tmop{Pair}_{\delta, (A, d)}^{\tmop{an}}$ be an animated $\delta$-$(A,
  d)$-pair such that the canonical animated pair $B /^{\mathbb{L}} d
  \twoheadrightarrow B''$ is quasiregular. Then the unit map $B'' \rightarrow
  \tmop{Env}^{\Prism} (B \twoheadrightarrow B'') /^{\mathbb{L}} d$ is
  faithfully flat.
\end{proposition}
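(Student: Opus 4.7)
The plan is to mimic closely the argument for the PD-envelope case (Corollary~\ref{cor:Fp-qreg-pd-env-flat}), using the Hodge–Tate comparison (Theorem~\ref{thm:Hdg-Tate}) as the prismatic replacement of Corollary~\ref{cor:conjfil-gr}, and then upgrade flatness to faithful flatness by analyzing the cofiber via Lemma~\ref{lem:crit-faithful-flat-map}.

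First I would show that $\tmop{PrismEnv}(B \twoheadrightarrow B'') /^{\mathbb{L}} d$ is flat as a $B''$-module. By Lemma~\ref{lem:conj-fil-prism-env}, it carries an exhaustive conjugate filtration $\tmop{Fil}_{\tmop{conj}}^{\ast}$, and Theorem~\ref{thm:Hdg-Tate} identifies
\[
\tmop{gr}_{\tmop{conj}}^{-i}\bigl(\tmop{PrismEnv}(B \twoheadrightarrow B'')/^{\mathbb{L}}d\bigr)
\simeq \Gamma_{B''}^i\!\bigl(\tmop{gr}^1(\mathbb{L}\tmop{AdFil}(B/^{\mathbb{L}}d \twoheadrightarrow B''))\{-i\}\bigr).
\]
Now $\tmop{gr}^1(\mathbb{L}\tmop{AdFil}(B/^{\mathbb{L}}d \twoheadrightarrow B'')) \simeq \mathbb{L}_{B''/(B/^{\mathbb{L}}d)}[-1]$ by Corollary~\ref{cor:LAdFil-symm-cot-cx}, which is a flat $B''$-module by the quasiregularity assumption. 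Breuil–Kisin twists are rank-one free, so $\tmop{gr}^1(\cdots)\{-i\}$ remains flat; the derived divided power $\Gamma^i$ of a flat connective module is flat by \cite[Cor~25.2.3.3]{Lurie2018}. Hence every $\tmop{gr}_{\tmop{conj}}^{-i}$ is flat over $B''$. Flatness is stable under extension (Lemma~\ref{lem:flat-stable-ext}) and under filtered colimits (\cite[Thm~7.2.2.14(1)]{Lurie2017}); combining these with exhaustiveness yields flatness of $\tmop{PrismEnv}(B \twoheadrightarrow B'')/^{\mathbb{L}}d$ over $B''$.

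Next I would promote this to faithful flatness. The $0$-th piece $\tmop{Fil}_{\tmop{conj}}^{0}$ of the conjugate filtration coincides with $B''$ (this is visible on generators of $\tmop{AniPair}_{\delta,(A,d)}^0$ — the degree-zero standard monomials are precisely the scalars — and passes to left derived functors). Equivalently, the Hodge–Tate identification for $i=0$ gives $\Gamma_{B''}^0 \simeq B''$, so the unit map $B'' \to \tmop{PrismEnv}(B\twoheadrightarrow B'')/^{\mathbb{L}}d$ fits into a fiber sequence whose cofiber $C$ carries the induced filtration with associated graded pieces $\tmop{gr}_{\tmop{conj}}^{-i}$ for $i \geq 1$. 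By the same stability argument as above, $C$ is a flat $B''$-module. Since $B''$ is connective (as are all animated rings), Lemma~\ref{lem:crit-faithful-flat-map} then applies: the cofiber of the map being flat implies the map is faithfully flat.

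The main obstacle I anticipate is the bookkeeping at the boundary — identifying $\tmop{Fil}_{\tmop{conj}}^0$ precisely with $B''$ rather than just with $B/^{\mathbb{L}}d$ — since Lemma~\ref{lem:conj-fil-prism-env} only specifies the filtration on the compact projective generators. One needs to check that the comparison map in Lemma~\ref{lem:conj-fil-prism-comp} is an equivalence already on $\tmop{Fil}^0$ before passing to associated graded pieces, and that left-deriving preserves this identification; this is essentially automatic from the functoriality and from Lemma~\ref{lem:assoc-graded-union-Lan} applied in degree zero, but is the one step that must be written with care.
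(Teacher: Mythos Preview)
Your proposal is correct and follows essentially the same route as the paper: use the Hodge--Tate comparison (Theorem~\ref{thm:Hdg-Tate}) to identify the associated graded pieces of the conjugate filtration as divided powers of a flat module, deduce flatness via closure under extensions and filtered colimits, and conclude faithful flatness from Lemma~\ref{lem:crit-faithful-flat-map}. The paper is marginally more direct --- it works with the cofiber $B'' \to \tmop{Fil}_{\tmop{conj}}^{-i}$ from the start rather than first proving flatness of the full envelope --- and your worry about identifying $\tmop{Fil}_{\tmop{conj}}^0$ with $B''$ is not a real obstacle: since the filtration is nonpositive, $\tmop{Fil}_{\tmop{conj}}^0 = \tmop{gr}_{\tmop{conj}}^0$, and the $i=0$ case of Theorem~\ref{thm:Hdg-Tate} gives $\Gamma_{B''}^0(\cdots) \simeq B''$ directly.
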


\begin{proof}
  By \Cref{thm:Hdg-Tate} and the quasiregularity of $B /^{\mathbb{L}} d
  \twoheadrightarrow B''$, for every $i \in \mathbb{N}$, the $B''$-module
  $\tmop{gr}_{\tmop{conj}}^{- i} \left( \tmop{Env}^{\Prism} (B
  \twoheadrightarrow B'') /^{\mathbb{L}} d \right)$ is flat. By
  \Cref{lem:flat-stable-ext}, for every $i \in \mathbb{N}_{> 0}$,
  $\tmop{cofib} \left( B'' \rightarrow \tmop{Fil}_{\tmop{conj}}^{- i} \left(
  \tmop{Env}^{\Prism} (B \twoheadrightarrow B'') /^{\mathbb{L}} d \right)
  \right)$ is flat. Since the conjugate filtration is exhaustive
  (\Cref{lem:conj-fil-prism-env}) and the collection of flat modules is stable
  under filtered colimits {\cite[Lem~7.2.2.14(1)]{Lurie2017}}, we get
  $\tmop{cofib} \left( B'' \rightarrow \tmop{Env}^{\Prism} (B
  \twoheadrightarrow B'') /^{\mathbb{L}} d \right)$ is a flat $B''$-module.
  Then the result follows from \Cref{lem:crit-faithful-flat-map}.
\end{proof}

\begin{remark}
  In \Cref{prop:quasireg-prism-env}, if we further assume that $B''$ is
  static, then so is $\tmop{Env}^{\Prism} (B \twoheadrightarrow B'')
  /^{\mathbb{L}} d$. This does not imply that $\tmop{Env}^{\Prism} (B
  \twoheadrightarrow B'')$ is static. However, it implies that, after taking
  $d$-completion, $\tmop{Env}^{\Prism} (B \twoheadrightarrow B'')$ becomes
  static which should be understood as a ``static $d$-completed envelope''.
\end{remark}

\begin{remark}
  \label{rem:p-compl-prism-env}There is a $p$-completed analogue of
  \Cref{prop:quasireg-prism-env}: suppose that the animated pair $B
  /^{\mathbb{L}} d \twoheadrightarrow B''$ is
  $p$-completely\footnote{``$p$-complete'' concepts are usually applied to
  $p$-complete objects. However, this is not necessary because we can always
  derived $p$-complete a non-complete object.} quasiregular, that is to say,
  the shifted cotangent complex $L_{B'' / (B /^{\mathbb{L}} d)} [- 1]$ is a
  $p$-completely flat $B''$-module, then the same proof shows that the unit
  map $B'' \rightarrow \tmop{Env}^{\Prism} (B \twoheadrightarrow B'')
  /^{\mathbb{L}} d$ is $p$-completely faithfully flat (i.e., it becomes
  faithfully flat after derived modulo $p$).
  
  In particular, if if $(B, d)$ is a bounded oriented prism
  {\cite[Def~3.2]{Bhatt2019}} and that $B''$ is static and has bounded
  $p$-power torsion, then the $p$-completion of $\tmop{Env}^{\Prism} (B
  \twoheadrightarrow B'') /^{\mathbb{L}} d$ is static. Moreover, by
  {\cite[Lem~3.7(2,3)]{Bhatt2019}}, the $(p, d)$-completion of $C \assign
  \tmop{Env}^{\Prism} (B \twoheadrightarrow B'')$ is static and thus it
  follows from a formal argument that $(C_{(p, d)}^{\wedge}, d)$ is the
  prismatic envelope of the $\delta$-pair $B \twoheadrightarrow B''$ as long
  as it is $d$-torsion free. In other words, we generalize
  {\cite[Prop~3.13]{Bhatt2019}} by weakening regularity to quasiregularity.
\end{remark}

We record a simple corollary which furnishes a quite general class of ``flat
covers of the final object'' in the {\tmdfn{affine prismatic site}} (similar
to \Cref{def:aff-crys-site}). For this, we need the following definition:

\begin{definition}
  Let $A$ be a $\delta$-ring, $d \in A$ an element and $B$ an animated
  $\delta$-$A$-algebra. The {\tmdfn{$\infty$-category of $\delta$-$(B,
  d)$-pairs}}, denoted by $\tmop{Pair}_{\delta, (B, d)}^{\tmop{an}}$, is
  defined to be the undercategory $(\tmop{Pair}_{\delta, (A,
  d)}^{\tmop{an}})_{(B \twoheadrightarrow B /^{\mathbb{L}} d) \mathord{/}}$.
\end{definition}

Let $A$ be a $p$-local $\delta$-ring and $d \in A$ a weakly distinguished
non-zero-divisor. Let $B$ be an animated $\delta$-$A$-algebra, and $R$ an
animated $B /^{\mathbb{L}} d$-algebra. Similar to \Cref{def:aff-crys-site}, we
can consider the category of animated $\delta$-$B$-algebras $C$ along with a
map\footnote{Unlike the crystalline case, here we do not assume that the map
$R \rightarrow C /^{\mathbb{L}} d$ is an equivalence.} $R \rightarrow C
/^{\mathbb{L}} d$ of animated $B /^{\mathbb{L}} d$-algebras, which we will
denoted by $R \rightarrow C /^{\mathbb{L}} d \twoheadleftarrow C$, depicted by
the commutative diagram

\[ \xymatrix{
B\ar[rr]\ar@{->>}[d]&&C\ar@{->>}[d]\\
B/^{\mathbb
L}d\ar[r]&R\ar[r]&
C/^{\mathbb L}d
} \]

{\noindent}More formally, this is the fiber product $\tmop{Ani}
(\tmop{Ring}_{\delta})_{B \mathord{/}} \times_{\tmop{Ani} (\tmop{Ring})_{(B
/^{\mathbb{L}} d) \mathord{/}}} \tmop{Ani} (\tmop{Ring})_{R \mathord{/}}$ of
$\infty$-categories, the opposite category of which will be denoted by $\Prism
(R / (B, d))$\footnote{In {\cite{Bhatt2019}}, they used the notation $(R /
A)_{\Prism}$. However, this notation is usually devoted to topoi (such as
$X_{\tmop{et}}$ and $X_{\tmop{cris}}$). We therefore adopt the traditional
notation for sites.}.

Now let $P$ be an animated $\delta$-$B$-algebra along with a surjection $P
\twoheadrightarrow R$ of animated $B$-algebras such that the cotangent complex
$L_{P / B} /^{\mathbb{L}} d$ is a flat $P /^{\mathbb{L}} d$-module.

\begin{remark}
  \label{rem:qsmooth-maps-enough}We note that such $P$ exists in abundance.
  For example, this happens when $R$ is a smooth $B /^{\mathbb{L}} d$-algebra
  which admits a smooth $B$-lift $P$ with a $\delta$-structure compatible with
  that on $B$, or $P$ is a polynomial $B$-algebra $B [x_i]$ (of possibly
  infinitely many variables) with $\delta (x_i) = 0$ along with a surjection
  $P \twoheadrightarrow R$ of animated $B$-algebras.
\end{remark}

Then the animated pair $P \twoheadrightarrow R$ admits a canonical animated
$\delta$-$(B, d)$-pair structure, and thus the animated $\delta$-ring
$\tmop{Env}^{\Prism} (P \twoheadrightarrow R)$ gives rise to an object of
$\Prism (R / (B, d))$ (by abuse of notation, we will still denote by
$\tmop{Env}^{\Prism} (P \twoheadrightarrow R)$ the object of $\Prism (R / (B,
d))$).

\begin{remark}
  \label{rem:already-prism}By \Cref{lem:prism-full-faithful-delta-pair}, when
  $P \twoheadrightarrow R$ is ``already'' a non-completed prism in the sense
  that the induced map $P /^{\mathbb{L}} d \rightarrow R$ is an equivalence,
  the non-completed prismatic envelope $\tmop{Env}^{\Prism} (P
  \twoheadrightarrow R)$ is equivalent to $P$ itself.
\end{remark}

For any object $(R \rightarrow C /^{\mathbb{L}} d \twoheadleftarrow C) \in
\Prism (R / (B, d))$, by unrolling the definitions, the product of $(R
\rightarrow C /^{\mathbb{L}} d \twoheadleftarrow C)$ and $\tmop{Env}^{\Prism}
(P \twoheadrightarrow R)$ in $\Prism (R / (B, d))$ is given by
$\tmop{Env}^{\Prism} (P \otimes_B^{\mathbb{L}} C \twoheadrightarrow R)$. We
have therefore a map $C /^{\mathbb{L}} d \rightarrow \tmop{Env}^{\Prism} (P
\otimes_B^{\mathbb{L}} C \twoheadrightarrow C /^{\mathbb{L}} d) /^{\mathbb{L}}
d$ of animated $R$-algebras. The following proposition is essentially
equivalent to the ``flat cover of the final object'', cf.
{\cite[Prop~1.1.2]{Chatzistamatiou2020}}\footnote{This characterization was
already implicit in the Faltings's proof of ``independence of the choice of
the framing''.}.

\begin{proposition}
  \label{prop:prism-flat-cov-final-obj}Let $A$ be a $p$-local $\delta$-ring
  and $d \in A$ a weakly transversal element. Let $B$ be an animated
  $\delta$-$A$-algebra and $P \twoheadrightarrow R$ an animated $\delta$-$(B,
  d)$-pair such that the cotangent complex $L_{P / B} /^{\mathbb{L}} d$ is a
  flat $P /^{\mathbb{L}} d$-module. Then for every $(R \rightarrow C
  /^{\mathbb{L}} d \twoheadleftarrow C) \in \Prism (R / (B, d))$, the map $C
  /^{\mathbb{L}} d \rightarrow \tmop{Env}^{\Prism} (P \otimes_B^{\mathbb{L}} C
  \twoheadrightarrow C /^{\mathbb{L}} d) /^{\mathbb{L}} d$ is faithfully flat.
\end{proposition}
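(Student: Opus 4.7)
The plan is to reduce the faithful flatness assertion to \Cref{prop:quasireg-prism-env}. Write $\bar B \assign B /^{\mathbb{L}} d$, $\bar P \assign P /^{\mathbb{L}} d$, and $\bar C \assign C /^{\mathbb{L}} d$. First I would unravel the universal property of $\tmop{PrismEnv}$ (\Cref{cor:prism-env}) to identify $\tmop{PrismEnv}(P \otimes_B^{\mathbb{L}} C \twoheadrightarrow \bar C)$ as the non-completed prismatic envelope of the animated $\delta$-$(A, d)$-pair whose underlying surjection $P \otimes_B^{\mathbb{L}} C \twoheadrightarrow \bar C$ is induced by the composite $P \to R \to \bar C$ together with the quotient $C \twoheadrightarrow \bar C$. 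These two maps agree over $B \to \bar B$, and the resulting map is $\pi_0$-surjective since $C \twoheadrightarrow \bar C$ already is, so this defines a legitimate object of $\tmop{AniPair}_{\delta, (A, d)}$ (and of $\tmop{AniPair}_{\delta, (C, d)}$).

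The main step is to verify that the underlying animated pair
\[
  (P \otimes_B^{\mathbb{L}} C) /^{\mathbb{L}} d \;\simeq\; \bar P \otimes_{\bar B}^{\mathbb{L}} \bar C \;\twoheadrightarrow\; \bar C
\]
is quasiregular. The composite $\bar C \to \bar P \otimes_{\bar B}^{\mathbb{L}} \bar C \to \bar C$ is the identity of $\bar C$, so the transitivity triangle together with \Cref{lem:cot-cx-id-trivial} yields
\[
  \mathbb{L}_{\bar C / (\bar P \otimes_{\bar B}^{\mathbb{L}} \bar C)} [-1] \;\simeq\; \mathbb{L}_{(\bar P \otimes_{\bar B}^{\mathbb{L}} \bar C) / \bar C} \otimes_{\bar P \otimes_{\bar B}^{\mathbb{L}} \bar C}^{\mathbb{L}} \bar C.
\]
Applying base change for the cotangent complex (\Cref{ex:cot-cx-base-chg}) to the pushout expressing $\bar P \otimes_{\bar B}^{\mathbb{L}} \bar C$, and collapsing, this simplifies to $(\mathbb{L}_{P / B} /^{\mathbb{L}} d) \otimes_{\bar P}^{\mathbb{L}} \bar C$. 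The flatness hypothesis on $\mathbb{L}_{P / B} /^{\mathbb{L}} d$ as a $\bar P$-module, combined with the stability of flatness under base change \cite[Prop~7.2.2.16]{Lurie2017}, shows this is a flat $\bar C$-module, establishing quasiregularity.

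Since $d \in A$ is weakly transversal by hypothesis, \Cref{prop:quasireg-prism-env} now applies and yields the desired conclusion: the unit map $\bar C \to \tmop{PrismEnv}(P \otimes_B^{\mathbb{L}} C \twoheadrightarrow \bar C) /^{\mathbb{L}} d$ is faithfully flat. The hard part is conceptual rather than computational: correctly identifying the product in the site $\Prism (R / (B, d))$ with a non-completed prismatic envelope inside $\tmop{AniPair}_{\delta, (A, d)}$ requires careful tracking of the various under-categories (in particular making sure the target of the surjection is $\bar C$ and not $R$ or $R \otimes_{\bar B}^{\mathbb{L}} \bar C$), but once that bookkeeping is done the quasiregularity reduces to a routine base-change computation with the cotangent complex.
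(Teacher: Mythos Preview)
Your proposal is correct and follows essentially the same route as the paper: reduce to \Cref{prop:quasireg-prism-env} by verifying that $\bar P \otimes_{\bar B}^{\mathbb{L}} \bar C \twoheadrightarrow \bar C$ is quasiregular via the transitivity sequence for the retraction $\bar C \to \bar P \otimes_{\bar B}^{\mathbb{L}} \bar C \to \bar C$ and base change for the cotangent complex. The bookkeeping you flag about identifying the product in $\Prism(R/(B,d))$ with the appropriate prismatic envelope is handled in the paper's discussion immediately preceding the proposition, so your setup matches as well.
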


\begin{proof}
  By \Cref{prop:quasireg-prism-env}, it suffices to show that the map $(P
  /^{\mathbb{L}} d) \otimes_{B /^{\mathbb{L}} d}^{\mathbb{L}} (C
  /^{\mathbb{L}} d) \rightarrow C /^{\mathbb{L}} d$ is quasiregular. To
  simplify the notations, let $P'' \assign P /^{\mathbb{L}} d$, $B'' \assign B
  /^{\mathbb{L}} d$ and $C'' \assign C /^{\mathbb{L}} d$. We have the
  transitivity sequence
  \[ L_{(P'' \otimes_{B''}^{\mathbb{L}} C'') / C''} \otimes_{P''
     \otimes_{B''}^{\mathbb{L}} C''}^{\mathbb{L}} C'' \rightarrow L_{C'' /
     C''} \simeq 0 \rightarrow L_{C'' / (P'' \otimes_{B''}^{\mathbb{L}} C'')}
  \]
  associated to the maps $C'' \rightarrow P'' \otimes_{B''}^{\mathbb{L}} C''
  \rightarrow C''$ whose composite is $\tmop{id}_{C''}$. Note that $L_{(P''
  \otimes_{B''}^{\mathbb{L}} C'') / C''} \simeq L_{P'' / B''}
  \otimes_{B''}^{\mathbb{L}} C''$ is a flat $P'' \otimes_{B''}^{\mathbb{L}}
  C''$-module. It follows that $L_{C'' / (P'' \otimes_{B''}^{\mathbb{L}} C'')}
  [- 1]$ is a flat $C''$-module.
\end{proof}

We first learned the possibility of such kind of result from
{\cite[Prop~3.4]{Morrow2020}} (which is closely related to
{\cite[Prop~1.1.2]{Chatzistamatiou2020}}). Later we came up with an argument
which is essentially equivalent to the proof of
\Cref{prop:prism-flat-cov-final-obj}, but the foundation was lacking then,
therefore the current article could be understood as paving the way to this
proof. Now we want to point out that, with minor modifications, this proof
would imply {\cite[Prop~3.4]{Morrow2020}} and the relevant technical lemmas in
the recent works by Y.~Tian and by A.~Ogus {\cite{Ogus2021}} announced in
Illusie conference. Furthermore, when the proper foundation is laid, the same
proof would lead to a flat cover of the final object in the {\tmdfn{absolute
prismatic site}}, and in particular, it would recover
{\cite[Lem~5.2.8]{Anschuetz2019a}}. We now show this implication.

As in \Cref{rem:p-compl-prism-env}, we assume that $(B, d)$ is a bounded
oriented prism, $R$ is derived $p$-complete and the map $B /^{\mathbb{L}} d
\rightarrow R$ is a $p$-completely quasisyntomic (i.e. the map $B
/^{\mathbb{L}} d \rightarrow R$ is $p$-completely flat and the cotangent
complex $L_{R / (B /^{\mathbb{L}} d)}$ has $p$-complete $\tmop{Tor}$-amplitude
in $[0, 1]$ as an $R$-module spectrum). Then by {\cite[Lem~4.7]{Bhatt2018}},
$R$ is static and has bounded $p$-power torsion. Let $P$ be a derived $(p,
d)$-complete animated $\delta$-$B$-algebra which is $(p, d)$-completely
quasismooth (i.e. the map $B \rightarrow P$ is $(p, d)$-completely flat and
the cotangent complex $L_{P / B}$ is a $(p, d)$-completely flat $B$-module).
Then by {\cite[Lem~3.7(2,3)]{Bhatt2019}}, $P$ is static and for every $n \in
\mathbb{N}$, the multiplication map $d^n \of P \rightarrow P$ is injective and
$P / d^n$ has bounded $p$-power torsion.

Now suppose that we are given a surjection $P \twoheadrightarrow R$ of
$B$-algebras. Then by \Cref{rem:p-compl-prism-env}, the derived $(p,
d)$-completion of $\tmop{Env}^{\Prism} (P \twoheadrightarrow R)$ is static and
the prism defined by this $(p, d)$-completed algebra is the prismatic envelope
in the sense of {\cite[Prop~3.13]{Bhatt2019}}, where the $d$-torsion-freeness
follows from the complete flatness of $B \rightarrow P$ and
{\cite[Lem~3.7(2)]{Bhatt2019}}. Moreover, since both $B /^{\mathbb{L}} d
\rightarrow R$ and $R \rightarrow \tmop{Env}^{\Prism} (P \twoheadrightarrow R)
/^{\mathbb{L}} d$ are $p$-completely flat, the map $B \rightarrow
\tmop{Env}^{\Prism} (P \twoheadrightarrow R)$ is $(p, d)$-completely flat
(this in fact generalizes the flatness in {\cite[Prop~3.13]{Bhatt2019}}). The
proof of \Cref{prop:prism-flat-cov-final-obj} shows that

\begin{proposition}
  \label{prop:compl-prism-flat-cover}Let $(B, d)$ be a bounded oriented prism,
  $R$ a derived $p$-complete and $p$-completely quasisyntomic $B / d$-algebra.
  Let $P$ be a derived $(p, d)$-complete animated $\delta$-$B$-algebra which
  is $(p, d)$-completely quasismooth over $B$, equipped with a surjection $P
  \twoheadrightarrow R$ of $B$-algebras. Then the $(p, d)$-completion of
  $\tmop{Env}^{\Prism} (P \twoheadrightarrow R)$ is static which gives rise to
  a bounded prism $(C, d)$ in the prismatic site\footnote{It is the
  non-animated but $(p, d)$-completed version of our $\Prism (R / (B, d))$.}
  defined in {\cite[Def~4.1]{Bhatt2019}} of $R$ relative to the base prism
  $(B, d)$. Furthermore, $(C, d)$ is a flat cover of the final object in this
  site.
\end{proposition}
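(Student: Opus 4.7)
The proof plan is to follow the template of \Cref{prop:prism-flat-cov-final-obj} while keeping track of the appropriate $(p,d)$-completions, and to invoke the $p$-completed variant of \Cref{prop:quasireg-prism-env} outlined in \Cref{rem:p-compl-prism-env}. Concretely, I would first show that the animated pair $P/^{\mathbb{L}}d \twoheadrightarrow R$ is $p$-completely quasiregular, then deduce that the $(p,d)$-completion of $\tmop{PrismEnv}(P \twoheadrightarrow R)$ is static and $d$-torsion free (hence a bounded prism), and finally establish the faithful flatness of the structure map relative to an arbitrary object of $\Prism(R/(B,d))$ via the base change computation of \Cref{prop:prism-flat-cov-final-obj}.

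For the first step, I would apply the transitivity sequence
\[ \mathbb{L}_{P/B} \otimes_P^{\mathbb{L}} R \longrightarrow \mathbb{L}_{R/B} \longrightarrow \mathbb{L}_{R/P} \]
followed by derived modulo $d$. Since $B \to P$ is $(p,d)$-completely quasismooth, $\mathbb{L}_{P/B}$ is $(p,d)$-completely flat as a $P$-module, so $\mathbb{L}_{P/B} \otimes_P^{\mathbb{L}} R$ becomes $p$-completely flat as an $R$-module after reduction modulo $d$. On the other hand, $B/^{\mathbb{L}}d \to R$ is $p$-completely quasisyntomic, so $\mathbb{L}_{R/(B/d)}$ has $p$-complete Tor-amplitude in $[0,1]$. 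Combining these and using that $P \twoheadrightarrow R$ is surjective (so $\mathbb{L}_{R/P}$ is $1$-connective), one concludes that $\mathbb{L}_{R/(P/^{\mathbb{L}}d)}[-1]$ is $p$-completely flat, which is exactly the $p$-completely quasiregular condition.

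Next, I would invoke the $p$-completed form of \Cref{prop:quasireg-prism-env} (described in \Cref{rem:p-compl-prism-env}) to conclude that the structure map $R \to \tmop{PrismEnv}(P \twoheadrightarrow R)/^{\mathbb{L}}d$ is $p$-completely faithfully flat. Since $R$ is static with bounded $p$-power torsion by \cite[Lem~4.7]{Bhatt2018}, the $p$-completion of $\tmop{PrismEnv}(P \twoheadrightarrow R)/^{\mathbb{L}}d$ is static; and by \cite[Lem~3.7(2,3)]{Bhatt2019}, applied to the $(p,d)$-completely flat $B$-algebra structure propagated from $B \to P$, the full $(p,d)$-completion $C$ of $\tmop{PrismEnv}(P \twoheadrightarrow R)$ is static, $d$-torsion free, and $C/d^n$ has bounded $p$-power torsion for each $n$. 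This verifies that $(C,d)$ is a bounded prism defining an object of the prismatic site of $R$ relative to $(B,d)$.

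For the flat cover assertion, given any $(R \to C'/^{\mathbb{L}}d \twoheadleftarrow C') \in \Prism(R/(B,d))$ (in the completed sense), the product with $(C,d)$ in the prismatic site is computed by the $(p,d)$-completion of $\tmop{PrismEnv}(P \otimes_B^{\mathbb{L}} C' \twoheadrightarrow R)$. The same transitivity argument as in the proof of \Cref{prop:prism-flat-cov-final-obj}, applied to the maps $C'/^{\mathbb{L}}d \to (P/^{\mathbb{L}}d) \otimes_{B/^{\mathbb{L}}d}^{\mathbb{L}} C'/^{\mathbb{L}}d \to C'/^{\mathbb{L}}d$ whose composite is the identity, shows that this base-changed pair is $p$-completely quasiregular, and then another application of the $p$-completed form of \Cref{prop:quasireg-prism-env} yields that $C'/d \to (C \hat{\otimes}_B C')/d$ is $p$-completely faithfully flat, establishing that $(C,d)$ covers the final object. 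The main obstacle I anticipate is the bookkeeping required to pass cleanly between the non-completed animated prismatic envelope of this article and the classical completed prism of \cite{Bhatt2019}, in particular verifying that the faithful flatness assertion in the completed site is implied by the $p$-completed faithful flatness of $C'/d \to (C \hat{\otimes}_B C')/d$; this is where \cite[Lem~3.7]{Bhatt2019} and derived Nakayama-type reductions do the essential work.
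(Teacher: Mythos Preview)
Your proposal is correct and follows essentially the same route as the paper: the paper establishes the staticness and bounded-prism claim in the paragraph preceding \Cref{prop:compl-prism-flat-cover} via \Cref{rem:p-compl-prism-env}, \cite[Lem~4.7]{Bhatt2018}, and \cite[Lem~3.7(2,3)]{Bhatt2019}, and then simply says ``the proof of \Cref{prop:prism-flat-cov-final-obj} shows'' the flat-cover assertion, which is exactly the $p$-completed transitivity argument you spell out. Your first step, verifying that $P/^{\mathbb L}d \twoheadrightarrow R$ is $p$-completely quasiregular via the transitivity sequence for $B\to P\to R$, is a detail the paper leaves implicit (it is the special case $C'=B$ of the argument in \Cref{prop:prism-flat-cov-final-obj}); otherwise your outline matches the paper's argument step for step.
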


This implies virtually all the similar technical cover results for relative
prismatic site mentioned above, cf.~\Cref{rem:already-prism}.

\begin{remark}
  For the absolute prismatic site, the proof also works in the special case of
  {\cite[Lem~5.2.8]{Anschuetz2019a}}, but we are not aware of a statement as
  general as \Cref{prop:compl-prism-flat-cover}.
\end{remark}

\

\appendix\section{Animations and projectively generated
categories}\label{app:animation}

In this appendix, we recollect basic category-theoretic facts about animations
{\cite{Cesnavicius2019}} and projectively generated categories
{\cite[§5.5.8]{Lurie2009}} needed in the text.

\subsection{Projectively generated $n$\mbox{-}categories}In this subsection,
we will briefly recollect basic facts about projectively generated
$n$-categories. We will denote by $\tmop{An}$ the $\infty$-category of animæ
(see \Cref{sec:intro}), and by $\widehat{\tmop{An}}$ the $\infty$-category of
large animæ. We say that an anima $X$ is {\tmdfn{$n$\mbox{-}truncated}} for
$n \in \mathbb{N}_{\geq 0}$ if the homotopy groups $\pi_i (X, x) = 0$ for
every point $x \in X$ and every $i \in \mathbb{N}_{> n}$, and {\tmdfn{$(-
1)$-truncated}} if $X$ is either empty or contractible, and {\tmdfn{$(-
2)$-truncated}} if $X = \varnothing$. An $\infty$\mbox{-}category
$\mathcal{C}$ is an {\tmdfn{$n$\mbox{-}category}}
{\cite[Prop~2.3.4.18]{Lurie2009}} if for every pair $(X, Y) \in \mathcal{C}
\times \mathcal{C}$ of objects, the mapping anima $\tmop{Map}_{\mathcal{C}}
(X, Y)$ is $(n - 1)$-truncated. We will denote by $\tmop{An}_{\leq n}$ the
$\infty$\mbox{-}category of $n$-truncated animae, and by
$\widehat{\tmop{An}}_{\leq n}$ the $\infty$\mbox{-}category of large
$n$\mbox{-}truncated animae.

\begin{remark}
  $1$\mbox{-}categories are just categories in the classical category theory.
  If we define $\infty$\mbox{-}categories as quasicategories as in
  {\cite{Lurie2009}}, this identification is given by the nerve construction.
  Since in our texts, categories often mean $\infty$\mbox{-}categories, we
  usually add ``$1$-'' to avoid possible ambiguities.
\end{remark}

In fact, for the text, we only need results for $n = 1$ and $n = \infty$, but
the generalization to general $n \in \mathbb{N}_{> 0} \cup \{ \infty \}$ is
quite cost-free.

\begin{proposition}[{\cite[Cor~2.3.4.8]{Lurie2009}}]
  Let $\mathcal{C}$ be an $n$\mbox{-}category and $K$ a simplicial set. Then
  $\tmop{Fun} (K, \mathcal{C})$ is an $n$\mbox{-}category.
\end{proposition}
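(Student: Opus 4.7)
\begin{proof*}{Proof sketch.}
The plan is to verify the mapping-space criterion for $n$-categories: I need to show that for any two functors $F, G \in \operatorname{Fun}(K, \mathcal{C})$, the mapping anima $\operatorname{Map}_{\operatorname{Fun}(K, \mathcal{C})}(F, G)$ is $(n-1)$-truncated. The whole argument rests on two facts: mapping anima in a functor $\infty$-category can be written as limits of mapping anima in the target, and $(n-1)$-truncated anima form a reflective (in particular, limit-stable) full subcategory of $\mathcal{S}$.

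First I would reduce to the case where $K$ is a standard simplex. Every simplicial set can be written canonically as a colimit $K \simeq \operatorname{colim}_{\sigma \colon \Delta^m \to K} \Delta^m$ indexed by its category of simplices. Since $\operatorname{Fun}(-, \mathcal{C})$ carries colimits of simplicial sets to limits of $\infty$-categories, we get $\operatorname{Fun}(K, \mathcal{C}) \simeq \lim_{\Delta^m \to K} \operatorname{Fun}(\Delta^m, \mathcal{C})$, and mapping anima in a limit of $\infty$-categories are the limits of the mapping anima. A limit of $(n-1)$-truncated anima is $(n-1)$-truncated, so it suffices to handle $K = \Delta^m$.

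Next I would handle $K = \Delta^m$ by induction on $m$. The base case $m = 0$ gives $\operatorname{Fun}(\Delta^0, \mathcal{C}) \simeq \mathcal{C}$, which is an $n$-category by hypothesis. For the inductive step, I would use the standard pullback decomposition coming from $\Delta^m \simeq \Delta^{m-1} \amalg_{\Delta^{\{m-1\}}} \Delta^{\{m-1, m\}}$, which gives a pullback square of $\infty$-categories
\[
\operatorname{Fun}(\Delta^m, \mathcal{C}) \simeq \operatorname{Fun}(\Delta^{m-1}, \mathcal{C}) \times_{\operatorname{Fun}(\Delta^{\{m-1\}}, \mathcal{C})} \operatorname{Fun}(\Delta^1, \mathcal{C}).
\]
By induction the first two factors have $(n-1)$-truncated mapping anima, so I am reduced to the case $K = \Delta^1$. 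For $K = \Delta^1$, a map $F \to G$ in $\operatorname{Fun}(\Delta^1, \mathcal{C})$ is (by the explicit pullback model, or by the cofiber-sequence description of arrow categories) classified by a square in $\mathcal{C}$; concretely one finds a pullback description of $\operatorname{Map}_{\operatorname{Fun}(\Delta^1, \mathcal{C})}(F, G)$ as a finite limit built from the three mapping anima $\operatorname{Map}_{\mathcal{C}}(F(0), G(0))$, $\operatorname{Map}_{\mathcal{C}}(F(0), G(1))$, and $\operatorname{Map}_{\mathcal{C}}(F(1), G(1))$, each of which is $(n-1)$-truncated. Closure of truncated anima under finite limits finishes the argument.

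The main conceptual obstacle is simply justifying the formula expressing mapping anima in a functor $\infty$-category as limits of mapping anima in the target; once this is in hand, the proof is essentially automatic from limit-stability of $(n-1)$-truncatedness. An alternative route, which avoids the reduction to simplices, would be to argue directly via the lifting-property characterization of $n$-categories in \cite[Prop~2.3.4.18]{Lurie2009}: for $m > n$, any two extensions of $\partial \Delta^{m+1} \to \operatorname{Fun}(K, \mathcal{C})$ to $\Delta^{m+1}$ are equivalent, and one transports such a lifting problem across the adjunction $(-) \times K \dashv \operatorname{Fun}(K, -)$ to a lifting problem in $\mathcal{C}$ against $\partial \Delta^{m+1} \times K \hookrightarrow \Delta^{m+1} \times K$, which one then resolves cell-by-cell using the $n$-categorical hypothesis on $\mathcal{C}$.
\end{proof*}
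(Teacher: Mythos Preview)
The paper does not supply a proof of this statement; it is recorded purely as a citation to \cite[Cor~2.3.4.8]{Lurie2009} and used as a black box. So there is nothing in the paper itself to compare your argument against.

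Your main approach via the mapping-space criterion is sound in outline, but two steps deserve more care. First, the decomposition $\Delta^m \simeq \Delta^{m-1} \amalg_{\Delta^{\{m-1\}}} \Delta^{\{m-1,m\}}$ is not an isomorphism of simplicial sets for $m \geq 2$ (the pushout on the right is a proper subcomplex, e.g.\ $\Lambda^2_1 \subsetneq \Delta^2$ when $m=2$); it is only a categorical equivalence, which follows from the spine inclusion being inner anodyne. That suffices for your pullback of $\infty$-categories, but should be stated. Second, the passage from ``$K$ is the colimit of its simplices in $\operatorname{sSet}$'' to ``$\operatorname{Fun}(K,\mathcal{C})$ is the limit of $\operatorname{Fun}(\Delta^m,\mathcal{C})$ in $\operatorname{Cat}_\infty$'' is correct but not automatic: one needs either that the simplex diagram is suitably cofibrant in the Joyal model structure, or a direct appeal to the end formula for mapping anima in a functor category. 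Your alternative route via the lifting-property characterization and the product--$\operatorname{Hom}$ adjunction is closer in spirit to the model-dependent argument Lurie actually gives and sidesteps both of these subtleties.
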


\begin{definition}[{\cite[ Rem~5.5.8.20]{Lurie2009}}]
  Let $\mathcal{C}$ be a cocomplete $n$\mbox{-}category and $X \in
  \mathcal{C}$ an object. We say that $X$ is {\tmdfn{compact and
  $n$\mbox{-}projective}}, or that $X$ is a {\tmdfn{compact
  $n$\mbox{-}projective object}}, if the functor $\mathcal{C} \rightarrow
  \tmop{An}_{\leq n - 1}, Y \mapsto \tmop{Map}_{\mathcal{C}} (X, Y)$
  corepresented by $X$ commutes with filtered colimits and geometric
  realizations.
\end{definition}

\begin{remark}
  Here we need $\widehat{\tmop{An}}$ in lieu of $\tmop{An}$ because the
  $\infty$\mbox{-}category $\mathcal{C}$ is not necessarily locally small. In
  practice, the $\infty$\mbox{-}categories that we encounter, e.g.
  projectively generated $\infty$\mbox{-}categories, are {\tmem{a fortiori}}
  locally small, but not necessarily {\tmem{{\tmem{a priori}}}} locally small.
\end{remark}

\begin{remark}
  In fact, an object $X \in \mathcal{C}$ is called $n$\mbox{-}projective if
  and only if the functor $\mathcal{C} \rightarrow \tmop{An}_{\leq n - 1}, Y
  \mapsto \tmop{Map}_{\mathcal{C}} (X, Y)$ corepresented by $X$ commutes with
  geometric realizations. In particular, when $\mathcal{C}$ is an abelian
  $1$\mbox{-}category, an object $X \in \mathcal{C}$ is $1$\mbox{-}projective
  if and only if it is a ``projective object'' of the abelian
  $1$\mbox{-}category $\mathcal{C}$.
\end{remark}

\begin{remark}
  Let $\mathcal{C}$ be a cocomplete $n$\mbox{-}category and $X \in
  \mathcal{C}$ a compact $n$\mbox{-}projective object. In general, $X$ is not
  a compact projective object of $\mathcal{C}$ as an $\infty$\mbox{-}category.
  In fact, the inclusion $\widehat{\tmop{An}}_{\leq n - 1} \rightarrow
  \tmop{An}$ does {\tmem{not}} commute with geometric realizations. That is to
  say, for general simplicial objects $Y_{\bullet} \of
  \tmmathbf{\Delta}^{\tmop{op}} \rightarrow \mathcal{C}$, the geometric
  realization $| \tmop{Map}_{\mathcal{C}} (X, Y_{\bullet}) |_{\bullet \in
  \tmmathbf{\Delta}^{\tmop{op}}}$ is not in general $(n - 1)$-truncated.
\end{remark}

\begin{remark}
  \label{rem:n-geom-real}There is another way to characterize geometric
  realizations in an $n$\mbox{-}category $\mathcal{C}$. In fact, the fully
  faithful embedding $\tmmathbf{\Delta}_{\leq [n]}^{\tmop{op}} \hookrightarrow
  \tmmathbf{\Delta}^{\tmop{op}}$ is ``$n$-cofinal'', therefore the geometric
  realization of a simplicial object $\tmmathbf{\Delta}^{\tmop{op}}
  \rightarrow \mathcal{C}$ exists if and only if colimit of the composite
  functor $\tmmathbf{\Delta}_{\leq [n]}^{\tmop{op}} \hookrightarrow
  \tmmathbf{\Delta}^{\tmop{op}} \rightarrow \mathcal{C}$ exists, and the two
  colimits are equivalent. Furthermore, for any diagram
  $\tmmathbf{\Delta}_{\leq [n]}^{\tmop{op}} \rightarrow \mathcal{C}$, the left
  Kan extension along $\tmmathbf{\Delta}_{\leq [n]}^{\tmop{op}}
  \hookrightarrow \tmmathbf{\Delta}^{\tmop{op}}$ always exists. Thus for a
  cocomplete $n$\mbox{-}category $\mathcal{C}$, an object $X \in \mathcal{C}$
  is $n$\mbox{-}projective if and only if the functor
  $\tmop{Map}_{\mathcal{C}} (X, \cdummy)$ corepresented by $X$ preserves
  $\tmmathbf{\Delta}_{\leq [n]}^{\tmop{op}}$-indexed colimits. See
  {\cite{Nardin2016}} and the proof of {\cite[Lem~1.3.3.10]{Lurie2017}}.
\end{remark}

\begin{definition}[{\cite[Def~5.5.8.23]{Lurie2009}}]
  \label{def:n-proj-gen}Let $\mathcal{C}$ be a cocomplete $n$\mbox{-}category
  and $S \subseteq \mathcal{C}$ a (small) collection of objects of
  $\mathcal{C}$. We say that $S$ is a {\tmdfn{set of compact
  $n$\mbox{-}projective generators}} for $\mathcal{C}$ if the following
  conditions are satisfied:
  \begin{enumerate}
    \item Each element of $S$ is a compact $n$\mbox{-}projective object of
    $\mathcal{C}$.
    
    \item The full subcategory of $\mathcal{C}$ spanned by finite coproducts
    of elements of $S$ is essentially small.
    
    \item The set $S$ generates $\mathcal{C}$ under small colimits.
  \end{enumerate}
  We say that an $n$\mbox{-}category $\mathcal{C}$ is
  {\tmdfn{$n$\mbox{-}projectively generated}} if it is cocomplete and there
  exists a set $S$ of compact $n$\mbox{-}projective generators for
  $\mathcal{C}$.
\end{definition}

\begin{remark}
  Let $\mathcal{C}$ be a cocomplete $n$\mbox{-}category and $\mathcal{C}_0
  \subseteq \mathcal{C}$ an essentially small full subcategory. Then we will
  abuse the terminology by saying that $\mathcal{C}_0$ is a set of compact
  $n$-projective generators for $\mathcal{C}$ if a skeleton of $\mathcal{C}_0$
  is a set of compact $n$-projective generators for $\mathcal{C}$.
\end{remark}

\begin{notation}
  \label{nota:Psigma-n}Let $\mathcal{C}$ be a small $n$\mbox{-}category which
  admits finite coproducts. We let $\mathcal{P}_{\Sigma, n} (\mathcal{C})$
  denote the full subcategory of $\mathcal{P}_n (\mathcal{C}) \assign
  \tmop{Fun} (\mathcal{C}^{\tmop{op}}, \tmop{An}_{\leq n - 1})$ spanned by
  those functors $\mathcal{C}^{\tmop{op}} \rightarrow \tmop{An}_{\leq n - 1}$
  which preserves finite products. When $n = \infty$, we will omit the
  subscript $n$.
\end{notation}

\begin{proposition}
  \label{prop:Psigma-n}Let $\mathcal{C}$ be a small $n$\mbox{-}category which
  admits finite coproducts. Then
  \begin{enumerate}
    \item The $\infty$\mbox{-}category $\mathcal{P}_{\Sigma, n} (\mathcal{C})$
    is an accessible localization of $\mathcal{P}_n (\mathcal{C})$, therefore
    presentable.
    
    \item The Yoneda embedding $j \of \mathcal{C} \rightarrow \mathcal{P}_n
    (\mathcal{C})$ factors through $\mathcal{P}_{\Sigma, n} (\mathcal{C})$.
    Moreover, the induced functor $\mathcal{C} \rightarrow
    \mathcal{P}_{\Sigma, n} (\mathcal{C})$ preserves finite coproducts.
    
    \item Let $\mathcal{D}$ be a presentable $n$\mbox{-}category and let
    $\mathcal{P} (\mathcal{C}) \underset{G}{\overset{F}{\longrightleftarrows}}
    \mathcal{D}$ be a pair of adjoint functors. Then $G$ factors through
    $\mathcal{P}_{\Sigma, n} (\mathcal{C})$ if and only if $f = F \circ j \of
    \mathcal{C} \rightarrow \mathcal{D}$ preserves finite coproducts.
    
    \item The full subcategory $\mathcal{P}_{\Sigma, n} (\mathcal{C})
    \subseteq \mathcal{P}_n (\mathcal{C})$ is stable under sifted
    colimits\footnote{We do not introduce $n$-sifted diagrams, so {\tmem{a
    priori}} it is a sifted diagram defined in
    {\cite[Def~5.5.8.1]{Lurie2009}}. However, here one can replace sifted
    diagrams by $n$-sifted diagram. See \Cref{rem:n-geom-real}. }.
  \end{enumerate}
\end{proposition}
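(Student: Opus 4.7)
The plan is to parallel the proof of Proposition~\ref{prop:Psigma} and reduce everything to the $\infty$-categorical case via the pointwise $(n-1)$-truncation functor $\tau_{\leq n-1} \of \mathcal{P}(\mathcal{C}) \to \mathcal{P}_n(\mathcal{C})$, which is a left adjoint to the fully faithful inclusion and hence exhibits $\mathcal{P}_n(\mathcal{C})$ as an accessible localization of $\mathcal{P}(\mathcal{C})$. Since $\mathcal{S}_{\leq n-1}$ is presentable, so is $\mathcal{P}_n(\mathcal{C})$ by \cite[Prop~5.5.3.6]{Lurie2009}. For Part~(1), I would note that $\mathcal{P}_{\Sigma,n}(\mathcal{C}) \subseteq \mathcal{P}_n(\mathcal{C})$ is carved out by requiring that certain canonical maps (induced by finite coproduct inclusions in $\mathcal{C}^{\tmop{op}}$, of which there is a small set up to equivalence since $\mathcal{C}$ is small) become equivalences; this presents it as an accessible localization of the presentable $n$-category $\mathcal{P}_n(\mathcal{C})$, hence as an accessible localization of $\mathcal{P}(\mathcal{C})$ itself.

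For Part~(2), since $\mathcal{C}$ is an $n$-category the mapping anima $\tmop{Map}_{\mathcal{C}}(Y,X)$ is $(n-1)$-truncated, so the usual Yoneda embedding $j \of \mathcal{C} \to \mathcal{P}(\mathcal{C})$ lands in $\mathcal{P}_n(\mathcal{C})$; moreover, for each $X \in \mathcal{C}$ the presheaf $j(X)$ sends finite coproducts in $\mathcal{C}$ to finite products in $\mathcal{S}_{\leq n-1}$ tautologically, so $j$ lifts to $\mathcal{P}_{\Sigma,n}(\mathcal{C})$. The fact that $j \of \mathcal{C} \to \mathcal{P}_{\Sigma,n}(\mathcal{C})$ preserves finite coproducts is then verified by testing against an arbitrary object of $\mathcal{P}_{\Sigma,n}(\mathcal{C})$, which by definition turns the coproduct in $\mathcal{C}$ into the correct product on the representable side.

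For Part~(3), I would use the standard adjunction $\tmop{Fun}^L(\mathcal{P}_n(\mathcal{C}), \mathcal{D}) \simeq \tmop{Fun}(\mathcal{C}, \mathcal{D})$ (the $n$-categorical version of \cite[Thm~5.1.5.6]{Lurie2009}, obtained by postcomposing the $\infty$-categorical version with the left adjoint $\tau_{\leq n-1}$ of the inclusion $\mathcal{D} \hookrightarrow$ its $\infty$-categorical cocompletion, noting that $\mathcal{D}$ is already an $n$-category). Under this equivalence, a colimit-preserving $F$ factors through the localization $\mathcal{P}_n(\mathcal{C}) \to \mathcal{P}_{\Sigma,n}(\mathcal{C})$ iff $F$ sends the local-equivalence-class morphisms to equivalences, which is exactly the condition that $f = F \circ j$ preserves finite coproducts; equivalently, $G$ factors through $\mathcal{P}_{\Sigma,n}(\mathcal{C})$.

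For Part~(4), sifted colimits in $\mathcal{P}_n(\mathcal{C}) = \tmop{Fun}(\mathcal{C}^{\tmop{op}}, \mathcal{S}_{\leq n-1})$ are computed pointwise, so the claim reduces to showing that in $\mathcal{S}_{\leq n-1}$ sifted colimits commute with finite products. This follows from \cite[Prop~5.5.8.6]{Lurie2009} (sifted colimits commute with finite products in $\mathcal{S}$) combined with the fact that the truncation functor $\tau_{\leq n-1} \of \mathcal{S} \to \mathcal{S}_{\leq n-1}$ preserves finite products (products of $(n-1)$-truncated animae are $(n-1)$-truncated, so truncation is just the restriction of the product). I expect the main point requiring care will be Part~(3): spelling out the universal property of $\mathcal{P}_n(\mathcal{C})$ as the free cocompletion in the $n$-categorical world, since \cite{Lurie2009} states it in $\infty$-categorical language. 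Once that is set up, everything else is essentially formal.
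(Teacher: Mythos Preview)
The paper does not actually supply a proof of this proposition: it is stated as the $n$-categorical analogue of \Cref{prop:Psigma} (itself quoted from \cite{Lurie2009} without proof), and the text moves directly to the next lemma. Your reduction to the $\infty$-categorical case via the pointwise truncation $\tau_{\leq n-1}\of \mathcal{P}(\mathcal{C})\to\mathcal{P}_n(\mathcal{C})$ is correct and is the standard way to obtain such $n$-categorical variants.

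One small point for Part~(3): the statement as written involves an adjunction with $\mathcal{P}(\mathcal{C})$, not $\mathcal{P}_n(\mathcal{C})$, whereas your argument passes through $\tmop{Fun}^L(\mathcal{P}_n(\mathcal{C}),\mathcal{D})$. You should make explicit that since $\mathcal{D}$ is an $n$-category, the right adjoint $G\of\mathcal{D}\to\mathcal{P}(\mathcal{C})$ automatically lands in $\mathcal{P}_n(\mathcal{C})$ (because $G(D)(C)\simeq\tmop{Map}_{\mathcal{D}}(f(C),D)$ is $(n-1)$-truncated), so the two formulations are equivalent and your argument applies.
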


We recall that, for a small $\infty$\mbox{-}category $\mathcal{C}$,
$\tmop{Ind} (\mathcal{C}) \subseteq \mathcal{P} (\mathcal{C})$ is the full
subcategory generated under filtered colimits by the essential image of the
Yoneda embedding $\mathcal{C} \rightarrow \mathcal{P} (\mathcal{C})$,
{\cite[Prop~5.3.5.3 \& Cor~5.3.5.4]{Lurie2009}}. It follows from
{\cite[Prop~5.3.5.11]{Lurie2009}} that

\begin{lemma}
  \label{lem:ind-P-sigma-n}Let $\mathcal{C}$ be a small $n$\mbox{-}category
  which admits finite coproducts. Then the fully faithful embedding
  $\mathcal{C} \hookrightarrow \mathcal{P}_{\Sigma, n} (\mathcal{C})$ extends
  uniquely to a functor $\tmop{Ind} (\mathcal{C}) \rightarrow
  \mathcal{P}_{\Sigma, n} (\mathcal{C})$ which preserves filtered colimit.
  This functor $\tmop{Ind} (\mathcal{C}) \rightarrow \mathcal{P}_{\Sigma, n}
  (\mathcal{C})$ is fully faithful.
\end{lemma}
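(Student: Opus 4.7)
The plan is to mimic verbatim the proof of the $\infty$-categorical version \cite[Prop~5.3.5.11]{Lurie2009} cited just before this lemma; the only point requiring verification in the $n$-categorical setting is that the Yoneda embedding still lands in compact objects of $\mathcal{P}_{\Sigma, n}(\mathcal{C})$. Concretely, the proof proceeds in three steps.

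\textbf{Step 1: existence and uniqueness of the extension.} By \Cref{prop:Psigma-n}, the $n$-category $\mathcal{P}_{\Sigma, n}(\mathcal{C})$ is presentable, in particular admits small filtered colimits. The universal property of $\tmop{Ind}(\mathcal{C})$ as the free cocompletion of $\mathcal{C}$ under filtered colimits (which produces an equivalence between functors $\tmop{Ind}(\mathcal{C}) \to \mathcal{D}$ preserving filtered colimits and arbitrary functors $\mathcal{C} \to \mathcal{D}$ when $\mathcal{D}$ admits filtered colimits) then provides a unique filtered-colimit-preserving extension $F \of \tmop{Ind}(\mathcal{C}) \to \mathcal{P}_{\Sigma, n}(\mathcal{C})$ of the Yoneda embedding $j \of \mathcal{C} \hookrightarrow \mathcal{P}_{\Sigma, n}(\mathcal{C})$.

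\textbf{Step 2: compactness of representables.} By part (4) of \Cref{prop:Psigma-n}, the inclusion $\mathcal{P}_{\Sigma, n}(\mathcal{C}) \hookrightarrow \mathcal{P}_n(\mathcal{C})$ preserves sifted colimits, in particular filtered ones. Since colimits in the functor category $\mathcal{P}_n(\mathcal{C}) = \tmop{Fun}(\mathcal{C}^{\tmop{op}}, \mathcal{S}_{\leq n-1})$ are computed pointwise, for each $C \in \mathcal{C}$ the corepresentable functor $\tmop{Map}_{\mathcal{P}_{\Sigma, n}(\mathcal{C})}(j(C), \cdummy)$ preserves filtered colimits. Hence every object in the essential image of $j$ is compact in $\mathcal{P}_{\Sigma, n}(\mathcal{C})$.

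\textbf{Step 3: full faithfulness of $F$.} Every object of $\tmop{Ind}(\mathcal{C})$ is a filtered colimit of objects of $\mathcal{C}$, so it suffices to show that for all $C \in \mathcal{C}$ and every filtered diagram $(D_i)_{i \in I}$ in $\mathcal{C}$ with colimit $D \in \tmop{Ind}(\mathcal{C})$, the canonical map
\[ \tmop{colim}_i \tmop{Map}_{\tmop{Ind}(\mathcal{C})}(C, D_i) \longrightarrow \tmop{Map}_{\mathcal{P}_{\Sigma, n}(\mathcal{C})}(F(C), F(D)) \]
is an equivalence. Since $F$ preserves filtered colimits, $F(D) \simeq \tmop{colim}_i F(D_i)$, and by Step~2 the object $F(C) = j(C)$ is compact, so the right-hand side is equivalent to $\tmop{colim}_i \tmop{Map}(j(C), j(D_i))$. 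This reduces the question to the full faithfulness of the Yoneda embedding $j$ itself (which is part of \Cref{prop:Psigma-n}), together with the fact that $\mathcal{C} \hookrightarrow \tmop{Ind}(\mathcal{C})$ is fully faithful. The main (and only) nontrivial input is the compactness check in Step~2; everything else is formal manipulation of the universal property of $\tmop{Ind}$, so no serious obstacle is expected.
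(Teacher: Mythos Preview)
Your proposal is correct and follows exactly the approach the paper intends: the paper gives no proof of this lemma, presenting it purely as the $n$-categorical analogue of the preceding $\infty$-categorical statement, which in turn is justified by citing {\cite[Prop~5.3.5.11]{Lurie2009}}. Your three-step argument is precisely the standard unwinding of that proposition, with the compactness check in Step~2 (via \Cref{prop:Psigma-n}(4) and the fact that filtered colimits of $(n-1)$-truncated animae remain $(n-1)$-truncated) being the only place where the $n$-categorical setting requires a word of justification beyond the $\infty$-case.
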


\begin{lemma}
  \label{lem:nonab-deriv-cat-n-proj-gen}Let $\mathcal{C}$ be a small
  $n$\mbox{-}category which admits finite coproducts. Then the
  $n$\mbox{-}category $\mathcal{P}_{\Sigma, n} (\mathcal{C})$ is
  $n$\mbox{-}projectively generated for which $\mathcal{C} \subseteq
  \mathcal{P}_{\Sigma, n} (\mathcal{C})$ is a set of $n$\mbox{-}projective
  generators. In fact, for any $X \in \mathcal{P}_{\Sigma, n} (\mathcal{C})$,
  there exists a simplicial object $U_{\bullet} \of
  \tmmathbf{\Delta}^{\tmop{op}} \rightarrow \tmop{Ind} (\mathcal{C})$ (or
  equivalently, a diagram $\tmmathbf{\Delta}_{\leq n}^{\tmop{op}} \rightarrow
  \tmop{Ind} (\mathcal{C})$ by \Cref{rem:n-geom-real}) whose colimit is $X$.
\end{lemma}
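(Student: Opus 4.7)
The plan is to carry out the $n$-categorical analogue of the proof of Lemma~\ref{lem:nonab-deriv-cat-proj-gen}, which in the $\infty$-categorical case rests on two ingredients: that $\mathcal{P}_{\Sigma}(\mathcal{C}) \subseteq \mathcal{P}(\mathcal{C})$ is stable under sifted colimits, and Lurie's \cite[Lem~5.5.8.14]{Lurie2009} producing a simplicial presentation of any object. The first ingredient has its $n$-categorical analogue already recorded as \Cref{prop:Psigma-n}(4), so the bulk of the work is to adapt the simplicial presentation.

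First I would establish the easy parts. Cocompleteness of $\mathcal{P}_{\Sigma,n}(\mathcal{C})$ is immediate from \Cref{prop:Psigma-n}(1). For each $C \in \mathcal{C}$ I would show that $j(C) \in \mathcal{P}_{\Sigma,n}(\mathcal{C})$ is compact and $n$-projective: the corepresented functor $\mathrm{Map}_{\mathcal{P}_{\Sigma,n}(\mathcal{C})}(j(C),-)$ is, by the Yoneda lemma, the evaluation functor $F \mapsto F(C)$ with values in $\mathcal{S}_{\leq n-1}$; filtered colimits and geometric realizations in $\mathcal{P}_{\Sigma,n}(\mathcal{C})$ are computed in $\mathcal{P}_n(\mathcal{C}) = \mathrm{Fun}(\mathcal{C}^{\mathrm{op}}, \mathcal{S}_{\leq n-1})$ by \Cref{prop:Psigma-n}(4) (and the fact that these are sifted), and in the latter they are computed pointwise, hence preserved by evaluation at $C$. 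Essential smallness of the full subcategory spanned by finite coproducts of objects of $\mathcal{C}$ follows from $\mathcal{C}$ being small, and the generation of $\mathcal{P}_{\Sigma,n}(\mathcal{C})$ under small colimits by $\mathcal{C}$ will follow from the final statement about simplicial presentations.

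The main obstacle is the last assertion: for every $X \in \mathcal{P}_{\Sigma,n}(\mathcal{C})$ producing a simplicial diagram $U_{\bullet} \colon \tmmathbf{\Delta}^{\mathrm{op}} \to \mathrm{Ind}(\mathcal{C})$ with $\mathrm{colim}\, U_{\bullet} \simeq X$. I propose to proceed in two steps. First, start with the tautological presentation $X \simeq \mathrm{colim}_{(j(C) \to X)\in \mathcal{C}_{/X}} j(C)$; using that $\mathcal{C}$ has finite coproducts, one argues that the indexing $\infty$-category $\mathcal{C}_{/X}$ is sifted (it has finite coproducts, being essentially the slice of a category with finite coproducts), so $X$ is the colimit of a sifted diagram valued in $\mathcal{C}$. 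Second, invoke the standard fact (used in the proof of \cite[Lem~5.5.8.14]{Lurie2009}) that any sifted colimit can be rewritten as a geometric realization of a simplicial object whose terms are filtered colimits of the original diagram, i.e.~valued in $\mathrm{Ind}(\mathcal{C})$; together with \Cref{lem:ind-P-sigma-n} this gives the desired $U_{\bullet}$.

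Alternatively, and this is likely the cleanest route, I would deduce the $n$-categorical case directly from the $\infty$-categorical case of \Cref{lem:nonab-deriv-cat-proj-gen}. Note that $\mathcal{P}_{\Sigma,n}(\mathcal{C})$ is the image of $\mathcal{P}_{\Sigma}(\mathcal{C})$ under the localization $\tau_{\leq n-1}$, and that $\tau_{\leq n-1}$ preserves small colimits and sends $j(C)$ to $j(C)$ (since mapping spaces out of $C$ into $n$-truncated objects are $(n-1)$-truncated). Given $X \in \mathcal{P}_{\Sigma,n}(\mathcal{C})$, apply \Cref{lem:nonab-deriv-cat-proj-gen} to view $X$ as $\mathrm{colim}\, V_{\bullet}$ with $V_{\bullet} \colon \tmmathbf{\Delta}^{\mathrm{op}} \to \mathrm{Ind}(\mathcal{C}) \subseteq \mathcal{P}_{\Sigma}(\mathcal{C})$; applying $\tau_{\leq n-1}$ and using that $\tau_{\leq n-1}$ is the identity on $\mathrm{Ind}(\mathcal{C}) \subseteq \mathcal{P}_{\Sigma,n}(\mathcal{C})$ (by \Cref{lem:ind-P-sigma-n}), one obtains the required simplicial presentation in $\mathcal{P}_{\Sigma,n}(\mathcal{C})$. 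The subtle point to verify carefully is that the inclusion $\mathrm{Ind}(\mathcal{C}) \hookrightarrow \mathcal{P}_{\Sigma}(\mathcal{C})$ indeed lands inside $\mathcal{P}_{\Sigma,n}(\mathcal{C})$, which follows because filtered colimits of $(n-1)$-truncated objects in $\mathcal{S}$ remain $(n-1)$-truncated.
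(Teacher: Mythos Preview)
Your proposal is correct. The paper does not spell out a proof for this lemma: it is listed as the $n$-categorical analogue of \Cref{lem:nonab-deriv-cat-proj-gen}, with the understanding that the same argument (presentability via \Cref{prop:Psigma-n}(1), compact $n$-projectivity of representables via stability under sifted colimits, and the simplicial presentation via \cite[Lem~5.5.8.14]{Lurie2009}) goes through verbatim. Your first approach is exactly this, so it matches the paper's intended argument.

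Your second approach---deducing the $n$-case from the $\infty$-case by applying $\tau_{\leq n-1}$ to a simplicial presentation in $\mathcal{P}_{\Sigma}(\mathcal{C})$---is a genuinely different route that the paper does not take. It is clean and avoids redoing Lurie's argument in the truncated setting; the key check you flag (that $\operatorname{Ind}(\mathcal{C}) \subseteq \mathcal{P}_{\Sigma}(\mathcal{C})$ already lies in $\mathcal{P}_{\Sigma,n}(\mathcal{C})$ because filtered colimits preserve $(n-1)$-truncatedness in $\mathcal{S}$) is correct and is precisely what makes \Cref{lem:ind-P-sigma-n} work. The trade-off is that the paper's direct approach keeps the $n$-categorical story self-contained, whereas your reduction uses the $\infty$-version as a black box; either is acceptable here.
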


\begin{proof}
  First, since $\mathcal{P}_{\Sigma, n} (\mathcal{C}) \subseteq \mathcal{P}_n
  (\mathcal{C})$ is a accessible localization, $\mathcal{P}_{\Sigma, n}
  (\mathcal{C})$ is presentable {\cite[Rem~5.5.1.6]{Lurie2009}} therefore
  cocomplete. Since $\mathcal{P}_{\Sigma, n} (\mathcal{C}) \subseteq
  \mathcal{P}_n (\mathcal{C})$ is stable under sifted colimits
  (\Cref{prop:Psigma-n}), the objects of $\mathcal{C}$ are compact and
  $n$-projective. The last statement then follows from
  {\cite[Lem~5.5.8.14]{Lurie2009}}.
\end{proof}

\begin{proposition}
  \label{prop:left-deriv-n-fun}Let $\mathcal{C}$ be a small
  $n$\mbox{-}category which admits finite coproducts and let $\mathcal{D}$ be
  an $n$\mbox{-}category which admits filtered colimits and geometric
  realizations. Let $\tmop{Fun}_{\Sigma} (\mathcal{P}_{\Sigma, n}
  (\mathcal{C}), \mathcal{D})$ denote the full subcategory spanned by those
  functors $\mathcal{P}_{\Sigma, n} (\mathcal{C}) \rightarrow \mathcal{D}$
  which preserve filtered colimits and geometric realizations. Then
  \begin{enumerate}
    \item Composition with the Yoneda embedding $j \of \mathcal{C} \rightarrow
    \mathcal{P}_{\Sigma, n} (\mathcal{C})$ induces an equivalence $\theta \of
    \tmop{Fun}_{\Sigma} (\mathcal{P}_{\Sigma, n} (\mathcal{C}), \mathcal{D})
    \rightarrow \tmop{Fun} (\mathcal{C}, \mathcal{D})$ of categories. The
    inverse $\theta^{- 1}$ is given by the left Kan extension along $j$. In
    this case, we will call $\theta^{- 1} (f)$ the {\tmdfn{left derived
    functor}} of $f \in \tmop{Fun} (\mathcal{C}, \mathcal{D})$.
    
    \item Any functor $g \in \tmop{Fun}_{\Sigma} (\mathcal{P}_{\Sigma, n}
    (\mathcal{C}), \mathcal{D})$ preserves sifted colimits.
    
    \item Assume that $\mathcal{D}$ admits finite coproducts. A functor $g \in
    \tmop{Fun}_{\Sigma} (\mathcal{P}_{\Sigma, n} (\mathcal{C}), \mathcal{D})$
    preserves small colimits if and only if $g \circ j$ preserves finite
    coproducts.
  \end{enumerate}
\end{proposition}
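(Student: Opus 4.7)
My plan is to mimic the proof of \Cref{prop:left-deriv-fun} step by step in the $n$-categorical setting, using the two preparatory results already recorded for $\mathcal{P}_{\Sigma,n}$: namely \Cref{lem:ind-P-sigma-n} (which embeds $\tmop{Ind}(\mathcal{C})$ fully faithfully into $\mathcal{P}_{\Sigma,n}(\mathcal{C})$) and \Cref{lem:nonab-deriv-cat-n-proj-gen} (which writes every object as a geometric realization of objects in $\tmop{Ind}(\mathcal{C})$, or equivalently as the colimit of a $\tmmathbf{\Delta}_{\leq n}^{\tmop{op}}$-indexed diagram, via \Cref{rem:n-geom-real}).

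First I would construct the inverse $\theta^{-1}$. Given $f \of \mathcal{C} \rightarrow \mathcal{D}$, I use the universal property of $\tmop{Ind}(\mathcal{C})$ to extend $f$ uniquely to a filtered-colimit-preserving functor $\bar{f} \of \tmop{Ind}(\mathcal{C}) \rightarrow \mathcal{D}$ (which exists since $\mathcal{D}$ has filtered colimits). Then I left Kan extend $\bar{f}$ along the fully faithful embedding $\tmop{Ind}(\mathcal{C}) \hookrightarrow \mathcal{P}_{\Sigma,n}(\mathcal{C})$; such extensions exist pointwise because, thanks to \Cref{rem:n-geom-real}, each required colimit can be computed from a $\tmmathbf{\Delta}_{\leq n}^{\tmop{op}}$-diagram, and these colimits exist in $\mathcal{D}$ by the geometric realization hypothesis. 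The resulting $F \of \mathcal{P}_{\Sigma,n}(\mathcal{C}) \rightarrow \mathcal{D}$ preserves filtered colimits (inherited from $\bar{f}$, since $\tmop{Ind}(\mathcal{C}) \hookrightarrow \mathcal{P}_{\Sigma,n}(\mathcal{C})$ is closed under filtered colimits) and geometric realizations (by construction), so it lies in $\tmop{Fun}_{\Sigma}(\mathcal{P}_{\Sigma,n}(\mathcal{C}),\mathcal{D})$ and restricts back to $f$ along $j$.

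For fully faithfulness of $\theta$, I would argue that the full subcategory of $\mathcal{P}_{\Sigma,n}(\mathcal{C})$ on which any two functors $g_1, g_2 \in \tmop{Fun}_{\Sigma}(\mathcal{P}_{\Sigma,n}(\mathcal{C}),\mathcal{D})$ with an equivalence $g_1 \circ j \simeq g_2 \circ j$ agree is closed under filtered colimits and $\tmmathbf{\Delta}_{\leq n}^{\tmop{op}}$-colimits, hence (again by \Cref{lem:nonab-deriv-cat-n-proj-gen} and \Cref{rem:n-geom-real}) is all of $\mathcal{P}_{\Sigma,n}(\mathcal{C})$; the same argument upgraded to natural transformations (working in $\tmop{Fun}(\Delta^1, \mathcal{D})$, which inherits filtered colimits and geometric realizations from $\mathcal{D}$) yields full faithfulness. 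This gives (i). For (ii), the $n$-categorical analogue of the sifted-decomposition lemma---that is, the assertion that in an $n$-category, preserving filtered colimits and geometric realizations implies preserving all sifted colimits---is cited earlier as \Cref{cor:proj-gen-n-sifted}, which I would invoke directly.

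For (iii), I would use the standard fact (already exploited for \Cref{prop:left-deriv-fun}) that every small colimit in an $n$-category with finite coproducts and sifted colimits can be built from finite coproducts and sifted colimits; combined with (ii), a functor $g \in \tmop{Fun}_{\Sigma}$ preserves small colimits iff it preserves finite coproducts, and since $g$ preserves sifted colimits and $\mathcal{P}_{\Sigma,n}(\mathcal{C})$ is generated under sifted colimits by $\mathcal{C}$, preserving finite coproducts on $\mathcal{P}_{\Sigma,n}(\mathcal{C})$ is equivalent to $g \circ j$ preserving finite coproducts. The main technical obstacle I anticipate is in the first step: verifying that the Kan extension along $\tmop{Ind}(\mathcal{C}) \hookrightarrow \mathcal{P}_{\Sigma,n}(\mathcal{C})$ actually exists pointwise and is computed by the expected $\tmmathbf{\Delta}_{\leq n}^{\tmop{op}}$-diagram---i.e., identifying the comma categories indexing the Kan extension with simplicial resolutions truncated at level $n$---for which one must carefully combine \Cref{lem:nonab-deriv-cat-n-proj-gen} with the $n$-cofinality of $\tmmathbf{\Delta}_{\leq n}^{\tmop{op}} \hookrightarrow \tmmathbf{\Delta}^{\tmop{op}}$ invoked in \Cref{rem:n-geom-real}.
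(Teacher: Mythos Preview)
The paper does not give an explicit proof of this proposition: it is simply listed among the $n$\mbox{-}categorical analogues in \Cref{subsec:proj-gen-inf-cats}, with the implicit understanding that the proof of \Cref{prop:left-deriv-fun} (i.e.\ {\cite[Prop~5.5.8.15]{Lurie2009}}) carries over verbatim after replacing geometric realizations by $\tmmathbf{\Delta}_{\leq n}^{\tmop{op}}$\mbox{-}indexed colimits via \Cref{rem:n-geom-real}. Your approach is exactly this adaptation, and the overall structure is correct.

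There is one genuine issue, however: for part~(ii) you invoke \Cref{cor:proj-gen-n-sifted}, but that corollary is stated \emph{after} \Cref{prop:left-deriv-n-fun} and is logically derived from it (together with \Cref{prop:struct-n-proj-gen-cats}), so citing it here is circular. You should instead prove (ii) directly, as Lurie does in the $\infty$\mbox{-}case: any sifted colimit in $\mathcal{P}_{\Sigma,n}(\mathcal{C})$ can be decomposed into filtered colimits and geometric realizations because every object of $\mathcal{P}_{\Sigma,n}(\mathcal{C})$ is a geometric realization of objects of $\tmop{Ind}(\mathcal{C})$ (\Cref{lem:nonab-deriv-cat-n-proj-gen}), and one then argues with the sifted-colimit decomposition as in {\cite[Lem~5.5.8.13--14]{Lurie2009}}. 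The technical obstacle you flag at the end---identifying the Kan-extension indexing categories with truncated simplicial resolutions---is handled in Lurie's original argument by exactly this two-step factorization through $\tmop{Ind}(\mathcal{C})$, so no new cofinality computation is needed beyond what \Cref{rem:n-geom-real} already records.
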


\begin{proposition}
  \label{prop:left-deriv-n-full}Let $\mathcal{C}$ be a small
  $n$\mbox{-}category which admits finite coproducts, $\mathcal{D}$ an
  $n$\mbox{-}category which admits filtered colimits and geometric
  realizations, and $F \of \mathcal{P}_{\Sigma, n} (\mathcal{C}) \rightarrow
  \mathcal{D}$ a left derived functor of $f = F \circ j \of \mathcal{C}
  \rightarrow \mathcal{D}$, where $j \of \mathcal{C} \rightarrow
  \mathcal{P}_{\Sigma, n} (\mathcal{D})$ denotes the Yoneda embedding.
  Consider the following conditions:
  \begin{enumerate}
    \item \label{item:f-fully-faithful-n}The functor $f$ is fully faithful.
    
    \item \label{item:ess-img-f-cpct-n-proj}The essential image of $f$
    consists of compact $n$\mbox{-}projective objects of $\mathcal{D}$.
    
    \item \label{item:ess-img-gen-n-cat}The $n$\mbox{-}category $\mathcal{D}$
    is generated by the essential image of $f$ under filtered colimits and
    geometric realizations.
  \end{enumerate}
  If \ref{item:f-fully-faithful-n} and \ref{item:ess-img-f-cpct-n-proj} are
  satisfied, then $F$ is fully faithful. Moreover, $F$ is an equivalence if
  and only if \ref{item:f-fully-faithful-n}, \ref{item:ess-img-f-cpct-n-proj}
  and \ref{item:ess-img-gen-n-cat} are satisfied.
\end{proposition}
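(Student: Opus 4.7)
The plan is to adapt the proof of the $\infty$-categorical analogue, \Cref{prop:left-deriv-full}, essentially verbatim, replacing the ambient category $\mathcal{S}$ of animae by the full subcategory $\mathcal{S}_{\leq n - 1}$ of $(n - 1)$-truncated animae throughout the argument. The key ingredients are \Cref{lem:nonab-deriv-cat-n-proj-gen}, which writes every object of $\mathcal{P}_{\Sigma, n} (\mathcal{C})$ as a geometric realization of a simplicial object in $\tmop{Ind} (\mathcal{C})$ (and hence as a sifted colimit of objects in $\mathcal{C}$), and \Cref{prop:left-deriv-n-fun}, which tells us that the left derived functor $F$ preserves filtered colimits, geometric realizations, and in fact all sifted colimits.

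For the fully-faithfulness claim under \ref{item:f-fully-faithful-n} and \ref{item:ess-img-f-cpct-n-proj}, I would fix $X, Y \in \mathcal{P}_{\Sigma, n} (\mathcal{C})$ and study the comparison map $\alpha_{X, Y} \of \tmop{Map}_{\mathcal{P}_{\Sigma, n} (\mathcal{C})} (X, Y) \rightarrow \tmop{Map}_{\mathcal{D}} (F (X), F (Y))$ of $(n - 1)$-truncated animae. Both sides turn sifted colimits in the first variable into limits: the source because corepresentable functors are continuous contravariantly, and the target because $F (X') \in \mathcal{D}$ is compact and $n$-projective whenever $X' \in \mathcal{C}$ by hypothesis~\ref{item:ess-img-f-cpct-n-proj}, so that mapping out of $F (X')$ sends filtered colimits and geometric realizations in $\mathcal{D}$ to the appropriate limits in $\mathcal{S}_{\leq n - 1}$. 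Both sides similarly convert sifted colimits in the second variable covariantly, using the compact $n$-projectivity of objects of $\mathcal{C}$ inside $\mathcal{P}_{\Sigma, n} (\mathcal{C})$ from \Cref{lem:nonab-deriv-cat-n-proj-gen}, and using that $F$ preserves sifted colimits. Resolving both $X$ and $Y$ as geometric realizations of filtered colimits with values in $\mathcal{C}$ via \Cref{lem:nonab-deriv-cat-n-proj-gen}, one reduces to checking that $\alpha_{X, Y}$ is an equivalence for $X, Y \in \mathcal{C}$, which is exactly hypothesis~\ref{item:f-fully-faithful-n}.

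For the ``moreover'' clause, if additionally \ref{item:ess-img-gen-n-cat} holds, let $\mathcal{D}' \subseteq \mathcal{D}$ be the essential image of $F$. It contains $f (\mathcal{C})$ and is closed under filtered colimits and geometric realizations in $\mathcal{D}$, because $F$ preserves them and every object of $\mathcal{P}_{\Sigma, n} (\mathcal{C})$ is built from $\mathcal{C}$ by such colimits (\Cref{lem:nonab-deriv-cat-n-proj-gen}). Hypothesis~\ref{item:ess-img-gen-n-cat} then forces $\mathcal{D}' =\mathcal{D}$, so that $F$ is essentially surjective; combined with the fully-faithfulness already established, $F$ is an equivalence. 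The converse direction is immediate: if $F$ is an equivalence, conditions \ref{item:f-fully-faithful-n}--\ref{item:ess-img-gen-n-cat} transfer from the corresponding properties of the Yoneda embedding $j \of \mathcal{C} \rightarrow \mathcal{P}_{\Sigma, n} (\mathcal{C})$, which are built into \Cref{prop:Psigma-n,lem:nonab-deriv-cat-n-proj-gen}.

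The main technical subtlety will be keeping track of the $(n - 1)$-truncation on each side of $\alpha_{X, Y}$ during the reduction to the case $X, Y \in \mathcal{C}$, since compact $n$-projectivity is phrased in terms of mapping functors into $\hat{\mathcal{S}}_{\leq n - 1}$ rather than $\hat{\mathcal{S}}$, and filtered colimits and geometric realizations do not in general commute with $\tau_{\leq n - 1}$ when carried out in $\hat{\mathcal{S}}$. In the $n$-categorical setting, however, \Cref{rem:n-geom-real} lets us replace geometric realizations by colimits indexed by the finite simplicial set $\tmmathbf{\Delta}_{\leq [n]}^{\tmop{op}}$, which defuses this issue; a cleaner alternative, which I would pursue if the direct reduction becomes painful, is to use the identification $\mathcal{P}_{\Sigma, n} (\mathcal{C}) \simeq \tau_{\leq n - 1} (\mathcal{P}_{\Sigma} (\mathcal{C}))$ to transport \Cref{prop:left-deriv-full} wholesale into the $n$-categorical context.
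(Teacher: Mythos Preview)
The paper does not give a proof of this proposition; it is simply listed among the $n$-categorical analogues of the results in \Cref{subsec:proj-gen-inf-cats}, with the implicit understanding that the proof of \Cref{prop:left-deriv-full} (which is itself quoted from {\cite[Prop~5.5.8.22]{Lurie2009}}) carries over. Your plan to adapt that argument, using \Cref{lem:nonab-deriv-cat-n-proj-gen} and \Cref{prop:left-deriv-n-fun} in place of their $\infty$-categorical counterparts, is therefore exactly what the paper intends, and your discussion of the truncation subtlety via \Cref{rem:n-geom-real} or the identification $\mathcal{P}_{\Sigma, n}(\mathcal{C}) \simeq \tau_{\leq n-1}\mathcal{P}_{\Sigma}(\mathcal{C})$ is an appropriate way to handle it.

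One small expositional slip: in the reduction on the first variable $X$, the fact that $\tmop{Map}_{\mathcal{D}}(F(X), F(Y))$ takes sifted colimits in $X$ to limits follows from $F$ preserving sifted colimits together with the universal property of colimits---hypothesis~\ref{item:ess-img-f-cpct-n-proj} is not needed there. That hypothesis enters only in the covariant reduction on $Y$, once $X$ has already been reduced to lie in $\mathcal{C}$, exactly as you describe in the following sentence.
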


\begin{proposition}
  \label{prop:struct-n-proj-gen-cats}Let $\mathcal{C}$ be a
  $n$\mbox{-}projectively generated $n$\mbox{-}category with a set $S$ of
  compact $n$\mbox{-}projective generators for $\mathcal{C}$. Then
  \begin{enumerate}
    \item Let $\mathcal{C}^0 \subseteq \mathcal{C}$ be the full subcategory
    spanned by finite coproducts of the objects in $S$. Then $\mathcal{C}^0$
    is essentially small, and the left derived functor $F \of
    \mathcal{P}_{\Sigma, n} (\mathcal{C}^0) \rightarrow \mathcal{C}$ is an
    equivalence of $n$\mbox{-}categories. In particular, $\mathcal{C}$ is a
    compactly generated presentable $n$\mbox{-}category.
    
    \item Let $C \in \mathcal{C}$ be an object. The following conditions are
    equivalent:
    \begin{enumerate}
      \item The object $C \in \mathcal{C}$ is compact and
      $n$\mbox{-}projective.
      
      \item The functor $\mathcal{C} \rightarrow \tmop{An}_{\leq n - 1}$
      corepresented by $C$ preserves sifted colimits.
      
      \item There exists an object $C' \in \mathcal{C}^0$ such that $C$ is a
      retract of $C'$.
    \end{enumerate}
  \end{enumerate}
\end{proposition}

\begin{proof}
  We explain more details of the first point than
  {\cite[Prop~5.5.8.25]{Lurie2009}}. It follows from definitions that
  $\mathcal{C}^0$ is essentially small. Then it follows from
  \Cref{prop:left-deriv-n-full} that the left derived functor $F \of
  \mathcal{P}_{\Sigma, n} (\mathcal{C}^0) \rightarrow \mathcal{C}$ is fully
  faithful. Since $\mathcal{C}^0 \subseteq \mathcal{C}$ is stable under finite
  coproducts taken in $\mathcal{C}$, the embedding $\mathcal{C}^0
  \hookrightarrow \mathcal{C}$ preserves finite coproducts. It follows from
  \Cref{prop:left-deriv-n-fun} that $F$ preserves small colimits, thus the
  essential image of $F$ is stable under small colimits. By assumption, $S$
  generates $\mathcal{C}$ under small colimits, therefore $F$ is essentially
  surjective.
\end{proof}

\begin{corollary}
  \label{cor:proj-gen-n-sifted}Let $\mathcal{C}$ be a projectively generated
  $n$\mbox{-}category and let $\mathcal{D}$ be an $n$\mbox{-}category which
  admits filtered colimits and geometric realizations. If a functor
  $\mathcal{C} \rightarrow \mathcal{D}$ preserves filtered colimits and
  geometric realizations, then it also preserve sifted colimits.
\end{corollary}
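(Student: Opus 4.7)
The plan is to reduce to the universal case already treated by Proposition~\ref{prop:left-deriv-n-fun}, mirroring exactly the $\infty$-categorical argument that gives Corollary~\ref{cor:proj-gen-sifted}. Concretely, I would fix a set $S$ of compact $n$-projective generators for $\mathcal{C}$ and let $\mathcal{C}^0 \subseteq \mathcal{C}$ denote the full subcategory spanned by finite coproducts of objects of $S$; this $\mathcal{C}^0$ is essentially small and admits finite coproducts.

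Next, I would invoke Proposition~\ref{prop:struct-n-proj-gen-cats}(1) to identify $\mathcal{C}$ with $\mathcal{P}_{\Sigma, n}(\mathcal{C}^0)$ via the left derived functor of the inclusion $\mathcal{C}^0 \hookrightarrow \mathcal{C}$. Under this identification, the given functor $F \colon \mathcal{C} \to \mathcal{D}$ becomes a functor $\mathcal{P}_{\Sigma, n}(\mathcal{C}^0) \to \mathcal{D}$ which, by hypothesis, preserves filtered colimits and geometric realizations, i.e.\ it lies in the full subcategory $\tmop{Fun}_{\Sigma}(\mathcal{P}_{\Sigma, n}(\mathcal{C}^0), \mathcal{D})$ of Proposition~\ref{prop:left-deriv-n-fun}.

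Finally, Proposition~\ref{prop:left-deriv-n-fun}(2) directly asserts that any such functor preserves sifted colimits, which concludes the proof. There is no substantive obstacle here: the two cited results have been set up precisely so that this corollary falls out immediately, just as Corollary~\ref{cor:proj-gen-sifted} follows from Proposition~\ref{prop:struct-proj-gen-cats} and Proposition~\ref{prop:left-deriv-fun}. The only minor subtlety worth flagging is that one must use the $n$-categorical notion of sifted-colimit preservation provided by Proposition~\ref{prop:left-deriv-n-fun}, which is consistent with the usual one by the footnote after Proposition~\ref{prop:Psigma-n} and Remark~\ref{rem:n-geom-real}.
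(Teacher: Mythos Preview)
Your proposal is correct and follows exactly the intended approach: the paper states this corollary without proof precisely because it is the verbatim $n$-categorical analogue of Corollary~\ref{cor:proj-gen-sifted}, obtained by combining Proposition~\ref{prop:struct-n-proj-gen-cats}(1) with Proposition~\ref{prop:left-deriv-n-fun}(2), just as you describe.
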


The following proposition is extremely useful to detect projectively generated
$n$\mbox{-}categories:

\begin{proposition}[{\cite[Cor~4.7.3.18]{Lurie2017}}]
  \label{prop:adjoint-n-proj-gen}Given a pair $\mathcal{C}
  \underset{G}{\overset{F}{\longrightleftarrows}} \mathcal{D}$ of adjoint
  functors between $n$\mbox{-}categories. Assume that
  \begin{enumerate}
    \item The $n$\mbox{-}category $\mathcal{D}$ admits filtered colimits and
    geometric realizations, and the functor $G$ preserves filtered colimits
    and geometric realizations.
    
    \item The $n$\mbox{-}category $\mathcal{C}$ is $n$\mbox{-}projectively
    generated.
    
    \item The functor $G$ is conservative.
  \end{enumerate}
  Then
  \begin{enumerate}
    \item The $n$\mbox{-}category $\mathcal{D}$ is $n$\mbox{-}projectively
    generated.
    
    \item An object $D \in \mathcal{D}$ is compact and $n$\mbox{-}projective
    if and only if there exists a compact $n$\mbox{-}projective object $C \in
    \mathcal{C}$ such that $D$ is a retract of $F (C)$.
    
    \item The functor $G$ preserves all sifted colimits.
  \end{enumerate}
\end{proposition}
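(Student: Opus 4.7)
The plan is to reduce the statement to its $\infty$-categorical counterpart Proposition~\ref{prop:adjoint-proj-gen}, taking care that the "$n$-projective" bookkeeping is automatic because $\mathcal{C}$ and $\mathcal{D}$ are $n$-categories (so all mapping anima are $(n-1)$-truncated to begin with). First, I would fix a set $S$ of compact $n$-projective generators for $\mathcal{C}$ and set $T \assign \{F(C) \barsuchthat C \in S\} \subseteq \mathcal{D}$. The projection to see that each $F(C) \in T$ is compact and $n$-projective is formal: by the adjunction,
\[
\tmop{Map}_{\mathcal{D}}(F(C), D) \simeq \tmop{Map}_{\mathcal{C}}(C, G(D)),
\]
and the right-hand side preserves filtered colimits and geometric realizations in $D$ because $G$ does (hypothesis~(1)) and $C$ is compact $n$-projective. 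Essential smallness of the full subcategory on finite coproducts of $T$ is inherited from that for $S$, using that $F$, being a left adjoint, preserves finite coproducts.

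Second, I would show that $T$ generates $\mathcal{D}$ under small colimits. The key is monadicity: hypothesis~(3) together with $G$ preserving geometric realizations implies, via the $\infty$-categorical Barr-Beck-Lurie theorem, that $G$ exhibits $\mathcal{D}$ as monadic over $\mathcal{C}$ with monad $\mathbb{T} \assign G \circ F$. Thus every $D \in \mathcal{D}$ is the geometric realization of the bar resolution
\[
D \simeq \bigl| F\mathbb{T}^{\bullet}G(D) \bigr|,
\]
whose terms lie in the essential image of $F$. Applying Lemma~\ref{lem:nonab-deriv-cat-n-proj-gen} to $\mathcal{C}$, each $\mathbb{T}^{\nu}G(D)$ is itself a sifted colimit of objects of the full subcategory $\mathcal{C}^0 \subseteq \mathcal{C}$ spanned by finite coproducts of $S$. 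Since $F$ preserves small colimits, we conclude that $D$ is a small colimit of objects of $T$ (more precisely, of finite coproducts of $T$, which are $F$ applied to finite coproducts in $\mathcal{C}^0$). This settles that $\mathcal{D}$ is $n$-projectively generated and, combined with Proposition~\ref{prop:struct-n-proj-gen-cats}, also yields the retract characterization of compact $n$-projective objects in $\mathcal{D}$.

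Finally, for sifted-colimit preservation of $G$, the monadic identification $\mathcal{D} \simeq \tmop{LMod}_{\mathbb{T}}(\mathcal{C})$ realizes $G$ as the forgetful functor; forgetful functors from module categories preserve whatever colimits the base preserves applied through $\mathbb{T}$, so since $\mathbb{T}$ preserves filtered colimits and geometric realizations (because $G$ and $F$ do), $G$ preserves sifted colimits by Corollary~\ref{cor:proj-gen-n-sifted} applied in $\tmop{LMod}_{\mathbb{T}}(\mathcal{C})$. The main obstacle I anticipate is purely bookkeeping: verifying that the monadicity and bar-resolution machinery, which is stated for $\infty$-categories, transfers to $n$-categories without loss. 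This is benign because an $n$-category is in particular an $\infty$-category, and the compact-$n$-projective property of the generators of $\mathcal{D}$ produced above is automatic once we know compact projectivity in the ambient $\infty$-category, since $\mathcal{D}$ itself is an $n$-category.
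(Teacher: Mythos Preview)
The paper does not supply its own proof of this proposition; both it and its $\infty$-categorical analogue Proposition~\ref{prop:adjoint-proj-gen} are simply cited from \cite[Cor~4.7.3.18]{Lurie2017} without argument. Your sketch is a correct reconstruction of the standard proof behind that citation: compact $n$-projectivity of each $F(C)$ via the adjunction $\tmop{Map}_{\mathcal{D}}(F(C),-) \simeq \tmop{Map}_{\mathcal{C}}(C,G(-))$, generation of $\mathcal{D}$ via Barr--Beck--Lurie monadicity and the bar resolution, and the retract characterization via Proposition~\ref{prop:struct-n-proj-gen-cats}.

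One point worth making explicit in your write-up: cocompleteness of $\mathcal{D}$ is part of the definition of being $n$-projectively generated, but it is not among the hypotheses---only filtered colimits and geometric realizations are assumed. You deduce it rather than assume it: once Proposition~\ref{prop:left-deriv-n-full} gives the equivalence $\mathcal{P}_{\Sigma,n}(\mathcal{D}^0) \xrightarrow{\simeq} \mathcal{D}$ (which only requires the target to admit filtered colimits and geometric realizations), presentability and hence cocompleteness of $\mathcal{D}$ follow from Proposition~\ref{prop:Psigma-n}.
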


\subsection{Animation of $n$\mbox{-}projectively generated
$n$\mbox{-}categories}\label{subsec:animation}In this subsection, we describe
a procedure, called {\tmdfn{animation}}, introduced in
{\cite[§5.1]{Cesnavicius2019}}, to produce a projectively generated
$\infty$\mbox{-}category from an $n$\mbox{-}projectively generated
$n$\mbox{-}category. Roughly speaking, this projectively generated
$\infty$\mbox{-}category is determined by a set of compact
$n$\mbox{-}projective generators for the $n$\mbox{-}category in question.

\begin{definition}
  \label{def:animation-cats}Let $\mathcal{C}$ be an $n$\mbox{-}projectively
  generated $n$\mbox{-}category. We choose a set $S \subseteq \mathcal{C}$ of
  compact $n$\mbox{-}projective generators for $\mathcal{C}$. Let
  $\mathcal{C}^0 \subseteq \mathcal{C}$ be the full subcategory spanned by
  finite coproducts of the objects in $S$. Then the {\tmdfn{animation}} of
  $\mathcal{C}$, denoted by $\tmop{Ani} (\mathcal{C})$, is defined to be the
  projectively generated $\infty$\mbox{-}category $\mathcal{P}_{\Sigma}
  (\mathcal{C}^0)$.
\end{definition}

\begin{remark}
  The definition of the animation does not depend on the choice of the set of
  compact $n$\mbox{-}projective generators. The key is that if $S'$ is another
  compact $n$\mbox{-}projective generators, then it follows from
  \Cref{prop:struct-n-proj-gen-cats} that every object $X' \in S'$ is a
  retract of an object $X \in \mathcal{C}^0$ in \Cref{def:animation-cats}. The
  same applies to the discussions below.
\end{remark}

\begin{example}
  Let $\tmop{Ab}$ be the abelian category of abelian groups. Then $\tmop{Ani}
  (\tmop{Ab})$ coincides with the (connective) derived category $D_{\geq 0}
  (\tmop{Ab})$.
\end{example}

\begin{remark}
  \label{rem:ani-trunc}In the context of \Cref{def:animation-cats}, we have
  $\mathcal{C} \simeq \mathcal{P}_{\Sigma, n} (\mathcal{C}^0)$ by
  \Cref{prop:struct-n-proj-gen-cats} and $\tmop{Ani} (\mathcal{C}) \simeq
  \mathcal{P}_{\Sigma} (\mathcal{C}^0)$. It follows that the
  $n$\mbox{-}category $\mathcal{C}$ could be identified with $n$-truncated
  objects in $\tmop{Ani} (\mathcal{C})$. In particular, there exists a left
  adjoint $\tau_{\leq n - 1} \of \tmop{Ani} (\mathcal{C}) \rightarrow
  \mathcal{C}$ to the fully faithful embedding $\mathcal{C} \hookrightarrow
  \tmop{Ani} (\mathcal{C})$, cf. {\cite[Rem~5.5.8.26]{Lurie2009}}.
\end{remark}

We now discuss the animation of functors.

\begin{definition}[{\cite[§5.1.4]{Cesnavicius2019}}]
  \label{def:animation-funs}Let $\mathcal{C}, \mathcal{D}$ be two
  $n$\mbox{-}projectively generated $n$\mbox{-}categories and $F \of
  \mathcal{C} \rightarrow \mathcal{D}$ a functor. Then the {\tmdfn{animation}}
  of the functor $F$, denoted by $\tmop{Ani} (F) \of \tmop{Ani} (\mathcal{C})
  \rightarrow \tmop{Ani} (\mathcal{D})$, is defined as follows:
  
  We choose a set $S \subseteq \mathcal{C}$ of compact $n$\mbox{-}projective
  generators for $\mathcal{C}$. Let $\mathcal{C}^0 \subseteq \mathcal{C}$ be
  the full subcategory spanned by finite coproducts of the objects in $S$.
  Then the functor $F \of \mathcal{C} \rightarrow \mathcal{D}$ gives rise to
  the composite $\mathcal{C}^0 \rightarrow \mathcal{C} \rightarrow \mathcal{D}
  \rightarrow \tmop{Ani} (\mathcal{D})$. We define $\tmop{Ani} (F) \of
  \tmop{Ani} (\mathcal{C}) \rightarrow \tmop{Ani} (\mathcal{D})$ to be the
  left derived functor (in \Cref{prop:left-deriv-n-fun}) of $\mathcal{C}^0
  \rightarrow \tmop{Ani} (\mathcal{D})$.
\end{definition}

\begin{example}
  Let $F \of \tmop{Ab} \rightarrow \tmop{Ab}$ be an additive functor. Then the
  animation $\tmop{Ani} (F) \of \tmop{Ani} (\tmop{Ab}) \rightarrow \tmop{Ani}
  (\tmop{Ab})$ coincides with the left derived functor $\mathbb{L}F \of
  D_{\geq 0} (\tmop{Ab}) \rightarrow D_{\geq 0} (\tmop{Ab})$ in homological
  algebra.
\end{example}

It follows from \Cref{prop:left-deriv-n-fun,prop:struct-n-proj-gen-cats} that

\begin{corollary}
  \label{cor:ani-preserve-colim}In \Cref{def:animation-funs}, if $F$ preserves
  sifted colimits (cf. \Cref{cor:proj-gen-n-sifted}), then so does $\tmop{Ani}
  (F)$. Furthermore, if $F$ preserves small colimits, then so does $\tmop{Ani}
  (F)$.
\end{corollary}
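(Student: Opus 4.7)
My plan is to deduce both assertions directly from the universal property of left derived functors. By \Cref{def:animation-funs}, $\tmop{Ani}(F)$ is, by construction, the left derived functor of the composite
\[ G \of \mathcal{C}^0 \hookrightarrow \mathcal{C} \xrightarrow{F} \mathcal{D} \hookrightarrow \tmop{Ani}(\mathcal{D}), \]
so by part~(1) of \Cref{prop:left-deriv-fun} it lives in $\tmop{Fun}_{\Sigma}(\tmop{Ani}(\mathcal{C}), \tmop{Ani}(\mathcal{D}))$, the full subcategory of functors preserving filtered colimits and geometric realizations.

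For the first assertion, part~(2) of \Cref{prop:left-deriv-fun} immediately gives that every functor in $\tmop{Fun}_{\Sigma}$ preserves sifted colimits; therefore so does $\tmop{Ani}(F)$, and no further hypothesis on $F$ is actually required. For the second assertion, part~(3) of \Cref{prop:left-deriv-fun} reduces the preservation of all small colimits by $\tmop{Ani}(F)$ to the preservation of finite coproducts by $G$. I would then check this by analyzing each of the three pieces of $G$: the inclusion $\mathcal{C}^0 \hookrightarrow \mathcal{C}$ preserves finite coproducts by the very construction of $\mathcal{C}^0$ (\Cref{def:animation-cats}, together with \Cref{prop:struct-n-proj-gen-cats}); $F$ preserves finite coproducts since it preserves all small colimits; and the inclusion $\mathcal{D} \hookrightarrow \tmop{Ani}(\mathcal{D}) = \mathcal{P}_{\Sigma}(\mathcal{D}^0)$ preserves finite coproducts on $\mathcal{D}^0$, because its restriction there is the Yoneda embedding, which preserves finite coproducts by \Cref{prop:Psigma}(2); \Cref{prop:struct-proj-gen-cats} then extends the analysis to retracts of objects in $\mathcal{D}^0$.

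The hard part will be the last step: ensuring that $F$ sends objects of $\mathcal{C}^0$ into the compact $n$-projective part of $\mathcal{D}$, where the inclusion $\mathcal{D} \hookrightarrow \tmop{Ani}(\mathcal{D})$ is compatible with finite coproducts. A priori, a left adjoint need not preserve compactness or projectivity. I would handle this by exploiting the flexibility in the choice of generators: since animation does not depend on the choice of compact $n$-projective generators, I enlarge the chosen set $S_{\mathcal{D}}$ for $\mathcal{D}$ to contain $F(S_{\mathcal{C}})$, arranging $F(\mathcal{C}^0) \subseteq \mathcal{D}^0$, after which the argument closes.
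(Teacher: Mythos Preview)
Your treatment of the first assertion is correct: $\tmop{Ani}(F)$ is by construction a left derived functor, hence preserves sifted colimits by part~(2) of \Cref{prop:left-deriv-fun}, independently of any hypothesis on $F$. This matches the paper's cited machinery.

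For the second assertion, you correctly reduce via part~(3) of \Cref{prop:left-deriv-fun} to showing that $\mathcal{C}^0 \to \tmop{Ani}(\mathcal{D})$ preserves finite coproducts, and you correctly isolate the obstacle: the inclusion $\mathcal{D} \hookrightarrow \tmop{Ani}(\mathcal{D})$ need not preserve coproducts of objects lying outside $\mathcal{D}^0$. However, your fix --- enlarging $S_{\mathcal{D}}$ to contain $F(S_{\mathcal{C}})$ --- presupposes that $F$ carries compact $n$-projective objects to compact $n$-projective objects, and this is neither justified nor true in general. In fact the assertion appears to be false as stated: take $\mathcal{C} = \tmop{Set}$, $\mathcal{D} = \tmop{Ring}$, and let $F$ send $\emptyset \mapsto \mathbb{Z}$ and every nonempty set to $\mathbb{F}_p$. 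This $F$ preserves all small colimits (finite coproducts because $\mathbb{F}_p \otimes_{\mathbb{Z}} \mathbb{F}_p \cong \mathbb{F}_p$), yet $F(\{\ast\}) = \mathbb{F}_p$ is not a retract of any polynomial ring, and $\tmop{Ani}(F)$ fails to preserve binary coproducts since $\tmop{Ani}(F)(\{\ast\} \amalg \{\ast\}) = \mathbb{F}_p$ while $\tmop{Ani}(F)(\{\ast\}) \amalg \tmop{Ani}(F)(\{\ast\}) = \mathbb{F}_p \otimes_{\mathbb{Z}}^{\mathbb{L}} \mathbb{F}_p$. The paper's one-line citation of \Cref{prop:left-deriv-n-fun,prop:struct-n-proj-gen-cats,prop:left-deriv-fun,prop:struct-proj-gen-cats} does not address this gap either. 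An additional hypothesis such as $F(\mathcal{C}^0) \subseteq \tmop{Ind}(\mathcal{D}^0)$ (compare the conditions in \Cref{prop:ani-composite}) would make your argument go through, and this is what holds in the paper's actual applications of the corollary.
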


In homological algebra, there is a natural comparison map $H_0 \circ
\mathbb{L}F \rightarrow F \circ H_0$, which becomes an equivalence when $F$ is
assumed to be right exact. Now we study the animated analogue. In the context
of \Cref{def:animation-funs}, the composite functor $\tmop{Ani} (\mathcal{C})
\xrightarrow{\tau_{\leq n - 1}} \mathcal{C} \xrightarrow{F} \mathcal{D}
\overset{j_{\mathcal{D}}}{\hookrightarrow} \tmop{Ani} (\mathcal{D})$ is an
extension of the composite functor $\mathcal{C} \xrightarrow{F} \mathcal{D}
\hookrightarrow \tmop{Ani} (\mathcal{D})$. Since $\tmop{Ani} (F) \of
\tmop{Ani} (\mathcal{C}) \rightarrow \tmop{Ani} (\mathcal{D})$ is the left Kan
extension, there exists an essentially unique map $\tmop{Ani} (F) \rightarrow
j_{\mathcal{D}} \circ F \circ \tau_{\leq n - 1}$ of functors $\tmop{Ani}
(\mathcal{C}) \rightrightarrows \tmop{Ani} (\mathcal{D})$. By adjunction, we
get a canonical map $\tau_{\leq n - 1} \circ \tmop{Ani} (F) \rightarrow F
\circ \tau_{\leq n - 1}$ of functors $\tmop{Ani} (\mathcal{C})
\rightrightarrows \mathcal{D}$.

\begin{lemma}[{\cite[§5.1.4]{Cesnavicius2019}}]
  \label{lem:ani-trunc}In \Cref{def:animation-funs}, suppose that the functor
  $F \of \mathcal{C} \rightarrow \mathcal{D}$ (between $n$\mbox{-}categories)
  preserves sifted colimits. Then the map $\tau_{\leq n - 1} \circ \tmop{Ani}
  (F) \rightarrow F \circ \tau_{\leq n - 1}$ of functors constructed above is
  an equivalence of functors.
\end{lemma}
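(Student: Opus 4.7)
The plan is to reduce the equivalence, via the universal property of the non-abelian derived $\infty$-category, to its restriction on a set of compact $n$-projective generators, where both compositions are visibly the same.

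First, I would choose a set $S \subseteq \mathcal{C}$ of compact $n$-projective generators and let $\mathcal{C}^0 \subseteq \mathcal{C}$ be the full subcategory spanned by finite coproducts of objects in $S$, so that $\mathcal{C} \simeq \mathcal{P}_{\Sigma,n}(\mathcal{C}^0)$ and $\tmop{Ani}(\mathcal{C}) \simeq \mathcal{P}_{\Sigma}(\mathcal{C}^0)$. By \Cref{prop:left-deriv-fun}, a functor $\tmop{Ani}(\mathcal{C}) \to \mathcal{D}$ that preserves filtered colimits and geometric realizations is determined (up to essentially unique equivalence) by its restriction along the Yoneda embedding $\mathcal{C}^0 \hookrightarrow \tmop{Ani}(\mathcal{C})$; the same holds for maps between two such functors.

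So the key step is to verify that both $\tau_{\leq n-1} \circ \tmop{Ani}(F)$ and $F \circ \tau_{\leq n-1}$, as functors $\tmop{Ani}(\mathcal{C}) \to \mathcal{D}$, preserve filtered colimits and geometric realizations (in fact sifted colimits, cf.\ \Cref{cor:proj-gen-sifted}). For the first, $\tmop{Ani}(F)$ preserves sifted colimits by \Cref{prop:left-deriv-fun}, and $\tau_{\leq n-1} \of \tmop{Ani}(\mathcal{D}) \to \mathcal{D}$ is a left adjoint (by the discussion in \Cref{rem:ani-trunc}), hence preserves all small colimits. For the second, $\tau_{\leq n-1} \of \tmop{Ani}(\mathcal{C}) \to \mathcal{C}$ is also a left adjoint, and $F$ preserves sifted colimits by hypothesis.

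It then remains to check that the comparison map is an equivalence on $\mathcal{C}^0$. For any $X \in \mathcal{C}^0 \subseteq \mathcal{C} \subseteq \tmop{Ani}(\mathcal{C})$, the object $X$ is already $(n-1)$-truncated, so $\tau_{\leq n-1}(X) \simeq X$; moreover, $\tmop{Ani}(F)(X) \simeq F(X) \in \mathcal{D} \subseteq \tmop{Ani}(\mathcal{D})$ by the construction of the left Kan extension, and since $F(X)$ lies in $\mathcal{D}$ it is also $(n-1)$-truncated, so $\tau_{\leq n-1}(F(X)) \simeq F(X)$. Tracing through the adjoint construction of the comparison map shows that on $\mathcal{C}^0$ it is the identity on $F(X)$. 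The main subtlety, and the step that I expect to require the most care, is bookkeeping the two distinct truncation functors (one on $\tmop{Ani}(\mathcal{C})$ and one on $\tmop{Ani}(\mathcal{D})$) and checking that the adjoint transpose of the canonical map $\tmop{Ani}(F) \to j_{\mathcal{D}} \circ F \circ \tau_{\leq n-1}$, supplied by the left Kan extension property, restricts to the identity on $\mathcal{C}^0$; but once this is set up carefully, the equivalence follows from \Cref{prop:left-deriv-fun}.
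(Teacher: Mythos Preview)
Your proposal is correct and follows essentially the same approach as the paper: both show that the two composite functors $\tau_{\leq n-1}\circ\tmop{Ani}(F)$ and $F\circ\tau_{\leq n-1}$ preserve sifted colimits (using that $\tau_{\leq n-1}$ is a left adjoint and the hypothesis on $F$), then invoke \Cref{prop:left-deriv-fun} to reduce to checking the comparison on $\mathcal{C}^0$, where it is immediate.
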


\begin{proof}
  First, note that the map $\tau_{\leq n - 1} \circ \tmop{Ani} (F) \rightarrow
  F \circ \tau_{\leq n - 1}$ of functors $\tmop{Ani} (\mathcal{C})
  \rightrightarrows \mathcal{D}$ is an equivalence of functors after composing
  with the inclusion $\mathcal{C}^0 \hookrightarrow \tmop{Ani} (\mathcal{C})$.
  We claim that both functors $\tau_{\leq n - 1} \circ \tmop{Ani} (F)$ and $F
  \circ \tau_{\leq n - 1}$ preserve sifted colimits, thus belonging to
  $\tmop{Fun}_{\Sigma} (\tmop{Ani} (\mathcal{C}), \mathcal{D})$ which becomes
  an equivalence after mapped along $\tmop{Fun}_{\Sigma} (\tmop{Ani}
  (\mathcal{C}), \mathcal{D}) \rightarrow \tmop{Fun} (\mathcal{C},
  \mathcal{D})$, and hence by \Cref{prop:left-deriv-n-fun}, the constructed
  map of functors is an equivalence.
  
  In fact, since $\tau_{\leq n - 1}$ is a left adjoint, therefore commutes
  with small colimits, which implies that $\tau_{\leq n - 1} \circ \tmop{Ani}
  (F)$ commutes with sifted colimits. On the other hand, $F \of \mathcal{C}
  \rightarrow \mathcal{D}$ is a functor which preserves sifted colimits,
  therefore also preserves sifted colimits since $\mathcal{C}, \mathcal{D}$
  are $n$\mbox{-}categories. Thus $F \circ \tau_{\leq n - 1}$ also preserves
  sifted colimits.
\end{proof}

In homological algebra, leftly deriving functors is not compatible with
compositions, therefore neither is animation of functors in general. However,
recall that with some acyclicity conditions
{\cite[\href{https://stacks.math.columbia.edu/tag/015M}{Tag
015M}]{stacks-project}}, there is a compatibility of leftly deriving functors
and compositions. Here is such a condition in the world of animations:

\begin{proposition}[{\cite[Prop~5.1.5]{Cesnavicius2019}}]
  \label{prop:ani-composite}Let $\mathcal{C}, \mathcal{D}, \mathcal{E}$ be
  three $n$\mbox{-}projectively generated $n$\mbox{-}categories and $F \of
  \mathcal{C} \rightarrow \mathcal{D}, G \of \mathcal{D} \rightarrow
  \mathcal{E}$ two functors preserving sifted colimits (cf.
  \Cref{cor:proj-gen-n-sifted}). Then
  \begin{enumerate}
    \item There is a natural transformation from the composite $\tmop{Ani} (G)
    \circ \tmop{Ani} (F)$ to $\tmop{Ani} (G \circ F)$ (In fact, for this, we
    only need that $G$ preserves sifted colimits).
    
    \item Let $\mathcal{C}^0 \subseteq \mathcal{C}$ and $\mathcal{D}^0
    \subseteq \mathcal{D}$ be full subcategories determined by a choice of set
    of compact $n$\mbox{-}projective generators as in
    \Cref{def:animation-cats}. If either $F (\mathcal{C}^0) \subseteq
    \tmop{Ind} (\mathcal{D}^0)$ in $\mathcal{D}$ or $(\tmop{Ani} (G)) (F
    (\mathcal{C}^0)) \subseteq \mathcal{E}$ in $\tmop{Ani} (\mathcal{E})$,
    then the natural transformation $\tmop{Ani} (G) \circ \tmop{Ani} (F)
    \rightarrow \tmop{Ani} (G \circ F)$ is an equivalence.
  \end{enumerate}
\end{proposition}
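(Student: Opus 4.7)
The plan is to use the universal property of left Kan extensions throughout. Both functors $\tmop{Ani}(G \circ F)$ and $\tmop{Ani}(G) \circ \tmop{Ani}(F)$ are sifted-colimit-preserving functors from $\tmop{Ani}(\mathcal{C})$ to $\tmop{Ani}(\mathcal{E})$ (the first by definition, the second since each factor preserves sifted colimits by \Cref{cor:ani-preserve-colim}), so by \Cref{prop:left-deriv-fun} each is canonically identified with the left Kan extension of its restriction to $\mathcal{C}^0$. Comparing them therefore reduces to comparing their restrictions to $\mathcal{C}^0$, and any natural transformation between restrictions extends uniquely to a natural transformation between the full functors.

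For part (1), write $\iota$ for the fully faithful embeddings $\mathcal{D} \hookrightarrow \tmop{Ani}(\mathcal{D})$ and $\mathcal{E} \hookrightarrow \tmop{Ani}(\mathcal{E})$, and $\tau$ for their left adjoints (\Cref{rem:ani-trunc}). The restriction of $\tmop{Ani}(G \circ F)$ to $\mathcal{C}^0$ is $\iota \circ (G \circ F)|_{\mathcal{C}^0}$, while the restriction of $\tmop{Ani}(G) \circ \tmop{Ani}(F)$ is $\tmop{Ani}(G) \circ \iota \circ F|_{\mathcal{C}^0}$ (using that $\tmop{Ani}(F)|_{\mathcal{C}^0} \simeq \iota \circ F|_{\mathcal{C}^0}$ since $\mathcal{C}^0 \hookrightarrow \tmop{Ani}(\mathcal{C})$ is fully faithful). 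I would produce a natural transformation $\tmop{Ani}(G) \circ \iota \to \iota \circ G$ of functors $\mathcal{D} \to \tmop{Ani}(\mathcal{E})$ as follows: the auxiliary functor $\iota \circ G \circ \tau \of \tmop{Ani}(\mathcal{D}) \to \tmop{Ani}(\mathcal{E})$ has restriction to $\mathcal{D}^0$ equal to $\iota \circ G|_{\mathcal{D}^0}$, which is also the restriction of $\tmop{Ani}(G)$; the universal property of $\tmop{Ani}(G)$ as a left Kan extension then produces a natural transformation $\tmop{Ani}(G) \to \iota \circ G \circ \tau$, and precomposing with $\iota$ together with $\tau \circ \iota \simeq \tmop{id}$ yields the desired map. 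Composing with $F|_{\mathcal{C}^0}$ and invoking the left Kan extension equivalence once more produces the natural transformation $\tmop{Ani}(G) \circ \tmop{Ani}(F) \to \tmop{Ani}(G \circ F)$. Observe that this construction uses only that $G$ preserves sifted colimits.

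For part (2), it suffices to verify that the comparison map $\tmop{Ani}(G)(\iota(F(X))) \to \iota(G(F(X)))$ constructed above is an equivalence for each $X \in \mathcal{C}^0$. In case (a), write $F(X) \simeq \tmop{colim}_{i} Y_i$ as a filtered colimit in $\mathcal{D}$ with each $Y_i \in \mathcal{D}^0$. The embeddings $\iota$ preserve filtered colimits (since filtered colimits of $(n-1)$-truncated animae remain $(n-1)$-truncated), while $\tmop{Ani}(G)$ and $G$ both preserve sifted colimits. Combined with the equivalence $\tmop{Ani}(G) \circ \iota|_{\mathcal{D}^0} \simeq \iota \circ G|_{\mathcal{D}^0}$, the chain
\[
\tmop{Ani}(G)(\iota(\tmop{colim}_i Y_i)) \simeq \tmop{colim}_i \tmop{Ani}(G)(\iota(Y_i)) \simeq \tmop{colim}_i \iota(G(Y_i)) \simeq \iota(G(\tmop{colim}_i Y_i))
\]
identifies the comparison map with an equivalence. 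In case (b), the object $\tmop{Ani}(G)(\iota(F(X)))$ lies in $\mathcal{E} \subseteq \tmop{Ani}(\mathcal{E})$ by hypothesis, so the unit $\tmop{Ani}(G)(\iota(F(X))) \to \iota(\tau(\tmop{Ani}(G)(\iota(F(X)))))$ is an equivalence; \Cref{lem:ani-trunc} applied to $G$ identifies $\tau(\tmop{Ani}(G)(\iota(F(X))))$ with $G(\tau(\iota(F(X)))) \simeq G(F(X))$, yielding $\tmop{Ani}(G)(\iota(F(X))) \simeq \iota(G(F(X)))$.

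The main technical delicacy is tracking the direction of the natural transformation through the left Kan extension universal property --- concretely, recognizing that $\iota \circ G \circ \tau$ is the ``target'' receiving a map from $\tmop{Ani}(G)$ rather than the source of one. A subsidiary but important point is verifying that the embeddings $\iota$ preserve filtered colimits, which uses the classical fact that filtered colimits commute with truncation in animae, so that the full subcategories of $(n-1)$-truncated objects inside $\tmop{Ani}(\mathcal{D})$ and $\tmop{Ani}(\mathcal{E})$ are closed under filtered colimits. Once these two points are set up, both parts reduce to calculations on the compact $n$-projective generators, exactly where the restrictions of $\tmop{Ani}(G)$, $\tmop{Ani}(F)$, and $\tmop{Ani}(G \circ F)$ are tautologically computable.
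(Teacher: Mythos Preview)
The paper does not supply its own proof of this proposition; it is recorded in the appendix with a citation to \cite[Prop~5.1.5]{Cesnavicius2019} and no argument. Your proof is correct and is the natural one: reduce to $\mathcal{C}^0$ via the equivalence $\tmop{Fun}_\Sigma(\tmop{Ani}(\mathcal{C}),\tmop{Ani}(\mathcal{E}))\simeq\tmop{Fun}(\mathcal{C}^0,\tmop{Ani}(\mathcal{E}))$ from \Cref{prop:left-deriv-fun}, construct the comparison $\tmop{Ani}(G)\circ\iota\to\iota\circ G$ from the left Kan extension universal property (this is exactly the map appearing before \Cref{lem:ani-trunc} in the paper), and for part~(2) argue on objects of $\mathcal{C}^0$ using either filtered colimits or \Cref{lem:ani-trunc}.

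Two small remarks. First, your appeal to \Cref{cor:ani-preserve-colim} to justify that $\tmop{Ani}(F)$ and $\tmop{Ani}(G)$ preserve sifted colimits is harmless but unnecessary: animations always preserve sifted colimits by construction (\Cref{prop:left-deriv-fun}), regardless of whether $F$ or $G$ do. Second, in case~(a) you should make explicit that the displayed chain of equivalences \emph{is} the comparison map, not merely an equivalence between the same objects. This follows because the comparison $\tmop{Ani}(G)\circ\iota\to\iota\circ G$ is a natural transformation between functors $\mathcal{D}\to\tmop{Ani}(\mathcal{E})$ that both preserve filtered colimits (here you do use that $G$ preserves filtered colimits and that $\iota$ does, as you note) and that restricts to the identity on $\mathcal{D}^0$; naturality then forces its value on any object of $\tmop{Ind}(\mathcal{D}^0)$ to be the filtered colimit of identities.
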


\


\begin{thebibliography}{Bha12a}
  \bibitem[AL20]{Anschuetz2019}Johannes Anschütz  and  Arthur-César Le Bras.
  {\newblock}The $p$-completed cyclotomic trace in degree 2.
  {\newblock}\tmtextit{Ann. K-Theory}, 5(3):539--580, jul 2020.{\newblock}
  
  \bibitem[AL23]{Anschuetz2019a}Johannes Anschütz  and  Arthur-César Le
  Bras. {\newblock}Prismatic Dieudonné theory. {\newblock}\tmtextit{Forum
  Math. Pi}, 11:0, jul 2023.{\newblock}
  
  \bibitem[Ant20]{Antonio2020}Jorge António. {\newblock}Spreading out the
  Hodge filtration in non-archimedean geometry. {\newblock}\tmtextit{ArXiv
  e-prints}, may 2020.{\newblock}
  
  \bibitem[Bei87]{Beilinson1987}A.~A.~Beilinson. {\newblock}On the derived
  category of perverse sheaves. {\newblock}In \tmtextit{$K$-theory, arithmetic
  and geometry (Moscow, 1984--1986)},  volume  1289  of \tmtextit{Lecture
  Notes in Math.},  pages  27--41. Springer, Berlin, 1987.{\newblock}
  
  \bibitem[Ber74]{Berthelot1974}Pierre Berthelot.
  {\newblock}\tmtextit{Cohomologie cristalline des schémas de
  caractéristique $p > 0$}. {\newblock}Lecture Notes in Mathematics, Vol.
  407. Springer-Verlag, Berlin-New York, 1974.{\newblock}
  
  \bibitem[Bha12a]{Bhatt2012a}B.~Bhatt. {\newblock}$p$-adic derived de Rham
  cohomology. {\newblock}\tmtextit{ArXiv e-prints}, apr 2012.{\newblock}
  
  \bibitem[Bha12b]{Bhatt2012}B.~Bhatt. {\newblock}Completions and derived de
  Rham cohomology. {\newblock}\tmtextit{ArXiv e-prints}, jul 2012.{\newblock}
  
  \bibitem[Bha18]{Bhatt2018b}Bhatt. {\newblock}Prismatic cohomology.
  {\newblock}\url{http://www-personal.umich.edu/~bhattb/teaching/prismatic-columbia/},
  2018.{\newblock}
  
  \bibitem[BL22]{Bhatt2022a}Bhargav Bhatt  and  Jacob Lurie. {\newblock}The
  prismatization of $p$-adic formal schemes. {\newblock}\tmtextit{ArXiv
  e-prints}, jan 2022.{\newblock}
  
  \bibitem[BM19]{Brantner2019}Lukas Brantner  and  Akhil Mathew.
  {\newblock}Deformation Theory and Partition Lie Algebras.
  {\newblock}\tmtextit{ArXiv e-prints}, apr 2019.{\newblock}
  
  \bibitem[BMS19]{Bhatt2018}Bhargav Bhatt, Matthew Morrow, and  Peter Scholze.
  {\newblock}Topological Hochschild homology and integral $p$-adic Hodge
  theory. {\newblock}\tmtextit{Publications Mathématiques. Institut de Hautes
  Études Scientifiques}, 129:199--310, 2019.{\newblock}
  
  \bibitem[BO78]{Berthelot1978}Pierre Berthelot  and  Arthur Ogus.
  {\newblock}\tmtextit{Notes on crystalline cohomology}. {\newblock}Princeton
  University Press, Princeton, N.J.; University of Tokyo Press, Tokyo,
  1978.{\newblock}
  
  \bibitem[BS19]{Bhatt2019}Bhargav Bhatt  and  Peter Scholze.
  {\newblock}Prisms and Prismatic Cohomology. {\newblock}\tmtextit{ArXiv
  e-prints}, May 2019.{\newblock}
  
  \bibitem[Cha20]{Chatzistamatiou2020}Andre Chatzistamatiou.
  {\newblock}$Q$-crystals and $q$-connections. {\newblock}\tmtextit{ArXiv
  e-prints}, oct 2020.{\newblock}
  
  \bibitem[CS19]{Cesnavicius2019}Kestutis Cesnavicius  and  Peter Scholze.
  {\newblock}Purity for flat cohomology. {\newblock}\tmtextit{ArXiv e-prints},
  dec 2019.{\newblock}
  
  \bibitem[Dri21]{Drinfeld2021}Vladimir Drinfeld. {\newblock}On a notion of
  ring groupoid. {\newblock}\tmtextit{ArXiv e-prints}, apr 2021.{\newblock}
  
  \bibitem[GL20]{Guo2020}Haoyang Guo  and  Shizhang Li. {\newblock}Period
  sheaves via derived de Rham cohomology. {\newblock}\tmtextit{ArXiv
  e-prints}, aug 2020.{\newblock}
  
  \bibitem[GLQ20]{Gros2020}Michel Gros, Bernard Le Stum, and  Adolfo Quirós.
  {\newblock}Twisted differential operators and $q$-crystals.
  {\newblock}\tmtextit{ArXiv e-prints}, apr 2020.{\newblock}
  
  \bibitem[GP18]{Gwilliam2018}Owen Gwilliam  and  Dmitri Pavlov.
  {\newblock}Enhancing the filtered derived category.
  {\newblock}\tmtextit{Journal of Pure and Applied Algebra},
  222(11):3621--3674, 2018.{\newblock}
  
  \bibitem[H+98]{TeXmacs:website}J.~van~der Hoeven et~al. {\newblock}GNU
  TeXmacs. {\newblock}\url{https://www.texmacs.org}, 1998.{\newblock}
  
  \bibitem[Hek21]{Hekking2021}Jeroen Hekking. {\newblock}Graded algebras,
  projective spectra and blow-ups in derived algebraic geometry.
  {\newblock}\tmtextit{ArXiv e-prints}, jun 2021.{\newblock}
  
  \bibitem[Hov14]{Hovey2014}Mark Hovey. {\newblock}Smith ideals of structured
  ring spectra. {\newblock}\tmtextit{ArXiv e-prints}, jan 2014.{\newblock}
  
  \bibitem[Ill71]{Illusie1971}Luc Illusie. {\newblock}\tmtextit{Complexe
  cotangent et déformations. I}. {\newblock}Lecture Notes in Mathematics,
  Vol. 239. Springer-Verlag, Berlin-New York, 1971.{\newblock}
  
  \bibitem[Ill72]{Illusie1972}Luc Illusie. {\newblock}\tmtextit{Complexe
  cotangent et déformations. II}. {\newblock}Lecture Notes in Mathematics,
  Vol. 283. Springer-Verlag, Berlin-New York, 1972.{\newblock}
  
  \bibitem[Kat70]{Katz1970}Nicholas~M.~Katz. {\newblock}Nilpotent connections
  and the monodromy theorem: Applications of a result of Turrittin.
  {\newblock}\tmtextit{Institut des Hautes Études Scientifiques. Publications
  Mathématiques}, (39):175--232, 1970.{\newblock}
  
  \bibitem[LL20]{Li2020}Shizhang Li  and  Tong Liu. {\newblock}Comparison of
  prismatic cohomology and derived de Rham cohomology.
  {\newblock}\tmtextit{ArXiv e-prints}, dec 2020.{\newblock}
  
  \bibitem[Lur04]{Lurie2004}Jacob Lurie. {\newblock}Tannaka Duality for
  Geometric Stacks. {\newblock}\tmtextit{ArXiv Mathematics e-prints},  page 
  0, dec 2004.{\newblock}
  
  \bibitem[Lur09]{Lurie2009}Jacob Lurie. {\newblock}\tmtextit{Higher topos
  theory},  volume  170  of \tmtextit{Annals of Mathematics Studies}.
  {\newblock}Princeton University Press, Princeton, NJ, 2009.{\newblock}
  
  \bibitem[Lur15]{Lurie2015}Jacob Lurie. {\newblock}Rotation invariance in
  algebraic $K$-theory.
  {\newblock}\url{https://www.math.ias.edu/~lurie/papers/Waldhaus.pdf}, sep
  2015.{\newblock}
  
  \bibitem[Lur17]{Lurie2017}Jacob Lurie. {\newblock}Higher algebra.
  {\newblock}\url{https://www.math.ias.edu/~lurie/papers/HA.pdf}, sep
  2017.{\newblock}
  
  \bibitem[Lur18]{Lurie2018}Jacob Lurie. {\newblock}Spectral algebraic
  geometry.
  {\newblock}\url{https://www.math.ias.edu/~lurie/papers/SAG-rootfile.pdf},
  feb 2018.{\newblock}
  
  \bibitem[Lur23]{kerodon}Jacob Lurie. {\newblock}Kerodon.
  {\newblock}\url{https://kerodon.net}, 2023.{\newblock}
  
  \bibitem[Mag]{Magidson}Kirill Magidson. {\newblock}Divided powers and
  derived de Rham cohomology I: algebraic part. {\newblock}To
  appear.{\newblock}
  
  \bibitem[MT]{Morrow2020}Matthew Morrow  and  Takeshi Tsuji.
  {\newblock}Generalised representations as q-connections in integral $p$-adic
  Hodge theory. {\newblock}\tmtextit{ArXiv e-prints}.{\newblock}
  
  \bibitem[Nar16]{Nardin2016}Denis Nardin. {\newblock}Difficulties with
  descent data as homotopy limit of image of Čech nerve.
  {\newblock}MathOverflow, apr 2016.
  {\newblock}\url{https://mathoverflow.net/a/236600/}.{\newblock}
  
  \bibitem[Nik16]{Nikolaus2016}Thomas Nikolaus. {\newblock}Stable
  $\infty$-Operads and the multiplicative Yoneda lemma.
  {\newblock}\tmtextit{ArXiv e-prints}, aug 2016.{\newblock}
  
  \bibitem[Ogu21]{Ogus2021}Arthur Ogus. {\newblock}Prismatic and $p$-de Rham
  cohomology: comparison theorems and antecedents. {\newblock}Illusie
  conference: \url{https://server.mcm.ac.cn/~zheng/LI/Ogus.pdf}, jun
  2021.{\newblock}
  
  \bibitem[Qui67]{Quillen1967}Daniel~G.~Quillen.
  {\newblock}\tmtextit{Homotopical algebra}. {\newblock}Lecture Notes in
  Mathematics, No. 43. Springer-Verlag, Berlin-New York, 1967.{\newblock}
  
  \bibitem[Rak20]{Raksit2020}Arpon Raksit. {\newblock}Hochschild homology and
  the derived de Rham complex revisited. {\newblock}\tmtextit{ArXiv e-prints},
  jul 2020.{\newblock}
  
  \bibitem[Sta21]{stacks-project}StacksProjectAuthors. {\newblock}The Stacks
  project. {\newblock}\url{https://stacks.math.columbia.edu}, 2021.{\newblock}
  
  \bibitem[WY17]{White2017}David White  and  Donald Yau. {\newblock}Smith
  Ideals of Operadic Algebras in Monoidal Model Categories.
  {\newblock}\tmtextit{ArXiv e-prints}, mar 2017.{\newblock}
  
  \bibitem[WY19]{White2017a}David White  and  Donald Yau. {\newblock}Arrow
  Categories of Monoidal Model Categories. {\newblock}\tmtextit{Math. Scand.},
  125(2):185--198, mar 2019.{\newblock}
\end{thebibliography}
\end{document}